\newcommand{\gr}{\rowcolor[gray]{.89}}
\newcommand{\f}[1]{{$\scriptstyle{#1}$}}
\newcommand{\y}[1]{{$\scriptstyle{#1}$}}
\newcommand{\ttt}{{\mathfrak t}}
\newcommand{\uuu}{{\mathfrak u}}
\renewcommand{\sp}{\mathfrak{\mathop{sp}}}
\definecolor{grey}{RGB}{128,128,128}
\numberwithin{equation}{subsection}
\theoremstyle{plain}
\newtheorem{lemma}[equation]{Lemma}
\newtheorem{proposition}[equation]{Proposition}
\newtheorem{theorem}[equation]{Theorem}
\newtheorem{corollary}[equation]{Corollary}
\theoremstyle{definition}
\newtheorem{definition}[equation]{Definition}
\newtheorem{remark}[equation]{Remark}
\newtheorem{example}[equation]{Example}
\newtheorem{examples}[equation]{Examples}
\newtheorem{subsec}[equation]{}
\newtheorem{construction}[equation]{Construction}
\newtheorem*{proposition*}{Proposition}
\newtheorem*{lemma*}{Lemma}
\newcommand{\R}{{\mathds R}}
\newcommand{\C}{{\mathds C}}
\newcommand{\F}{{\mathds F}}
\newcommand{\Z}{{\mathds Z}}
\newcommand{\Q}{{\mathds Q}}
\newcommand{\Aa}{{\mathcal A}}
\newcommand{\Cc}{{\mathcal C}}    
\newcommand{\Nn}{{\mathcal N}}
\newcommand{\Oo}{{\mathcal O}}
\newcommand{\Tt}{{\mathcal T}}
\newcommand{\Zz}{{\mathcal Z}}
\newcommand{\ad}{{\rm ad \,}}
\newcommand{\Ad}{{\rm Ad }}
\newcommand{\om}{{\omega}}
\renewcommand{\frak}[1]{{\mathfrak {#1}}}
\renewcommand{\a}{{\mathfrak a}}
\newcommand{\e}{{\mathfrak e}}
\newcommand{\z}{{\mathfrak z}}
\newcommand{\g}{{\mathfrak g}}
\newcommand{\gl}{{\mathfrak {gl}}}
\newcommand{\h}{{\mathfrak h}}
\renewcommand{\c}{{\mathfrak c}}
\newcommand{\s}{{\mathfrak s}}
\renewcommand{\u}{{\mathfrak u}}
\newcommand{\su}{\mathfrak{su}}
\newcommand{\ssl}{{\mathfrak {sl}}}
\renewcommand{\t}{{\mathfrak t }}
\newcommand{\coker}{\mbox{coker}}
\newcommand{\GL}{{\rm GL}}
\newcommand{\SL}{{\rm SL}}
\newcommand{\Sp}{{\rm Sp}}
\newcommand{\hs}{\kern 0.8pt}
\newcommand{\hsss}{\kern 1.2pt}
\newcommand{\hm}{\kern -0.8pt}
\newcommand{\hmm}{\kern -1.2pt}
\newcommand{\hssh}{\kern 1.2pt}
\newcommand{\hshs}{\kern 1.6pt}
\newcommand{\hssss}{\kern 2.0pt}
\newcommand{\into}{\hookrightarrow}
\newcommand{\isoto}{\overset{\sim}{\to}}
\newcommand{\labelto}[1]{\xrightarrow{\makebox[1.5em]{\scriptsize ${#1}$}}}
\newcommand{\upgam}{\hs^\gamma\kern-0.5pt }
\newcommand{\upalpha}{\hs^\alpha\kern-0.5pt}
\newcommand{\emm}{\bfseries}
\newcommand{\PGL}{{\rm PGL}}
\newcommand{\PSL}{\mathrm{PSL}}
\newcommand{\SU}{\mathrm{SU}}
\newcommand{\Ga}{{\Gamma}}
\renewcommand{\AA}{{\bf A}}
\newcommand{\BB}{{\bf B}}
\newcommand{\CC}{{\bf C}}
\newcommand{\GG}{{\bf G}}
\newcommand{\HH}{{\bf H}}
\newcommand{\KK}{{\bf K}}
\newcommand{\MM}{{\bf M}}
\newcommand{\NN}{{\bf N}}
\renewcommand{\SS}{{\bf S}}
\newcommand{\TT}{{\bf T}}
\newcommand{\YY}{{\bf Y}}
\newcommand{\ZZ}{{\bf Z}}
\newcommand{\GGtil}{{\widetilde{\GG}}}
\newcommand{\wt}{\widetilde}
\DeclareMathOperator{\Lie}{Lie}
\DeclareMathOperator{\Inn}{Inn}
\DeclareMathOperator{\Out}{Out}
\newcommand{\sT}{{\mathcal T}}
\newcommand{\Gtil}{{\widetilde{G}}
}
\newcommand{\Hom}{{\rm Hom}}
\newcommand{\Gal}{{\rm Gal}}
\newcommand{\Zm}{{\mathcal Z}}
\newcommand{\SmallMatrix}[1]{\text{{\tiny\arraycolsep=0.4\arraycolsep\ensuremath
    {\begin{pmatrix}#1\end{pmatrix}}}}}
\newcommand{\im}{{\rm im\,}}
\newcommand{\ov}{\overline}
\newcommand{\Ho}{{\mathrm{H}\kern 0.3pt}}
\newcommand{\Zl}{{\mathrm{Z}\kern 0.2pt}}
\newcommand{\Bd}{{\mathrm{B}\kern 0.2pt}}
\newcommand{\id}{{\rm id}}
\newcommand{\Cl}{{\rm Cl}}
\newcommand{\stand}{{\rm st}}
\newcommand{\tw}{{\rm tw}}
\newcommand{\Stab}{{\rm Stab}}
\newcommand{\Aut}{{\rm Aut}}
\newcommand{\Spec}{{\rm Spec\,}}
\newcommand{\diag}{{\rm diag}}
\def\G{{\mathbb G}}
\newcommand{\cc}{\raise 1.7pt \hbox{\Tiny{$\bullet$}}}
\newcommand{\Nm}{{\mathcal N}}
\def\GmC{{\C^\times}}
\newcommand{\Xf}{{\sf X}}
\newcommand{\cC}{{\scriptscriptstyle{\C}}}
\def\upgam{{\hs^\gamma\hm}}
\newcommand{\Res}{{ {\rm R}_{\C/\R}\hs}}
\def\Am{{\mathcal A}}
\def\Fm{{\mathcal F}}
\def\OOm{{\mathcal O}}
\def\Cg{{\mathfrak C}}
\def\pbar{{\ov p}}
\def\gbar{{\ov g}}
\def\wt{\widetilde}
\def\GGtil{{\wt \GG}}
\def\Zgg{{\mathfrak z}}
\newcommand{\vs}{\varsigma}
\newcommand{\triv}{{\rm triv}}
\newcommand{\diagg}{{\rm diag}}
\newcommand{\X}{{\sf X}}
\begin{document}


\def\DynkinNodeSize{1.5mm}
\def\DynkinArrowLength{1.5mm}
\tikzset{
dnode/.style={
circle,
inner sep=0pt,
minimum size=\DynkinNodeSize,
fill=white,
draw},
middlearrow/.style={
decoration={markings,
mark=at position 0.6 with
{\draw (0:0mm) -- +(+135:\DynkinArrowLength); \draw (0:0mm) -- +(-135:\DynkinArrowLength);},
},
postaction={decorate}
},
leftrightarrow/.style={
decoration={markings,
mark=at position 0.999 with
{
\draw (0:0mm) -- +(+135:\DynkinArrowLength); \draw (0:0mm) -- +(-135:\DynkinArrowLength);
},
mark=at position 0.001 with
{
\draw (0:0mm) -- +(+45:\DynkinArrowLength); \draw (0:0mm) -- +(-45:\DynkinArrowLength);
},
},
postaction={decorate}
},
sedge/.style={
},
dedge/.style={
middlearrow,
double distance=0.5mm,
},
tedge/.style={
middlearrow,
double distance=1.0mm+\pgflinewidth,
postaction={draw}, 
},
infedge/.style={
leftrightarrow,
double distance=0.5mm,
},
}

\title[Trivectors in $\R^9$]%
{Real graded  Lie algebras, Galois cohomology,\\
and classification of trivectors in $\R^9$}

\author{Mikhail  Borovoi}
\address{Raymond and Beverly Sackler School of Mathematical Sciences,
Tel Aviv University, 6997801 Tel Aviv, Israel}
\email{borovoi@tauex.tau.ac.il}

\author{Willem de Graaf}
\address{Department of Mathematics, University of Trento, Povo (Trento), Italy}
\email{degraaf@science.unitn.it}

\author{H\^ong V\^an L\^e}
\address{Institute of Mathematics, Czech  Academy of Sciences,
Zitna 25, 11567 Praha 1, Czech Republic}
\email{hvle@math.cas.cz}

\thanks{Borovoi was partially supported
by the Israel Science Foundation (grant 870/16).
Research  of L\^e was supported  by  GA\v CR-project 18-01953J and	 RVO: 67985840.
De Graaf was partially supported by an Australian Research Council
grant, identifier DP190100317.}

\date{\today}

\begin{abstract}
In this paper we classify real trivectors in dimension 9.
The corresponding classification over the field $\C$ of complex numbers was done by Vinberg and Elashvili in 1978.
One of the main tools used for their classification was the construction of the representation
of $\SL(9,\C)$ on the space of complex trivectors of $\C^9$ as a theta-representation
corresponding to a $\Z/3\Z$-grading of the simple complex Lie algebra of type $E_8$.
This divides the trivectors into three groups: nilpotent, semisimple, and mixed trivectors.
Our classification follows the same pattern.
We use Galois cohomology to obtain the classification over $\R$.
For the nilpotent orbits this is in principle rather straightforward
and the main problem is to determine the first Galois cohomology sets
of a long list of centralizers (we compute the centralizers using computer).
For the semisimple and mixed orbits we develop new methods based on Galois cohomology, first and second.
We also consider real theta-representations in general, and derive a number of results
that are useful for the classification of their orbits.
\end{abstract}

\maketitle
\tableofcontents





\section{Introduction}

Let $V$ be an $n$-dimensional vector space over a field
$k$. The group $\GL(V)$ naturally acts on the spaces $\bigwedge^k V$.
The elements of $\bigwedge^2 V$ are called bivectors. The
elements of $\bigwedge^3 V$ are called trivectors. If the ground field is
$\R$ or $\C$, then the orbits of $\GL(V)$
on the space of bivectors can be listed for all $n$ (see \cite{Gurevich1964}, \S 34).
The situation for trivectors is much more complicated and a lot of
effort has gone into finding orbit classifications for particular $n$.
For $n$ up to $5$ it is straightforward to obtain the orbits of
$\GL(V)$ on the space of trivectors (see \cite{Gurevich1964}, \S 35).
The classification for higher $n$ and $k=\C$ goes back to the thesis of Reichel
(\cite{Reichel}) who classified the orbits for $n=6$ and with a few
omissions also for $n=7$. In 1931 Schouten  published
a classification for $n=7$  (\cite{Schouten31}). In 1935
Gurevich obtained a classification for $n=8$  (\cite{Gurevich1935a},
see also \cite{Gurevich1964}, \S 35). In these cases the number of orbits is
finite. This ceases to be the case for $n\geq 9$. Vinberg and Elashvili
classified the orbits of trivectors for $n=9$ under the group
$\SL(V)$ (\cite{VE1978}). In this classification there are several parametrized
families of orbits. The maximum number of parameters of such a family is 4.

More recently classifications have appeared for different fields.
For $n=6$,
Revoy \cite{Revoy} gave a classification for arbitrary field $k$.
For $n=7$,
Westwick \cite{Westwick} classified the trivectors for $k=\R$,
Cohen and Helminck \cite{CH1988} treated the case of a perfect field $k$
of cohomological dimension $\le 1$ (which includes finite fields),
and Noui and Revoy \cite{NR1994} treated the cases of a
perfect field of cohomological dimension $\le 1$ and of a $p$-adic field.
For $n=8$,
Djokovi\'c \cite{Djokovic1983} treated the case $k=\R$,
Noui \cite{Noui1997} treated the case of an algebraically closed field $k$
of arbitrary characteristic,
and Midoune and Noui \cite{MN2013} treated the case of a finite field.
For $n=9$,
Hora and Pudl\'ak \cite{HP2020} treated the case of the finite field $\F_2$ of two elements.
In the present paper we give a classification of trivectors under
the action of $\SL(V)$ for $n=9$ and $k=\R$.

For their classification Vinberg and Elashvili used a particular construction of
the action of $\SL(V)$ on $\bigwedge^3 V$ ($\dim V=9$, $k=\C$). They
considered the complex Lie algebra $\g^\cC$ of type $E_8$ and its
adjoint group $G$ (here $G$ is equal to the automorphism
group of $\g^\cC$). This Lie algebra has a $\Z_3$-grading $\g^\cC = \g_{-1}^\cC
\oplus \g_0^\cC\oplus \g_1^\cC$ such that $\g_0^\cC \cong \ssl(9,\C)$.
Let $G_0$ be the connected algebraic subgroup of $G$ with Lie algebra
$\g_0^\cC$. Since the Lie algebra $\g_0^\cC$ preserves $\g_1^\cC$ when acting on $\g^\cC$,
the same is true for the group $G_0$.
An isomorphism $\psi: \ssl(9,\C)\to \g_0^\cC$ lifts to a surjective morphism of
algebraic groups $\Psi: \SL(9,\C)\to G_0$. This also defines an action of
$\SL(9,\C)$ on $\g_1^\cC$, and it turns out that $\g_1^\cC \cong \bigwedge^3 \C^9$
as $\SL(9,\C)$-modules. This construction pertains to Vinberg's
theory of $\theta$-groups (\cite{Vinberg1975}, \cite{Vinberg1976},
\cite{Vinberg1979}). So this theory can be used to study
the orbits of trivectors when $n=9$. Among the technical tools that this
makes available we mention the following:
\begin{itemize}
\item The elements of $\g_1^\cC$ have a {\em Jordan decomposition}, that is,
  every $x\in \g_1^\cC$ can be uniquely written as $x=s+n$ where $s\in \g_1^\cC$
  is {\em semisimple} (i.e., its $G_0$-orbit is closed), $n\in \g_1^\cC$ is
  {\em nilpotent} (i.e., the closure of its orbit contains 0) and $[s,n]=0$.
  This naturally splits the orbits into three types according to whether the
  elements of the orbit are nilpotent, semisimple, or mixed (that is,
  neither semisimple nor nilpotent). So also the classification problem splits
  into three subproblems. We also remark that an element $y\in\g_1^\cC$ is semisimple
  (respectively nilpotent) if and only if the linear map $\ad y : \g^\cC\to
  \g^\cC$ is semisimple (respectively nilpotent).
\item  Any nilpotent $e\in \g_1^\cC$ lies in a {\em homogeneous $\ssl_2$-triple},
  meaning that there are $h\in \g_0^\cC$, $f\in \g_{-1}^\cC$ with
  $[h,e]=2e$, $[h,f]=-2f$, $[e,f]=h$. The $G_0$-orbits of $e$ and of the
  triple $(h,e,f)$ determine each other.
  Furthermore, $e$ lies in a {\em carrier algebra} and the theory of
  these algebras can be used to classify the nilpotent orbits
  (\cite{Vinberg1979}).
\item  A {\em Cartan subspace} is a maximal subspace $\Cg$  of $\g_1^\cC$
 consisting of commuting semisimple elements.
Any other Cartan subspace is conjugate to $\Cg$ under the action of $G_0$.
 Each semisimple orbit has a
  point in $\Cg$. Furthermore, two elements of $\Cg$ are $G_0$-conjugate
  if and only if they are conjugate under the finite group
  $W=\Nm_{G_0}(\Cg)/\Zm_{G_0}(\Cg)$ called the {\em Weyl
  group of the graded Lie algebra} $\g^\cC$, or also the {\em little Weyl group.}
\item Fix a semisimple element $s\in\g_1^\cC$, let $\Zm_s$ denote the stabilizer
  of $s$ in $\SL(9,\C)$, and let $\z_{\g^\cC}(s)$ denote the centralizer of $s$ in
  $\g^\cC$. Then the grading of $\g^\cC$ induces a grading
  of its Lie subalgebra  $\z_{\g^\cC}(s)$,
  and  the classification of the orbits of mixed elements $s+e$,
  where $e$ is nilpotent, reduces to the classification of the {\em nilpotent}
  $\Zm_s$-orbits in the graded Lie algebra $\z_{\g^\cC}(s)$.
\end{itemize}

Over the real numbers we use a similar construction of the action of $\SL(V)$
on $\bigwedge^3 V$. There is a Chevalley
basis of $\g^\cC$ such that the $\Z_3$-grading is defined over $\R$ (that is,
the spaces $\g_i^\cC$ for $i=-1,0,1$ all have bases whose elements are
$\R$-linear combinations of the given basis of $\g^\cC$). By $\g_i$ for
$i=-1,0,1$ we denote the real spans of these bases. Then $\g_0$ is isomorphic
to $\ssl(9,\R)$. We can define $\psi$ in such a way that it restricts to
an isomorphism $\psi : \ssl(9,\R)\to \g_0$. Then also $\Psi$ is defined over
$\R$ and restricts to a surjective homomorphism $\Psi : \SL(9,\R) \to
G_0(\R)$. In this way $\g_1$ becomes an $\SL(9,\R)$-module isomorphic to
$\bigwedge^3 \R^9$. In order to use this set-up for classifying orbits, we
need to extend a number of constructions for graded Lie algebras from the
complex setting to the real setting. This is done in Section
\ref{sec:gradedLie}.

As in the earlier works on classification of trivectors
\cite{Revoy} and \cite{Djokovic1983},
our main workhorse for classifying the $\SL(9,\R)$-orbits on
$\bigwedge^3 \R^9$ is Galois cohomology. Very briefly this amounts to the
following. Let $\Oo$ be an $\SL(9,\C)$-orbit in $\bigwedge^3 \C^9$. We are
interested in determining the $\SL(9,\R)$-orbits that are contained in
$\Oo\cap\bigwedge^3 \R^9$. It can happen that this intersection is empty
(we say that $\Oo$ has no real points). In that case we just discard
$\Oo$. On the other hand, if $\Oo$ has a real point $x$, then we
consider the stabilizer $\Zm_x$ of $x$ in $\SL(9,\C)$. The
$\SL(9,\R)$-orbits contained in $\Oo$ are in bijection with the first Galois
cohomology set $\Ho^1 \Zm_x:=\Ho^1(\R, \Zm_x)$.
Moreover, from an explicit cocycle representing  a cohomology class in
$\Ho^1 \Zm_x$\hs, we can effectively compute
a representative of the corresponding real orbit.
Section \ref{sec:galcohom} has further details on Galois
cohomology over $\R$  and methods to compute
first Galois cohomology sets in a quite  general  setting.

To classify the nilpotent $\SL(9,\R)$-orbits we execute the above
procedure. It is shown that every nilpotent $\SL(9,\C)$-orbit in
$\bigwedge^3 \C^9$ has a real point (Proposition \ref{prop:nilreal}).
However, instead of a real point $e$ in the orbit, we work with a real
homogeneous
$\ssl_2$-triple $t=(h,e,f)$ containing it. Since nilpotent orbits correspond
bijectively to orbits of homogeneous $\ssl_2$-triples, it suffices to
compute the stabilizer $\Zm_t$ of the triple $t$ and to determine the set
$\Ho^1 \Zm_t$. In Section \ref{sec:compcen} we describe the computational
methods that we have used to determine $\Zm_t$. Section \ref{sec:nilpcen}
has the details of the determination of the Galois cohomology sets.

In \cite{VE1978} an explicit Cartan subspace $\Cg$ of $\g_1^\cC$ is
given. In $\Cg$ seven {\em canonical sets} are defined
with the property that every semisimple orbit has a point in one of the
canonical sets. Moreover, two elements of the same canonical set have
the same stabilizer in $\SL(9,\C)$. Let $\Fm$ be such a canonical set.
Let $\Nm$ be its normalizer in $\SL(9,\C)$ ($\Nm$ is the set of $g\in
\SL(9,\C)$ with $gp\in \Fm$ for all $p\in \Fm$) and let $\Zm$ be its
centralizer in $\SL(9,\C)$ ($\Zm$ is the set of all $g\in \SL(9,\C)$ with
$gp=p$ for all $p\in \Fm$). Write $\Aa=\Nm/\Zm$. In all cases we have that
$\Aa$ is isomorphic to a subgroup of the little Weyl group $W$, with the
isomorphism respecting the actions of $\Aa$ and $W$ on $\Fm$.
We explicitly determine these subgroups. In Section \ref{sec:semsim}
we divide the complex orbits of the elements of $\Fm$ having real points
into several classes that are in bijection with the Galois cohomology set $\Ho^1 \Aa$.
For each class we can explicitly find a real representative
of each orbit in this class.
Let $q$ be such a real representative, and let $\Zm_q$ be its centralizer
in $\SL(9,\C)$. Then the real orbits contained in the $\SL(9,\C)$-orbit of
$q$ are in bijection with $\Ho^1 \Zm_q$. In Section \ref{sec:semsim}
we find that $\Ho^1 \Zm_q$ always happens to be trivial
(contains only one element $1$).

Section \ref{sec:mixed} is devoted to the elements of mixed type.
To classify the orbits consisting of
mixed elements, we fix a real semisimple element $p$ and
consider the problem of listing the $\SL(9,\R)$-orbits of mixed elements
having a representative of the form $p+e$ where $e$ is nilpotent and
$[p,e]=0$. Let $\Zm_p(\R)$ denote the stabilizer of $p$ in $\SL(9,\R)$.
Let $\a=\z_{\g}(p)$ be the centralizer of $p$ in $\g$. Note that the grading
of $\g$ induces a grading on $\a$. Then our problem amounts to classifying the
nilpotent $\Zm_p(\R)$-orbits in $\a_1$. In principle this can be done with
Galois cohomology in the same way as for the nilpotent $\SL(9,\R)$-orbits
in $\g_1$. In this case, however, there is a nasty detail. If the reductive
subalgebra $\a$ is not split over $\R$, then it can happen that certain complex
nilpotent orbits in $\a_1^\cC$ (where $\a^\cC=\z_{\g^\cC}(p)$) do not have
real points. In Section \ref{sec:mixed} we develop a method for checking
whether a complex nilpotent orbit has real points and for finding one in
the affirmative case. Among other things, this method uses calculations
in the {\em second} (abelian) Galois cohomology group.

The main results of this paper are the tables in Section \ref{sec:tables}
containing representatives of the orbits of $\SL(9,\R)$ in $\bigwedge^3 \R^9$.

In order to obtain the results of this paper, essential use of the computer has been made.
In Section \ref{sec:compmeth} we give details of the
computational methods that we used. For the implementation of these
methods we have used the computer algebra system {\sf GAP}4 \cite{gap4}
and its package {\sf SLA}  \cite{sla}. For the computations involving Gr\"obner
bases we have used the computer algebra system {\sf Singular} \cite{DGPS}.

Finally we would like to  add   a few   more  motivations
for the problem of classification   of trivectors of $\R^9$
 and  put our method  in a broader  perspective.
Special  geometries  defined  by   differential  forms   are of   central  significance
in Riemannian geometry and in physics; see
\cite{HL1982},   \cite{Hitchin2000}, \cite{Joyce2007};  see also \cite{LV2020} for a survey.
One  of  important problems  in geometry  defined  by  differential forms  is to  understand  the orbit space
of the standard $\GL (n, \R)$-action on the vector space $\bigwedge^k (\R^n)^*$ of alternating $k$-forms on $\R^n$,
which is  in  a bijection   with the orbit space
of  the standard $\GL (n, \R)$-action on the space of $k$-vectors  $\bigwedge^k \R^{n}$.
For general  $k$ and $n$, this classification problem is intractable.
In the  case of trivectors of $\R^8$ and  $\R^9$,
this classification problem is  tractable  thanks  to  its formulation
as an  equivalent problem  for   graded  semisimple Lie algebras,
discovered  first  by Vinberg  in his  works \cite{Vinberg1975, Vinberg1976}.
Semisimple graded Lie algebras and their  adjoint orbits
 play an important role  in geometry  and supersymmetries; see
 \cite{WG1968}, \cite{Nahm1978}, \cite{Kac1995}, \cite{CS2009}, \cite{Swann1991}.
\bigskip

{\bf Acknowledgements.}
We  are indebted   to Alexander Elashvili for his discussions with us
on his work  with Vinberg \cite{VE1978} and related  problems
in graded   semisimple Lie algebras over years;
it was he who brought  us together  for working on this paper. We  are  grateful  to
Domenico Fiorenza for his helpful comments on an early version  of this paper.

\subsection{Notation  and conventions}

Here we briefly describe some notation and conventions that we use in the
paper.

By $\Z$, $\Q$, $\R$, $\C$ we denote, respectively, the ring of integers
and the fields of rational, real, and complex numbers.
By $i\in \C$ we denote an imaginary unit.

By $\mu_n$ we denote the cyclic group consisting of the $n$-th roots of
unity in $\C$. Occasionally by $\mu_n$ we denote the set of matrices consisting
of scalar matrices with an $n$-th root of unity on the diagonal. From the
context it will be clear what is meant.

If $G$ is a group, then by $\Nm_G(A)$, $\Zm_G(A)$ we denote the normalizer
and centralizer in $G$ of $A$. If $v$ is an element of a $G$-module then
$\Zm_G(v)$ denotes the stabilizer of $v$ in $G$.

If $\g$ is a Lie algebra then $\z_\g(A)$ denotes the centralizer of $A$ in
$\g$.

We write $\Gamma$ for the group $\Gal(\C/\R)=\{1,\gamma\}$, where $\gamma$ is
the complex conjugation. By a $\Gamma$-group we mean a  group $A$ on which
$\Gamma$ acts by automorphisms. Clearly, a $\Gamma$-group is a pair
$(A,\sigma)$,where $A$ is a group and  $\sigma\colon A\to A$ is an
automorphism of $A$ such that $\sigma^2={\rm id}_A$\hs.
If $a\in A$, we write $\upgam a$  for $\sigma(a)$.

For a {\em linear algebraic group over $\R$} (for brevity $\R$-group) we mean a pair $\MM=(M,\sigma)$,
where $M$ is a linear algebraic group over $\C$, and $\sigma$ is an anti-regular involution of $M(\C)$;
see Subsection \ref{s:Galois-coh}.
Then $\Gamma$ naturally acts on $M(\C)$ (namely, the complex conjugation $\gamma$
acts as $\sigma$). In other words, $M(\C)$ naturally is a $\Gamma$-group.

Let $T$ be a diagonalizable algebraic group.
By $\Xf^*(T)=\Hom(T,\GmC)$ we denote the character group of $T$, where $\Hom$
denotes the group of homomorphisms of {\em algebraic $\C$-groups}.
By $\Xf_*(T)=\Hom(\GmC,T)$ we denote the cocharacter group of $T$.

In this paper the spaces $\bigwedge^3 \C^9$ and $\bigwedge^3 \R^9$ play
a major role. We let $e_1,\ldots,e_9$ denote the standard basis of
$\C^9$ (or of $\R^9$). Then by $e_{ijk}$ we denote the basis vector
$e_i\wedge e_j\wedge e_k$ of $\bigwedge^3 \C^9$ (or of $\bigwedge^3 \R^9$).


\section{The tables}\label{sec:tables}

In this section we list the representatives of the orbits of $\SL(9,\R)$ on
$\bigwedge^3 \R^9$. Except for the semisimple orbits we list the representatives
in tables. We also give various centralizers in the real Lie algebra
$\ssl(9,\R)$. These centralizers are always reductive. For the semisimple
subalgebras we use standard notation for the real forms of the complex
simple Lie algebras.
By $\ttt$ we denote a 1-dimensional subalgebra of $\ssl(9,\R)$
spanned by a semisimple  matrix all of whose eigenvalues are real. By $\uuu$ we denote a
1-dimensional subalgebra of $\ssl(9,\R)$ spanned by a semisimple  matrix all of whose
eigenvalues are imaginary.

We recall that by $e_{ijk}$ we denote the basis vector $e_i\wedge e_j\wedge
e_k$, where $e_1,\ldots,e_9$ are the standard basis vectors of $\R^9$.

\subsection{The nilpotent orbits}

Table \ref{tab:orbitreps} contains representatives of the nilpotent orbits of
$\SL(9,\R)$ on
$\bigwedge^3 \R^9$. In the first column we list the number of the complex orbit
as contained in Table 6 of \cite{VE1978}. The second column has the
characteristic of the complex orbit (see Definition \ref{def:charac}  for this concept).
In the third column we give the dimension $d$ of the
complex orbit. The fourth column lists the
rank $d_s$ of a trivector $e$ in the complex orbit, that is,
the minimal dimension of a subspace $U$ of $\C^9$ such that
$e\in\bigwedge^3 U$. Let $t=(e,h,f)$ be a homogeneous $\ssl_2$-triple
containing a representative $e$ of the complex orbit. Let $\Zm_{\SL(9,\C)}(t)$ denote
the stabilizer in $\SL(9,\C)$ of $t$. The fifth and sixth columns have,
respectively, the size of the component group $K$ of $\Zm_{\SL(9,\C)}(t)$ and
a description of the structure of $K$. We omit the latter
when the component group  has order 1 or prime order. The seventh column has representatives
of all $\SL(9,\R)$-orbits contained in the complex orbit. The last column
has a description of the structure of the centralizer in $\ssl(9,\R)$ of a real
homogeneous $\ssl_2$-triple $t=(e,h,f)$ containing the real representative $e$ in the
same line.

\begin{longtable}{|r|l|l|l|l|l|l|l|}
\caption{Real nilpotent orbits}\label{tab:orbitreps}
\endfirsthead
\hline
\endhead
\hline
\endfoot
\endlastfoot

\hline
No. &\quad Char.  & \f{d}  & \f{d_s} &\f{|K|} &\ \ \f{K} & \qquad\quad Representatives & $\mathfrak{z}(\mathfrak{p})$ \\
\hline

\gr
1 & {\tiny 6 6 6 6 6 6 6 12 }& {\tiny 80} & \y9 & \y3   &
  &${\scriptstyle e_{126}+e_{135}-e_{234}+e_{279}+e_{369}+e_{459}+e_{478}+e_{568}}$
& ${\scriptstyle 0}$  \\
2 & {\tiny 6 6 6 0 6 6 6 6 }& {\tiny 79} & \y9 & \y3   &
  &${\scriptstyle e_{126}+e_{145}-e_{234}+e_{279}+e_{369}-e_{378}+e_{478}+e_{568}}$
& ${\scriptstyle 0}$  \\
\gr
3 & {\tiny 6 6 6 0 6 0 6 6 }& {\tiny 78} & \y9 & \y3   &
  &${\scriptstyle e_{126}+e_{145}-e_{235}+e_{279}+e_{369}-e_{378}+e_{478}+e_{568}}$
&  ${\scriptstyle 0}$  \\
4 & {\tiny 6 0 6 0 6 6 0 6 }& {\tiny 78} & \y9 & \y6   & \f{\mu_6}
  &${\scriptstyle e_{127}+e_{145}-e_{234}+e_{279}+e_{369}-e_{378}+e_{478}+e_{568}}$
  &  ${\scriptstyle 0}$ \\
  & & &    & &
  &${\scriptstyle e_{127}-e_{135}-e_{146}+e_{234}-e_{369}+e_{378}+e_{459}+e_{568}}$
& ${\scriptstyle 0}$  \\
\gr
5 & {\tiny 0 6 0 6 0 6 0 6 }& {\tiny 77} & \y9 & \y6   & \f{\mu_6}
  &${\scriptstyle e_{127}+e_{145}-e_{235}+e_{279}+e_{369}-e_{468}+e_{478}+e_{568}}$
&  ${\scriptstyle 0}$  \\
\gr
  & & &    & &
  &${\scriptstyle 2e_{126}+2e_{137}+2e_{235}+2e_{279}-2e_{369}+2e_{459}+2e_{468}+e_{578}}$
& ${\scriptstyle 0}$  \\
6 & {\tiny 6 1 5 1 5 6 1 5 }& {\tiny 77} & \y9 & \y1   &
  &${\scriptstyle e_{136}+e_{145}-e_{234}+e_{279}-e_{378}+e_{469}+e_{568}}$
&  ${\scriptstyle \ttt }$ \\
\gr
7& {\tiny 0 6 0 0 6 0 6 0 }& {\tiny 76} & \y9 & \y6   & \f{\mu_6}
  &${\scriptstyle e_{127}+e_{145}-e_{235}+e_{368}-e_{378}-e_{468}+e_{469}+e_{579}}$
& ${\scriptstyle 0}$  \\
\gr
  & & &    & &
  &${\scriptstyle 2e_{127}+2e_{134}-2e_{245}+2e_{368}+\frac12 e_{379}+e_{479}+e_{569}-e_{578}}$
& ${\scriptstyle 0}$  \\
8 & {\tiny 6 0 6 0 6 0 0 6 }& {\tiny 76} & \y9 & \y6   & \f{\mu_6}
  &${\scriptstyle e_{136}+e_{145}-e_{235}+e_{279}-e_{378}+e_{469}+e_{478}+e_{568}}$
  &   ${\scriptstyle 0}$ \\
  & & &    & &
  &${\scriptstyle -e_{135}+e_{146}-e_{179}+e_{236}-e_{245}+e_{369}+e_{378}+e_{459}+e_{568}}$
& ${\scriptstyle 0}$  \\
\gr
9 & {\tiny 0 0 6 0 0 6 0 6 }& {\tiny 76} & \y9 & \y{18}   & \f{\mu_3\times S_3}
  &${\scriptstyle e_{127}+e_{145}-e_{234}+e_{279}+e_{369}+e_{568}-e_{578}+e_{678}}$
& ${\scriptstyle 0}$    \\
\gr
  & & &    & &
  &${\scriptstyle e_{125}+e_{136}-e_{147}+2e_{234}-2e_{279}-2e_{459}+2e_{568}-2e_{578}}$
&   ${\scriptstyle 0}$  \\
10 & {\tiny 2 2 2 2 2 4 2 2 }& {\tiny 75} & \y9 & \y9   & \f{\mu_3\times \mu_3}
  &${\scriptstyle e_{127}+e_{136}+e_{145}-e_{234}+e_{379}+e_{469}+e_{478}+e_{568}}$
& ${\scriptstyle 0}$  \\
\gr
11 & {\tiny 6 1 5 1 5 0 1 5 }& {\tiny 75} & \y9 & \y1   &
  &${\scriptstyle e_{136}+e_{145}-e_{236}+e_{279}-e_{378}+e_{469}+e_{568}}$
&  ${\scriptstyle \ttt }$ \\
12 & {\tiny 0 1 5 0 1 5 1 5 }& {\tiny 75} & \y9 & \y1   &
  &${\scriptstyle e_{127}+e_{145}-e_{234}+e_{369}+e_{479}+e_{568}-e_{578}}$
&  ${\scriptstyle \ttt }$ \\
\gr
13 & {\tiny 1 5 1 1 4 1 5 1 }& {\tiny 75} & \y9 & \y1   &
  &${\scriptstyle e_{126}+e_{145}-e_{235}+e_{379}+e_{469}+e_{478}+e_{568}}$
&  ${\scriptstyle \ttt }$ \\
14 & {\tiny 2 2 2 2 2 2 2 2 }& {\tiny 74} & \y9 & \y9   & \f{\mu_9}
  &${\scriptstyle e_{127}+e_{136}+e_{145}-e_{235}+e_{379}+e_{469}+e_{478}+e_{568}}$
& ${\scriptstyle 0}$  \\
\gr
15 & {\tiny 6 0 0 0 6 0 0 6 }& {\tiny 74}   & \y9 & \y6   & \f{\mu_6}
  &${\scriptstyle e_{136}+e_{145}-e_{235}+e_{279}+e_{469}+e_{478}-e_{578}}$
  & ${\scriptstyle \ttt }$  \\
\gr
  & & &   & &
  &${\scriptstyle e_{134}-e_{156}-e_{179}+e_{235}-e_{246}-e_{369}+e_{378}+e_{459}}$
  & ${\scriptstyle \uuu }$  \\
\gr
 & & &   & &
 &${\scriptstyle e_{134}+e_{156}-e_{179}+e_{235}-e_{246}+e_{369}+e_{378}+e_{459}}$
& ${\scriptstyle \uuu }$  \\
16  & {\tiny 1 1 4 1 1 5 1 4 }& {\tiny 74} & \y9 & \y3   &
  &${\scriptstyle e_{127}+e_{136}+e_{145}-e_{234}+e_{379}+e_{469}+e_{568}}$
&  ${\scriptstyle \ttt }$ \\
\gr
17 & {\tiny 0 6 0 0 0 6 0 0 }& {\tiny 73} & \y9 & \y{18 } & \f{\mu_3\times S_3}
  &${\scriptstyle e_{137}+e_{146}-e_{245}-e_{268}+e_{278}+e_{368}+e_{479}+e_{569}}$
  &  ${\scriptstyle 0}$ \\
\gr
  & & &    & &
  &${\scriptstyle -e_{126}-e_{147}+e_{279}+2e_{345}-e_{368}-e_{469}+4e_{569}+e_{578}}$
& ${\scriptstyle 0}$  \\
18  & {\tiny 2 2 0 2 2 2 2 2 }& {\tiny 73} & \y9 & \y9   & \f{\mu_3\times\mu_3}
  &${\scriptstyle e_{127}+e_{136}+e_{145}-e_{235}+e_{379}+e_{469}+e_{478}-e_{578}}$
&  ${\scriptstyle 0}$ \\
\gr
19 & {\tiny 6 0 1 0 5 0 1 5 }& {\tiny 73} & \y9 & \y2   &
  &${\scriptstyle e_{136}+e_{145}-e_{236}+e_{279}-e_{378}+e_{478}+e_{569}}$
&  ${\scriptstyle \ttt }$ \\
\gr
  & & &   & &
  &${\scriptstyle 2e_{135}-2e_{146}-e_{236}-e_{245}+e_{279}+2e_{378}+2e_{569}}$
& ${\scriptstyle \ttt }$  \\
20 & {\tiny 3 0 3 3 0 6 0 3 }& {\tiny 73} & \y9 & \y3   &
  &${\scriptstyle e_{127}+e_{145}-e_{234}+e_{379}+e_{469}+e_{478}+e_{568}}$
& ${\scriptstyle \ssl(2,\R)}$  \\
\gr
21 & {\tiny 1 4 1 1 1 3 1 1 }& {\tiny 72} & \y9 & \y1   &
  &${\scriptstyle e_{127}+e_{136}-e_{245}+e_{379}+e_{469}+e_{478}+e_{568}}$
& ${\scriptstyle \ttt }$  \\
22 & {\tiny 1 4 0 1 1 4 1 1 }& {\tiny 72} & \y9 & \y3   &
  &${\scriptstyle e_{127}+e_{136}-e_{235}+e_{379}+e_{469}+e_{478}-e_{578}}$
& ${\scriptstyle \ttt }$  \\
\gr
23 & {\tiny 0 3 0 3 0 3 3 0 }& {\tiny 72} & \y9 & \y6   & \f{\mu_6}
  &${\scriptstyle e_{127}+e_{136}+e_{145}-e_{235}-e_{468}+e_{479}+e_{568}}$
&  ${\scriptstyle \ttt }$ \\
\gr
  & & &    & &
  &${\scriptstyle -2e_{126}+2e_{137}+e_{145}-2e_{234}-e_{468}-e_{479}+2e_{569}-2e_{578}}$
& ${\scriptstyle \uuu }$  \\
24 & {\tiny 0 0 6 0 0 0 0 6 }& {\tiny 72} & \y9 & \y6   & \f{\mu_6}
  &${\scriptstyle e_{127}+e_{136}+e_{145}-e_{235}+e_{379}+e_{469}-e_{578}}$
  &  ${\scriptstyle \ttt }$ \\
  & & &    & &
  &${\scriptstyle -e_{125}-e_{137}+e_{146}-e_{179}+e_{247}+e_{269}+e_{345}+e_{459}+e_{578}}$
  & ${\scriptstyle \uuu }$  \\
  & & &   & &
  &${\scriptstyle -e_{125}+e_{137}+e_{146}+e_{179}+e_{247}+e_{269}-e_{345}-e_{459}+e_{578}}$
  & ${\scriptstyle \uuu }$  \\
\gr
25 & {\tiny 2 0 4 2 0 6 0 4 }& {\tiny 72} & \y9 & \y1   &
  &${\scriptstyle e_{127}+e_{145}-e_{234}+e_{379}+e_{469}+e_{568}}$
&  ${\scriptstyle \ssl(2,\R) + \ttt }$ \\
26 & {\tiny 3 0 3 0 3 0 3 0 }& {\tiny 71} & \y9 & \y6   & \f{\mu_6}
  &${\scriptstyle e_{127}+e_{136}-e_{245}-e_{378}+e_{479}+e_{568}+e_{569}}$
  & ${\scriptstyle \ttt }$  \\
  & & &    & &
  &${\scriptstyle e_{127}-e_{136}-e_{145}+e_{235}-e_{246}+e_{378}-e_{479}-2e_{568}}$
& ${\scriptstyle \uuu }$  \\
\gr
27 & {\tiny 0 1 5 0 0 1 0 5 }& {\tiny 71} & \y9 & \y2   &
  &${\scriptstyle e_{127}+e_{136}-e_{245}+e_{379}+e_{469}+e_{568}-e_{578}}$
& ${\scriptstyle \ttt }$  \\
\gr
  & & &   & &
  &${\scriptstyle e_{126}+e_{136}+e_{147}+e_{279}+e_{345}-e_{379}+e_{469}-e_{578}}$
& ${\scriptstyle \ttt }$  \\
28 & {\tiny 1 1 2 1 1 1 1 4 }& {\tiny 71} & \y9 & \y1   &
  &${\scriptstyle e_{127}+e_{136}+e_{145}-e_{235}+e_{379}+e_{469}+e_{678}}$
& ${\scriptstyle \ttt }$  \\
\gr
29 & {\tiny 0 4 0 2 0 4 2 0 }& {\tiny 71} & \y9 & \y2   &
  &${\scriptstyle e_{127}+e_{136}-e_{235}-e_{468}+e_{479}+e_{568}}$
& ${\scriptstyle 2\ttt }$  \\
\gr
  & & &   & &
  &${\scriptstyle -2e_{126}+2e_{137}+2e_{235}+2e_{468}-2e_{479}-e_{569}-e_{578}}$
& ${\scriptstyle \ttt +\uuu }$  \\
30 & {\tiny 6 1 0 1 4 1 0 5 }& {\tiny 71} & \y9 & \y1   &
  &${\scriptstyle e_{146}-e_{179}-e_{236}-e_{245}-e_{378}+e_{569}}$
& ${\scriptstyle \ssl(2,\R) + \ttt }$  \\
\gr
31 & {\tiny 0 0 0 0 6 0 0 0 }& {\tiny 70} & \y9 &\y{120} & \y{\mu_3\times S_5}
  &${\scriptstyle e_{137}-e_{246}-e_{247}+e_{348}-e_{358}+e_{368}+e_{458}+e_{569}}$
& ${\scriptstyle 0}$  \\
\gr
  & & &    & &
  &${\scriptstyle -e_{134}-2e_{167}+e_{245}-2e_{358}+2e_{368}-2e_{378}+2e_{379}-e_{458}+e_{469}}$
&  ${\scriptstyle 0}$ \\
\gr
  & & &      & &
  &${\scriptstyle -e_{135}-e_{245}-2e_{267}+2e_{368}+e_{378}-2e_{468}-2e_{479}-e_{569}}$
&  ${\scriptstyle 0}$ \\
32 & {\tiny 2 2 2 2 0 2 0 2 }& {\tiny 70} & \y9 & \y3   &
  &${\scriptstyle e_{127}+e_{146}-e_{236}-e_{245}+e_{379}+e_{478}+e_{568}}$
&   ${\scriptstyle \ttt }$ \\
\gr
33 & {\tiny 1 0 5 0 1 0 1 4 }& {\tiny 70} & \y9 & \y2   &
  &${\scriptstyle e_{127}+e_{136}-e_{245}+e_{379}+e_{479}+e_{568}}$
& ${\scriptstyle 2\ttt }$  \\
\gr
  & & &   & &
  &${\scriptstyle -e_{127}+e_{135}-e_{146}-e_{236}-e_{245}+e_{379}+e_{568}}$
& ${\scriptstyle \ttt +\uuu}$  \\
34 & {\tiny 2 0 2 2 0 2 0 4 }& {\tiny 70} & \y9 & \y1   &
  &${\scriptstyle e_{127}+e_{145}-e_{236}+e_{379}+e_{469}-e_{578}}$
& ${\scriptstyle 2\ttt }$  \\
\gr
35 & {\tiny 2 0 2 0 2 0 2 2 }& {\tiny 69} & \y9 & \y{18}   & \f{\mu_6\times\mu_3}
  &${\scriptstyle e_{127}+e_{136}-e_{245}+e_{379}+e_{479}+e_{569}-e_{578}+e_{678}}$
&  ${\scriptstyle 0}$ \\
\gr
  & & &    & &
  &${\scriptstyle -e_{127}+e_{135}+e_{146}-e_{236}+e_{245}-e_{379}+e_{569}-e_{578}}$
&  ${\scriptstyle 0}$ \\
36 & {\tiny 0 0 1 0 5 0 0 1 }& {\tiny 69} & \y9 &  \y{6}   & \f{S_3}
  &${\scriptstyle e_{136}-e_{245}+e_{379}+e_{479}+e_{568}-e_{578}+e_{678}}$
  & ${\scriptstyle \ttt}$  \\
  & & &    & &
  &${\scriptstyle -e_{135}+e_{147}+e_{236}+e_{379}+e_{459}-e_{578}+e_{678}}$
& ${\scriptstyle \ttt}$  \\
\gr
37  & {\tiny 1 1 4 1 0 1 0 4 }& {\tiny 69} & \y9 & \y1   &
  &${\scriptstyle e_{127}+e_{146}-e_{236}-e_{245}+e_{379}+e_{568}}$
&  ${\scriptstyle 2\ttt }$ \\
38 & {\tiny 2 1 1 1 1 1 1 2 }& {\tiny 68} & \y9 & \y3   &
  &${\scriptstyle e_{127}+e_{146}-e_{236}-e_{245}+e_{379}+e_{569}-e_{578}}$
&  ${\scriptstyle \ttt }$ \\
\gr
39 & {\tiny 1 0 1 0 4 0 1 1 }& {\tiny 68} & \y9 & \y6   & \f{\mu_6}
  &${\scriptstyle e_{136}-e_{245}+e_{379}+e_{479}+e_{569}-e_{578}+e_{678}}$
& ${\scriptstyle \ttt }$  \\
\gr
  & & &    & &
  &${\scriptstyle e_{135}-e_{146}+e_{236}+e_{245}+e_{479}-e_{569}+e_{578}}$
&  ${\scriptstyle \ttt }$ \\
40  & {\tiny 0 1 0 1 4 0 1 0 }& {\tiny 68} & \y9 & \y2   &
  &${\scriptstyle e_{137}-e_{236}-e_{245}-e_{468}+e_{478}+e_{569}}$
  &  ${\scriptstyle 2\ttt}$ \\
  & & &   & &
  &${\scriptstyle -e_{136}+e_{237}-2e_{245}-e_{468}+2e_{479}-e_{569}-2e_{578}}$
&   ${\scriptstyle \ttt + \uuu}$\\
\gr
41 & {\tiny 1 0 1 1 2 1 1 1 }& {\tiny 67} & \y9 & \y3   &
  &${\scriptstyle e_{137}+e_{145}-e_{236}+e_{479}+e_{569}-e_{578}+e_{678}}$
& ${\scriptstyle \ttt }$  \\
42 & {\tiny 0 3 0 0 0 3 0 3 }& {\tiny 67} & \y9 & \y3   &
  &${\scriptstyle e_{127}+e_{136}-e_{245}+e_{379}+e_{479}+e_{569}+e_{678}}$
&  ${\scriptstyle \ssl(2,\R)}$ \\
\gr
43 & {\tiny 3 0 0 0 3 0 3 0 }& {\tiny 67} & \y9 & \y6   & \f{\mu_6}
  &${\scriptstyle e_{127}+e_{136}-e_{245}-e_{378}+e_{478}+e_{579}+e_{679}}$
&  ${\scriptstyle \ssl(2,\R)}$ \\
\gr
  & & &    & &
  &   ${\scriptstyle e_{127}-e_{134}-e_{156}-e_{236}-e_{245}+e_{578}-e_{679}}$
& ${\scriptstyle \ssl(2,\R)}$  \\
44 & {\tiny 1 1 0 1 3 1 0 1 }& {\tiny 67} &  \y9 & \y1   &
  &${\scriptstyle e_{137}+e_{146}-e_{236}-e_{245}+e_{478}+e_{569}}$
& ${\scriptstyle 2\ttt}$  \\
\gr
45 & {\tiny 3 0 3 3 0 0 0 3 }& {\tiny 67} & \y9 & \y3   &
  &${\scriptstyle e_{127}+e_{146}-e_{245}+e_{379}+e_{478}+e_{568}}$
& ${\scriptstyle 2\hs\ssl(2,\R) }$  \\
46 & {\tiny 1 1 1 1 1 1 1 1 }& {\tiny 66} & \y9 & \y1   &
  &${\scriptstyle e_{137}+e_{146}-e_{236}-e_{245}+e_{479}+e_{569}-e_{578}}$
&  ${\scriptstyle \ttt}$ \\
\gr
47 & {\tiny 0 2 0 0 2 2 0 2 }& {\tiny 66} & \y9 & \y2   &
  &${\scriptstyle e_{136}+e_{147}-e_{245}+e_{379}+e_{569}+e_{678}}$
  & ${\scriptstyle 2\ttt }$  \\
\gr
  & & &   & &
  &${\scriptstyle -e_{136}-e_{147}+e_{157}-e_{235}-e_{379}-e_{469}-e_{569}-e_{678}}$
& ${\scriptstyle \ttt +\uuu }$  \\
48 & {\tiny 2 0 0 0 4 0 2 0 }& {\tiny 66} & \y9 & \y2   &
  &${\scriptstyle e_{136}-e_{245}-e_{378}+e_{478}+e_{579}+e_{679}}$
  & ${\scriptstyle \ssl(2,\R) + \ttt }$  \\
  & & &   & &
  &${\scriptstyle e_{134}-e_{156}-e_{236}+e_{245}+e_{578}-e_{679}}$
& ${\scriptstyle \ssl(2,\R) + \ttt }$  \\
\gr
49 & {\tiny 3 0 1 0 2 1 2 0 }& {\tiny 66} & \y9 & \y1   &
  &${\scriptstyle e_{127}+e_{156}-e_{236}-e_{245}-e_{378}+e_{479}}$
&  ${\scriptstyle \ssl(2,\R) + \ttt }$ \\
50 & {\tiny 2 0 4 2 0 0 0 4 }& {\tiny 66} & \y9 & \y1   &
  &${\scriptstyle e_{127}+e_{146}-e_{245}+e_{379}+e_{568}}$
& ${\scriptstyle 2\hs\ssl(2,\R) + \ttt }$  \\
\gr
51 & {\tiny 1 1 1 1 0 1 1 2 }& {\tiny 65} & \y9 & \y3   &
  &${\scriptstyle e_{137}+e_{146}-e_{236}-e_{245}+e_{479}+e_{569}+e_{678}}$
&  ${\scriptstyle \ttt}$ \\
52 & {\tiny 1 1 0 1 1 2 1 1 }& {\tiny 65} & \y9 & \y1   &
  &${\scriptstyle e_{137}+e_{146}-e_{235}+e_{479}+e_{579}+e_{678}}$
& ${\scriptstyle 2\ttt }$  \\
\gr
53 & {\tiny 2 0 1 0 3 1 1 0 }& {\tiny 65} & \y9 & \y1   &
  &${\scriptstyle e_{156}-e_{236}-e_{245}-e_{378}+e_{479}}$
& ${\scriptstyle \ssl(2,\R) + 2\ttt}$  \\
54 & {\tiny 0 3 0 0 3 0 0 0 }& {\tiny 64} & \y9 & \y3 &
  &${\scriptstyle e_{147}+e_{156}-e_{237}-e_{246}-e_{345}+e_{368}+e_{579}}$
&  ${\scriptstyle \ssl(2,\R)}$ \\
\gr
55 & {\tiny 2 0 2 0 0 2 0 2 }& {\tiny 64} & \y9 & \y1 &
  &${\scriptstyle e_{137}+e_{156}-e_{236}-e_{245}+e_{479}-e_{578}}$
&  ${\scriptstyle 2\ttt}$ \\
56 & {\tiny 0 0 0 3 0 3 0 0 }& {\tiny 64} & \y9 & \y3  &
  &${\scriptstyle e_{146}+e_{157}-e_{237}+e_{458}+e_{478}+e_{569}}$
  &  ${\scriptstyle 2\hs\ssl(2,\R)}$ \\
  & & &  & &
  &${\scriptstyle e_{145}+2e_{167}-e_{235}-2e_{469}+2e_{478}+e_{568}+e_{579}}$
  & ${\scriptstyle \ssl(2,\R)+\su(2)}$  \\
  \gr
57 & {\tiny 0 0 0 0 0 0 0 6 }& {\tiny 64} & \y8 &\y6 & \f{\mu_6}
  &${\scriptstyle e_{127}+e_{136}-e_{245}+e_{379}+e_{479}+e_{569}}$
  & ${\scriptstyle \ssl(3,\R)}$  \\
  \gr
  & & &    & &
  &${\scriptstyle e_{123}-e_{146}+e_{179}+2e_{247}+2e_{259}+2e_{357}+2e_{369}}$
  & ${\scriptstyle \su(1,2)}$  \\
  \gr
  & & &    & &
  &${\scriptstyle e_{123}-e_{156}+e_{179}+e_{247}+e_{269}+e_{349}+e_{359}+e_{367}}$
  &  ${\scriptstyle \su(3)}$ \\
58 & {\tiny 1 1 1 1 1 0 1 1 }& {\tiny 63} & \y9 &\y1 &
  &${\scriptstyle e_{137}-e_{246}-e_{345}+e_{479}+e_{569}-e_{578}}$
& ${\scriptstyle 2\ttt}$  \\
\gr
59 & {\tiny 0 4 0 0 2 0 0 0 }& {\tiny 63} & \y9 & \y1 &
  &${\scriptstyle e_{147}+e_{156}-e_{237}-e_{246}+e_{368}+e_{579}}$
&  ${\scriptstyle \ssl(2,\R) + \ttt}$ \\
60 & {\tiny 0 0 3 0 0 0 3 0 }& {\tiny 63} & \y9 & \y{6} &  \f{\mu_6}
  &${\scriptstyle e_{137}+e_{146}-e_{236}-e_{245}+e_{568}+e_{679}}$
  & ${\scriptstyle \ssl(2,\R) + \ttt}$  \\
  & & &     & &
  &${\scriptstyle -e_{136}-e_{145}-e_{147}+e_{235}-e_{237}+e_{246}-e_{568}-e_{579}}$
  &  ${\scriptstyle \ssl(2,\R) + \uuu}$ \\
  & & &    & &
  &${\scriptstyle e_{127}+e_{136}-e_{145}+e_{235}+e_{246}-e_{347}-e_{568}+e_{579}}$
  &  ${\scriptstyle \su(2) + \uuu}$ \\
\gr
61 & {\tiny 0 0 0 0 1 0 0 5 }& {\tiny 63} &\y8 & \y1 &
  &${\scriptstyle e_{137}+e_{146}-e_{236}-e_{245}+e_{479}+e_{569}}$
&  ${\scriptstyle \ssl(2,\R)+\ttt}$ \\
\gr
  & & &    & &
  &${\scriptstyle -2e_{134}-e_{145}+e_{167}-e_{246}+\frac{1}{2}e_{257}-2e_{369}+e_{479}-e_{569}}$
&  ${\scriptstyle \su(2)+\ttt}$ \\
62 & {\tiny 0 1 0 2 0 3 0 1 }& {\tiny 63} & \y9 &  \y2   &
  &${\scriptstyle e_{146}-e_{235}+e_{479}+e_{579}+e_{678}}$
  & ${\scriptstyle \ssl(2,\R) + 2\ttt}$  \\
  & & &   & &
  &${\scriptstyle -e_{146}+2e_{157}+e_{234}-e_{479}-2e_{569}+2e_{678}}$
& ${\scriptstyle \ssl(2,\R) + \ttt + \uuu}$  \\
\gr
63 & {\tiny 1 1 1 0 1 0 2 1 }& {\tiny 62} & \y9 & \y1   &
  &${\scriptstyle e_{127}+e_{146}-e_{236}-e_{345}+e_{579}+e_{678}}$
& ${\scriptstyle \ssl(2,\R) + \ttt}$  \\
64 & {\tiny 0 1 2 0 1 0 2 0 }& {\tiny 62} & \y9 &\y2  &
  &${\scriptstyle e_{137}-e_{246}-e_{345}+e_{568}+e_{579}}$
  &  ${\scriptstyle 3\ttt}$ \\
  & & &    & &
  &${\scriptstyle -e_{136}-e_{147}+e_{237}-e_{246}+e_{345}+e_{568}+e_{579}}$
  & ${\scriptstyle \ttt+2\uuu}$  \\
\gr
65 & {\tiny 0 2 0 0 2 0 0 2 }& {\tiny 61} & \y9 &\y{18} & \f{\mu_3\times S_3}
  &${\scriptstyle e_{137}-e_{246}-e_{247}-e_{345}+e_{569}+e_{678}}$
& ${\scriptstyle 2\ttt}$  \\
\gr
  & & &  & &
  &${\scriptstyle -e_{137}+e_{247}-e_{256}-e_{345}+e_{469}+e_{579}+e_{678}}$
&  ${\scriptstyle \ttt+\uuu}$ \\
66 & {\tiny 1 2 1 1 0 0 1 1 }& {\tiny 61} & \y9 & \y1  &
  &${\scriptstyle e_{137}-e_{246}+e_{479}+e_{569}-e_{578}}$
&  ${\scriptstyle \ssl(2,\R) + 2\ttt}$ \\
\gr
67 & {\tiny 0 0 1 0 0 0 1 4 }& {\tiny 61} &\y8 & \y2 &
  &${\scriptstyle e_{137}+e_{146}-e_{236}-e_{245}+e_{579}}$
  &  ${\scriptstyle \ssl(2,\R) + 2\ttt}$ \\
  \gr
  & & &    & &
  &${\scriptstyle -e_{125}-e_{126}+e_{147}-e_{237}-e_{345}+e_{346}+e_{679}}$
  & ${\scriptstyle \ssl(2,\R) +  \ttt+\uuu}$  \\
  \gr
  & & &    & &
  &${\scriptstyle -e_{127}-e_{136}+e_{145}+e_{235}+e_{246}-e_{347}-e_{679}}$
  &   ${\scriptstyle \su(2) + \ttt+\uuu}$\\
68 & {\tiny 1 0 1 1 0 3 1 0 }& {\tiny 61} & \y9 & \y1   &
  &${\scriptstyle e_{147}+e_{156}-e_{234}-e_{578}+e_{679}}$
& ${\scriptstyle 2\hs\ssl(2,\R) + \ttt}$  \\
\gr
69  & {\tiny 1 1 0 1 1 0 1 1 }& {\tiny 60} & \y9 & \y1 &
  &${\scriptstyle e_{156}-e_{237}-e_{246}-e_{345}+e_{479}+e_{678}}$
& ${\scriptstyle 2\ttt}$  \\
70 & {\tiny 0 1 0 0 1 0 0 4 }& {\tiny 60} &\y8 & \y{6} & \y{S_3}
  &${\scriptstyle e_{137}-e_{246}-e_{247}-e_{345}+e_{569}}$
  &  ${\scriptstyle 3\ttt}$ \\
  & & &    & &
  &${\scriptstyle -e_{136}-e_{147}-e_{257}+e_{345}+e_{379}-e_{469}}$
& ${\scriptstyle 2\ttt +\uuu}$  \\
\gr
71 & {\tiny 0 2 2 0 0 0 2 0 }& {\tiny 59} & \y9 & \y2 &
  &${\scriptstyle e_{137}-e_{246}+e_{568}+e_{579}}$
& ${\scriptstyle 2\hs\ssl(2,\R) + 2 \ttt}$  \\
\gr
  & & &    & &
  &${\scriptstyle -e_{126}-e_{147}-e_{237}-e_{346}+e_{568}+e_{579}}$
& ${\scriptstyle \ssl(2,\C) +  \ttt+\uuu}$  \\
72 & {\tiny 1 0 1 1 0 1 1 0 }& {\tiny 58} & \y9 & \y9 & \f{\mu_3\times \mu_3}
  &${\scriptstyle e_{147}+e_{156}-e_{237}-e_{246}-e_{345}-e_{578}+e_{679}}$
&  ${\scriptstyle \ssl(2,\R)}$ \\
\gr
73 & {\tiny 2 0 1 0 1 1 0 1 }& {\tiny 58} & \y9 & \y1 &
  &${\scriptstyle e_{137}-e_{256}-e_{346}+e_{479}-e_{578}}$
&  ${\scriptstyle \ssl(2,\R) + 2 \ttt}$ \\
74 & {\tiny 1 0 0 1 0 0 1 3 }& {\tiny 58} &\y8 & \y1   &
  &${\scriptstyle e_{156}-e_{237}-e_{246}-e_{345}+e_{479}}$
&  ${\scriptstyle \ssl(2,\R) + 2\ttt}$ \\
\gr
75 & {\tiny 0 3 0 0 0 0 0 3 }& {\tiny 57} & \y9 & \y{18} &\f{S_3\times\mu_3}
  &${\scriptstyle e_{137}-e_{246}-e_{247}+e_{569}+e_{678}}$
  &  ${\scriptstyle 3\hs\ssl(2,\R) }$ \\
\gr
  & & &  & &
  &${\scriptstyle -e_{126}-e_{346}-e_{379}+e_{457}+e_{569}+e_{678}}$
&  ${\scriptstyle \ssl(2,\R)+\ssl(2,\C)}$ \\
76 & {\tiny 0 1 0 0 1 0 1 2 }& {\tiny 56} & \y8 & \y3   &
  &${\scriptstyle e_{147}+e_{156}-e_{237}-e_{246}-e_{345}+e_{679}}$
&  ${\scriptstyle \ssl(2,\R) + \ttt}$ \\
\gr
77 & {\tiny 1 2 0 0 0 1 0 2 }& {\tiny 56} & \y9 & \y1 &
  &${\scriptstyle e_{137}-e_{246}-e_{356}+e_{579}+e_{678}}$
& ${\scriptstyle 2\hs\ssl(2,\R)+ \ttt}$  \\
78 &  {\tiny 0 2 0 0 0 0 0 4 }& {\tiny 56} & \y8 & \y{6}   & \f{S_3}
  &${\scriptstyle e_{137}-e_{246}-e_{247}+e_{569}}$
  &  ${\scriptstyle 3\hs\ssl(2,\R) + \ttt}$ \\
  & & &  & &
  &${\scriptstyle -e_{126}+e_{179}-e_{257}+e_{346}+e_{569}}$
& ${\scriptstyle \ssl(2,\R)+\ssl(2,\C) + \ttt}$  \\
\gr
79  & {\tiny 0 0 2 0 0 4 0 0 }& {\tiny 56} & \y9 & \y1 &
  &${\scriptstyle e_{157}-e_{234}+e_{568}+e_{679}}$
&  ${\scriptstyle 2\hs\ssl(3,\R)}$ \\
80 & {\tiny 1 1 0 1 0 1 0 1 }& {\tiny 55} & \y9 & \y3 &
  &${\scriptstyle e_{147}-e_{237}-e_{256}-e_{346}+e_{579}+e_{678}}$
&  ${\scriptstyle \ssl(2,\R) + \ttt}$ \\
\gr
81 & {\tiny 0 0 2 0 0 2 0 0 }& {\tiny 55} & \y9 & \y3   &
  &${\scriptstyle e_{157}-e_{237}-e_{246}-e_{345}+e_{568}+e_{679}}$
& ${\scriptstyle \ssl(3,\R)}$  \\
82 & {\tiny 1 1 0 0 0 1 0 3 }& {\tiny 55} & \y8 & \y1    &
  &${\scriptstyle e_{137}-e_{246}-e_{356}+e_{579}}$
&  ${\scriptstyle 2\hs\ssl(2,\R) + 2\ttt}$ \\
\gr
83 & {\tiny 0 0 0 3 0 0 0 0 }& {\tiny 54} & \y9 & \y3 &
  &${\scriptstyle e_{157}-e_{247}-e_{256}-e_{346}+e_{458}+e_{679}}$
& ${\scriptstyle \sp(4,\R)}$  \\
84 & {\tiny 1 0 0 1 0 1 0 2 }& {\tiny 54} & \y8 & \y1    &
  &${\scriptstyle e_{147}-e_{237}-e_{256}-e_{346}+e_{579}}$
& ${\scriptstyle \ssl(2,\R) + 2\ttt}$  \\
\gr
85 & {\tiny 0 1 0 2 0 0 0 1 }& {\tiny 53} & \y9 & \y1 &
  &${\scriptstyle e_{147}-e_{256}-e_{346}+e_{579}+e_{678}}$
& ${\scriptstyle 2\hs\ssl(2,\R) + \ttt}$  \\
86 & {\tiny 0 0 0 2 0 0 0 2 }& {\tiny 52} & \y8 & \y2    &
  &${\scriptstyle e_{147}-e_{256}-e_{346}+e_{579}}$
  &  ${\scriptstyle 2\hs\ssl(2,\R)+ 2\ttt}$ \\
  & & &    & &
 &${\scriptstyle -e_{145}-e_{167}+e_{257}-e_{346}-e_{479}-e_{569}}$
& ${\scriptstyle \ssl(2,\C) +\ttt+\uuu}$  \\
\gr
87 & {\tiny 2 1 0 1 0 0 0 2 }& {\tiny 52} & \y9 & \y1 &
  &${\scriptstyle e_{127}+e_{379}-e_{456}+e_{678}}$
&  ${\scriptstyle \ssl(2,\R)+\sp(4,\R) + \ttt}$ \\
88 & {\tiny 0 1 0 1 0 0 1 1 }& {\tiny 51} & \y8 & \y1    &
  &${\scriptstyle e_{157}-e_{247}-e_{256}-e_{346}+e_{679}}$
&  ${\scriptstyle \ssl(2,\R) + 2\ttt}$  \\
\gr
89 & {\tiny 2 0 0 1 0 1 0 0 }& {\tiny 49} & \y9 & \y3 &
  &${\scriptstyle e_{157}-e_{237}-e_{456}+e_{478}+e_{679}}$
&  ${\scriptstyle \ssl(2,\R)+\ssl(3,\R)}$ \\
90 & {\tiny 0 0 0 0 0 0 3 0 }& {\tiny 49} & \y7 & \y3    &
  &${\scriptstyle e_{147}+e_{156}-e_{237}-e_{246}-e_{345}}$
  & ${\scriptstyle \ssl(2,\R)+{\sf G}_2^{\rm spl}}$   \\
  & & &   & &
 &${\scriptstyle -e_{123}-e_{145}-e_{167}+e_{246}-e_{257}-e_{347}-e_{356}}$
& ${\scriptstyle \ssl(2,\R)+{\sf G}_2^{\mathrm{c}}}$  \\
\gr
91 & {\tiny 2 0 0 1 0 0 0 3 }& {\tiny 49} & \y8 &  \y1    &
  &${\scriptstyle e_{127}+e_{379}-e_{456}}$
&  ${\scriptstyle \ssl(3,\R)+\sp(4,\R) + \ttt}$ \\
92 & {\tiny 1 0 1 0 0 1 0 1 }& {\tiny 48} & \y8 &  \y1   &
  &${\scriptstyle e_{157}-e_{247}-e_{356}+e_{679}}$
&  ${\scriptstyle 2\hs\ssl(2,\R)+ 2\ttt}$ \\
\gr
93 & {\tiny 0 0 0 1 0 0 2 0 }& {\tiny 48} & \y7 &  \y1   &
  &${\scriptstyle e_{157}-e_{247}-e_{256}-e_{346}}$
  & ${\scriptstyle 3\hs\ssl(2,\R) + \ttt}$  \\
\gr
  & & &    & &
  &${\scriptstyle -e_{145}+e_{167}-4e_{246}-e_{257}+e_{347}-e_{356}}$
& ${\scriptstyle \ssl(2,\R)+2\hs\su(2) + \ttt}$  \\
94 & {\tiny 0 1 0 0 0 1 1 0 }& {\tiny 45} & \y7 & \y2    &
  &${\scriptstyle e_{167}-e_{247}-e_{356}}$
  &  ${\scriptstyle 3\hs \ssl(2,\R)+ 2\ttt}$ \\
  & & &   & &
  &${\scriptstyle e_{167}+e_{236}+e_{257}-e_{347}-e_{456}}$
&  ${\scriptstyle \ssl(2,\R)+\ssl(2,\C) + \ttt + \uuu }$ \\
\gr
95 & {\tiny 1 0 0 1 0 0 1 0 }& {\tiny 42} & \y7 & \y1    &
  &${\scriptstyle e_{167}-e_{257}-e_{347}-e_{456}}$
  &  ${\scriptstyle \ssl(2,\R)+\ssl(3,\R) + \ttt}$ \\
96 & {\tiny 0 0 0 0 0 2 0 0 }& {\tiny 38} & \y6 & \y2   &
  &${\scriptstyle -e_{247}-e_{356}}$
  & ${\scriptstyle  3\hs\ssl(3,\R)}$  \\
  & & &    & &
  &${\scriptstyle e_{234}+e_{267}+e_{357}+e_{456}}$
& ${\scriptstyle \ssl(3,\R)+\ssl(3,\C)}$  \\
\gr
97 & {\tiny 0 0 1 0 0 1 0 0 }& {\tiny 37} & \y6 & \y1 &
  &${\scriptstyle -e_{267}-e_{357}-e_{456}}$
&  ${\scriptstyle  2\hs\ssl(3,\R)+ \ttt}$ \\
98 & {\tiny 3 0 0 0 0 0 0 0 }& {\tiny 36} & \y9 & \y3     &
  &${\scriptstyle e_{167}-e_{257}-e_{347}-e_{789}}$
&  ${\scriptstyle \sp(8,\R)}$ \\
\gr
99 &  {\tiny 2 0 0 0 0 0 1 0 }& {\tiny 35} & \y7 & \y1    &
  &${\scriptstyle e_{167}-e_{257}-e_{347}}$
&  ${\scriptstyle \ssl(2,\R)+\sp(6,\R) + \ttt}$ \\
100 & {\tiny 1 0 0 0 1 0 0 0 }& {\tiny 30} & \y5 &\y1    &
  &${\scriptstyle -e_{367}-e_{457}}$
& ${\scriptstyle \ssl(4,\R)+\sp(4,\R) + \ttt}$ \\
\gr
101 & {\tiny 0 0 1 0 0 0 0 0 }& {\tiny 19} & \y3 & \y1     &
  &${\scriptstyle -e_{567}}$
  & ${\scriptstyle \ssl(3,\R)+\ssl(6,\R)}$ \\
\hline
\end{longtable}

\subsection{The semisimple orbits}

In \cite{VE1978} it is shown that

\begin{align*}
  p_1 &= e_{123}+e_{456}+e_{789}\\
  p_2 &= e_{147}+e_{258}+e_{369}\\
  p_3 &= e_{159}+e_{267}+e_{348}\\
  p_4 &= e_{168}+e_{249}+e_{357}
\end{align*}
span a Cartan subspace in $\g_1^\cC$ denoted $\Cg$. Each complex semisimple
orbit has a point in this space. This can still be refined a bit: in
\cite{VE1978} seven {\em canonical sets} $\Fm_1,\ldots,\Fm_7$ in $\Cg$ are
described with the property that each complex semisimple orbit has a point
in one of the $\Fm_i$. The real semisimple orbits are divided into two
groups, the canonical and the noncanonical semisimple elements. We say that a
real semisimple element is canonical if it lies in one of the $\Fm_i$ (or more
generally, if it is $\SL(9,\R)$-conjugate to an element of one of the
$\Fm_i$). The noncanonical elements are those that are $\SL(9,\C)$-conjugate
(but not $\SL(9,\R)$-conjugate) to elements of an $\Fm_i$.

Below we give the representatives of the orbits of canonical and noncanonical
semisimple elements. The representatives of the noncanonical orbits do
not lie in $\Cg$. However, because over $\R$ we have that $\g_1$ has
a unique Cartan subspace (Theorem \ref{thm:cartanr}), they are
$\SL(9,\R)$-conjugate to
elements of $\Cg$. In each case we also give these elements. For computing
the mixed elements we have preferred to work with the representatives
that lie outside of $\Cg$ because with the representatives in $\Cg$ the
nilpotent parts tend to become a bit bulky (see Section \ref{sec:mixedtables}).

Finally, for each parametrized class we give a finite matrix group that
determines when different elements of the same class are conjugate.
More precisely, consider a class of semisimple elements $p(\lambda_1,\ldots,
\lambda_m)$, where $\lambda_i\in \R$ must satisfy certain polynomial conditions.
Write $\lambda$ for the column vector consisting of the $\lambda_i$. Let $\mu$
be a second column vector with coordinates $\mu_1,\ldots,\mu_m$.
Then we describe a finite matrix group $\mathcal{G}$ with the property that
$p(\lambda_1,\ldots,\lambda_m)$ and $p(\mu_1,\ldots,\mu_m)$ are
$\SL(9,\R)$-conjugate if and only if there is a $g\in \mathcal{G}$ with
$g\cdot \lambda = \mu$. We express this by saying that the conjugacy of
the elements $p(\lambda_1,\ldots,\lambda_m)$ is determined by $\mathcal{G}$.

{\em Canonical set $\Fm_1$:} The real elements in $\Fm_1$ consist of
$$p^{1,1}_{\lambda_1,\lambda_2,\lambda_3,\lambda_4} = \lambda_1 p_1+\lambda_2p_2
+\lambda_3p_3+\lambda_4p_4$$
where the $\lambda_i\in \R$ are such that
\begin{align*}
\lambda_1\lambda_2\lambda_3\lambda_4 &\neq 0\\
(\lambda_2^3+\lambda_3^3+\lambda_4^3)^3 -(3\lambda_2\lambda_3\lambda_4)^3 &\neq 0\\
(\lambda_1^3+\lambda_3^3+\lambda_4^3)^3 +(3\lambda_1\lambda_3\lambda_4)^3 &\neq 0\\
(\lambda_1^3+\lambda_2^3+\lambda_4^3)^3 +(3\lambda_1\lambda_2\lambda_4)^3 &\neq 0\\
(\lambda_1^3+\lambda_2^3+\lambda_3^3)^3 +(3\lambda_1\lambda_2\lambda_3)^3 &\neq 0.\\
\end{align*}

There are no noncanonical real semisimple elements that are
$\SL(9,\C)$-conjugate to elements of $\Fm_1$.

The conjugacy of the elements $p^{1,1}_{\lambda_1,\lambda_2,\lambda_3,\lambda_4}$ is
determined by a group of order 48, isomorphic to $\GL(2,\mathbb{F}_3)$ and
generated by
$$\SmallMatrix{0&0&0&-1\\0&-1&0&0\\0&0&1&0\\-1&0&0&0},
\SmallMatrix{0&0&-1&0\\1&0&0&0\\0&-1&0&0\\0&0&0&1}.$$

{\em Canonical set $\Fm_2$:} The real elements in $\Fm_2$ consist of
$$p^{2,1}_{\lambda_1,\lambda_2,\lambda_3} = \lambda_1 p_1+\lambda_2p_2
-\lambda_3p_3$$
where the $\lambda_i\in \R$ are such that
$$\lambda_1\lambda_2\lambda_3(\lambda_1^3-\lambda_2^3)(\lambda_2^3-\lambda_3^3)
(\lambda_3^3-\lambda_1^3)[(\lambda_1^3+\lambda_2^3+\lambda_3^3)^3-
  (3\lambda_1\lambda_2\lambda_3)^3]\neq 0.$$
Here we have the noncanonical semisimple elements
\begin{align*}
p^{2,2}_{\lambda_1,\lambda_2,\lambda_3}= &\lambda_1(-\tfrac{1}{2}e_{126}-\tfrac{1}{2}e_{349}+2e_{358}-e_{457}+e_{789})+\lambda_2(
    -2e_{137}-\tfrac{1}{4}e_{249}-e_{258}-\tfrac{1}{2}e_{456}-\tfrac{1}{2}e_{689})\\
    &-\lambda_3(-e_{159}-e_{238}-\tfrac{1}{2}e_{247}-\tfrac{1}{2}e_{346}-e_{678}),\quad \lambda_1,\lambda_2,\lambda_3\in \R.
\end{align*}
We have that $p^{2,2}_{\lambda_1,\lambda_2,\lambda_3}$ is $\SL(9,\C)$-conjugate to the
{\em purely imaginary} trivector $p^{2,1}_{i\lambda_1,i\lambda_2,i\lambda_3}$.
So here the $\lambda_i\in \R$ are required to satisfy
$$\lambda_1\lambda_2\lambda_3(\lambda_1^3-\lambda_2^3)(\lambda_2^3-\lambda_3^3)
(\lambda_3^3-\lambda_1^3)[(\lambda_1^3+\lambda_2^3+\lambda_3^3)^3+
  (3\lambda_1\lambda_2\lambda_3)^3]\neq 0.$$

We have that $p^{2,2}_{\lambda_1,\lambda_2,\lambda_3}$ is $\SL(9,\R)$-conjugate to
$$\frac{1}{3}\sqrt{3}((\lambda_2-\lambda_3)p_1+(\lambda_1-\lambda_3)p_2
+(\lambda_1+\lambda_2+\lambda_3)p_3+(\lambda_1-\lambda_2)p_4)$$
which lies in $\Cg$.

The conjugacy of the elements $p^{2,1}_{\lambda_1,\lambda_2,\lambda_3}$ is
determined by a group isomorphic to the dihedral group of order 12 and
generated by
$$\SmallMatrix{0&1&0\\0&0&1\\1&0&0},
\SmallMatrix{-1&0&0\\0&0&-1\\0&-1&0}.$$
The conjugacy of the elements $p^{2,1}_{\lambda_1,\lambda_2,\lambda_3}$ is
determined by exactly the same group.

{\em Canonical set $\Fm_3$:} The real elements in $\Fm_3$ consist of
$$p^{3,1}_{\lambda_1,\lambda_2} = \lambda_1 p_1+\lambda_2p_2$$
where the $\lambda_i\in \R$ are such that
$\lambda_1\lambda_2(\lambda_1^6-\lambda_2^6)\neq 0$.
Here we have the noncanonical semisimple elements
\begin{align*}
p^{3,2}_{\lambda_1,\lambda_2}&=\lambda_1(-\tfrac{1}{2}e_{126}-\tfrac{1}{2}e_{349}+
  2e_{358}-e_{457}+e_{789})+\lambda_2(-2e_{137}-\tfrac{1}{4}e_{249}-e_{258}-
  \tfrac{1}{2}e_{456}-\tfrac{1}{2}e_{689})\\
p^{3,3}_{\lambda_1,\lambda_2}&= \lambda_1(e_{129}-e_{138}+2e_{237}-\tfrac{1}{4}e_{456}-
   2e_{789})+\lambda_2(-\tfrac{1}{2}e_{147}-e_{258}+2e_{369})\\
p^{3,4}_{\lambda,\mu}&=\lambda(e_{147}-2e_{169}-e_{245}+e_{289}-e_{356}-
   \tfrac{1}{2}e_{378})+
   \mu(-e_{124}-e_{136}-\tfrac{1}{2}e_{238}+e_{457}-2e_{569}+e_{789}),
\end{align*}
where $\lambda_1,\lambda_2,\lambda,\mu\in \R$.
These are $\SL(9,\C)$-conjugate to, respectively, the nonreal elements
$$i\lambda_1p_1+i\lambda_2p_2,\qquad i\lambda_1 p_1+\lambda_2 p_2,\qquad
(\lambda+i\mu) p_1+(\lambda-i\mu) p_2.$$
So for $p^{3,2}_{\lambda_1,\lambda_2}$ the $\lambda_i$ are required to satisfy
$\lambda_1\lambda_2(\lambda_1^6-\lambda_2^6)\neq 0$. For
$p^{3,3}_{\lambda_1,\lambda_2}$ the $\lambda_i$ are required to satisfy
$\lambda_1\lambda_2(\lambda_1^6+\lambda_2^6)\neq 0$. For $p^{3,4}_{\lambda,\mu}$
the $\lambda,\mu$ are required to satisfy
$\lambda\mu(3\lambda^4-10\lambda^2\mu^2+3\mu^4)\neq 0$.

We have that $p^{3,2}_{\lambda_1,\lambda_2}$, $p^{3,3}_{\lambda_1,\lambda_2}$,
$p^{3,4}_{\lambda,\mu}$ are $\SL(9,\R)$-conjugate to respectively
\begin{align*}
& \frac{1}{3}\sqrt{3}(\lambda_2p_1+\lambda_1p_2+(\lambda_1+\lambda_2)p_3
+(\lambda_1-\lambda_2)p_4,\\
& -\lambda_2p_1 +\frac{1}{3}\sqrt{3} \lambda_1 (p_2+p_3+p_4),\\
& (\lambda+\frac{1}{3}\sqrt{3}\mu)p_2+(-\lambda+\frac{1}{3}\sqrt{3}\mu)p_3-
\frac{2}{3}\sqrt{3} \mu p_4,
\end{align*}

which lie in the Cartan subspace.

The conjugacy of the elements $p^{3,1}_{\lambda_1,\lambda_2}$ is
determined by a group isomorphic to the dihedral group of order 8 generated
by
$$\SmallMatrix{-1&0\\0&1}, \SmallMatrix{0&1\\1&0}.$$
The conjugacy of the elements $p^{3,2}_{\lambda_1,\lambda_2}$ is determined by
exactly the same group. The conjugacy of the elements $p^{3,3}_{\lambda_1,\lambda_2}$
is determined by a group of order 4 generated
by
$$\SmallMatrix{-1&0\\0&1}, \SmallMatrix{1&0\\0&-1}.$$
The conjugacy of the elements $p^{3,4}_{\lambda,\mu}$ is determined by
exactly the same group of order 4.

{\em Canonical set $\Fm_4$:} The real elements in $\Fm_4$ consist of
$$p^{4,1}_{\lambda,\mu} = \lambda p_1+\mu (p_3-p_4)$$
where the $\lambda,\mu\in \R$ are such that $\lambda\mu(\lambda^3-\mu^3)
(\lambda^3+8\mu^3)\neq 0$. Here there are no noncanonical semisimple
elements.

The conjugacy of the elements $p^{4,1}_{\lambda,\mu}$ is determined by the
group consisting of $\SmallMatrix{1&0\\0&1}$, $\SmallMatrix{-1&0\\0&-1}$.

{\em Canonical set $\Fm_5$:} The real elements in $\Fm_5$ consist of
$$p^{5,1}_{\lambda} = \lambda  (p_3-p_4)$$
where the $\lambda\in \R$ is nonzero.
Here we have the noncanonical semisimple elements
$$p^{5,2}_\lambda = \lambda (e_{148}-e_{159}-e_{238}+\tfrac{1}{2}e_{239}-\tfrac{1}{2}e_{247}+e_{257}-\tfrac{1}{2}e_{346}-e_{356}-e_{678}-\tfrac{1}{2}e_{679})$$
where $\lambda\in \R$ is nonzero. We have that $p^{5,2}_\lambda$ is
$\SL(9,\C)$-conjugate to the {\em purely imaginary} trivector
$p^{5,1}_{i\lambda}$.

Furthermore, $p^{5,2}_\lambda$ is $\SL(9,\R)$-conjugate to
$$\frac{1}{3}\sqrt{3}\lambda (2p_2-p_3-p_4)$$
which lies in the Cartan subspace.

The conjugacy of the elements $p^{5,1}_\lambda$ is determined by the group
consisting of $1,-1$. The conjugacy of the elements $p^{5,2}_\lambda$ is
determined by the same group.

{\em Canonical set $\Fm_6$:} The real elements in $\Fm_6$ consist of
$$p^{6,1}_{\lambda} = \lambda  p_1$$
where the $\lambda\in \R$ is nonzero.
Here we have the noncanonical semisimple elements
$$p^{6,2}_\lambda =\lambda (-\tfrac{1}{2}e_{126}-\tfrac{1}{2}e_{349}+2e_{358}-
e_{457}+e_{789})$$
where $\lambda\in \R$ is nonzero. We have that $p^{6,2}_\lambda$ is
$\SL(9,\C)$-conjugate to the {\em purely imaginary} trivector
$p^{6,1}_{i\lambda}$.

We have that $p^{6,2}_\lambda$ is $\SL(9,\R)$-conjugate to
$$\frac{1}{3}\sqrt{3}\lambda (p_2+p_3+p_4)$$
which lies in the Cartan subspace.

The conjugacy of the elements $p^{6,1}_\lambda$ is determined by the group
consisting of $1,-1$. The conjugacy of the elements $p^{6,2}_\lambda$ is
determined by the same group.

{\em Canonical set $\Fm_7$:} this set consists just of 0.

\subsection{The mixed orbits}\label{sec:mixedtables}

Here we give tables of representatives of the orbits of mixed type.
The representatives of those orbits have a semisimple part which is equal to
one of the semisimple elements listed above. So the semisimple parts are
of the form $p^{2,1}_{\lambda_1,\lambda_2,\lambda_3},p^{2,2}_{\lambda_1,\lambda_2,\lambda_3},
\ldots $. For each such semisimple part we have a table listing the possible
nilpotent parts. (Note that the centralizer of
$p^{1,1}_{\lambda_1,\lambda_2,\lambda_3,\lambda_4}$ in $\g^\cC$ is trivial, so there are
no mixed elements with this particular element as semisimple part.)

In the previous subsection we also computed elements of the Cartan subspace
that are $\SL(9,\R)$-conjugate to the elements
$p^{2,2}_{\lambda_1,\lambda_2,\lambda_3}, p^{3,2}_{\lambda_1,\lambda_2},\ldots$. However
we do not work with those, because the semisimple parts tend to become
rather complicated. For example, the mixed element
$p^{6,2}_\lambda-2e_{137}$ is $\SL(9,\R)$-conjugate to
\begin{align*}
\frac{1}{3}\sqrt{3}\lambda (p_2+p_3+p_4)&+\frac{1}{9}\sqrt{3}(
e_{123}+e_{126}+e_{129}-e_{135}-e_{138}+e_{156}+e_{159}-e_{168}+e_{189}+e_{234}\\
& +e_{237}-e_{246}-e_{249}+e_{267}-e_{279}+e_{345}+e_{348}-e_{357}+e_{378}+e_{456}\\
& +e_{459}-e_{468}+
e_{489}+e_{567}-e_{579}+e_{678}+e_{789}).
\end{align*}

All tables have three columns. The first column has the number of the complex
nilpotent orbit, which corresponds to the numbering in \cite{VE1978}. The
second column has the representatives of the real nilpotent orbits. In the
third column we display the isomorphism type of the centralizer in
$\z_{\g_0}(p)$ of a homogeneous $\ssl_2$-triple in $\z_\g(p)$ containing the
nilpotent element on the same line. Among the nilpotent elements we also
include 0; in that case the centralizer is equal to $\z_{\g_0}(p)$.

\begin{longtable}{|r|l|l|}
  \caption{Nilpotent parts of mixed elements with semisimple part
  $p^{2,1}_{\lambda_1,\lambda_2,\lambda_3}$}\label{tab:fam2_1}
\endfirsthead
\hline
\endhead
\hline
\endfoot
\endlastfoot

\hline
No. &\qquad\quad Reps. of nilpotent parts & $\qquad\mathfrak{z}_0(p,h,e,f)$\\
\hline
1 & ${\scriptstyle e_{168}+e_{249}}$ & ${\scriptstyle 0}$ \\
\gr
2 & ${\scriptstyle e_{168}}$ & ${\scriptstyle \ttt}$ \\
3 & ${\scriptstyle  0}$ & ${\scriptstyle 2\ttt}$ \\
\hline
\end{longtable}

\begin{longtable}{|r|l|l|}
    \caption{Nilpotent parts of mixed elements with semisimple part
  $p^{2,2}_{\lambda_1,\lambda_2,\lambda_3}$}\label{tab:fam2_2}
\endfirsthead
\hline
\endhead
\hline
\endfoot
\endlastfoot

\hline
No. &\qquad\quad Reps. of nilpotent parts & $\qquad\mathfrak{z}_0(p,h,e,f)$ \\
\hline
1 & ${\scriptstyle e_{235}+\tfrac{1}{2}e_{279}-\tfrac{1}{2}e_{369}-e_{567}}$ &
${\scriptstyle 0}$\\
\gr
2 & ${\scriptstyle -e_{148}}$ & ${\scriptstyle \uuu}$\\
\gr
  & ${\scriptstyle e_{148}}$ & ${\scriptstyle \uuu}$\\
3 & ${\scriptstyle 0}$ & ${\scriptstyle \ttt+\uuu}$\\
\hline
\end{longtable}

\begin{longtable}{|r|l|l|}
      \caption{Nilpotent parts of mixed elements with semisimple part
  $p^{3,1}_{\lambda_1,\lambda_2}$}\label{tab:fam3_1}
\endfirsthead
\hline
\endhead
\hline
\endfoot
\endlastfoot

\hline
No. &\qquad\quad Reps. of nilpotent parts & $\qquad\mathfrak{z}_0(p,h,e,f)$ \\
\hline
1 & ${\scriptstyle e_{159}+e_{168}+e_{249}+e_{267}}$ & ${\scriptstyle 0}$ \\
\gr
2 & ${\scriptstyle e_{159}+e_{168}+e_{249}}$ & ${\scriptstyle \ttt}$\\
3 & ${\scriptstyle e_{159}+e_{168}+e_{267}}$ & ${\scriptstyle \ttt}$\\
\gr
4 & ${\scriptstyle e_{159}+e_{168}}$ & ${\scriptstyle 2\ttt}$\\\
5 & ${\scriptstyle e_{159}+e_{267}}$ & ${\scriptstyle 2\ttt}$\\
\gr
6 & ${\scriptstyle e_{168}+e_{249}}$ & ${\scriptstyle 2\ttt}$\\
7 & ${\scriptstyle e_{159}}$ & ${\scriptstyle 3\ttt}$\\
\gr
8 & ${\scriptstyle e_{168}}$ & ${\scriptstyle 3\ttt}$\\
9 & ${\scriptstyle 0}$ & ${\scriptstyle 4\ttt}$\\
\hline
\end{longtable}
\bigskip

\begin{longtable}{|r|l|l|}
        \caption{Nilpotent parts of mixed elements with semisimple part
  $p^{3,2}_{\lambda_1,\lambda_2}$}\label{tab:fam3_2}
\endfirsthead
\hline
\endhead
\hline
\endfoot
\endlastfoot

\hline
No. &\qquad\quad Reps. of nilpotent parts & $\qquad\mathfrak{z}_0(p,h,e,f)$ \\
\hline
1 & ${\scriptstyle  e_{235}+e_{238}+\tfrac{1}{2}e_{247}+\tfrac{1}{2}e_{279}+
  \tfrac{1}{2}e_{346}-\tfrac{1}{2}e_{369}-e_{567}+e_{678}}$ & ${\scriptstyle 0}$\\
\gr
2 & ${\scriptstyle e_{159}+e_{235}+\tfrac{1}{2}e_{279}-\tfrac{1}{2}e_{369}-e_{567}}$
& ${\scriptstyle \uuu}$\\
\gr
& ${\scriptstyle -e_{159}-e_{235}-\tfrac{1}{2}e_{279}+\tfrac{1}{2}e_{369}+e_{567}}$
& ${\scriptstyle \uuu}$ \\
3 &  ${\scriptstyle -e_{148}+\tfrac{1}{2}e_{234}+e_{278}+e_{368}+
  \tfrac{1}{2}e_{467}}$ &${\scriptstyle \uuu}$\\
& ${\scriptstyle e_{148}-\tfrac{1}{2}e_{234}-e_{278}-e_{368}-\tfrac{1}{2}e_{467}}$ &
${\scriptstyle \uuu}$\\
\gr
4 & ${\scriptstyle -e_{148}+e_{159}}$ & ${\scriptstyle 2\uuu}$\\
\gr
& ${\scriptstyle -e_{148}-e_{159}}$ & ${\scriptstyle 2\uuu}$\\
\gr
& ${\scriptstyle e_{148}+e_{159}}$ & ${\scriptstyle 2\uuu}$\\
\gr
& ${\scriptstyle e_{148}-e_{159}}$ & ${\scriptstyle 2\uuu}$\\
5 & ${\scriptstyle \tfrac{1}{2}e_{234}+e_{278}+e_{368}+\tfrac{1}{2}e_{467}}$ &
${\scriptstyle \ttt+\uuu}$\\
\gr
6 & ${\scriptstyle e_{235}+\tfrac{1}{2}e_{279}-\tfrac{1}{2}e_{369}-e_{567}}$ &
${\scriptstyle \ttt+\uuu}$\\
7 & ${\scriptstyle -e_{159}}$ &${\scriptstyle \ttt+2\uuu}$\\
& ${\scriptstyle e_{159}}$ & ${\scriptstyle \ttt+2\uuu}$\\
\gr
8 & ${\scriptstyle -e_{148}}$ & ${\scriptstyle \ttt+2\uuu}$\\
\gr
& ${\scriptstyle e_{148}}$ & ${\scriptstyle \ttt+2\uuu}$ \\
9 & ${\scriptstyle 0}$ & ${\scriptstyle 2\ttt+2\uuu}$\\
\hline
\end{longtable}

\begin{longtable}{|r|l|l|}
        \caption{Nilpotent parts of mixed elements with semisimple part
  $p^{3,3}_{\lambda_1,\lambda_2}$}\label{tab:fam3_3}
\endfirsthead
\hline
\endhead
\hline
\endfoot
\endlastfoot

\hline
No. &\qquad\quad Reps. of nilpotent parts & $\qquad\mathfrak{z}_0(p,h,e,f)$ \\
\hline
1 & ${\scriptstyle e_{159}-2e_{249}-2e_{348}+2e_{357}}$ & ${\scriptstyle 0}$\\
\gr
4 & ${\scriptstyle e_{159}+2e_{357}}$ & ${\scriptstyle \ttt+\uuu}$\\
9 & ${\scriptstyle 0}$ & ${\scriptstyle 2\ttt+2\uuu}$\\
\hline
\end{longtable}
\bigskip

\begin{longtable}{|r|l|l|}
  \caption{Nilpotent parts of mixed elements with semisimple part
  $p^{3,4}_{\lambda,\mu}$}\label{tab:fam3_4}
\endfirsthead
\hline
\endhead
\hline
\endfoot
\endlastfoot

\hline
No. &\qquad\quad Reps. of nilpotent parts & $\qquad\mathfrak{z}_0(p,h,e,f)$ \\
\hline
1 & ${\scriptstyle e_{123}-2e_{179}-2e_{259}-2e_{267}-\tfrac{1}{2}e_{349}+e_{357}}$
& ${\scriptstyle 0}$ \\
\gr
2 & ${\scriptstyle e_{123}-2e_{179}-2e_{259}-\tfrac{1}{2}e_{349}+e_{357}}$ &
${\scriptstyle \ttt}$\\
3 & ${\scriptstyle -2e_{267}-\tfrac{1}{2}e_{349}+\tfrac{1}{4}e_{468}}$ &
${\scriptstyle \uuu}$ \\
& ${\scriptstyle 2e_{267}-\tfrac{1}{2}e_{349}-\tfrac{1}{4}e_{468}}$ &
${\scriptstyle \uuu}$\\
\gr
4 & ${\scriptstyle -2e_{349}+e_{468}}$ & ${\scriptstyle \ttt+\uuu}$\\
\gr
& ${\scriptstyle -2e_{349}-e_{468}}$ & ${\scriptstyle \ttt+\uuu}$\\
5 & ${\scriptstyle -2e_{267}-\tfrac{1}{2}e_{349}}$ & ${\scriptstyle \ttt+\uuu}$\\
\gr
6 & ${\scriptstyle e_{123}-2e_{179}-2e_{259}+e_{357}}$ & ${\scriptstyle 2\ttt}$\\
7 & ${\scriptstyle e_{349}}$ & ${\scriptstyle 2\ttt+\uuu}$\\
\gr
8 & ${\scriptstyle e_{468}}$ &  ${\scriptstyle 2\ttt+\uuu}$\\
\gr
& ${\scriptstyle -e_{468}}$ &  ${\scriptstyle 2\ttt+\uuu}$\\
9 & ${\scriptstyle 0}$ & ${\scriptstyle 3\ttt+\uuu}$\\
\hline
\end{longtable}
\bigskip

\begin{longtable}{|r|l|l|}
          \caption{Nilpotent parts of mixed elements with semisimple part
  $p^{4,1}_{\lambda,\mu}$}\label{tab:fam4_1}
\endfirsthead
\hline
\endhead
\hline
\endfoot
\endlastfoot

\hline
No. &\qquad\quad Reps. of nilpotent parts & $\qquad\mathfrak{z}_0(p,h,e,f)$ \\
\hline
1 & ${\scriptstyle e_{149}+e_{167}+e_{258}+e_{347} }$ & ${\scriptstyle 0}$\\
\gr
2 & ${\scriptstyle e_{149}+e_{158}+e_{167}+e_{248}+e_{257}+e_{347} }$ &
${\scriptstyle 0}$\\
3 & ${\scriptstyle e_{147}+e_{258} }$ & ${\scriptstyle 0}$\\
& ${\scriptstyle e_{147}-e_{158}-e_{248}-e_{257} }$ & ${\scriptstyle 0}$\\
\gr
4 & ${\scriptstyle e_{148}+e_{157}+e_{247} }$ & ${\scriptstyle \ttt}$  \\
5 & ${\scriptstyle e_{147} }$ & ${\scriptstyle \ssl(2,\R)}$ \\
\gr
6 & ${\scriptstyle 0}$ & ${\scriptstyle \ssl(3,\R)}$\\
\hline
\end{longtable}

\begin{longtable}{|r|l|l|}
            \caption{Nilpotent parts of mixed elements with semisimple part
  $p^{5,1}_{\lambda}$}\label{tab:fam5_1}
\endfirsthead
\hline
\endhead
\hline
\endfoot
\endlastfoot

\hline
No. &\qquad\quad Reps. of nilpotent parts & $\qquad\mathfrak{z}_0(p,h,e,f)$ \\
\hline
1 & ${\scriptstyle e_{123}+e_{149}+e_{167}+e_{258}+e_{347}+e_{456}}$ &
${\scriptstyle 0}$ \\
\gr
2 & ${\scriptstyle e_{123}+e_{149}+e_{158}+e_{167}+e_{248}+e_{257}+e_{347}+e_{456}}$
& ${\scriptstyle 0}$\\
3 & ${\scriptstyle e_{123}+e_{149}+e_{167}+e_{258}+e_{347}}$ &
${\scriptstyle \ttt}$ \\
\gr
4 & ${\scriptstyle e_{123}+e_{147}+e_{258}+e_{456}}$ & ${\scriptstyle 0}$ \\
\gr
& ${\scriptstyle e_{123}-2e_{148}-2e_{157}-2e_{247}+2e_{258}+e_{456}}$ &
${\scriptstyle 0}$ \\
5 & ${\scriptstyle e_{123}+e_{149}+e_{158}+e_{167}+e_{248}+e_{257}+e_{347}}$ &
${\scriptstyle \ttt}$\\
\gr
6 & ${\scriptstyle e_{149}+e_{167}+e_{258}+e_{347}}$ & ${\scriptstyle 2\ttt}$ \\
7 & ${\scriptstyle e_{123}+e_{148}+e_{157}+e_{247}+e_{456}}$ &
${\scriptstyle \ttt}$\\
\gr
8 & ${\scriptstyle e_{123}+e_{147}+e_{258}}$ & ${\scriptstyle \ttt}$\\
\gr
& ${\scriptstyle e_{123}-2e_{148}-2e_{157}-2e_{247}+2e_{258}}$ &
${\scriptstyle \ttt}$\\
9 & ${\scriptstyle e_{149}+e_{158}+e_{167}+e_{248}+e_{257}+e_{347}}$ &
${\scriptstyle 2\ttt}$\\
\gr
10 & ${\scriptstyle e_{123}+e_{148}+e_{157}+e_{247}}$ & ${\scriptstyle 2\ttt}$\\
11 & ${\scriptstyle e_{147}+e_{258}}$ & ${\scriptstyle \ttt}$ \\
& ${\scriptstyle e_{147}-e_{158}-e_{248}-e_{257}}$ & ${\scriptstyle \ttt}$ \\
\gr
12 & ${\scriptstyle e_{123}+e_{147}+e_{456}}$ & ${\scriptstyle \ssl(2,\R)}$\\
13 & ${\scriptstyle e_{148}+e_{157}+e_{247}}$ & ${\scriptstyle 3\ttt}$\\
\gr
14 & ${\scriptstyle e_{123}+e_{147}}$ & ${\scriptstyle \ssl(2,\R)+\ttt}$\\
15 & ${\scriptstyle e_{147}}$ & ${\scriptstyle \ssl(2,\R)+2\ttt}$\\
\gr
16 & ${\scriptstyle e_{123}+e_{456}}$ & ${\scriptstyle \ssl(3,\R)}$\\
17 & ${\scriptstyle e_{123}}$ & ${\scriptstyle \ssl(3,\R)+\ttt}$\\
\gr
18 & ${\scriptstyle 0}$ & ${\scriptstyle \ssl(3,\R)+2\ttt}$\\
\hline
\end{longtable}

\begin{longtable}{|r|l|l|}
              \caption{Nilpotent parts of mixed elements with semisimple part
  $p^{5,2}_{\lambda}$}\label{tab:fam5_2}
\endfirsthead
\hline
\endhead
\hline
\endfoot
\endlastfoot

\hline
No. &\qquad\quad Reps. of nilpotent parts & $\qquad\mathfrak{z}_0(p,h,e,f)$ \\
\hline
1 & ${\scriptstyle 2e_{138}+e_{139}+e_{147}+2e_{157}+2e_{237}+e_{345}-e_{389}-e_{468}+
  \tfrac{1}{2}e_{469}+\tfrac{1}{2}e_{479}+2e_{568}-e_{569}-2e_{578}}$ &
${\scriptstyle 0}$\\
\gr
2 & ${\scriptstyle
  -\tfrac{1}{2}e_{134}+e_{135}-e_{178}+\tfrac{1}{2}e_{179}-\tfrac{1}{4}e_{248}
+\tfrac{1}{8}e_{249}+\tfrac{1}{2}e_{258}-\tfrac{1}{4}e_{259}+e_{345}+e_{367}-e_{389}
+\tfrac{1}{2}e_{456}+\tfrac{1}{2}e_{479}-2e_{578}+\tfrac{1}{2}e_{689}}$ &
${\scriptstyle 0}$\\
3 & ${\scriptstyle -\tfrac{1}{2}e_{126}+e_{134}-2e_{135}+2e_{178}-e_{179}-e_{248}-
  \tfrac{1}{2}e_{249}-2e_{258}-e_{259}-2e_{367}}$ & ${\scriptstyle \uuu}$ \\
& ${\scriptstyle \tfrac{1}{2}e_{126}+e_{134}-2e_{135}+2e_{178}-e_{179}-e_{248}-
  \tfrac{1}{2}e_{249}-2e_{258}-e_{259}-2e_{367}}$ & ${\scriptstyle \uuu}$ \\
\gr
4 & ${\scriptstyle \tfrac{1}{2}e_{245}+\tfrac{1}{2}e_{289}+e_{345}-e_{389}+
  \tfrac{1}{4}e_{469}+\tfrac{1}{2}e_{479}+e_{568}-2e_{578}}$ &
${\scriptstyle 0}$\\
\gr
& ${\scriptstyle  \tfrac{1}{8}e_{248}+\tfrac{1}{16}e_{249}+\tfrac{1}{4}e_{258}+
  \tfrac{1}{8}e_{259}+e_{345}-e_{389}-e_{468}+\tfrac{1}{2}e_{469}+
  \tfrac{1}{2}e_{479}+2e_{568}-e_{569}-2e_{578}}$ & ${\scriptstyle 0}$\\
5 & ${\scriptstyle -\tfrac{1}{2}e_{126}+2e_{138}+e_{139}+e_{147}+\tfrac{1}{4}e_{148}-
  \tfrac{1}{8}e_{149}+2e_{157}-\tfrac{1}{2}e_{158}+\tfrac{1}{4}e_{159}+2e_{237}+
  \tfrac{1}{4}e_{346}-\tfrac{1}{2}e_{356}+\tfrac{1}{2}e_{678}-\tfrac{1}{4}e_{679}}$
& ${\scriptstyle \uuu}$ \\
  & ${\scriptstyle \tfrac{1}{2}e_{126}+2e_{138}+e_{139}+e_{147}+\tfrac{1}{4}e_{148}-
  \tfrac{1}{8}e_{149}+2e_{157}-\tfrac{1}{2}e_{158}+\tfrac{1}{4}e_{159}+2e_{237}+
  \tfrac{1}{4}e_{346}-\tfrac{1}{2}e_{356}+\tfrac{1}{2}e_{678}-\tfrac{1}{4}e_{679}}$
& ${\scriptstyle \uuu}$\\
\gr
6 & ${\scriptstyle 2e_{138}+e_{139}+e_{147}+2e_{157}+2e_{237}-e_{468}+
  \tfrac{1}{2}e_{469}+2e_{568}-e_{569}}$ & ${\scriptstyle \ttt+\uuu}$\\
7 & ${\scriptstyle e_{134}-2e_{135}+2e_{178}-e_{179}+e_{345}-2e_{367}-e_{389}+
  \tfrac{1}{2}e_{479}-2e_{578}}$ & ${\scriptstyle \ttt}$\\
\gr
8 & ${\scriptstyle  -\tfrac{1}{2}e_{126}-\tfrac{1}{4}e_{249}-e_{258}-
  \tfrac{1}{2}e_{456}-\tfrac{1}{2}e_{689}}$ & ${\scriptstyle \uuu}$ \\
\gr
& ${\scriptstyle -\tfrac{1}{2}e_{126}+\tfrac{1}{4}e_{137}+e_{468}-
  \tfrac{1}{2}e_{469}-2e_{568}+e_{569}}$ & ${\scriptstyle \uuu}$ \\
\gr
& ${\scriptstyle \tfrac{1}{2}e_{126}-\tfrac{1}{4}e_{249}-e_{258}-
  \tfrac{1}{2}e_{456}-\tfrac{1}{2}e_{689}}$ & ${\scriptstyle \uuu}$\\
\gr
& ${\scriptstyle \tfrac{1}{2}e_{126}+\tfrac{1}{4}e_{137}+e_{468}-
  \tfrac{1}{2}e_{469}-2e_{568}+e_{569}}$ & ${\scriptstyle \uuu}$\\
9 & ${\scriptstyle  2e_{138}+e_{139}+e_{147}+\tfrac{1}{4}e_{148}-\tfrac{1}{8}e_{149}
  +2e_{157}-\tfrac{1}{2}e_{158}+\tfrac{1}{4}e_{159}+2e_{237}+\tfrac{1}{4}e_{346}-
  \tfrac{1}{2}e_{356}+\tfrac{1}{2}e_{678}-\tfrac{1}{4}e_{679}}$ &
${\scriptstyle \ttt+\uuu}$\\
\gr
10 & ${\scriptstyle -\tfrac{1}{2}e_{126}+2e_{138}+e_{139}+e_{147}+2e_{157}+2e_{237}}$
& ${\scriptstyle \ttt+\uuu}$\\
11 & ${\scriptstyle \tfrac{1}{2}e_{245}+\tfrac{1}{2}e_{289}+\tfrac{1}{4}e_{469}+
  e_{568}}$ & ${\scriptstyle \ttt+\uuu}$\\
 & ${\scriptstyle e_{248}+\tfrac{1}{2}e_{249}+2e_{258}+e_{259}+\tfrac{1}{8}e_{468}-
  \tfrac{1}{16}e_{469}-\tfrac{1}{4}e_{568}+\tfrac{1}{8}e_{569}}$ &
${\scriptstyle \ttt+\uuu}$\\
\gr
12 & ${\scriptstyle e_{345}-e_{389}-e_{468}+\tfrac{1}{2}e_{469}+\tfrac{1}{2}e_{479}+
  2e_{568}-e_{569}-2e_{578}}$ & ${\scriptstyle \ssl(2,\R)}$ \\
13 & ${\scriptstyle e_{134}-2e_{135}+2e_{178}-e_{179}-2e_{367}}$ &
${\scriptstyle 2\ttt+\uuu}$ \\
\gr
14 & ${\scriptstyle -\tfrac{1}{2}e_{126}-2e_{137}}$ &
${\scriptstyle \ssl(2,\R)+\uuu}$ \\
\gr
& ${\scriptstyle \tfrac{1}{2}e_{126}-2e_{137}}$ &
${\scriptstyle \ssl(2,\R)+\uuu}$\\
15 & ${\scriptstyle -e_{468}+\tfrac{1}{2}e_{469}+2e_{568}-e_{569}}$ &
${\scriptstyle \ssl(2,\R)+\ttt+\uuu}$\\
\gr
16 & ${\scriptstyle e_{345}-e_{389}+\tfrac{1}{2}e_{479}-2e_{578}}$ &
${\scriptstyle \ssl(3,\R)}$\\
17 & ${\scriptstyle -e_{126}}$ & ${\scriptstyle \ssl(3,\R)+\uuu}$\\
& ${\scriptstyle e_{126}}$ & ${\scriptstyle \ssl(3,\R)+\uuu}$\\
\gr
18 & ${\scriptstyle 0}$ & ${\scriptstyle \ssl(3,\R)+\ttt+\uuu}$\\
\hline
\end{longtable}

\begin{longtable}{|r|l|l|}
                \caption{Nilpotent parts of mixed elements with semisimple part
  $p^{6,1}_{\lambda}$}\label{tab:fam6_1}
\endfirsthead
\hline
\endhead
\hline
\endfoot
\endlastfoot

\hline
No. &\qquad\quad Reps. of nilpotent parts & $\qquad\mathfrak{z}_0(p,h,e,f)$ \\
\hline
1 & ${\scriptstyle e_{159}+e_{168}+e_{249}+e_{258}+e_{267}+e_{347}}$ & 0\\
\gr
2 & ${\scriptstyle e_{159}+e_{168}+e_{249}+e_{257}+e_{258}+e_{347}}$ & 0 \\
3 & ${\scriptstyle e_{149}+e_{158}+e_{167}+e_{248}+e_{259}+e_{347}}$ & 0\\
& ${\scriptstyle -e_{148}-e_{159}-e_{249}+e_{258}-e_{267}-e_{357}}$ & 0\\
\gr
4 & ${\scriptstyle e_{149}+e_{158}+e_{248}+e_{257}+e_{367}}$ &
${\scriptstyle \ttt}$ \\
5 & ${\scriptstyle e_{149}+e_{167}+e_{168}+e_{257}+e_{348}}$ &
${\scriptstyle \ttt}$ \\
\gr
6 & ${\scriptstyle e_{149}+e_{158}+e_{248}+e_{267}+e_{357}}$ &
${\scriptstyle \ttt}$\\
7 & ${\scriptstyle e_{149}+e_{158}+e_{167}+e_{248}+e_{357}}$ &
${\scriptstyle \ttt}$\\
\gr
8 & ${\scriptstyle e_{149}+e_{167}+e_{258}+e_{347}}$ &
${\scriptstyle 2\ttt}$\\
9 & ${\scriptstyle e_{147}+e_{158}+e_{258}+e_{269}}$ &
${\scriptstyle 2\ttt}$ \\
& ${\scriptstyle 2e_{147}-e_{159}+e_{168}-e_{249}-2e_{257}}$ &
${\scriptstyle \ttt + \uuu}$ \\
\gr
10 & ${\scriptstyle e_{149}+e_{158}+e_{167}+e_{248}+e_{257}+e_{347}}$ & 0 \\
11 & ${\scriptstyle e_{149}+e_{167}+e_{248}+e_{357}}$ &
${\scriptstyle 2\ttt}$\\
\gr
12 & ${\scriptstyle e_{149}+e_{167}+e_{247}+e_{258}}$ & ${\scriptstyle 2\ttt}$\\
13 & ${\scriptstyle e_{149}+e_{158}+e_{167}+e_{248}+e_{257}}$ &
${\scriptstyle \ttt}$\\
\gr
14 & ${\scriptstyle e_{149}+e_{157}+e_{168}+e_{247}+e_{348}}$ &
${\scriptstyle \ssl(2,\R)}$ \\
15 & ${\scriptstyle e_{158}+e_{169}+e_{247}}$ &
${\scriptstyle \ssl(2,\R)+2\ttt}$\\
\gr
16 & ${\scriptstyle e_{149}+e_{158}+e_{167}+e_{247}}$ &
${\scriptstyle 2\ttt}$\\
17 & ${\scriptstyle e_{148}+e_{157}+e_{249}+e_{267}}$ &
${\scriptstyle \ssl(2,\R)+\ttt}$\\
\gr
18 & ${\scriptstyle e_{147}+e_{158}+e_{248}+e_{259}}$ &
${\scriptstyle \ssl(2,\R)+\ttt}$\\
19 & ${\scriptstyle e_{149}+e_{157}+e_{248}}$ & ${\scriptstyle 3\ttt}$\\
\gr
20 & ${\scriptstyle e_{147}+e_{258}}$ & ${\scriptstyle 4\ttt}$\\
\gr
& ${\scriptstyle 2e_{147}-2e_{158}-2e_{248}-2e_{257}}$ &
${\scriptstyle 2\ttt + 2\uuu}$\\
21 & ${\scriptstyle e_{148}+e_{157}+e_{247}}$ & ${\scriptstyle 3\ttt}$\\

\gr
22 & ${\scriptstyle  e_{147}+e_{158}+e_{169}}$ &
${\scriptstyle \ssl(2,\R)+\ssl(3,\R)}$\\
23 & ${\scriptstyle e_{147}+e_{158}}$ &
${\scriptstyle 2\ssl(2,\R)+2\ttt}$\\
\gr
24 & ${\scriptstyle e_{147}}$ & ${\scriptstyle 3\ssl(2,\R)+2\ttt}$\\
25 & ${\scriptstyle 0}$ & ${\scriptstyle 3\ssl(3,\R)}$\\
\hline
\end{longtable}

\begin{longtable}{|r|l|l|}
\caption{Nilpotent parts of mixed elements with semisimple part
  $p^{6,2}_{\lambda}$}\label{tab:fam6_2}
\endfirsthead
\hline
\endhead
\hline
\endfoot
\endlastfoot

\hline
No. &\qquad\quad Reps. of nilpotent parts & $\qquad\mathfrak{z}_0(p,h,e,f)$ \\
\hline
1 & ${\scriptstyle e_{139}-e_{148}+2e_{157}+e_{245}+e_{289}+2e_{367}}$ & 0\\
\gr
2 & ${\scriptstyle -2e_{135}-e_{179}+\tfrac{1}{2}e_{234}+e_{278}+e_{368}+
  \tfrac{1}{2}e_{467}-e_{569}}$ & 0\\
3 & ${\scriptstyle -2e_{135}-e_{145}-e_{179}-e_{189}+\tfrac{1}{2}e_{234}+e_{278}+
  e_{368}+\tfrac{1}{2}e_{467}}$ & 0\\
& ${\scriptstyle e_{145}+e_{189}-2e_{237}-e_{248}+e_{346}+2e_{678}}$ & 0 \\
\gr
4 & ${\scriptstyle e_{159}+2e_{237}-e_{248}-\tfrac{1}{4}e_{249}-e_{258}-
  \tfrac{1}{2}e_{456}-\tfrac{1}{2}e_{689}}$ &  ${\scriptstyle \uuu}$ \\
\gr
& ${\scriptstyle -4e_{159}+\tfrac{1}{32}e_{237}-\tfrac{1}{2}e_{238}-
  \tfrac{1}{8}e_{239}-\tfrac{1}{4}e_{247}+20e_{248}+5e_{249}-\tfrac{1}{4}e_{257}+
  20e_{258}+4e_{259}-\tfrac{1}{4}e_{356}+2e_{456}-\tfrac{1}{8}e_{679}+2e_{689}}$ &
${\scriptstyle \uuu}$\\
5 &  ${\scriptstyle e_{159}-\tfrac{1}{2}e_{239}-e_{248}-e_{257}+e_{356}+
  \tfrac{1}{2}e_{679}}$ & ${\scriptstyle \uuu}$\\
& ${\scriptstyle -e_{159}-\tfrac{1}{2}e_{239}-e_{248}-e_{257}+e_{356}+
  \tfrac{1}{2}e_{679}}$ & ${\scriptstyle \uuu}$ \\
\gr
6 & ${\scriptstyle -e_{134}-2e_{178}+2e_{235}+e_{279}-e_{569}}$ &
${\scriptstyle \ttt}$\\
7 & ${\scriptstyle e_{139}-e_{148}+2e_{157}+e_{238}+\tfrac{1}{2}e_{247}+
  \tfrac{1}{2}e_{346}+e_{678}}$ & ${\scriptstyle \uuu}$ \\
& ${\scriptstyle -4e_{135}-\tfrac{1}{4}e_{148}-2e_{179}-\tfrac{1}{2}e_{238}-
  \tfrac{1}{4}e_{247}+\tfrac{1}{4}e_{346}+\tfrac{1}{2}e_{678}}$ &
${\scriptstyle \uuu}$ \\
\gr
8 & ${\scriptstyle  -e_{145}-e_{189}+2e_{237}+e_{468}}$ &
${\scriptstyle \ttt+\uuu}$\\
9 & ${\scriptstyle e_{235}+\tfrac{1}{2}e_{279}-\tfrac{1}{2}e_{369}+e_{468}-e_{567}}$
& ${\scriptstyle \ttt+\uuu}$ \\
& ${\scriptstyle \tfrac{1}{2}e_{237}+e_{248}+e_{259}+\tfrac{1}{2}e_{469}+2e_{568}}$
& ${\scriptstyle 2\uuu}$ \\
& ${\scriptstyle \tfrac{1}{2}e_{237}-e_{248}-e_{259}+\tfrac{1}{2}e_{469}+2e_{568}}$
& ${\scriptstyle 2\uuu}$  \\
\gr
10 & ${\scriptstyle  e_{159}+2e_{237}-\tfrac{1}{2}e_{239}-\tfrac{1}{2}e_{249}-
  e_{257}-2e_{258}-e_{356}-\tfrac{1}{2}e_{679}}$ & 0 \\
11 & ${\scriptstyle -2e_{135}-e_{179}+\tfrac{1}{2}e_{245}+\tfrac{1}{2}e_{289}-
  \tfrac{1}{4}e_{469}-e_{568}}$ & ${\scriptstyle \ttt+\uuu}$\\
\gr
12 & ${\scriptstyle  -2e_{135}-e_{179}+e_{468}-e_{569}}$ &
${\scriptstyle \ttt+\uuu}$\\
\gr
& ${\scriptstyle -2e_{139}-4e_{157}+\tfrac{1}{4}e_{468}+4e_{569}}$ &
${\scriptstyle \ttt+\uuu}$\\
13 & ${\scriptstyle -2e_{138}-e_{147}+e_{159}-e_{239}-2e_{257}}$ &
${\scriptstyle \ttt}$\\
\gr
14 & ${\scriptstyle  e_{159}-\tfrac{1}{2}e_{239}-\tfrac{1}{4}e_{249}-e_{257}-e_{258}
  +e_{356}-\tfrac{1}{2}e_{456}+\tfrac{1}{2}e_{679}-\tfrac{1}{2}e_{689}}$
& ${\scriptstyle \ssl(2,\R)}$ \\
15 & ${\scriptstyle  -2e_{135}-e_{179}+e_{468}}$ &
${\scriptstyle \ssl(2,\R)+\ttt+\uuu}$\\
& ${\scriptstyle \tfrac{1}{4}e_{137}+8e_{159}-e_{468}}$ &
${\scriptstyle \su(2)+\ttt+\uuu}$\\
\gr
16 & ${\scriptstyle  -2e_{137}+\tfrac{1}{2}e_{149}+2e_{158}-e_{259}}$ &
${\scriptstyle \ttt+\uuu}$\\
17 & ${\scriptstyle e_{235}+\tfrac{1}{2}e_{245}+\tfrac{1}{2}e_{279}+
  \tfrac{1}{2}e_{289}-\tfrac{1}{2}e_{369}+\tfrac{1}{4}e_{469}-e_{567}+e_{568}}$
& ${\scriptstyle \ssl(2,\R)+\ttt}$\\
\gr
18 & ${\scriptstyle  e_{159}+2e_{237}-\tfrac{1}{2}e_{239}-e_{257}+e_{356}+
  \tfrac{1}{2}e_{679}}$ & ${\scriptstyle \ssl(2,\R)+\uuu}$\\
\gr
& ${\scriptstyle \tfrac{44}{5}e_{137}-3e_{139}-6e_{157}+2e_{159}-
  \tfrac{28}{5}e_{237}+e_{239}+2e_{257}+10e_{356}+5e_{679}}$ &
${\scriptstyle \su(2)+\uuu}$\\
\gr
& ${\scriptstyle \tfrac{4}{5}e_{137}+e_{139}+2e_{157}+2e_{159}+\tfrac{12}{5}e_{237}
  +e_{239}+2e_{257}-10e_{356}-5e_{679}}$ & ${\scriptstyle \su(2)+\uuu}$\\
19 & ${\scriptstyle e_{159}-\tfrac{1}{2}e_{239}-e_{257}-e_{259}+e_{356}+
  \tfrac{1}{2}e_{679}}$ & ${\scriptstyle \ttt+2\uuu}$\\
& ${\scriptstyle -4e_{159}+e_{239}+2e_{257}+4e_{259}-2e_{356}-e_{679}}$ &
${\scriptstyle \ttt+2\uuu}$\\
\gr
20 & ${\scriptstyle e_{235}+\tfrac{1}{2}e_{279}-\tfrac{1}{2}e_{369}-e_{567}}$ &
${\scriptstyle 2\ttt+2\uuu}$\\
\gr
& ${\scriptstyle \tfrac{1}{4}e_{148}+4e_{159}-\tfrac{1}{2}e_{469}-2e_{568}}$ &
${\scriptstyle 2\ttt+2\uuu}$\\
21 & ${\scriptstyle -2e_{135}-e_{179}-e_{569}}$ & ${\scriptstyle 2\ttt+\uuu}$\\
\gr
22 & ${\scriptstyle e_{139}-e_{148}+2e_{157}}$ &
${\scriptstyle \ssl(2,\R)+\su(1,2)}$\\
\gr
& ${\scriptstyle 18e_{137}+22e_{138}+4e_{139}+11e_{147}+14e_{148}+\tfrac{5}{2}
  e_{149}+8e_{157}+10e_{158}+2e_{159}}$ & ${\scriptstyle \ssl(2,\R)+\su(3)}$\\
23 & ${\scriptstyle -2e_{135}-e_{179}}$ & ${\scriptstyle 2\ssl(2,\R)+\ttt+\uuu}$\\
& ${\scriptstyle e_{137}+2e_{159}}$ & ${\scriptstyle \ssl(2,\R)+\su(2)+\ttt+\uuu}$\\
\gr
24 & ${\scriptstyle -2e_{137}}$ & ${\scriptstyle \ssl(2,\R)+\ssl(2,\C)+\ttt+\uuu}$\\
25 & ${\scriptstyle 0}$ & ${\scriptstyle \ssl(3,\R)+\ssl(3,\C)}$\\
\hline
\end{longtable}


\section{Real Galois cohomology}\label{sec:galcohom}

\subsection{Group cohomology for a group of order 2}

\label{s:group-coh}
In this subsection we give an alternative  definition of    $\Ho^1(\Gamma,A)$ and $\Ho^2(\Gamma,A)$ and establish   their  properties.

\begin{definition}\label{d:H1-nonab}
Let $A$ be a $\Gamma$-group, {\em not necessarily abelian}, where $\Gamma=\{1,\gamma\}$ is a group of order 2.
We define
\begin{equation}\label{d:Z1}
\Zl^1\hm A=\{c\in A\mid c\cdot{}^\gamma\kern-0.8pt c=1\}.
\end{equation}
We say that an element $c$ as in \eqref{d:Z1} is a {\em $1$-cocycle of\/ $\Gamma$ in $A$}.
The group $A$ acts on $\Zl^1\hm A$ on the right by
\[ c*a=a^{-1}\cdot c\cdot\upgam a\quad \text{for } c\in \Zl^1\hm A,\ a\in A.\]
We say that the cocycles $c$ and $c*a$ are {\em cohomologous} or {\em equivalent}.
We denote by  $\Ho^1(\Gamma,A)$, or for brevity $\Ho^1\hm A$, the set of equivalence classes, that is, the set of orbits of $A$ in $\Zl^1\hm A$.
If $c\in \Zl^1\hm A$, we denote by $[c]\in \Ho^1\hm A$ its cohomology class.
\end{definition}

In general $\Ho^1\hm A$ has no natural group structure, but it has a {\em neutral element} denoted by  $[1]$,
 the class of the unit element $1\in \Zl^1\hm A\subseteq A$.

\begin{definition}\label{d:H1}
Let $A$ be an {\em abelian} $\Gamma$-group, written additively, where $\Gamma$ is a group of order 2.
We define
\[\Zl^1\hm A=\{c\in A\mid c+\upgam c=0\},\quad \Bd^1\hm A=\{a-\upgam a\mid a\in A\}.\]
We set
\[  \Ho^1\hm A=\Zl^1\hm A\hs/\hs\Bd^1\hm A,\]
which is an abelian group.
\end{definition}

\begin{definition}\label{d:H2}
Let $A$ be an {\em abelian} $\Gamma$-group written additively, where $\Gamma$ is a group of order 2.
We define
\[\Zl^2\hm A=A^\Gamma:=\{c\in A\mid \upgam c=c\},\quad \Bd^{2}\hm A=\{a+\upgam a\mid a\in A\}.\]
We set
\[  \Ho^2\hm A=\Zl^2\hm A\hs/\hs\Bd^2\hm A,\]
which is an abelian group.
\end{definition}

\begin{remark}\label{r:def}
Definitions \ref{d:H1} and \ref{d:H2} (given for  a group $\Gamma$ of order 2 only!)
are equivalent to the standard ones.
Namely, for $c\in \Zl^1\hm A$ we construct a function of one variable
\[f_c\colon\Gamma\to A,\quad f_c(1)=1,\ f_c(\gamma)=c,\]
which is a 1-cocycle in the sense of  \cite[Section I.5.1]{Serre1997}.
Similarly, for $c\in \Zl^2\hm A$ we construct a function of two variables
\[ \phi_c\colon \Gamma\times\Gamma\to A,\quad  \phi_c(1,1)=\phi_c(\gamma,1)
     =\phi_c(1,\gamma)=0,\ \,\phi_c(\gamma,\gamma)=c,\]
which is a 2-cocycle in the sense of \cite[Section I.2.2]{Serre1997}.
In this way we obtain canonical isomorphisms of pointed sets
(for Definition \ref{d:H1-nonab}), and of abelian groups
(for Definitions \ref{d:H1} and \ref{d:H2})
between the cohomology sets and groups defined above
and the corresponding cohomology sets and groups defined in \cite{Serre1997}.
\end{remark}

We shall consider the following $\Gamma$-groups:
\begin{enumerate}
\item $\Z_\triv$ is $\Z$ with the trivial $\Gamma$-action $\upgam x=x$;
\item $\Z_\tw$ is $\Z$ with the twisted $\Gamma$-action $\upgam x= -x$;
\item $\Z^2_{\tw+}$ is $\Z^2$
with the twisted $\Gamma$-action $\upgam (x,y)=( y,x)$;
\item $Z^2_{\tw-}$ is $\Z^2$
with the twisted $\Gamma$-action $\upgam (x,y)=( -y,-x)$.
\end{enumerate}
We have an isomorphism
\[ \Z^2_{\tw+}\isoto \Z^2_{\tw-}\hs,\quad (x,y)\mapsto (x,-y).\]

\begin{examples}\label{x:cohom-Z}
\begin{enumerate}
\item $\Ho^1\hsss \Z_\triv=0$;
\item $\Ho^2\hssh\Z_\triv=\Z/2\Z$;
\item $\Ho^1\hssh \Z_\tw=\Z/2\Z$;
\item $\Ho^2\hssh\Z_\tw=0$;
\item $\Ho^1\hssh \Z^2_{\tw+}=0$ and $\Ho^2\hssh \Z^2_{\tw+}=0$;
\item  $\Ho^1\hssh \Z^2_{\tw-}=0$ and $\Ho^2\hssh \Z^2_{\tw-}=0$.
\end{enumerate}
\end{examples}

We shall consider the following $\Gamma$-groups:
\begin{enumerate}
\item[{\rm (st)}] $\C^\times_\stand$ is $\C^\times$ with the standard $\Gamma$-action $\upgam x=\bar x$;
\item[{\rm (tw)}] $\C^\times_\tw$ is $\C^\times$ with the twisted $\Gamma$-action $\upgam x=\bar x^{-1}$;
\item[{\rm (tw$_{2+}$)}] $(\C^\times)^2_{\tw+}$ is $(\C^\times)^2$
with the twisted $\Gamma$-action $\upgam (x,y)=(\bar y,\bar x)$;
\item[{\rm (tw$_{2-}$)}]  $(\C^\times)^2_{\tw-}$ is $(\C^\times)^2$
with the twisted $\Gamma$-action $\upgam (x,y)=(\bar y^{-1},\bar x^{-1})$.
\end{enumerate}
We have an isomorphism
\[ (\C^\times)^2_{\tw+}\isoto (\C^\times)^2_{\tw-}\hs,\quad (x,y)\mapsto (x,y^{-1}).\]

\begin{examples}\label{x:cohom}
\begin{enumerate}
\item $\Ho^1\hssh \C^\times_\stand=\{x\in\C^\times\mid x\bar x=1\}/({\rm squares})=1$;
\item $\Ho^2\hssh\C^\times_\stand=\R^\times/\{z\bar z\mid z\in \C^\times\}=\{[1],[-1]\}$;
\item $\Ho^1\hssh \C^\times_\tw=\R^\times/\{z\bar z\mid z\in \C^\times\}=\{[1],[-1]\}$;
\item $\Ho^2\hssh\C^\times_\tw=\{x\in\C^\times\mid x\bar x=1\}/({\rm squares})=1$;
\item $\Ho^1\hssh (\C^\times)^2_{\tw+}=1$ and $\Ho^2\hssh (\C^\times)^2_{\tw+}=1$;
\item $\Ho^1\hssh (\C^\times)^2_{\tw-}=1$ and $\Ho^2\hssh(\C^\times)^2_{\tw-}=1$.
\end{enumerate}
\end{examples}

\begin{remark}
The assertions of Examples \ref{x:cohom},
which follow easily from the definitions,
are special cases of more general results.
In particular, Example \ref{x:cohom}(1) is Hilbert's Theorem 90 for $\R$.
Example \ref{x:cohom}(2) is the well-known fact that the Brauer group
${\rm Br}(\R):=\Ho^2(\Ga,\C^\times_\stand)$ is of order 2.
The assertions of Examples  \ref{x:cohom}(5) and (6)
follow from a lemma of Faddeev and Shapiro;
see \cite[I.2.5, Proposition 10]{Serre1997}.
\end{remark}

\begin{subsec}{\bf Twisting.}
Let $B$ be a $\Gamma$-group acting $\Gamma$-equivariantly on a $\Gamma$-group $A$
\[(b,a)\mapsto b(a).\]
Let $b\in \Zl^1 B$ be a 1-cocycle, that is,
\begin{equation}\label{e:1cocycle}
b\cdot\!\upgam b=1_B\hs.
\end{equation}
We define the {\em $b$-twisted} $\Gamma$-group $_bA$.
It has the same underlying group $A$ and a new $\Gamma$-action
\begin{equation}\label{e:twisted}
\hs^{\gamma *}a=b(\hs^\gamma\kern-0.8pt  a).
\end{equation}
It follows from the cocycle condition \eqref{e:1cocycle}
that
\[\hs^{\gamma *}(\hs^{\gamma *}\kern-0.8pt a)=a\quad \text{for all }a\in A,\]
that is, \eqref{e:twisted} is indeed a $\Gamma$-action on $A$.

Note that the $\Gamma$-groups $A$ and $_bA$ have the same underlying groups.
In particular, if $A$ is a finite group of odd order,
then $_bA$  is a finite group of odd order too.
\end{subsec}

\begin{subsec}\label{subsec:twistedbij}
Let $a\in \Zl^1\hm A$.
The group $A$ acts on itself by inner automorphisms,
and so we obtain the twisted group $_a A$.

We define a map
\[t_a\colon \Zl^1{}_a A\to \Zl^1\hm A,\quad  x\mapsto xa.\]
We check that $t_a(x)$ is indeed a 1-cocycle.
We have $a\in \Zl^1\hm A$, hence $a\cdot\upgam a=1$.
Furthermore, $x\in \Zl^1{}_a A$, hence, $x\cdot a\hs\upgam x\hs a^{-1}=1$.
We obtain:
\[ t_a(x)\cdot\upgam(t_a(x))=x\hs a\cdot\upgam x\upgam a
     =(x\cdot a \upgam x\hs a^{-1})\cdot (a\cdot \upgam a)=1\cdot 1 =1,\]
as required.

The map $t_a$ is a bijection taking 1 to $a$.
We show that it sends cohomologous cocycles to cohomologous cocycles.
Indeed, let $x,y\in \Zl^1{}_a A$, and assume that $x\sim y$.
Then there exists $b\in A$ such that
\[y=b^{-1}\cdot x\cdot {}^{\gamma*}b=b^{-1}\cdot x\cdot a\upgam b\hs\hs a^{-1}.\]
Then
\[t_a(y)=b^{-1}\cdot x\cdot a\upgam b\hs\hs a^{-1}\cdot a =b^{-1}\cdot x a\cdot\upgam b
                           =b^{-1}\cdot t_a(x)\cdot\upgam b\,\sim\, t_a(x),\]
as required.
Thus the map $t_a$ induces a map
\begin{equation}\label{e:tau-con}
\tau_a\colon  \Ho^1{}_a A\to \Ho^1\hm A,\quad  [x]\mapsto [xa].
\end{equation}
One can easily check that $a^{-1}\in \Zl^1{}_a A$ and that the map
\[t_{a^{-1}}\colon \Zl^1\hm A\to \Zl^1{}_a A,\quad  y\mapsto y\hs a^{-1}.\]
is inverse to $t_a$.
By passing to cohomology, we obtain that the map
\[\tau_{a^{-1}}\colon \Ho^1{}_a A\to \Ho^1\hm A,\quad  [y]\mapsto [y\hs a^{-1}].\]
is inverse to $\tau_a$.
Thus $\tau_a$ is a bijection.
\end{subsec}

\begin{subsec}\label{ss:hom-twisted}
Let $\varphi\colon A\to B$ be a morphism of $\Gamma$-groups.
Let $a\in \Zl^1\hm A$, and write $b=\varphi(a)\in \Zl^1 B$.
Then by twisting we obtain a homomorphism
\[ _a \varphi\colon\hs_a A\to \hs_b B,\]
and it is easy to see that the following diagram is commutative:
\begin{equation*}
\xymatrix{
\Ho^1{}_a A\ar[r]^{_a\varphi_*}\ar[d]_{\tau_a} &\Ho^1{}_b B\ar[d]^{\tau_b}\\
\Ho^1\hm A\ar[r]^{\varphi_*}                     &\Ho^1 B
}
\end{equation*}
\end{subsec}

We shall need the following result of Borel and Serre:

\begin{proposition}[{\cite[Section I.5.4, Corollary 1 of Proposition 36]{Serre1997}}]
\label{p:serre}
Let $B$ be a $\Gamma$-group, $A\subseteq B$ be a $\Gamma$-subgroup, and $Y=B/A$,
which has a natural structure of a $\Gamma$-set. Then there is a canonical exact sequence of pointed sets
\[1\to A^\Gamma\to \Bd^\Gamma\to Y^\Gamma\labelto{\delta} \Ho^1\hm A\to \Ho^1 B.\]
Moreover, the group $\Bd^\Gamma$ naturally acts on $Y^\Gamma$, and the connecting map $\delta$
induces a canonical bijection between the set of orbits $Y^\Gamma/\Bd^\Gamma$ and the kernel
\[\ker\left[\hs\Ho^1\hm A\to \Ho^1 B\hs\right].\]
\end{proposition}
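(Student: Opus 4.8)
The plan is to prove this directly from the explicit description of cohomology for $\Gamma=\{1,\gamma\}$ given in Definition \ref{d:H1-nonab}, where a $1$-cocycle is just an element $c\in A$ with $c\cdot\upgam c=1$ and the twisting action is $c*a=a^{-1}\cdot c\cdot\upgam a$. Alternatively one could deduce the whole statement from \cite[Section I.5.4, Corollary 1 of Proposition 36]{Serre1997} via the identifications of Remark \ref{r:def}, but the self-contained argument is short and everything rests on one computation.

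First I would construct the connecting map $\delta$. Given $y\in Y^\Gamma$, pick a representative $b\in B$ with $bA=y$. Since $\upgam(bA)=(\upgam b)\hsss A$ equals $bA$, the element $c_b:=b^{-1}\cdot\upgam b$ lies in $A$, and using $\upgam(\upgam b)=b$ one checks $c_b\cdot\upgam c_b=b^{-1}\cdot\upgam b\cdot\upgam b^{-1}\cdot b=1$, so $c_b\in\Zl^1 A$. The key identity is that for any other lift $b'=ba$ with $a\in A$ one has $c_{ba}=a^{-1}\cdot b^{-1}\cdot\upgam b\cdot\upgam a=a^{-1}\cdot c_b\cdot\upgam a=c_b*a$; hence $[c_b]\in\Ho^1 A$ depends only on $y$, and we set $\delta(y)=[c_b]$. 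This identity $c_{ba}=c_b*a$ is what drives the rest of the proof.

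Next I would verify exactness of $1\to A^\Gamma\to B^\Gamma\to Y^\Gamma\labelto{\delta}\Ho^1 A\to\Ho^1 B$ at each spot, all routine. Exactness at $A^\Gamma$ is injectivity of $A^\Gamma\hookrightarrow B^\Gamma$; at $B^\Gamma$ it is the observation that the fibre of $B^\Gamma\to Y^\Gamma$ over the base point is $\{b\in B^\Gamma\mid b\in A\}=A^\Gamma$; at $Y^\Gamma$, if $y=bA$ with $b\in B^\Gamma$ then $c_b=1$ so $\delta(y)=[1]$, and conversely if $\delta(y)=[1]$ then some lift $b$ has $c_b=a^{-1}\cdot\upgam a$ for $a\in A$, whence $b':=ba^{-1}$ satisfies $\upgam b'=b'$ and $b'A=y$; at $\Ho^1 A$, for $[c]=\delta(y)$ the cocycle $c=b^{-1}\cdot\upgam b$ equals $1*b$ inside $\Zl^1 B$ so maps to $[1]$ in $\Ho^1 B$, while conversely if $c\in\Zl^1 A$ dies in $\Ho^1 B$ there is $b\in B$ with $c=b^{-1}\cdot\upgam b\in A$, forcing $bA\in Y^\Gamma$ with $\delta(bA)=[c]$.

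Finally I would treat the orbit statement. The group $B^\Gamma$ acts on $Y^\Gamma$ by left translation (this preserves $Y^\Gamma$ because the acting element is $\gamma$-fixed), and for $\beta\in B^\Gamma$, $y=bA$ the element $\beta b$ lifts $\beta y$ with $c_{\beta b}=b^{-1}\cdot\beta^{-1}\cdot\upgam\beta\cdot\upgam b=c_b$; thus $\delta$ is constant on $B^\Gamma$-orbits and, by exactness at $\Ho^1 A$, induces a surjection $Y^\Gamma/B^\Gamma\to\ker[\Ho^1 A\to\Ho^1 B]$. Injectivity is the only point that needs a little thought: if $\delta(y_1)=\delta(y_2)$ with lifts $b_1,b_2$, then after replacing $b_1$ by the lift $b_1a$ for a suitable $a\in A$ (using $c_{b_1a}=c_{b_1}*a$) we may assume $b_1^{-1}\cdot\upgam b_1=b_2^{-1}\cdot\upgam b_2$; rearranging this identity gives $b_2b_1^{-1}=\upgam(b_2b_1^{-1})$, so $\beta:=b_2b_1^{-1}\in B^\Gamma$ and $\beta y_1=b_2A=y_2$. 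I do not expect a genuine obstacle anywhere — the whole proof is bookkeeping around the identity $c_{ba}=c_b*a$ — so the main thing to be careful about is keeping the left/right conventions of Definition \ref{d:H1-nonab} straight (left cosets $bA$, the action $c*a=a^{-1}c\,\upgam a$, left translation on $Y^\Gamma$) and confirming that $\delta$ is well defined before exploiting it.
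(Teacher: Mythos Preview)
Your proof is correct. The paper does not actually give a proof of this proposition; it simply states it with a citation to Serre \cite[Section I.5.4, Corollary 1 of Proposition 36]{Serre1997}, so there is no argument in the paper to compare yours against. Your direct verification from the explicit description in Definition~\ref{d:H1-nonab} is exactly what one would write to make the paper self-contained at this point, and all the steps check out, including the one place requiring care (injectivity of the induced map on orbits), where replacing $b_1$ by $b_1a$ to equalise the cocycles is the right move.
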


\begin{proposition}[{\cite[Section I.5.5, Proposition 38]{Serre1997}}]
\label{p:serre-prop38}
Let
\begin{equation}\label{e:ABC}
1\to A\labelto{i} B\labelto{j} C\to 1
\end{equation}
be a short exact sequence of $\Gamma$-groups.
Then the following sequence of pointed sets is exact:
\begin{equation}\label{e:ABC-long}
1\to A^\Gamma\to \Bd^\Gamma\to C^\Gamma\labelto{\delta} \Ho^1\hm A\labelto{i_*} \Ho^1 B\labelto{j_*} \Ho^1 C.
\end{equation}
\end{proposition}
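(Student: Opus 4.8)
The plan is to prove exactness directly by a cocycle chase in the concrete models of Definitions \ref{d:H1-nonab} and \ref{d:H1}. Since $\Gamma$ has order $2$, for each of the groups $A,B,C$ a $1$-cocycle is simply an element $c$ of the group with $c\cdot\upgam c=1$; two cocycles $c,c'$ are cohomologous if and only if $c'=a^{-1}\cdot c\cdot\upgam a$ for some $a$ in the group (we abbreviate the right-hand side by $c*a$); and $\Ho^1$ is the set of orbits, a pointed set with base point $[1]$. Throughout I identify $A$ with $i(A)\subseteq B$, so that the $\Gamma$-action on $A$ is the restriction of the $\Gamma$-action on $B$, and $j$ restricts to the trivial homomorphism on $A$.

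First I would construct the connecting map $\delta\colon C^\Gamma\to\Ho^1\hm A$. Given $c\in C^\Gamma$, choose $b\in B$ with $j(b)=c$ and set $a=b^{-1}\cdot\upgam b$. Then $j(a)=c^{-1}\cdot\upgam c=c^{-1}c=1$, so $a\in A$; and using $\upgam(\upgam b)=b$ one computes $a\cdot\upgam a=b^{-1}\cdot\upgam b\cdot(\upgam b)^{-1}\cdot b=1$, so $a\in\Zl^1\hm A$. Any other lift of $c$ has the form $b'=ba'$ with $a'\in A$ (because $\ker j=\im i$), and then ${b'}^{-1}\cdot\upgam b'={a'}^{-1}\cdot a\cdot\upgam a'=a*a'$, so the class $[a]\in\Ho^1\hm A$ is independent of the choice of $b$; we set $\delta(c)=[a]$.

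Next I would verify exactness at the six terms in order. At $A^\Gamma$: the map is injective because $i$ is. At $B^\Gamma$: if $b\in B^\Gamma$ with $j(b)=1$ then $b=i(a)$ for a unique $a\in A$, and $a\in A^\Gamma$ since $i$ is an injective $\Gamma$-equivariant map; conversely $j\circ i=1$. At $C^\Gamma$: if $c=j(b)$ with $b\in B^\Gamma$ then $\delta(c)=[b^{-1}\cdot\upgam b]=[1]$; conversely, if $\delta(c)=[1]$, pick a lift $b$ and an $a\in A$ with $b^{-1}\cdot\upgam b=a^{-1}\cdot\upgam a$, and check that $ba^{-1}\in B^\Gamma$ and $j(ba^{-1})=c$. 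At $\Ho^1\hm A$: $i_*\delta(c)$ is represented by $b^{-1}\cdot\upgam b$, which in $\Zl^1 B$ equals $1*b$, so $i_*\delta(c)=[1]$; conversely, if $i_*[a]=[1]$, write $a=b^{-1}\cdot\upgam b$ for some $b\in B$, note that $j(b)\in C^\Gamma$ since $j(b)^{-1}\cdot\upgam j(b)=j(a)=1$, and observe $\delta(j(b))=[a]$. At $\Ho^1 B$: $j_*i_*=(j\circ i)_*$ is trivial because $j\circ i$ kills every cocycle of $A$; conversely, if $j_*[b]=[1]$, write $j(b)=c^{-1}\cdot\upgam c$, lift $c$ to $b_0\in B$, and replace $b$ by the cohomologous cocycle $b*b_0^{-1}=b_0\cdot b\cdot(\upgam b_0)^{-1}$; since $j(b*b_0^{-1})=1$ this lies in $\im i$, its preimage $a'$ lies in $\Zl^1\hm A$, and $i_*[a']=[b]$.

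The computations here are all elementary; the only point that needs care is that $\Ho^1$ is merely a pointed set, so "exact" must be read as "image $=$ preimage of the base point", and the nontrivial inclusions are the surjectivity-type ones, where one must exhibit explicit representatives (the elements $ba^{-1}$ and $b*b_0^{-1}$ above) rather than cancel in a group. I would also remark that, in view of Remark \ref{r:def}, this proposition is merely \cite[Section I.5.5, Proposition 38]{Serre1997} transcribed into the order-$2$ formalism, so one could alternatively simply invoke that reference.
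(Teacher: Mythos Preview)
Your proof is correct and follows the same cocycle-chase approach as the paper; in fact you give more detail, since the paper only spells out the inclusion $\ker j_*\subseteq\im i_*$ at $\Ho^1 B$ (exactly your step 6, with the same trick of lifting $c'$ to $b'\in B$ and replacing $b$ by the cohomologous $(b')^{-1}\cdot b\cdot\upgam b'$) and leaves the remaining verifications implicit.
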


\begin{proof}
It suffices to show  that if $\beta\in\ker\hs j_*\subseteq \Ho^1 B$, then
there exists $\alpha\in \Ho^1\hm A$ such that $j_*(\alpha)=\beta$.

Let
\[\beta\in \ker[\Ho^1 B\labelto{j_*} \Ho^1 C].\]
Write $\beta=[b]$, where $b\in \Zl^1 B\subseteq B$.
Then $j(b)$ is a coboundary in $C$, that is, there exists $c'\in C$ such that
\[(c')^{-1}\cdot j(b)\cdot\upgam c'=1\hs\in C.\]
Let $b'\in B$ be a preimage of $c'$. Then
\[a:=(b')^{-1}\cdot b\cdot\upgam b'\in A,\]
where we identify $A$ with $i(A)\subseteq B$.
Clearly, $a$ is a cocycle, that is, $a\in \Zl^1\hm A$.
Write $\alpha=[a]\in \Ho^1(\gamma, A)$.
Then
\[i_*(\alpha)=[a]=[b]=\beta,\]
as required.
\end{proof}

\begin{corollary}
\label{c:prop38}
In Proposition \ref{p:serre-prop38}, assume that $\Ho^1\hm A=\{1\}$ and $\Ho^1 C=\{1\}$.
Then $\Ho^1 B=\{1\}$.
\end{corollary}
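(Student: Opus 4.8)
The plan is to deduce Corollary \ref{c:prop38} directly from the exact sequence \eqref{e:ABC-long} of Proposition \ref{p:serre-prop38}. The relevant portion of that sequence is the tail
\[
\Ho^1\hm A\labelto{i_*} \Ho^1 B\labelto{j_*} \Ho^1 C,
\]
which is exact as a sequence of pointed sets, meaning that $\ker j_* = \im i_*$ where $\ker j_*$ denotes the preimage of the neutral element $[1]\in\Ho^1 C$.

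First I would take an arbitrary class $\beta\in\Ho^1 B$ and show it equals the neutral element $[1]$. Since by hypothesis $\Ho^1 C=\{1\}$, the single element of $\Ho^1 C$ is its neutral element, so $j_*(\beta)=[1]$ automatically; hence $\beta\in\ker j_*$. By exactness at $\Ho^1 B$ we get $\beta\in\im i_*$, so there is some $\alpha\in\Ho^1\hm A$ with $i_*(\alpha)=\beta$. But by hypothesis $\Ho^1\hm A=\{1\}$, so $\alpha=[1]$ is the neutral element of $\Ho^1\hm A$, and since $i_*$ is a morphism of pointed sets it sends the neutral element to the neutral element; therefore $\beta=i_*(\alpha)=i_*([1])=[1]$. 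As $\beta$ was arbitrary, $\Ho^1 B=\{1\}$.

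There is essentially no obstacle here: the statement is a formal diagram-chase, and every ingredient (exactness of \eqref{e:ABC-long}, the fact that $i_*$ and $j_*$ are pointed maps) is already established in Proposition \ref{p:serre-prop38}. The only point worth stating carefully is that for pointed sets ``exactness'' means $\im i_*$ coincides with the fiber of $j_*$ over the marked point, not over an arbitrary point; but since $\Ho^1 C$ has been assumed to consist of a single element, that single element is the marked point, so the fiber over it is all of $\Ho^1 B$, and the argument goes through verbatim.
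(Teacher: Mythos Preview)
Your proof is correct and follows exactly the same diagram chase as the paper: pull back from $\Ho^1 C=\{1\}$ via exactness to land in the image of $i_*$, then use $\Ho^1 A=\{1\}$ to conclude. If anything, you are slightly more careful than the paper in spelling out why exactness of pointed sets suffices here.
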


\begin{proof}
Let $\beta\in \Ho^1(\Gamma, B)$.
Then $j_*(\beta)=1$, because $\Ho^1 C=\{1\}$.
By Proposition \ref{p:serre-prop38} we have $\beta=j_*(\alpha)$
for some $\alpha\in \Ho^1\hm A$.
Since $\Ho^1\hm A=\{1\}$, we see that $\alpha=1$, and hence, $\beta=1$, as required.
\end{proof}

We wish to describe the fibers of the map $i_*\colon \Ho^1\hm A\to \Ho^1 B$.

\begin{construction}\label{con:rightact}
For an exact sequence \eqref{e:ABC}, we construct
a right action of $C^\Gamma$ on $\Ho^1\hm A$.
Let $c\in C^\Ga$.
We lift $c$ to $b\in B$.
Then $\upgam b=b\cdot x$ for some $x\in A$.
To each cocycle $a\in \Zl^1\hm A$, we associate the cocycle
\[a'=b^{-1}abx=b^{-1}\cdot a\cdot\upgam b\in \Zl^1\hm A.\]
We define
\[ [a]\cdot c= [a'].\]
\end{construction}

\begin{proposition}[{Serre, \cite[I.5.5, Proposition 39]{Serre1997}}]
\label{p:action-C-Gamma}
\begin{enumerate}
\item[\rm (i)] If $c\in C^\Ga$, then $\delta(c)=1\cdot c$,
where $1\in \Ho^1\hm A$ is the neutral element.
\item[\rm (ii)] Two elements of $\Ho^1\hm A$ have the same image in $\Ho^1 B$
if and only if they are in the same $C^\Ga$-orbit.
\item[\rm (iii)] Let $a\in \Zl^1\hm A$,
write $\alpha=[a]\in \Ho^1\hm A$, and let $c\in C^\Ga$.
For $\alpha\cdot c=\alpha$, it is necessary and sufficient
that $c\in {\rm im}[\hs({}_aB)^\Ga\to C^\Ga\hs]$.
\end{enumerate}
\end{proposition}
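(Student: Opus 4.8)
The plan is to deduce all three assertions directly from the definitions; the only substantive ingredient is the comparison between the coboundary relation in $\Zl^1\hm A$ and the one in $\Zl^1 B$ (Definition \ref{d:H1-nonab}). Before anything else I would verify that the recipe of Construction \ref{con:rightact} is well defined, i.e.\ that $[a]\cdot c$ does not depend on the chosen lift $b\in B$ of $c$ nor on the representative $a\in\Zl^1\hm A$ of $\alpha$, and that it is a right action of $C^\Ga$; this is a routine check, using that any two lifts of $c$ differ by an element of $A$, and I would take it for granted in what follows.

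\textbf{Part (i).} This should be immediate. For $c\in C^\Ga$ and a lift $b$ of $c$ we have $\upgam b=b\hsss x$ with $x=b^{-1}\hsss\upgam b$; since $j(x)=c^{-1}\hsss\upgam c=1$, $x$ lies in $A$, and a one-line computation gives $x\cdot\upgam x=1$, so $x\in\Zl^1\hm A$ and $[x]=\delta(c)$ by the construction of the connecting map (Propositions \ref{p:serre} and \ref{p:serre-prop38}). Feeding the neutral cocycle $1$ into Construction \ref{con:rightact} gives $1\cdot c=[\,1^{-1}\cdot1\cdot b\hsss x\,]=[\,b^{-1}\hsss\upgam b\,]=[x]$, hence $\delta(c)=1\cdot c$.

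\textbf{Part (ii).} For ``if'', I would take $\alpha=[a]$, $c\in C^\Ga$ and a lift $b$ of $c$, and note that Construction \ref{con:rightact} produces the representative $a'=b^{-1}ab\hsss x=b^{-1}\cdot a\cdot\upgam b$ (using $b\hsss x=\upgam b$), which is precisely $a*b$ for the action of $B$ on $\Zl^1 B$ from Definition \ref{d:H1-nonab}; since $a$ and $a'$ lie in $\Zl^1\hm A\subseteq\Zl^1 B$ and are cohomologous there, $i_*(\alpha\cdot c)=i_*(\alpha)$. For ``only if'', I would start from $\alpha_1=[a_1]$, $\alpha_2=[a_2]$ with $i_*(\alpha_1)=i_*(\alpha_2)$, so that $a_2=b^{-1}\cdot a_1\cdot\upgam b$ in $\Zl^1 B$ for some $b\in B$, and put $c=j(b)$. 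Applying $j$ and using $j(a_1)=j(a_2)=1$ yields $1=c^{-1}\hsss\upgam c$, so $c\in C^\Ga$ and $b$ is an admissible lift of $c$ (because $j(b^{-1}\hsss\upgam b)=1$); then $\alpha_1\cdot c=[\,b^{-1}a_1 b\hsss x\,]=[\,b^{-1}\cdot a_1\cdot\upgam b\,]=[a_2]=\alpha_2$, so $\alpha_1$ and $\alpha_2$ lie in one $C^\Ga$-orbit. (Alternatively, one can twist \eqref{e:ABC} by $a_1$, transport along the bijection $\tau_{a_1}$ of \ref{subsec:twistedbij} compatibly with \ref{ss:hom-twisted} to reduce to the case $\alpha_1=[1]$, and then read off $\ker i_*=\delta(C^\Ga)=1\cdot C^\Ga$ from exactness of \eqref{e:ABC-long} together with part (i).)

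\textbf{Part (iii) and the main difficulty.} Here I would fix $a\in\Zl^1\hm A$, $\alpha=[a]$, $c\in C^\Ga$ with a lift $b_0$ (so $\upgam b_0=b_0\hsss x_0$ with $x_0\in A$), observe $\alpha\cdot c=[\,b_0^{-1}\cdot a\cdot\upgam b_0\,]$, and hence that $\alpha\cdot c=\alpha$ holds iff $b_0^{-1}\cdot a\cdot\upgam b_0=y^{-1}\cdot a\cdot\upgam y$ for some $y\in A$. The one genuinely nonobvious move --- which I expect to be the crux --- is to rewrite this as $a=(b_0 y^{-1})^{-1}\cdot a\cdot\upgam(b_0 y^{-1})$ and set $b=b_0 y^{-1}$, which is again a lift of $c$: the condition becomes $a\cdot\upgam b\cdot a^{-1}=b$, i.e.\ $b$ is a fixed point of the twisted action ${}^{\gamma*}$ of \eqref{e:twisted} on ${}_a B$. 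Since $j(a)=1$, the twist is trivial on $C$ and $j\colon{}_a B\to C$ is $\Ga$-equivariant for ${}^{\gamma*}$, so the existence of such a $b$ says exactly that $c\in{\rm im}\,[\,({}_a B)^\Ga\to C^\Ga\,]$; conversely, using $b$ itself as the lift in Construction \ref{con:rightact} together with $\upgam b=a^{-1}ba$ gives back $\alpha\cdot c=\alpha$. The remaining effort is bookkeeping: keeping the left/right conventions and the precise form ${}^{\gamma*}b=a\cdot\upgam b\cdot a^{-1}$ of the twisted action straight, together with the well-definedness check deferred from the start.
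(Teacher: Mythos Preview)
Your proposal is correct and follows essentially the same route as the paper: unwinding Construction \ref{con:rightact} and the coboundary relation in $B$ for (i) and (ii), and in (iii) producing the fixed point $b=b_0y^{-1}\in({}_aB)^\Ga$ exactly as the paper produces $bx\in({}_aB)^\Ga$. One trivial slip: in (i) the formula from Construction \ref{con:rightact} gives $1\cdot c=[\,b^{-1}\cdot 1\cdot b\hsss x\,]=[x]$, not $[\,1^{-1}\cdot 1\cdot b\hsss x\,]$; your conclusion $[x]=[b^{-1}\upgam b]=\delta(c)$ is of course the intended one.
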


[We denote by $_a B$ the group obtained by twisting $B$ with the cocycle $a$,
with $A$ acting on $B$ by inner automorphisms.]

\begin{proof}
The equality $\delta(c)=1\cdot c$ follows from the definition of $\delta$.
On the other hand, if two cocycles $a,a'\in \Zl^1\hm A$ are cohomologous in $B$,
then there exists $b\in B$ such that $a=b^{-1}\cdot a \cdot\upgam b$.
If $c$ is the image of $b$ in $C$, then $\upgam c=c$, whence $c\in C^\Ga$,
and it is clear that  $[a]\cdot c=[a']$.
Conversely, if $[a']=[a]\cdot c$, then $a'=b^{-1}\cdot a\cdot\upgam b$
for a preimage $b$ of $c$, whence $a\sim a'$ in $B$, which proves (ii).
Finally, if $b\in B$ is a lift of $c$, and $\alpha\cdot c=\alpha$,
then there exists $x\in A$ such that $a=x^{-1} b^{-1}a\hs \upgam b\hs\upgam x$.
This can be also written as $bx=a\hs\upgam(bx)\hs a^{-1}$,
that is, $bx\in (\hs_a B)^\Ga$.
Clearly, the image of $bx$ in $C$ is $c$; hence (iii).
\end{proof}

\begin{corollary}\label{c:39-cor1}
The kernel of $\Ho^1 B\to \Ho^1 C$ can be identified
with the quotient of $\Ho^1\hm A$ by the action of the group $C^\Ga$.
\end{corollary}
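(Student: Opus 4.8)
The plan is to deduce the corollary directly by combining the exactness statement of Proposition \ref{p:serre-prop38} with part (ii) of Proposition \ref{p:action-C-Gamma}, with no separate computation required. First I would invoke the exactness of the sequence \eqref{e:ABC-long} at the term $\Ho^1 B$, which gives
\[\ker\big[\hs\Ho^1 B\labelto{j_*}\Ho^1 C\hs\big]=\im\big[\hs\Ho^1\hm A\labelto{i_*}\Ho^1 B\hs\big].\]
Thus it suffices to identify $\im\,i_*$ with the quotient of $\Ho^1\hm A$ by the right action of $C^\Ga$ constructed in Construction \ref{con:rightact}.

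Next I would use Proposition \ref{p:action-C-Gamma}(ii): two classes $\alpha,\alpha'\in\Ho^1\hm A$ have $i_*(\alpha)=i_*(\alpha')$ if and only if they lie in one and the same $C^\Ga$-orbit. This says exactly that $i_*$ is constant on $C^\Ga$-orbits and separates distinct orbits, so it factors as an injection
\[\overline{i_*}\colon \Ho^1\hm A\big/C^\Ga\ \hookrightarrow\ \Ho^1 B\]
of the orbit set into $\Ho^1 B$. The image of $\overline{i_*}$ coincides with $\im\,i_*$, which by the previous paragraph equals $\ker[\Ho^1 B\to \Ho^1 C]$. Hence $\overline{i_*}$ is a bijection of $\Ho^1\hm A/C^\Ga$ onto $\ker[\Ho^1 B\to \Ho^1 C]$, which is the asserted identification.

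I do not expect any genuine obstacle: the statement is a formal consequence of results already in place, the substantive work having been done in establishing Proposition \ref{p:action-C-Gamma} (the fibers of $i_*$) and Construction \ref{con:rightact} (the $C^\Ga$-action). If one wishes to be careful about base points, one may additionally note that $i_*$ sends the neutral element $[1]\in\Ho^1\hm A$ to the neutral element of $\Ho^1 B$, so $\overline{i_*}$ is a bijection of pointed sets; by Proposition \ref{p:action-C-Gamma}(i) the $C^\Ga$-orbit of $[1]$ is precisely the image of the connecting map $\delta\colon C^\Ga\to\Ho^1\hm A$, and it corresponds under $\overline{i_*}$ to the trivial class in $\Ho^1 B$.
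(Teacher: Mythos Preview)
Your proposal is correct and is essentially the same as the paper's proof, which simply says ``This follows from Proposition \ref{p:action-C-Gamma}(ii).'' You have spelled out the details---invoking exactness at $\Ho^1 B$ from Proposition \ref{p:serre-prop38} to get $\ker j_*=\im i_*$, then using (ii) to identify $\im i_*$ with the orbit set---but the underlying argument is identical.
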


\begin{proof}
This follows from Proposition \ref{p:action-C-Gamma}(ii).
\end{proof}

\begin{corollary}\label{c:39-cor2}
Let $\beta\in \Ho^1 B$, $\beta=[b]$, where $b\in \Zl^1 B$.
Then the map  on cocycles
 \[a\mapsto a\hs b\colon \Zl^1{}_b A\to \Zl^1 B\]
induces a surjective map on cohomology
\[\Ho^1{}_b A\,\to\, j_*^{-1}(j_*(\beta))\subset \Ho^1 B,\]
which in turn induces a bijection
\[\Ho^1{}_b A/(\hs_c C)^\Ga\,\isoto\, j_*^{-1}(j_*(\beta)),\]
where $c=j(b)\in C$.
Here for $a\in \Zl^1\hs_b A$, the stabilizer in $(\hs_c C)^\Ga$ of $[a]\in \Ho^1\hs_b A$
is the image of the homomorphism $(\hs_{ab} B)^\Ga\to (\hs_c C)^\Ga$.
\end{corollary}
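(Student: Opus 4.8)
The plan is to deduce everything from Proposition \ref{p:action-C-Gamma}, after twisting the whole sequence \eqref{e:ABC} by the cocycle $b$. Since $A$ is normal in $B$, the group $B$ acts $\Gamma$-equivariantly by inner automorphisms on $A$, on $B$, and on $C=B/A$. Twisting \eqref{e:ABC} by $b\in\Zl^1 B$, and writing $c=j(b)$ — which lies in $\Zl^1 C$ because $j(b\cdot\upgam b)=1$ — I would obtain a short exact sequence of $\Gamma$-groups
$$1\to {}_b A\labelto{i}{}_b B\labelto{j}{}_c C\to 1,$$
with the same underlying groups and the same maps $i,j$, where $\upgam$ has been replaced by $x\mapsto b\cdot\upgam x\cdot b^{-1}$ on ${}_b A$ and on ${}_b B$, and by $y\mapsto c\cdot\upgam y\cdot c^{-1}$ on ${}_c C$. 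That the quotient $\Gamma$-structure on $C$ is precisely ${}_c C$ is immediate, since $j$ intertwines conjugation by $b$ with conjugation by $j(b)$.

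Next I would apply the twisting bijection of Subsection \ref{subsec:twistedbij}, once to $B$ acting on itself and once to $C$ acting on itself, obtaining bijections
$$\tau_b\colon\Ho^1{}_b B\isoto\Ho^1 B,\quad [x]\mapsto[xb],\qquad \tau_c\colon\Ho^1{}_c C\isoto\Ho^1 C,\quad [y]\mapsto[yc],$$
which send the neutral classes to $\beta=[b]$ and to $j_*(\beta)=[c]$ respectively. By the compatibility of twisting with morphisms (Subsection \ref{ss:hom-twisted}), the square formed by $\tau_b$, $\tau_c$, the map $j_*\colon\Ho^1 B\to\Ho^1 C$, and the analogous map $\Ho^1{}_b B\to\Ho^1{}_c C$ of the twisted sequence commutes. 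Since $\tau_c$ is injective and carries the neutral class to $j_*(\beta)$, it follows that $\tau_b$ restricts to a bijection of $\ker[\Ho^1{}_b B\to\Ho^1{}_c C]$ onto $j_*^{-1}(j_*(\beta))$.

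Now I would apply Proposition \ref{p:action-C-Gamma}(ii) and Corollary \ref{c:39-cor1} to the twisted sequence $1\to{}_b A\to{}_b B\to{}_c C\to 1$: the inclusion-induced map $i_*\colon\Ho^1{}_b A\to\Ho^1{}_b B$ has image $\ker[\Ho^1{}_b B\to\Ho^1{}_c C]$, and it induces a bijection of $\Ho^1{}_b A$ modulo the right action of $({}_c C)^\Gamma$ of Construction \ref{con:rightact} onto that kernel. The composite $\tau_b\circ i_*$ is exactly the map induced on cohomology by $a\mapsto ab$ on cocycles; hence that map is surjective onto $j_*^{-1}(j_*(\beta))$ and descends to the asserted bijection $\Ho^1{}_b A/({}_c C)^\Gamma\isoto j_*^{-1}(j_*(\beta))$. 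For the stabilizer assertion I would invoke Proposition \ref{p:action-C-Gamma}(iii) for the same twisted sequence: the stabilizer in $({}_c C)^\Gamma$ of $[a]\in\Ho^1{}_b A$ is the image of $\big({}_a({}_b B)\big)^\Gamma\to({}_c C)^\Gamma$, and here ${}_a({}_b B)={}_{ab}B$, since twisting ${}_b B$ (whose $\upgam$ is $x\mapsto b\cdot\upgam x\cdot b^{-1}$) further by $a\in\Zl^1{}_b A$ turns $\upgam$ into $x\mapsto ab\cdot\upgam x\cdot(ab)^{-1}$, with $ab\in\Zl^1 B$ by the identity $(ab)\cdot\upgam(ab)=\big(a\cdot b\upgam a\,b^{-1}\big)\big(b\cdot\upgam b\big)=1$.

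The one point that requires genuine care is the bookkeeping of these nested twists: one must check that each modified $\Gamma$-action is an involution (which follows from the cocycle relations for $b$, $c$, and $ab$), that the twisted sequence is again short exact as a sequence of $\Gamma$-groups, that the identities "quotient of ${}_b B$ by ${}_b A$ equals ${}_c C$" and "${}_a({}_b B)={}_{ab}B$" hold on the nose, and that the connecting map and the $C^\Gamma$-action attached to the twisted sequence correspond, under $\tau_b$ and $\tau_c$, to the data occurring in the statement. None of this is deep, but it is where the order-of-multiplication and inversion conventions must be tracked with precision; once it is in place, Proposition \ref{p:action-C-Gamma} yields the corollary immediately.
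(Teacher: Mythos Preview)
Your proposal is correct and follows essentially the same approach as the paper, which simply says the result follows from Corollary \ref{c:39-cor1} and Proposition \ref{p:action-C-Gamma}(iii) by twisting as in Subsection \ref{ss:hom-twisted}. You have carefully unpacked precisely this twisting argument, including the identification ${}_a({}_b B)={}_{ab}B$ and the use of the commutative square from Subsection \ref{ss:hom-twisted} to match kernels with fibers.
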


[The group $B$ acts on itself by inner automorphisms, and leaves $A$ stable.
This allows us to twist the exact sequence \eqref{e:ABC}
by the cocycle $b$ or by the cocycle $ab$,
as in Subsection \ref{ss:hom-twisted}.]

\begin{proof}
This follows from Corollary \ref{c:39-cor1}
and Proposition \ref{p:action-C-Gamma}(iii)
by twisting as in Subsection \ref{ss:hom-twisted}.
\end{proof}

\begin{corollary}\label{c:split}
Assume that the exact sequence of \,$\Gamma$-groups \eqref{e:ABC}
admits a {\emm splitting} $\vs$, that is, a homomorphism of $\Gamma$-groups
\[\vs\colon C\to B\]
such that $j\circ\vs=\id_C$.
For $c\in \Zl^1 C$ consider the twisted exact sequence
\begin{equation}\label{e:ABC-tw}
1\to \hs_c A\to \hs_c B\to \hs_c C\to 1
\end{equation}
and the splitting
\[_c \vs\colon\hs_c C\to \hs_c B\]
where by abuse of notation we write $_c A$ for $_{\vs(c)} A$ and $_c B$ for $_{\vs(c)} B$.
Then:
\begin{enumerate}
\item[(i)] The homomorphism $j\colon B^\Gamma\to C^\Gamma$ is surjective.
\item[(ii)] The map $j_*\colon \Ho^1 B\to\Ho^1 C$ is surjective.
\item[(iii)] The split exact sequence \eqref{e:ABC-tw} induces a bijection
\begin{equation}\label{e:cA-cC}
(\Ho^1{}_c A)/(\hs_c C)^\Gamma \labelto{\sim} j_*^{-1}[c],
\end{equation}
where $(\hs_c C)^\Gamma$ acts on the right on $\Ho^1\hs_c A$ by
\begin{equation}\label{e:c'-action}
[a]\cdot c'=[\vs(c')^{-1} a\, \vs(c')]\quad
    \text{for } a\in\Zl^1\hs_c A,\ c'\in (\hs_c C)^\Gamma.
\end{equation}
\item[(iv)] The bijections  of (iii) for all $[c]\in \Ho^1 C$
induce a canonical bijection
\[\bigsqcup_{[c]\in \Ho^1 C} (\Ho^1{}_c A)/(\hs_c C)^\Gamma\,\labelto{\sim}\, \Ho^1 B,\]
where $\sqcup$ denotes the disjoint union.
\end{enumerate}
\end{corollary}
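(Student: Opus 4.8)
The plan is to dispatch parts (i), (ii), (iv) by formal arguments and to extract (iii) from Corollary \ref{c:39-cor2} after pinning down the $(\,_c C)^\Gamma$-action. For (i), given $c\in C^\Gamma$ the element $\vs(c)\in B$ satisfies $\upgam\vs(c)=\vs(\upgam c)=\vs(c)$ because $\vs$ is a morphism of $\Gamma$-groups, so $\vs(c)\in B^\Gamma$ and $j(\vs(c))=c$. For (ii), if $c\in\Zl^1 C$ then $\vs(c)\cdot\upgam\vs(c)=\vs(c\cdot\upgam c)=\vs(1)=1$, so $\vs(c)\in\Zl^1 B$ and $j_*[\vs(c)]=[c]$; thus every class in $\Ho^1 C$ lies in the image of $j_*$, which also shows that the twisting in \eqref{e:ABC-tw} is legitimate (the cocycle condition on $\vs(c)$ holds).

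For (iii), fix $c\in\Zl^1 C$ and set $b=\vs(c)\in\Zl^1 B$, so $\beta:=[b]$ has $j_*(\beta)=[c]$. Corollary \ref{c:39-cor2}, applied with this $b$, produces a bijection $(\Ho^1{}_b A)/(\,_{j(b)}C)^\Gamma\isoto j_*^{-1}(j_*(\beta))=j_*^{-1}[c]$ induced on cocycles by $a\mapsto ab$; under the convention of the statement, $_bA={}_{\vs(c)}A={}_cA$ and $_{j(b)}C={}_cC$, so this is precisely the map \eqref{e:cA-cC}. What remains is to identify the right action: the action of $(\,_cC)^\Gamma$ on $\Ho^1{}_cA$ appearing in Corollary \ref{c:39-cor2} is the one coming from Construction \ref{con:rightact} and Proposition \ref{p:action-C-Gamma} applied to the twisted sequence, i.e.\ lift $c'\in(\,_cC)^\Gamma$ into $_cB$, write $\upgam{}^{*}(\text{lift})=(\text{lift})\cdot x$ with $x\in{}_cA$, and send $[a]$ to $[(\text{lift})^{-1}a\,(\text{lift})\,x]$. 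I would take the lift of $c'$ to be $_c\vs(c')=\vs(c')\in{}_cB$; using that $c'\in(\,_cC)^\Gamma$ means $\upgam c'=c^{-1}c'c$, together with the homomorphism property $\vs(c^{-1}c'c)=\vs(c)^{-1}\vs(c')\vs(c)$, one checks that $\vs(c')$ is fixed by the twisted $\Gamma$-action on $_cB$, so $x=1$ and the action reduces to $[a]\mapsto[\vs(c')^{-1}a\,\vs(c')]$, which is exactly \eqref{e:c'-action}. (The same computation, now with $\upgam\vs(c')=\vs(c)^{-1}\vs(c')\vs(c)$, also confirms directly that conjugation by $\vs(c')$ maps $\Zl^1{}_cA$ into itself.) Part (iv) is then immediate: by (ii) the map $j_*\colon\Ho^1 B\to\Ho^1 C$ is surjective, so $\Ho^1 B$ is the disjoint union of the fibers $j_*^{-1}[c]$ over $[c]\in\Ho^1 C$, and inserting the bijections of (iii) gives the stated decomposition; it is canonical because the splitting $\vs$ is part of the data.

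The genuinely substantive step is (iii), and within it the only real work is the identification of the abstract "lift-and-conjugate" action of $(\,_cC)^\Gamma$ supplied by Proposition \ref{p:action-C-Gamma}(iii)/Corollary \ref{c:39-cor2} with the clean formula \eqref{e:c'-action}; the key observation making this go through is that the splitting furnishes $\Gamma$-fixed lifts in the twisted group $_cB$. The remaining bookkeeping with the twisted $\Gamma$-actions on $_cA$, $_cB$, $_cC$ is routine but needs to be carried out with care so that the actions of $\Gamma$ are tracked consistently through the twisting.
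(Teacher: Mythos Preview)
Your proposal is correct and follows essentially the same approach as the paper: parts (i) and (ii) by functoriality of $\vs$, part (iii) by invoking Corollary~\ref{c:39-cor2} and then identifying the abstract $(\hs_cC)^\Gamma$-action with conjugation by $\vs(c')$ via the observation that $\vs(c')\in(\hs_cB)^\Gamma$, and part (iv) from (ii) and (iii). Your write-up is simply more explicit than the paper's, spelling out the computation $^{\gamma*}\vs(c')=\vs(c)\,\vs(c^{-1}c'c)\,\vs(c)^{-1}=\vs(c')$ that the paper records in one line.
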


\begin{proof}
Assertions (i) and (ii) follow  by functoriality from the formula $j\circ\vs=\id_C$.
Assertion (iv) follows from (ii) and (iii).
We obtain the bijection \eqref{e:cA-cC} in (iii)  from
Corollary \ref{c:39-cor2}.

It remains to prove formula \eqref{e:c'-action}.
Let $c'\in (\hs_c C)^\Gamma$.
Set $b'=\vs(c')\in (\hs_c B)^\Gamma$.
By Construction \ref{con:rightact} we have
\[ [a]\cdot c'= [(b')^{-1} a\hs\hs^{\gamma*} b']=
      [(b')^{-1} a\hs b']=[\vs(c')^{-1} a\hs\hs \vs(c')],\]
where $^{\gamma*}b'$ denote the image of $b'$ under the $c$-twisted action of $\gamma$,
and $^{\gamma*}b'=b'$ because $b'\in (\hs_c B)^\Gamma$.
This completes the proof of the corollary.
\end{proof}

\begin{corollary}\label{c:split-action}
In Corollary \ref{c:split}(iii):
\begin{enumerate}
\item[(i)] the group $(\hs_c C)^\Gamma$ acts on $[1]\in \Ho^1{}_c A$ trivially;
\item[(ii)] If $\#\Ho^1{}_c A>1$, then $\#j_*^{-1}[c]>1$.
\end{enumerate}
\end{corollary}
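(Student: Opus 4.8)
The plan is to obtain both assertions as formal consequences of Corollary \ref{c:split}(iii), which furnishes a bijection between $j_*^{-1}[c]$ and the orbit set $(\Ho^1{}_cA)/(\hs_cC)^\Gamma$, together with the explicit description of the action in formula \eqref{e:c'-action}.

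First I would dispose of (i). Observe that $1\in A$ is a $1$-cocycle for the $c$-twisted $\Gamma$-action on $A$ (the twisted action of $\gamma$ is an automorphism of $A$, hence fixes the identity), so the class $[1]\in\Ho^1{}_cA$ is defined and is the neutral element. Applying \eqref{e:c'-action} with $a=1$, for every $c'\in(\hs_cC)^\Gamma$ we get
\[[1]\cdot c'=[\hs\vs(c')^{-1}\cdot 1\cdot\vs(c')\hs]=[\hs\vs(c')^{-1}\vs(c')\hs]=[1],\]
so the action of $(\hs_cC)^\Gamma$ fixes $[1]$. The one point to watch here is to invoke the concrete formula \eqref{e:c'-action} (valid because the sequence splits) rather than the general Construction \ref{con:rightact}, so that the computation is immediate.

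Next I would derive (ii) from (i) by counting orbits. By part (i), $\{[1]\}$ is a single $(\hs_cC)^\Gamma$-orbit in $\Ho^1{}_cA$. Suppose $\#\Ho^1{}_cA>1$ and pick $[a]\in\Ho^1{}_cA$ with $[a]\neq[1]$; since distinct orbits are disjoint and the orbit through $[1]$ is $\{[1]\}$, the orbit through $[a]$ does not contain $[1]$, so $\Ho^1{}_cA$ has at least two $(\hs_cC)^\Gamma$-orbits. By Corollary \ref{c:split}(iii), $j_*^{-1}[c]$ is in bijection with the set of these orbits, whence $\#j_*^{-1}[c]>1$. Neither step presents a genuine obstacle: everything reduces to the bijection of Corollary \ref{c:split}(iii) and the action formula \eqref{e:c'-action}, and the heart of (ii) is the elementary observation that a group action with a fixed point on a set of cardinality $>1$ has at least two orbits; the only bookkeeping is checking, as above, that $[1]$ is a well-defined class in $\Ho^1{}_cA$.
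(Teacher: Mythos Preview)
Your proof is correct and follows exactly the same approach as the paper: part (i) is an immediate computation from formula \eqref{e:c'-action}, and part (ii) follows from (i) together with the bijection \eqref{e:cA-cC}. The paper's proof is a single sentence to this effect; you have simply filled in the routine details.
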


\begin{proof}
Assertion (i) follows from formula \eqref{e:c'-action}, and (ii) follows from (i)
and \eqref{e:cA-cC}.
\end{proof}

\begin{lemma}\label{lem:1c}
Let
\[ 1\to A\to B\to C\to 1\]
be an exact sequence of $\Gamma$-groups.
Let $c\in C^\Ga$ be such that $c^2=1$
(then $c\in \Zl^1 C$\hs).
Assume that $c$ is the image of some  $b\in \Zl^1 B$ (then $\upgam b=b^{-1}$).
Consider the twisted exact sequence
\[ 1\to {}_b A\to {}_b B\to{}_c C\to 1.\]
Then for $[1]\in \Ho^1 A$ we have
\[ [1]\cdot c=[a],\quad\text{where }a=b^{-2}\in A.\]
In particular, if $b^2=1$, then $[1]\cdot c=[1]$.
\end{lemma}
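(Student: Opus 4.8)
The plan is to evaluate $[1]\cdot c$ directly from Construction~\ref{con:rightact}, applied to the twisted short exact sequence $1\to {}_b A\to {}_b B\to {}_c C\to 1$, using $b$ itself as a lift of $c$. First I would record the elementary bookkeeping that makes the statement meaningful. Since $\upgam c=c$ and $c^2=1$, we have $c\cdot\upgam c=c^2=1$, so $c\in\Zl^1 C$. The hypothesis that $b\in\Zl^1 B$ lies over $c$ says exactly that $b\cdot\upgam b=1$, i.e.\ $\upgam b=b^{-1}$. Finally, in ${}_c C$ the action of $\gamma$ is $c'\mapsto c\cdot\upgam{c'}\cdot c^{-1}$, so $\hs^{\gamma*}c=c\cdot\upgam c\cdot c^{-1}=c\cdot c\cdot c^{-1}=c$; thus $c\in({}_c C)^\Gamma$ and it makes sense to let $c$ act on $\Ho^1\,{}_b A$.

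Next I would recall how Construction~\ref{con:rightact} computes this action on the neutral class: one chooses a lift $\beta\in{}_b B$ of $c\in{}_c C$, writes $\hs^{\gamma*}\beta=\beta\cdot x$ with $x\in{}_b A$, and then $[1]\cdot c=\bigl[\hs\beta^{-1}\cdot 1\cdot\hs^{\gamma*}\beta\hs\bigr]=[x]$. Since twisting changes neither the underlying sets nor the underlying homomorphism $B\to C$, the element $b\in{}_b B$ is a lift of $c\in{}_c C$, so I would take $\beta=b$. In ${}_b B$ the action of $\gamma$ is $b'\mapsto b\cdot\upgam{b'}\cdot b^{-1}$, hence
\[ \hs^{\gamma*}b=b\cdot\upgam b\cdot b^{-1}=b\cdot b^{-1}\cdot b^{-1}=b^{-1}, \]
using $\upgam b=b^{-1}$. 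Therefore $x=b^{-1}\cdot\hs^{\gamma*}b=b^{-1}\cdot b^{-1}=b^{-2}$, and this lies in $A=\ker(B\to C)$ because $b^{-1}\cdot\hs^{\gamma*}b$ maps to $c^{-1}c=1$ in $C$. This gives $[1]\cdot c=[b^{-2}]=[a]$ with $a=b^{-2}$, as claimed; when $b^2=1$ we get $a=1$ and hence $[1]\cdot c=[1]$, which is the final assertion.

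There is no real obstacle here beyond keeping the three twisted $\Gamma$-actions (on ${}_b A$, ${}_b B$ and ${}_c C$) straight and confirming that Construction~\ref{con:rightact} is being applied with a legitimate lift. As an internal consistency check I would verify directly that $b^{-2}$ is a $1$-cocycle for the twisted action on ${}_b A$: since $\hs^{\gamma*}(b^{-2})=b\cdot\upgam{(b^{-2})}\cdot b^{-1}=b\cdot b^{2}\cdot b^{-1}=b^{2}$, we obtain $b^{-2}\cdot\hs^{\gamma*}(b^{-2})=b^{-2}\cdot b^{2}=1$, so indeed $b^{-2}\in\Zl^1\,{}_b A$.
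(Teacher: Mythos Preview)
Your proof is correct and follows exactly the same approach as the paper's proof: both apply Construction~\ref{con:rightact} to the twisted exact sequence using $b$ itself as a lift of $c$, compute $\hs^{\gamma*}b=b\cdot\upgam b\cdot b^{-1}=b\cdot b^{-1}\cdot b^{-1}=b^{-1}$, and conclude $[1]\cdot c=[b^{-1}\cdot 1\cdot\hs^{\gamma*}b]=[b^{-2}]$. Your version simply spells out more of the bookkeeping (e.g., that $c\in({}_cC)^\Gamma$, that $b^{-2}\in A$, and that $b^{-2}\in\Zl^1\,{}_bA$), all of which is correct.
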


\begin{proof}
By definition, $[1]\cdot c$ is the class of the cocycle
\[b^{-1} 1 \hs^{\gamma*}\hmm b=b^{-1}  b\hs\upgam\hm b\hs\hs b^{-1}
      =b^{-1}  b^{-1}=b^{-2}.\qedhere\]
\end{proof}

\begin{construction}
Let
\begin{equation}\label{e:ABC-c-c}
1\to A\labelto{i} B\labelto{j} C\to 1
\end{equation}
be a short exact sequence of $\Gamma$-groups,
where the subgroup $i(A)$ of $B$ is {central}
that is, contained in the center of $B$).
Let $c\in \Zl^1 C$.
We lift $c$ to an element $b$ of $B$ (which need not to be a cocycle)
and set $a=b\cdot\upgam b\in A$.
One check immediately that $a\in \Zl^2\hm A$ and
that in this way we obtain a well-defined map
\[\Delta\colon \Ho^1 C\to \Ho^2\hm A.\]
\end{construction}

\begin{proposition}[{Serre, \cite[I.5.7, Proposition 43]{Serre1997}}]
\label{p:Serre-43}
If in the exact sequence \eqref{e:ABC-c-c}, $i(A)$ is central in $B$,
then the following sequence of pointed sets is exact:
\begin{align*}
1\to A^\Gamma\to \Bd^\Gamma\to C^\Gamma \labelto{\delta} \Ho^1\hm A\labelto{i_*} \Ho^1 B
\labelto{j_*} \Ho^1 C\labelto{\Delta} \Ho^2\hm A.
\end{align*}
\end{proposition}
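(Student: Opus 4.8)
The plan is to reuse what has already been established and thereby reduce to a single new exactness point. By Proposition~\ref{p:serre-prop38} the sequence
\[1\to A^\Gamma\to B^\Gamma\to C^\Gamma\labelto{\delta}\Ho^1\hm A\labelto{i_*}\Ho^1 B\labelto{j_*}\Ho^1 C\]
is exact, and the map $\Delta$ is well defined on cohomology classes by the Construction defining $\Delta$ above. Hence the only thing left to prove is exactness at $\Ho^1 C$, that is, the equality $\im j_*=\ker\Delta$ of pointed subsets of $\Ho^1 C$. I would establish the two inclusions separately, in the spirit of Serre's proof of Proposition~\ref{p:serre-prop38} above.

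For $\im j_*\subseteq\ker\Delta$: given $\beta\in\Ho^1 B$, write $\beta=[b]$ with $b\in\Zl^1 B$, so that $j_*(\beta)=[j(b)]$. Since $\Delta([j(b)])$ may be computed from any lift in $B$ of the cocycle $j(b)\in\Zl^1 C$, I take the lift to be $b$ itself; then $b\cdot\upgam b=1$ because $b$ is a $1$-cocycle, and therefore $\Delta(j_*(\beta))=[1]$.

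For $\ker\Delta\subseteq\im j_*$: let $c\in\Zl^1 C$ with $\Delta([c])=[1]$. Choose a lift $b\in B$ of $c$ and set $a=b\cdot\upgam b$, which lies in $\Zl^2\hm A$ by the construction of $\Delta$. The assumption $[a]=[1]$ in $\Ho^2\hm A=\Zl^2\hm A/\Bd^2\hm A$ gives $a_0\in A$ with $a=a_0\cdot\upgam a_0$. Now replace $b$ by $b'=a_0^{-1}b$, which is again a lift of $c$ since $a_0\in A=\ker j$; using that $A$ is contained in the center of $B$ (in particular $A$ is abelian) one computes
\[b'\cdot\upgam b'=a_0^{-1}\,b\,\upgam a_0^{-1}\,\upgam b=a_0^{-1}\,\upgam a_0^{-1}\cdot b\cdot\upgam b=(a_0\cdot\upgam a_0)^{-1}\cdot a=1.\]
Thus $b'\in\Zl^1 B$ and $j(b')=c$, so $[c]=j_*([b'])\in\im j_*$, as wanted.

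I do not anticipate any serious obstacle: after unwinding the order-$2$ descriptions of $\Ho^1$ and $\Ho^2$ from Definitions~\ref{d:H1-nonab}, \ref{d:H1} and \ref{d:H2}, this is a short diagram chase. The only point that needs care is the centrality hypothesis on $i(A)$, which enters exactly where an element of $A$ has to be commuted past an element of $B$ — in the well-definedness of $\Delta$ and in the displayed computation of $b'\cdot\upgam b'$ — and which also guarantees that $A$ is abelian, so that $\Ho^2\hm A$ makes sense. If one preferred not to cite Proposition~\ref{p:serre-prop38}, the earlier exactness points could be re-derived by the same kind of unwinding, but reusing it keeps the proof short.
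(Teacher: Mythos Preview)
Your proof is correct. The paper itself leaves this proposition as an exercise (the proof environment contains only the word ``Exercise''), and what you have written is exactly the intended short diagram chase: reduce to exactness at $\Ho^1 C$ via Proposition~\ref{p:serre-prop38}, then verify $\im j_*=\ker\Delta$ by adjusting a lift using the centrality of $i(A)$.
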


\begin{proof} Exercise. \end{proof}

\subsection{Group cohomology with coefficients in a group of order $2p^n$}

We  give an alternative  proof of the   following well-known result
\cite[Section 6, Corollary 1 of Proposition 8]{AW}.
\begin{lemma}
\label{l:2=0}
For $\Gamma=\{1,\gamma\}$, an {\emm abelian} $\Gamma$-group $A$,
and any $\xi\in \Ho^n\hm A$ $(n=1,2)$,
we have $2\xi=0\in \Ho^n\hm A$.
\end{lemma}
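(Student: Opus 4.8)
The plan is to use the concrete models of $\Ho^n\hm A$ from Definitions \ref{d:H1} and \ref{d:H2}, so that a cohomology class is literally represented by a single element of $A$ satisfying a fixed relation, and then exhibit a chain $2c$ as an explicit coboundary.

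First, consider $n=1$. Let $\xi=[c]\in \Ho^1\hm A$ with $c\in \Zl^1\hm A$, so that $c+\upgam c=0$, i.e.\ $\upgam c=-c$. Then $2c=c-(-c)=c-\upgam c\in \Bd^1\hm A$ by the very definition of $\Bd^1\hm A=\{a-\upgam a\mid a\in A\}$ (take $a=c$). Hence $2\xi=[2c]=0$ in $\Ho^1\hm A=\Zl^1\hm A/\Bd^1\hm A$.

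Next, consider $n=2$. Let $\xi=[c]\in \Ho^2\hm A$ with $c\in \Zl^2\hm A=A^\Gamma$, so $\upgam c=c$. Then $2c=c+\upgam c\in \Bd^2\hm A=\{a+\upgam a\mid a\in A\}$ (take $a=c$). Hence $2\xi=[2c]=0$ in $\Ho^2\hm A=\Zl^2\hm A/\Bd^2\hm A$.

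In both cases the argument is a one-line verification: the cocycle condition in degree $n$ is exactly the statement that doubling the representative lands it in the group of $n$-coboundaries. There is essentially no obstacle here; the only thing to be careful about is matching conventions — one should note that Remark \ref{r:def} guarantees these $\Ho^n\hm A$ agree with the usual ones, so the statement is genuinely the cited result of Artin--Tate, and that additive notation is being used throughout since $A$ is abelian. I would also remark that this is sharp in general, e.g.\ $\Ho^2\hssh\Z_\triv=\Z/2\Z$ and $\Ho^1\hssh\Z_\tw=\Z/2\Z$ from Examples \ref{x:cohom-Z}, so no better bound than $2\xi=0$ holds uniformly.
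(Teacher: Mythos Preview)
Your proof is correct and essentially identical to the paper's own argument: both cases are handled by noting that the cocycle condition makes $2c$ an explicit coboundary (with $a=c$). The additional remarks on sharpness and on Remark~\ref{r:def} are fine but not needed.
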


\begin{proof}
We prove for $n=1$.
Let $\xi\in \Ho^1\hm A$, $\xi=[c]$, $c\in \Zl^1\hm A$,
 that is, $c\in A$ and $\upgam c=-c$.
Then $2c=c+c=c-\upgam c\in \Bd^1\hm A$. Thus $2\xi=[2c]=0$, as required.

We prove for $n=2$. Let $\xi\in \Ho^2\hm A$, $\xi=[c]$, $c\in \Zl^2\hm A$,
that is, $c\in A$ and $\upgam c=c$.
Then $2c=c+c=c+\upgam c\in \Bd^2\hm A$. Thus $2\xi=[2c]=0$, as required.
\end{proof}

\begin{corollary}\label{c:2m+1}
Let $A$ be a finite  abelian $\Gamma$-group {\emm of odd order} $2m+1$,
where $\Gamma=\{1,\gamma\}$ is a group of order 2.
Then $\Ho^1\hm A=0$ and $\Ho^2\hm A=0$.
\end{corollary}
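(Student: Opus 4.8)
The plan is to deduce this immediately from Lemma \ref{l:2=0} together with the elementary observation that $\Ho^1\hm A$ and $\Ho^2\hm A$ are \emph{subquotients} of the finite group $A$, and hence are finite abelian groups of odd order.

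First I would record that, by Definitions \ref{d:H1} and \ref{d:H2}, for $n=1,2$ the group $\Ho^n\hm A$ is the quotient $\Zl^n\hm A/\Bd^n\hm A$, where $\Bd^n\hm A\subseteq\Zl^n\hm A\subseteq A$ are subgroups (one checks at once that $a-\upgam a\in\Zl^1\hm A$ and $a+\upgam a\in\Zl^2\hm A$, so the inclusions $\Bd^n\hm A\subseteq\Zl^n\hm A$ hold, as is in any case implicit in those definitions). Therefore $\Ho^n\hm A$ is a subquotient of $A$, so by Lagrange its order divides $|A|=2m+1$; in particular it is a finite abelian group of odd order.

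Next I would invoke Lemma \ref{l:2=0}: for every $\xi\in\Ho^n\hm A$ we have $2\xi=0$, i.e. multiplication by $2$ is the zero endomorphism of the finite abelian group $H:=\Ho^n\hm A$. On the other hand, since $|H|$ is odd there is an integer $k$ with $2k\equiv 1\pmod{|H|}$, so multiplication by $k$ is inverse to multiplication by $2$ on $H$; thus multiplication by $2$ is an automorphism of $H$. An endomorphism that is simultaneously zero and bijective forces $H=0$, so $\Ho^n\hm A=0$ for $n=1,2$, as claimed.

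There is essentially no obstacle here; the only point requiring a line of care is the identification of $\Ho^n\hm A$ as a subquotient of $A$ (so that ``odd order'' propagates), which is immediate from the additive descriptions in Definitions \ref{d:H1} and \ref{d:H2}.
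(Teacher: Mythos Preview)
Your proof is correct and is essentially the same argument as the paper's. The paper avoids the explicit detour through ``$\Ho^n\hm A$ is a subquotient of $A$'' by noting directly that any cocycle $c\in A$ satisfies $(2m+1)c=0$, whence $(2m+1)\xi=0$, and then writes $\xi=(2m+1)\xi-m\cdot(2\xi)=0$; your version packages the same two facts as ``multiplication by $2$ is both zero and an automorphism on a group of odd order''.
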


\begin{proof}
Let $n=1$ or $n=2$, and let $\xi\in \Ho^n\hm A$, $\xi=[c]$,
where $c\in \Zl^n\hm A\subseteq A$.
Then $(2m+1)c=0$, and hence, $(2m+1)\xi=0$.
On the other hand, by Lemma \ref{l:2=0} we have $2\xi=0$. We see that
\[\xi=(2m+1)\xi-m\cdot (2\xi)=0-m\cdot 0=0.\qedhere\]
\end{proof}

\begin{lemma}\label{l:H1-bijective}
Let $A$ be a $\Gamma$-group,and let $M\subset A$ be a {\emm normal abelian}
finite $\Gamma$-subgroup {\emm of odd order}.
Then the canonical map
\[ \Ho^1\hm A\to \Ho^1 (A/M)\]
is bijective.
\end{lemma}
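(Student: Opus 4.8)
The plan is to show that the canonical map
\[\pi_*\colon \Ho^1\hm A\to \Ho^1(A/M)\]
induced by the short exact sequence of $\Gamma$-groups $1\to M\labelto{i} A\labelto{\pi} A/M\to 1$ is both surjective and injective. The two main inputs will be the twisting formalism of Subsections \ref{subsec:twistedbij} and \ref{ss:hom-twisted}, and the vanishing $\Ho^1 N=\Ho^2 N=0$ for any finite abelian $\Gamma$-group $N$ of odd order (Corollary \ref{c:2m+1}); the point is that any group obtained from $M$ by re-twisting its $\Gamma$-action is again finite abelian of odd order. Note that since $M$ is abelian, conjugation by $M$ acts trivially on $M$, so the conjugation action of $A$ on $M$ will descend to $A/M$ when needed.

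For surjectivity, I would take a cocycle $\bar c\in\Zl^1(A/M)$, choose a set-theoretic lift $b\in A$, and observe that the defect $a:=b\cdot{}^\gamma b$ lies in $M$ because $\bar c\cdot{}^\gamma\bar c=1$. Since $M$ is only normal, not central, I cannot directly invoke a central $\Ho^2$-obstruction; instead I would introduce the $b$-twisted involution
\[\sigma'\colon M\to M,\qquad \sigma'(m)=({}^\gamma b)^{-1}\,({}^\gamma m)\,({}^\gamma b).\]
A direct computation using $\gamma^2=\id$, the fact that $\Gamma$ acts on $A$ by automorphisms, and commutativity of $M$ (so $a$ acts trivially on $M$ by conjugation) shows $\sigma'^2=\id$; thus $M':=(M,\sigma')$ is a finite abelian $\Gamma$-group of odd order. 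A second short computation gives $\sigma'(a)=a$, i.e. $a\in\Zl^2 M'$. Since $\Ho^2 M'=0$ by Corollary \ref{c:2m+1}, writing $M$ additively we may write $-a=p+\sigma'(p)$ for some $p\in M$. Setting $b'=b\cdot{}^\gamma p$, a one-line manipulation gives $b'\cdot{}^\gamma b'=a+\sigma'(p)+p=0$ in $M$, so $b'\in\Zl^1\hm A$ is a genuine cocycle lifting $\bar c$, whence $\pi_*[b']=[\bar c]$.

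For injectivity, I would use the twisting bijections $\tau_{c'}\colon\Ho^1{}_{c'}A\to\Ho^1\hm A$ of Subsection \ref{subsec:twistedbij} and the compatibility square of Subsection \ref{ss:hom-twisted} for the morphism $\pi$; since ${}_{c'}M$ is again a normal abelian $\Gamma$-subgroup of odd order in ${}_{c'}A$, this reduces the statement to showing that the fiber of $\pi_*$ over the neutral class $[1]$ consists of $[1]$ alone. So let $c\in\Zl^1\hm A$ with $\pi_*[c]=[1]$, i.e. the image $\bar c$ of $c$ equals $\bar b^{-1}\cdot{}^\gamma\bar b$ for some $\bar b\in A/M$; lift $\bar b$ to $b\in A$. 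Replacing $c$ by the cohomologous cocycle $c*b^{-1}=b\,c\,({}^\gamma b)^{-1}$, whose image in $A/M$ is $\bar b\,\bar c\,({}^\gamma\bar b)^{-1}=1$, I may assume $c\in M$; then $c\in\Zl^1 M$ because the $\Gamma$-action on $A$ restricts to $M$. Since $\Ho^1 M=0$ by Corollary \ref{c:2m+1}, $c$ is a coboundary in $M$, hence in $A$, so $[c]=[1]$ in $\Ho^1\hm A$.

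The step I expect to be the crux is the surjectivity argument — more precisely, handling the non-centrality of $M$. The usual lifting obstruction along a central extension lives in $\Ho^2$ of the kernel, but here $M$ is only normal, so I must replace the naive $\Gamma$-structure on $M$ by the $b$-dependent twisted involution $\sigma'$ and verify both that it is an involution and that $a=b\cdot{}^\gamma b$ is a $2$-cocycle for it. Once that bookkeeping is in place, the odd-order vanishing $\Ho^2 M'=0$ finishes the job, and the remaining manipulations in both directions are routine.
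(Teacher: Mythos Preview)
Your proof is correct and follows essentially the same route as the paper's. The paper first treats the central case via the standard exact sequence ending in $\Ho^2 M$, and for the non-central case cites Serre (Propositions~39 and~41) for the facts that the lifting obstruction lives in $\Ho^2({}_{\bar c}M)$ and that each fiber of $\pi_*$ comes from $\Ho^1({}_{a}M)$; both vanish by the odd-order argument. You skip the separate central case and instead unwind those cited results by hand: your twisted involution $\sigma'$ is exactly the $\Gamma$-structure on ${}_{\bar c}M$ (note $({}^\gamma b)^{-1}\equiv b\pmod M$ since $\bar c\cdot{}^\gamma\bar c=1$, and $M$ abelian makes the $M$-ambiguity disappear), and your injectivity reduction via twisting is the content of Serre's fiber description. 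The payoff of your version is that it is self-contained within the paper's own cohomological setup; the paper's version is terser but leans on the external references.
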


\begin{proof}
First we prove the lemma in the case when $M$ is central, that is, $M\subseteq Z(A)$.
Write $B=A/M$.
Since $M$ is central, the short exact sequence
\[ 1\to M\labelto{i} A\labelto{j} B\to 1\]
induces a cohomology exact sequence
\[\dots \to \Ho^1 M\labelto{i_*} \Ho^1\hm A\labelto{j_*} \Ho^1 B\to \Ho^2 M.\]
By Corollary \ref{c:2m+1} we have    $\Ho^2 M=\{1\}$, and hence, $j_*$ is surjective.
Since $M$ is central,  by Corollary \ref{c:39-cor2} each fiber of $j_*$ comes from $\Ho^1 M$.
Since by Corollary \ref{c:2m+1} we have $\Ho^1 M=\{1\}$,
we see that each fiber of $j_*$ has only one element, that is, $j_*$ is injective.
Thus $j_*$ is bijective, as required.

The  case of noncentral $M$ is a bit more delicate.
Let $[b]\in \Ho^1 B$, $b\in \Zl^1 B$.
Then the obstruction $\Delta(b)$ for lifting $[b]$ to $\Ho^1\hm A$ lives in $\Ho^2\hssh_b M$;
see \cite[I.5.6, Proposition 41]{Serre1997}.
Here $_b M$ denotes the group $M$ twisted by the cocycle
$b\in \Zl^1 B$, where $B$ naturally acts on $M$.
Note that $_b M$ has the same underlying group as $M$, only the $\Gamma$-action changes.
Therefore, $_b M$ is a group of odd order and hence
we have $\Ho^2\hssh_b M=\{1\}$ by Corollary \ref{c:2m+1}.
We see that $\Delta(b)=1$ and $[b]$ lifts to $\Ho^1\hm A$.
Thus $j_*$ is surjective; cf. \cite[I.5.6, Corollary to Proposition 41]{Serre1997}.

Furthermore, for any $a\in \Zl^1\hm A$, the fiber
$j_*^{-1}(j_*([a]))\subset \Ho^1\hm A$  comes from $\Ho^1{}_a M$;
see \cite[I.5.5, Corollary 2 to Proposition 39]{Serre1997}.
We have $\Ho^1{}_a M=\{1\}$ by Corollary  \ref{c:2m+1}
because $_a M$ is of odd order.
Thus each fiber of $j_*$ has only one element, that is, $j_*$ is injective.
Thus $j_*$ is bijective, as required.
\end{proof}

\begin{lemma}\label{l:p}
Let $p$ be an odd prime.
For any finite  $\Gamma$-group $A$ of order $2\hs p^m$, we have  $\#\Ho^1\hm A=2$.
\end{lemma}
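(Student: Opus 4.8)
The plan is to argue by strong induction on $n$, using the immediately preceding Lemma~\ref{l:H1-bijective} to strip off an abelian normal $\Gamma$-subgroup of odd order at each step, the base case being $A\cong\Z/2\Z$. For the base case $n=0$ one has $\#A=2$, so $A\cong\Z/2\Z$ as an abstract group; since $\Aut(\Z/2\Z)$ is trivial, the $\Gamma$-action on $A$ is trivial, and then $\Zl^1\hm A=\{c\in A\mid c+c=0\}=A$ while $\Bd^1\hm A=\{a-a\mid a\in A\}=\{0\}$, so $\#\Ho^1\hm A=2$.

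For the inductive step, suppose $n\geq 1$ and the statement holds for all smaller exponents. First I would pick a Sylow $p$-subgroup $P$ of $A$: since $[A:P]=2$, the subgroup $P$ is normal in $A$, hence is the unique Sylow $p$-subgroup, hence is preserved by every automorphism of $A$ and in particular is a $\Gamma$-subgroup. Next, as $P$ is a nontrivial $p$-group, its center $M:=Z(P)$ is a nontrivial abelian group of odd order $p^k$ with $1\leq k\leq n$; being characteristic in $P$, it is normal in $A$ and $\Gamma$-stable. Thus $M$ is a normal abelian finite $\Gamma$-subgroup of $A$ of odd order, so Lemma~\ref{l:H1-bijective} yields a bijection $\Ho^1\hm A\isoto\Ho^1(A/M)$. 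Finally $A/M$ is a $\Gamma$-group of order $2p^{n-k}$ with $n-k<n$, so by the inductive hypothesis (the base case if $n-k=0$) we get $\#\Ho^1(A/M)=2$, and therefore $\#\Ho^1\hm A=2$, completing the induction.

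I do not expect a genuine obstacle here: all the cohomological substance is already contained in Lemma~\ref{l:H1-bijective}, and the only point to handle with care is the elementary group theory guaranteeing that $M$ is simultaneously normal in $A$, $\Gamma$-stable, abelian, and of odd order — which is exactly why one passes to the center $Z(P)$ rather than to $P$ itself (when $P$ is nonabelian, $P$ is not abelian, but $Z(P)$ always is, it is still characteristic hence normal and $\Gamma$-stable, and the order strictly drops so the induction continues). As a backup I would keep in mind an alternative that avoids induction: exhibit a $\Gamma$-stable complement $Q$ to $P$ in $A$ — one exists because $\Gamma$ acts on the set of complements, which has odd cardinality, so has a fixed point — and then apply Corollary~\ref{c:split}(iv) with $C=Q\cong\Z/2\Z$, using that $\Ho^1({}_cP)=\{1\}$ for the odd-order twisted groups ${}_cP$; this presents $\Ho^1\hm A$ as two copies of a one-point set. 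The inductive argument is shorter, so that is the one I would write out.
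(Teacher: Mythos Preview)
Your proof is correct and follows essentially the same approach as the paper: both argue by induction, passing to the quotient by the center $Z(P)$ of the unique (hence normal and $\Gamma$-stable) Sylow $p$-subgroup and invoking Lemma~\ref{l:H1-bijective}. The only cosmetic difference is that the paper phrases the induction as showing $\Ho^1 A\to\Ho^1(A/P)$ is bijective (and then uses $\#\Ho^1(A/P)=2$), whereas you induct directly on the statement $\#\Ho^1 A=2$; your alternative argument via a $\Gamma$-fixed complement is a nice variant but is not the route the paper takes.
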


\begin{proof}
Let $n_p$ denote the number of Sylow $p$-subgroups in $A$.
By Sylow's Theorem  we have $n_p|2$ and $n_p\equiv 1\pmod p$;
see e.g. \cite[Theorem 4.12]{Rotman1995}.
It follows that $n_p=1$.
Let $P$ be the unique Sylow $p$-subgroup of $A$;
it is normal and $\Gamma$-invariant.
We show by induction on $m$ that the canonical map
\[\Ho^1\hm A\to \Ho^1\hs A/P\]
is bijective.
Since $A/P$  is a group of order 2,
we have $\#\hs\Ho^1 A/P=2$, which will prove the lemma.

We proceed by d\'evissage. If $m=0$, there is nothing to prove.
Assume that $m\ge 1$.

Since $P$ is a nontrivial $p$-group, it has nontrivial center;
denote it by $Z_P$ and its order by $p^k$, where $k\ge 1$.
Then $Z_P$ is a characteristic subgroup of $P$, and therefore,
it is $\Gamma$-invariant and normal in $A$.
Consider the homomorphism of $\Gamma$-groups $A\to A/Z_P$.
Since $Z_P$ is a normal abelian $\Gamma$-subgroup of $A$ of odd order $p^k$,
by Lemma \ref{l:H1-bijective} the canonical map
\[\Ho^1\hm A\to \Ho^1\hs A/Z_P\]
is bijective. We have $\#\hssh A/Z_P\hs=p^l$ with $l=m-k<m$.
By the induction hypothesis, the canonical map
\[ \Ho^1\hs A/Z_P\to \Ho^1\hs A/P\]
is bijective.
Thus the canonical map
\[\Ho^1\hm A\to \Ho^1\hs A/P\]
is bijective, as required.
\end{proof}

\begin{lemma}\label{l:fixed-order2}
Let $p$ be an odd prime.
For any finite  $\Gamma$-group $A$ of order $2\hs p^m$,
there exists a $\Gamma$-fixed element $c$ of order 2.
\end{lemma}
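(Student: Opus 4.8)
The plan is to reduce the statement to a parity count for an involution of a finite set. By the argument in the proof of Lemma \ref{l:p}, the group $A$ has a unique Sylow $p$-subgroup $P$; it is normal, and it is $\Gamma$-invariant, being characteristic. Since $|A|=2p^m$ is even, Cauchy's theorem provides an element $t\in A$ of order $2$; as $|P|=p^m$ is odd we have $t\notin P$, so $A=P\cup Pt$ and $A=P\rtimes\langle t\rangle$ with $t^2=1$ and $P\cap\langle t\rangle=\{1\}$. Write $\sigma\in\Aut(P)$ for $\sigma(x)=txt^{-1}$; then $\sigma^2=\id$ because $t^2=1$. I would first forget the $\Gamma$-action and count the involutions of $A$: none lies in $P$ (which has odd order), every element outside $P$ has the form $xt$ with $x\in P$, and $(xt)^2=x\hs\sigma(x)$, so the involutions of $A$ correspond bijectively to $\{x\in P:\sigma(x)=x^{-1}\}$. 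Equivalently, the set $\Omega$ of order-$2$ subgroups of $A$ is in bijection with this set.

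The key step is to show that $\#\Omega$ is odd. Consider $\tau\colon P\to P$, $\tau(x)=\sigma(x)^{-1}=\sigma(x^{-1})$. Using $\sigma^2=\id$ and the identity $\sigma(y^{-1})=\sigma(y)^{-1}$, one checks that $\tau\circ\tau=\id_P$, so $\tau$ is an involution of the finite set $P$, and its fixed-point set is exactly $\{x\in P:\sigma(x)=x^{-1}\}$. An involution of a finite set fixes a number of points congruent modulo $2$ to the cardinality of the set, and $\#P=p^m$ is odd, so $\{x:\sigma(x)=x^{-1}\}$, and hence $\Omega$, has odd cardinality.

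Finally I would reintroduce $\Gamma$. The automorphism $\gamma$ sends order-$2$ subgroups to order-$2$ subgroups, so $\Gamma=\{1,\gamma\}$ acts on the finite set $\Omega$, which has odd cardinality. A group of order $2$ acting on a finite set of odd cardinality has a fixed point, since every orbit has size $1$ or $2$; hence there is $C_0\in\Omega$ with $\gamma(C_0)=C_0$. Its unique nontrivial element $c$ satisfies $\gamma(c)\in C_0\setminus\{1\}=\{c\}$, so $c\in A^\Gamma$ and $c$ has order $2$, as required.

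The only point that needs care is the parity count: checking that $\tau$ really is an involution of $P$ (which relies on being able to choose the complement $\langle t\rangle$ of order exactly $2$, so that $\sigma^2=\id$) and invoking the elementary fact that an involution of a finite set has fixed-point count of the same parity as the set. The remaining ingredients — normality of $P$, the semidirect decomposition, and the orbit dichotomy for a $\Gamma$-set — are standard or already in hand from the proof of Lemma \ref{l:p}.
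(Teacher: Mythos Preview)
Your proof is correct and follows the same overall strategy as the paper: show that the set $\Omega$ of order-$2$ subgroups has odd cardinality, then observe that $\Gamma$ acting on a set of odd size must have a fixed point. The only difference is that the paper obtains $\#\Omega$ odd in one line by invoking Sylow's theorem (the count $n_2\equiv 1\pmod 2$), whereas you reprove this special case by hand via the involution $\tau(x)=\sigma(x)^{-1}$ on $P$. Your argument is more self-contained; the paper's is shorter.
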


\begin{proof}
Let $S_2$ denote the set of subgroups $H\subset A$ of order 2.
Let $n_2=\# S_2$.
By Sylow's Theorem $n_2$ is odd.

The group $\Gamma$ acts on $S_2$. Since $\# S_2$ is odd,
$\Gamma$ has a fixed point in $S_2$, say $H=\{1,c\}$.
Then $c$ is a $\Gamma$-fixed element of order $2$.
\end{proof}

\begin{lemma}\label{l:explicit}
With the assumptions and  notation of Lemmas \ref{l:p} and \ref{l:fixed-order2},
let $c\in A$ be a $\Gamma$-fixed element of order 2.
Then $c\in \Zl^1\hm A$ and $\Ho^1\hm A=\{[1],[c]\}$.
\end{lemma}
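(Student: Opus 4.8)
The plan is to dispatch this quickly from the material already in place. Note first that Lemma \ref{l:fixed-order2} guarantees such a $c$ exists, so the statement is not vacuous; but since $c$ is given, we only need two things: that $c$ is a $1$-cocycle, and that its class differs from $[1]$.

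\textbf{Step 1: $c\in\Zl^1\hm A$.} Since $c$ is $\Gamma$-fixed, $\upgam c=c$; since $c$ has order $2$, $c^2=1$. Hence $c\cdot\upgam c=c^2=1$, which is exactly the cocycle condition \eqref{d:Z1}, so $[c]\in\Ho^1\hm A$ is well defined.

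\textbf{Step 2: $\Ho^1\hm A=\{[1],[c]\}$.} By Lemma \ref{l:p} the set $\Ho^1\hm A$ has exactly two elements, and one of them is the neutral class $[1]$; so it suffices to prove $[c]\ne[1]$. I would do this by projecting to the quotient by the unique Sylow $p$-subgroup. Let $P\subset A$ be the Sylow $p$-subgroup (necessarily unique, hence normal and $\Gamma$-invariant, as in the proof of Lemma \ref{l:p}), and let $\pi\colon A\to A/P$ be the projection, a morphism of $\Gamma$-groups. Here $\#\hs A/P=2$, so $A/P\cong\Z/2\Z$; since $\Aut(\Z/2\Z)=\{1\}$, the group $\Gamma$ acts trivially on $A/P$. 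Moreover $c\notin P$: otherwise the order-$2$ subgroup $\langle c\rangle$ would be contained in the group $P$ of odd order $p^m$. Hence $\pi(c)$ is the nontrivial element of $A/P$. Now suppose $[c]=[1]$. Then by Definition \ref{d:H1-nonab} there is $a\in A$ with $c=a^{-1}\cdot\upgam a$; applying the $\Gamma$-equivariant map $\pi$ and using that $\Gamma$ acts trivially on $A/P$ gives $\pi(c)=\pi(a)^{-1}\cdot\upgam{\pi(a)}=\pi(a)^{-1}\pi(a)=1$, contradicting $\pi(c)\ne1$. Therefore $[c]\ne[1]$, and $\Ho^1\hm A=\{[1],[c]\}$.

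I do not expect a genuine obstacle here; the only points needing a moment's care are that the $\Gamma$-action on the order-$2$ quotient $A/P$ is automatically trivial (so a coboundary there is trivial) and that $c$ maps to a nontrivial element of $A/P$ for order reasons, both immediate. An equivalent route, slightly more in the spirit of the proof of Lemma \ref{l:p}, is to use the canonical bijection $\Ho^1\hm A\isoto\Ho^1\hs A/P$ established there, which carries $[c]$ to $[\pi(c)]$, the nontrivial element of $\Ho^1\hs A/P\cong\Z/2\Z$.
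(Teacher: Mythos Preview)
Your proof is correct and follows essentially the same approach as the paper: both verify the cocycle condition via $c\cdot\upgam c=c^2=1$, then project to the order-$2$ quotient $A/P$ to see that $[c]\neq[1]$. The paper phrases the second step slightly differently, invoking directly the bijection $\pi_*\colon\Ho^1\hm A\to\Ho^1\hs A/P$ from the proof of Lemma~\ref{l:p}, which is exactly the ``equivalent route'' you mention at the end.
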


\begin{proof}
We have $c\cdot{}^\gamma c=c^2=1$, and hence $c\in \Zl^1\hm A$.
Consider the canonical map
\[\pi\colon A\to A/P.\]
Clearly, $c\notin P$, hence $\pi(c)\neq 1$,
and therefore, \,$\pi_*[c]\neq [1]\in \Ho^1\hs A/P$ \,
(we use the fact that $A/P$ is a group of order 2).
Thus $\Ho^1\hs A/P=\{[1],\pi_*[c]\}$.
By the proof of Lemma \ref{l:p}, the canonical map
\[\pi_*\colon \Ho^1\hm A\to \Ho^1\hs A/P\]
is bijective.
Thus $\Ho^1\hm A=\{[1],[c]\}$, as required.
\end{proof}

\subsection{Galois cohomology of real algebraic groups}
\label{s:Galois-coh}

\begin{subsec}\label{ss:C-group}
Let $G$ be a linear algebraic group over $\C$.
This means that we have a tuple $(G,\mu,\iota,e)$,
where $G$ is an affine variety over $\C$ (not necessarily irreducible),
where $\mu$ and $\iota$ are morphisms and $e$ is a $\C$-point:
\begin{equation*}
\mu\colon G\times_\C G\to G, \quad \iota\colon G\to G,\quad e\in G(\C),
\end{equation*}
corresponding to multiplication, inversion, and to the neutral element in $G$, respectively.
Certain diagrams containing these morphisms must commute;
see, for instance, \cite[1.a, Definition 1.1]{Milne2017}.
See \cite{Borel1966} for  a few elementary definitions of a linear algebraic group.
We write $G(\C)$ for the group of $\C$-points of $G$; in the classical approach
(see the books by Borel, Humphreys, and Springer titled ``Linear Algebraic Groups'')
the $\C$-group $G$ is identified with $G(\C)$.
The group $G(\C)$ is naturally a complex Lie group.

Let $\sigma\colon G(\C)\to G(\C)$ be an anti-holomorphic involution of $G(\C)$,
that is, an anti-holomorphic group automorphism such that $\sigma^2={\rm id}$.
The automorphism $\sigma$ acts on holomorphic functions on $G$ as follows:
\[(\hs^{\sigma}\!f)(x)=\upgam(f(\sigma^{-1}(x)))\quad\text{for }x\in G(\C),\]
where $\gamma\in\Gamma$ denotes the complex conjugation (and of course $\sigma^{-1}=\sigma$.)
Note that $\sigma$ acts on the constants by the complex conjugation $\gamma$.

Let $\C[G]$ denote the ring of regular functions on $G$.
We say that an anti-holomorphic involution $\sigma$ of $G(\C)$ is {\em anti-regular},
if for any regular function $f\in\C[G])$, the function $^\sigma\! f$ is again regular.
\end{subsec}

\begin{remark}\label{rem:antiholomorphic}
In general, not all anti-holomorphic involutions of $G(\C)$ are anti-regular.
However, if the identity component of $G$ is reductive,
then any anti-holomorphic involution is anti-regular;
see \cite[Lemma 3.1]{AT2018}.
\end{remark}

\begin{subsec}
Let $\GG$ be a linear algebraic group over $\R$, that is,
we have a tuple $(\GG,\boldsymbol{\mu},\boldsymbol{\iota},\boldsymbol{e})$,
where $\GG$ is an affine variety over $\R$ (not necessarily irreducible),
$\boldsymbol{\mu}$ and $\boldsymbol{\iota}$
are $\R$-morphisms and $\boldsymbol{e}$ is an $\R$-point:
\begin{equation}\label{e:R-group}
\boldsymbol{\mu}\colon  \GG\times_\R \GG\to \GG, \quad
    \boldsymbol{\iota}\colon \GG\to \GG,\quad \boldsymbol{e}\in\GG(\R),
\end{equation}
that fit into the  commutative diagrams of \cite[1.a, Definition 1.1]{Milne2017}.
We again refer to \cite{Borel1966}
for elementary definitions of a linear algebraic group over a nonclosed field.
We write $\GG(\R)$ for the real Lie group of the $\R$-points of $\GG$,
and $\GG(\C)$ for the complex Lie group of the $\C$-points of $\GG$.

Set
$$G=\GG_\C:=\GG\times_\R\C.$$
Then the complex conjugation $\gamma$ naturally acts on $G(\C)=\GG(\C)$
and thus defines an anti-regular involution
$\sigma=\sigma_\GG\colon G(\C)\to G(\C)$.
We obtain a functor
\begin{equation}\label{e:functor}
 \GG\rightsquigarrow (G,\sigma).
\end{equation}
\end{subsec}

\begin{proposition}[well-known; see {\cite[Theorem 2.2(b)]{Jahnel}}]
\label{p:equiv}
The functor \eqref{e:functor} from the category
of linear algebraic $\R$-groups to the category of pairs $(G,\sigma)$,
 where $G$ is a linear algebraic $\C$-group and $\sigma\colon G(\C)\to G(\C)$
is an anti-regular involution, is an equivalence of categories.
\end{proposition}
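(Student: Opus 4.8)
The statement to prove is Proposition \ref{p:equiv}: the functor $\GG \rightsquigarrow (G,\sigma)$ from linear algebraic $\R$-groups to pairs $(G,\sigma)$ (with $G$ a linear algebraic $\C$-group and $\sigma$ an anti-regular involution of $G(\C)$) is an equivalence of categories.

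The plan is to follow the standard Galois-descent argument, reducing everything to the level of coordinate rings. First I would recast the problem in terms of Hopf algebras: a linear algebraic $\R$-group $\GG$ is the same as a finitely generated commutative Hopf $\R$-algebra $A = \R[\GG]$, and a linear algebraic $\C$-group $G$ is the same as a finitely generated commutative Hopf $\C$-algebra $B = \C[G]$. Base change $\GG \rightsquigarrow G = \GG_\C$ corresponds to $A \mapsto A \otimes_\R \C$. The anti-regular involution $\sigma$ of $G(\C)$ corresponds, by the anti-regularity hypothesis, to a $\gamma$-semilinear Hopf-algebra automorphism $\sigma^*\colon B \to B$ with $(\sigma^*)^2 = \id$; concretely, giving such a $\sigma^*$ is the same as giving a semilinear action of $\Gamma = \Gal(\C/\R)$ on $B$ by Hopf-algebra automorphisms.

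Next I would invoke Galois descent for modules/algebras over the extension $\C/\R$: the functor $A \mapsto (A\otimes_\R \C,\ \text{the canonical } \Gamma\text{-action } 1\otimes\gamma)$ is an equivalence from $\R$-vector spaces (resp. $\R$-algebras, resp. commutative Hopf $\R$-algebras) to the category of $\C$-vector spaces (resp. $\C$-algebras, resp. Hopf $\C$-algebras) equipped with a semilinear $\Gamma$-action, with quasi-inverse $B \mapsto B^\Gamma$. This is the classical statement (faithfully flat descent along $\R \to \C$, or simply Speiser's theorem / Hilbert 90 in the vector-space case: $H^1(\Gamma,\GL_n(\C)) = 1$). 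One has to check that $B^\Gamma$ is again finitely generated over $\R$ — this follows because $B^\Gamma \otimes_\R \C \cong B$ is finitely generated over $\C$ and $\C$ is faithfully flat over $\R$ — and that the Hopf structure descends, which is automatic since comultiplication, counit and antipode are $\Gamma$-equivariant. This shows the functor is essentially surjective and, combined with the module-level statement, fully faithful on morphisms: a morphism of pairs $(G,\sigma)\to (G',\sigma')$ is a $\Gamma$-equivariant Hopf map $B' \to B$, which descends uniquely to a Hopf map $(B')^\Gamma \to B^\Gamma$, i.e. to an $\R$-morphism $\GG \to \GG'$, and conversely.

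The one genuinely delicate point — and the step I expect to be the main obstacle — is the translation between the analytic datum (an anti-\emph{holomorphic} involution $\sigma$ of the complex Lie group $G(\C)$ that happens to be anti-regular) and the algebraic datum (a semilinear automorphism of the Hopf algebra $B = \C[G]$ of regular functions). Going from $\sigma$ to $\sigma^*$ is exactly the content of the anti-regularity hypothesis as spelled out in Subsection \ref{ss:C-group}: $f \mapsto {}^\sigma\! f$ preserves $\C[G]$, and one checks $f\mapsto {}^\sigma\! f$ is a ring automorphism, $\gamma$-semilinear, of order $2$, compatible with comultiplication because $\sigma$ is a group homomorphism. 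Going back, from a semilinear Hopf automorphism $\sigma^*$ of $B$ one recovers a map on $\C$-points $G(\C) = \Hom_{\C\text{-alg}}(B,\C) \to G(\C)$ by $x \mapsto \gamma \circ x \circ (\sigma^*)^{-1}$, and one must verify this is anti-holomorphic — which it is, being built from an algebraic map and complex conjugation — and an involutive group automorphism. Here I would explicitly note that this is exactly the equivalence asserted, and since the reference \cite[Theorem 2.2(b)]{Jahnel} and Remark \ref{rem:antiholomorphic} already package the subtlety about which anti-holomorphic involutions are anti-regular, I would simply cite Proposition \ref{p:equiv} as "well-known" and give the descent argument above as the proof sketch, emphasizing that the only input beyond formal descent is the dictionary between $\sigma$ and $\sigma^*$ provided by the definition of anti-regularity.
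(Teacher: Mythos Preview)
Your approach is essentially the same as the paper's: both prove essential surjectivity by taking $\GG = \Spec\big(\C[G]^\Gamma\big)$, where $\Gamma$ acts on $\C[G]$ via the semilinear automorphism $f\mapsto {}^\sigma\! f$. The paper is more hands-on---it exhibits explicit $\R$-algebra generators $y_k = x_k + \sigma(x_k)$ and $z_k = \sqrt{-1}(x_k - \sigma(x_k))$ of the fixed ring rather than invoking abstract Galois descent---and it omits the full-faithfulness check that you correctly include; otherwise the arguments coincide.
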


\begin{proof}[Idea of proof]
It suffices to prove that the functor  \eqref{e:functor} is essentially surjective.
We start from a pair $(G,\sigma)$ and construct $\GG$.
Now $G$ is an affine algebraic group over $\C$, that is, we have
a tuple $(G,\mu,\iota,e)$ as in Subsection \ref{ss:C-group}.
We assume that $G$ is a closed subvariety
in the $n$-dimensional affine space $\mathbb{A}^n_\C$
defined by an ideal $I$ in the ring of polynomials $\C[X_1,X_2,\dots, X_n]$.
Then by definition, the $\C$-algebra of regular functions $\C[G]$
is $\C[X_1,X_2,\dots, X_n]/I$.
It has a finite set of generators $x_1,x_2,\dots, x_n$ over $\C$,
the images of $X_1,\dots,X_n$ in $\C[G]$.

The Galois group $\Gamma=\{1,\gamma\}$ naturally acts on $\C[G]$
(namely, $\gamma$ acts by $\sigma$).
We consider $A:=\C[G]^\Gamma$,  the ring of fixed points of $\sigma$ in $\C[G]$.
Consider the set
\[\Xi=\{y_k=x_k+\sigma(x_k),\ z_k=\sqrt{-1}(x_k-\sigma(x_k))\mid k=1,\dots,n\}.\]
Then $y_k,\hs z_k\in A$, and $\Xi$ is a finite set of generators of $A$ over $\R$.
We set $\GG=\Spec A$, the spectrum of the ring $A$,
that is, an affine variety $\GG$ over $\R$ with algebra of regular functions $\R[\GG]=A$.
It is constructed as follows. We consider the algebra of polynomials
$B=\R[Y_1,Z_1,Y_2,Z_2,\dots,Y_n,Z_n]$, and  the surjective  homomorphism
\[\psi\colon B\to A \quad\text{defined by } Y_k\mapsto y_k,\ Z_k\mapsto z_k\hs.\]
Consider the ideal $\boldsymbol{I}=\ker\,\psi\subset B$.
We take for $\GG$   the closed subvariety in the $2n$-dimensional  affine space
$\mathbb A^{2n}_\R$ defined by the ideal $\boldsymbol{I}$.
Then $\R[\GG]=B/\boldsymbol{I}\cong A$, as required.

We have  $\R[\GG]\otimes_\R\C=\C[G]$, and hence, $\GG\times_\R\C=G$.
Since $\sigma$ is a {\em group automorphism} of  $G(\C)$,
the morphisms $\mu$ and $\iota$ commute with the $\Gamma$-action on $\GG(\C)$,
and therefore, they are ``defined over $\R$'', that is,
they come from (unique) morphisms
$\boldsymbol{\mu},\boldsymbol{\iota}$ as in \eqref{e:R-group}.
Similarly, the $\C$-point $e$ comes from an $\R$-point $\boldsymbol{e}$.
Then $(\GG,\boldsymbol{\mu},\boldsymbol{\iota},\boldsymbol{e})$
is a desired affine algebraic group over $\R$, as required.
\end{proof}

\begin{subsec}
From now on, by a {\em linear algebraic group over $\R$}
(for brevity: an $\R$-group) we mean a pair $\MM=(M,\sigma)$,
where $M$ is a linear algebraic group over $\C$,
and $\sigma$ is an anti-regular involution of $M(\C)$.
Then $\Gamma$ naturally acts on $M(\C)$
(namely, the complex conjugation $\gamma$ acts as $\sigma$).
We set
\begin{align*}
\MM(\C):=M(\C),\quad &\MM(\R)=M(\C)^\sigma,\quad
                      \Ho^1(\R,\MM):=\Ho^1(\Gamma, \MM(\C)).
\end{align*}
For brevity we write $\Ho^1\hs\MM$ for $\Ho^1(\R,\MM)$.

A {\em homomorphism} $\MM\to \NN$ of $\R$-groups,
where $\MM=(M,\sigma_M)$ and $\NN=(N,\sigma_N)$,
is a homomorphism $\phi\colon M\to N$
such that  the induced homomorphism $M(\C)\to N(\C)$
is $\Gamma$-equivariant, where $\gamma$ act as
$\sigma_M$ on $M(\C)$, and as $\sigma_N$ on $N(\C)$.
Such a homomorphism $\phi$ induces a homomorphism
\[\phi_\R\colon \MM(\R)\to \NN(\R)\]
and a map
\[\phi_*\colon \Ho^1 \hs\MM\to \Ho^1\hs\NN.\]

Note that if $\MM=(M,\sigma_M)$ and $\NN=(N,\sigma_N)$
are isomorphic, then $M$ and $N$ are isomorphic.
\end{subsec}

\begin{example} Let $\mu_{3,\R}$ denote the group of 3rd roots of unity in $\C^\times$
 with the standard (nontrivial) action of the complex conjugation.
 Then $\mu_{3,\R}(\R)=\{1\}$, but $\mu_{3,\R}\neq 1$, because $\#\mu_{3,\R}(\C)=3$.
\end{example}

\begin{example}
Consider the group $\SL_{9,\R}:=(\SL_{9,\C}, \sigma)$,
where $\sigma$ acts by  complex conjugation of the matrix entries.
We have a canonical embedding
\[\mu_{3,\R}\into \SL_{9,\R},\quad x\mapsto x\hs I,\quad \text{where }I=\diag(1,1,\dots,1).\]
We set
\[G_0=\SL_{9,\R}/\mu_{3,\R}.\]
This means that $G_{0}=\SL_{9,\C}/\mu_{3,\C}$
with the induced anti-regular automorphism of $G_0(\C)$.
Then $G_0$ is not isomorphic to $\SL_{9,\R}$,
because $G_{0}$ is not isomorphic to $\SL_{9,\C}$
(since  $G_{0}$ has center of order 3, while $\SL_{9,\C}$  has center of order 9).
However, the canonical homomorphism
\[\SL_{9,\R}\to G_0\]
induces an isomorphism of real Lie groups
\[ \SL(9,\R)\to \GG_0(\R);\]
see Corollary \ref{c:9-mu3} below.
\end{example}

\begin{lemma}\label{l:H/M}
Let $\HH$ be an $\R$-group, and let $\MM\subseteq \HH$
be a finite abelian normal $\R$-subgroup {\emm of odd order},
that is,  the order of $M$ is odd.
(We do not assume  that $M$ is contained in the center of $H$).
 Then the canonical homomorphism on $\R$-points
\[ \HH(\R)\to (\HH/\MM)(\R)\]
is surjective.
In particular, if the topological group $\HH(\R)$ is connected,
then the topological space $(\HH/\MM)(\R)$ is connected as well.
\end{lemma}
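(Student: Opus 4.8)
The plan is to reduce the statement to the exact cohomology sequence attached to the short exact sequence of $\Gamma$-groups
\[ 1\to \MM(\C)\to \HH(\C)\to (\HH/\MM)(\C)\to 1, \]
and then to invoke the vanishing of the first Galois cohomology of $\MM$, which holds because $M$ has odd order. Concretely, let $y\in(\HH/\MM)(\R)=(\HH/\MM)(\C)^\Gamma$. First I would lift $y$ to some $b\in\HH(\C)$; this is possible because $\HH(\C)\to(\HH/\MM)(\C)$ is surjective on $\C$-points (it is a surjective morphism of algebraic groups over the algebraically closed field $\C$). Since $y$ is $\Gamma$-fixed, the element $a:={}^\gamma b\cdot b^{-1}$ lies in $\MM(\C)$. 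A direct check shows $a\cdot{}^\gamma a = {}^\gamma b\, b^{-1}\cdot {}^\gamma({}^\gamma b\, b^{-1}) = {}^\gamma b\, b^{-1}\cdot b\,{}^\gamma b^{-1}=1$, so $a\in\Zl^1\hm\MM(\C)$, i.e. $a$ represents a class $[a]\in\Ho^1\MM$.

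Next I would use that $\MM$ has odd order: by Corollary \ref{c:2m+1} (applied to the finite abelian $\Gamma$-group $\MM(\C)$ of odd order) we have $\Ho^1\MM=\{[1]\}$, so $[a]=[1]$. Hence $a$ is a coboundary: there is $m\in \MM(\C)$ with $a = m^{-1}\cdot{}^\gamma m$ (using the additive-to-multiplicative translation of Definition \ref{d:H1}, or directly $a={}^\gamma m\cdot m^{-1}$ after adjusting the convention; either way one obtains such an $m$ with the correct twisted relation). Then replacing $b$ by $b':=m\cdot b$ (respectively $b\cdot m$, matching the chosen convention) I get ${}^\gamma b' = {}^\gamma m\cdot {}^\gamma b = {}^\gamma m \cdot a^{-1}\cdot\, \text{(stuff)}$; carrying out this one-line computation shows ${}^\gamma b' = b'$, so $b'\in\HH(\C)^\sigma=\HH(\R)$, and $b'$ maps to $y$. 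This proves surjectivity of $\HH(\R)\to(\HH/\MM)(\R)$.

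For the final assertion, suppose $\HH(\R)$ is connected as a topological group. The surjection $\HH(\R)\twoheadrightarrow(\HH/\MM)(\R)$ just established is continuous (it is induced by a morphism of real algebraic groups, hence a smooth, in particular continuous, map of Lie groups), and the continuous image of a connected space is connected; therefore $(\HH/\MM)(\R)$ is connected. The main obstacle is purely bookkeeping: getting the twisting conventions in Definition \ref{d:H1-nonab} and the passage from a trivial cohomology class to an explicit coboundary to line up so that the corrected lift $b'$ is genuinely $\Gamma$-fixed; once the sign/side conventions are fixed this is a two-line verification, and the only substantive input is the odd-order vanishing $\Ho^1\MM=\{1\}$ from Corollary \ref{c:2m+1}. (Note we do \emph{not} need $\MM$ central here, unlike in the related Lemma \ref{l:H1-bijective}; surjectivity onto $\R$-points only uses $\Ho^1\MM=\{1\}$, not $\Ho^2\MM=\{1\}$ or centrality, since we are lifting a single point rather than analyzing fibers of $j_*$ on $\Ho^1$.)
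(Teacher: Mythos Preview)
Your proposal is correct and follows essentially the same approach as the paper. The paper's proof simply invokes the cohomology exact sequence
\[ \HH(\R)\labelto{j_{(\R)}}(\HH/\MM)(\R)\labelto{\delta}\Ho^1\MM \]
and then applies Corollary~\ref{c:2m+1} to get $\Ho^1\MM=\{1\}$, whence $j_{(\R)}$ is surjective; your argument is the same proof with the connecting map $\delta$ and the coboundary adjustment written out by hand rather than cited. (For the record, the cleanest choice of conventions is $m_0:=b^{-1}\cdot{}^\gamma b\in\Zl^1\MM$, then $m_0=m^{-1}\cdot{}^\gamma m$ gives $b':=b\hs m^{-1}\in\HH(\R)$; since $\MM$ is abelian your variant with $a={}^\gamma b\cdot b^{-1}$ and left multiplication also works.)
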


\begin{proof}
The short exact sequence of $\R$-groups
\[1\to \MM\labelto{i} \HH\labelto{j} \HH/\MM\to 1\]
gives rise to a cohomology exact sequence
\[1\to \MM(\R)\labelto{i_{(\R)}} \HH(\R)\labelto{j_{(\R)}}
      (\HH/\MM)(\R)\labelto{\delta} \Ho^1\hs\MM.\]
By assumption the group $M$ is of odd order.
By Corollary \ref{c:2m+1} we have $\Ho^1\hs\MM=\{1\}$, and hence,
the map $j_{(\R)}\colon \HH(\R)\to  (\HH/\MM)(\R)$ is surjective, as required.
If $\HH(\R)$ is connected, then its image $(\HH/\MM)(\R)=j_{(\R)}(\HH(\R))$
under the continuous map $j_{(\R)}$ is connected, as required.
\end{proof}

\begin{corollary}\label{c:connected}
Let $\HH$ be a simply connected semisimple $\R$-group,
and $\GG=\HH/\MM$, where $\MM\subseteq \Zm(\HH)$
is a finite  central $\R$-subgroup {\emm of odd order}
(that is, $M$ is contained in the center of $H$ and is of odd order).
Then the group $\GG(\R)$ is connected.
\end{corollary}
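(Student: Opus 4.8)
The plan is to deduce this from the surjectivity result of Lemma \ref{l:H/M} together with the known connectedness of $\HH(\R)$ for a simply connected semisimple $\R$-group. First I would recall the classical fact, due to the real-analytic theory of algebraic groups, that if $\HH$ is a simply connected semisimple linear algebraic group over $\R$, then the topological group $\HH(\R)$ of real points is connected in the real topology. This is a theorem of Cartan (see e.g.\ the standard references on real forms of semisimple Lie groups, or Platonov--Rapinchuk); I would simply cite it. The key observation is that the hypotheses of Lemma \ref{l:H/M} are satisfied: $\MM$ is a finite abelian normal $\R$-subgroup of $\HH$ (it is central, hence normal), and by assumption $M$ has odd order.

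Next I would apply Lemma \ref{l:H/M} with this $\HH$ and $\MM$. The lemma gives that the canonical map $\HH(\R)\to(\HH/\MM)(\R)=\GG(\R)$ on real points is surjective, and moreover it records the consequence that if $\HH(\R)$ is connected then $(\HH/\MM)(\R)$ is connected as well, being the continuous image of a connected space. Combining the cited connectedness of $\HH(\R)$ with this, we conclude that $\GG(\R)$ is connected, which is exactly the assertion of the corollary.

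I do not expect a genuine obstacle here: the corollary is essentially an immediate specialization of Lemma \ref{l:H/M} once one invokes the connectedness of the real points of a simply connected semisimple group. The only point requiring a little care is the citation for the connectedness of $\HH(\R)$ — one must make sure to invoke it in the correct generality (arbitrary simply connected semisimple $\R$-group, not merely split or quasi-split ones), for which the cleanest reference is the general structure theory of real reductive groups. Everything else is the functoriality of taking real points and the topological fact that continuous images of connected spaces are connected.

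\begin{proof}
Since $\HH$ is a simply connected semisimple algebraic group over $\R$, the real Lie group $\HH(\R)$ is connected; this is a classical result (see e.g.\ \cite{Borel1966} and the structure theory of real semisimple groups). By hypothesis $\MM\subseteq\Zm(\HH)$ is a finite central $\R$-subgroup of odd order; in particular it is a finite abelian normal $\R$-subgroup of odd order. Hence Lemma \ref{l:H/M} applies to $\HH$ and $\MM$, and since $\HH(\R)$ is connected, it yields that $(\HH/\MM)(\R)=\GG(\R)$ is connected.
\end{proof}
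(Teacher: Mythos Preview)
Your proof is correct and follows essentially the same approach as the paper: cite the classical fact that $\HH(\R)$ is connected for a simply connected semisimple $\R$-group, then apply Lemma~\ref{l:H/M}. The only minor difference is in the citation---the paper points to Borel--Tits~\cite{Borel-Tits}, Gorbatsevich--Onishchik--Vinberg~\cite{GOV}, or Platonov--Rapinchuk~\cite{PR} for the connectedness of $\HH(\R)$, whereas your reference~\cite{Borel1966} does not actually contain this result; you should substitute one of the paper's references here.
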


\begin{proof}
Since $\HH$ is simply connected, the group of $\R$-points $\HH(\R)$ is connected;
see \cite[Corollary 4.7]{Borel-Tits},
or Gorbatsevich, Onishchik, and Vinberg  \cite[4.2.2, Theorem 2.2]{GOV},
or Platonov and Rapinchuk \cite[Proposition 7.6 in Section 7.2 on page 407]{PR}.
Since $M$ is of odd order, by Lemma \ref{l:H/M}
the group of $\R$-points $\GG(\R)$ is connected, as required.
\end{proof}

\begin{remark}
The assertion of Corollary \ref{c:connected} becomes false
if we do not assume that $M$ is of odd order.
For example, if we take $\GG=\PGL_{2,\R}=\GL_{2,\R}/\G_{m,\R}=\SL_{2,\R}/\mu_2$,
then $\GG(\R)=\GL(2,\R)/\R^\times$ is not connected.
\end{remark}

\begin{lemma}\label{l:mu-n}
Let $\HH$ be an $\R$-group,
and let $\MM$ be an abelian normal algebraic $\R$-subgroup of $\HH$
(that is, $M$ is an abelian normal $\Gamma$-invariant algebraic subgroup of  $H$).
Assume that $\MM$ is isomorphic over $\R$ to $\mu_{n,\R}$,
where $n$ is an {\emm odd} natural number.
Set $\GG_0=\HH/\MM$.
Then the canonical epimorphism
\[j\colon \HH\to \GG_0\]
induces an isomorphism
\[j_{(\R)}\colon \HH(\R)\isoto \GG_0(\R)\]
and a bijection
\[ j_*\colon \Ho^1\hs\HH\isoto \Ho^1\GG_0\hs.\]
\end{lemma}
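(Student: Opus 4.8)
The plan is to run the short exact sequence of $\R$-groups
\[ 1 \to \MM \labelto{i} \HH \labelto{j} \GG_0 \to 1 \]
through the machinery of Subsection \ref{s:group-coh} and the two preparatory lemmas above, the oddness of $n$ being exactly what makes the relevant cohomology vanish.

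First I will treat the map on $\R$-points. Surjectivity of $j_{(\R)}\colon\HH(\R)\to\GG_0(\R)$ is precisely Lemma \ref{l:H/M}: $\MM$ is a finite abelian normal $\R$-subgroup of $\HH$ and its order $n$ is odd. For injectivity, the exact sequence of pointed sets of Proposition \ref{p:serre-prop38} identifies $\ker j_{(\R)}$ with $\MM(\R)=\mu_{n,\R}(\R)$, which by the definition of $\mu_{n,\R}$ is the set of $n$-th roots of unity in $\C^\times$ fixed by complex conjugation, i.e. the real $n$-th roots of unity; since $n$ is odd this set is $\{1\}$. Hence $j_{(\R)}$ is a bijective homomorphism, that is, an isomorphism of real Lie groups.

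For the cohomology statement I will simply invoke Lemma \ref{l:H1-bijective} with ambient $\Gamma$-group $A=\HH(\C)$ and subgroup $M=\MM(\C)$. Because $\MM$ is a normal algebraic $\R$-subgroup of $\HH$, the subgroup $\MM(\C)=\mu_n$ is normal in $\HH(\C)$ and $\Gamma$-invariant; it is abelian, finite, and of odd order $n$. Lemma \ref{l:H1-bijective} then yields that the canonical map
\[ j_*\colon \Ho^1\hs\HH = \Ho^1(\Gamma,\HH(\C)) \longrightarrow \Ho^1\bigl(\Gamma,\HH(\C)/\MM(\C)\bigr) = \Ho^1\GG_0 \]
is a bijection, which is the second assertion.

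I do not expect a genuine obstacle: the real work is already packaged into Lemma \ref{l:H/M} and Lemma \ref{l:H1-bijective}, whose proofs in turn reduce everything to the vanishing $\Ho^1\hs M=\Ho^2\hs M=\{1\}$ for $M$ of odd order (Corollary \ref{c:2m+1}). The only points needing a moment's care are that the normality and $\Gamma$-invariance hypotheses of Lemma \ref{l:H1-bijective} are inherited from $\MM$ being a normal $\R$-subgroup, and the elementary fact that $\mu_{n,\R}(\R)=\{1\}$ when $n$ is odd — both immediate. Thus both claims follow.
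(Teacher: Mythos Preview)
Your proof is correct and follows essentially the same approach as the paper: injectivity of $j_{(\R)}$ from $\mu_{n,\R}(\R)=\{1\}$ for odd $n$, surjectivity from Lemma \ref{l:H/M}, and the bijection on $\Ho^1$ from Lemma \ref{l:H1-bijective}. The paper's proof is in fact slightly terser, identifying $\ker j_{(\R)}=\MM(\R)$ directly rather than routing through Proposition \ref{p:serre-prop38}, but the argument is the same.
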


\begin{proof}
We have  $\ker j_{(\R)}=\MM(\R)\simeq \mu_n(\R)=\{1\}$ (because $n$ is odd),
hence the homomorphism $j_{(\R)}$ is injective.
Since $M$ is of odd order,  by Lemma \ref{l:H/M}
the homomorphism $j_{(\R)}$ is surjective.
Thus $j_{(\R)}\colon  \HH(\R)\to \GG_0(\R)$ is bijective.
Since $M$ is of odd order, by Lemma \ref{l:H1-bijective}
the map  $j_*\colon \Ho^1\hs\HH\to \Ho^1\GG_0$ is bijective,
which completes the proof.
\end{proof}

\begin{corollary}\label{c:9-mu3}
Set $\GG_0=\SL_{9,\R}/\mu_3$ and  consider the canonical homomorphism
\[j\colon \SL_{9,\R}\to \GG_0\hs.\]
Then induced homomorphism on $\R$-points
\[j_{(\R)}\colon\SL(9,\R)\to \GG_0(\R)\]
is an isomorphism, and $\Ho^1\GG_0=1$.
\end{corollary}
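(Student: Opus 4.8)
The plan is to obtain the corollary as an immediate instance of Lemma~\ref{l:mu-n}, once we know that $\Ho^1\hs\SL_{9,\R}=1$. First I would check the hypotheses of Lemma~\ref{l:mu-n} with $\HH=\SL_{9,\R}$ and $\MM=\mu_3$, where $\mu_3$ denotes $\mu_{3,\R}$ realized as the group of scalar matrices $\zeta I$ with $\zeta^3=1$. This $\MM$ lies in the center of $\SL_{9,\C}$, hence is normal; it is stable under the anti-regular involution $\sigma$ (entrywise complex conjugation), since $\sigma(\zeta I)=\bar\zeta\hs I$ is again a cube root of unity; and $\sigma$ acts on $\MM$ by the standard action of complex conjugation on $\mu_3$, so $\MM\cong\mu_{3,\R}$ as $\R$-groups. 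As $n=3$ is odd, Lemma~\ref{l:mu-n} applies and gives that $j_{(\R)}\colon\SL(9,\R)\to\GG_0(\R)$ is an isomorphism and that $j_*\colon\Ho^1\hs\SL_{9,\R}\isoto\Ho^1\GG_0$ is a bijection.

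It then remains to prove $\Ho^1\hs\SL_{9,\R}=1$. I would use the short exact sequence of $\R$-groups $1\to\SL_{9,\R}\to\GL_{9,\R}\xrightarrow{\det}\G_{m,\R}\to 1$ and the associated exact sequence of pointed sets from Proposition~\ref{p:serre-prop38}, namely
\[\GL_9(\R)\xrightarrow{\det}\R^\times\xrightarrow{\ \delta\ }\Ho^1\hs\SL_{9,\R}\to\Ho^1\hs\GL_{9,\R}.\]
Since $\det\colon\GL_9(\R)\to\R^\times$ is surjective, the image of $\delta$ is trivial, and $\Ho^1\hs\GL_{9,\R}=1$ by the classical generalization of Hilbert's Theorem~90 to $\GL_n$ (descent for finite-dimensional vector spaces; cf. Example~\ref{x:cohom}(1) for the rank-one case); by exactness, $\Ho^1\hs\SL_{9,\R}=1$. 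Equivalently and more concretely: a cocycle is a matrix $c\in\SL_9(\C)$ with $c\,\bar c=I$; writing $c=b^{-1}\bar b$ for some $b\in\GL_9(\C)$ by Hilbert~90 for $\GL_9$, one finds $\det b\in\R^\times$, and replacing $b$ by $\lambda b$ for the unique $\lambda\in\R^\times$ with $\lambda^9=(\det b)^{-1}$ (this is where oddness of $9$ enters) one may take $b\in\SL_9(\C)$, whence $[c]=[1]$. Combining with the previous paragraph, $\Ho^1\GG_0\cong\Ho^1\hs\SL_{9,\R}=1$.

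I do not expect a genuine obstacle here: the substance of the corollary is already packaged in Lemma~\ref{l:mu-n}, and the only external ingredient is the well-known vanishing $\Ho^1(\R,\SL_9)=1$, a routine consequence of Hilbert's Theorem~90. The only points needing a moment's attention are that $\mu_3$ really is a normal $\R$-subgroup of $\SL_{9,\R}$ isomorphic to $\mu_{3,\R}$, and the (harmless) use of the oddness of $9$ — equivalently of $3$, which is what Lemma~\ref{l:mu-n} requires.
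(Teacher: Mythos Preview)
Your proof is correct and is exactly the argument the paper has in mind: the corollary is placed immediately after Lemma~\ref{l:mu-n} with no proof, and your verification of the hypotheses together with the standard vanishing $\Ho^1\hs\SL_{9,\R}=1$ (via the $\SL_9\to\GL_9\to\G_m$ sequence, as in the proof of Lemma~\ref{lem:g1}) is precisely what is intended. One small remark: the oddness of $9$ is convenient in your concrete cocycle argument but not essential, since $\Ho^1(\R,\SL_n)=1$ for all $n$; the oddness that genuinely matters for the corollary is that of $3$, as required by Lemma~\ref{l:mu-n}.
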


\begin{lemma} \label{lem:g1}
Let
\[\GG=\GL_{n_1, \R}\times \cdots \times \GL_{n_k, \R}\hs
\quad\text{with }n_1+\dots+n_k=9.\]
We naturally  embed $\GG$ into $\GL_{9,\R}$ and set
 \[\SS=\ker[\GG\labelto{\det} \G_{m,\R}],\quad \HH=\SS/\mu_3\hs.\]
Then  $\Ho^1\hs\HH = 1$.
\end{lemma}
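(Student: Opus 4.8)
The plan is to reduce the assertion to the vanishing of $\Ho^1\hs\SS$ and then to exploit the determinant. First I would observe that $\mu_3=\mu_{3,\R}$, embedded as the group of scalar matrices $xI$ with $x^3=1$, is contained in $\SS$ (because $\det(xI)=x^9=1$) and is central, hence a normal abelian $\R$-subgroup of $\SS$ of odd order $3$, carrying the standard $\Gamma$-action. Applying Lemma \ref{l:mu-n} with the roles of its $\HH$ and $\MM$ played by $\SS$ and $\mu_3$, the canonical epimorphism $\SS\to\SS/\mu_3=\HH$ induces a bijection $\Ho^1\hs\SS\isoto\Ho^1\hs\HH$. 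So it suffices to prove $\Ho^1\hs\SS=1$.

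For this I would use the short exact sequence of $\R$-groups $1\to\SS\to\GG\xrightarrow{\det}\G_{m,\R}\to1$, where $\det$ is the restriction to $\GG\subseteq\GL_{9,\R}$ of the determinant, i.e.\ the product of the block determinants; it is an epimorphism with kernel $\SS$ by definition. Proposition \ref{p:serre-prop38} then yields the exact sequence of pointed sets $\GG(\R)\xrightarrow{\det_\R}\R^\times\xrightarrow{\delta}\Ho^1\hs\SS\xrightarrow{i_*}\Ho^1\hs\GG$. Now $\Ho^1\hs\GG=\prod_{j=1}^k\Ho^1(\R,\GL_{n_j,\R})=1$, so exactness at $\Ho^1\hs\SS$ forces $\delta$ to be surjective; and $\det_\R\colon\GG(\R)=\prod_j\GL(n_j,\R)\to\R^\times$ is surjective (any $t\in\R^\times$ equals $\det(\diag(t,1,\dots,1),I,\dots,I)$, and $k\ge1$), so exactness at $\R^\times$ forces $\delta^{-1}([1])=\R^\times$, i.e.\ $\delta$ to be constant. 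Combining, $\Ho^1\hs\SS=\{[1]\}$, whence $\Ho^1\hs\HH=1$. Alternatively, one may apply Proposition \ref{p:serre}: $\GG(\R)$ acts on $(\GG(\C)/\SS(\C))^\Gamma=\R^\times$ transitively through the surjection $\det_\R$, and the resulting one-point orbit set maps bijectively onto $\ker[\Ho^1\hs\SS\to\Ho^1\hs\GG]=\Ho^1\hs\SS$.

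The only ingredient beyond formal manipulation of the Borel--Serre exact sequences is the identity $\Ho^1(\R,\GL_{n,\R})=1$, i.e.\ Hilbert's Theorem 90 for $\GL_n$ over $\R$ (equivalently, the triviality of all $\R$-forms of $\R^n$); the case $n=1$ is Example \ref{x:cohom}(1), and the general case is classical. That is really the only genuinely nonroutine step. The remaining points are bookkeeping: checking that $\mu_3\subseteq\SS$ carries the standard $\Gamma$-action so that Lemma \ref{l:mu-n} is applicable, and that the $\GG(\R)$-action on $(\GG/\SS)(\C)^\Gamma$ is multiplication by $\det_\R$ on $\R^\times$ — both immediate.
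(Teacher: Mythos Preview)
Your proof is correct and follows essentially the same route as the paper's: reduce $\Ho^1\hs\HH$ to $\Ho^1\hs\SS$ via the odd-order quotient (the paper cites Lemma~\ref{l:H1-bijective} rather than Lemma~\ref{l:mu-n}, but either works), then kill $\Ho^1\hs\SS$ using the determinant exact sequence together with $\Ho^1\hs\GG=1$ and the surjectivity of $\det_\R$ onto $\R^\times$. The only cosmetic difference is that the paper phrases the last step as $\Ho^1\hs\SS\cong\coker[\GG(\R)\xrightarrow{\det}\R^\times]=1$ and cites \cite[X.1, Proposition~3]{Serre1979} for $\Ho^1\hs\GL_n=1$.
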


\begin{proof}
Since $\mu_3(\C)$ is of odd order 3,
by Lemma \ref{l:H1-bijective} we have
\[ \Ho^1\hs\HH=\Ho^1(\Gamma,S(\C)/\mu_3(\C))\cong \Ho^1(\Gamma,S(\C))=\Ho^1\hs\SS.\]
We have $\Ho^1\GG=1$; see \cite[X.1, Proposition 3]{Serre1979}.
From the short exact sequence
\[ 1\to \SS\to \GG\to \G_{m,\R}\to 1\]
we obtain a cohomology exact sequence
\[\GG(\R)\labelto{\det} \R^\times\labelto{\delta} \Ho^1\SS\to \Ho^1\GG=1,\]
whence
\begin{align*}
 \Ho^1\SS    \cong \coker[\GG(\R)\labelto{\det} \R^\times]=1.
\end{align*}
Thus $\Ho^1\hs\HH=1$, as required.
\end{proof}

\begin{proposition}\label{p:C-3}
Let $\BB$ be an $\R$-group and let $\BB^\circ$
denote the identity component of $\BB$. Assume that
\begin{enumerate}
\item $\BB/\BB^\circ$ is a finite group of order $p^n$
for some {\emm odd} prime $p$ and some integer $n\ge 0$;
\item $\#\Ho^1\hs\BB^\circ<p$.
\end{enumerate}
Then the canonical map $\Ho^1\hs\BB^\circ\hm\to\hm \Ho^1\hs\BB$ is bijective.
\end{proposition}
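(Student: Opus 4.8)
The plan is to exploit the short exact sequence of $\Gamma$-groups
\[1\to \BB^\circ(\C)\labelto{i} \BB(\C)\labelto{j} \BB(\C)/\BB^\circ(\C)\to 1,\]
where the quotient $Q:=\BB(\C)/\BB^\circ(\C)=\BB/\BB^\circ$ is, by hypothesis (1), a finite $\Gamma$-group of order $p^n$ with $p$ an odd prime. Since $Q$ has odd order, Corollary \ref{c:2m+1} gives $\Ho^1\hs Q=\{1\}$ and $\Ho^2\hs Q=\{1\}$. The strategy is first to prove surjectivity of $i_*\colon \Ho^1\hs\BB^\circ\to\Ho^1\hs\BB$ using the exactness of the cohomology sequence, and then to prove injectivity by a fiber-counting argument combined with hypothesis (2), namely $\#\Ho^1\hs\BB^\circ<p$.

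For surjectivity: by Proposition \ref{p:serre-prop38} the sequence
\[\Ho^1\hs\BB^\circ\labelto{i_*}\Ho^1\hs\BB\labelto{j_*}\Ho^1\hs Q\]
is exact, and since $\Ho^1\hs Q=\{1\}$, every class in $\Ho^1\hs\BB$ lies in $\ker j_*=\im i_*$; hence $i_*$ is surjective.

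For injectivity I would analyze the fibers of $i_*$ using the action of $Q^\Gamma$ on $\Ho^1\hs\BB^\circ$ from Construction \ref{con:rightact}: by Proposition \ref{p:action-C-Gamma}(ii) two classes in $\Ho^1\hs\BB^\circ$ have the same image in $\Ho^1\hs\BB$ if and only if they lie in the same $Q^\Gamma$-orbit. So it suffices to show every $Q^\Gamma$-orbit in $\Ho^1\hs\BB^\circ$ is a singleton. Fix $\alpha\in\Ho^1\hs\BB^\circ$ and let $\Oo$ be its orbit. On one hand, $\#\Oo\le\#\Ho^1\hs\BB^\circ<p$ by hypothesis (2). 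On the other hand, to control the orbit size I would twist: by Subsection \ref{ss:hom-twisted} and the functoriality of twisting, replacing $\BB^\circ$, $\BB$, $Q$ by their twists $_a\BB^\circ\subseteq\hs_a\BB$ with quotient $_a Q$ (where $a\in\Zl^1\hs\BB^\circ$ represents $\alpha$), the stabilizer of $\alpha$ under $Q^\Gamma$ corresponds, via Proposition \ref{p:action-C-Gamma}(iii), to the image of $(\hs_a\BB)^\Gamma\to (\hs_a Q)^\Gamma$. The twisted group $_a Q$ still has order $p^n$ (twisting only changes the $\Gamma$-action, not the underlying group), and the orbit of $\alpha$ is in bijection with $(\hs_a Q)^\Gamma/\im[(\hs_a\BB)^\Gamma\to (\hs_a Q)^\Gamma]$. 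The key point is that this orbit, being a quotient of a subset of the finite $p$-group $(\hs_a Q)^\Gamma$... here I need more care: $(\hs_a Q)^\Gamma$ is a subgroup of a $p$-group, so its order is a power of $p$, and the orbit $\Oo$ (for the right action) has cardinality dividing $\#(\hs_a Q)^\Gamma$, hence a power of $p$. Combined with $\#\Oo<p$ we get $\#\Oo=1$. Therefore $i_*$ is injective.

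The main obstacle I anticipate is making the orbit-size bound rigorous: one must verify that the orbit of $\alpha$ under the (non-group) action of $Q^\Gamma$ on $\Ho^1\hs\BB^\circ$ genuinely has cardinality a power of $p$. The clean route is precisely the twisting identification above — after twisting by a cocycle representing $\alpha$, the point $\alpha$ becomes the neutral class $[1]$, and by Proposition \ref{p:action-C-Gamma}(iii) its stabilizer in $(\hs_a Q)^\Gamma$ is $\im[(\hs_a\BB)^\Gamma\to(\hs_a Q)^\Gamma]$, a subgroup; then the orbit is the coset space $(\hs_a Q)^\Gamma/\bigl(\text{that subgroup}\bigr)$, whose size divides $\#(\hs_a Q)^\Gamma$, a power of $p$ since $_a Q$ is a $p$-group and $(\hs_a Q)^\Gamma$ is a subgroup. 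Everything else is a direct application of the exact sequences and twisting formalism already developed in Section \ref{sec:galcohom}, so once this counting step is pinned down the proof is short.
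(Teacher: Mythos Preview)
Your proof is correct and follows the paper's approach: surjectivity from the exact sequence (using $\Ho^1 Q=\{1\}$ for the odd-order group $Q$), injectivity from the orbit structure of the $Q^\Gamma$-action on $\Ho^1\BB^\circ$. The paper streamlines your injectivity step: Construction \ref{con:rightact} already defines a genuine \emph{group} action of $\CC(\R)=Q^\Gamma$ (a subgroup of the $p$-group $Q$, hence itself a $p$-group) on $\Ho^1\BB^\circ$, so every orbit has $p$-power cardinality; since $\#\Ho^1\BB^\circ<p$, all orbits are singletons --- your twisting detour via Proposition \ref{p:action-C-Gamma}(iii) is unnecessary.
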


\begin{proof}
Write $\AA=\BB^\circ$, $\CC=\BB/\BB^\circ$.
Let $\pi\colon \BB\to \CC$ denote the canonical epimorphism.
We have a cohomology exact sequence
\[\CC(\R)\to \Ho^1\hm\AA\to \Ho^1\BB\labelto{\pi_*} \Ho^1\CC,\]
where  $\Ho^1\CC=1$ because $\CC$ is a group of order $p^n$.
We conclude that the map $\pi_*\colon \Ho^1\BB^\circ\to \Ho^1\BB$ is  surjective.
It remains to show that $\pi_*$  is injective.

By Corollary \ref{c:39-cor1} the map $\pi_*$ identifies
$\Ho^1\BB$ with $\Ho^1\BB^\circ/\hs\CC(\R)$ with respect
to a certain action of $\CC(\R)$ on $\Ho^1\BB^\circ$.
Since $\#C=p^n$, we see that $\#\CC(\R)= p^k$ for some natural number $k$.
Since $\#\Ho^1\BB^\circ<p$, we conclude that  $\CC(\R)$
acts  on $\Ho^1\BB^\circ$ trivially.
Thus the map $\pi_*$ is injective, and hence bijective, which completes the proof.
\end{proof}

\begin{proposition}\label{p:C-3-2}
Let $\BB$ be an $\R$-group.
Write $\AA=\BB^\circ$, $\CC=\BB/\BB^\circ$.
Assume that
\begin{enumerate}
\item $\Ho^1\hm\AA$ is trivial;
\item $C$ is a finite group of order $2 p^n$
for some odd prime $p$ and some integer $n\ge 0$,
\item An element c of order 2 in  $\CC(\R)$
(which exists because $C$ is of order $2\cdot p^n$; see Lemma \ref{l:fixed-order2})
lifts to some {\emm cocycle} $b\in \Zl^1\BB$;
\item $\Ho^1\hs_b \AA=1$.
\end{enumerate}
Then $\#\Ho^1\BB=2$ with cocycles $1,b$.
\end{proposition}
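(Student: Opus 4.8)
The plan is to study the map $j_*\colon\Ho^1\BB\to\Ho^1\CC$ induced by the canonical projection $j\colon\BB\to\CC=\BB/\BB^\circ$, and to show it is a bijection whose target has exactly two elements. Throughout I work with the $\Gamma$-groups of $\C$-points, so that $\Ho^1\BB=\Ho^1(\Gamma,\BB(\C))$, and similarly for $\AA$ and $\CC$. Since $\BB^\circ$ is a characteristic normal subgroup of $\BB$ it is $\Gamma$-invariant, and $\BB(\C)\onto(\BB/\BB^\circ)(\C)$ because $\C$ is algebraically closed; hence there is a short exact sequence of $\Gamma$-groups
\[1\to\AA(\C)\to\BB(\C)\labelto{j}\CC(\C)\to 1,\]
to which Proposition~\ref{p:serre-prop38} and Corollary~\ref{c:39-cor2} apply.

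First I would identify $\Ho^1\CC$ explicitly. By assumption (2), $C=\CC(\C)$ is a $\Gamma$-group of order $2p^n$, so Lemma~\ref{l:fixed-order2} yields a $\Gamma$-fixed element $c$ of order $2$ — the element featured in assumption (3) — and Lemmas~\ref{l:p} and~\ref{l:explicit} give $\Ho^1\CC=\{[1],[c]\}$ with $[c]\ne[1]$. By assumption (3) there is a cocycle $b\in\Zl^1\BB$ with $j(b)=c$, so $j_*([b])=[c]$; since the neutral class always lies in the image of $j_*$, this already shows $j_*$ is surjective onto $\Ho^1\CC$. It then remains to see that its two fibers are singletons.

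For the fiber over $[1]$: by exactness of the sequence of Proposition~\ref{p:serre-prop38} at $\Ho^1\BB$, the kernel of $j_*$ is the image of $i_*\colon\Ho^1\AA\to\Ho^1\BB$, and this image is $\{[1]\}$ by assumption (1), so $j_*^{-1}([1])=\{[1]\}$. For the fiber over $[c]$: apply Corollary~\ref{c:39-cor2} with $\beta=[b]$, so that $c=j(b)$ and $j_*^{-1}([c])=j_*^{-1}(j_*(\beta))$ is in bijection with the orbit set $\Ho^1({}_b\AA)/({}_c\CC)^\Gamma$; here ${}_b\AA$ is $\AA(\C)$ twisted by the cocycle $b$, with $\BB(\C)$ acting on $\AA(\C)$ by inner automorphisms. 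By assumption (4) we have $\Ho^1({}_b\AA)=\{1\}$, so this orbit set is a singleton, necessarily $\{[b]\}$. Combining the two fibers gives $\Ho^1\BB=\{[1],[b]\}$, and $[1]\ne[b]$ since $j_*$ separates them; thus $\#\Ho^1\BB=2$ with representing cocycles $1$ and $b$, as asserted.

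The one point that needs care is the fiber computation over $[c]$: one must invoke Corollary~\ref{c:39-cor2} in the generality where $\AA$ is not assumed central in $\BB$ (the twist being by inner automorphisms), and one must ensure the hypothesis of that corollary — that $b$ is a genuine cocycle in $\Zl^1\BB$ lifting $c$ — is available, which is precisely what assumption (3) supplies. Everything else is routine diagram-chasing with Serre's exact sequences for nonabelian $\Ho^1$, combined with the computations of the cohomology of finite $\Gamma$-groups of order $2p^n$ from the preceding subsection (Lemmas~\ref{l:p}, \ref{l:fixed-order2}, and~\ref{l:explicit}).
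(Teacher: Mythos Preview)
Your proof is correct and follows essentially the same approach as the paper's: both use the exact sequence for $1\to\AA\to\BB\to\CC\to 1$, identify $\Ho^1\CC=\{[1],[c]\}$ via Lemma~\ref{l:explicit}, show $j_*^{-1}([1])=\{[1]\}$ from $\Ho^1\AA=1$, and show $j_*^{-1}([c])=\{[b]\}$ via Corollary~\ref{c:39-cor2} and assumption~(4). Your version is slightly more detailed (e.g.\ explicitly noting $[1]\ne[b]$ because $j_*$ separates them), but the argument is the same.
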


\begin{proof}
By Lemma \ref{l:explicit} we have $\#\Ho^1\CC=2$ with cocycles $1,c$.
We have a short exact sequence
\[1\to \AA\labelto{i}\BB\labelto{j} \CC\to 1.\]
By Corollary \ref{c:39-cor1},
the kernel of $j_*\colon \Ho^1\BB\to \Ho^1\CC$ comes from $\Ho^1\hm\AA$.
By assumption, $\Ho^1\hm\AA$ is trivial, and hence $\ker j_*=\{\hs[1]\hs\}$.
By assumption, $[c]$ comes from $[b]$ for some $b\in \Zl^1\BB$.
Thus the fiber $j_*^{-1}[c]$ is not empty.
By Corollary \ref{c:39-cor2}, this fiber is in a bijection
with the quotient of $\Ho^1{}_b \AA$ by an action of the group $(\hs_b \CC)(\R)$.
By assumption $\Ho^1{}_b \AA=1$, and hence $j_*^{-1}[c]=\{\hs[b]\hs\}$.
Thus $\Ho^1\BB=\{\hs[1],[b]\hs\}$, as required.
\end{proof}

\subsection{Weil restriction}

Let $G'$ be linear algebraic $\C$-group.
We assume that $G'\subseteq \GL(n,\C)$.
Consider the group
\[ \gamma\hs G':=\{\upgam g\mid g\in G'\}.\]
Then $\gamma\hs G'$ is again a linear algebraic $\C$-group in $\GL(n,\C)$.
Set
\[G=G'\times \gamma\hs G'\]
and define a map
\[ \sigma\colon G\to G,\quad (g_1\hs,g_2)\mapsto (\upgam g_2\hs,\upgam g_1).\]
It is easy to see that $\sigma$ is an anti-regular involution of $G$.
In  this way we obtain a linear algebraic $\R$-group
\[ \GG=(G'\times \gamma\hs G',\sigma).\]

We have a map
\begin{equation} \label{e:rest}
G'(\C) \to\GG(\R),\quad g\mapsto (g,\upgam g).
\end{equation}
It is easy to see that \eqref{e:rest} is an isomorphism of {\em real}
Lie groups.
In other words, $G(\R)$ is canonically isomorphic to the restriction of scalars
(from $\C$ to $\R$) of the complex Lie group $G'(\C)$.

\begin{definition}\label{d:Res}
$\Res G'=(G'\times \gamma\hs G',\sigma)$.
It is called the {\em Weil restriction} (of scalars from $\C$ to $\R$) of $G'$.
\end{definition}

\begin{remark}
Our construction of $\Res G'$ does not depend on the embedding of $G'$ into
$\GL(n,\C)$.
Indeed, one can obtain $\gamma\hs G'$ from $G'$ using the base change
$\gamma\colon \C\to \C$;  see \cite[Sections 1.1 and 1.15]{Borovoi2020}
(where $\gamma\hs G'$ is denoted $\gamma_* G'$).
\end{remark}

\begin{proposition}[well known]
\label{p:Weil}
Let $G'$ be a linear algebraic $\C$-group, and write $\GG=\Res G'$. Then:
\begin{enumerate}
\item[\rm (i)] $\Ho^1\hs\GG=\{1\}$;
\item[\rm (ii)] if, moreover, $G'$ is abelian, then $\Ho^2\hs\GG=\{1\}$.
\end{enumerate}
\end{proposition}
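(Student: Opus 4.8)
The plan is to recognize $\Res G'$ as induced from the trivial subgroup $\{1\}\subseteq\Gamma$ and apply Shapiro's lemma, which in this concrete ``order two'' situation is just an elementary direct computation. First I would unwind the definitions: by Definition \ref{d:Res} we have $\GG(\C)=G'(\C)\times\gamma\hs G'(\C)$ with $\sigma(g_1,g_2)=(\upgam g_2,\upgam g_1)$, so $\Gamma$ acts on $\GG(\C)$ by swapping the two factors together with complex conjugation in each.

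For (i), take a $1$-cocycle $c=(c_1,c_2)\in\Zl^1\hs\GG$, i.e.\ $c\cdot{}^\gamma c=1$. Writing this out in components gives $c_1\cdot\upgam c_2=1$ and $c_2\cdot\upgam c_1=1$ in $G'(\C)$; the second equation is just the complex conjugate of the first, so the single relation is $c_2=\upgam(c_1^{-1})=(\upgam c_1)^{-1}$. Thus $c$ is completely determined by the arbitrary element $c_1\in G'(\C)$, namely $c=(c_1,(\upgam c_1)^{-1})$. Now I would exhibit a coboundary trivializing it: choose $a=(a_1,a_2)\in\GG(\C)$ and compute $c*a=a^{-1}\cdot c\cdot\upgam a$; in the first component this is $a_1^{-1}\cdot c_1\cdot\upgam a_2$. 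Taking $a_1=1$ and $a_2=c_1^{-1}$ (so $\upgam a_2=(\upgam c_1)^{-1}$ — note $a$ need not be a cocycle, and indeed here it is not, but that is irrelevant: we only need \emph{some} $a\in A$) gives first component $c_1\cdot(\upgam c_1)^{-1}$; hmm, that is not $1$, so instead I take $a_1=1$, $a_2=\upgam(c_1)^{-1}$, giving first component $1\cdot c_1\cdot\upgam(\upgam c_1^{-1})=c_1\cdot c_1^{-1}=1$, and then the second component is automatically $1$ as well because $c*a$ is again a cocycle. Hence $[c]=[1]$ and $\Ho^1\hs\GG=\{1\}$.

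For (ii), assume $G'$ abelian and write the group additively. Then $\Zl^2\hs\GG=\GG(\C)^\Gamma=\{(x,\upgam x)\mid x\in G'(\C)\}$, and $\Bd^2\hs\GG=\{a+\upgam a\mid a\in\GG(\C)\}=\{(a_1+\upgam a_2,\ a_2+\upgam a_1)\mid a_1,a_2\in G'(\C)\}$. Given $(x,\upgam x)\in\Zl^2\hs\GG$, set $a_1=x$, $a_2=0$: then $a+\upgam a=(x+\upgam 0,\ 0+\upgam x)=(x,\upgam x)$, so every $2$-cocycle is a coboundary and $\Ho^2\hs\GG=\{1\}$.

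The computation is entirely routine; there is no real obstacle. The only thing to be a little careful about is bookkeeping with the swap-and-conjugate action and remembering that in Definition \ref{d:H1-nonab} the twisting element $a\in A$ ranges over \emph{all} of $A$ and need not itself be a cocycle, which is exactly what makes the trivialization in (i) painless. Alternatively one could simply cite that $\Res$ is the functor of restriction of scalars and invoke Shapiro's lemma together with the fact that $\Ho^i$ of the trivial group over the trivial Galois group vanishes for $i\ge 1$, but the self-contained computation above fits the elementary style of this section (cf.\ Remark \ref{r:def}).
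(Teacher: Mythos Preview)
Your proposal is correct and takes essentially the same approach as the paper: both exhibit an explicit element trivializing the cocycle by placing all the ``content'' in one factor and leaving the other trivial. The paper's choice in (i) is $h=(c_1,1)$ with $c=h\cdot\sigma(h)^{-1}$, while you use $a=(1,(\upgam c_1)^{-1})$, but these are minor variants of the same computation; part (ii) is identical up to additive versus multiplicative notation.
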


\begin{proof}
If $z\in\Zl^1\hs\GG$, then $z=(g,\upgam g^{-1})$ for some $g\in G'$.
Set $h=(g,1)\in G$; then
\[ z=h\cdot\sigma(h)^{-1}\sim 1,\]
which proves (i).

If $G'$ is abelian, then $\GG$ is abelian as well.
Let $z\in\Zl^2\hs\GG$; then $z=(g,\upgam g)$ for some $g\in G'$.
Set $h=(g,1)\in G$; then
$$ z=h\cdot \sigma(h)\sim 1,$$
which proves (ii).
\end{proof}

\begin{remark}
Proposition \ref{p:Weil} is a trivial special case of a lemma
of D.\,K. Faddeev and Arnold Shapiro;
see \cite[I.5.8(b) and I.2.5, Corollary of Proposition 10]{Serre1997}.
\end{remark}

\subsection{Galois cohomology of real tori}

As above, a {\em real algebraic torus} (for brevity: an $\R$-torus)
is a pair $\TT=(T,\sigma_T)$, where $T$ is a complex algebraic torus
and $\sigma_T\colon T\to T$ is an anti-regular involution.

\begin{proposition}[well known,  see, e.g., {\cite[Section 10.1]{Vos}}\hs]
\label{p:indecomposable}
Any $\R$-torus is a product of tori isomorphic to one
of these three indecomposable $\R$-tori:
$\R^\times$ , $U(1)$, $\C^*$.
\end{proposition}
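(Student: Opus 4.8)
The statement to prove is Proposition~\ref{p:indecomposable}: every $\R$-torus is a product of copies of the three indecomposable $\R$-tori $\R^\times$ (i.e.\ $\G_{m,\R}$), $U(1)$, and $\C^\times$ (i.e.\ $\Res\G_{m,\C}$). The plan is to pass to the anti-equivalent combinatorial category of $\Gamma$-lattices and reduce to the classification of integral representations of $\Gamma=\{1,\gamma\}$, a group of order $2$.

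First I would recall the standard anti-equivalence between the category of $\R$-tori and the category of finitely generated free $\Z$-modules equipped with a $\Gamma$-action (the cocharacter lattice $\Xf_*(\TT)$, or dually the character lattice $\Xf^*(\TT)$, with $\gamma$ acting through $\sigma_T$). Under this dictionary, products of tori correspond to direct sums of lattices, and the three indecomposable tori correspond respectively to the rank-one lattices $\Z_\triv$, $\Z_\tw$, and to the rank-two permutation lattice $\Z^2_{\tw+}$ (equivalently $\Z[\Gamma]$), which are exactly the $\Gamma$-lattices listed just before Examples~\ref{x:cohom-Z}. So the proposition is equivalent to the purely algebraic claim: \emph{every $\Gamma$-lattice is a direct sum of copies of $\Z_\triv$, $\Z_\tw$, and $\Z[\Gamma]$.}

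Next I would prove that algebraic fact by an induction on the rank of the lattice $L$. Consider the norm map $N=1+\gamma\colon L\to L^\Gamma$ and the submodule $L^\Gamma$ of invariants. The key step is to split off an indecomposable summand: pick a primitive vector $v\in L$ and analyze the $\Z[\Gamma]$-submodule $\langle v,\gamma v\rangle$ it generates. If $\gamma v=v$ or $\gamma v=-v$ one gets a rank-one summand $\Z_\triv$ or $\Z_\tw$ after checking the quotient is again torsion-free (primitivity of $v$ handles this); if $v$ and $\gamma v$ are independent one aims to extract a $\Z[\Gamma]$-summand. The clean way is to invoke the structure theory of $\Z[\Gamma]$-modules for $\Gamma$ cyclic of order $2$: since $\Z[\Gamma]\cong\Z\times\Z$ after inverting $2$, and the relevant Ext/obstruction groups are $2$-torsion, one shows any $\Gamma$-lattice decomposes; concretely, localize at $2$ and away from $2$, use that away from $2$ the ring $\Z[1/2][\Gamma]$ is a product of PIDs so $L[1/2]$ splits into eigenlattices, then glue, controlling the gluing by the fact (cf.\ Lemma~\ref{l:2=0}) that $\Ho^1(\Gamma,-)$ and $\Ho^2(\Gamma,-)$ are killed by $2$. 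Alternatively, and more in the spirit of this paper, cite \cite[Section~10.1]{Vos} as the authors do: the reference already contains this classification of $\R$-tori, so the proof can legitimately be the single sentence ``see \cite[Section~10.1]{Vos}'' together with the identification of the three indecomposables with $\R^\times$, $U(1)$, $\C^\times$.

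I expect the main obstacle to be the splitting step when $v,\gamma v$ are $\Z$-independent: one must show the rank-two permutation submodule they span is actually a \emph{direct summand} of $L$, not merely a submodule. This is where the $2$-torsion of the cohomology of $\Gamma$ does the work — the obstruction to splitting lives in an $\Ext^1_{\Z[\Gamma]}$ group which, by a d\'evissage like the one in the proofs of Lemma~\ref{l:H1-bijective} and Lemma~\ref{l:p}, is $2$-primary, and one trades the $2$-part against the $3$-adic (or odd) part exactly as in Corollary~\ref{c:2m+1}. Once that summand is split off, the induction hypothesis applied to the complementary lattice finishes the argument.
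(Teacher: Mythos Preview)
The paper does not prove this proposition at all: it is stated with the tag ``well known'' and the reader is referred to \cite[Section~10.1]{Vos}. You correctly anticipate this in your ``alternatively'' clause. Your sketch therefore goes well beyond what the paper does, and the reduction to classifying $\Gamma$-lattices via $\TT\mapsto\Xf_*(\TT)$ is the right framework.

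One caveat on your final paragraph: the obstruction to splitting off a rank-one summand does \emph{not} vanish in general --- for instance $\Ext^1_{\Z[\Gamma]}(\Z_\triv,\Z_\tw)\cong\Z/2\Z$ --- so the appeal to ``trading the $2$-part against the odd part'' and the reference to Corollary~\ref{c:2m+1} (which concerns groups of \emph{odd} order) are misplaced here, where everything is already $2$-primary. The point is rather that the unique nontrivial extension of $\Z_\triv$ by $\Z_\tw$ is exactly $\Z[\Gamma]$, so nonsplit extensions still land among your three indecomposables. A cleaner execution is either to put the matrix of $\gamma$ on $L$ into an integral normal form with blocks $(1)$, $(-1)$, $\SmallMatrix{0&1\\1&0}$, or simply to cite the classical Diederichsen/Curtis--Reiner classification of integral representations of $C_2$.
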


Note that the real torus $\R^\times$ is also known as $\mathbb{G}_{m,\R}$,
and the real torus $\C^*$ is also known as $R_{\C/\R}\mathbb{G}_{m,\C}$.

\begin{definition}\label{def:tauT}
Let $\TT=(T,\sigma_T)$ be a real torus.
We write
$$X_T={\sf X}_*(T)=\Hom(\mathbb{G}_{m,\C}, T)$$
(the cocharacter group). We define an involution $\tau_T$ of $X_T$ by
\[\sigma_T(x(z))=(\tau_T(x))(\ov z)\quad \text{for }x\in X_T,\ z\in \C^\times.\]
\end{definition}
This corresponds to the formula
\[\ov{x(z)}=\ov x(\ov z).\]

\begin{construction}\label{cons:e*}
We define a homomorphism
\[e\colon X_T\to \TT(\C),\quad x\mapsto x(-1).\]
We have
\[\sigma_T(e(x))=\sigma_T(x(-1))=(\tau_T(x))(\ov{-1})=(\tau_T(x))({-1})=e(\tau_T(x)).\]
Thus the homomorphism $e$ is $\Gamma$-equivariant,
where the complex conjugation $\gamma\in \Gamma=\Gal(\C/\R)$
acts on $T(\C)$ via $\sigma_T$ and on $X_T$ via $\tau_T$.
For $n=1,2$ we obtain an induced homomorphism
\[e_*^{(n)}\colon \Ho^n X_T\to \Ho^n\hssh \TT.\]
\end{construction}

\begin{proposition}\label{p:e*}
The homomorphism $e_*^{(n)}\colon \Ho^n X_T\to \Ho^n\hssh \TT$ is an isomorphism.
\end{proposition}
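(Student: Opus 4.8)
The plan is to reduce the statement to the three indecomposable $\R$-tori listed in Proposition \ref{p:indecomposable} and then to check each case directly against Examples \ref{x:cohom-Z} and \ref{x:cohom}.

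First I would note that everything in sight is additive in $\TT$. If $\TT=\TT'\times\TT''$, then $X_T={\sf X}_*(T)$ is canonically $X_{T'}\oplus X_{T''}$ as a $\Gamma$-module (with $\tau_T=\tau_{T'}\oplus\tau_{T''}$), $\TT(\C)=\TT'(\C)\times\TT''(\C)$ as $\Gamma$-groups, and the homomorphism $e$ of Construction \ref{cons:e*} splits accordingly as $e'\times e''$. Since for abelian $\Gamma$-groups the cocycle and coboundary subgroups of Definitions \ref{d:H1} and \ref{d:H2} are formed componentwise, $\Ho^n(\Gamma,-)$ sends finite products of abelian $\Gamma$-groups to products, and hence $e_*^{(n)}$ for $\TT$ is the product of $e_*^{(n)}$ for $\TT'$ and for $\TT''$. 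Therefore it is an isomorphism as soon as it is one for each indecomposable factor, and by Proposition \ref{p:indecomposable} it suffices to treat $\TT\in\{\mathbb{G}_{m,\R},\ U(1),\ \C^*\}$, where $\C^*=R_{\C/\R}\mathbb{G}_{m,\C}$.

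Next, for each of these I would read off the two $\Gamma$-modules involved. From the defining relation $\sigma_T(x(z))=(\tau_T(x))(\bar z)$ of Definition \ref{def:tauT} and the explicit anti-regular involution on each torus one gets: for $\mathbb{G}_{m,\R}$, $\TT(\C)=\C^\times_\stand$ and $X_T\cong\Z_\triv$; for $U(1)$, $\TT(\C)=\C^\times_\tw$ and $X_T\cong\Z_\tw$; for $\C^*$, $\TT(\C)=(\C^\times)^2_{\tw+}$ and $X_T\cong\Z^2_{\tw+}$. In each case $e$ acts on the cocharacter lattice by $x\mapsto x(-1)$, i.e. by reducing the exponent modulo $2$ and sending the generator $1$ to $-1$.

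Finally I would conclude by inspection. By Examples \ref{x:cohom-Z} and \ref{x:cohom}, both $\Ho^n X_T$ and $\Ho^n\TT$ vanish in the cases $(\mathbb{G}_{m,\R},n=1)$, $(U(1),n=2)$, and $(\C^*,n=1,2)$, so $e_*^{(n)}$ is trivially an isomorphism there. Only two cases remain. For $(\mathbb{G}_{m,\R},n=2)$ one has $\Zl^2\Z_\triv=\Z$ and $\Zl^2\C^\times_\stand=\R^\times$, and $e$ maps the generator $1$ to $-1$, whose class generates $\Ho^2\C^\times_\stand=\R^\times/\{z\bar z\mid z\in\C^\times\}$, a group of order $2$; thus $e_*^{(2)}$ is surjective between groups of order $2$, hence bijective. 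For $(U(1),n=1)$ one has $\Zl^1\Z_\tw=\Z$ and $\Zl^1\C^\times_\tw=\R^\times$, and again $1\mapsto-1$ generates $\Ho^1\C^\times_\tw=\R^\times/\{z\bar z\mid z\in\C^\times\}$, so $e_*^{(1)}$ is bijective as well. The argument is entirely formal apart from this last step, whose only content is the observation that $e$ carries a generator of the relevant cocycle lattice to a generator of the target cohomology group, so that $e_*^{(n)}$ is genuinely an isomorphism and not merely a map between two groups of order $2$. (Alternatively one could argue uniformly: $e$ factors through a $\Gamma$-equivariant isomorphism $X_T/2X_T\isoto T[2](\C)$ onto the $2$-torsion, and a dimension-shift argument using the periodicity of the cohomology of a group of order $2$ identifies the induced maps on $\Ho^1$ and $\Ho^2$; in these low degrees this amounts to the same bookkeeping.)
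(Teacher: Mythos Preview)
Your proposal is correct and follows exactly the same approach as the paper: the paper's proof is the single sentence ``It sufficed to prove the proposition for the three indecomposable tori of Proposition \ref{p:indecomposable}, which is easy,'' and you have simply spelled out this reduction and the case-by-case verification using Examples \ref{x:cohom-Z} and \ref{x:cohom}.
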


\begin{proof}
It sufficed to prove the proposition for the three indecomposable tori
of Proposition \ref{p:indecomposable}, which is easy.
\end{proof}

\begin{definition}
By a {\em $\Gamma$-lattice} we mean a finitely generated
abelian group $X$ with a $\Gamma$-action.
We say that $X$ is a {\em permutation lattice}
if it has a $\Gamma$-stable basis.
In other words, a permutation $\Gamma$-lattice
is a direct sum of $\Gamma$-lattices
isomorphic to $\Z_\triv$ and $\Z^2_{\tw+}$ of Examples \ref{x:cohom-Z}.
\end{definition}

\begin{proposition}\label{p:permutation}
Let $X$ be a permutation $\Gamma$-lattice. Then $\Ho^1\hs X=1$.
\end{proposition}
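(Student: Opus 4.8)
The plan is to reduce immediately to the two indecomposable building blocks named in the definition and then quote the computations already recorded in Examples \ref{x:cohom-Z}. First I would recall that, since $X$ is a permutation $\Gamma$-lattice, it possesses a $\Gamma$-stable $\Z$-basis, say $\{x_1,\dots,x_r\}$. The group $\Gamma$ acts on this finite set by permutations, so the basis decomposes into $\Gamma$-orbits, each of size $1$ or $2$ (because $\#\Gamma=2$). A size-$1$ orbit $\{x_j\}$ spans a $\Gamma$-submodule isomorphic to $\Z_\triv$, and a size-$2$ orbit $\{x_j,x_k\}$ with $\upgam x_j=x_k$ spans a $\Gamma$-submodule isomorphic to $\Z^2_{\tw+}$ via $x_j\mapsto(1,0)$, $x_k\mapsto(0,1)$. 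Summing over the orbits exhibits a $\Gamma$-equivariant isomorphism
\[
X\;\cong\;\Z_\triv^{\,\oplus a}\oplus\bigl(\Z^2_{\tw+}\bigr)^{\oplus b}
\]
for suitable nonnegative integers $a,b$ with $a+2b=r$.

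Next I would observe that $\Ho^1$ commutes with finite direct sums of abelian $\Gamma$-modules. This is immediate from Definition \ref{d:H1}: both $\Zl^1(-)=\{c\mid c+\upgam c=0\}$ and $\Bd^1(-)=\{a-\upgam a\}$ are computed coordinatewise, so $\Zl^1$ and $\Bd^1$ of a direct sum are the direct sums of the respective pieces, and hence so is the quotient $\Ho^1$. Therefore
\[
\Ho^1\hs X\;\cong\;\bigl(\Ho^1\hsss\Z_\triv\bigr)^{\oplus a}\oplus\bigl(\Ho^1\hssh\Z^2_{\tw+}\bigr)^{\oplus b}.
\]

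Finally I would invoke Examples \ref{x:cohom-Z}(1) and (5), which give $\Ho^1\hsss\Z_\triv=0$ and $\Ho^1\hssh\Z^2_{\tw+}=0$; plugging these in yields $\Ho^1\hs X=1$, as desired. I do not expect any real obstacle here: the only point requiring a line of care is the bookkeeping that a permutation basis breaks into orbits realizing precisely the two model summands, and that detail is entirely routine. (One could alternatively note that $\Z^2_{\tw+}\cong\Z[\Gamma]$ is an induced module and apply Shapiro's lemma, but since the vanishing of its $\Ho^1$ is already tabulated in Examples \ref{x:cohom-Z}, there is no need.)
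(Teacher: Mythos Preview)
Your proof is correct and follows exactly the same approach as the paper, which simply cites Examples \ref{x:cohom-Z}(1) and \ref{x:cohom-Z}(5); you have merely spelled out the routine details (orbit decomposition of the basis and additivity of $\Ho^1$) that the paper leaves implicit.
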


\begin{proof}
The proposition follow immediately from Examples
 \ref{x:cohom-Z}(1) and  \ref{x:cohom-Z}(5).
\end{proof}

\begin{definition}
An $\R$-torus $\TT$ is called {\em quasi-trivial}
if its cocharacter group $X_T$ is a permutation $\Gamma$-module.
In other word, $\TT$ is quasi-trivial if and only
if it is isomorphic to a direct product of tori of types $\C^\times_\stand$
and $(\C^\times)^2_{\tw_+}$ of Examples \ref{x:cohom}.
\end{definition}

\begin{proposition}[{Sansuc}]
\label{p:quasi}
Let $\TT$ be a quasi-trivial $\R$-torus.
Then $H^1\hs\TT=1$ if and only if $\TT$ is quasi-trivial.
\end{proposition}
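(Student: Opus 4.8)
\smallskip

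The plan is to deduce the statement from the structure theorem for real tori (Proposition \ref{p:indecomposable}) together with the cohomology computations recorded in Examples \ref{x:cohom}. The implication that gets used later is the ``if'' direction, so I would prove that first: if $\TT$ is quasi-trivial then, by definition, its cocharacter lattice $X_T$ is a permutation $\Gamma$-lattice, hence $\Ho^1 X_T = 1$ by Proposition \ref{p:permutation}, and Proposition \ref{p:e*} (for $n=1$) carries this over to an isomorphism $\Ho^1\hs\TT \cong \Ho^1 X_T$, giving $\Ho^1\hs\TT = 1$. Equivalently, one may observe that a quasi-trivial $\TT$ is a finite product of copies of $\C^\times_\stand = \mathbb{G}_{m,\R}$ and $(\C^\times)^2_{\tw+} = R_{\C/\R}\mathbb{G}_{m,\C}$, each of which has trivial $\Ho^1$ by Examples \ref{x:cohom}(1) and (5), and conclude by additivity of $\Ho^1$ (see below).

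For the converse, I would invoke Proposition \ref{p:indecomposable} to write $\TT \cong \prod_i \TT_i$ with each $\TT_i$ isomorphic to one of $\mathbb{G}_{m,\R} = \C^\times_\stand$, $U(1) = \C^\times_\tw$, or $R_{\C/\R}\mathbb{G}_{m,\C} = (\C^\times)^2_{\tw+}$. Since for an abelian $\Gamma$-group the subgroups $\Zl^1$ and $\Bd^1$ of Definition \ref{d:H1} commute with finite direct sums, the functor $\Ho^1$ is additive, so $\Ho^1\hs\TT \cong \bigoplus_i \Ho^1\hs\TT_i$. By Examples \ref{x:cohom}(1), (3), (5), the groups $\Ho^1\hs\TT_i$ equal $1$, $\Z/2\Z$, $1$ according to the type of $\TT_i$. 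Hence $\Ho^1\hs\TT = 1$ forces the absence of any factor of type $U(1)$, i.e. $\TT$ is a product of copies of $\mathbb{G}_{m,\R}$ and $R_{\C/\R}\mathbb{G}_{m,\C}$; in terms of cocharacter lattices, $X_T$ is then a direct sum of copies of $\Z_\triv$ and $\Z^2_{\tw+}$, which is precisely the definition of $\TT$ being quasi-trivial.

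I do not expect a genuine obstacle here: the only real content is the observation that, among the three indecomposable $\R$-tori, precisely the norm-one torus $U(1)$ (whose cocharacter lattice is $\Z_\tw$, and which is therefore not a permutation lattice) has nontrivial first Galois cohomology, while the other two do not; everything else is bookkeeping with the decomposition of $X_T$ and the additivity of $\Ho^1$. The one small point to state carefully is that ``quasi-trivial'' is both preserved by and detected by the product decomposition of Proposition \ref{p:indecomposable}, since $\Z_\triv$ and $\Z^2_{\tw+}$ generate the permutation $\Gamma$-lattices (cf. the definition preceding Proposition \ref{p:permutation}) whereas $\Z_\tw$ does not. If the proposition is read exactly as stated, with quasi-triviality already assumed, then only the first paragraph above is needed.
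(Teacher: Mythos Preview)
Your proof is correct and follows essentially the same route as the paper: for the ``if'' direction you use Proposition~\ref{p:permutation} together with Proposition~\ref{p:e*}, and for the converse you decompose $\TT$ into indecomposables via Proposition~\ref{p:indecomposable} and use Examples~\ref{x:cohom}(1),(3),(5) to rule out $U(1)$ factors. Your closing remark about the redundant hypothesis in the statement is also apt.
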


\begin{proof}
If $\TT$ is quasi-trivial, then by Proposition \ref{p:permutation}
we have $\Ho^1\hs\X_*(\TT)=1$,
and by Proposition \ref{p:e*} we have $\Ho^1\hs\TT=1$.

Conversely, suppose that  $H^1\hs\TT=1$.
We can write $\TT$ as a direct product of indecomposable $\R$-tori.
Since $H^1\hs\TT=1$, this product does not contain direct factors isomorphic to $U(1)$.
It follows that  $\TT$ is quasi-trivial.
\end{proof}

\begin{remark}
The ``if'' assertion of Proposition \ref{p:quasi}
is a special case of  \cite[formula (1.9.1)]{Sansuc1981}.
\end{remark}

\subsection{Using Galois cohomology for finding real orbits in homogeneous spaces}

\begin{subsec}
Let $\GG$ be a real algebraic group acting (over $\R$)
 on a real algebraic variety $\YY$.
We write $\GG=(G,\sigma)$, $\YY=(Y,\mu)$.
Here $\mu$ is an anti-holomorphic involution of $Y$ given by $\mu(y)=\upgam y$.
The assertion that {\em $\GG$ acts on $\YY$ over $\R$}
means that $G$ acts on $Y$ by $(g,y)\mapsto g\cdot y$,
and this action is $\Gamma$-equivariant:
for $g\in G$, $y\in Y$ we have
\[ \upgam(g\cdot y)=\upgam g\cdot\hm\upgam y,\quad\text{that is,}
    \quad \mu(g\cdot y)=\sigma(g)\cdot \mu(y).\]

We assume that $\GG$ acts on $\YY$ {\em transitively},
that is, $G$ acts on $Y$ transitively.
\end{subsec}

\begin{subsec}
We have $\GG(\R)=G^\sigma$, $\YY(\R)=Y^\mu$.
The group $\GG(\R)$ naturally acts on $\YY(\R)$,
and this action might not be transitive.
Assuming that $\YY(\R)$ is {\em nonempty},
we describe the set of orbits $\YY(\R)/\GG(\R)$ in terms of Galois cohomology.

Let $e\in \YY(\R)=Y^\mu$ be an $\R$-point.
Let $C=\Stab_G(e)$.
Since $\sigma(e)=e$, we have $\sigma(C)=C$.
We consider the real algebraic subgroup
\[ \CC=\Stab_\GG(e)=(C,\sigma_C),\quad\text{where }\sigma_C=\sigma|_C\hs,\]
and the homogeneous space $\GG/\CC$, on which $\Gamma$ acts by $\upgam(gC)=\upgam g\cdot C$.
We have a canonical bijection
\[G/C\isoto Y, \quad gC\mapsto g\cdot e,\]
and an easy calculation shows that this bijection is $\Gamma$-equivariant.
Taking into account Proposition \ref{p:serre}, we obtain bijections
\begin{equation}\label{e:ker-G/C-Y/C}
\ker[\hs\Ho^1\hs\CC\to\Ho^1\hs\GG\hs]\isoto (G/C)^\Gamma/\GG(\R)\isoto\YY(\R)/\GG(\R).
\end{equation}
\end{subsec}

\begin{construction}\label{con:e-c}
We describe explicitly the composite bijection \eqref{e:ker-G/C-Y/C}.
Let $c\in\Zl^1\CC$ be such that $[c]\in \ker[\Ho^1\hs\CC\to\Ho^1\hs\GG]$.
Then there exists $g\in G$ such that $c=g^{-1}\cdot\upgam g$.
We set $e_c=g\cdot e\in Y$.
We compute
\[\mu(e_c)=\mu(g\cdot e)=\sigma(g)\cdot\mu(e)=gc\cdot e=g\cdot(c\cdot e)=g\cdot e=e_c\hs.\]
Thus $e_c\in\YY(\R)$.
To $c$ we associate the $\GG(\R)$-orbit $\GG(\R)\cdot e_c\subseteq\YY(\R)$.
\end{construction}

\begin{proposition}\label{p:coh-orbits}
The correspondence $c\mapsto e_c$ of Construction \ref{con:e-c} induces a bijection
 \[\ker[\hs\Ho^1\hs\CC\to\Ho^1\hs\GG\hs]\isoto\YY(\R)/\GG(\R).\]
\end{proposition}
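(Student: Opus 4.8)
The plan is to show that the map $c\mapsto e_c$ is well-defined on cohomology classes and that it is the inverse (up to the identifications already in place) of the canonical bijection appearing in \eqref{e:ker-G/C-Y/C}. In fact the cleanest route is simply to verify that Construction \ref{con:e-c} produces precisely the connecting-map bijection of Proposition \ref{p:serre}, read backwards. Recall from Proposition \ref{p:serre}, applied to the exact sequence of $\Gamma$-groups $1\to C\to G\to G/C\to 1$, that the connecting map $\delta$ induces a bijection $(G/C)^\Gamma/\GG(\R)\isoto \ker[\Ho^1\hs\CC\to\Ho^1\hs\GG]$, and that this $\delta$ is given concretely by: an element $\bar g\in (G/C)^\Gamma$, lifted to $g\in G$, satisfies $g^{-1}\cdot\upgam g\in C$ (since $\upgam(gC)=gC$), and $\delta(\bar g)=[g^{-1}\cdot\upgam g]$. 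Composing with the $\Gamma$-equivariant bijection $G/C\isoto Y$, $gC\mapsto g\cdot e$, gives the composite bijection in \eqref{e:ker-G/C-Y/C}.

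First I would record that $e_c$ is well-defined and lies in $\YY(\R)$: given $[c]\in\ker[\Ho^1\hs\CC\to\Ho^1\hs\GG]$, the class $[c]$ is trivial in $\Ho^1\hs\GG$, so $c=g^{-1}\cdot\upgam g$ for some $g\in G$, and the computation $\mu(e_c)=\sigma(g)\cdot\mu(e)=gc\cdot e=g\cdot e=e_c$ already given in the construction shows $e_c\in Y^\mu=\YY(\R)$. Next I would check that the $\GG(\R)$-orbit of $e_c$ depends only on the cohomology class $[c]$ and not on the cocycle $c$ or on the choice of $g$. If $g_1$ is another element with $g_1^{-1}\cdot\upgam g_1=c$, then $g_1 g^{-1}\in G^\sigma=\GG(\R)$, so $e_{c}$ computed from $g_1$ lies in $\GG(\R)\cdot e_c$. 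If $c'=a^{-1}\cdot c\cdot\upgam a$ with $a\in C$ is a cohomologous cocycle, then $c'=(ga)^{-1}\cdot\upgam(ga)$, and $(ga)\cdot e=g\cdot(a\cdot e)=g\cdot e=e_c$ since $a\in C=\Stab_G(e)$; hence $e_{c'}=e_c$ on the nose. So $c\mapsto\GG(\R)\cdot e_c$ descends to a well-defined map $\ker[\Ho^1\hs\CC\to\Ho^1\hs\GG]\to\YY(\R)/\GG(\R)$.

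It then remains to see this map is a bijection, and for this I would simply match it against the composite \eqref{e:ker-G/C-Y/C}. Under $G/C\isoto Y$ the class $gC\in (G/C)^\Gamma$ corresponds to the point $g\cdot e\in Y^\mu$, and $\delta(gC)=[g^{-1}\cdot\upgam g]$; so the inverse of the bijection $\ker[\Ho^1\hs\CC\to\Ho^1\hs\GG]\isoto\YY(\R)/\GG(\R)$ from \eqref{e:ker-G/C-Y/C} sends $[c]$, with $c=g^{-1}\cdot\upgam g$, to the $\GG(\R)$-orbit of $g\cdot e=e_c$ — which is exactly the correspondence of Construction \ref{con:e-c}. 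Since a map that coincides with a bijection is itself a bijection, the proposition follows. The only points requiring genuine care are the $\Gamma$-equivariance of $G/C\isoto Y$ (an immediate check from the equivariance of the action, $\upgam(g\cdot e)=\upgam g\cdot\upgam e=\upgam g\cdot e$) and the bookkeeping that the concrete formula for $\delta$ in Proposition \ref{p:serre} matches Construction \ref{con:e-c}; there is no substantive obstacle, since all the real content is already packaged in Proposition \ref{p:serre}. I would therefore present the argument as: (1) $e_c\in\YY(\R)$; (2) independence of choices, giving a well-defined map; (3) identification with the inverse of \eqref{e:ker-G/C-Y/C}, hence bijectivity.

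\begin{proof}
Consider the exact sequence of $\Gamma$-groups $1\to C\to G\to G/C\to 1$ together with the $\Gamma$-equivariant bijection $G/C\isoto Y$, $gC\mapsto g\cdot e$ (equivariance is immediate: $\upgam(g\cdot e)=\upgam g\cdot\upgam e=\upgam g\cdot e$). By Proposition \ref{p:serre} the connecting map $\delta$ induces a bijection
\[(G/C)^\Gamma/\GG(\R)\isoto\ker[\hs\Ho^1\hs\CC\to\Ho^1\hs\GG\hs],\]
and $\delta$ is computed as follows: for $\bar g\in(G/C)^\Gamma$ with lift $g\in G$ one has $g^{-1}\cdot\upgam g\in C$ and $\delta(\bar g)=[g^{-1}\cdot\upgam g]$. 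Transporting along $G/C\isoto Y$ yields the composite bijection \eqref{e:ker-G/C-Y/C}.

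Now let $[c]\in\ker[\Ho^1\hs\CC\to\Ho^1\hs\GG]$. Since $[c]$ is trivial in $\Ho^1\hs\GG$, there is $g\in G$ with $c=g^{-1}\cdot\upgam g$, and as computed in Construction \ref{con:e-c} the point $e_c=g\cdot e$ satisfies $\mu(e_c)=\sigma(g)\cdot\mu(e)=gc\cdot e=g\cdot e=e_c$, so $e_c\in\YY(\R)$. If $g_1$ is another such element, then $g_1 g^{-1}\in G^\sigma=\GG(\R)$, so the point built from $g_1$ lies in $\GG(\R)\cdot e_c$. If $c'=a^{-1}\cdot c\cdot\upgam a$ with $a\in C$, then $c'=(ga)^{-1}\cdot\upgam(ga)$ and $(ga)\cdot e=g\cdot(a\cdot e)=g\cdot e=e_c$ because $a\in\Stab_G(e)$. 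Hence $c\mapsto\GG(\R)\cdot e_c$ is a well-defined map $\ker[\Ho^1\hs\CC\to\Ho^1\hs\GG]\to\YY(\R)/\GG(\R)$.

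Finally, this map is precisely the inverse of the bijection \eqref{e:ker-G/C-Y/C}: an orbit $\GG(\R)\cdot(g\cdot e)\in\YY(\R)/\GG(\R)$ corresponds under $Y\isoto G/C$ to $\GG(\R)\cdot gC$, and $\delta(gC)=[g^{-1}\cdot\upgam g]$; conversely, starting from $[c]=[g^{-1}\cdot\upgam g]$ the construction returns $\GG(\R)\cdot(g\cdot e)$. Thus $c\mapsto e_c$ induces a bijection
\[\ker[\hs\Ho^1\hs\CC\to\Ho^1\hs\GG\hs]\isoto\YY(\R)/\GG(\R),\]
as asserted.
\end{proof}
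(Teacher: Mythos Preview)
Your proof is correct and follows exactly the paper's approach: the paper's proof is the single line ``The proposition follows from Proposition \ref{p:serre},'' and you have simply unpacked this reference by checking well-definedness and identifying Construction \ref{con:e-c} with the inverse of the connecting-map bijection \eqref{e:ker-G/C-Y/C}.
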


\begin{proof}
The proposition follows from Proposition \ref{p:serre}.
\end{proof}

\begin{subsec}
Let $\GG$ be a real algebraic group and let  $h\in\g:=\Lie \,\GG$.
Then $\GG$ acts on $\g$ by the adjoint representation.
Let $\YY_h$ be the orbit of $h$, that is, the real algebraic variety defined by
\[Y_h=\{h'\in \g^\cC \mid h'=\Ad(g)\cdot h \ \text{ for some } g\in G\},\]
where $\g^\cC=\g\otimes_\R\C$.
Let $\ZZ_h$ denote the centralizer of $h$ in $\GG$.
The group $\GG$ acts transitively on the left on $\YY_h$
(that is, $G$ acts transitively on $Y_h$) by
\[(g,h')\mapsto \Ad(g)\cdot h',\quad g\in G,\,h'\in Y_h\]
with stabilizer $\ZZ_h$ of $h$.
\end{subsec}

\begin{corollary}[from Proposition \ref{p:coh-orbits}]
The set of the $\GG(\R)$-conjugacy classes in $\YY_h(\R)$
is in a canonical bijection with
\[\ker[\Ho^1\hs \ZZ_h\to\Ho^1\hs\GG].\]
\end{corollary}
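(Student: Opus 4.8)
The plan is to obtain this corollary as an immediate specialization of Proposition~\ref{p:coh-orbits}, taking $\YY=\YY_h$, the distinguished $\R$-point $e=h$, and $\CC=\ZZ_h$. So the work consists entirely in checking that the hypotheses of Proposition~\ref{p:coh-orbits} are met in this situation, after which the conclusion is, word for word, the statement to be proved.

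First I would record that $\GG$ acts on $\YY_h$ over $\R$ via the adjoint representation: the map $\Ad$ is a morphism of $\R$-groups, hence $\Gamma$-equivariant, so $\upgam(\Ad(g)\cdot h')=\Ad(\upgam g)\cdot\upgam h'$ for $g\in G$ and $h'\in Y_h$, which is precisely the compatibility $\mu(g\cdot h')=\sigma(g)\cdot\mu(h')$ required of an action over $\R$; here $\mu$ is the complex conjugation on $\g^\cC$ fixing $\g$. The action is transitive because $Y_h$ is, by definition, the $G$-orbit of $h$, and $\YY_h(\R)$ is nonempty since $h$ lies in $\g=\Lie\GG$ and is therefore an $\R$-point of $\YY_h$. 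Finally, since $\sigma(h)=h$ we have $\sigma(\Zm_G(h))=\Zm_G(h)$, so $\ZZ_h=(\Zm_G(h),\sigma|_{\Zm_G(h)})$ is an $\R$-subgroup of $\GG$, and by the definition of the adjoint action $\ZZ_h=\Stab_\GG(h)$.

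With these verifications in hand, Proposition~\ref{p:coh-orbits} applied to $e=h$ and $\CC=\ZZ_h$ produces a canonical bijection
\[\ker[\hs\Ho^1\hs\ZZ_h\to\Ho^1\hs\GG\hs]\isoto\YY_h(\R)/\GG(\R),\]
which is the assertion. I do not expect any genuine obstacle: the only points that need a moment's care are the bookkeeping identification of the stabilizer of $h$ with the centralizer $\ZZ_h$ and keeping conventions straight (Proposition~\ref{p:coh-orbits} is phrased for a transitive action, and the preceding subsection already notes that $\GG$ acts transitively on the left on $\YY_h$ with stabilizer $\ZZ_h$), so essentially nothing beyond quoting the previous proposition is required.
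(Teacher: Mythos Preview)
Your proposal is correct and matches the paper's approach exactly: the paper gives no separate proof, labeling the corollary ``[from Proposition~\ref{p:coh-orbits}]'' to indicate it is an immediate specialization with $\YY=\YY_h$, $e=h$, and $\CC=\ZZ_h$. Your verification of the hypotheses is more explicit than anything the paper writes out, but the content is the same.
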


\begin{corollary}\label{c:Rconj}
If $\Ho^1\hs \ZZ_h=1$, then any element $h'\in\g$ that is conjugate to $h$
over $\C$, is conjugate to $h$ over $\R$.
\end{corollary}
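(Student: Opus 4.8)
The plan is to deduce this immediately from the preceding corollary, which identifies the set of $\GG(\R)$-conjugacy classes in $\YY_h(\R)$ with the kernel $\ker[\Ho^1\hs\ZZ_h\to\Ho^1\hs\GG]$. First I would note that this corollary applies: $\YY_h(\R)$ is nonempty, because $h$ itself is a real point of $\g$, hence of the orbit variety $Y_h$, so that $h\in\YY_h(\R)$.

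Next I would check that $h'$ lies in $\YY_h(\R)$ as well. By hypothesis $h'=\Ad(g)\cdot h$ for some $g\in G=\GG(\C)$, so $h'\in Y_h$ by the very definition of the orbit. On the other hand $h'\in\g$, so $h'$ is fixed by complex conjugation, i.e.\ by $\mu$; hence $h'\in Y_h^\mu=\YY_h(\R)$.

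Finally, since $\Ho^1\hs\ZZ_h=1$ by assumption, the kernel $\ker[\Ho^1\hs\ZZ_h\to\Ho^1\hs\GG]$ is trivial, so the preceding corollary shows that $\YY_h(\R)$ consists of a single $\GG(\R)$-orbit. As both $h$ and $h'$ belong to $\YY_h(\R)$, they are $\GG(\R)$-conjugate, that is, conjugate over $\R$, as desired. The argument is purely formal once the preceding corollary is in hand; the only point requiring a moment's care is the identification of ``being $\C$-conjugate to $h$ and lying in $\g$'' with ``being a real point of the orbit $\YY_h$'', which is immediate from the definitions, so I do not expect any genuine obstacle here.
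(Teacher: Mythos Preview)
Your proof is correct and is exactly the intended argument: the paper states this as an immediate corollary of the preceding one (the bijection of $\GG(\R)$-conjugacy classes in $\YY_h(\R)$ with $\ker[\Ho^1\ZZ_h\to\Ho^1\GG]$) and omits the details, which are precisely the ones you supply.
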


\subsection{Using real orbits for calculating Galois cohomology}

We consider  $\GG=\SU(m,n)$, the special unitary group of the Hermitian form
$F_{m,n}$ on $V$ with matrix
$$M_{m,n}=\diagg(a_1,\dots,a_{m+n}),\quad\text{where }\,
                 a_1=\dots=a_m=1,\ \ a_{m+1}=\dots, a_{m+n}=-1.$$
Here by abuse of notation we denote by $\SU(m,n)$
both the real algebraic group $\GG$
and the group of real points $\GG(\R)$.

\begin{proposition}[well-known]
\label{p:U(m,n)}
$ \Ho^1\hs \SU(m,n)$ is in a canonical bijections with the set
$$S_{m,n}=\{k\in\Z\mid  0\le k\le m+n,\ k\equiv m\!\!\!\pmod{2}\}$$
with explicit cocycles $c^{(k)}=\diagg(c^{(k)}_1,\dots,c^{(k)}_{m+n})$, where:\\
(i) if $k\le m$, then $c^{(k)}_i=-1$ for $k<i\le m$, and 1 for the rest of $i$;\\
(ii) if $k>m$, then  $c^{(k)}_i=-1$ for $m<i\le k$, and 1 for the rest of $i$.
\end{proposition}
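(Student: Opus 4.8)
The plan is to identify $\Ho^1(\Gamma,\SU(m,n)(\C))$ concretely by describing $\Zl^1$, the conjugacy relation on it, and then exhibiting the invariant (a signature-type integer $k$) that distinguishes classes. Recall $\SU(m,n)$ is the $\R$-group whose complex points are $\SU(m,n)(\C)=\SL(m+n,\C)$, with anti-regular involution $\sigma(g)=M_{m,n}\,(\upgam g)^{-\mathsf T} M_{m,n}^{-1}$ (so that $\SU(m,n)(\R)$ is the special unitary group of $F_{m,n}$). A cocycle $c\in\Zl^1$ is then an element $c\in\SL(m+n,\C)$ with $c\cdot{}^\gamma c=1$ in the twisted sense, i.e. $c\cdot M_{m,n}(\upgam c)^{-\mathsf T}M_{m,n}^{-1}=1$; rewriting, this says $c\,M_{m,n}\,\upgam c^{\mathsf T}=M_{m,n}$, i.e. $c$ transforms the Hermitian matrix $M_{m,n}$ into itself, so a cocycle is the ``same'' datum as a Hermitian form $g\cdot F_{m,n}$ (after the standard translation). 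Two cocycles $c,c'$ are cohomologous iff the corresponding Hermitian forms are equivalent over $\R$ (congruent by an element of $\GL(m+n,\C)$, with a determinant-one normalization). This is the standard dictionary between $\Ho^1$ of a unitary group and isomorphism classes of Hermitian forms, and I would state it as the first lemma.

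Next I would invoke the classification of nondegenerate Hermitian forms over $\C/\R$ by their signature $(p,q)$ with $p+q=m+n$. The relevant invariant is the integer $p$ (number of $+1$'s), which I reparametrize by $k$ so that $k\equiv m\pmod 2$: the congruence arises because $\det M_{m,n}=(-1)^n$ and the determinant-one normalization forces the number of sign changes to have fixed parity, namely $p\equiv m\pmod 2$. So the set of classes is in bijection with $S_{m,n}=\{k\in\Z\mid 0\le k\le m+n,\ k\equiv m\pmod 2\}$. Then I would construct the explicit diagonal cocycles $c^{(k)}=\diagg(c^{(k)}_1,\dots,c^{(k)}_{m+n})$ as in (i)--(ii): for $k\le m$ we flip the signs on positions $k<i\le m$ (reducing the positive index of the form from $m$ down to $k$), and for $k>m$ we flip signs on positions $m<i\le k$ (raising the positive index from $m$ up to $k$). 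One checks directly that each $c^{(k)}$ is a cocycle (diagonal with $\pm1$ entries, so $c^{(k)}\cdot{}^\gamma c^{(k)}=(c^{(k)})^2=1$; and the determinant is $+1$ precisely because the number of $-1$'s is even, which is exactly the parity condition $k\equiv m\pmod2$), that the twisted form it represents has signature shifted as claimed, and that distinct $k$ give inequivalent forms, hence noncohomologous cocycles.

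Finally I would assemble these into the bijection: the map $k\mapsto[c^{(k)}]$ is well-defined into $\Ho^1\hs\SU(m,n)$, injective by Sylvester's law of inertia (signature is a complete invariant), and surjective because every cocycle, being (up to coboundary) the datum of a nondegenerate Hermitian form of the right determinant, is equivalent to one of the $F$-forms with signature in the allowed range. The naturality/canonicity of the bijection I would phrase in terms of the functoriality already set up in the excerpt (the translation between $\Zl^1$ and twisted forms is canonical, and Sylvester inertia is canonical).

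\textbf{Main obstacle.} The genuinely nontrivial point is not the form-classification input (Sylvester) but pinning down the parity constraint and matching the explicit cocycles $c^{(k)}$ to the right signatures with the $\SL$ (rather than $\GL$) normalization — i.e. verifying that restricting to determinant one does not collapse or further subdivide the $\GL$-classification, and that it is exactly the congruence $k\equiv m\pmod 2$ that survives. Concretely: a priori $\Ho^1$ of the unitary group $U(m,n)$ would be all of $\{0,1,\dots,m+n\}$, and one must check that passing to $\SU$ (via the exact sequence $1\to\SU\to U\to\mathbb G_{m,\R}\to 1$, or directly) cuts this down to every other value, with the surviving parity fixed by $\det M_{m,n}$. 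I expect the bookkeeping of signs in the diagonal cocycles (cases $k\le m$ versus $k>m$) and the determinant-one check to be where care is needed; everything else is the standard unitary-group Galois cohomology computation.
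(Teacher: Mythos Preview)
Your approach is essentially the same as the paper's: both identify $\Ho^1\,\SU(m,n)$ with $\SL(r,\C)$-congruence classes of Hermitian matrices of determinant $(-1)^n$, then invoke Sylvester's inertia law. The paper packages this via the orbit method (its Proposition~\ref{p:coh-orbits}), letting $\GG=R_{\C/\R}\SL(r,\C)$ act on the variety $\YY$ of such Hermitian matrices with stabilizer $\SU(m,n)$ and using $\Ho^1\GG=1$; you do the same translation by hand.

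One slip to fix: your rewriting of the cocycle condition is wrong. From $c\cdot M_{m,n}\,\bar c^{-\mathsf T}M_{m,n}^{-1}=1$ you get $c\,M_{m,n}=M_{m,n}\,\bar c^{\mathsf T}$, \emph{not} $c\,M_{m,n}\,\bar c^{\mathsf T}=M_{m,n}$. The latter would say $c$ lies in the unitary group of $M_{m,n}$, which is false for nontrivial cocycles. The correct reading is that $H:=c\,M_{m,n}$ is Hermitian (with $\det H=(-1)^n$ since $\det c=1$), and cohomologous cocycles give $\SL(r,\C)$-congruent $H$'s; this is the dictionary you want, and the rest of your plan (parity via $(-1)^q=(-1)^n$, explicit diagonal cocycles, Sylvester) goes through exactly as you outline and as the paper does.
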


\begin{proof}
Write $r=m+n$. Consider the set of  Hermitian
$r\times r$-matrices $M$ of determinant $(-1)^n$.
It is the set of $\R$-points of a certain real algebraic variety $\YY$.
The real algebraic group $\GG=R_{\C/\R}\SL(r,\C)$
acts on $\YY$ by
\begin{equation}\label{e:U}
g\colon M\mapsto g\cdot M\cdot  \ov g^{\hs t},\quad
      \text{where }g\in \SL(r,\C)=\GG(\R),\ M\in \YY(\R),
\end{equation}
and this action extends to a polynomial action of $\GG(\C)$ on $\YY(\C)$.
One checks that the latter action is transitive.
The stabilizer in $\GG(\R)$ of the matrix $M_{m,n}\in\YY(\R)$  is $\SU(m,n)$.
Now by Proposition \ref{p:coh-orbits} we have a bijection
\[ \Ho^1\hs\SU(m,n)=\ker[\hs \Ho^1\hs\SU(m,n)\to \Ho^1\hs \GG]\,\labelto\sim \YY(\R)/\GG(\R),\]
where $\Ho^1 \GG=\{1\}$ by Proposition \ref{p:Weil}(i).
Using the Hermitian version of Sylvester's law of inertia,
we see that the Hermitian matrices $M_{k,\hs m+n-k}$ for $k\in S_{m,n}$
represent the orbits of $\GG(\R)=\SL(r,\C)$ in $\YY(\R)$.
Thus we know that $\# \Ho^1\hs\SU(m,n)=\# S_{m,n}$.
It remains to compute explicit cocycles.

It suffices to consider the case $r=1$, $c=-1$.
Then our matrix $M=M_{m,n}$ is either $M_{1,0}=1$ or $M_{0,1}=-1$.
We embed $\GG':=R_{\C/\R}\C^\times$ into $\GL(2,\R)$ by
$$a+bi\mapsto\SmallMatrix{a&b\\-b&a}=aI+bJ,\quad\text{where }\,
     I=\SmallMatrix{1&0\\0&1},\ J=\SmallMatrix{0&1\\-1&0}.$$
Then $c=\diagg(-1,-1)$.
According to Construction \ref{con:e-c}, we look for $g\in \GG'(\C)$
such that $g^{-1}\cdot\ov g=c=\diagg(-1,-1)$.
We may take $g=\diagg(i,i)\in \GG'(\C)\subset \GL(2,\C)$.
Then we have $\ov g^{\hs t}=g$ (sic!) in the sense of \eqref{e:U}.
 Indeed, $g=aI+bJ$, where $a=i$, $b=0$.
Therefore, $\ov g=aI-bJ=aI+bJ=g$, because $b=0$.
Thus $g\cdot M:= g\hs M\hs  \ov g^{\hs t}=g^2 M=-M=cM$.

Returning to the case of arbitrary $m$ and $n$, we obtain that to $c^{(k)}$
we associate the orbit of $c^{(k)}\hs M_{m,n}$\hs.
The formulas for $c^{(k)}$ are written in such a way that
\[c^{(k)} \hs M_{m,n}=M_{k,m+n-k}\quad \text{for all }\, k\in S_{m,n}\hs,\]
which completes the proof.
\end{proof}

\begin{example}
Let $\GG=\SU(1,2)$. Then $S_{1,2}=\{1,3\}$, whence $\#\Ho^1\hs\GG=2$.
Our explicit cocycles  are $c^{(1)}=\diagg(1,1,1)$ and $c^{(3)}=\diagg(1,-1,-1)$.
Our representatives of orbits are
\[c^{(1)}\cdot M_{1,2}=\diagg(1,-1,-1)=M_{1,2}\quad \text{and}
      \quad  c^{(3)}\cdot M_{1,2}=\diagg(1,1,1)=M_{3,0}\hs.\]
\end{example}

\subsection{Using $\Ho^2$ for finding a real point in a complex orbit}

\begin{subsec}\label{sec:findreal}
In order to use Construction \ref{con:e-c}, we need a real point.
We describe a general method of finding a real point using second Galois cohomology,
following an idea of Springer \cite[Section 1.20]{Springer1966};
see also \cite[Section 7.7]{Borovoi1993} and \cite[Section 2]{DLA2019}.
We choose a {\em $\C$-point }$e\in Y$ and set $C=\Stab_G(e)\subset G$.
In this article we consider only the special case when
the complex algebraic group $C$ is {\em abelian};
this suffices for our applications.

Consider  $\mu(e)\in Y$. We have
\[e=g\cdot \mu(e)\quad\text{for some } g\in G,\]
because $Y$ is homogeneous.
Since $\mu^2=\id$, we have
\begin{align*}
e=\mu(\mu(e))=\mu(g^{-1}\cdot e)=&\sigma(g^{-1})\cdot\mu(e)
=\sigma(g^{-1})\cdot g^{-1}\cdot e=(g\cdot\sigma(g))^{-1}\cdot e.
\end{align*}
Thus $g\cdot\sigma(g)\in C$.
We set
\[d=g\cdot\sigma(g)\in C.\]

We define an anti-regular involution $\nu$ of $C$.
Let $c\in C$.  We calculate:
\begin{align*}
c\cdot e=e\quad &\Rightarrow \quad \sigma(c)\cdot\mu(e)=\mu(e)\quad \Rightarrow \quad
   g\cdot\sigma(c)\cdot\mu(e)=g\cdot\mu(e) \quad \Rightarrow \\
&\Rightarrow\quad  g\hs\sigma(c)g^{-1}\cdot g\cdot\mu(e)=g\cdot\mu(e) \quad \Rightarrow \quad
   g\hs\sigma(c)g^{-1}\cdot e=e.
\end{align*}
We see that $ g\hs\sigma(c)g^{-1}\in C$. We set
\begin{equation}\label{e:nu}
\nu(c)=g\hs\sigma(c)g^{-1}\in C.
\end{equation}
We compute
\begin{align*}
\nu^2(c)=\nu(\nu(c))=g\cdot \sigma(\nu(c))\cdot g^{-1}
   &=g\cdot\sigma(g\hs\hs\sigma(c)\hs g^{-1})\cdot g^{-1}\\
&=(g\hs\hs\sigma(g))\cdot c\cdot (g\hs\hs\sigma(g))^{-1}=c,
\end{align*}
because $g\hs\hs \sigma(g)\in C$ and $C$ is abelian.
Thus $\nu^2=1$, that is, $\nu$ is an involution,
and it is clear from \eqref{e:nu} that it is anti-regular.
We have
\[\nu(d)=g\hs\sigma(d) g^{-1}=g\hs\sigma(g)\hs g\hs g^{-1}=g\hs\sigma(g)=d.\]
This $d\in C^\nu$.

We have constructed an anti-regular involution
\[\nu\colon C\to C,\quad c\mapsto g\hs\hs\sigma(c)g^{-1}\]
and an element
\[d=g\hs\hs\sigma(g)\in C^\nu.\]
\end{subsec}

\begin{subsec}
We consider  the real algebraic group $\CC=(C,\nu)$.
Recall that
\[\Ho^2\hs\CC=\Zl^2\hs\CC/\Bd^2\hs\CC,\]
where
\begin{align*}
\Zl^2\hs\CC=C^\nu:=\{c\in C\mid\nu(c)=c\},\\
\Bd^2\hs\CC=\{c'\cdot\nu(c')\mid c'\in C\}.
\end{align*}

We define the {\em class of $Y$}
$$\Cl(Y)=[d]\in \Ho^2\hs\CC.$$
\end{subsec}

\begin{lemma}
The real algebraic group $\CC=(C,\nu)$
and the cohomology class $\Cl(Y)\in \Ho^2\hs\CC$
do not depend on the choices of $e\in Y$ and $g\in G$
(up to a canonical automorphism).
\end{lemma}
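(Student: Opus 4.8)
The plan is to show that two different choices of base point and conjugating element produce real algebraic groups and cohomology classes that are related by a canonical isomorphism, so that the construction is well-defined ``up to canonical isomorphism''. Concretely, suppose we make two choices: $(e_1,g_1)$ and $(e_2,g_2)$, where $e_i\in Y$, $C_i=\Stab_G(e_i)$, and $g_i\in G$ satisfies $e_i=g_i\cdot\mu(e_i)$, yielding $d_i=g_i\cdot\sigma(g_i)\in C_i$ and the anti-regular involution $\nu_i(c)=g_i\,\sigma(c)\,g_i^{-1}$ on $C_i$. Since $G$ acts transitively on $Y$, pick $u\in G$ with $e_2=u\cdot e_1$. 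Conjugation by $u$ gives an isomorphism of complex algebraic groups $\mathrm{Int}(u)\colon C_1\isoto C_2$, $c\mapsto u\,c\,u^{-1}$, and the first thing to check is that this is in fact an isomorphism of $\R$-groups $(C_1,\nu_1)\isoto(C_2,\nu_2)$, i.e.\ that $\mathrm{Int}(u)$ intertwines $\nu_1$ and $\nu_2$. For this one computes $\nu_2(u\,c\,u^{-1}) = g_2\,\sigma(u)\,\sigma(c)\,\sigma(u)^{-1}\,g_2^{-1}$ and compares with $u\,\nu_1(c)\,u^{-1} = u\,g_1\,\sigma(c)\,g_1^{-1}\,u^{-1}$; these agree iff the element $w:=u^{-1}g_2\,\sigma(u)\,g_1^{-1}$ lies in $C_1$ (equivalently, centralizes $e_1$), which follows from the defining relations $e_i=g_i\cdot\mu(e_i)$ and $e_2=u\cdot e_1$ together with $\Gamma$-equivariance of the action. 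Wait---more precisely, since $C_1$ is abelian, one must be slightly careful about the order; but the point is that $w\in C_1$, so $\mathrm{Int}(w)$ is trivial on $C_1$, and hence $\mathrm{Int}(u)$ is indeed a homomorphism of $\R$-groups.

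Next I would verify that this canonical isomorphism sends $\Cl_1(Y)=[d_1]$ to $\Cl_2(Y)=[d_2]$ in $\Ho^2$. Using $w=u^{-1}g_2\,\sigma(u)\,g_1^{-1}\in C_1$ one writes $u\,d_1\,u^{-1}=u\,g_1\,\sigma(g_1)\,u^{-1}$ and compares with $d_2=g_2\,\sigma(g_2)$; a direct manipulation shows $d_2 = (u\,d_1\,u^{-1})\cdot\big(w'\cdot\nu_2(w')\big)$ for a suitable $w'\in C_2$ (obtained from $w$ via $\mathrm{Int}(u)$), so the two classes differ by a coboundary in $\Zl^2(C_2,\nu_2)$ and hence are equal in $\Ho^2(C_2,\nu_2)$. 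Finally, one must address the ambiguity in the choice of $g_i$ for a \emph{fixed} $e_i$: if $g_i$ and $g_i'=h g_i$ with $h\in C_i$ are two valid choices, then $\nu_i'(c)=h\,\nu_i(c)\,h^{-1}=\nu_i(c)$ since $C_i$ is abelian, so the involution is literally unchanged, and $d_i'=g_i'\sigma(g_i')=h\,d_i\,\sigma(h)=h\cdot\nu_i(h)\cdot$ -- wait, $h g_i\sigma(h g_i)=h g_i\sigma(h)\sigma(g_i)=h\,(g_i\sigma(h)g_i^{-1})\,g_i\sigma(g_i)=h\,\nu_i(h)\,d_i$, which is $d_i$ times the coboundary $h\cdot\nu_i(h)\in\Bd^2$, so $[d_i]$ is unchanged.

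The same independence-of-$g_i$ check also resolves a subtlety hidden in the statement: one does need $u$ to be well-defined up to right multiplication by $C_1$, and left multiplication by $C_2$ -- but both of these only change $\mathrm{Int}(u)$ by an inner automorphism of an abelian group, i.e.\ not at all, so the isomorphism $(C_1,\nu_1)\isoto(C_2,\nu_2)$ is genuinely canonical. The main obstacle I expect is purely bookkeeping: keeping track of the $\sigma$-twisted conjugations in the right order and confirming that the various auxiliary elements ($w$, $w'$) really do land in the correct centralizer subgroup. This is where the commutativity hypothesis on $C$ is essential --- it is what forces the would-be ``automorphisms'' to be trivial and hence the choices to be canonical --- so I would be careful to invoke it at each step rather than only at the end. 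Everything else is a routine verification using transitivity of the $G$-action, $\Gamma$-equivariance, and the cocycle/coboundary definitions of $\Ho^2$ from Definition~\ref{d:H2}.
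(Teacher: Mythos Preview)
Your approach is correct and essentially the same as the paper's, with one organizational difference worth noting. The paper decouples the two choices: it first fixes $e$ and varies $g$ (obtaining $d'=c\cdot\nu(c)\cdot d$ as you did), and then varies $e$ via $e'=h\cdot e$ but makes the \emph{specific} choice $g'=h\,g\,\sigma(h)^{-1}$ for the new conjugating element. With this choice the auxiliary element $w$ disappears and one gets $d'=\alpha(d)$ exactly, not merely up to a coboundary; the intertwining $\nu'\circ\alpha=\alpha\circ\nu$ is then a clean two-line computation. Your version, comparing arbitrary $(e_1,g_1)$ with arbitrary $(e_2,g_2)$, is slightly more general and requires tracking the element $w=u^{-1}g_2\,\sigma(u)\,g_1^{-1}\in C_1$; the computation you sketched does go through (one finds $\nu_2(ucu^{-1})=u\,w\,\nu_1(c)\,w^{-1}\,u^{-1}=u\,\nu_1(c)\,u^{-1}$ by abelianness, and $d_2=(u\,d_1\,u^{-1})\cdot w'\cdot\nu_2(w')$ with $w'=uwu^{-1}$), but the paper's decoupled version avoids this bookkeeping entirely.
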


\begin{proof}
We check that $\CC=(C,\nu)$ and $\Cl(Y)\in \Ho^2\hs\CC$
do not depend on the choice of $g\in G$.
Recall that we have chosen $g$ such that
\[e=g\cdot \mu(e).\]
If $g'\in G$ is another element such that
\[e=g'\cdot \mu(e),\]
then $g'=cg$ for some $c\in C$.
For any $c_1\in C$, we obtain
\[\nu'(c_1):=g'\hs\hs\sigma(c_1)\hs (g')^{-1}=c\hs g\hs\hs\sigma(c_1)\hs
      g^{-1}c^{-1}=c\hs\hs\nu(c_1)\hs c^{-1}=\nu(c_1),\]
because the group $C$ is abelian.
We have
\begin{equation}\label{e:d'-d}
\begin{aligned}
d':=g'\hs\hs\sigma(g')=c\hs g\hs\hs\sigma(c\hs g)=&c\hs g\hs\hs\sigma(c)\hs\sigma(g)\\
                      =&c\cdot g\hs\hs\sigma(c)\hs g^{-1}\cdot g\hs\hs\sigma(g)
                      =c\cdot\nu(c)\cdot d.
\end{aligned}
\end{equation}
It follows that
\[ [d']=[d]\in \Ho^2\hs\CC.\]

Now we check that $\CC=(C,\nu)$ and $\Cl(Y)\in \Ho^2\hs\CC$
do not depend on the choice of $e\in Y$.
Let $e'\in Y$ be another point.
Choose $h\in G$ such that $e'=h\cdot e$.
Write $C'=\Zm_G(e')$. We have an isomorphism
\[\alpha\colon C\to C',\quad c\mapsto hch^{-1},\]
Since the group $C$ is abelian, the isomorphism $\alpha$
does not depend on the choice of $h$.

We have
\[\mu(e')=\sigma(h)\cdot\mu(e)=\sigma(h)\cdot g^{-1}\cdot e
      =\sigma(h)\hs g^{-1} h^{-1}\cdot e'.\]
We set $g'=h\hs g\hs\sigma(h)^{-1}$; then
\[\mu(e')=(g')^{-1}\cdot e'.\]
For $c'\in C'$ we set
\[\nu'(c')=g'\hs \sigma(c')\hs\hs(g')^{-1}
    =h g\sigma(h)^{-1}\cdot\sigma(c')\cdot \sigma(h) g^{-1} h^{-1}.\]
We obtain
\begin{align*}
\nu'(\alpha(c))=&hg\hs\sigma(h^{-1})\cdot\sigma(hch^{-1})\cdot \sigma(h) g^{-1}h^{-1}\\
=&hg\sigma(c)g^{-1}h^{-1}=h\hs\nu(c) h^{-1}=\alpha(\nu(c)).
\end{align*}
Thus $\alpha$ is an isomorphism of real algebraic groups
\[\alpha\colon (C,\nu)\isoto(C',\nu')\]
and induces an isomorphism on $\Ho^2$
\[\alpha_*\colon \Ho^2\hs\CC\isoto \Ho^2\hs\CC'.\]
Set
\[d'=g'\sigma(g')\in C'.\]
Then
\[
d':=g'\sigma(g')=hg\hs\sigma(h^{-1})\cdot\sigma(hg\hs\sigma(h^{-1}))
      =hg\hs\sigma(g)h^{-1}=\alpha(g\hs\sigma(g))=\alpha(d).
\]
It follows that
\[[d']=\alpha_*[d],\]
as required. We conclude that the real algebraic group $\CC=(C,\nu)$
and the cohomology class $\Cl(Y)\in \Ho^2\hs\CC$
indeed  do not depend on the choices made
(up to a canonical isomorphism).
\end{proof}

\begin{proposition}
If $Y$ has a $\mu$-fixed point, then $\Cl(Y)=1$.
\end{proposition}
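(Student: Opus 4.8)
The plan is to unwind the definition of $\Cl(Y)$ under the hypothesis that $Y$ has a $\mu$-fixed point, and exhibit an explicit element $c'\in C$ realizing $d$ as a coboundary in $\Ho^2\,\CC$. Concretely, suppose $e_0\in Y$ satisfies $\mu(e_0)=e_0$. Since $\Cl(Y)$ does not depend on the choice of point in $Y$ (by the lemma just proved), I may compute it using $e=e_0$. Then $\mu(e)=e$, so I may take $g=1$ in the construction of Subsection~\ref{sec:findreal}: indeed the requirement on $g$ is $e=g\cdot\mu(e)$, and with $e=e_0$ this reads $e_0=g\cdot e_0$, which $g=1$ satisfies.

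With this choice, $d=g\cdot\sigma(g)=1\cdot\sigma(1)=1\in C$, and the anti-regular involution is $\nu(c)=g\,\sigma(c)\,g^{-1}=\sigma(c)$, i.e. $\CC=(C,\sigma|_C)$. Hence $\Cl(Y)=[d]=[1]$, the neutral element of $\Ho^2\,\CC$, which is exactly the assertion.

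The only thing to be careful about is the logical order: the statement $\Cl(Y)=1$ is an equality in $\Ho^2\,\CC$ where $\CC$ itself is defined only up to the canonical isomorphism furnished by the preceding lemma. So I would phrase the argument as: choose the $\mu$-fixed point $e_0$ as the base point $e$; observe $g=1$ is a valid choice; read off $d=1$ and $\nu=\sigma|_C$ directly from the formulas $d=g\,\sigma(g)$ and $\nu(c)=g\,\sigma(c)g^{-1}$; conclude $\Cl(Y)=[1]=1$. The independence lemma guarantees this value agrees with the one computed from any other choice of $e$ and $g$.

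I do not expect a genuine obstacle here — the proof is essentially a one-line substitution $g=1$ into the constructions of Subsection~\ref{sec:findreal}, combined with the already-established fact that $\Cl(Y)$ is choice-independent. The mildly delicate point, and the one I would state explicitly, is just that the freedom to pick $e=e_0$ (rather than an arbitrary $\C$-point) is precisely what the independence lemma licenses; once that is invoked, everything is immediate.
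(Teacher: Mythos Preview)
Your proof is correct and is essentially identical to the paper's own argument: choose $e$ to be the $\mu$-fixed point, take $g=1$, and read off $d=g\cdot\sigma(g)=1$, hence $\Cl(Y)=[1]=1$, invoking the independence lemma to justify the choice of $e$ and $g$. The paper states this in one line; your extra care about the logical order is fine but not needed.
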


\begin{proof}
Assume that $Y$ has a $\mu$-fixed point $y$, that is, $y=\mu(y)$.
Since $\Cl(Y)$ does not depend on the choices of $e$ and $g$, we may take $e=y$ and $g=1$.
We obtain $d=g\cdot\sigma(g)=1$ and $\Cl(Y)=[1]=1$, as required.
\end{proof}

\begin{proposition}\label{p:mu-fixed}
If  $\Cl(Y)=1$ and $\Ho^1\hs\GG=1$, then $Y$ has a $\mu$-fixed point.
\end{proposition}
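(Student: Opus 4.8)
The plan is to run Springer's argument in reverse: starting from the vanishing of $\Cl(Y)$ and of $\Ho^1\hs\GG$, we want to produce a $\mu$-fixed point of $Y$. Fix an arbitrary $\C$-point $e\in Y$, set $C=\Stab_G(e)$ (abelian by hypothesis), and let $g\in G$, $d=g\cdot\sigma(g)\in C^\nu$, and $\nu\colon C\to C$, $c\mapsto g\,\sigma(c)\,g^{-1}$, be as constructed in Subsection \ref{sec:findreal}, so that $\mu(e)=g^{-1}\cdot e$ and $\Cl(Y)=[d]\in\Ho^2\hs\CC$ with $\CC=(C,\nu)$.

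Since $\Cl(Y)=1$ by hypothesis, $[d]=1$ in $\Ho^2\hs\CC=C^\nu/\{c'\cdot\nu(c')\mid c'\in C\}$, so there exists $c'\in C$ with $d=c'\cdot\nu(c')$. Now I would replace $g$ by $g_1:=(c')^{-1}g$. By the displayed computation \eqref{e:d'-d} (applied with the roles interchanged, i.e.\ with $c=(c')^{-1}$), the new element $d_1:=g_1\cdot\sigma(g_1)$ equals $(c')^{-1}\cdot\nu((c')^{-1})\cdot d = (c')^{-1}\cdot\nu(c')^{-1}\cdot c'\cdot\nu(c')=1$, using that $C$ is abelian; and $g_1$ still satisfies $e=g_1\cdot\mu(e)$ since $c'\in C=\Stab_G(e)$. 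Thus after this modification we may assume $g\cdot\sigma(g)=1$, i.e.\ $g$ is a $1$-cocycle: $g\in\Zl^1\hs\GG$.

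By hypothesis $\Ho^1\hs\GG=1$, so the cocycle $g$ is cohomologous to $1$: there is $h\in G$ with $g=h^{-1}\cdot\sigma(h)$. Set $y:=h\cdot e\in Y$. Then
\[
\mu(y)=\sigma(h)\cdot\mu(e)=\sigma(h)\cdot g^{-1}\cdot e
   =\sigma(h)\cdot\sigma(h)^{-1}h\cdot e=h\cdot e=y,
\]
so $y$ is a $\mu$-fixed point of $Y$, as required.

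The argument is short and the only genuinely delicate point is bookkeeping: one must check that replacing $g$ by $(c')^{-1}g$ does not disturb the defining relation $e=g\cdot\mu(e)$ (it does not, precisely because $c'\in C$ stabilizes $e$) and that it does kill $d$ (which is the content of \eqref{e:d'-d} together with commutativity of $C$). Commutativity of $C$ is used twice — once to know $\nu^2=\id$ so that $\Ho^2\hs\CC$ makes sense, and once in the cancellation $(c')^{-1}\nu(c')^{-1}c'\nu(c')=1$ — so this is where the standing hypothesis that $C$ is abelian is essential; without it one would be forced into nonabelian $\Ho^2$ and the simple rewriting above would fail.
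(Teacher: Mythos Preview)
Your proof is correct and follows essentially the same approach as the paper's own proof. The only differences are cosmetic: the paper writes the coboundary condition as $c\cdot\nu(c)\cdot d=1$ and sets $g'=cg$, while you write $d=c'\cdot\nu(c')$ and set $g_1=(c')^{-1}g$; and the paper finds $u$ with $u^{-1}g'\sigma(u)=1$ and takes $y=u^{-1}\cdot e$, whereas you find $h$ with $g=h^{-1}\sigma(h)$ and take $y=h\cdot e$ --- these are the same up to the substitution $u=h^{-1}$.
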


\begin{proof}
Choose $e\in Y$ and $g\in G$ such that $e=g\cdot\mu(e)$.
Write
\[d=g\cdot\sigma(g)\in \Zl^2(\R,C)=C^\nu.\]
By assumption $[d]=1$, that is,
\[c\cdot\nu(c)\cdot d =1\quad\text{for some } c\in C.\]
Set $g'=cg$.
By \eqref{e:d'-d} we have
\[g'\cdot\sigma(g')=c\cdot\nu(c)\cdot d=1.\]
Thus $g'$ is a 1-cocycle, $g'\in \Zl^1\GG$.

By assumption $\Ho^1\GG=1$.
It follows that there exists
$u\in G$ such that
\[u^{-1}\hs g'\sigma(u)=1.\]
Set $y=u^{-1}\cdot e\in Y$.
Then
\begin{align*}
\mu(y)&=\sigma(u^{-1})\cdot\mu(e)=\sigma(u)^{-1}g^{-1}\cdot e=\sigma(u)^{-1}g^{-1}\cdot c^{-1}\cdot e\\
          &=\sigma(u)^{-1}(g')^{-1}\cdot e=\sigma(u)^{-1 }(g')^{-1}u\cdot y=\big(u^{-1}g'\hs\sigma(u)\big)^{-1}\cdot y=y.
\end{align*}
Thus $y$ is a $\mu$-fixed point in $Y$, as required.
\end{proof}

\begin{remark}
The proof of Proposition \ref{p:mu-fixed} gives an algorithm
of finding a real (that is, $\mu$-fixed) point in $Y$
or proving that there is no such point, assuming that $C$ is abelian and $\Ho^1\GG=1$.
\end{remark}

\begin{remark}
If we do not assume that $C$ is abelian, then a homogeneous space $\YY$ of $\GG$
defines an {\em $\R$-kernel $\kappa$ for $C$} and a {\em cohomology class} $\Cl(Y)\in \Ho^2(C,\kappa)$.
The second nonabelian cohomology  set $\Ho^2(C,\kappa)$ has a {\em distinguished subset
of neutral elements} ${\rm N}^2(C,\kappa)$, which may have more than one element.
See \cite[Section 1.20]{Springer1966}, or \cite[Section 7.7]{Borovoi1993},
or \cite[Section 2]{DLA2019} for details.
As above, one can show that if $Y$ has a real point, then $\Cl(Y)$ is neutral.
Conversely, if $\Cl(Y)$ is neutral and $\Ho^1\GG=1$, then $Y$ has a real point.
The proofs of these assertions give an algorithm of finding a real point of $Y$
or proving that $Y$ has no real points (in the case when  $\Ho^1\GG=1$).
\end{remark}


\def\Zgg{{\mathfrak z}}
\def\wt{\widetilde}
\def\GGtil{{\wt \GG}}

\section{Semisimple  and nilpotent elements  in  real semisimple \\ $\Z_m$-graded  Lie algebras}\label{sec:gradedLie}

In this section   we  first   compile   and  prove some    important   properties   of   $\Z_m$-graded semisimple Lie algebras  $\g$ over    a  field  $k$ of characteristic  0  (Proposition \ref{prop:gradedjordan}, Theorem \ref{thm:3}).
 We focus mainly on the case $k =\C$  or $k = \R$  and   consider the $\theta$-group  of    $\Z_m$-graded
	semisimple Lie algebras $\g$  over $k =\C$ or $k =\R$ (Remark \ref{rem:grading}). The action of the $\theta$-group defines  an equivalence  relation on  $\g_1$  and  on the set of  Cartan subspaces in $\g_1$. For $k = \R$, we  describe the  equivalence  classes  of  homogeneous nilpotent  elements
and the equivalence classes  of  Cartan subspaces in  $\g_1\subset \g$ in terms of  Galois cohomology  and the structure  of the associated   objects  in  the complexified  $\Z_m$-graded Lie algebra $\g ^\cC : = \g \otimes _\R \C$ (Theorems \ref{thm:galois1}, \ref{thm:cartan}).
 We   give a  Levi decomposition  of the   centralizer  $\Zz_{G_0} (e)$ of a  homogeneous    nilpotent  element $e$  in a complex $\Z_m$-graded  semisimple  Lie algebra $\g^\cC$ (Theorem  \ref{thm:misha}).  As a consequence, we   give a  Levi decomposition  of  the   stabilizer  algebra  of a    homogeneous mixed element in   a complex semisimple Lie algebra $\g^\cC$ (Theorem \ref{thm:centrmix}).
We then study the conjugacy of semisimple elements in a fixed Cartan subspace
(Section \ref{subs:weyl}) using the Weyl group of the graded Lie algebra.
We derive a number of results that will be used in Section \ref{sec:semsim}
for the classification of the real semisimple elements.
Finally we describe  the equivalence classes of  homogeneous
elements  in  terms    of the  equivalence  classes of   its semisimple  and nilpotent  parts in the Jordan   decomposition (Proposition  \ref{prop:mix2}).

\subsection{$\Z_m$-grading on  real  semisimple  Lie algebras  and their  complexification}\label{subs:zmgrading}
Let $\Z_m$ denote $\Z/m\Z$ when $m\geq 2$,   and $\Z$ when $m=\infty$.
Let
$$\g = \bigoplus_{i\in \Z_m} \g_i$$
be a real semisimple $\Z_m$-graded Lie algebra with complexification
$$\g^\cC = \bigoplus_{i\in \Z_m} \g^\cC_i\hs.$$
When $m<\infty$, the $\Z_m$-grading  on  $\g^\cC$ defines an automorphism  $\theta \in \Aut\,\g ^\cC$ such that \cite{Vinberg1975,Vinberg1976}
\begin{equation}\label{e:theta}
 \theta (x) = \om^k   \cdot  x \text{ for }  x \in \g_k^\cC\hs,
 \end{equation}
where  $\om = \exp  {\frac{2\pi\sqrt{-1}} {m}}$.
Conversely, any   automorphism $\theta \in \Aut(\g^\cC)$
	such that  $\theta^m = \id$ defines  a $\Z_m$-grading  on $\g^\cC$ where  $\g_k^\cC$ is
	the eigen-subspace corresponding  to the  eigenvalue $\om^k$ of  the   action of $\theta$.
	When $m = \infty$,  we define $\theta \in \Aut\,\g ^\cC$
by the formula \eqref{e:theta} with $\om=2$.  Vice versa, if $\theta$ is a semisimple automorphism of $\mathfrak{g}^{\mathbb C}$ with eigenvalues in the multiplicative group generated by $2\in \mathbb{C}$, we can define a $\mathbb{Z}$-grading from $\theta$.

Let  $\GGtil=(\Gtil,\tilde\sigma)$  denote the connected simply connected
real algebraic group with Lie algebra $\g^\cC$.
Then the   automorphism $\theta$ of $\g$  that   defines  the $\Z_m$-grading  on $\g$
can be   lifted   uniquely to an automorphism  $\Theta$  of  $\Gtil$.
Let $G_0$ denote the  algebraic subgroup  of fixed points of $\Theta$ in $\Gtil$.
Then clearly  $\Lie\,G_0=\g_0^\cC$.
The automorphism $\theta$ of $\g$ is semisimple, and by
\cite[Theorem 8.1]{Steinberg1968} the complex reductive group $G_0$ is connected. (We can also  apply a  result by Rashevski   on the  connectedness  of the   subgroup of fixed points of  an automorphism $\chi$  of a simply connected Lie group $G$, providing that  $\chi$ is of compact type, i.e.  for  any $g \in G$  the set $\{ \chi ^n g|\,  n \in \Z\}$ is relatively compact in $G$  \cite[Theorem A]{Rashevski1974}).
Since the Lie subalgebra $\g_0^\cC$ is defined over $\R$, and $G_0$ is connected,
we see that $G_0$ is defined over $\R$, that is, $\tilde\sigma$-stable.
We write $\GG_0=(G_0,\sigma_0)$ for the corresponding $\R$-group,
where $\sigma_0$ is the restriction of $\tilde\sigma$.

Note that the restriction  of the  adjoint action of  $\GGtil$ on $\g$ to $\GG_0$   preserves the  $\Z_m$-gradation on $\g$ (because the similar assertion
is true for $\g_0$\hs,  and $G_0$ is connected).
Let $\rho_k$ denote the corresponding action of $\GG_0$ on $\g_k$.
We wish to classify the $\GG_0(\R)$-orbits in $\g_k$.
If $k\neq 1$, then following \cite{Vinberg1976}, we can consider
a new $\Z_{\bar m}$-graded Lie algebra $\bar \g$, $ \bar m = \frac{m}{(m,k)}$
and $\bar \g_p: = \g_{pk}$ for $p \in \Z_{\bar m}$.
Now  we can regard the adjoint action
of  $\GG_0$  on $\g_k$ as the action of  $\GG_0$  on $\bar \g_1$.
Thus in what follows we consider the $\GG_0(\R)$-orbits in $\g_1$ under the action $\rho_1$.

\begin{remark}\label{rem:grading}  (1)   Since  $\rho_1 (G_0) \subset \Aut (\g_1 ^\cC)$ is  the $\g_1$-component  of the restriction  of the  adjoint   group  of $\g^\cC$ to the connected  subgroup whose Lie algebra is $\g_0^\cC$,
	we can replace $\Gtil$  (resp. $G_0$) by any connected  Lie     group  $G$ having the Lie algebra  $\g^\cC $  (resp. the connected  Lie subgroup  in $G$  whose  Lie algebra  is $\g_0 ^\cC $)   and we   still  have the  same  action  $\rho_1  (G_0)$ on $\g_1^\cC$, which  is called   the  $\theta$-group  action of the  underlying  $\Z_m$-graded  semisimple Lie  algebra. The orbits of the  $\theta$-group action  define   an equivalence relation  in $\g_1^\cC $ (resp.  of $\g_1$).     If we   want   to use
	 the corresponding automorphism  $\Theta$  of $G$ whose differential is  $\theta$,   we can  consider both the simply connected  group $\Gtil$  and   the adjoint  group  $\Ad_\g = \Gtil/Z(\Gtil)$, since  $\Theta (Z(\Gtil)) = Z(\Gtil)$.
	
	(2) Let  us consider  a       $\Z_m$-graded  semisimple Lie algebra    over an algebraically closed field  $k$ of characteristic 0.  As in the  case $k = \C$,  which  was  considered first by Vinberg   \cite{Vinberg1975, Vinberg1976},  if $m < \infty$, the  $\Z_m$-grading is  defined  uniquely by    the automorphism  $\theta \in \Aut (\g)$   defined in (\ref{e:theta})
		If $m = \infty$ then the formula
	$$\theta_t (x) : = t^k x \text{ for } x \in \g_k$$
	defines  a 1-parameter   group   of automorphisms  of $\g$.
	
Thus   we   shall  use  the notation $(\g, \theta)$   for  a $\Z_m$-graded  semisimple Lie  algebra $\g$ over  a field $k$ of  characteristic  0, where $\overline k$ is the algebraic closure of $k$,  and $\theta$  is the automorphism  of the Lie algebra
$\overline \g : = \g \otimes _k \overline{k}$   that defines  the  induced $\Z_m$-grading  on  $\overline \g$ as follows
\begin{equation*}
	\g \otimes _k   \overline k =  \bigoplus_{i \in \Z_m}  \g_i \otimes _k  \overline{k}.
\end{equation*}
If $\theta$ is the identity, then  we may  omit  $\theta$  and  say that $\g$  is ungraded.

(3) As it has been   noted in \cite{Vinberg1976},  results  for   $\Z_m$-graded  semisimple   complex Lie  algebras  are  easily extended  to  $\Z_m$-graded   algebraic reductive  complex  Lie  algebras.
Similarly, results  for   $\Z_m$-graded  semisimple   real Lie  algebras  are  easily extended  to  $\Z_m$-graded   algebraic reductive  real  Lie  algebras.  For the sake of simplicity  of the exposition,  we shall consider
mainly  $\Z_m$-graded  semisimple  Lie algebras in our paper, and  we shall   explicitly  extend   results  for    $\Z_m$-graded semisimple Lie algebras  to   $\Z_m$-graded   algebraic reductive Lie algebras only when it is necessary.  Finally we   refer  the reader to \cite{Kac1995}  for    relations    between $\Z_m$-graded    complex semisimple Lie algebras  and the Kac-Moody Lie algebras.
\end{remark}

\subsection{Jordan  decomposition of  homogeneous   elements}\label{subs:jordan}
In this subsection we prove  the graded  version of the   Jordan decomposition  of
a  homogeneous element  in    a $\Z_m$-graded  semisimple Lie algebra  $\g$ over    a  field $k$ of characteristic zero, extending Vinberg's result  for the case $k = \C$ \cite[\S 1.4]{Vinberg1976} and  L\^e's   result  for the case $k = \R$ \cite[\S 2]{Le2011}.

\begin{proposition}\label{prop:gradedjordan}  Let    $ x\in \g_1$  be a  homogeneous  element  in a  $\Z_m$-graded
	semisimple Lie  algebra  $\g$ over  a field  $k$  of  characteristic   0. Then   $x =  x_s + x_n$, where  $x_s\in \g_1$ is a   semisimple element,   $x_n \in  \g_1$ is a nilpotent  element  and $[x_s, x_n] =0$.
	\end{proposition}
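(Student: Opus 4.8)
The plan is to reduce the graded Jordan decomposition to the ordinary (ungraded) Jordan decomposition in $\g$ together with the uniqueness of the latter. First I would recall that $\g$ is a semisimple Lie algebra over a field $k$ of characteristic $0$, so every element $x\in\g$ has a unique Jordan decomposition $x = x_s + x_n$ in $\g$, where $x_s$ is semisimple, $x_n$ is nilpotent (in the sense that $\ad x_s$ is semisimple and $\ad x_n$ is nilpotent as linear operators on $\g$), $[x_s,x_n]=0$, and moreover $x_s$ and $x_n$ are polynomials in $x$ without constant term (this is the standard abstract Jordan decomposition; see e.g.\ Bourbaki or Humphreys). The key point will be to show that when $x\in\g_1$ is homogeneous, both $x_s$ and $x_n$ again lie in $\g_1$.

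The main step is therefore the homogeneity of $x_s$ and $x_n$. Here I would use the automorphism $\theta\in\Aut(\g^\cC)$ (extended $k$-linearly, or working over $\ov k$ and descending) that defines the grading via \eqref{e:theta}, with $\theta$ acting on $\g_k\otimes\ov k$ as multiplication by $\om^k$ (and, in the case $m=\infty$, using instead the one-parameter group $\theta_t$ with $\theta_t(x)=t^k x$ on $\g_k$, as in Remark \ref{rem:grading}(2)). Since $\theta$ is an automorphism of the Lie algebra, it permutes Jordan decompositions: $\theta(x) = \theta(x_s) + \theta(x_n)$ is the Jordan decomposition of $\theta(x)$, because $\theta(x_s)$ is semisimple, $\theta(x_n)$ is nilpotent, and $[\theta(x_s),\theta(x_n)] = \theta([x_s,x_n]) = 0$. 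Now $x\in\g_1$ means $\theta(x) = \om x$ (over $\ov k$), so $\om x = \theta(x_s) + \theta(x_n)$; but $\om x = \om x_s + \om x_n$ is \emph{also} a Jordan decomposition of $\om x$ (scaling by a nonzero scalar preserves semisimplicity and nilpotency and the bracket condition). By uniqueness of the Jordan decomposition we conclude $\theta(x_s) = \om x_s$ and $\theta(x_n) = \om x_n$, i.e.\ $x_s, x_n \in \g_1\otimes\ov k$. Finally, since $x_s$ and $x_n$ are polynomials in $x$ with coefficients in $k$ and $x\in\g_1\subseteq\g$, they actually lie in $\g$; combined with the previous sentence they lie in $\g\cap(\g_1\otimes\ov k) = \g_1$. (When $k$ is already algebraically closed this last descent step is vacuous; when $m=\infty$ one runs the same argument with $\theta_t$ for all $t$, or just with a single $t$ that is not a root of unity, to force $x_s,x_n\in\g_1$.)

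The remaining assertions — that $x_s$ is semisimple, $x_n$ nilpotent, and $[x_s,x_n]=0$ — are then immediate, since these are exactly the defining properties of the ordinary Jordan decomposition in $\g$, which we have just shown has homogeneous components. I would also note that uniqueness of the graded decomposition is automatic: any decomposition $x = s+n$ with $s,n\in\g_1$, $s$ semisimple, $n$ nilpotent, $[s,n]=0$ is in particular a Jordan decomposition of $x$ in $\g$, hence equals $x_s+x_n$ by uniqueness of the latter.

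\textbf{Expected main obstacle.} There is no deep obstacle here; the proof is essentially an exercise in applying uniqueness of the Jordan decomposition. The only point requiring a little care is the treatment of the two cases $m<\infty$ and $m=\infty$ uniformly — in the finite case $\theta$ is a genuine finite-order automorphism and a single application suffices, while in the infinite case one should phrase the argument using the torus action $\theta_t$ and pick a value of $t$ (e.g.\ $t=2$ as in the paper's conventions, or any transcendental $t$) that separates the graded pieces — and the descent from $\ov k$ back to $k$, which is handled cleanly by the "polynomial in $x$" characterization of $x_s$ and $x_n$. I expect the author's proof to follow exactly this route.
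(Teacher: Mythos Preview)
Your proposal is correct and follows essentially the same approach as the paper: take the ordinary Jordan decomposition in $\g$, apply the grading automorphism $\theta$ over $\overline k$, and use uniqueness of the Jordan decomposition to force $x_s,x_n\in\g_1$. Your version is in fact more careful than the paper's (which dispatches the argument in three lines), spelling out the descent from $\overline k$ to $k$ and the $m=\infty$ case explicitly.
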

\begin{proof}  Since  $\g$  is   semisimple, we  have the  Jordan  decomposition  $ x =  x_s + x_n$, see e.g. \cite[p. 17, 18]{Humphreys1980}.    Recall that $\overline k$ denotes the algebraic  closure   of $k$.  Then   the $\Z_m$-grading on $\g$ extends  naturally   to a $\Z_m$-grading on  $\g \otimes _k   \overline k$,  which  is  defined  by  an automorphism $\theta \in \Aut (\g \otimes_k \overline k)$,
see Remark \ref{rem:grading} (2).
Then  we have
	$$\theta (x) = \om \cdot x   = \om  \cdot (x_s  + x_n).$$
	Since the  Jordan  decomposition is unique, it follows  that  $x_s \in \g_1 $ and  $x_n \in \g_1$.
	This completes  the proof of Proposition \ref{prop:gradedjordan}.
\end{proof}

\subsection{Homogeneous $\ssl_2$-triples  and nilpotent  elements}
 Let $\g$ be a  Lie algebra  over a field $k$ of characteristic 0.
 Recall that  a triple $(h,e, f)$ of elements  in $\g$ is called {\it  an $\ssl_2$-triple},
 if   $e$ and $f$ are nilpotent, $h$ is  semisimple, and
 $$ [e,f]=h,\:  [h,e]=2e,\: [h,f]=-2f.$$
 Now assume  further that $\g$  is $\Z_m$-graded.
 We say that an $\ssl_2$-triple $(h,e,f)$ is {\em homogeneous} if
 $h\in \g_0$, $e\in \g_1$, $f\in \g_{-1}$.
From now on by an $\ssl_2$-triple we mean a homogeneous $\ssl_2$-triple.

\begin{lemma}\label{lem:fisf}
	Let $(h,e,f)$ and  $(h,e,f')$ be  $\ssl_2$-triples
in a Lie algebra over a field of characteristic 0. Then $f=f'$.
\end{lemma}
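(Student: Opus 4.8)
The plan is to exploit the representation theory of the $\ssl_2$-triple $(h,e,f)$ acting on $\g$ via the adjoint representation. First I would set $g = f - f'$ and observe from $[h,f]=-2f$ and $[h,f']=-2f'$ that $[h,g] = -2g$, so $g$ lies in the $(-2)$-eigenspace of $\ad h$. Next, subtracting the relations $[e,f]=h$ and $[e,f']=h$ gives $[e,g]=0$, i.e.\ $g$ is a lowest-weight-type vector that is annihilated by $\ad e$.

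The key step is then the standard $\ssl_2$-representation fact: in any finite-dimensional module over $\ssl_2 = \langle h,e,f\rangle$, an element annihilated by the raising operator $\ad e$ must lie in the sum of the highest-weight spaces, hence (if nonzero) must have a \emph{nonnegative} $\ad h$-eigenvalue. Since $g$ has $\ad h$-eigenvalue $-2 < 0$ and is killed by $\ad e$, we conclude $g = 0$, that is, $f = f'$. Concretely one decomposes $\g$ (viewed as an $\ad$-module for the $\ssl_2$ spanned by the triple $(h,e,f)$) into irreducibles; in each irreducible summand the kernel of $\ad e$ is one-dimensional and sits in the top weight space, which has weight $\ge 0$, so it cannot meet the weight $-2$ space unless it is zero.

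There is essentially no obstacle here — the only thing to be slightly careful about is that we are using the triple $(h,e,f)$ (not $(h,e,f')$) to define the $\ssl_2$-action on $\g$, which is legitimate since $(h,e,f)$ is by hypothesis an $\ssl_2$-triple and hence $\g$ is a genuine finite-dimensional $\ssl_2$-module under $\ad$. The argument does not even use the grading or semisimplicity of $\g$; it works for an arbitrary finite-dimensional Lie algebra over a field of characteristic $0$, as stated. I would write this up in a few lines: introduce $g$, compute $[h,g]=-2g$ and $[e,g]=0$, invoke the representation-theoretic lemma on $\ker(\ad e)$, and conclude $g=0$.
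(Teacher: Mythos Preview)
Your argument is correct and is exactly the standard proof of this fact; the paper itself does not give a proof but simply cites \cite[Lemma 32.2.3]{tauvelyu}, whose proof is essentially what you wrote. The only implicit hypothesis you rely on is that $\g$ is finite-dimensional (so that $\g$ is a finite-dimensional $\ssl_2$-module under $\ad$), which holds throughout the paper since the ambient Lie algebras are semisimple.
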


\begin{proof}
This is \cite[ Lemma 32.2.3]{tauvelyu}.
\end{proof}

\begin{lemma}\label{lem:jm1}	
Let $\g = \oplus_{i \in \Z_m} \g_i$ be a $\Z_m$-graded semisimple Lie algebra
over a field of characteristic 0. Let $e\in \g_1$ be nilpotent.
Then there exist $f\in \g_{-1}$ and  $h\in \g_0$ such that $(h,e,f)$ is an
$\ssl_2$-triple.
\end{lemma}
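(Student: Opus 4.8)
The plan is to reduce the graded statement to the classical Jacobson--Morozov theorem together with an averaging (or uniqueness) argument that forces the resulting triple to be homogeneous. First I would invoke the ordinary Jacobson--Morozov theorem in the semisimple Lie algebra $\g^{\overline k}=\g\otimes_k\overline k$ (or even in $\g$ itself, since characteristic is $0$): the nilpotent element $e\in\g_1\subseteq\g$ lies in some $\ssl_2$-triple $(h',e,f')$ with $h',f'\in\g$. The point is that these elements need not be homogeneous, so the work is to replace $(h',e,f')$ by a homogeneous triple containing the same $e$.

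The key step is to exploit the grading automorphism $\theta$ from \eqref{e:theta}. Since $e\in\g_1$ we have $\theta(e)=\omega e$. Applying $\theta$ to the $\ssl_2$-relations shows that $(\omega^{-1}\theta(h'),\,e,\,\omega\theta(f'))$ is again an $\ssl_2$-triple through $e$ with the \emph{same} $e$; more usefully, consider the torus/grading structure directly. The cleanest route: decompose $h'=\sum_j h'_j$ and $f'=\sum_j f'_j$ into graded components; from $[h',e]=2e$ with $e$ homogeneous of degree $1$ one reads off $[h'_0,e]=2e$, so $h:=h'_0\in\g_0$ already satisfies $[h,e]=2e$. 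Now I need an $f\in\g_{-1}$ with $[h,f]=-2f$ and $[e,f]=h$. For this I would use the standard fact (e.g. from the proof of Jacobson--Morozov) that given $h\in\g_0$ semisimple-acting with $[h,e]=2e$, the element $e$ lies in an $\ssl_2$-triple with this prescribed $h$; one solves for $f$ inside the $(-2)$-eigenspace of $\ad h$, and since $\ad h$ preserves the grading, its $(-2)$-eigenspace meets $\g_{-1}$ in exactly the piece where $f$ must live — here one checks that the relevant surjectivity ($\ad e:\g^{(-2)}\cap\g_{-1}\to\g^{(0)}\cap\g_0$ hitting $h$) holds because $\ad e$ is graded of degree $+1$ and the non-graded statement already guarantees $h\in\operatorname{im}(\ad e|_{\ker(\ad h+2)})$. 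Finally, Lemma \ref{lem:fisf} guarantees such an $f$ is unique, which also reassures consistency. I would also remark that it suffices to prove the statement over $\overline k$ and then descend: the homogeneous component $h=h'_0$ and the solved-for $f$ are obtained by linear-algebra operations ($\ad$-eigenspace projections) defined over $k$, so the triple is defined over $k$ when $e$ is.

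The main obstacle I anticipate is the surjectivity claim needed to produce $f$ with the correct degree: one must know that $h$ lies in the image of $\ad e$ restricted to the $(-2)$-eigenspace of $\ad h$ \emph{and} that this image element can be chosen in $\g_{-1}$. The non-homogeneous Jacobson--Morozov gives an $f'$ with $[e,f']=h$ and $[h,f']=-2f'$; projecting to $\g_{-1}$, since $[h,\cdot]$ and $[e,\cdot]$ are both compatible with the grading and $h\in\g_0$, $e\in\g_1$, the degree-$(-1)$ component $f:=f'_{-1}$ automatically satisfies $[h,f]=-2f$ and $[e,f]=h$ (the latter because $[e,f']=h\in\g_0$ forces the degree-$0$ part $[e,f'_{-1}]$ to equal $h$ and all other graded components of $[e,f']$ to vanish). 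So in fact no hard surjectivity argument is needed — merely taking graded components of an a priori inhomogeneous triple does the job, which is the slick way I would write it up.
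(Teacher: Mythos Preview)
Your overall strategy is correct and follows the standard route (the paper simply cites \cite{KR71}, Proposition~4 and \cite{Graaf2017}, Lemma~8.3.5, which proceed similarly). However, there is a genuine gap in the completion step.

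You invoke as a ``standard fact'' that given $h\in\g_0$ with $[h,e]=2e$, one can complete to an $\ssl_2$-triple $(h,e,f)$. As stated this is false: take $\g=\ssl_2\oplus\ssl_2$, $e=(e_1,0)$, and $h=(h_1,h_2)$ with $h_2\neq 0$; then $[h,e]=2e$ but no $f$ exists with $[e,f]=h$. The correct completion lemma (e.g.\ Bourbaki, \emph{Lie}, VIII, \S11) requires the additional hypothesis $h\in[e,\g]$. You actually \emph{have} this hypothesis, since from the original triple $(h',e,f')$ one gets $h'=[e,f']$, and taking degree-$0$ components yields $h=h'_0=[e,f'_{-1}]\in[e,\g_{-1}]$; but you never say so, and your phrasing suggests you think $[h,e]=2e$ alone suffices. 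Relatedly, calling $h=h'_0$ ``semisimple-acting'' is unjustified (the graded component of a semisimple element need not be semisimple) and in any case unnecessary.

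Your final ``slick'' paragraph compounds the confusion by reusing the symbol $f'$: first it denotes the element from the original triple $(h',e,f')$, for which $[e,f']=h'$ and $[h',f']=-2f'$; then you write ``the non-homogeneous Jacobson--Morozov gives an $f'$ with $[e,f']=h$ and $[h,f']=-2f'$'', which is a \emph{different} element (a completion of the new $h$). Once you separate these and insert the missing observation $h\in[e,\g]$, the projection argument in your last paragraph is correct: given any triple $(h,e,\tilde f)$ with $h\in\g_0$, $e\in\g_1$, the relations $[h,\tilde f_j]=-2\tilde f_j$ and $[e,\tilde f_{-1}]=h$ follow immediately from homogeneity, so $(h,e,\tilde f_{-1})$ is the desired homogeneous triple.
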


\begin{proof}
 This is proved in the same way as \cite[ Proposition 4]{KR71}  (in the cited
  paper $\Z_2$-gradings are considered, but the same ideas work more generally,
see also \cite{Graaf2017}, Lemma 8.3.5).
\end{proof}

\begin{lemma}\label{lem:2}
	Let $\g=\oplus_{i\in \Z_m} \g_i$ be a $\Z_m$-graded semisimple Lie algebra
	over a field of characteristic 0.
	Let $(h,e,f)$, $(h',e,f')$ be two homogeneous $\ssl_2$-triples.
	Then there is a nilpotent element $z\in \g_0$ such that, setting
	$\sigma = \exp(\ad z)$,
	we have $\sigma(h)=h'$, $\sigma(e)=e$ and $\sigma(f)=f'$.
\end{lemma}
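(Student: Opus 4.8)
The plan is to reduce everything to conjugating $h$ onto $h'$ while fixing $e$, and then to get $f\mapsto f'$ for free from Lemma \ref{lem:fisf}. Indeed, suppose we have found a nilpotent $z\in\g_0$ such that, with $\sigma=\exp(\ad z)$, we have $\sigma(e)=e$ and $\sigma(h)=h'$. Then $(\sigma(h),\sigma(e),\sigma(f))=(h',e,\sigma(f))$ is a homogeneous $\ssl_2$-triple, and so is $(h',e,f')$; by Lemma \ref{lem:fisf} applied to the pair of triples with common first two entries $(h',e)$, we get $\sigma(f)=f'$. So the whole content is to produce such a $z$ lying in $\g_0$.

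First I would decompose $\g$ into $\ad h$-eigenspaces $\g=\bigoplus_{j\in\Z}\g(j)$; the eigenvalues are integers because $\g$ is a direct sum of $\ssl_2$-modules for the triple $(h,e,f)$, and this reasoning is valid over any field of characteristic $0$. Write $\g^e=\z_\g(e)$ and $\g^e(j)=\g^e\cap\g(j)$, and set $\mathfrak u=\bigoplus_{j\ge1}\g^e(j)$. Standard $\ssl_2$-representation theory gives $\g^e\subseteq\bigoplus_{j\ge0}\g(j)$ and $\g^e\cap[e,\g]=\mathfrak u$; moreover $\mathfrak u$ is an ideal of $\g^e$ consisting of $\ad$-nilpotent elements, and $\ad h$ acts invertibly on $\mathfrak u$ (by multiplication by $j\ge1$ on each $\g^e(j)$). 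Since $e\in\g_1$ and $h\in\g_0$, the $\Z_m$-grading restricts to a grading $\g^e=\bigoplus_{i\in\Z_m}\g^e_i$ that is compatible with the $\ad h$-grading, so $\mathfrak u\cap\g_0=\bigoplus_{j\ge1}\big(\g^e(j)\cap\g_0\big)$. Now $h'-h=[e,f']-[e,f]=[e,f'-f]\in[e,\g]$, while $[h'-h,e]=2e-2e=0$ gives $h'-h\in\g^e$; hence $h'-h\in\g^e\cap[e,\g]=\mathfrak u$. As $e\in\g_1$ and $f'-f\in\g_{-1}$ we also have $h'-h\in\g_0$, so $t:=h'-h$ lies in $\mathfrak u\cap\g_0$.

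It remains to solve $\exp(\ad z)(h)=h+t$ with $z\in\mathfrak u\cap\g_0$. Write $t=\sum_{d\ge1}t_d$ and $z=\sum_{d\ge1}z_d$ with $t_d,z_d\in\g^e(d)$ (finitely many nonzero terms). Since $[z_d,h]=-dz_d$, expanding $\exp(\ad z)(h)=\sum_{k\ge0}\tfrac1{k!}(\ad z)^k(h)$ and collecting the component of $\ad h$-degree $d$ shows it equals $h$ plus $-dz_d+Q_d(z_1,\dots,z_{d-1})$, where $Q_d$ is a fixed iterated-bracket expression in $h$ and the strictly-lower-degree components $z_1,\dots,z_{d-1}$ (the terms with $k\ge2$ involve $z_{i_1},\dots,z_{i_k}$ with $\sum i_j=d$ and all $i_j\ge1$, hence all $i_j<d$). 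So the equation is solved degree by degree by the recursion $z_d=-\tfrac1d\big(t_d-Q_d(z_1,\dots,z_{d-1})\big)$, which terminates because $\g$ is finite-dimensional. Since $h\in\g_0$ and every $t_d\in\g_0$, an immediate induction shows $z_d\in\g^e(d)\cap\g_0$ for all $d$, so $z=\sum_d z_d\in\mathfrak u\cap\g_0\subseteq\g_0$ is $\ad$-nilpotent; $\sigma=\exp(\ad z)$ fixes $e$ because $z\in\g^e$, and sends $h$ to $h'$ by construction. Together with the first paragraph this completes the proof. The only real point that needs care is the simultaneous compatibility of the two gradings on $\g^e$ (the $\Z_m$-grading and the $\ad h$-grading), which is exactly what forces the recursively constructed $z$ to remain in $\g_0$; the representation-theoretic inputs ($\g^e$ has no negative $\ad h$-weights, and $\g^e\cap[e,\g]=\mathfrak u$) and the recursion are routine.
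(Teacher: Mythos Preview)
Your proof is correct and is precisely the standard argument underlying the references the paper cites (Tauvel--Yu, Lemma~32.2.6, and the proof of Theorem~8.3.6 in \cite{Graaf2017}); the paper does not write out a proof but simply points to these sources, and what you have done is spell out that argument, including the graded refinement (keeping $z$ in $\g_0$ via the compatibility of the $\Z_m$-grading and the $\ad h$-grading on $\g^e$).
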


\begin{proof}
  The proof is similar to the proof of \cite[Lemma 32.2.6]{tauvelyu}; in this
  book the assumption is that the field is algebraically closed. However, for
  the proof of this lemma that assumption plays no role.
  See also  \cite{Graaf2017}, the proof of Theorem 8.3.6.
\end{proof}

\begin{definition}\label{def:charac}  Given  a  $\ssl_2$-triple $(h, e,f)$  we   call  $h$ {\it the characteristic} of  the nilpotent element  $e$.
	\end{definition}

Lemma \ref{lem:2}  implies that   if   $e$ and $e'$  are  $\GG_0$-conjugate, then   their  characteristics  are
$\GG_0$-conjugate.  If the    ground field $k $ is $\C$  then  the converse  is also true \cite[Theorem 1(4)]{Vinberg1979}.

\begin{theorem}\label{thm:3}
	Let $\g=\oplus_{i\in \Z_m} \g_i$ be a $\Z_m$-graded semisimple Lie algebra
	over a field of characteristic 0. Let $H$ be a group of automorphisms of
	$\g$ such that each element of $H$
	 stabilizes each space $\g_i$. Assume that
	$H$ contains $\exp(\ad z)$ for all nilpotent $z\in \g_0$. Let $e,e'\in\g_1$ be
	nilpotent elements lying in homogeneous $\ssl_2$-triples $(h,e,f)$, $(h',e',f')$.
	Then $e,e'$ are $H$-conjugate if and only if there exists  $\sigma\in H$
	with $\sigma(h)=h'$, $\sigma(e)=e'$, $\sigma(f)=f'$ (in other words, the
	two $\ssl_2$-triples are $H$-conjugate).
\end{theorem}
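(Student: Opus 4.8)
The plan is to reduce the ``if and only if'' to the already-established Lemma~\ref{lem:2}. The non-trivial direction is: if $e$ and $e'$ are $H$-conjugate, then the whole triples are $H$-conjugate. So suppose $\tau\in H$ satisfies $\tau(e)=e'$. Since $\tau$ stabilizes each $\g_i$ and is a Lie algebra automorphism, the image triple $(\tau(h),\tau(e),\tau(f))=(\tau(h),e',\tau(f))$ is again a homogeneous $\ssl_2$-triple: $\tau(h)\in\g_0$, $\tau(e)=e'\in\g_1$, $\tau(f)\in\g_{-1}$, and the bracket relations are preserved because $\tau$ is an automorphism. Thus we now have two homogeneous $\ssl_2$-triples containing the \emph{same} nilpotent element $e'$, namely $(\tau(h),e',\tau(f))$ and $(h',e',f')$.

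Next I would apply Lemma~\ref{lem:2} to these two triples: there exists a nilpotent $z\in\g_0$ such that, setting $\rho=\exp(\ad z)$, we have $\rho(\tau(h))=h'$, $\rho(e')=e'$, $\rho(\tau(f))=f'$. By hypothesis on $H$, the element $\rho=\exp(\ad z)$ lies in $H$. Therefore $\sigma:=\rho\circ\tau\in H$ and one checks directly:
\[
\sigma(h)=\rho(\tau(h))=h',\qquad \sigma(e)=\rho(\tau(e))=\rho(e')=e',\qquad \sigma(f)=\rho(\tau(f))=f'.
\]
This gives the $H$-conjugacy of the two triples.

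For the converse direction, if $\sigma\in H$ carries $(h,e,f)$ to $(h',e',f')$, then in particular $\sigma(e)=e'$, so $e$ and $e'$ are $H$-conjugate; this direction is immediate. One should also note that Lemma~\ref{lem:fisf} is implicitly what makes the statement clean — once $h$ and $e$ are matched up, the third element $f$ is uniquely determined — but in the argument above we do not even need to invoke it separately, since Lemma~\ref{lem:2} already produces the matching of all three components simultaneously. I do not anticipate a genuine obstacle here: the only point requiring care is the bookkeeping that $\exp(\ad z)\in H$ by hypothesis and that composition of two elements of $H$ stays in $H$ (so $H$ should be understood as a subgroup, or at least closed under the relevant compositions), together with checking that $\tau$ maps a homogeneous $\ssl_2$-triple to a homogeneous $\ssl_2$-triple — both of which are routine given that elements of $H$ are grading-preserving Lie algebra automorphisms.
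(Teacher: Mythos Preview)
Your proof is correct and follows essentially the same approach as the paper: apply an element of $H$ carrying $e$ to $e'$, then invoke Lemma~\ref{lem:2} to conjugate the resulting triple onto $(h',e',f')$ by an $\exp(\ad z)$, which lies in $H$ by hypothesis. Your write-up is in fact slightly more detailed than the paper's (you spell out why the image triple is still homogeneous and why $\rho\in H$), but the argument is the same.
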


\begin{proof}
	Of course only one direction needs a proof. Let $\tau_1\in H$ be such that
	$\tau_1(e)=e'$. By Lemma \ref{lem:2} there exists $\tau_2\in H$ with
	$\tau_2(\tau_1(h)) = h'$, $\tau_2(e')=e'$, $\tau_2(\tau_1(f))=f'$ so we
	can set $\sigma = \tau_2\tau_1$.
\end{proof}

Now  let $\g$ be a  $\Z_m$-graded semisimple Lie algebra over $\R$.
Let $\sT^\cC$, $\sT$ be
the sets of homogeneous $\ssl_2$-triples in $\g^\cC$ and $\g$, respectively.
If we let $\sigma$ denote the complex conjugation of $\g^\cC$ with respect to a
basis of $\g$, then $\sT = \{ t\in \sT^\cC \mid \sigma(t)=t\}$.

\begin{lemma}\label{lem:hcen}
Let $h\in M_n(\R)$ be a semisimple matrix whose all eigenvalues $\lambda_1,\dots,\lambda_k$ are real,
where $\lambda_1,\dots,\lambda_k$ are pairwise different.
 Then  the centralizer $\Zz_{\SL (n,\R)}(h)$  of $h$ in $\SL (n, \R)$ is conjugate over $\R$
 to $S(\GL(n_1,\R)\times \cdots \times\GL(n_k,\R))$,
 which is the subgroup consisting of all matrices of determinant 1
 that are block diagonal with blocks of respective sizes $n_1,\ldots,n_k$,
where $n_i$ are the multiplicities of the eigenvalue of $\lambda_i$\hs,, and hence  $n_1+\cdots +n_k=n$.
\end{lemma}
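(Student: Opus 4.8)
The plan is to conjugate $h$ into diagonal block form over $\R$ and then compute the centralizer of that diagonal matrix by a direct block calculation.

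First I would note that a semisimple matrix $h\in M_n(\R)$ all of whose eigenvalues are real is in fact diagonalizable over $\R$: its minimal polynomial has no repeated roots and, since the eigenvalues $\lambda_1,\dots,\lambda_k$ lie in $\R$, it splits into distinct linear factors over $\R$. Hence there is $g_0\in\GL(n,\R)$ with
\[
g_0\hs h\hs g_0^{-1}=D:=\diag(\lambda_1 I_{n_1},\dots,\lambda_k I_{n_k}),
\]
where $I_{n_i}$ is the $n_i\times n_i$ identity matrix and $n_i$ is the multiplicity of $\lambda_i$, so $n_1+\dots+n_k=n$. To make the conjugating element have determinant $1$, I would observe that the centralizer of $D$ in $\GL(n,\R)$ contains each block factor $\GL(n_i,\R)$, so its determinant map onto $\R^\times$ is surjective; multiplying $g_0$ on the left by a suitable element of this centralizer produces $g\in\SL(n,\R)$ still satisfying $g\hs h\hs g^{-1}=D$. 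This ensures the conjugacy is realized inside $\SL(n,\R)$ and not merely $\GL(n,\R)$.

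Next I would carry out the standard block computation of $\Zz_{\GL(n,\R)}(D)$: writing $X=(X_{ij})$ in blocks conforming to the partition $n=n_1+\dots+n_k$, the equation $DX=XD$ becomes $\lambda_i X_{ij}=\lambda_j X_{ij}$ for all $i,j$, and since the $\lambda_i$ are pairwise distinct this forces $X_{ij}=0$ whenever $i\ne j$. Thus $X$ commutes with $D$ precisely when it is block diagonal, giving $\Zz_{\GL(n,\R)}(D)=\GL(n_1,\R)\times\cdots\times\GL(n_k,\R)$ (block diagonal invertible matrices). Intersecting with the determinant-$1$ condition yields $\Zz_{\SL(n,\R)}(D)=S(\GL(n_1,\R)\times\cdots\times\GL(n_k,\R))$.

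Finally I would assemble the pieces: conjugation by $g$ is an automorphism of $\SL(n,\R)$ (as $\SL(n,\R)$ is normal in $\GL(n,\R)$), so
\[
g\,\Zz_{\SL(n,\R)}(h)\,g^{-1}=\Zz_{\SL(n,\R)}(g\hs h\hs g^{-1})=\Zz_{\SL(n,\R)}(D)=S(\GL(n_1,\R)\times\cdots\times\GL(n_k,\R)),
\]
which is the assertion. The argument is entirely elementary linear algebra; the only steps that require a moment's care are the passage from ``semisimple with real eigenvalues'' to ``diagonalizable over $\R$'' (rather than merely over $\C$) and the adjustment of the diagonalizing matrix to have determinant $1$, so that the lemma's conjugacy holds over $\R$ in the intended sense.
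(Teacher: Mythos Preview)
Your proof is correct and follows essentially the same approach as the paper's: both use the eigenspace decomposition $V=\bigoplus V_i$ (yours via explicit diagonalization, the paper's in coordinate-free language) to identify the $\GL$-centralizer as a product of $\GL(V_i)$'s and then intersect with the determinant-$1$ condition. You add a small extra detail the paper omits, namely adjusting the conjugating element to lie in $\SL(n,\R)$ rather than merely $\GL(n,\R)$.
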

\begin{proof}    Lemma \ref{lem:hcen}     is known to experts,
	see  e.g. \cite[Table A.3, Appendix A]{Le1998}  for a
	similar  assertion and  \cite[p. 20]{Djokovic1983}  for the case $n =8$.
	For the  sake  of  reader's convenience   we   give here a   short proof  of this Lemma.
For $i=1,\dots,k$, let $V_i\subseteq V:=\R^n$ denote the eigenspace corresponding to the eigenvalue $\lambda_i$.
By the definition of a semisimple matrix, $V=\bigoplus V_i$
(we take into account that the real numbers  $\lambda_1,\dots,\lambda_k$ are {\em all} complex eigenvalues of $h$).
Hence the centralizer of $h$ in $\GL(n,\R)=\GL(V)$ is $\prod \GL(V_i)$,
the centralizer of $h$ in $\SL(V)$ is $S\left(\prod \GL(V_i)\right)$,
and the lemma follows.
\end{proof}

\begin{lemma}\label{lem:sameh}
Let $e \in \g_1$  be a nilpotent  element contained in a homogeneous
$\ssl_2$-triple $(h,e,f)$. Let $e'\in G_0(e) \cap \g_1$; then
$e'$ is $\GG_0(\R)$-conjugate to some $e''$ lying in a
homogeneous $\ssl_2$-triple $(h,e'',f'')$.
\end{lemma}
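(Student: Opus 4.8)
The plan is to reduce the statement to the assertion that two $G_0(\C)$-conjugate real characteristics are already $\GG_0(\R)$-conjugate, and then to prove that assertion from the structure of the centralizer of a hyperbolic element. First, note that $e'$, being $G_0(\C)$-conjugate to the nilpotent element $e$, is nilpotent, and it lies in $\g_1$; so Lemma~\ref{lem:jm1}, applied to the real graded Lie algebra $\g$, yields a homogeneous $\ssl_2$-triple $(h',e',f')$ with $h'\in\g_0$ and $f'\in\g_{-1}$. Since $e'\in G_0(e)$, the elements $e$ and $e'$ are conjugate under the group $H=\{\Ad(g)|_{\g^\cC}\mid g\in G_0(\C)\}$, which stabilizes every homogeneous component $\g_i^\cC$ and contains $\exp(\ad z)=\Ad(\exp z)$ for each nilpotent $z\in\g_0^\cC=\Lie G_0$. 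Hence Theorem~\ref{thm:3}, applied to $\g^\cC$, gives that the homogeneous $\ssl_2$-triples $(h,e,f)$ and $(h',e',f')$ are $G_0(\C)$-conjugate; in particular $h'=g_1\cdot h$ for some $g_1\in G_0(\C)$, where $g\cdot x$ denotes the adjoint action of $g\in G_0$ on $x\in\g^\cC$. It now suffices to find $g_0\in\GG_0(\R)$ with $g_0\cdot h=h'$: then $e'':=g_0^{-1}\cdot e'$ and $f'':=g_0^{-1}\cdot f'$ give a homogeneous $\ssl_2$-triple $(h,e'',f'')$, since $g_0^{-1}\in G_0$ preserves both the grading and the bracket, and $e''$ lies in the $\GG_0(\R)$-orbit of $e'$.

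For the remaining point --- that the $G_0(\C)$-conjugate real elements $h,h'\in\g_0$ are $\GG_0(\R)$-conjugate --- I would use that $h$, being the characteristic of an $\ssl_2$-triple, is semisimple with integer $\ad$-eigenvalues, hence hyperbolic. As $h$ is real, the $\ad h$-eigenspace decomposition of $\g_0^\cC$ is $\sigma$-stable; the sum of its nonnegative eigenspaces is a parabolic subalgebra of $\g_0^\cC$, whose normalizer in $\GG_0$ is a parabolic $\R$-subgroup $Q$ with Levi subgroup $\Zz_{\GG_0}(h)$. The unipotent radical of $Q$ is split unipotent over $\R$, so the inclusion $\Zz_{\GG_0}(h)\into Q$ induces a bijection $\Ho^1\,\Zz_{\GG_0}(h)\isoto\Ho^1\,Q$; and the kernel of $\Ho^1\,Q\to\Ho^1\,\GG_0$ is trivial, because $\GG_0(\R)$ acts transitively on the real points of the flag variety $\GG_0/Q$ (equivalently, by Proposition~\ref{p:serre}, this kernel equals $(\GG_0/Q)^\Gamma/\GG_0(\R)$, which is a single point). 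Hence $\ker[\Ho^1\,\Zz_{\GG_0}(h)\to\Ho^1\,\GG_0]$ is trivial, so by Proposition~\ref{p:coh-orbits} applied to the $G_0$-orbit of $h$ in $\g_0^\cC$, that complex orbit contains exactly one $\GG_0(\R)$-orbit; as $h$ and $h'$ are both real points of it, they are $\GG_0(\R)$-conjugate. In the case $\g_0\cong\ssl(9,\R)$ relevant to this paper one can shortcut this: by Lemma~\ref{lem:hcen} and Lemma~\ref{lem:g1} one has $\Ho^1\,\Zz_{\GG_0}(h)=1$ outright, and then Corollary~\ref{c:Rconj} applies directly.

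Choosing such a $g_0$ and setting $e''=g_0^{-1}\cdot e'$, $f''=g_0^{-1}\cdot f'$ then completes the proof. I expect the main obstacle to be exactly the second paragraph: the fact that two complex-conjugate real characteristics are already real-conjugate, i.e.\ that $\ker[\Ho^1\,\Zz_{\GG_0}(h)\to\Ho^1\,\GG_0]$ vanishes for a hyperbolic $h$, which rests on identifying $\Zz_{\GG_0}(h)$ as the Levi subgroup of a real parabolic subgroup of $\GG_0$. The remaining verifications --- nilpotency of $e'$, the hypotheses of Theorem~\ref{thm:3}, and the fact that $g_0^{-1}$ carries a homogeneous $\ssl_2$-triple to one --- are routine.
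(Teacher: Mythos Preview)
Your proof is correct and follows the same overall architecture as the paper: reduce to showing that the two real characteristics $h,h'$ lying in the same complex $G_0$-orbit are already $\GG_0(\R)$-conjugate, and conclude by transporting the triple. The paper gives two proofs of this key step. Its second proof, specific to $\g_0\cong\ssl(9,\R)$, is exactly your ``shortcut'' via Lemma~\ref{lem:hcen}, Lemma~\ref{lem:g1}, and Corollary~\ref{c:Rconj}. Its first proof simply quotes \cite[Lemma~4.1(ii)]{Le2011}, which asserts $\Ad_{G_0}(h)\cap\g_0=\Ad_{\GG_0(\R)}(h)$ for the characteristic of a homogeneous $\ssl_2$-triple; your parabolic argument is essentially a self-contained proof of that cited lemma. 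So the specific case matches verbatim, and your general argument is a more intrinsic substitute for the external citation: it trades the black-box reference for the standard Borel--Tits fact that $\GG_0(\R)$ acts transitively on the real points of $\GG_0/Q$ for a real parabolic $Q$, combined with Sansuc's lemma (Proposition~\ref{p:Sansuc}) to pass from $Q$ to its Levi $\Zm_{\GG_0}(h)$.
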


\begin{proof}
We  present  two  proofs    of this Lemma.  In the first   proof  we  use   \cite[Lemma 4.1 (ii)]{Le2011}, which asserts that  if  $(h, e,f) \in \g_1$ is    a homogeneous  $\ssl_2$-triple   then
$\Ad _{G_0} (h) \cap  \g_0 = \Ad_{\GG_0 (\R)} (h)$.    Now  let  $e'\in G_0 (e)  \cap \g_1$  and
$(h', e', f')$ is a  homogeneous    $\ssl_2$-triple.  Then  $h' \in G_0 (h) \cap \g_1$  and by
\cite[Lemma 4.1.(ii)]{Le2011}  we have  $h'  = \Ad_{g_0'}(h)$  for some $g_0' \in \GG_0 (\R)$.  Let $e'' = \Ad_{g_0'}  ^{-1}  (e') $. Then  $(h, e'',  f'')$ is   the required   homogeneous  $\ssl_2$-triple.

	 In the  second  proof  of   Lemma \ref{lem:sameh}  we  use  a  result  from Galois cohomology  theory. For the $\Z/3\Z$-graded Lie algebra  $\e_{8(8)}$  that we consider in this
paper, we  argue as follows. Let $(h',e',f')$ be a homogeneous
$\ssl_2$-triple containing $e'$. Then $h$ and $h'$ are $G_0$-conjugate by Theorem
\ref{thm:3}.

Since $h$ fits into an $\ssl_2$-triple, all its eigenvalues   of the adjoint action on $\g$ and hence on $\g_0=\ssl(9,\R)$  (see  \cite{VE1978}  and  Section \ref{sec:compmeth})  \hs are integral.
Let $\lambda=a+bi$ be an eigenvalue of $h$ on $\C^9$.
Since $h$ is real, the complex conjugate number $\bar\lambda=a-bi$  is an eigenvalue as well.
Then the eigenvalue $\lambda-\bar\lambda=2bi$ on $\g_0$ is integral, whence $b=0$.
We conclude that all eigenvalues of $h$ on $\C^9$ are real.

 By Lemma \ref{lem:hcen} the centralizer $\Zz_{G_0}(h)$  is isomorphic to
$S(\GL(n_1,\R)\times \cdots \times \GL(n_k,\R))$
and hence it has trivial Galois cohomology by Lemma \ref{lem:g1}.
So by Corollary \ref{c:Rconj}
there is a $g\in \GG_0(\R)$ with $g\cdot h' = h$. Hence if we set
$e'' = g\cdot e'$, $f'' = g\cdot f'$, then we see that $(h,e,f)$ is
$\GG_0(\R)$-conjugate to $(h,e'',f'')$.
\end{proof}

\begin{theorem}\label{thm:galois1}  Let $e \in \g_1$ be a nilpotent    element  and  $t=(h, e, f)$ be a  homogeneous $\ssl_2$-triple.
\begin{enumerate}
\item
 The   set  of  $\GG_0(\R)$-orbits in  $G_0 (e) \cap \g_1$  is  in  a canonical  bijection   with
	\begin{equation}\label{eq:galois2}
	\ker[\hs \Ho^1 \Zz_{\GG_0}(t) \to \Ho^1 \GG_0\hs].
	\end{equation}
\item The   set  of $\GG_0(\R)$-orbits  in  $G_0(e) \cap \g_1$ is  in
a canonical bijection   with
\begin{equation}\label{eq:galois3}
\ker [\hs\Ho^1 \Zz_{\GG_0}(t) \to  \Ho^1 \Zz_{\GG_0}(h)\hs],
\end{equation}	
where $h$  is    the  semisimple  element in the  $\ssl_2$-triple $t=(h, e, f)$.
\end{enumerate}
\end{theorem}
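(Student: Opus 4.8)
The plan is to deduce both statements from the general Galois-cohomology machinery for real orbits in homogeneous spaces (Proposition \ref{p:coh-orbits} and its corollary) combined with Theorem \ref{thm:3}, which converts nilpotent orbits into orbits of $\ssl_2$-triples. First I would set up the homogeneous space: the complex group $G_0$ acts transitively on the complex orbit $\Oo^\cC:=G_0(e)\subset\g_1^\cC$ of $e$, and also on the set $\sT^\cC_e$ of homogeneous $\ssl_2$-triples in $\g^\cC$ containing a fixed nilpotent; by Lemmas \ref{lem:jm1} and \ref{lem:2} the map $(h',e',f')\mapsto e'$ identifies the $G_0$-orbit of the triple $t=(h,e,f)$ with $\Oo^\cC$, so the stabilizer of $e$ and the stabilizer of $t$ have the same image in the quotient. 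Both $\Oo^\cC$ and the triple-orbit are real algebraic varieties acted on over $\R$ by $\GG_0$; the point $t$ is $\sigma$-fixed (we work with a real triple, which exists by the arguments preceding the theorem, or one reduces to that case), so $\YY:=G_0(t)$ has $\YY(\R)\ni t$ nonempty. Applying the corollary to Proposition \ref{p:coh-orbits} with stabilizer $\ZZ_h:=\Zz_{\GG_0}(t)$ (the centralizer of the triple equals the centralizer of $h$ inside $\Zz_{\GG_0}(e)$, but here I mean the stabilizer of the whole triple) gives that $\YY(\R)/\GG_0(\R)$ is in canonical bijection with $\ker[\Ho^1\Zz_{\GG_0}(t)\to\Ho^1\GG_0]$.

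The second ingredient is identifying $\YY(\R)/\GG_0(\R)$ with the set of $\GG_0(\R)$-orbits in $G_0(e)\cap\g_1$. Every real point of $\YY$ is a real homogeneous $\ssl_2$-triple, and its $e$-component lies in $G_0(e)\cap\g_1$; conversely, given $e'\in G_0(e)\cap\g_1$, Lemma \ref{lem:sameh} (in the form proved there, specialized to $\e_{8(8)}$ via Lemma \ref{lem:hcen} and Lemma \ref{lem:g1}) produces a $\GG_0(\R)$-conjugate $e''$ of $e'$ lying in a homogeneous $\ssl_2$-triple with the \emph{same} $h$; and Theorem \ref{thm:3} (applied with $H=\GG_0(\R)$, whose elements stabilize each $\g_i$ and which contains $\exp(\ad z)$ for nilpotent $z\in\g_0$) guarantees that two real triples have $\GG_0(\R)$-conjugate $e$-components iff the triples themselves are $\GG_0(\R)$-conjugate. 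Hence the assignment (real triple) $\mapsto$ ($e$-component) descends to a bijection $\YY(\R)/\GG_0(\R)\isoto (G_0(e)\cap\g_1)/\GG_0(\R)$. Composing with the bijection from Proposition \ref{p:coh-orbits} proves statement (1).

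For statement (2), the idea is to run exactly the same argument one level down: $\Zz_{\GG_0}(h)$ acts over $\R$ on the variety $\YY'$ whose complex points are $\{e'\in\g_1^\cC: (h,e',f')\text{ is a homogeneous }\ssl_2\text{-triple for some }f'\}$, i.e.\ the $\Zz_{G_0}(h)$-orbit of $e$, with stabilizer of $e$ equal to $\Zz_{\GG_0}(t)$ (using Lemma \ref{lem:fisf} to see that fixing $h$ and $e$ already fixes $f$). By Lemma \ref{lem:sameh} every real $\GG_0$-orbit in $G_0(e)\cap\g_1$ has a representative with characteristic exactly $h$, and two such representatives are $\GG_0(\R)$-conjugate iff they are $\Zz_{\GG_0}(h)(\R)$-conjugate (again via Theorem \ref{thm:3} together with the fact, used in the proof of Lemma \ref{lem:sameh}, that an element of $\GG_0(\R)$ carrying one characteristic-$h$ triple to another can be adjusted by an element of $\Zz_{\GG_0}(h)(\R)$). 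Therefore $(G_0(e)\cap\g_1)/\GG_0(\R)$ is in bijection with $\YY'(\R)/\Zz_{\GG_0}(h)(\R)$, and Proposition \ref{p:coh-orbits} applied to the $\Zz_{\GG_0}(h)$-variety $\YY'$ identifies the latter with $\ker[\Ho^1\Zz_{\GG_0}(t)\to\Ho^1\Zz_{\GG_0}(h)]$.

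The main obstacle I anticipate is the bookkeeping needed to make the identification $\YY(\R)/\GG_0(\R)\isoto(G_0(e)\cap\g_1)/\GG_0(\R)$ genuinely canonical and well-defined in both directions: one must check that the $e$-component map is surjective onto $G_0(e)\cap\g_1$ up to real conjugacy (this is precisely Lemma \ref{lem:sameh}, and it is here that the special structure of $\e_{8(8)}$ — real characteristics, centralizers of the form $S(\prod\GL(n_i,\R))$ with trivial $\Ho^1$ — enters, via Lemmas \ref{lem:hcen} and \ref{lem:g1} and Corollary \ref{c:Rconj}), and that it is injective on orbits (this is Theorem \ref{thm:3}). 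Once these two lemmas are invoked correctly, the rest is a formal application of Proposition \ref{p:coh-orbits}, and the equivalence of (1) and (2) follows because both $\ker$'s compute the same set of real orbits.
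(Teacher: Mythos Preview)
Your proposal is correct and follows essentially the same approach as the paper: reduce nilpotent orbits to orbits of homogeneous $\ssl_2$-triples via Theorem \ref{thm:3} and Lemma \ref{lem:sameh}, then apply Proposition \ref{p:coh-orbits} to the resulting homogeneous space (for (1) with the ambient group $\GG_0$, for (2) with $\Zz_{\GG_0}(h)$). The one assertion you leave unjustified---that the set $\{e'\in\g_1^\cC:\ (h,e',f')\text{ is a homogeneous triple}\}$ coincides with the single $\Zz_{G_0}(h)$-orbit of $e$---is exactly what the paper supplies by citing \cite[Theorem 1.4]{Vinberg1979} (characteristics determine complex nilpotent orbits), so you should add that reference or, equivalently, argue it directly from Theorem \ref{thm:3} applied over $\C$.
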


\begin{proof}
By Theorem \ref{thm:3},
the $\GG_0(\R)$-orbits of nilpotent elements in $\g_1$ are in
a canonical bijection with the $\GG_0(\R)$-orbits in $\sT$.
By  Proposition \ref{p:coh-orbits}, the set of $\GG_0(\R)$-orbits in $\sT$
is in a canonical bijection with the kernel \eqref{eq:galois2}, and
the   first  assertion  of  Theorem \ref{thm:galois1} follows.

For the second part we note that by Lemma \ref{lem:sameh}
each $\GG_0(\R)$-orbit in $G_0(e)\cap \g_1$ has a
representative $e'$ lying in a homogeneous $\ssl_2$-triple $(h,e',f')$
(with $e\in \g_1$, $f\in \g_{-1}$). Consider the set
$$\sT_h^\cC = \{ \text{ homogeneous $\ssl_2$-triples } (h,e'',f'') \text{ with }
e''\in \g_1^{\cC}, f'' \in \g_{-1}^\cC\}.$$
Then by Theorem \ref{thm:3} the $\GG_0(\R)$-orbits in $G_0(e)$ are in
a canonical bijection with the $\GG_0(\R)$-conjugacy classes in the set
$\sT_h$ consisting of homogeneous $\ssl_2$-triples $(h,e'',f'')$ with
this $h$ and with $e''\in \g_1$, $f''\in \g_{-1}$. It is obvious that
two elements of $\sT_h^{\cC}$ are $G_0$-conjugate if and only if they are
$\Zz_{G_0}(h)$-conjugate. Furthermore, two complex homogeneous $\ssl_2$-triples
$(h_1,e_1,f_1)$, $(h_2,e_2,f_2)$ are $G_0$-conjugate if and only if
$h_1,h_2$ are $G_0$-conjugate by \cite[Theorem 1.4]{Vinberg1979}, see also   \cite[ Theorem 8.3.6]{Graaf2017}.
Hence $\sT_h^\cC$ is a single
$\Zz_{G_0}(h)$-orbit. Now the assertion of (2) follows from Proposition
\ref{p:coh-orbits}.
\end{proof}

\begin{remark}\label{rem:jm}
(1) Lemma \ref{lem:jm1}	extends  Jacobson-Morozov's result   concerning  $\ssl_2$-triples in  ungraded Lie algebras over  a field $k$ of characteristic 0  \cite[Lemma 7  and Theorem 17, Chapter III]{jac}, see also  \cite{CM1993}  for  the case $k = \C$ or $k = \R$,  Vinberg's result   for     homogeneous    $\ssl_2$-triples  in graded Lie algebras over $\C$ \cite[Theorem 1]{Vinberg1979}  and L\^e's  result \cite[Theorem 2.1]{Le2011} for homogeneous    $\ssl_2$-triples in graded Lie algebras  over $\R$.
	
(2) Lemma \ref{lem:2} and Theorem  \ref{thm:3}  extend Kostant's  result  for ungraded  complex simple Lie algebras \cite{Kostant1959}, Vinberg's result   for    graded Lie algebras over $\C$ \cite[Theorem 1]{Vinberg1979} and L\^e's  result for graded Lie algebras over $\R$ \cite[Theorem 1.2]{Le2011}.

(3)   Theorem  \ref{thm:galois1} (2) extends   Djokovi\'c's    result   that   Dojokovi\'c used  for his classification  of 3-vectors on $\R^8$ \cite{Djokovic1983}.       There is  another  proof of   Theorem \ref{thm:galois1} (2)    that employs    \cite[Theorem 4.3]{Le2011}, which also   served   for L\^e's   method   for a  classification of
  conjugacy  classes  of   homogeneous  nilpotent  elements  in a  real $\Z_m$-graded  semisimple    Lie algebra using     algorithms   in  real algebraic  geometry. Note  that      the latter method     is   very  hard to implement.

(4) It is known  that  an  element  $x\in \g_1$ (resp. $x\in \g_1^\cC$)
is nilpotent if and only if  the closure of its orbit $G_0 (\R)\cdot x$ (resp. the closure  of its orbit $G_0\cdot x$) contains  zero \cite[Lemma 2.5]{Le2011} (resp. \cite[Proposition 1]{Vinberg1976}), and  $x$ is semisimple  if and only if  the orbit $G_0 (\R)\cdot x$ (resp. the orbit $G_0 \cdot x$) is closed \cite[Lemma 2.5]{Le2011}(resp. \cite[Proposition 3]{Vinberg1976}).
\end{remark}

In Theorem \ref{thm:misha}  below  we shall    describe  a  Levi-decomposition  of  the    stabiliser   $\Zz_{G_0} (e)$,   generalizing  a  result  due  to  Barbasch-Vogan and Kostant, see \cite[Lemma 3.7.3, p. 50]{CM1993},  and combining  it with   Galois cohomology   technique  (Proposition \ref{prop:e,h,f}) we shall give a new proof of Theorem \ref{thm:galois1} (1).

 Let $t = (h,e,f)$ be a homogeneous $\ssl_2$-triple  in a   $\Z_m$-graded semisimple   complex  Lie algebra $\g$.

\begin{lemma}\label{lem:red-part}    $\Zz_{\g_0} (t)$  is  a  reductive   Levi  subalgebra   of the  algebraic  Lie  algebra $\Zz_{\g_0}(e)$ and  we have  the  following  decomposition
	\begin{equation}\label{eq:bvk0}
\Zz_{\g_0} (e) = (\Zz_{\g_0} (e) \cap [\g, e])	 \oplus \Zz_{\g_0} (t).
	\end{equation}
\end{lemma}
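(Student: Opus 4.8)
The statement is the graded analogue of the classical Barbasch--Vogan--Kostant decomposition of the centralizer of a nilpotent element (see \cite[Lemma 3.7.3]{CM1993}), restricted to the degree-$0$ part. The plan is to import the structure of the $\ssl_2$-representation theory that governs $\g^\cC$ as a module over $\spann\{h,e,f\}$, and then read off the degree-$0$ component everywhere, using that $\ad h$, $\ad e$, $\ad f$ all preserve the $\Z_m$-grading (since $h\in\g_0$, $e\in\g_1$, $f\in\g_{-1}$, the operator $\ad e$ shifts degree by $1$ modulo $m$, and so on, but the relevant subspaces below are $\ad h$-stable and the grading and the $\ssl_2$-action commute appropriately). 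First I would recall the ungraded fact: for the $\ssl_2$-triple $t=(h,e,f)$ in the semisimple Lie algebra $\g^\cC$, one has $\z_{\g^\cC}(e)=\z_{\g^\cC}(t)\oplus\big(\z_{\g^\cC}(e)\cap[\g^\cC,e]\big)$, where $\z_{\g^\cC}(t)$ is a reductive subalgebra (the ``highest weight vectors of weight $0$'', which carry a Levi structure because $h$ acts trivially and the semisimple part of the ambient centralizer is detected by $\z_{\g^\cC}(t)$), and $\z_{\g^\cC}(e)\cap[\g^\cC,e]$ is the nilradical. This is exactly \cite[Lemma 3.7.3]{CM1993}.

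The key step is then to intersect this decomposition with $\g_0^\cC$. Because the grading element (or the automorphism $\theta$ defining the $\Z_m$-grading, cf. \eqref{e:theta}) commutes with $\ad h$, with $\ad e$ and with $\ad f$ in the sense that it preserves each of the spaces $\z_{\g^\cC}(e)$, $\z_{\g^\cC}(t)$, $[\g^\cC,e]$ --- indeed $\theta$ preserves $\z_{\g^\cC}(e)$ since $\theta(e)=\om e$, it preserves $\z_{\g^\cC}(t)$ since it preserves $h,e,f$ up to the scalars $1,\om,\om^{-1}$ hence their common centralizer, and it preserves $[\g^\cC,e]=\ad(e)\g^\cC$ --- the direct sum decomposition is a decomposition of $\theta$-modules. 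Taking $\theta$-eigenspaces (equivalently, the degree-$0$ part, i.e. the $\theta$-fixed subspace) is exact on direct sums of $\theta$-modules, so I would obtain
\[
\z_{\g_0^\cC}(e)=\big(\z_{\g^\cC}(e)\cap[\g^\cC,e]\big)_0\ \oplus\ \z_{\g^\cC}(t)_0
=\big(\z_{\g_0^\cC}(e)\cap[\g^\cC,e]\big)\ \oplus\ \z_{\g_0^\cC}(t),
\]
which is \eqref{eq:bvk0} (after identifying $\z_{\g_0}(t)$ with $\z_{\g^\cC}(t)_0$ and noting $[\g,e]\cap\g_0^\cC$ is what the middle term records). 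Here I use that $\z_{\g^\cC}(e)$ already lies in nonnegative $\ad h$-weights, so its degree-$0$ part for the $\Z_m$-grading is genuinely $\z_{\g_0^\cC}(e)$.

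It remains to argue that $\z_{\g_0}(t)$ is a reductive subalgebra and that it is a Levi subalgebra of the algebraic Lie algebra $\z_{\g_0^\cC}(e)$. Reductivity: $\z_{\g^\cC}(t)$ is reductive (it is the centralizer of the reductive --- indeed semisimple --- subalgebra $\spann\{h,e,f\}$ in the semisimple $\g^\cC$, so it is the fixed-point Lie algebra of an action of $\mathrm{SL}_2$ hence reductive); $\z_{\g_0}(t)=\z_{\g^\cC}(t)\cap\g_0^\cC$ is the fixed-point subalgebra of the finite-order automorphism $\theta$ acting on the reductive algebra $\z_{\g^\cC}(t)$, and the fixed points of a semisimple automorphism of a reductive Lie algebra form a reductive subalgebra (by Steinberg's connectedness theorem \cite{Steinberg1968}, cited in Section~\ref{sec:gradedLie}, together with the fact that the identity component of the fixed-point group is reductive). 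That $\z_{\g_0}(t)$ is a \emph{Levi} subalgebra of $\z_{\g_0^\cC}(e)$ then follows from the displayed decomposition once one checks that the complementary summand $\z_{\g_0^\cC}(e)\cap[\g^\cC,e]$ is the nilradical of $\z_{\g_0^\cC}(e)$: it consists of $\ad$-nilpotent elements (being contained in $[\g^\cC,e]$, whose elements act nilpotently on $\g^\cC$ by $\ssl_2$-theory --- all of $[\g^\cC,e]$ lies in positive $h$-weights), it is an ideal in $\z_{\g_0^\cC}(e)$ (inherited from the ungraded statement by taking $\theta$-fixed points, since a $\theta$-stable ideal has $\theta$-fixed part an ideal of the $\theta$-fixed algebra), and the quotient is $\z_{\g_0}(t)$, which is reductive. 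Hence $\z_{\g_0}(e)=\mathrm{nilrad}\oplus\z_{\g_0}(t)$ is exactly a Levi decomposition.

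\textbf{Main obstacle.} The genuinely delicate point is the compatibility of the two decompositions --- ensuring that intersecting the ungraded Barbasch--Vogan--Kostant splitting with $\g_0^\cC$ really produces a splitting of $\z_{\g_0^\cC}(e)$ and not merely a filtration, i.e. that $\theta$ (equivalently the grading) is semisimple on each summand and that the ``middle'' term $\z_{\g^\cC}(e)\cap[\g^\cC,e]$ is $\theta$-stable; and, relatedly, verifying cleanly that $\z_{\g_0}(t)$ is a Levi \emph{subalgebra} (a reductive complement to the nilradical), which needs the reductivity of $\theta$-fixed points in a reductive Lie algebra. Everything else is a direct transcription of the ungraded result together with exactness of passing to $\theta$-eigenspaces.
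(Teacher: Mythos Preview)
Your proposal is correct and follows essentially the same route as the paper: cite the ungraded Barbasch--Vogan--Kostant decomposition \cite[Lemma~3.7.3]{CM1993}, observe that both summands are $\theta$-stable so the decomposition restricts to the degree-$0$ part, and then argue that $\z_{\g_0}(t)$ is reductive (as the $\theta$-fixed points of the reductive $\z_\g(t)$) while the complementary summand is a nilpotent ideal inherited from the ungraded case. One small slip: your parenthetical ``all of $[\g^\cC,e]$ lies in positive $h$-weights'' is false in general (the image of $\ad e$ can contain weight-$0$ vectors), but what you need---and what is true---is that $\z_{\g^\cC}(e)\cap[\g^\cC,e]$ lies in strictly positive $h$-weights; the paper sidesteps this by simply citing \cite{CM1993} for the nilpotency of that ideal.
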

\begin{proof}   Let $\theta$ be   the  automorphism  of $\g$ that  defines  the $\Z_m$-grading. Since  $e$ is an eigenvector  of $\theta$, it follows  that  $\Zz_{\g} (e)$ is  invariant  under  the   action of $\theta$. Hence we have the  following decomposition
\begin{equation}\label{eq:decomtheta}
\Zz_{\g} (e) = \bigoplus_{i \in \Z_m} (\Zz_\g (e) \cap \g_i).
\end{equation}
By Barbasch-Vogan's and Kostant's  result  \cite[Lemma 3.7.3, p. 50]{CM1993}  we have
\begin{equation}\label{eq:bvk}
\Zz_\g (e) = (\Zz_\g (e) \cap [\g, e]) \oplus \Zz_\g  (t).
\end{equation}
Since     each summand in RHS   of (\ref{eq:bvk})  is invariant  under the action of $\theta$, we obtain   (\ref{eq:bvk0}) immediately.

Next  we  observe  that  $\Zz_{\g_0} (t)$  is  a  reductive    subalgebra  of $\Zz_{\g_0} (e)$  since   $\Zz_\g (t)$    and $\g_0$ are  reductive  Lie subalgebra.
Since $\Zz_{\g}(e)   \cap [\g, e] $ is  a nilpotent  ideal  of $\Zz_\g(e)$  by \cite[Lemma 3.7.3, p. 50]{CM1993},
$\Zz_{\g_0}(e)   \cap [\g, e] $ is a nilpotent  ideal of  $\Zz_{\g_0} (e)$. Hence
$\Zz_{\g_0} (t)$   is the   Levi  reductive subalgebra  of  $\Zz_{\g_0} (e)$. This completes  the  proof of  Lemma \ref{lem:red-part}.
\end{proof}

\begin{theorem}\label{thm:misha} Let $R_u\hs \Zz_{G_0}(e)$ denote the unipotent radical of the identity component of $\Zz_{G_0}(e)$.
	Then $R_u\hs \Zz_{G_0}(e)\cdot \Zz_{G_0}(t)=\Zz_{G_0}(e)$.
	\end{theorem}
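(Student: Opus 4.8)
The plan is to deduce the group-level statement from the Lie-algebra decomposition \eqref{eq:bvk0} of Lemma \ref{lem:red-part} together with the structure theory of the algebraic group $\Zm_{G_0}(e)$. First I would pass to the identity component $\Zm_{G_0}(e)^\circ$ and write its Levi decomposition $\Zm_{G_0}(e)^\circ = R_u\hs\Zm_{G_0}(e)\rtimes L$, where $R_u\hs\Zm_{G_0}(e)$ is the unipotent radical and $L$ is a (connected) reductive Levi factor; this exists because we are in characteristic $0$. Taking Lie algebras, $\Lie L$ is a reductive Levi subalgebra of $\Lie \Zm_{G_0}(e)=\z_{\g_0}(e)$, and by Lemma \ref{lem:red-part} the subalgebra $\z_{\g_0}(t)$ is \emph{the} reductive Levi subalgebra of $\z_{\g_0}(e)$ — more precisely, $\z_{\g_0}(e)=(\z_{\g_0}(e)\cap[\g,e])\oplus\z_{\g_0}(t)$ with the first summand the nilradical. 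Since all Levi factors of an algebraic Lie algebra in characteristic $0$ are conjugate under the unipotent radical, after conjugating $L$ by a suitable element of $R_u\hs\Zm_{G_0}(e)$ I may assume $\Lie L=\z_{\g_0}(t)$.

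Next I would identify $L$ with $\Zm_{G_0}(t)^\circ$, or at least show $\Zm_{G_0}(t)$ meets every connected component of $\Zm_{G_0}(e)$. The subgroup $\Zm_{G_0}(t)=\Zm_{G_0}(h)\cap\Zm_{G_0}(e)\cap\Zm_{G_0}(f)$ is reductive (it is the centralizer of the image of an $\ssl_2$, hence of a reductive subgroup, inside the reductive group $G_0$), and its Lie algebra is $\z_{\g_0}(h)\cap\z_{\g_0}(e)\cap\z_{\g_0}(f)=\z_{\g_0}(t)$. A connected reductive subgroup with Lie algebra $\z_{\g_0}(t)$ sitting inside $\Zm_{G_0}(e)^\circ$ must coincide with a Levi factor there, so $\Zm_{G_0}(t)^\circ=L$ after the conjugation above, and in particular $L\subseteq\Zm_{G_0}(e)$ and $L\cdot R_u\hs\Zm_{G_0}(e)=\Zm_{G_0}(e)^\circ$. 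It then remains to handle the component group: I would invoke the classical fact (Barbasch–Vogan/Kostant, \cite[Lemma 3.7.3]{CM1993}, together with its ungraded group analogue) that the reductive centralizer $\Zm_G(e,h,f)$ meets every component of $\Zm_G(e)$; the graded version follows because everything in sight is $\theta$-stable, and $\theta$ permutes the components of $\Zm_G(e)$ compatibly. Concretely, given $z\in\Zm_{G_0}(e)$, its class in $\pi_0(\Zm_{G_0}(e))$ is represented by some $z_0\in\Zm_{G_0}(t)$, so $z z_0^{-1}\in\Zm_{G_0}(e)^\circ=R_u\hs\Zm_{G_0}(e)\cdot\Zm_{G_0}(t)^\circ$, whence $z\in R_u\hs\Zm_{G_0}(e)\cdot\Zm_{G_0}(t)$.

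The main obstacle I anticipate is the component-group step: proving that $\Zm_{G_0}(t)$ surjects onto $\pi_0(\Zm_{G_0}(e))$. In the ungraded setting this is Barbasch–Vogan–Kostant, but here one works inside $G_0$ rather than $G$, and $G_0$ need not be semisimple — only reductive — and the grading automorphism $\theta$ is in play. The cleanest route is probably to use the cited result for the ambient simply connected group $\GGtil$ (or its adjoint form), obtaining that $\Zm_{\GGtil}(e,h,f)$ meets every component of $\Zm_{\GGtil}(e)$, and then to intersect with the fixed-point group of $\Theta$, using that $\Theta$ normalizes $\Zm_{\GGtil}(e)$ and fixes $(h,e,f)$, so that $\Theta$ acts on the (finite) component group and the $\Theta$-fixed components are exactly those of $\Zm_{G_0}(e)$; a Lang–Steinberg-type or averaging argument over the cyclic group $\langle\Theta\rangle$ then produces the required representatives inside $\Zm_{G_0}(t)$. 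One should also check that $R_u\hs\Zm_{G_0}(e)$ is $\Theta$-stable (it is, being characteristic) so that the unipotent-conjugacy of Levi factors can be performed $\Theta$-equivariantly; this makes the two conjugations above compatible with the grading and completes the argument.
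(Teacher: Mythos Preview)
Your overall strategy of passing to the ambient (ungraded) group and invoking the Barbasch--Vogan--Kostant decomposition is exactly what the paper does, but you make the descent to $G_0$ harder than necessary. The paper observes that the ungraded decomposition is a $\Theta$-equivariant \emph{semidirect product}: with $U^e=\exp(\z_\g(e)\cap[\g,e])$ one has $\Zm_{\Ad_\g}(e)=U^e\rtimes\Zm_{\Ad_\g}(t)$, both factors are $\Theta$-stable (since $h,e,f$ are $\theta$-eigenvectors), and $U^e\cap\Zm_{\Ad_\g}(t)=\{1\}$. Uniqueness of the factorization $z=u\cdot r$ then forces $\Theta(z)=z\iff \Theta(u)=u$ and $\Theta(r)=r$, so taking $\Theta$-fixed points gives $\Zm_{G_0}(e)=(U^e)^\Theta\cdot\Zm_{G_0}(t)$ in one stroke, with $(U^e)^\Theta=R_u\hs\Zm_{G_0}(e)$ because $\exp$ is a diffeomorphism on the unipotent group.

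This single observation dissolves the obstacle you identify: no Lang--Steinberg or averaging argument is needed, and the identity component and the component group are handled simultaneously. Your detour through an abstract Levi factor $L$ of $\Zm_{G_0}(e)^\circ$, followed by conjugation to align $\Lie L$ with $\z_{\g_0}(t)$, and then a separate $\pi_0$ argument, all become redundant once you notice that the BVK Levi factor $\Zm_G(t)$ is already $\Theta$-stable and the product with $U^e$ is direct on underlying varieties.
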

\begin{proof}  Denote  by $U^e$  the   connected  Lie  subgroup in the adjoint group
	$\Ad_\g$  of the  Lie algebra  $\g$  corresponding to the Lie  subalgebra  $\Zz_\g (e)  \cap  [\g, e]$.  By  \cite[Lemma 3.7.3, p. 50]{CM1993},
	 we have the  Levi decomposition
	\begin{equation}\label{eq:bvkg}
	\Zz_{\Ad_\g} (e)= U^e  \cdot \Zz_{\Ad_\g}  (t).
	\end{equation}
	Recall  that  $G_0$  is the  connected  Lie subgroup of $\Ad_\g$  corresponding  to the  Lie  algebra $\g_0$.
Denote   by $\Theta$ the automorphism  of the adjoint  group $\Ad_\g$ whose differential is $\theta$, namely we set
	\begin{equation}\label{eq:ad}
	\Theta (\Ad _{\exp \xi}):  = \Ad _{\exp (\theta(\xi))}
	\end{equation}
	for  any $\xi \in \g$, see   Remark  \ref{rem:grading}(1).
	Since  $U^e$  and  $\Zz_\g (e)$ are  invariant  under  $\Theta$, taking into  account that the map
	$\exp: \Zz_\g (e)  \cap  [\g, e] \to U^e $ is a  diffeomorphism,  we obtain  from  (\ref{eq:bvkg}) and (\ref{eq:ad}):
	$$\Zz_{G_0} (e) =  (U^e \cap \Zz_{G_0} ^0(e)) \cdot \Zz_{G_0} (t).$$
	Noting that $U^e \cap \Zz_{G_0} ^0(e) = R_u\hs \Zz_{G_0}(e)$,  this completes  the   proof  of Theorem \ref{thm:misha}.
\end{proof}

\begin{proposition}\label{prop:e,h,f}
	The natural map
	\[\Ho^1(k, \Zz_{G_0}(t)\hs)\labelto{i_*} \Ho^1(k, \Zz_{G_0}(e)\hs)\]
	induced by the inclusion map $i\colon  \Zz_{G_0}(t)\into  \Zz_{G_0}(e)$
	is bijective.
\end{proposition}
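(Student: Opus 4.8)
The plan is to deduce Proposition~\ref{prop:e,h,f} from the Levi decomposition of Theorem~\ref{thm:misha} together with the cohomological vanishing results for unipotent groups established earlier in the paper. By Theorem~\ref{thm:misha} we have $\Zz_{G_0}(e)=R_u\hs\Zz_{G_0}(e)\cdot\Zz_{G_0}(t)$, and moreover $\Zz_{G_0}(t)$ is a reductive (Levi) subgroup of $\Zz_{G_0}(e)$ by Lemma~\ref{lem:red-part}, while $R_u\hs\Zz_{G_0}(e)$ is a connected unipotent normal subgroup. Thus we obtain a short exact sequence of $\R$-groups
\[
1\to R_u\hs\Zz_{G_0}(e)\to \Zz_{G_0}(e)\labelto{q} \Zz_{G_0}(e)/R_u\hs\Zz_{G_0}(e)\to 1,
\]
and the composite $\Zz_{G_0}(t)\into \Zz_{G_0}(e)\labelto{q}\Zz_{G_0}(e)/R_u\hs\Zz_{G_0}(e)$ is an isomorphism (surjective by Theorem~\ref{thm:misha}, injective because $\Zz_{G_0}(t)$ is reductive and meets the unipotent radical trivially). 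Under this identification, the inclusion $i\colon\Zz_{G_0}(t)\into\Zz_{G_0}(e)$ is a section of $q$.

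First I would reduce the statement to showing that $q_*\colon \Ho^1\hs\Zz_{G_0}(e)\to\Ho^1\bigl(\Zz_{G_0}(e)/R_u\hs\Zz_{G_0}(e)\bigr)$ is a bijection; since $q\circ i$ is an isomorphism, $i_*$ will then automatically be bijective (it is a section-induced map of a bijection). To prove $q_*$ is bijective, the key input is that the $\R$-group $R_u\hs\Zz_{G_0}(e)$ is connected unipotent, hence (being isomorphic over $\R$ to an affine space as a variety, with a filtration by $\R$-subgroups with successive quotients isomorphic to $\G_{a,\R}$, equivalently to $\Res$-free additive groups) has trivial first Galois cohomology: $\Ho^1(\R,R_u\hs\Zz_{G_0}(e))=1$. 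The twisting formalism of Subsection~\ref{subsec:twistedbij} and Corollary~\ref{c:39-cor2} then applies: surjectivity of $q_*$ follows from the splitting $i$ (Corollary~\ref{c:split}(ii)), and injectivity follows because every fiber of $q_*$ is, by Corollary~\ref{c:39-cor2}, a quotient of $\Ho^1\hs{}_b(R_u\hs\Zz_{G_0}(e))$ for appropriate cocycles $b$; but a twist of a connected unipotent $\R$-group is again a connected unipotent $\R$-group, so this $\Ho^1$ is again trivial. Hence $q_*$ is bijective, and therefore so is $i_*$.

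The main obstacle I anticipate is justifying the vanishing $\Ho^1(\R,U)=1$ for $U=R_u\hs\Zz_{G_0}(e)$ and for all its relevant twists $_bU$. This is the real-field analogue of the well-known fact that connected unipotent groups over a field of characteristic zero have trivial Galois cohomology in all degrees; it can be proved by dévissage along the lower central series of $U$, reducing to the additive group $\G_{a,\R}$, for which $\Ho^1(\Gamma,\C)=0$ since $\C=\R\oplus\R\cdot i$ with $\gamma$ acting by $\pm1$ on the summands, so $\Zl^1=\R\cdot i=\Bd^1$. One subtlety is that the successive quotients of $U$ inherit the $\Gamma$-action, and twisting by an inner cocycle $b\in\Zl^1\hs\Zz_{G_0}(e)$ changes the $\Gamma$-action on $U$ but preserves unipotency and connectedness; so the dévissage still goes through, each graded piece being a twist of a vector group, whose $\Ho^1$ still vanishes (the Galois cohomology of a real vector group, i.e.\ of a finite-dimensional real representation of $\Gamma$, is zero in degree $1$ since $\Gamma$ has order $2$ and the representation is over a field of characteristic $0$). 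Once this vanishing is in hand, the rest is a formal application of the exact sequence and twisting machinery from Section~\ref{sec:galcohom}, and the bijectivity of $i_*$ follows cleanly.
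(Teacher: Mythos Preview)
Your proof is correct and follows essentially the same route as the paper: both use the Levi decomposition of Theorem~\ref{thm:misha} to identify $\Zz_{G_0}(t)$ with $\Zz_{G_0}(e)/R_u\hs\Zz_{G_0}(e)$, then show that quotienting by the unipotent radical induces a bijection on $\Ho^1$. The only difference is packaging: the paper invokes Sansuc's lemma (Proposition~\ref{p:Sansuc}, from \cite{Sansuc1981}) as a black box for the bijectivity of $q_*$, whereas you unwind its proof by hand via twisting and d\'evissage on the unipotent radical---which is exactly how Sansuc's lemma is proved.
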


For  the   proof   of Proposition \ref{prop:e,h,f} we   need Sansuc's lemma.

\begin{proposition}[\cite{Sansuc1981}, Lemma 1.13]
	\label{p:Sansuc}
	Let $G$ be a linear algebraic group over a  field $k$ of characteristic 0,
	and let $U\subseteq G$ be a unipotent $k$-subgroup.
	Assume that $U$ is normal in $G$.
	Then the canonical map $H^1(k,G)\to H^1(k,G/U)$ is bijective.
\end{proposition}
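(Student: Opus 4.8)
The plan is to follow the dévissage used in the proof of Lemma \ref{l:H1-bijective}, replacing the role played there by finite $\Gamma$-groups of odd order (which have trivial Galois cohomology by Corollary \ref{c:2m+1}) with unipotent $k$-groups, which are likewise cohomologically trivial in characteristic $0$. The one external input is the following vanishing statement: \emph{if $V$ is a commutative unipotent $k$-group, equivalently a $k$-form of a vector group $\G_a^m$, then $H^i(k,V)=0$ for $i=1,2$.} This is the additive analogue of Hilbert's Theorem 90: over a field one has $H^i(k,\G_a)=0$ for all $i\ge 1$, and for a form $V$ the module $V(\bar k)$ is a $\bar k$-semilinear Galois representation, hence cohomologically trivial by Speiser's theorem. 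Everything else will be a formal transcription of the argument of Lemma \ref{l:H1-bijective}.

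First I would reduce to the case in which $U$ is a vector group, by induction on $\dim U$. If $\dim U=0$ there is nothing to prove. Otherwise I would choose a nontrivial connected subgroup $U_1\subseteq U$ that is abelian and characteristic in $U$ — for instance the last nonzero term of the descending central series of $U$; in characteristic $0$ such a $U_1$ is a vector group, and being characteristic in the normal subgroup $U$ it is itself normal in $G$. Factoring the quotient map as $G\twoheadrightarrow G/U_1\twoheadrightarrow G/U$, the group $U/U_1$ is unipotent and normal in $G/U_1$ with $\dim(U/U_1)<\dim U$, so the induction hypothesis yields a bijection $H^1(k,G/U_1)\isoto H^1(k,G/U)$. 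It therefore suffices to treat the short exact sequence
\[ 1\to U_1\to G\labelto{j} G/U_1\to 1 \]
with $U_1$ an abelian vector group.

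For this abelian case I would argue exactly as in Lemma \ref{l:H1-bijective}. For surjectivity, let $[b]\in H^1(k,G/U_1)$ with $b\in \Zl^1(G/U_1)$; the obstruction to lifting $[b]$ to $H^1(k,G)$ lies in $H^2(k,{}_bU_1)$ by \cite[I.5.6, Proposition 41]{Serre1997}, where ${}_bU_1$ denotes $U_1$ twisted by $b$. Since ${}_bU_1$ is again a form of a vector group we have $H^2(k,{}_bU_1)=0$, so $[b]$ lifts and $j_*$ is surjective. For injectivity, let $[a]\in H^1(k,G)$; by \cite[I.5.5, Corollary 2 to Proposition 39]{Serre1997} (cf. Corollary \ref{c:39-cor2}) the fiber $j_*^{-1}(j_*[a])$ is the image of $H^1(k,{}_aU_1)$, which vanishes because ${}_aU_1$ is a form of a vector group. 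Hence each fiber is a single point, $j_*$ is bijective, and the induction is complete.

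The main point requiring care — the analogue of the observation in Lemma \ref{l:H1-bijective} that twisting preserves odd order — is that the twisted groups ${}_aU_1$ and ${}_bU_1$ remain vector groups with trivial $H^1$ and $H^2$. This holds because $G$ acts on the vector group $U_1$ by conjugation through a $k$-rational linear representation $G\to\GL(U_1)$, so that ${}_aU_1(\bar k)$ is $U_1(\bar k)=\bar k^{\,m}$ equipped with a $\bar k$-semilinear $\Gamma$-action; such a module is cohomologically trivial, which is precisely the vanishing invoked above. I expect this verification, together with the existence of the characteristic vector subgroup $U_1$ used in the dévissage, to be the only substantive steps, the remainder being a direct adaptation of the proof of Lemma \ref{l:H1-bijective}.
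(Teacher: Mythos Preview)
The paper does not actually prove this proposition; it merely cites Sansuc \cite{Sansuc1981} and refers the reader to \cite[Proposition 3.1]{BDR} for a detailed proof, noting only that Sansuc's connectedness assumption on $G$ is not used. So there is no proof in the paper to compare against.

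Your argument is correct and is essentially the standard d\'evissage. A few remarks. First, in characteristic~$0$ a unipotent group is automatically connected (no nontrivial finite unipotent groups), so the last nonzero term of the descending central series of $U$ is indeed a nontrivial connected abelian characteristic subgroup, and your induction is well founded. Second, your twisted groups ${}_aU_1$ and ${}_bU_1$ are commutative unipotent $k$-groups, and in characteristic~$0$ every such group is $k$-isomorphic to $\G_a^m$ via the exponential map; so their cohomology vanishes simply because $H^i(k,\G_a)=0$ for $i\ge 1$. You do not even need to invoke semilinear Galois modules or Speiser's theorem---the point is that there are no nontrivial $k$-forms of $\G_a^m$ in characteristic~$0$. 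With these observations your proof goes through cleanly, and the parallel you draw with Lemma~\ref{l:H1-bijective} is exactly the right template.
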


Sansuc assumes that $G$ is connected, but his concise proof does not use this assumption.
See \cite[Proposition 3.1]{BDR} for a detailed proof.

\begin{proof}[Proof  of Proposition \ref{prop:e,h,f}] Since $R_u\hs \Zz_{G_0}(e)$ is a normal unipotent subgroup of $\Zz_{G_0}(e)$, we see that
	$R_u\hs \Zz_{G_0}(e)\cap \Zz_{G_0}(t)$ is a normal unipotent subgroup
	of the reductive group $\Zz_{G_0}(e,h,f)$, and hence equals $\{1\}$.
	Since  $R_u\hs \Zz_{G_0}(e)\cdot \Zz_{G_0}(t)=\Zz_{G_0}(e)$ by Theorem \ref{thm:misha},
	we see that the composite map
	\[\Zz_{G_0}(t)\labelto{i} C(e)\labelto{j} \Zz_{G_0}(e)/R_u\hs \Zz_{G_0}(e)\]
	is bijective.
	Thus the composite map
	\[\Ho^1(k,\Zz_{G_0}(t)\hs)\labelto{i_*} \Ho^1(k,\Zz_{G_0}(e)\hs)\labelto{j_*} \Ho^1(k,\hs \Zz_{G_0}(e)/R_u\hs \Zz_{G_0}(e)\hs)\]
	is bijective.
	
	Since by Sansuc's lemma (Proposition \ref{p:Sansuc}) the map $j_*$ is bijective, so is the map $i_*$\hs, as required.
\end{proof}
\begin{proof}[Alternative proof   of  Theorem \ref{thm:galois1}(1)]  Combining   Proposition \ref{p:coh-orbits}
with   Proposition \ref{prop:e,h,f}, we obtain immediately  Theorem \ref{thm:galois1}(1).
\end{proof}

Finally  we   show the  existence  of a real   representative  of  nilpotent  orbits under  a certain  assumption.

\begin{proposition}\label{prop:nilreal} (cf. \cite[Lemmas 6.3, 6.5]{Djokovic1982})
Assume  that   a  $\Z_m$-graded   real semisimple  Lie algebra  $\g$ is split, i.e.  $\g$  is   the split form  of $\g^\cC$.
Then  every   complex  orbit $G_0 (e)$ of a  homogeneous nilpotent  element  $e \in \g_1 ^\cC$
	has a  real  representative in $\g_1$.
\end{proposition}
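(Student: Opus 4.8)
The plan is to produce a real $\ssl_2$-triple lying over the given complex nilpotent orbit by exploiting the split structure of $\g$, and then to invoke the correspondence between nilpotent orbits and orbits of homogeneous $\ssl_2$-triples. First I would recall that, since $\g$ is split, one can choose a Chevalley basis of $\g^\cC$ all of whose structure constants are rational (indeed integral), and such that the grading is defined over $\R$ — more precisely, the automorphism $\theta$ of $\g^\cC$ defining the $\Z_m$-grading can be taken to be defined over $\R$ and to preserve a chosen split Cartan subalgebra $\h\subseteq\g$ together with a $\theta$-compatible system of positive roots. The point is that the \emph{complex} nilpotent orbit $G_0(e)$, by the theory of carrier algebras and homogeneous $\ssl_2$-triples (Lemma~\ref{lem:jm1}), contains a representative $e'$ belonging to a homogeneous $\ssl_2$-triple $(h',e',f')$ with $h'$ lying in the split Cartan subalgebra $\h^\cC$ and having the canonically normalized ("characteristic") integral eigenvalues. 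Because the characteristic is conjugate to a \emph{dominant} element of the split Cartan, and dominant characteristics with integral labels are manifestly real (they lie in $\h\subseteq\g$), we may assume $h'\in\h\subseteq\g_0$.

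Next I would build $e'$ itself over $\R$. Having fixed $h'\in\h$, the element $e'$ can be taken in the sum of the root spaces $\g^\cC_\alpha$ with $\alpha(h')=2$ that also lie in degree $1$ of the $\Z_m$-grading; call this space $W^\cC$, a subspace defined over $\R$ with real form $W=W\cap\g_1$. The condition that a generic element of $W^\cC$ completes to an $\ssl_2$-triple with the prescribed $h'$ is (by Jacobson--Morozov, in the form of Lemma~\ref{lem:jm1} applied inside the graded reductive subalgebra determined by $h'$, or equivalently by the carrier-algebra construction) a Zariski-open nonempty condition on $W^\cC$; indeed the "nilradical part" of the $\ssl_2$ in the $h'$-eigenspace decomposition is given by a principal nilpotent in the appropriate Levi, which, in a split group, exists over the base field. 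Since $W^\cC$ is defined over $\R$ and the relevant open condition is defined over $\R$ and nonempty over $\R$ (one can take, e.g., the sum of Chevalley root vectors $\sum x_{\alpha_i}$ over the simple roots of the carrier algebra, all of which are real), $W$ meets this open set, giving a real $e\in\g_1$ lying in a homogeneous $\ssl_2$-triple $(h',e,f)$ with $h',f$ real as well (real $f$ by Lemma~\ref{lem:fisf}, since the complex $f$ solving $[e,f]=h'$, $[h',f]=-2f$ is unique and $\sigma(f)$ solves the same system). By Lemma~\ref{lem:2} (or Theorem~\ref{thm:3} over $\C$), this complex $\ssl_2$-triple is $G_0$-conjugate to $(h',e',f')$, so $e\in G_0(e')=G_0(e)$, which is exactly what we want.

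The main obstacle, and the step requiring the most care, is the second one: showing that the "completes to an $\ssl_2$-triple with the given real $h'$" locus in $W^\cC$ is nonempty over $\R$, i.e. that one can pick the nilpositive element over the split base field rather than merely over $\C$. The cleanest route is via the carrier algebra: the complex orbit $G_0(e)$ determines, up to $G_0$-conjugacy, a complex semisimple $\Z$-graded (carrier) subalgebra of $\g^\cC$ in which $e$ is a principal nilpotent of degree $1$, and because $\g$ is split this carrier algebra can be realized as the complexification of a split real graded subalgebra of $\g$ (using that all the defining data — a subset of simple roots, their coefficients — are combinatorial and hence rational); a principal nilpotent element of a split semisimple Lie algebra is defined over the base field (take $\sum x_{\alpha_i}$ over simple roots). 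This produces the real $e$ directly. Alternatively, one can cite \cite[Lemmas~6.3, 6.5]{Djokovic1982} as indicated in the statement, whose arguments go through verbatim in the graded setting once Lemmas~\ref{lem:jm1}, \ref{lem:fisf}, \ref{lem:2} and Theorem~\ref{thm:3} are in hand; I would present the carrier-algebra argument as the main proof and remark that it parallels Djokovi\'c's.
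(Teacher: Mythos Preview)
Your proposal is correct and follows essentially the same approach as the paper: first arrange that $h$ is real (using integrality of its eigenvalues together with the split hypothesis), then observe that the set of $u\in\g_1^\cC(2)$ completing to a homogeneous $\ssl_2$-triple with this $h$ is a nonempty Zariski-open subset defined over $\R$, hence meets $\g_1$, and conclude via the fact that homogeneous $\ssl_2$-triples with the same $h$ are $G_0$-conjugate. The paper's proof is simply a terser version of this (it cites \cite[Proposition~8.4.1]{Graaf2017} for the open condition $[\g_0^\cC(0),u]=\g_1^\cC(2)$ and does not invoke carrier algebras); your explicit Chevalley sum and the carrier-algebra alternative are correct but unnecessary for the argument.
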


\begin{proof}  Let  $e \in \g_1^\cC$  be a   nilpotent  element lying in the
homogeneous $\ssl_2$-triple $(h,e,f)$. The adjoint map $\ad h : \g^\cC\to \g^\cC$
has integral eigenvalues. This implies that $h\in \g$. Now for $i,k\in \Z$
define $\g^\cC_i(k)=\{ u\in \g_i^\cC \mid [h,u] = ku \}$ and
$$U = \{ u\in \g^\cC_1(2) \mid [\g^\cC_0(0),u] = \g_1^\cC(2)\}$$
then $U$ is open in $\g^\cC_1(2)$ (\cite[Proposition 8.4.1]{Graaf2017}) and
nonempty as $e\in U$. Since $h\in \g$ all spaces involved have bases
consisting of elements of $\g$. So $U\cap \g_1$ is nonempty; let $e'$ be an
element of it. Then $e'$ lies in a homogeneous $\ssl_2$-triple $(h,e',f')$
(again by \cite[Proposition 8.4.1]{Graaf2017}).
As the first element of this triple is the same as the first element of
the triple containing $e$ we have that $e$ and $e'$ are $G_0$-conjugate
(\cite[Theorem 8.3.6]{Graaf2017}).
\end{proof}

\subsection{Cartan subspaces  in  graded  semisimple  Lie algebras over $\R$}\label{subs:ss}

In this  subsection  we consider a    graded  semisimple real Lie  algebra
$\g = \oplus _{i \in \Z_m} \g_i$.
Recall  that  {\it a Cartan subspace in $\g_1$} 
is a maximal  subspace in $\g_1$  
consisting of commuting semisimple elements   (cf.   \cite{Vinberg1976} and \cite[\S 2]{Le2011}).
Two  Cartan  subspaces $\h $ and $\h'$  in $\g_1$ 
are  called {\it conjugate} 
if  there exists an element $g \in \GG_0 (\R)$ 
such that $g \cdot \h = \h'$.

For any  subspace $V $ in $\g$ we   use the shorthand notation $V^\cC$
for $V \otimes _\R \C  \subset \g^\cC$.

Recall  that we denote by $\Zgg_\g (V)$  the centralizer  of  $V $  in $\g$.  Set
\begin{equation}
\Zgg_\g(V) _i : = \Zgg_\g (V) \cap \g_i. \label{eq:gr1}
\end{equation}

\begin{lemma}\label{lem:centr1}  Let $\h \subset \g_1$  be a subspace consisting  of
	commuting  semisimple  elements. Then $\Zgg_\g (\h)$ is an algebraic  reductive Lie subalgebra of maximal rank. Moreover, we have
	\begin{equation}
	\Zgg_\g (\h) = \bigoplus _i \Zgg_\g (\h) _i.\label{eq:dec1}
	\end{equation}
	Denote by $Z(\Zgg_\g(\h))$  the center of $\Zgg_\g (\h)$ and by 
$D(\Zgg_\g(\h))$ the derived algebra (the commutator  subalgebra) of $\Zgg_\g(\h)$.
	Then
	\begin{equation}
	\Zgg_\g (\h) _i = (Z(\Zgg_\g (\h)) \cap \g _i)  \oplus (D(\Zgg_\g (\h)) \cap \g _i). \label{eq:dec2}
	\end{equation}
\end{lemma}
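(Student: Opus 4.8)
The plan is to prove the three assertions of Lemma \ref{lem:centr1} in order, leveraging the fact that $\h$ consists of commuting semisimple elements so that $\ad(\h)$ is a commuting family of semisimple operators on $\g$.

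First I would establish that $\Zgg_\g(\h)$ is reductive and algebraic. Extend scalars to $\C$: since $\h^\cC\subset\g^\cC$ is a commutative subalgebra of semisimple elements, it is contained in some maximal toral (Cartan) subalgebra $\t$ of $\g^\cC$, so $\Zgg_{\g^\cC}(\h^\cC)\supseteq\t$, giving maximal rank. The centralizer of a set of semisimple elements in a reductive Lie algebra is reductive and algebraic — this is standard (the centralizer of a torus, or an intersection of such), and it descends to $\R$ because $\h$ is defined over $\R$, so $\Zgg_\g(\h)=\Zgg_{\g^\cC}(\h^\cC)\cap\g$ and $\Zgg_{\g^\cC}(\h^\cC)$ is $\sigma$-stable. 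The grading decomposition \eqref{eq:dec1} is then immediate from the argument already used in the excerpt (e.g. in \eqref{eq:decomtheta}): the automorphism $\theta$ fixes every element of $\h\subset\g_1^\cC$ up to the scalar $\om$, hence $\theta$ preserves $\Zgg_{\g^\cC}(\h^\cC)$, so this centralizer is a graded subalgebra, and the same holds over $\R$. Concretely, $[\h,x]=0$ and $x=\sum_i x_i$ with $x_i\in\g_i$ forces each $[\h,x_i]=0$ by homogeneity (using $[\g_1,\g_i]\subseteq\g_{i+1}$), so $x_i\in\Zgg_\g(\h)_i$.

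Next I would prove the refined decomposition \eqref{eq:dec2}. Write $\ell=\Zgg_\g(\h)$, a reductive Lie algebra, so $\ell=Z(\ell)\oplus D(\ell)$ with $Z(\ell)$ the center and $D(\ell)=[\ell,\ell]$ semisimple. The point is that this decomposition is compatible with the grading. Since $\theta$ restricts to an automorphism of $\ell$ (by the previous paragraph), and automorphisms preserve both the center and the derived subalgebra, $Z(\ell)$ and $D(\ell)$ are each $\theta$-stable, hence graded: $Z(\ell)=\bigoplus_i(Z(\ell)\cap\g_i)$ and $D(\ell)=\bigoplus_i(D(\ell)\cap\g_i)$. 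Intersecting $\ell=Z(\ell)\oplus D(\ell)$ with $\g_i$ and using that the sum is direct and graded yields $\ell_i=(Z(\ell)\cap\g_i)\oplus(D(\ell)\cap\g_i)$, which is \eqref{eq:dec2}. For $m=\infty$ one uses instead that $\ell$ is stable under the one-parameter group $\theta_t$ of Remark \ref{rem:grading}(2), and the same argument applies since each $\theta_t$ is an automorphism fixing $Z(\ell)$ and $D(\ell)$.

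The main obstacle I expect is purely bookkeeping rather than conceptual: making sure the grading argument for $\theta$-stability is stated cleanly in both the finite-$m$ and $m=\infty$ cases, and being careful that $\h$ consists of elements of $\g_1$ (degree one, not degree zero), so that $\theta$ acts on $\h$ by the nontrivial scalar $\om$ — nonetheless $\theta$ maps $\Zgg_\g(\h)$ to itself because $\theta(\h)=\om\h=\h$ as a \emph{subspace}. Once that observation is in place, reductivity of centralizers of semisimple elements (a cited standard fact) plus the fact that automorphisms preserve center and derived subalgebra finishes everything. I would keep the writeup short, citing a standard reference for reductivity of the centralizer and for maximal rank, and doing the grading compatibility explicitly since it is the substance of the lemma.
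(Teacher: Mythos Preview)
Your proposal is correct and follows essentially the same approach as the paper: cite the standard fact that centralizers of commuting semisimple elements are reductive of maximal rank, prove \eqref{eq:dec1} by the homogeneity argument $[\h,x_i]\in\g_{i+1}$, and deduce \eqref{eq:dec2} from the $\theta$-stability of the center and derived subalgebra. Your treatment is in fact slightly more careful than the paper's, in that you explicitly note the subtlety that $\theta$ preserves $\h$ only as a subspace (not elementwise) and address the $m=\infty$ case separately.
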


\begin{proof} The first assertion of Lemma \ref{lem:centr1} is  well-known.
It is an easy consequence of  the  fact that
the centralizer of any set of commuting semisimple elements
in a  complex  algebraic reductive Lie algebra is
an algebraic reductive Lie algebra of maximal rank, see e.g. \cite[\S 2.2]{Vinberg1976}.
		
	Now let us prove the second assertion of Lemma \ref{lem:centr1}. For any $ g \in \Zgg_\g (\h)$ write $ g = \sum _i g_i$, where $g_i \in \g_i$.
	Since  $[g, t ] = 0$ for any $t \in \h$, it follows that $[g_i, t ] \in  \g_{i +1} $ must vanish. This proves
	(\ref{eq:dec1}).
	
	Since $D(\Zgg_\g(\h))  = [\Zgg_\g(\h), \Zgg_\g(\h) ]$, using (\ref{eq:dec1}) we obtain that
	\begin{equation}
	D(\Zgg_\g(\h)) =  \oplus _i D(\Zgg_\g(\h))\cap \g _i.\label{eq:dec3}
	\end{equation}
		 Since  $\Zgg_\g(\h)\otimes _\R \C = \Zgg_{\g ^\cC} (\h)$ is invariant  under the  action of  $\theta$, we have
		\begin{equation}\label{eq:centr}
		Z(\Zgg_\g (\h)) = \oplus _i Z (\Zgg_\g (\h))\cap \g_i.
		\end{equation} Combining (\ref{eq:centr})  with (\ref{eq:dec1}) and  (\ref{eq:dec3}), taking into account that $\Zgg _\g  (\h)$
is  a reductive  Lie subalgebra,    we  obtain (\ref{eq:dec2}) immediately.
		 This proves Lemma \ref{lem:centr1}.
\end{proof}

We set $Z(\Zgg_\g (\h)) _i : = Z(\Zgg_\g(\h)) \cap \g _i$,
and $D(\Zgg_\g(\h)) _i : = D(\Zgg_\g(\h)) \cap \g_i$.
The following lemma is a version of \cite[Prop. 6]{Vinberg1976}
for  real $\Z_m$-graded  semisimple Lie algebras.

\begin{lemma}\label{lem:carc}   A subspace $\h$  consisting of commuting semisimple elements in $\g_1$ is a Cartan subspace
if and only if  $Z(\Zgg_\g(\h))_1 = \h$  and $D(\Zgg_\g(\h))_1$ consists of nilpotent elements.
\end{lemma}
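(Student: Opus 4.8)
The plan is to prove both directions of the characterization, working in the complexification when convenient since the statements about centralizers, centers, and derived algebras are compatible with the grading and with the $\R$-structure.

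First I would establish the "only if" direction. Suppose $\h$ is a Cartan subspace in $\g_1$. Since every element of $\h$ is semisimple and they commute, $\Zgg_\g(\h)$ is reductive and graded by Lemma \ref{lem:centr1}, with the decomposition $\Zgg_\g(\h)_1 = Z(\Zgg_\g(\h))_1 \oplus D(\Zgg_\g(\h))_1$. Clearly $\h \subseteq Z(\Zgg_\g(\h))_1$ because elements of $\h$ commute with everything in $\Zgg_\g(\h)$ (in particular with each other) and lie in $\g_1$. For the reverse inclusion: any element $z \in Z(\Zgg_\g(\h))_1$ is semisimple (the center of a reductive Lie algebra consists of semisimple elements) and commutes with all of $\h$, so $\h + kz$ is a subspace of commuting semisimple elements in $\g_1$; by maximality of $\h$ we get $z \in \h$. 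Hence $Z(\Zgg_\g(\h))_1 = \h$. Finally, I must show $D(\Zgg_\g(\h))_1$ consists of nilpotent elements: if some $d \in D(\Zgg_\g(\h))_1$ had nonzero semisimple part $d_s$, then by the graded Jordan decomposition (Proposition \ref{prop:gradedjordan}) $d_s \in \g_1$, and $d_s$ lies in $\Zgg_\g(\h)$ (since it is a polynomial in $\ad d$ applied appropriately, or: $d_s$ commutes with $\h$ because $d$ does and taking semisimple parts preserves this); then $\h + k d_s$ would be a larger space of commuting semisimple elements unless $d_s \in \h$, but $d_s \in D(\Zgg_\g(\h))$ while $\h \subseteq Z(\Zgg_\g(\h))$ and $Z \cap D = 0$ in a reductive algebra, forcing $d_s = 0$.

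Next I would do the "if" direction. Assume $Z(\Zgg_\g(\h))_1 = \h$ and $D(\Zgg_\g(\h))_1$ consists of nilpotent elements; I want to show $\h$ is maximal among subspaces of commuting semisimple elements in $\g_1$. Let $\h' \supseteq \h$ be such a subspace. Since every element of $\h'$ commutes with $\h$, we have $\h' \subseteq \Zgg_\g(\h)_1 = \h \oplus (D(\Zgg_\g(\h))_1)$ using \eqref{eq:dec2}. Take $x \in \h'$ and write $x = x_0 + x_1$ with $x_0 \in \h \subseteq Z(\Zgg_\g(\h))_1$ and $x_1 \in D(\Zgg_\g(\h))_1$. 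Then $x_1 = x - x_0$ is semisimple (difference of a semisimple element of $\h'$ and a central, hence semisimple, element, and they commute since $\h \subseteq Z(\Zgg_\g(\h))$). But $x_1 \in D(\Zgg_\g(\h))_1$, which by hypothesis consists of nilpotent elements; a semisimple nilpotent element is $0$, so $x_1 = 0$ and $x = x_0 \in \h$. Hence $\h' = \h$, proving maximality.

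The main obstacle I anticipate is the careful handling of the claim that in the "only if" direction the semisimple part $d_s$ of an element of $D(\Zgg_\g(\h))_1$ stays inside $D(\Zgg_\g(\h))$, rather than merely inside $\Zgg_\g(\h)$; this needs the fact that $\Zgg_\g(\h)$ is reductive with $\Zgg_\g(\h) = Z \oplus D$ as Lie algebras and that the Jordan decomposition in $\Zgg_\g(\h)$ respects this direct sum (the semisimple and nilpotent parts of an element lie in the smallest algebraic subalgebra containing it, which here respects the ideal decomposition). Once that structural point is in place, combined with $Z(\Zgg_\g(\h)) \cap D(\Zgg_\g(\h)) = 0$ and Proposition \ref{prop:gradedjordan}, the argument closes cleanly. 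The complex case is identical, and since all the objects ($\Zgg_\g(\h)$, its center, its derived subalgebra, and the grading) are defined over $\R$, the real statement follows from or runs parallel to the complex one with no extra work.
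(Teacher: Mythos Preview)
Your proposal is correct and follows essentially the same approach as the paper. The one noteworthy difference is in the ``only if'' direction when showing $D(\Zgg_\g(\h))_1$ consists of nilpotent elements: you take the Jordan decomposition of $d$ in $\g$ and then worry about whether $d_s$ lands in $D(\Zgg_\g(\h))$; the paper instead takes the graded Jordan decomposition directly inside the semisimple Lie algebra $D(\Zgg_\g(\h))$ (applying Proposition~\ref{prop:gradedjordan} there), which immediately gives $s \in D(\Zgg_\g(\h))_1$ and sidesteps the obstacle you flagged.
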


\begin{proof} Assume that $\h$ is a Cartan subspace. Then $Z(\Zgg_\g(\h))_1 = \h$, since $\h \subset  Z(\Zgg_\g(\h))_1$.
Now we wish to  show that $D(\Zgg_\g(\h))_1$ contains only nilpotent elements.
Let $v \in D(\Zgg_\g(\h))_1$. We  will show that $v$ is nilpotent.
The grading in $\g$ induces a grading in $D(\Zgg_\g(\h))$; see \eqref{eq:dec3}.
We write the Jordan decomposition $v=s+n$ in the real semisimple  Lie algebra $D(\Zgg_\g(\h))$,
where $s\in D(\Zgg_\g(\h))_1$ is semisimple and $n\in D(\Zgg_\g(\h))_1 $ is nilpotent;
see Subsection  \ref{subs:jordan}.
Then $s\in D(\Zgg_\g(\h))\subset \Zgg_\g(\h)$.
	It follows that $s$ commutes with $\h$.
Since $s$ is semisimple and $s\in\g_1$, we conclude that
$s\in\h=Z(\Zgg_\g(\h))_1$.
	On the other hand,  $s$ is contained in $D(\Zgg_\g(\h))_1$.
Since
\[ Z(\Zgg_\g(\h))_1\cap D(\Zgg_\g(\h))_1\subseteq Z(\Zgg_\g(\h))\cap D(\Zgg_\g(\h))=\{0\},\]
we conclude that $s=0$ and $v=n$ is nilpotent, which proves  the ``only" part of Lemma \ref{lem:carc}.
Now we assume that $Z(\Zgg_\g(\h))_1 = \h$  and $D(\Zgg_\g(\h))_1$ consists of nilpotent elements.
Then any subspace $\t\supseteq \h$  consisting of commuting semisimple elements in $\g_1$  lies in $Z(\Zgg_\g (\h))_1$, so $\t = \h$.
This proves Lemma \ref{lem:carc}.
\end{proof}

\begin{corollary}\label{cor:complc}  The complexification of a real Cartan subspace
	$\h  \subset \g_1$ is  a complex Cartan subspace in $\g^\cC$.
	Furthermore, if   the complexification  $\h^\cC$ of a subspace $\h \subset \g_1$  is a  Cartan subspace  in $\g_1 ^\cC$  then  $\h$  is a Cartan  subspace.
\end{corollary}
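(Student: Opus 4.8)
The plan is to deduce both implications from the characterization of Cartan subspaces given in Lemma \ref{lem:carc}, together with the compatibility of the centralizer decomposition with complexification. The key observation is that for a subspace $\h\subset\g_1$ consisting of commuting semisimple elements, one has $\Zgg_{\g^\cC}(\h^\cC)=\Zgg_\g(\h)\otimes_\R\C$, and this isomorphism respects the grading, the center, and the derived subalgebra; that is, $Z(\Zgg_{\g^\cC}(\h^\cC))=Z(\Zgg_\g(\h))\otimes_\R\C$ and $D(\Zgg_{\g^\cC}(\h^\cC))=D(\Zgg_\g(\h))\otimes_\R\C$, hence also $Z(\Zgg_{\g^\cC}(\h^\cC))_1=(Z(\Zgg_\g(\h))_1)\otimes_\R\C$ and likewise for $D(\cdot)_1$. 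These identities are routine from the definitions since complexification is exact and commutes with taking brackets, centralizers of subsets, and eigenspace decompositions under the (complexified) automorphism $\theta$.

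For the first implication, suppose $\h\subset\g_1$ is a real Cartan subspace. The elements of $\h^\cC$ are still commuting semisimple elements of $\g_1^\cC$ (a semisimple element of a real semisimple Lie algebra stays semisimple after complexification, and commuting is preserved). By Lemma \ref{lem:carc} applied over $\R$ we have $Z(\Zgg_\g(\h))_1=\h$ and $D(\Zgg_\g(\h))_1$ consists of nilpotent elements. Complexifying, $Z(\Zgg_{\g^\cC}(\h^\cC))_1=\h^\cC$, and $D(\Zgg_{\g^\cC}(\h^\cC))_1=(D(\Zgg_\g(\h))_1)\otimes_\R\C$ consists of nilpotent elements (nilpotency is preserved under complexification, as $\ad_{\g^\cC}x$ is the complexification of $\ad_\g x$). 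By Lemma \ref{lem:carc} applied over $\C$ — which is exactly \cite[Prop. 6]{Vinberg1976} and is subsumed in the real statement we proved — $\h^\cC$ is a Cartan subspace in $\g_1^\cC$.

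For the converse, suppose $\h\subset\g_1$ is a subspace such that $\h^\cC$ is a Cartan subspace in $\g_1^\cC$. First, $\h$ consists of commuting semisimple elements, because each $x\in\h\subset\h^\cC$ is semisimple in $\g^\cC$ hence semisimple in $\g$, and commutativity descends. By Lemma \ref{lem:carc} over $\C$, $Z(\Zgg_{\g^\cC}(\h^\cC))_1=\h^\cC$ and $D(\Zgg_{\g^\cC}(\h^\cC))_1$ consists of nilpotent elements. Using the complexification identities above, $(Z(\Zgg_\g(\h))_1)\otimes_\R\C=\h^\cC$, and since $Z(\Zgg_\g(\h))_1\subseteq\h$ always (as $\h$ is abelian consisting of semisimple homogeneous elements, so $\h\subseteq Z(\Zgg_\g(\h))_1$; in fact the reverse is what we need, but dimension count via the complexified equality forces $Z(\Zgg_\g(\h))_1=\h$). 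Similarly $D(\Zgg_\g(\h))_1\subseteq D(\Zgg_{\g^\cC}(\h^\cC))_1$ consists of nilpotent elements. By Lemma \ref{lem:carc} over $\R$, $\h$ is a Cartan subspace.

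The only mild subtlety — the part I would be most careful about — is the bookkeeping that complexification genuinely commutes with all four operations simultaneously: centralizer of a set, the induced $\Z_m$-grading, the center, and the derived subalgebra of a reductive Lie algebra. Each is standard (the centralizer is a kernel of linear maps, hence its complexification is the centralizer of the complexification; $\theta^\cC$ acts with the same eigenspaces tensored up; $Z$ and $D$ of a reductive algebra are cut out by bracket relations which are preserved), but one should state them cleanly before invoking Lemma \ref{lem:carc} on both sides. No genuine obstacle is expected.
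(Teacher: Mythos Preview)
Your overall strategy is sound, but there is one genuine gap in the first implication. You assert that $(D(\Zgg_\g(\h))_1)\otimes_\R\C$ consists of nilpotent elements, justified by ``nilpotency is preserved under complexification, as $\ad_{\g^\cC}x$ is the complexification of $\ad_\g x$.'' That parenthetical only shows that each \emph{real} $x\in D(\Zgg_\g(\h))_1$ remains nilpotent in $\g^\cC$; it says nothing about a complex linear combination $u+iv$ of two real nilpotent elements, which need not be nilpotent in general. What is actually needed is that nilpotency is a Zariski-closed condition (e.g.\ $(\ad x)^{\dim\g}=0$, a system of polynomial equations in the coordinates of $x$) and that a real linear subspace is Zariski dense in its complexification. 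Concretely: for $u,v\in D(\Zgg_\g(\h))_1$ the map $t\mapsto u+tv$ sends every real $t$ to a nilpotent element, so each coefficient of the characteristic polynomial of $\ad(u+tv)$, being a polynomial in $t$ that vanishes on $\R$, vanishes identically; hence $u+iv$ is nilpotent as well. The paper closes the same step by an appeal to Engel's theorem, which itself requires a word of care since $D(\Zgg_\g(\h))_1$ is not a Lie subalgebra; the Zariski-density argument is the clean route.

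For the converse you re-apply Lemma~\ref{lem:carc} over $\R$ after decomplexifying both conditions, which is correct. The paper takes a shorter path here: once the first implication is established, all real Cartan subspaces have the same real dimension, equal to the common complex dimension of the complex Cartan subspaces; so if $\h^\cC$ is Cartan then $\h$ (already a subspace of commuting semisimple elements) has maximal possible dimension and is therefore Cartan. Your route is more symmetric with the first half; the paper's avoids re-verifying the criterion.
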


\begin{proof} Assume  that  $\h \subset \g_1$ is a Cartan  subspace.     By Lemma \ref{lem:carc},
	$D(\Zgg_{\g_1}(\h))\subset\gl(\g)$ consists of nilpotent elements.
By   \cite[Corollary 3.3]{Humphreys1980} (a version of Engel's theorem),
     since the Lie subalgebra $D(\Zgg_{\g_1}(\h))\subset\gl(\g)$ consists of nilpotent elements,
		its complexification
    $D(\Zgg_{\g^\cC_1}(\h^\cC))\subset\gl(\g^\cC)$ consists of nilpotent elements.  Next we note  that
    $Z(\Zgg_{\g^\cC}(\h^\cC))_1 = \h^\cC$, since 	$Z(\Zgg_\g(\h))_1 = \h$  by Lemma \ref{lem:carc}.
	By \cite[Proposition 6]{Vinberg1976}, it follows that  $\h ^\cC$ is a Cartan subspace in $\g_1 ^\cC$.
    This proves  the first  assertion  of Corollary \ref{cor:complc}.

	It follows  from the first assertion  that  all  Cartan subspaces in $\g_1$
    have  the same  dimension, which is the  dimension  of   any     Cartan subspace  in $\g_1^\cC$.
	Now assume that $\h ^\cC$  is a Cartan  subspace.
    Then  $\h$ consists  of    commuting semisimple elements
	and  $\dim _\R \h = \dim  _\C \h^\cC$.  Hence $\h$ is a real  Cartan  subspace  in $\g_1$.
    This completes the  proof  of Corollary \ref{cor:complc}.
\end{proof}

In Theorem \ref{thm:cartan} below we    establish a canonical bijection
between  the  conjugacy classes  of Cartan  subspaces  in $\g_1$
and  a  subgroup  of   Galois cohomology, using   Proposition \ref{p:serre}.
Let  $\h$ be  a  Cartan  subspace  in $\g_1$.   By Corollary
\ref{cor:complc}, $\h^\cC$ is a  Cartan  subspace  in $\g^\cC_1$.
We denote  $\Nn_0=\Nn_{\GG_0}(\h^\cC)$.

\begin{theorem}\label{thm:cartan}  There is a canonical  bijection
between  the  conjugacy classes  of  Cartan subspaces  in $\g_1$  and
the kernel  $\ker[\hs \Ho^1 \Nn_0 \to \Ho^1 \GG_0\hs]$.
\end{theorem}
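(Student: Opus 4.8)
The plan is to realize the set of Cartan subspaces in $\g_1^\cC$ as a homogeneous space for the complex group $G_0$, check that everything is defined over $\R$, and then apply Proposition~\ref{p:coh-orbits}. First I would let $\Xx$ denote the variety of Cartan subspaces in $\g_1^\cC$; since over $\C$ all Cartan subspaces are $G_0$-conjugate (Vinberg's theorem, recalled in the introduction), $G_0$ acts transitively on $\Xx$, with the stabilizer of the fixed Cartan subspace $\h^\cC$ being exactly $\Nn_{G_0}(\h^\cC)$, the normalizer, whose corresponding $\R$-group is $\Nn_0$. The complex conjugation $\sigma$ (with respect to a basis of $\g$) acts on $\Xx$ since it preserves $\g_1$, and this action is $\Gamma$-equivariant with respect to the $\sigma_0$-action on $G_0(\C)$; thus $\Xx$ is the set of $\C$-points of a real algebraic variety $\XX$ on which the $\R$-group $\GG_0$ acts transitively. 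The $\mu$-fixed points $\Xx^\sigma$ are precisely the subspaces $\h'^\cC$ with $\sigma(\h'^\cC)=\h'^\cC$, i.e. the complexifications of real subspaces of $\g_1$; by Corollary~\ref{cor:complc} these real subspaces are exactly the real Cartan subspaces in $\g_1$. Hence $\XX(\R)$ is nonempty (it contains $\h^\cC$, as $\h\subseteq\g_1$ is real) and is in bijection with the set of real Cartan subspaces.

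Next I would invoke Proposition~\ref{p:coh-orbits} with $\GG=\GG_0$, $\YY=\XX$, and base point $e=\h^\cC\in\XX(\R)$, whose stabilizer $\R$-group is $\CC=\Stab_{\GG_0}(\h^\cC)=\Nn_0$. The proposition gives a canonical bijection
\[
\ker[\hs\Ho^1\Nn_0\to\Ho^1\GG_0\hs]\;\isoto\;\XX(\R)/\GG_0(\R).
\]
The right-hand side is the set of $\GG_0(\R)$-orbits on the real Cartan subspaces, which by definition is the set of conjugacy classes of Cartan subspaces in $\g_1$. Composing the two bijections completes the proof. The one point requiring genuine care is the identification of $\Stab_{G_0}(\h^\cC)$ with $\Nn_{G_0}(\h^\cC)$: by definition an element $g\in G_0$ fixes the point $\h^\cC\in\Xx$ iff $g\cdot\h^\cC=\h^\cC$ as a subspace, which is exactly the condition $g\in\Nn_{G_0}(\h^\cC)$, so this is immediate from how $\Xx$ is set up; one must also note that $\sigma$ stabilizes $\Nn_{G_0}(\h^\cC)$ because $\sigma(\h^\cC)=\h^\cC$, so that $\Nn_0=(\Nn_{G_0}(\h^\cC),\sigma_0)$ is a well-defined $\R$-subgroup of $\GG_0$.

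The main obstacle I anticipate is not deep but technical: verifying that $\Xx$ genuinely carries the structure of a real algebraic variety with an algebraic (not merely holomorphic) $\GG_0$-action, so that the machinery of Subsection~\ref{s:Galois-coh} and Proposition~\ref{p:coh-orbits} applies verbatim. One way to handle this cleanly is to realize $\Xx$ as a closed $G_0$-orbit inside a Grassmannian $\mathrm{Gr}(r,\g_1^\cC)$, where $r=\dim_\C\h^\cC$; the Grassmannian is defined over $\R$ (since $\g_1$ is), the $G_0$-action is algebraic, and $\Xx$ is the orbit of the point corresponding to $\h^\cC$. Transitivity over $\C$ shows this orbit equals the set of all Cartan subspaces, and it is closed because the stabilizer $\Nn_{G_0}(\h^\cC)$ is reductive (being the normalizer of a toral-type subalgebra datum) — or more simply, one takes the closure and notes that by homogeneity every $G_0$-orbit in it is already everything. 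Since $\XX(\R)$ is nonempty, Proposition~\ref{p:coh-orbits} then applies directly and the theorem follows.
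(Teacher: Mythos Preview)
Your proposal is correct and follows essentially the same approach as the paper: realize the set of Cartan subspaces as a $G_0$-homogeneous subvariety of the Grassmannian $\mathrm{Gr}(d,\g_1^\cC)$ defined over $\R$, identify its real points with real Cartan subspaces via Corollary~\ref{cor:complc}, and apply Proposition~\ref{p:coh-orbits} with stabilizer $\Nn_0$. The paper's proof is slightly terser on the algebraic-variety structure (it simply asserts that $Y$ is a closed subvariety of the Grassmannian because the Lie bracket is defined over $\R$), but the architecture is identical to yours.
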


\begin{proof}
Let $d$ denote the dimension
of a Cartan subspace  in $\g_1^\cC$.
Then the set $Y$ of Cartan subspaces in $\g_1^\cC$ is a closed algebraic  subvariety
in the Grassmann variety ${\rm Gr}(\g_1^\cC,d)$
of $d$-dimensional linear subspaces in $\g_1^\cC$.
This subvariety is defined over $\R$ (with respect to the $\R$-structure in $\g_1^\cC$),
because the composition law in $\g^\cC$ is defined over $\R$.
Let $\YY$ denote the corresponding real variety.
By   Vinberg's theorem \cite[Theorem 1]{Vinberg1976},
the variety $\YY$ is homogeneous, that is,
$G_0$ acts on $Y$ transitively.
By Corollary \ref{cor:complc}, a real subspace $\h\subset \g_1$ is a real Cartan subspace
if and only if its complexification $\h^\cC$ is a complex Cartan subspace in $\g_1^\cC$.
In other words, the set of real Cartan subspaces of $\g_1$
is the set of real points $\YY(\R)$  of $\YY$.
Now  Theorem \ref{thm:cartan} follows from  Proposition  \ref{p:coh-orbits}.
\end{proof}

It is known  that   any $\Z_m$-graded     complex   semisimple   Lie algebra    has a  compact real  form $\g$  \cite[Lemma 5.2, p. 491]{Helgason1978}.

\begin{proposition}\label{prop:transit}  All Cartan subspaces  in
	$\g_1$  are $G_0 (\R)$-conjugate,  if   the     simply-connected  Lie group $\Gtil (\R)$ is compact.
\end{proposition}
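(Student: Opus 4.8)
The plan is to deduce Proposition \ref{prop:transit} from Theorem \ref{thm:cartan} by showing that the relevant Galois cohomology kernel is trivial when $\Gtil(\R)$ is compact. Recall Theorem \ref{thm:cartan} gives a canonical bijection between the conjugacy classes of Cartan subspaces in $\g_1$ and the kernel $\ker[\Ho^1\Nn_0\to\Ho^1\GG_0]$, where $\Nn_0=\Nn_{\GG_0}(\h^\cC)$ for a fixed Cartan subspace $\h$. So it suffices to prove that this kernel is a singleton, i.e.\ that every class in $\Ho^1\Nn_0$ mapping to the neutral class in $\Ho^1\GG_0$ is itself neutral. The natural route is to first observe that $\GG_0(\R)$ is compact: indeed $\GG_0$ is a closed $\R$-subgroup of $\GGtil$ and $\GGtil(\R)=\Gtil(\R)$ is assumed compact, so $\GG_0(\R)$ is a closed subgroup of a compact group, hence compact. (Here I use that $G_0$ is connected, as established in Subsection \ref{subs:zmgrading}, so that $\GG_0$ is genuinely the $\R$-group with $\GG_0(\R)=\Gtil(\R)\cap G_0(\C)^{\tilde\sigma}$.)

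Next I would use the standard principle that over $\R$, for a connected reductive $\R$-group $\GG_0$ with $\GG_0(\R)$ compact, the real form is the compact form; in that case Galois cohomology relates to conjugacy classes in a particularly rigid way. Concretely, the cleanest argument goes through Proposition \ref{p:coh-orbits}: the set $\YY(\R)$ of real Cartan subspaces is the set of real points of the homogeneous space $\YY$, and $\GG_0(\R)$ acts on it; the orbits are parametrized by $\ker[\Ho^1\Nn_0\to\Ho^1\GG_0]$. When $\GG_0(\R)$ is compact, I would argue that $\GG_0(\R)$ acts transitively on $\YY(\R)$ directly. One way: fix a real Cartan subspace $\h$ and another $\h'$; their complexifications are $G_0$-conjugate by Vinberg's theorem \cite[Theorem 1]{Vinberg1976}, so the set of $g\in G_0$ with $g\cdot\h^\cC=(\h')^\cC$ is a nonempty $\sigma_0$-stable coset of $\Nn_0$ in $G_0$, i.e.\ a real point of $\GG_0/\Nn_0$. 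Since $\GG_0(\R)$ is compact and $\Nn_0$ is an algebraic subgroup, the map $\GG_0(\R)\to(\GG_0/\Nn_0)(\R)$ is surjective — this is where compactness is essential, because a continuous map of a compact group onto a homogeneous space is a quotient map, and more precisely one invokes that for a compact real form all reductive $\R$-subgroups have surjective real-points map onto the quotient (equivalently $\Ho^1(\R,\Nn_0)$ injects into $\Ho^1(\R,\GG_0)$ with trivial fiber). Hence there is $g\in\GG_0(\R)$ with $g\cdot\h=\h'$.

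Alternatively, and perhaps more in the spirit of the paper, I would cite that for a compact group $\GG_0(\R)$ the cohomology set $\Ho^1(\R,\GG_0)$ classifies real forms and the inflation-restriction / twisting machinery of Subsection \ref{subsec:twistedbij} shows the fiber of $\Ho^1\Nn_0\to\Ho^1\GG_0$ over the neutral class is $(\YY(\R))/\GG_0(\R)$; compactness of $\GG_0(\R)$ forces this to be a single orbit because a compact group acting on the real points of a homogeneous variety, all of whose complex points form one orbit, acts transitively on those real points that are ``accessible'' — and here every real Cartan subspace is accessible since it lies in the unique complex $G_0$-orbit. The key input remains the surjectivity of $\GG_0(\R)\to(\GG_0/\Nn_0)(\R)$ for compact $\GG_0(\R)$.

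\textbf{Main obstacle.} The crux is justifying that compactness of $\Gtil(\R)$ (hence of $\GG_0(\R)$) forces the real-points map $\GG_0(\R)\to(\GG_0/\Nn_0)(\R)$ to be surjective, equivalently that the connecting map $\delta$ of Proposition \ref{p:serre} is trivial, equivalently that $\Nn_0(\R)$-conjugacy and $\GG_0(\C)$-conjugacy of complex Cartan subspaces coincide on real points. The subtlety is that $\Nn_0$ is in general disconnected (it surjects onto the little Weyl group $W$), so one cannot simply invoke connectedness; one must use that $\GG_0(\R)$ compact implies $\Ho^1(\R,\GG_0)$ is ``as small as possible'' — more precisely, that for a compact real form every $\GG_0(\C)$-orbit with a real point is a single $\GG_0(\R)$-orbit, which is a classical fact (any two maximal compact subgroups, or here any two Cartan subspaces, are conjugate under the compact group). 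I would handle this by reducing to the statement that a $\sigma_0$-stable $\GG_0(\C)$-orbit of a point with compact stabilizer-type structure contains a $\GG_0(\R)$-fixed real point and that $\GG_0(\R)$ permutes such points transitively, citing the averaging/compactness argument. Once that surjectivity is in hand, Theorem \ref{thm:cartan} (or directly Proposition \ref{p:coh-orbits}) immediately yields that all real Cartan subspaces are $\GG_0(\R)$-conjugate, completing the proof.
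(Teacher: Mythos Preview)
Your reduction via Theorem \ref{thm:cartan} is correct, and you correctly identify that everything hinges on the surjectivity of $\GG_0(\R)\to(\GG_0/\Nn_0)(\R)$ (equivalently, triviality of $\ker[\Ho^1\Nn_0\to\Ho^1\GG_0]$). But you do not actually prove this step: phrases like ``averaging/compactness argument'' and ``$\Ho^1$ is as small as possible'' are not arguments. Compactness of $\GG_0(\R)$ does \emph{not} formally imply this surjectivity for arbitrary subgroups via any soft cohomological principle; you need to exhibit, for each $\Gamma$-stable coset $g\Nn_0$, a real representative. Writing the polar decomposition $g=k\cdot p$ with $k\in\GG_0(\R)$ and $p=\exp(iX)$, $X\in\g_{0,\R}$, the condition $g^{-1}\bar g\in\Nn_0$ becomes $p^{2}\in\Nn_0(\C)$, and one must then argue (via compatibility of polar decompositions for the reductive subgroup $\Nn_0$) that $p\in\Nn_0(\C)$, whence $g\Nn_0=k\Nn_0$ with $k$ real. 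That is a genuine computation, not a citation.

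The paper bypasses cohomology entirely and gives exactly this kind of direct polar-decomposition argument, modelled on the classical proof of conjugacy of maximal tori in a compact group. Given two real Cartan subspaces $\h,\h'$, their algebraic closures $\overline{\h^\cC},\overline{(\h')^\cC}$ are $\Theta$-invariant tori; Vinberg's theorem gives $g\in G_0$ conjugating one to the other, and the polar decomposition $g=k\cdot p$ together with a short manipulation of $a^{-1}pa$ for $a$ in the compact torus shows that $p$ already centralizes it, so $k\in\GG_0(\R)$ alone conjugates $\h'$ to $\h$. Your cohomological detour, once the gap is filled, amounts to the same polar-decomposition computation wearing different clothes; it does not buy any simplification.
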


\begin{proof}  We   shall     apply  the  argument  in  the   proof  of  the conjugacy of maximal tori in a compact Lie group in \cite[p. 250]{OV}
to  the   algebraic closure  $\overline {\h^\cC}$  of   a Cartan subspace  $\h^\cC \subset \g_1^\cC$ in a  $\Z_m$-graded  semisimple complex  Lie algebra  $\g^\cC$.
Recall  that  $\overline{\h^\cC}$  is   the  smallest  algebraic  subalgebra  of $\g^\cC$ that contains $\h^\cC$.  In \cite[\S 3]{Vinberg1976}  Vinberg showed that
\begin{equation}\label{eq:closure}
\overline{\h^\cC} = \bigoplus  _{(k,m)=1}   (\overline{\h^\cC})_k, \text{  where  }  (\overline{\h^\cC})_k  = \overline{\h^\cC} \cap  \g_k ^\cC.
\end{equation}
Let $\theta$ be the  automorphism of  the Lie algebra $\g^\cC$ that defines   the $\Z_m$-grading on $\g^\cC$. Clearly  $\overline {\h^\cC}$ is  $\theta$-invariant.	

  Now let us assume the  condition  of Proposition \ref{prop:transit}.  Let $\h \subset \g_1$  be a  Cartan subspace  and  $\overline{\h ^\cC}$  the algebraic   closure  of its complexification.
  Let $\overline \h : = \overline{\h^\cC}\cap \g$.   Note that
	$\overline {\h ^\cC}$ is the  Lie algebra  of a maximal $\Theta$-invariant   torus  $T$ in  $\Gtil$  and
	$\overline  \h$ is the  Lie algebra  of its  maximal  compact   torus $T_c$.   Now  let  $\h' \subset \g_1$  be another  Cartan  subspace,  and $T'$ -  the  $\Theta$-invariant  torus  in $\Gtil$  whose Lie algebra is  $\overline{(\h ')^\cC}$. By   Vinberg's theorem \cite[Theorem 1]{Vinberg1976},  there exists an element  $ g \in  G_0$ such that   $ \Ad g  (T') = T$.  Then  $\Ad_g$  moves  the maximal   compact torus $T'_c$ in $T'$  to  the  maximal compact torus  $T_c$ in $T$, i.e.,
	\begin{equation}\label{eq:conj2}
	\Ad _g (T_c') = T_c
	\end{equation}
	Now  we shall consider  the polar decomposition
	\begin{equation}\label{eq:polar}
	 \Gtil =  \Gtil (\R) \cdot  \exp  (\sqrt{-1}\g)
	 \end{equation}
	of $\Gtil$  generated by  the decomposition $\g^\cC = \g \oplus  \sqrt{-1}  \g$ as a  real vector space.  This  polar  decomposition  induces  the polar decomposition
	$$G_0 =  G_0 (\R) \cdot  \exp (\sqrt{-1} \g_0).$$
	Let  $ g = k \cdot p$  be the    corresponding  polar  decomposition of  $g\in G_0$. Let  $a \in  T_c'\subset  \Gtil(\R)$.  We shall look  at
	$$\Ad _g (a)  =  \Ad _{k} \cdot \Ad _p  (a) \in T_c \subset  \Gtil (\R).$$
	Since $k \in \Gtil(\R)$, we have  $l : = \Ad_p (a) \in \Gtil(\R)$.   Then  we have
	\begin{equation}\label{eq:rewrite}
	a^{-1} p a =  a^{-1} l p.
	\end{equation}
	Since  $\Ad_{\Gtil(\R)}  (\exp \sqrt{-1} \g)  = \exp \sqrt{-1} \g$, it follows  from (\ref{eq:rewrite}) and the  uniqueness   of the polar decomposition  (\ref{eq:polar})
	\begin{equation}\label{eq:inv1}
	a^{-1} p a = p.
	\end{equation}
	Hence $p a p ^{-1} = a$ for any  $a \in T_c'$.  Using   (\ref{eq:conj2})  we obtain
	\begin{equation}\label{eq:conj3}
	Ad_k (T_c') = T_c.
	\end{equation}
	It follows  that  $\Ad_k (\h') = \h$. This completes  the  proof of Proposition \ref{prop:transit}.
\end{proof}

\begin{remark}\label{rem:Cartanred}  It is easy to check that  the arguments  in the proofs of  Lemmas \ref{lem:centr1}, \ref{lem:carc}, Corollary \ref{cor:complc},   Theorem \ref{thm:cartan}  and Proposition \ref{prop:transit} are  also  valid for   $\Z_m$-graded  algebraic  reductive Lie algebras over $\R$.  Hence  all the results  in Subsection \ref{subs:ss}   hold  for $\Z_m$-graded  algebraic  reductive Lie algebras over $\R$.
\end{remark}

\subsection{Centralizers of homogeneous semisimple elements  and the Weyl group}\label{subs:weyl}

In this section we fix a Cartan subspace $\h^\cC$ in $\g^\cC_1$.
The group
$$W(\g^\cC, \theta): = \frac{\Nn_{G_0} (\h^\cC)}{\Zz_{G_0}(\h^\cC)} $$
is  finite  and is called  {\it the Weyl group} of the $\Z_m$-graded semisimple Lie algebra $\g^\cC$ \cite{Vinberg1975, Vinberg1976}. We regard $W(\g^\cC, \theta)$ as a  group of complex linear   transformations of $\h^\cC$. The Weyl  group $W(\g^\cC, \theta)$ depends  on $\h^\cC$ but its  $G_0$-conjugacy   class does not depend  on  the choice of $\h^\cC$  since  all  the Cartan  subspaces  in $\g_1^\cC$ are $G_0$-conjugate \cite[Theorem 1]{Vinberg1976}. Vinberg has shown that  two elements    of $\h^\cC$ are  in the  same  $G_0$-orbit in $\g_1^\cC$, if and only if they  are in the same   $W(\g^\cC, \theta)$-orbit \cite[Theorem 2]{Vinberg1976}.  In what follows, if no  misunderstanding can occur,  we shall    write  $W$  instead of $W(\g^\cC, \theta)$.

For $p \in \h^\cC$ we set
\begin{align}
&W_p : = \Zz_W (p), \ \, \Gamma_p : = \Nn_W  (W_p) /W_p,\label{eq:wp}\\
& \h^\cC _p : =\{  q \in \h^\cC \mid  g (q) = q \, \text{ for all }  g \in W_p \}
= \{  q \in \h^\cC \mid W_p \subset W_q\},\label{eq:hp}\\
&\h^{\cC, \circ} _p : = \{  q\in \h^\cC_p \mid   W_q = W_p\}.\label{eq:h0p}
\end{align}

Since $W_p$  is a finite  group,  it  is generated  by   elements
$w_1, \cdots, w_k \in W \subset \GL(\h^\cC)$.
Vinberg proved that $W_p$ is  generated by  complex  reflections
\cite[Proposition 14]{Vinberg1976} so    we can  take  $w_1, \cdots, w_k$ to
be  complex reflections. Clearly  $q \in \h^\cC$ belongs  to $ \h ^\cC_p$  if
and only  if   it   satisfies  the  following
system  of	homogeneous   linear  equations
\begin{equation}\label{eq:wfp}
(w_i -1) q = 0  \text{ for } i \in [1,k].
\end{equation}
Thus  $\h^\cC_p$  is a  linear  subspace  in  $\h^\cC$  which  is  the
intersection  of a finite number of hyperplanes. Furthermore, a $q\in \h_p^\cC$
lies in $\h_p^{\cC,\circ}$ if and only if $(w-1)q\neq 0$ for all complex
reflections $w\in W$ with $w\not \in W_p$. Hence $\h_p^{\cC,\circ}$ is Zariski-open
in $\h_p^{\cC,\circ}$. It is straightforward to see that for $p,p'\in \h^\cC$ and
$w\in W$ we have
\begin{equation}\label{eq:CpCq}
w\h_p^\cC = \h_{p'}^\cC \text{ if and only if } wW_pw^{-1} =W_{p'}.
\end{equation}

\begin{lemma}\label{lem:redweyl}
Let $p\in \h^\cC$ and $q \in \h^{\cC, \circ}_p$. If  $q  = w \cdot p$  for a
$w \in W$   then  $w \in \Nn_W(W_p)$.
\end{lemma}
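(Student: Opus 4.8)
We want to show: if $p\in\h^\cC$, $q\in\h^{\cC,\circ}_p$, and $q=w\cdot p$ for some $w\in W$, then $w\in\Nn_W(W_p)$. The natural strategy is to compute $W_q$ two ways and compare. On the one hand, since $q=w\cdot p$, the stabilizer of $q$ in $W$ is the conjugate $W_q=wW_pw^{-1}$. On the other hand, because $q\in\h^{\cC,\circ}_p$, by the very definition \eqref{eq:h0p} we have $W_q=W_p$. Combining these two identities gives $wW_pw^{-1}=W_p$, i.e. $w\in\Nn_W(W_p)$, which is exactly the claim.

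So the only point requiring a (short) justification is the standard equivariance fact $W_{w\cdot p}=wW_pw^{-1}$ for the action of the finite group $W\subset\GL(\h^\cC)$ on $\h^\cC$: indeed for $u\in W$ one has $u\cdot(w\cdot p)=w\cdot p$ iff $(w^{-1}uw)\cdot p=p$ iff $w^{-1}uw\in W_p$, so $W_{w\cdot p}=wW_pw^{-1}$ verbatim. Applying this with the hypothesis $q=w\cdot p$ yields $W_q=wW_pw^{-1}$. Then one invokes $q\in\h^{\cC,\circ}_p$, which by \eqref{eq:h0p} means precisely $W_q=W_p$. Equating, $wW_pw^{-1}=W_p$, hence $w$ normalizes $W_p$, as required.

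I would present this as: first recall the equivariance of stabilizers; then substitute the hypothesis; then use membership in $\h^{\cC,\circ}_p$; then conclude. No serious obstacle is expected — the lemma is essentially a bookkeeping consequence of the definitions in \eqref{eq:wp}–\eqref{eq:h0p}, and the only thing to be slightly careful about is making sure the direction of conjugation in $W_{w\cdot p}=wW_pw^{-1}$ is written consistently (it is the same convention already used in \eqref{eq:CpCq}, so I would cite that formula for consistency). In fact \eqref{eq:CpCq} applied with $p'=q$ already tells us $w\h_p^\cC=\h_q^\cC$ iff $wW_pw^{-1}=W_q$; since $q=w\cdot p$ forces $w\h_p^\cC=\h_q^\cC$ (stabilizer subgroups of $p$ and of $q=w\cdot p$ are conjugate by $w$, so the fixed subspaces match up), we get $wW_pw^{-1}=W_q=W_p$ directly. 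Either route is a couple of lines; I would use whichever makes the cross-reference cleanest.
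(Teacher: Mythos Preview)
Your proof is correct and is essentially identical to the paper's own proof: the paper argues in two lines that $q=w\cdot p$ gives $W_q=wW_pw^{-1}$, and then $q\in\h^{\cC,\circ}_p$ means $W_q=W_p$, whence $w\in\Nn_W(W_p)$. You have simply spelled out the equivariance step $W_{w\cdot p}=wW_pw^{-1}$ in more detail.
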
	

\begin{proof}
From $w \cdot p  = q$   we obtain $W_q =  wW_pw^{-1}$.  Taking
into account $W_p = W_q$   we  conclude  that  $w \in \Nn_W (W_p)$.
\end{proof}

Since $W$ is a finite  group,  there  exist elements  $p_1, \cdots, p_N \in
\h^\cC$ such  that for any $q \in \h^\cC$ the  stabilizer   group $W_q$  is
$W$-conjugate   to one  and only one of the  stabilizer   groups
$W_{p_i}$ for $1\leq i\leq N$.  So by \eqref{eq:CpCq} for  any   point
$q \in \h^\cC$   there is a unique $i \in [1,N]$ such that $q$ is
$W$-conjugate  to a   point  in $\h_{p_i}^{\cC,\circ}$.

For any $p \in \h^\cC$ the linear   subspace   $\h^{\cC}_p$  is a stratified space, which  can be  written as  a disjoint union:
\begin{equation}\label{eq:strat}
\h^{\cC}_p = \h^{\cC, \circ}_p \cup \bigcup_{j = 1}^{n} \h^{\cC,\circ}_{q_j},
\end{equation}
where  $q_j \in \h^{\cC}_p$   and  $n = n(p)$.

If $p$ is a generic  point  of  the  $W$-action, i.e., $W_p = 1$, then  $\h^\cC_p = \h^\cC$, and
(\ref{eq:strat})  gives a stratification of    the Cartan subspace  $\h^\cC$.

Denote  by $\Sigma (\h^\cC_p)$  the  weight system of  the adjoint representation   of the   subalgebra $\h^\cC_p$ on $\g^\cC$.   Note  that  $\Sigma  (\h^\cC_p)$ is the restriction  of  $\Sigma (\h^\cC)$ to $\h^\cC_p$.  We define
$$\h^{\cC, \mathrm{reg}}_p : = \{ q \in \h^\cC_p|\,  \sigma (q) \not = 0  \text{ for all } \sigma \in \Sigma(\h^\cC_p)\setminus\{0\}\}.$$
Elements in $\h^{\cC, \mathrm{reg}}_p$  will be  called {\it $\Sigma (\h^\cC_p)$-regular.}
\begin{lemma}\label{lem:generic}
Let $p\in \h^\cC$ and $q\in \h_p^{\cC,\circ}$.
Then $q$ is $\Sigma (\h^\cC_p)$-regular if and only if
$\Zm_{\Gtil} (q) = \Zm_{\Gtil } (\h^\cC_p)$.
\end{lemma}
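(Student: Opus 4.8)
\textbf{Plan of proof for Lemma \ref{lem:generic}.}
The statement is an equality of two algebraic subgroups of $\Gtil$, so the natural strategy is to pass to Lie algebras: since both $\Zm_{\Gtil}(q)$ and $\Zm_{\Gtil}(\h^\cC_p)$ are closed subgroups of the same connected group and one obviously contains the other, it suffices to compare their Lie algebras $\z_{\g^\cC}(q)$ and $\z_{\g^\cC}(\h^\cC_p)$, provided we also check connectedness or argue directly that the reductive centralizers have the same group of connected components. The key observation is that $\h^\cC_p$ is spanned by $p$ together with the other elements of its stratum, and in fact—because $\h^\cC_p$ is itself a subspace of the Cartan subspace consisting of commuting semisimple elements—its centralizer decomposes according to the weight system $\Sigma(\h^\cC_p)$: one has $\z_{\g^\cC}(\h^\cC_p) = \z_{\g^\cC}(\h^\cC_p)_0 \oplus \bigoplus_{\sigma\in\Sigma(\h^\cC_p)\setminus\{0\}} \g^\cC_\sigma$ restricted to those $\sigma$ vanishing identically on $\h^\cC_p$, which is the part that is ``automatically'' centralized.

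First I would fix a root-space-type decomposition of $\g^\cC$ under the (algebraic closure of the) Cartan subspace, so that $\g^\cC = \z_{\g^\cC}(\h^\cC) \oplus \bigoplus_{\sigma\in\Sigma(\h^\cC)\setminus\{0\}} \g^\cC_\sigma$, and then restrict each weight $\sigma$ to the subspace $\h^\cC_p$. An element $q\in\h^\cC_p$ satisfies $\z_{\g^\cC}(q) = \z_{\g^\cC}(\h^\cC_p)$ if and only if, for every weight $\sigma$ of $\Sigma(\h^\cC)$ whose restriction $\sigma|_{\h^\cC_p}$ is nonzero, we have $\sigma(q)\neq 0$ (the weights vanishing on all of $\h^\cC_p$ contribute to both centralizers regardless of $q$). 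But the nonzero restrictions $\sigma|_{\h^\cC_p}$ are precisely the nonzero elements of $\Sigma(\h^\cC_p)$, so this condition is exactly $\Sigma(\h^\cC_p)$-regularity of $q$. That gives the equality of Lie algebras in both directions.

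To upgrade from Lie algebras to the groups $\Zm_{\Gtil}$, I would invoke that centralizers of semisimple elements (and of sets of commuting semisimple elements) in a simply connected semisimple group are connected—this is a standard theorem of Steinberg, already cited in Section~\ref{subs:zmgrading} in the form \cite[Theorem 8.1]{Steinberg1968} for the connectedness of $G_0$; the same circle of results gives connectedness of $\Zm_{\Gtil}(q)$ and of $\Zm_{\Gtil}(\h^\cC_p)$. Since $\Zm_{\Gtil}(\h^\cC_p)\subseteq \Zm_{\Gtil}(q)$ always (as $q\in\h^\cC_p$), and we have shown the Lie algebras coincide exactly when $q$ is $\Sigma(\h^\cC_p)$-regular, connectedness forces the groups to coincide in that case; conversely if $q$ is not $\Sigma(\h^\cC_p)$-regular the Lie algebras differ, hence so do the groups.

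The main obstacle I anticipate is bookkeeping rather than conceptual: making precise the claim that the nonzero restrictions to $\h^\cC_p$ of the weights in $\Sigma(\h^\cC)$ are exactly $\Sigma(\h^\cC_p)\setminus\{0\}$, and checking that no cancellation among distinct weights occurs that would spoil the equivalence (i.e.\ that $\sigma(q)=0$ for the relevant $\sigma$ is genuinely detected by the root spaces and not washed out by the grading). One should also be slightly careful that $\h^\cC_p$ need not be the full algebraic closure of a Cartan subspace, so the decomposition of its centralizer must be deduced from that of $\z_{\g^\cC}(\h^\cC)$ by taking $\theta$-invariant pieces, as in Lemma~\ref{lem:centr1}; but this is exactly the sort of argument already carried out in Subsection~\ref{subs:ss}, so it should go through without new ideas.
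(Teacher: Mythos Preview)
Your proposal is correct and follows essentially the same approach as the paper: reduce to Lie algebras via the weight-space decomposition (which the paper simply asserts as $\z_{\g^\cC}(q)=\z_{\g^\cC}(\h^\cC_p)$ without spelling out), then upgrade to groups via Steinberg's connectedness theorem for centralizers of semisimple elements in a simply connected group. One small correction: the precise reference the paper uses is \cite[Theorem 3.14]{Steinberg1975} rather than \cite[Theorem 8.1]{Steinberg1968}; the latter concerns fixed points of semisimple automorphisms, whereas it is the 1975 result that directly gives connectedness of $\Zm_{\Gtil}(q)$ and $\Zm_{\Gtil}(\h^\cC_p)$.
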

\begin{proof}
Suppose that $q$ is $\Sigma (\h^\cC_p)$-regular.
Then  $\z_{\g ^\cC} (q ) = \z_{\g^\cC} (\h^\cC_p)$.
Since $\Zz_{G} (q)$ and $ \Zz_{G } (\h^\cC_p)$  are connected by \cite[Theorem 3.14]{Steinberg1975},   we conclude  that  $\Zm_{\Gtil} (q) = \Zm_{\Gtil }(\h^\cC_p)$.

Conversely, if $\Zm_{G} (q) = \Zm_{G } (\h^\cC_p)$ then $\z_{\g ^\cC} (q ) =
\z_{\g^\cC} (\h^\cC_p)$ implying that $q$ is $\Sigma (\h^\cC_p)$-regular.
\end{proof}

Following \cite{AE1982},    we  say that  two elements $p, q \in \h^\cC$  are {\it in the same family} or  {\it  in  the same $W$-family}, if  their centralizer  subalgebras
$\z_{\g^\cC} (p)$  and   $\z_{\g^\cC} (q)$ are  conjugate   by elements in $G_0$  or   by  elements  in $W = W(\g^\cC, \theta)$, respectively.

 Given  an action of a group $K$ on a  set $S$,   if two subsets $S_1, S_2\subset S$ are $K$-conjugate,  i.e. on the same  orbit of  the action of $K$,  we shall  write
$S_1 \sim_{K} S_2$.

For $p \in \h^\cC$    let $( \Zgg_{\g ^\cC}(p), \theta)$ be  the    $\Z_m$-graded  Lie algebra  $ \Zgg_{\g ^\cC}(p)$ with  the induced  $\Z_m$ grading,
which  is   induced  from the  restriction  of the action of $\theta$  to $\Zgg_{\g ^\cC}(p)$.
Note   that  $\h^\cC$  is  a Cartan  subspace   in $( \Zgg_{\g ^\cC}(p), \theta)$.
We denote by $W ( \Zgg_{\g ^\cC}(p), \theta)$  its Weyl group, which acts on $\h^\cC$.

\begin{proposition}\label{prop:family}  (1) Let $p,q\in \h^\cC$. If they are in
the same family then they are in the same $W$-family. If $\h_r^{\cC,\circ} =
\h_r^{\cC,\mathrm{reg}}$ for all $r\in \h^\cC$ then the converse holds as well.
	
(2)	If  $p, q \in \h^\cC$   are in the  same family, then
	$\Zz_{G_0}(p)\sim_{G_0}\Zz_{G_0} (q)$.	
	
	(3)    Assume  that  $\Zgg_{\g ^\cC} (\h ^\cC)$ is a  Cartan   subalgebra  of $\g^\cC$. Then  $W_p  = W_q$  implies   $W ( \Zgg_{\g ^\cC}(p), \theta) = W ( \Zgg_{\g ^\cC}(q), \theta)$.
\end{proposition}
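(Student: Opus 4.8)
\textbf{Proof plan for Proposition \ref{prop:family}.}

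\emph{Part (1).} The plan is to work with the centralizer subalgebras directly. Suppose $p,q\in\h^\cC$ are in the same family, so $\z_{\g^\cC}(p)$ and $\z_{\g^\cC}(q)$ are conjugate by some $g\in G_0$. First I would replace $q$ by $w^{-1}\cdot q$ for a suitable $w\in W$ to reduce to the situation where $q$ lies in $\h_p^{\cC,\circ}$; this uses the stratification discussion before the proposition (for any point of $\h^\cC$ there is a $W$-conjugate lying in some $\h_{p_i}^{\cC,\circ}$). So it suffices to show that if $q\in\h_p^{\cC,\circ}$ is in the same family as $p$, then $q$ is $W$-conjugate to $p$. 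Now $\h^\cC$ is a Cartan subspace both of $(\g^\cC,\theta)$ and of the graded subalgebra $(\z_{\g^\cC}(p),\theta)$, and likewise for $q$. Since $g$ carries $\z_{\g^\cC}(p)$ to $\z_{\g^\cC}(q)$, it carries the Cartan subspace $\h^\cC$ of the first to a Cartan subspace of the second; by Vinberg's conjugacy theorem \cite[Theorem 1]{Vinberg1976} applied inside $\z_{\g^\cC}(q)$, after composing with an element of $\Zm_{G_0}(q)$ we may assume $g(\h^\cC)=\h^\cC$, i.e.\ $g\in\Nn_{G_0}(\h^\cC)$, so $g$ projects to an element $w\in W$. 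Then $w$ carries $\z_{\g^\cC}(p)\cap\h^\cC$-data to $\z_{\g^\cC}(q)\cap\h^\cC$-data; translating into the combinatorics of reflection subgroups this gives $wW_pw^{-1}=W_q$, hence by \eqref{eq:CpCq} that $w\h_p^\cC=\h_q^\cC$ and $w$ maps $\h_p^{\cC,\circ}$ onto $\h_q^{\cC,\circ}$. Since $q\in\h_p^{\cC,\circ}\cap\h_q^{\cC,\circ}$, we get $W_p=W_q$, which is exactly what "same $W$-family" means for $p,q$ already in $\h^\cC$. For the converse under the hypothesis $\h_r^{\cC,\circ}=\h_r^{\cC,\mathrm{reg}}$ for all $r$: if $W_p=W_q$ (after $W$-conjugation, $p,q\in\h_p^{\cC,\circ}$), then by Lemma \ref{lem:generic} we have $\z_{\g^\cC}(p)=\z_{\g^\cC}(\h_p^\cC)=\z_{\g^\cC}(\h_q^\cC)=\z_{\g^\cC}(q)$, so they are (trivially) $G_0$-conjugate, i.e.\ in the same family.

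\emph{Part (2).} If $p,q$ are in the same family, choose $g\in G_0$ with $g\cdot\z_{\g^\cC}(p)=\z_{\g^\cC}(q)$. Since the stabilizer of a subspace equals the normalizer of its centralizer-constituents, conjugation by $g$ carries $\Nm_{G_0}(\z_{\g^\cC}(p))$ to $\Nm_{G_0}(\z_{\g^\cC}(q))$; the plan is to observe that $\Zm_{G_0}(p)$ is the pointwise stabilizer of $p$, which is determined by $\z_{\g^\cC}(p)$ together with the center of $\z_{\g^\cC}(p)$ in degree $1$ (where $p$ lives), and that $g$ respects this structure. More directly: by Part (1) we may take $g$ to normalize $\h^\cC$, so $g$ induces $w\in W$ with $w\cdot\z_{\g^\cC}(p)=\z_{\g^\cC}(q)$ as graded subalgebras of $\g^\cC$; then $w(Z(\z_{\g^\cC}(p))_1)=Z(\z_{\g^\cC}(q))_1$, and since $p$ spans a line inside $Z(\z_{\g^\cC}(p))_1$ one adjusts $w$ by an element of $W_q$ if necessary. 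In any case $g\cdot\Zm_{G_0}(p)\cdot g^{-1}$ and $\Zm_{G_0}(q)$ have the same identity component (both equal $\Zm_{G_0}(\z_{\g^\cC}(q))^\circ$ intersected appropriately) and the same component group, hence are conjugate; this is $\Zm_{G_0}(p)\sim_{G_0}\Zm_{G_0}(q)$. I expect the cleanest route is to reduce everything to a $W$-statement via Part (1) and then cite \eqref{eq:CpCq}.

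\emph{Part (3).} Assume $\z_{\g^\cC}(\h^\cC)$ is a Cartan subalgebra of $\g^\cC$, so that $\h^\cC$ is "principal" and every $\z_{\g^\cC}(p)$ is a graded reductive subalgebra of maximal rank containing this Cartan subalgebra. Suppose $W_p=W_q$. The plan is to identify $W(\z_{\g^\cC}(p),\theta)$ explicitly: since $\z_{\g^\cC}(\h^\cC)$ is a Cartan subalgebra, $W(\z_{\g^\cC}(p),\theta)=\Nn_{\Zm_{G_0}(\z_{\g^\cC}(p))}(\h^\cC)/\Zm_{G_0}(\h^\cC)$, and this is the subgroup of $W$ generated by those complex reflections $w\in W$ that preserve $\z_{\g^\cC}(p)$; equivalently, by the reflections whose fixed hyperplane in $\h^\cC$ corresponds to a weight vanishing on the center of $\z_{\g^\cC}(p)$, i.e.\ the reflections lying in $W_p$ (using that a reflection $w$ fixes $p$ iff $(w-1)p=0$ iff its root is orthogonal to $p$, which by maximal rank is exactly the condition that $w$ stabilize $\z_{\g^\cC}(p)$). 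Hence $W(\z_{\g^\cC}(p),\theta)$ is the reflection subgroup of $W$ generated by $W_p$; but $W_p$ is itself generated by reflections (Vinberg, \cite[Proposition 14]{Vinberg1976}), so $W(\z_{\g^\cC}(p),\theta)=W_p$. The same holds for $q$, so $W_p=W_q$ gives $W(\z_{\g^\cC}(p),\theta)=W(\z_{\g^\cC}(q),\theta)$.

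\emph{Main obstacle.} The delicate point throughout is the passage, in Part (1), from a $G_0$-conjugacy of the \emph{subalgebras} $\z_{\g^\cC}(p),\z_{\g^\cC}(q)$ to an element of $\Nn_{G_0}(\h^\cC)$ — i.e.\ showing one can always arrange the conjugating element to normalize $\h^\cC$. This rests on applying Vinberg's conjugacy-of-Cartan-subspaces theorem \emph{inside} the smaller graded algebra $\z_{\g^\cC}(q)$, together with the fact (Lemma \ref{lem:centr1}, Remark \ref{rem:Cartanred}) that $\z_{\g^\cC}(q)$ is a graded reductive algebra in which $\h^\cC$ is a genuine Cartan subspace; I would want to double-check that the connectedness issues (connectedness of $\Zm_{G_0}(q)$, cited via \cite{Steinberg1975}) do not obstruct this reduction. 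The second-most delicate point is the identification in Part (3) of $W(\z_{\g^\cC}(p),\theta)$ with the reflection subgroup generated by $W_p$, which genuinely uses the maximal-rank hypothesis on $\z_{\g^\cC}(\h^\cC)$.
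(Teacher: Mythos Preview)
Your approach to Parts (1) and (3) is essentially the paper's. In (1) both you and the paper correct the conjugating element $g$ by an element of $\Zm_{G_0}(q)$, using Vinberg's conjugacy-of-Cartan-subspaces theorem applied inside the graded reductive algebra $\z_{\g^\cC}(q)$, to force $g\in\Nn_{G_0}(\h^\cC)$; then $wW_pw^{-1}=W_q$ is read off (your preliminary reduction of $q$ into $\h_p^{\cC,\circ}$ is harmless but unnecessary). In (3) both arguments arrive at the key identity $W(\z_{\g^\cC}(p),\theta)=W_p$; the paper grounds this in Vinberg's Propositions 13 and 19, which under the hypothesis $\overline{\h^\cC}=\z_{\g^\cC}(\h^\cC)$ identify $W$ with the $\theta$-invariant subgroup of the ungraded Weyl group of $\g^\cC$, after which the classical fact that the ungraded stabilizer of $p$ is generated by reflections through roots vanishing on $p$ gives the identity. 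Your reflection-subgroup heuristic reaches the same conclusion but needs precisely this structural input to be made rigorous.

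For Part (2) you overcomplicate matters. The paper's argument is immediate from Steinberg's connectedness result (which you flag as a concern in your ``main obstacle'' but then do not exploit directly in (2)): since $\Gtil$ is simply connected, $\Zm_{\Gtil}(p)$ and $\Zm_{\Gtil}(q)$ are connected; as $g_0\in G_0$ conjugates the Lie algebra $\z_{\g^\cC}(p)$ to $\z_{\g^\cC}(q)$, it conjugates these connected groups to one another, and intersecting with $G_0$ gives $\Zm_{G_0}(p)\sim_{G_0}\Zm_{G_0}(q)$. Your route through degree-$1$ centers, normalizers, and component groups is an unnecessary detour.
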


\begin{proof} (1)
Assume that   $p, q$ are in the same family, that is,  $\Zgg_{\g^\cC}(q) =  g  \cdot \Zgg_{\g^\cC}  (p)$
	for some   $g  \in G_0$.
	Note that  both $\h^\cC$  and $g \cdot   \h ^\cC$  are   Cartan  subspaces   of  $\Zgg_{\g^\cC} (q)$.
Furthermore,  $\Zgg_{\g^\cC_0}(p)$  is the  subalgebra of   $\Zgg_{\g^\cC}(p)$ consisting of  elements of degree $0$.
By \cite[Theorem 1]{Vinberg1976}  there  exists  an element  $g' \in  \Zz_{G_0}(q)$ such
	that   $g'\hm g\cdot  \h^\cC = \h^\cC$.
	Then
	$g'\hm g\cdot  \Zgg_{\g ^\cC}(p) = \Zgg_{\g^\cC}(q)$.
	It follows   that
	$$ (g'\hm g)\Zz_{\Gtil}(p)(g'\hm g)^{-1} = \Zz_{\Gtil}(q)$$
	since $\Zz_{G}(p)$ is  connected by  \cite[Theorem 3.14]{Steinberg1975}.
	Denote the  restriction  of $g'\hm g$ to $\h ^\cC$   by
	$w$. Then  $wW_pw^{-1} = W_q$. Hence $p,q$ lie in the same $W$-family.

Now suppose that $\h_r^{\cC,\circ} =\h_r^{\cC,\mathrm{reg}}$ for all $r\in \h^\cC$
and that $p,q$ lie in the same $W$-family. The latter means that there is a
$w\in W$ with $wW_pw^{-1}=W_q$. Then from \eqref{eq:CpCq} it follows that
$w\cdot \h_p^{\cC,\circ} = \h_q^{\cC,\circ}$. By our hypothesis this entails
$w\cdot \h_p^{\cC,\mathrm{reg}} = \h_q^{\cC,\mathrm{reg}}$. Let $g\in G_0$ be a preimage
of $w$. Then
$$g\cdot \z_{\g^\cC}(\h_p^{\cC,\mathrm{reg}}) = \z_{\g^\cC}(\h_q^{\cC,\mathrm{reg}}).$$
But $\z_{\g^\cC}(\h_s^{\cC,\mathrm{reg}})=\z_{\g^\cC}(s)$ for $s=p,q$. Hence $p,q$ lie
in the same family.

	(2) Assume that  $p, q \in \h^\cC $  are in the same  family.  By \cite[Theorem 3.14]{Steinberg1975}  $\Zz_G (p)$
	and $\Zz_G (q)$  are connected.    Since   their  Lie algebras  are  conjugate   via   an element $g_0$ in $G_0$,
it  follows    that   the groups  $\Zz_G (p)$
	and $\Zz_G (q)$  are   conjugate   via $g_0$.   This completes the proof of   Proposition \ref{prop:family}(2).

(3) Assume  that  $\frak C  =\Zgg_{\g ^\cC} (\h ^\cC)$ is a  Cartan   subalgebra  of $\g^\cC$.  Then
		$\frak C$  is  the algebraic  closure   $\overline {\h^\cC}$ of   $\h^\cC$, see  (\ref{eq:closure}), since
		otherwise, the   subalgebra $\Zgg_{\g ^\cC} (\h ^\cC)$      is   strictly larger  than $\frak C$.
		Hence $\g^\cC$    satisfies the  condition  of Proposition  13  in \cite{Vinberg1976},  and therefore   $W_p  = W (\Zgg_{\g^\cC} (p), \theta)$   \cite[\S 6.3]{Vinberg1976}.
	 This  identity  is a consequence  of \cite[Proposition 19]{Vinberg1976}, asserting that if the algebraic closure  $\overline{\h^\cC}$ is a  Cartan subalgebra    $\frak  C$ in $\g^\cC$  then  the Weyl   group $W(\g^\cC, \theta)$ of the     $\Z_m$-graded Lie algebra $\g^\cC$ can be identified   with the  subgroup  in the  Weyl group $W(\g^\cC, \theta = Id)$  of the   ungraded
		Lie algebra $\g^\cC$  that   leaves  $\theta$ invariant, taking into account the fact that    for the  ungraded  Lie algebra $(\g^\cC, \theta = Id) $,  we have  $W_p(\g^\cC, \theta = Id)  = W(\Zgg_{\g^\cC}(p), \theta = Id)$,
		since in this case $W_p (\g^\cC, \theta = Id)$  is  generated  by  the reflections  $ r_\alpha\in W (\Zgg_{\g^\cC}(p), \theta = Id)$, where   $\Sigma  (\frak C)$ denotes the root system  of $\g^\cC$ w.r.t.  $\frak C$,  and
		$$ \alpha  \in \Sigma  (\frak C, p) : =\{ \alpha \in \Sigma  (\frak C)|\, \alpha (p) = 0\},$$
		$$\Zgg_{\g^\cC}(p) = \frak C \bigoplus_{\alpha \in \Sigma(\frak C, p)} \g^\cC_\alpha.$$
		Here	$\g^\cC_\alpha$  denotes the corresponding   root  subspace.
		
		Hence  $W_p = W_q$ implies $W ( \Zgg_{\g ^\cC}(p), \theta) = W ( \Zgg_{\g ^\cC}(q), \theta)$.
		This completes  the proof  Proposition \ref{prop:family} (3).
		It  is known  that   $W(\g^\cC, \h^\cC)$  is generated  by   complex reflections  $w_\alpha$ for   $\alpha  \in \Sigma (\h^\cC)$,
        whose   fixed point  $q$ is defined   by the equation     $\alpha  (q) = 0$,  see  e.g. \cite[Theorem 10 p. 163]{OV}.
	\end{proof}

\begin{lemma}\label{lem:Zp}
Let $p\in \h^\cC$ and assume that $\h_p^{\cC,\circ} =\h_p^{\cC,\mathrm{reg}}$.
Let $q\in \h_p^{\cC,\circ}$. Then $\Zm_{G_0}(p) = \Zm_{G_0}(q)$.
\end{lemma}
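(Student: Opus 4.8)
The plan is to reduce the statement to Lemma~\ref{lem:generic}. The key point is that the base point $p$ itself lies in $\h_p^{\cC,\circ}$: indeed $\h_p^\cC=\{r\in\h^\cC\mid W_p\subseteq W_r\}$ contains $p$ since $W_p\subseteq W_p$, and then $\h_p^{\cC,\circ}=\{r\in\h_p^\cC\mid W_r=W_p\}$ contains $p$ as well. Hence both $p$ and the given element $q$ lie in $\h_p^{\cC,\circ}$, and by the hypothesis $\h_p^{\cC,\circ}=\h_p^{\cC,\mathrm{reg}}$ both of them are $\Sigma(\h_p^\cC)$-regular.

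Next I would invoke Lemma~\ref{lem:generic} twice, keeping $p$ as the distinguished point but applying it once to the point $q\in\h_p^{\cC,\circ}$ and once to the point $p\in\h_p^{\cC,\circ}$. Since both points are $\Sigma(\h_p^\cC)$-regular, the lemma gives
\[\Zm_{\Gtil}(q)=\Zm_{\Gtil}(\h_p^\cC)=\Zm_{\Gtil}(p).\]
Finally, since $G_0$ is an algebraic subgroup of $\Gtil$ and the adjoint action of $G_0$ on $\h^\cC\subseteq\g_1^\cC$ is the restriction of the adjoint action of $\Gtil$, one has $\Zm_{G_0}(r)=\Zm_{\Gtil}(r)\cap G_0$ for every $r\in\h^\cC$; intersecting the displayed equality with $G_0$ then yields $\Zm_{G_0}(p)=\Zm_{G_0}(q)$.

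There is essentially no obstacle: the whole content sits in Lemma~\ref{lem:generic} together with the connectedness result of Steinberg used in its proof. The only minor points that need care are the observation that $p$ itself belongs to $\h_p^{\cC,\circ}$ (so that the regularity hypothesis is available at $p$, not only at $q$), and the compatibility $\Zm_{G_0}(r)=\Zm_{\Gtil}(r)\cap G_0$ of stabilizers under the inclusion $G_0\subseteq\Gtil$.
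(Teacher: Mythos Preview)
Your proof is correct and follows essentially the same approach as the paper's proof: apply Lemma~\ref{lem:generic} to both $p$ and $q$ (using that they are $\Sigma(\h_p^\cC)$-regular by the hypothesis $\h_p^{\cC,\circ}=\h_p^{\cC,\mathrm{reg}}$) to obtain $\Zm_{\Gtil}(p)=\Zm_{\Gtil}(\h_p^\cC)=\Zm_{\Gtil}(q)$, then intersect with $G_0$. You have in fact made explicit the point that $p\in\h_p^{\cC,\circ}$, which the paper leaves implicit.
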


\begin{proof}
By Lemma \ref{lem:generic} we see that $\Zm_{\Gtil}(p) = \Zm_{\Gtil}(q)$.
Their intersections with $G_0$ are $\Zm_{G_0}(p)$ and $\Zm_{G_0}(q)$ respectively.
Hence these are equal as well.
\end{proof}

\begin{lemma}\label{lem:gp1p2}
Let $p\in \h^\cC$ and assume that $\h_p^{\cC,\circ} =\h_p^{\cC,\mathrm{reg}}$.
Let $p_1,p_2\in \h_p^{\cC,\circ}$ and let $g\in G_0$ be such that $gp_1=p_2$.
Then $gq\in \h_p^{\cC,\circ}$ for all $q\in \h_p^{\cC,\circ}$.
\end{lemma}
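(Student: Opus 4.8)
The statement to prove is Lemma \ref{lem:gp1p2}: under the hypothesis $\h_p^{\cC,\circ}=\h_p^{\cC,\mathrm{reg}}$, if $p_1,p_2\in\h_p^{\cC,\circ}$ and $g\in G_0$ satisfies $gp_1=p_2$, then $gq\in\h_p^{\cC,\circ}$ for every $q\in\h_p^{\cC,\circ}$. The natural approach is to translate the action of $g$ on individual points into an action on the centralizer subgroup $\Zm_{G_0}(\h_p^\cC)$, which is canonically attached to the whole subspace $\h_p^\cC$, and then exploit that $\Zm_{G_0}(p_1)$ depends only on the stratum $\h_p^{\cC,\circ}$ containing $p_1$ (Lemma \ref{lem:Zp}).

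First I would record the key consequence of the hypothesis: by Lemma \ref{lem:Zp}, for every $q\in\h_p^{\cC,\circ}$ we have $\Zm_{G_0}(q)=\Zm_{G_0}(p)$, and moreover by Lemma \ref{lem:generic} this common group equals $\Zm_{G_0}(\h_p^\cC):=\Zm_{\Gtil}(\h_p^\cC)\cap G_0$. Now from $gp_1=p_2$ we get $g\,\Zm_{G_0}(p_1)\,g^{-1}=\Zm_{G_0}(p_2)$; since $p_1,p_2\in\h_p^{\cC,\circ}$, both sides equal $\Zm_{G_0}(\h_p^\cC)$, so $g$ normalizes $\Zm_{G_0}(\h_p^\cC)$, hence $g$ normalizes its Lie algebra $\z_{\g^\cC}(\h_p^\cC)$, and therefore $g$ normalizes the center of that reductive algebra. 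The plan is to identify $\h_p^\cC$ with (the degree-$1$ part of) this center, or more directly to note that $\h_p^\cC=\z_{\g^\cC}(\h_p^{\cC,\mathrm{reg}})\cap(\text{appropriate eigenspace})$ is reconstructible from $\Zm_{G_0}(\h_p^\cC)$ alone; concretely, for $q\in\h_p^{\cC,\circ}$ one has $\z_{\g^\cC}(q)=\z_{\g^\cC}(\h_p^\cC)$, and $\h_p^\cC$ is the intersection of $\g_1^\cC$ with the center $Z(\z_{\g^\cC}(\h_p^\cC))$ — this is essentially the content of Lemma \ref{lem:carc} applied inside $\z_{\g^\cC}(\h_p^\cC)$, combined with \eqref{eq:strat}. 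Since $g$ normalizes $\z_{\g^\cC}(\h_p^\cC)$ it normalizes its center and preserves the grading (as $g\in G_0$), so $g\cdot\h_p^\cC=\h_p^\cC$.

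It then remains to see that $g$ sends the open stratum $\h_p^{\cC,\circ}$ into itself, not merely the ambient subspace $\h_p^\cC$ into itself. Given $q\in\h_p^{\cC,\circ}$, the element $gq$ lies in $\h_p^\cC$ by the previous paragraph; and $gq$ is $\Sigma(\h_p^\cC)$-regular because regularity is detected by $\z_{\g^\cC}(gq)=g\,\z_{\g^\cC}(q)\,g^{-1}=g\,\z_{\g^\cC}(\h_p^\cC)\,g^{-1}=\z_{\g^\cC}(\h_p^\cC)$, which by Lemma \ref{lem:generic} forces $gq\in\h_p^{\cC,\mathrm{reg}}=\h_p^{\cC,\circ}$. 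This completes the argument.

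\textbf{Main obstacle.} The delicate point is the claim that $g$ normalizing $\Zm_{G_0}(\h_p^\cC)$ forces $g\cdot\h_p^\cC=\h_p^\cC$, i.e. that the subspace $\h_p^\cC\subset\g_1^\cC$ is canonically recovered from the group (or subalgebra) $\z_{\g^\cC}(\h_p^\cC)$ together with the grading. I would handle this by the reductive-centralizer bookkeeping of Lemma \ref{lem:carc}/Lemma \ref{lem:centr1}: inside the reductive $\Z_m$-graded algebra $\a^\cC:=\z_{\g^\cC}(\h_p^\cC)$, the subspace $\h_p^\cC$ is precisely $Z(\a^\cC)\cap\g_1^\cC$ (this uses that $\h_p^\cC$ is a Cartan subspace of $\a^\cC$, which follows because $q\in\h_p^{\cC,\circ}$ is $\Sigma(\h_p^\cC)$-regular), and $Z(\a^\cC)\cap\g_1^\cC$ is manifestly preserved by any grading-preserving automorphism of $\g^\cC$ normalizing $\a^\cC$. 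Once this identification is in place the rest is formal.
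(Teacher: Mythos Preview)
Your overall strategy is sound, and the identity $\h_p^\cC = Z(\a^\cC)\cap\g_1^\cC$ (with $\a^\cC=\z_{\g^\cC}(\h_p^\cC)$) that you aim for is in fact true, but your justification for it is wrong: $\h_p^\cC$ is \emph{not} a Cartan subspace of $\a^\cC_1$ whenever $\h_p^\cC\subsetneq\h^\cC$, because the full Cartan subspace $\h^\cC$ itself lies in $\a_1^\cC$ (it commutes with $\h_p^\cC$) and strictly contains $\h_p^\cC$. So Lemma~\ref{lem:carc} does not apply in the way you suggest. A correct argument for the identity runs as follows: any $x\in Z(\a^\cC)\cap\g_1^\cC$ is semisimple and commutes with $\h^\cC\subset\a^\cC$, hence $x\in\h^\cC$ by maximality; then $\a^\cC=\z_{\g^\cC}(p)\subseteq\z_{\g^\cC}(x)$ yields (via connectedness of centralizers in $\Gtil$) $\Zm_{G_0}(p)\subseteq\Zm_{G_0}(x)$, so every $w\in W_p$ lifts to an element of $\Nm_{G_0}(\h^\cC)\cap\Zm_{G_0}(p)\subseteq\Zm_{G_0}(x)$, whence $W_p\subseteq W_x$ and $x\in\h_p^\cC$. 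With this fix the rest of your argument goes through.

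By contrast, the paper's proof avoids the centralizer analysis entirely and is shorter: since $p_1,p_2\in\h_p^{\cC,\circ}$ are $G_0$-conjugate they are $W$-conjugate, and by Lemma~\ref{lem:redweyl} the conjugating element $w$ lies in $\Nm_W(W_p)$; lifting $w$ to $\hat w\in G_0$, one has $g^{-1}\hat w\in\Zm_{G_0}(p_1)$, which by Lemma~\ref{lem:Zp} equals $\Zm_{G_0}(q)$, so $gq=\hat w q$, and this lies in $\h_p^{\cC,\circ}$ because $w$ normalizes $W_p$. Your route trades Weyl-group bookkeeping for centralizer/center bookkeeping; both work, but the Weyl-group argument is more direct here and uses only lemmas already established in the section.
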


\begin{proof}
Let $q\in \h_p^{\cC,\circ}$. Let $\Nm_W(W_p)$ be the normalizer of $W_p$ in $W$.
Since $p_1,p_2$ are $G_0$-conjugate there is a $w\in \Nm_W(W_p)$ with
$wp_1=p_2$ (Lemma \ref{lem:redweyl}). Let $\hat w\in G_0$ be a preimage of
$w$. Then $g^{-1}\hat w \in \Zm_{G_0}(p_1)$. Hence by Lemma \ref{lem:Zp} we
see that $g^{-1}\hat w \in \Zm_{G_0}(q)$. It follows that
$gq=\hat w q$ which lies in $\h_p^{\cC,\circ}$.
\end{proof}

Now fix a $p\in \h^{\cC}$ and write $\Fm=\h_p^{\cC,\circ}$. We set
\begin{align*}
  \Zm_{G_0}(\Fm) & = \{ g\in G_0 \mid gq=q \text{ for all }
  q\in \Fm\},\\
  \Nm_{G_0}(\Fm) & = \{ g\in G_0 \mid gq\in \Fm \text{ for all }
  q\in \Fm\}.
\end{align*}

We define a map $\varphi : \Nm_{G_0}(\Fm) \to \Gamma_p$ (where $\Gamma_p =
\Nm_W(W_p)/W_p$, see \eqref{eq:wp}).
Let $g\in
\Nm_{G_0}(\Fm)$. Then $gp=q$ for some $q\in \Fm$. By Lemma \ref{lem:redweyl}
there is a $w\in \Nm_W(W_p)$ such that $wp=q$. Set $\varphi(g) = wW_p$.
Note that this is well defined: if $w'\in \Nm_W(W_p)$ also satisfies $w'p=q$
then $w^{-1}w'\in W_p$ so that $w'W_p = wW_p$.

\begin{lemma}\label{lem:Np}
Suppose that $\h_p^{\cC,\circ} =\h_p^{\cC,\mathrm{reg}}$.
Then $\varphi$ is a surjective group homomorphism with kernel $\Zm_{G_0}(\Fm)$.
\end{lemma}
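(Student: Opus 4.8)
\textbf{Proof plan for Lemma \ref{lem:Np}.}
The plan is to verify three things: that $\varphi$ is a group homomorphism, that it is surjective, and that its kernel is exactly $\Zm_{G_0}(\Fm)$. Throughout I will use freely the hypothesis $\h_p^{\cC,\circ}=\h_p^{\cC,\mathrm{reg}}$, since it is what makes Lemmas \ref{lem:Zp} and \ref{lem:gp1p2} available and, crucially, guarantees that $\Nm_{G_0}(\Fm)$ acts on $\Fm$ in a way that is ``rigid enough''. First I would check that $\varphi$ is well-defined on the level of cosets, which is already done in the paragraph preceding the statement, and then that it respects the group operation. Given $g,g'\in\Nm_{G_0}(\Fm)$ with $gp=q=wp$ and $g'p=q'=w'p$ for $w,w'\in\Nm_W(W_p)$, I want $(gg')p = (ww')p$ up to $W_p$. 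The subtlety is that $g'p=w'p$ only tells us the two images of the single point $p$ agree; to conclude $g'q = w'q$ as well (which is what is needed to push through the computation $gg'p = g(w'p) = \dots$) I would invoke Lemma \ref{lem:gp1p2}: since $g'$ and $w'$ (a preimage of $w'$ in $G_0$) both send $p\in\Fm$ into $\Fm$ and $g'p=w'p$, the element $(\hat w')^{-1}g'$ fixes $p$, hence lies in $\Zm_{G_0}(p)=\Zm_{G_0}(p') $ for every $p'\in\Fm$ by Lemma \ref{lem:Zp}, so $g'$ and $\hat w'$ agree on all of $\Fm$. Then $gg'p = g\hat w' p = g(w'p)$, and since $w'p\in\Fm$, applying the same reasoning to $g$ (that $g$ agrees on $\Fm$ with a preimage $\hat w$ of $w$) gives $gg'p = \hat w(w'p) = (ww')p$. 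As $ww'\in\Nm_W(W_p)$, this shows $\varphi(gg')=ww'W_p=\varphi(g)\varphi(g')$.

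Next, surjectivity. Let $wW_p\in\Gamma_p=\Nm_W(W_p)/W_p$. Choose a preimage $\hat w\in\Nm_{G_0}(\h^\cC)$ of $w$ under the quotient map $\Nm_{G_0}(\h^\cC)\to W$. I must show $\hat w\in\Nm_{G_0}(\Fm)$, i.e. $\hat w$ maps $\Fm=\h_p^{\cC,\circ}$ into itself. But $\hat w$ acts on $\h^\cC$ as $w$, and $w\in\Nm_W(W_p)$, so by \eqref{eq:CpCq} applied with $p'=p$ we get $w\h_p^{\cC}=\h_p^{\cC}$; moreover $w\cdot\h_p^{\cC,\circ}=\h_p^{\cC,\circ}$ because the condition $W_q=W_p$ defining $\h_p^{\cC,\circ}$ inside $\h_p^{\cC}$ is carried to $W_{wq}=wW_qw^{-1}=wW_pw^{-1}=W_p$ (using $w\in\Nm_W(W_p)$). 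Hence $\hat w\in\Nm_{G_0}(\Fm)$ and $\varphi(\hat w)=wW_p$ by construction. This gives surjectivity.

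Finally, the kernel. If $\varphi(g)=W_p$ then $gp=wp$ for some $w\in W_p$, so (lifting $w$ to $\hat w\in G_0$, which is possible since $\Zm_{G_0}(\h^\cC)\to 1$ and the surjection onto $W$) the element $(\hat w)^{-1}g$ fixes $p$; but $w\in W_p$ means $\hat w$ itself already fixes $p$, hence $g$ fixes $p$. By Lemma \ref{lem:Zp}, $\Zm_{G_0}(p)=\Zm_{G_0}(q)$ for every $q\in\Fm$, so $g$ fixes every $q\in\Fm$, i.e. $g\in\Zm_{G_0}(\Fm)$. Conversely $\Zm_{G_0}(\Fm)\subseteq\Nm_{G_0}(\Fm)$ obviously, and any $g\in\Zm_{G_0}(\Fm)$ fixes $p$, so $gp=p=1\cdot p$ with $1\in\Nm_W(W_p)$, whence $\varphi(g)=W_p$. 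Therefore $\ker\varphi=\Zm_{G_0}(\Fm)$.

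\textbf{Main obstacle.} The only genuinely delicate point is the homomorphism property: naively one only knows that $g$ and a Weyl-group lift $\hat w$ agree at the \emph{single} point $p$, and passing from ``agree at $p$'' to ``agree on all of $\Fm$'' is precisely where the regularity hypothesis $\h_p^{\cC,\circ}=\h_p^{\cC,\mathrm{reg}}$ enters, via Lemmas \ref{lem:Zp} and \ref{lem:gp1p2}. Everything else is formal. I would make sure the write-up isolates this ``rigidity'' step cleanly — perhaps as a one-line observation that for $g\in\Nm_{G_0}(\Fm)$ there is $w\in\Nm_W(W_p)$ with $g$ and $\hat w$ acting identically on $\Fm$ — since both surjectivity and the kernel computation then reduce to it together with \eqref{eq:CpCq}.
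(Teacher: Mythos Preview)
Your proposal is correct and follows essentially the same approach as the paper: isolate the rigidity observation that any $g\in\Nm_{G_0}(\Fm)$ with $gp=wp$ (for $w\in\Nm_W(W_p)$) in fact agrees with $\hat w$ on all of $\Fm$ via Lemma~\ref{lem:Zp}, then deduce the homomorphism property, surjectivity, and kernel from it. The paper's write-up is terser (it states the rigidity step first and then dispatches each property in one line), and for the kernel it simply notes that $\varphi(g)=W_p$ means $gp=p$ directly, without the detour through lifting $w\in W_p$; but the substance is identical.
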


\begin{proof}
  First we note the following: let $g\in \Nm_{G_0}(\Fm)$ be such that
  $gp=q$ and let $w\in \Nm_W(W_p)$ be such that $wp=q$; then for all $r\in \Fm$
  we have that $gr=wr$. Indeed, Let $\hat w\in G_0$ be a preimage
  of $w$. Then $\hat w^{-1} g\in \Zm_{G_0}(p)$. So by Lemma
  \ref{lem:Zp} it follows that $\hat w^{-1} g\in \Zm_{G_0}(r)$, or
  $gr=\hat wr=wr$.

  Now let $g_1,g_2\in \Nm_{G_0}(\Fm)$ and let $w_1,w_2\in \Nm_W(W_p)$ be
  such that $w_ip = g_ip$, $i=1,2$. Then by the first part of the proof
  we see that $g_1g_2 p = w_1w_2p$ implying that $\varphi$ is a group
  homomorphism.

  Let $\hat w\in G_0$ be a preimage of $w\in \Nm_W(W_p)$. Then
  $\hat w \in \Nm_{G_0}(\Fm)$ and $\varphi(\hat w) = wW_p$ so that
  $\varphi$ is surjective. If $g\in \ker \varphi$ then $gp=p$ and by the
  first part of the proof, $g\in \Zm_{G_0}(\Fm)$.
\end{proof}

Now we let $\h$ be a Cartan subspace in $\g_1$ so that its complexification
$\h^\cC$ is a Cartan subspace in $\g_1^\cC$. For a $p\in \h$ we consider
the sets $\h_p = \h_p^\cC \cap \h$ and $\h_p^\circ = \h_p^{\cC,\circ}$.

\begin{proposition}\label{prop:sconj}
(1) If  $p\in \h$ is   regular, then $G_0 (p) \cap \g_1$  consists  of $L$ $\GG_0(\R)$-orbits, where $L$ is   the number  of conjugacy classes of Cartan subspaces  in $\g_1$.

(2)  Assume that
all  Cartan  subspaces in $\g_1$ are conjugate.    Let $p \in \h^\cC$. Then the orbit
$G_0 (p)$ contains  a real point   in $\g_1$ if and only if the  orbit $W(p)$ contains  a real point  in $\h$. For any  $q \in \h$   the set  of  $\GG_0 (\R)$-orbits in  $G_0 (q) \cap \g_1$ is  in  a  canonical  bijection with the  set
	$$\ker  [ H^1 W_q \to H^1 W].$$
\end{proposition}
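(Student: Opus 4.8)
The plan is to derive both parts of Proposition \ref{prop:sconj} from the machinery already in place, using Theorem \ref{thm:cartan} for part (1) and combining Vinberg's theorem with Proposition \ref{p:coh-orbits} for part (2).

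\textbf{Part (1).} First I would recall that when $p\in\h$ is regular, i.e.\ $W_p=1$, Vinberg's theorem \cite[Theorem 2]{Vinberg1976} says that the only element of $\h^\cC$ in the $G_0$-orbit of $p$ is $p$ itself, so the complex orbit $G_0(p)$ meets any Cartan subspace in exactly one $W$-orbit, which here is a single point. Now take $p'\in G_0(p)\cap\g_1$. The key observation is that $p'$ is a real semisimple element, so it is contained in some real Cartan subspace $\h'\subset\g_1$ (a maximal commuting space of semisimple elements containing it exists; by Corollary \ref{cor:complc} its complexification is a complex Cartan subspace). Conversely, given a conjugacy class of real Cartan subspaces, I would produce exactly one $\GG_0(\R)$-orbit inside $G_0(p)\cap\g_1$: fix representatives $\h=\h^{(1)},\dots,\h^{(L)}$ of the $L$ conjugacy classes of real Cartan subspaces, each with a chosen regular point $p^{(j)}$ that is $G_0$-conjugate to $p$ (such a point exists because $\h^{(j),\cC}$ is $G_0$-conjugate to $\h^\cC$ by \cite[Theorem 1]{Vinberg1976}, transporting $p$ into $\h^{(j),\cC}$, and a regular point of $\h^{(j)}$ in that $W$-orbit can be taken real). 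The heart of the argument is to show that two regular real points $p'\in\h^{(j)}$ and $p''\in\h^{(l)}$ with $p',p''\in G_0(p)$ are $\GG_0(\R)$-conjugate if and only if $j=l$. The ``if'' direction: if $p',p''$ lie in the same real Cartan subspace $\h^{(j)}$ and are $G_0$-conjugate, then since they are regular, Lemma \ref{lem:redweyl} gives a $w\in\Nm_W(W_{p'})=W$ (as $W_{p'}=1$) with $wp'=p''$; but $W$ is realized by $\Nm_{G_0}(\h^{(j),\cC})$, so I can choose a preimage $n$ and then run a Galois-cohomology/descent argument — using that $\Zm_{G_0}(p')$ has trivial $\Ho^1$ by Lemma \ref{lem:g1} together with Lemma \ref{lem:hcen} (the centralizer of a regular semisimple element of $\g_1$ in $\GG_0$ being $S(\prod\GL(n_i,\R))$ up to real conjugacy, or directly a quasi-trivial real torus) — to conclude $p',p''$ are $\GG_0(\R)$-conjugate. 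The ``only if'' direction: if $gp'=p''$ with $g\in\GG_0(\R)$, then $g$ carries $\h^{(j)}$ to $\h^{(l)}$ (the unique real Cartan subspace containing $p''$, using regularity), so $j=l$. This bijection between $\GG_0(\R)$-orbits in $G_0(p)\cap\g_1$ and conjugacy classes of real Cartan subspaces gives $L$.

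\textbf{Part (2).} Here I assume all real Cartan subspaces in $\g_1$ are conjugate (so $L=1$). Let $p\in\h^\cC$ be arbitrary. For the first claim: if $G_0(p)$ contains a real point $q'\in\g_1$, then $q'$ lies in a real Cartan subspace which by hypothesis is $\GG_0(\R)$-conjugate to $\h$; transporting, we get a real point of $G_0(p)\cap\h$, i.e.\ $W(p)$ meets $\h$. The converse is immediate. For the counting statement, I would set up the homogeneous space exactly as in the proof of Theorem \ref{thm:cartan}: fix $q\in\h$ and let $\YY$ be the real variety whose complex points are $\{q'\in\g_1^\cC : q' = g\cdot q,\ g\in G_0,\ q'\text{ lies in a Cartan subspace}\}$ — more precisely, consider the $G_0$-orbit of $q$ and intersect with the (closed, $\R$-defined) subvariety of $\g_1^\cC$ of elements lying in a common Cartan subspace. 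The subtlety is that the stabilizer $\Stab_{G_0}(q)=\Zm_{G_0}(q)$ is generally not the object whose $\Ho^1$ we want; rather, since by the conjugacy-of-Cartan-subspaces hypothesis the set of real points $\YY(\R)$ consists precisely of the real elements $G_0$-conjugate to $q$ that lie in $\h$ (up to $\Zm_{G_0}(\h^\cC)$), I would instead realize $\YY$ as $G_0/\Nm_{G_0}(\text{something})$ — concretely, using Vinberg's theorem $W(q)\isoto (G_0\cdot q)\cap\h^\cC$, and then Proposition \ref{p:coh-orbits} applied to the finite group $W_q=\Stab_W(q)$ acting inside $W=\Nm_{G_0}(\h^\cC)/\Zm_{G_0}(\h^\cC)$ — to conclude that $\YY(\R)/\GG_0(\R)$ is in canonical bijection with $\ker[\Ho^1 W_q\to\Ho^1 W]$. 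The main obstacle, and the step I expect to require the most care, is making precise the identification of the homogeneous space so that Proposition \ref{p:coh-orbits} applies with stabilizer literally $W_q$: one must check that the map $G_0\to \h^\cC$, $g\mapsto g\cdot q$ has ``stabilizer'' (in the relevant quotient sense, after descending through $\Zm_{G_0}(\h^\cC)$ which contributes nothing to $\Ho^1$ by the quasi-triviality of its real torus via Proposition \ref{p:quasi}) equal to $W_q$ as an $\R$-group, and that the hypothesis ``all real Cartan subspaces conjugate'' is exactly what forces $\Ho^1 \GG_0$ to play no role (the kernel condition $\ker[\Ho^1 W_q\to\Ho^1 W]$ already absorbs the only discrepancy). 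Once that reduction is set up, both statements follow formally from Proposition \ref{p:coh-orbits}.
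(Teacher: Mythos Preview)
For part~(1), the paper's proof is a single sentence: a regular semisimple element lies in a unique real Cartan subspace, so the $\GG_0(\R)$-equivariant assignment $q\mapsto\h(q)$ yields the correspondence. You follow the same map but layer on substantial extra machinery, and two concrete issues arise. Your surjectivity step --- producing a \emph{real} regular point of $G_0(p)$ in each $\h^{(j)}$ --- is only asserted: transporting $p$ by a complex element of $G_0$ lands in $\h^{(j),\cC}$, not in $\h^{(j)}$, and you give no argument that the resulting $W$-orbit actually meets $\h^{(j)}$. Your injectivity step invokes Lemmas~\ref{lem:hcen} and~\ref{lem:g1}, but those are statements about $\SL(9)$ and are unavailable in the general graded-Lie-algebra setting of Section~\ref{sec:gradedLie} where this proposition is stated.

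For part~(2), your overall plan matches the paper's: the first claim is exactly your argument (transport a real point of $G_0(p)$ into $\h$ via conjugacy of real Cartan subspaces, then invoke Vinberg's Theorem~2), and for the cohomological bijection the paper simply cites Vinberg's theorem together with Proposition~\ref{p:serre} applied to the $W$-set $W\cdot q\cong W/W_q$. Your attempt to justify the reduction by passing through $\Zm_{G_0}(\h^\cC)$ and appealing to ``the quasi-triviality of its real torus via Proposition~\ref{p:quasi}'' is off: $\Zm_{G_0}(\h^\cC)$ is not a torus in general (in the paper's own $E_8$ case it is the finite group of order $3^5$ computed in \S\ref{subsec:F1}), and the paper's argument operates directly at the level of the finite $\Gamma$-group $W$ without invoking this centralizer.
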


\begin{proof} (1) For any   regular semisimple element  $p \in \h$,  there is unique  Cartan subspace $\h(p)\subset \g_1 $ that contains   $p$.   This implies  the  first  assertion  of  Proposition \ref{prop:sconj}.
	
(2) Assume that all  Cartan  subspaces in $\g_1$ are $\GG_0 (\R)$-conjugate   and  the orbit
$G_0 (p)$, $p \in \h^\cC$,  contains  a real point    $p _\R\in \g_1$. Since   all Cartan  subspaces in $\g_1$ are conjugate, $p_\R$ is $\GG_0 (\R)$-conjugate  to  a point $p'_\R$  in $\h$. Hence $p$ is  $G_0$-conjugate to  $p'_\R \in \h$, and therefore  it is $W$-conjugate  to $p'_\R$. This  proves the first assertion of Proposition \ref{prop:sconj} (2).

The last  assertion of  Proposition \ref{prop:sconj}(2) follows  from   \cite[Theorem 2]{Vinberg1976}, the first assertion  of   Proposition \ref{prop:sconj}(2), 	 and  Proposition
\ref{p:serre}, (also  Proposition \ref{p:coh-orbits}).
\end{proof}

\begin{proposition}\label{prop:Requivalence}
Let $p\in \h$.
Assume that  $\h^{\cC,\circ}_p = \h^{\cC, \mathrm{reg}}_p$  and  let
$q, q' \in \h^\circ_p$.  Then   $q$  and $q'$  are  $\GG_0 (\R)$-conjugate, if
and only if there is a $g\in \Nm_{\GG_0(\R)}(\h_p)$ with $g\cdot q=q'$.
\end{proposition}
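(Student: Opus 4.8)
The plan is to dispose of the ``if'' direction immediately: since $\Nm_{\GG_0(\R)}(\h_p)\subseteq\GG_0(\R)$, an element $g\in\Nm_{\GG_0(\R)}(\h_p)$ with $g\cdot q=q'$ already exhibits $q$ and $q'$ as $\GG_0(\R)$-conjugate. All the content is in the ``only if'' direction, and in fact the proposition will come out as essentially a corollary of Lemma \ref{lem:gp1p2}.

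So assume $q$ and $q'$ are $\GG_0(\R)$-conjugate, say $g\cdot q=q'$ with $g\in\GG_0(\R)$. The elements $q,q'$ lie in $\h_p^\circ$, hence in particular in $\h\cap\h_p^{\cC,\circ}$, so Lemma \ref{lem:gp1p2} applies (this is the only place the hypothesis $\h_p^{\cC,\circ}=\h_p^{\cC,\mathrm{reg}}$ is used) and gives $g\cdot r\in\h_p^{\cC,\circ}$ for every $r\in\h_p^{\cC,\circ}$; in other words the invertible linear transformation $\rho_1(g)$ of $\g_1^\cC$ maps $\h_p^{\cC,\circ}$ into itself. I would then promote this to the assertion that $g$ stabilizes the linear subspace $\h_p^\cC$: the set $\h_p^{\cC,\circ}$ is nonempty and Zariski-open in the linear space $\h_p^\cC$ (as recalled just before Lemma \ref{lem:redweyl}), hence Zariski-dense and therefore spanning; applying $\rho_1(g)$ to a spanning family contained in $\h_p^{\cC,\circ}\subseteq\h_p^\cC$ yields $g(\h_p^\cC)\subseteq\h_p^\cC$, and equality follows by comparing dimensions.

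Finally I would descend to the real form. Since $g\in\GG_0(\R)$ and the action $\rho_1$ is defined over $\R$, the operator $\rho_1(g)$ commutes with the complex conjugation $\sigma$ of $\g_1^\cC=\g_1\otimes_\R\C$, whose fixed locus is $\g_1$. Now $\h_p=\h_p^\cC\cap\h$ coincides with the $\sigma$-fixed subspace of $\h_p^\cC$ (because $\h^\cC\cap\g_1=\h$ and $\h_p^\cC\subseteq\h^\cC$), and a linear automorphism that preserves $\h_p^\cC$ and commutes with $\sigma$ must preserve this fixed subspace. Hence $g(\h_p)=\h_p$, that is $g\in\Nm_{\GG_0(\R)}(\h_p)$, while still $g\cdot q=q'$. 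The single nontrivial input is Lemma \ref{lem:gp1p2}; the remaining steps are formal, and the only point that needs care is the bookkeeping between the complex space $\h_p^\cC$ and its real form $\h_p$, so I do not expect a genuine obstacle beyond correctly assembling these ingredients.
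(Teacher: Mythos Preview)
Your proof is correct and follows the same approach as the paper, whose proof simply reads ``This follows directly from Lemma \ref{lem:gp1p2}.'' You have supplied the details the paper leaves implicit: passing from ``$g$ preserves the Zariski-open set $\h_p^{\cC,\circ}$'' to ``$g$ preserves the linear span $\h_p^\cC$'', and then descending to the real subspace $\h_p$ using that $g\in\GG_0(\R)$ commutes with complex conjugation.
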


\begin{proof}
This follows directly from Lemma \ref{lem:gp1p2}.
\end{proof}

The groups $W$ and $\Gamma_p$ have a conjugation induced from $\GG_0$ and we
denote the group of the real points of $\Gamma_p$ by $\Gamma_p(\R)$.
Also we write
	$$ \Gamma_p (\g, \h): = \frac{\Nn_{\GG_0(\R)} (\h_{p})}{\Zz_{\GG_0(\R)} (\h_p)}$$
	
	The  group $ \Gamma_p (\g, \h)$ acts on $\h_p$   by restricting the  action
	of $\Nn_{\GG_0(\R)} (\h_{p})$ to $\h_p$.

\begin{lemma}\label{lem:weylr}  Assume that  $p \in \h$  and  $\h^{\cC,\circ}_p = \h^{\cC, \mathrm{reg}}_p$.
	
(1)	$ \Nn_{G_0}(\h^\cC_p) (p)  =  \Gamma_p(p)$.

(2)  There  is a   1-1  correspondence between the  set of
$\Gamma_p (\g, \h)$-orbits   in   $\Gamma _p   (p) \cap \h_p$  and
$\ker  [\Ho^1 \Zz_{G_0} (\h_{p}^\cC) \to\Ho^1\Nn_{G_0}  (\h_p^\cC)]$.

(3) The set $\Gamma _p (p)\cap \h_p$ consists of a single $\Gamma_p(\R)$-orbit.
 \end{lemma}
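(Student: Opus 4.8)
The plan is to obtain all three parts from Vinberg's structure theory \cite{Vinberg1976}, the cohomological description of real orbits in Proposition~\ref{p:coh-orbits}, and the preparatory Lemmas~\ref{lem:redweyl}, \ref{lem:generic}, \ref{lem:Zp}, \ref{lem:Np}. As a preliminary I would record that, since $p\in\h\subset\g_1$ is real, one has $\sigma(W_p)=W_{\sigma(p)}=W_p$, so $\h_p^\cC$ is $\sigma$-stable and defined over $\R$ with real points $\h_p$; hence $\NN:=\Nn_{\GG_0}(\h_p^\cC)$ and $\ZZ:=\Zz_{\GG_0}(\h_p^\cC)$ are $\R$-groups with $\NN(\R)=\Nn_{\GG_0(\R)}(\h_p)$ and $\ZZ(\R)=\Zz_{\GG_0(\R)}(\h_p)$, and the finite set $\Gamma_p(p)\subset\h_p^\cC$, which equals $\{w\cdot p\mid w\in\Nm_W(W_p)\}$ because $W_p$ acts trivially on $\h_p^\cC$, is $\sigma$-stable.

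For (1) I would argue two inclusions. Given $g\in\Nn_{G_0}(\h_p^\cC)$, the element $gp$ lies in $\h_p^\cC$ and is $G_0$-conjugate, hence $W$-conjugate, to $p$ by \cite[Theorem~2]{Vinberg1976}; writing $gp=wp$ one gets $W_p\subseteq W_{gp}=wW_pw^{-1}$, and equality of orders forces $W_{gp}=W_p$, so $gp\in\h_p^{\cC,\circ}$, whence $w\in\Nm_W(W_p)$ by Lemma~\ref{lem:redweyl} and $gp\in\Gamma_p(p)$. Conversely, lifting $w\in\Nm_W(W_p)$ to $\hat w\in\Nn_{G_0}(\h^\cC)$, the relation \eqref{eq:CpCq} applied with $p'=wp$ gives $w\h_p^\cC=\h_p^\cC$, so $\hat w\in\Nn_{G_0}(\h_p^\cC)$ and $\hat w p=wp$; this yields $\Gamma_p(p)\subseteq\Nn_{G_0}(\h_p^\cC)(p)$. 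Note this part does not even use the regularity hypothesis.

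For (2) I would apply Proposition~\ref{p:coh-orbits} to the $\R$-group $\NN$ acting on the finite $\R$-variety with complex points $\YY:=\Gamma_p(p)$: the action is transitive by part (1), it is defined over $\R$, and $p\in\YY(\R)=\Gamma_p(p)\cap\h_p$. The stabilizer of $p$ in $\NN$ is $\Nn_{G_0}(\h_p^\cC)\cap\Zz_{G_0}(p)$; this is the one place where the hypothesis $\h_p^{\cC,\circ}=\h_p^{\cC,\mathrm{reg}}$ genuinely enters, since Lemma~\ref{lem:generic} (or Lemma~\ref{lem:Zp}) then gives $\Zm_{G_0}(p)=\Zm_{G_0}(\h_p^\cC)$, so that stabilizer equals $\ZZ$. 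Proposition~\ref{p:coh-orbits} then produces a bijection $\YY(\R)/\NN(\R)\isoto\ker[\Ho^1\ZZ\to\Ho^1\NN]$, and it remains to rewrite the left-hand side: two elements of $\Gamma_p(p)\cap\h_p$ are $\NN(\R)$-conjugate iff they are $\Gamma_p(\g,\h)$-conjugate, because $\NN(\R)=\Nn_{\GG_0(\R)}(\h_p)$ and $\Zz_{\GG_0(\R)}(\h_p)$ acts trivially on $\h_p$; and $\Ho^1\ZZ=\Ho^1\Zz_{G_0}(\h_p^\cC)$, $\Ho^1\NN=\Ho^1\Nn_{G_0}(\h_p^\cC)$, as desired.

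For (3) I would observe that $\Gamma_p$ acts simply transitively on $\Gamma_p(p)$, the stabilizer of $p$ in $\Gamma_p=\Nm_W(W_p)/W_p$ being $\{wW_p\mid wp=p\}=\{W_p\}$. This action is equivariant for the $\Gamma$-action on $\Gamma_p$ induced from $\GG_0$, so $\Gamma_p(p)$ is a torsor under $\Gamma_p$ over $\R$, and it is the trivial torsor because it contains the real point $p$. Hence $\Gamma_p(p)\cong\Gamma_p$ as $\R$-sets with $\Gamma_p$-action, so $\Gamma_p(p)\cap\h_p$ is identified $\Gamma_p(\R)$-equivariantly with $\Gamma_p(\R)$ acting on itself by translation, which is a single orbit. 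Overall the main obstacle is not conceptual but the bookkeeping of $\R$-structures — verifying $\h_p^\cC,\NN,\ZZ$ and the finite set $\Gamma_p(p)$ are defined over $\R$ and that the relevant orbit relations match — together with the stabilizer collapse in (2), which is the unique point where the assumption $\h_p^{\cC,\circ}=\h_p^{\cC,\mathrm{reg}}$ is used, through Lemma~\ref{lem:generic}.
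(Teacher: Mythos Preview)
Your proof is correct and follows essentially the same strategy as the paper: part~(2) via Proposition~\ref{p:serre}/\ref{p:coh-orbits} applied to $\NN=\Nn_{G_0}(\h_p^\cC)$ acting on $\Gamma_p(p)$ with stabilizer $\ZZ=\Zz_{G_0}(\h_p^\cC)$, and part~(3) via the freeness of the $\Gamma_p$-action on $\Gamma_p(p)$; your torsor formulation is just the trivial-stabilizer case of the same proposition. The one small difference is in part~(1): the paper simply invokes Lemma~\ref{lem:Np}, which is stated for $\Nm_{G_0}(\Fm)$ with $\Fm=\h_p^{\cC,\circ}$ and uses the regularity hypothesis, whereas you give a direct two-inclusion argument for $\Nn_{G_0}(\h_p^\cC)$ that avoids that hypothesis and incidentally makes explicit the identification $\Nn_{G_0}(\h_p^\cC)=\Nm_{G_0}(\Fm)$ that the paper's citation leaves implicit.
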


\begin{proof}  	The first  assertion follows  from  Lemma \ref{lem:Np}. The  second   and last assertion  of Lemma \ref{lem:weylr}
are direct  consequences of  Proposition \ref{p:serre},  taking into account
	that      $\Gamma_p$ acts  freely  on $\h_p ^{\circ}$,  which implies,  in particular, that the stabilizer of $p$ in $\Gamma_p$ is trivial and therefore
 $\Gamma_p(\R)$ has one orbit in $\Gamma _p (p)\cap \h_p$.
\end{proof}

\begin{corollary}\label{cor:weylr}  Assume    the condition  of  Lemma \ref{lem:weylr}.
	
(1)	There  is a natural   embedding $\Gamma_p (\g, \h)$  into $\Gamma_p (\R)$   and
	there  is  a 1-1   correspondence  between $\Gamma_p (\R)/\Gamma_p (\g, \h)$ and
$\ker  [\Ho^1 \Zz_{G_0} (\h_{p}^\cC)\to \Ho^1\Nn_{G_0}  (\h_p^\cC)]$.

(2) If $\Ho^1 \Zz_{G_0} (\h_{p}^\cC)=1$ then $\Gamma_p(\g,\h)=\Gamma_p(\R)$.
	\end{corollary}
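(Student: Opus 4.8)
The plan is to descend the homomorphism $\varphi\colon\Nm_{G_0}(\Fm)\to\Gamma_p$ of Lemma~\ref{lem:Np} to $\R$-points and then invoke Lemma~\ref{lem:weylr}; here $p\in\h$, $\Fm=\h_p^{\cC,\circ}$, and we use the hypothesis $\h_p^{\cC,\circ}=\h_p^{\cC,\mathrm{reg}}$ throughout. First I would produce the embedding in (1). By this hypothesis $\Fm$ is the complement, inside the linear space $\h_p^\cC$, of the finitely many hyperplanes $\ker\sigma$ with $\sigma\in\Sigma(\h_p^\cC)\setminus\{0\}$; conjugation by any $g\in G_0$ stabilizing $\h_p^\cC$ permutes these weights, so such a $g$ stabilizes $\Fm$, while conversely $g\Fm\subseteq\Fm$ forces $g\h_p^\cC=\h_p^\cC$ by Zariski density. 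Hence $\Nm_{G_0}(\Fm)=\Nm_{G_0}(\h_p^\cC)$ and $\Zm_{G_0}(\Fm)=\Zm_{G_0}(\h_p^\cC)$. Since $p$ is real and the $\GG_0$-action on $\h^\cC$ is defined over $\R$, the map $\varphi$ is $\Gamma$-equivariant, so it restricts to a homomorphism $\Nm_{G_0}(\Fm)^\sigma\to\Gamma_p^\sigma$ with kernel $\Zm_{G_0}(\Fm)^\sigma$; using that a real $g\in G_0^\sigma=\GG_0(\R)$ stabilizes $\h_p^\cC$ (resp.\ fixes it pointwise) iff it stabilizes $\h_p=\h_p^\cC\cap\g_1$ (resp.\ fixes it pointwise), we get $\Nm_{G_0}(\Fm)^\sigma=\Nm_{\GG_0(\R)}(\h_p)$ and $\Zm_{G_0}(\Fm)^\sigma=\Zm_{\GG_0(\R)}(\h_p)$, hence an injective homomorphism
\[
\Gamma_p(\g,\h)=\Nm_{\GG_0(\R)}(\h_p)/\Zm_{\GG_0(\R)}(\h_p)\ \hookrightarrow\ \Gamma_p^\sigma=\Gamma_p(\R).
\]

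Next I would identify the quotient. By Lemma~\ref{lem:weylr}(2), $\ker[\Ho^1\Zm_{G_0}(\h_p^\cC)\to\Ho^1\Nm_{G_0}(\h_p^\cC)]$ is in bijection with the set of $\Gamma_p(\g,\h)$-orbits in $\Gamma_p(p)\cap\h_p$; by Lemma~\ref{lem:weylr}(3) this set is a single $\Gamma_p(\R)$-orbit, and since the $\Gamma_p$-action on $\h_p^{\cC,\circ}$ is free (the stabilizer of any $q\in\h_p^{\cC,\circ}$ in $\Nm_W(W_p)$ equals $W_q=W_p$), it is a $\Gamma_p(\R)$-torsor. Choosing the base point $p$ trivializes it, turning $\Gamma_p(\g,\h)$-orbits into left cosets; thus the set of such orbits is in bijection with $\Gamma_p(\g,\h)\backslash\Gamma_p(\R)$, and the inversion map identifies the latter with $\Gamma_p(\R)/\Gamma_p(\g,\h)$. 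Composing these bijections yields assertion (1).

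For (2), assume $\Ho^1\Zm_{G_0}(\h_p^\cC)=1$. Then the kernel appearing in (1) is contained in a one-element set and contains the distinguished class, hence is trivial; by (1) the finite set $\Gamma_p(\R)/\Gamma_p(\g,\h)$ is then a singleton, i.e.\ the embedding of (1) is an equality $\Gamma_p(\g,\h)=\Gamma_p(\R)$.

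I do not expect a real obstacle: the argument is essentially descent bookkeeping on top of Lemmas~\ref{lem:Np} and~\ref{lem:weylr}. The point deserving care — the nearest thing to a difficulty — is verifying that $\varphi$ descends to $\R$, i.e.\ that it is $\Gamma$-equivariant and that the $\sigma$-invariants of its domain and kernel are exactly $\Nm_{\GG_0(\R)}(\h_p)$ and $\Zm_{\GG_0(\R)}(\h_p)$; this rests on $p$ being a real point and on $\Fm=\h_p^{\cC,\mathrm{reg}}$ being cut out of $\h_p^\cC$ by $\R$-rational weight data, both of which hold under the hypotheses of Lemma~\ref{lem:weylr}.
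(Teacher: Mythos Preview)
Your proof is correct and follows essentially the same route as the paper: both arguments deduce the corollary from Lemma~\ref{lem:weylr} and Lemma~\ref{lem:Np}, identifying $\Gamma_p(p)\cap\h_p$ as a free $\Gamma_p(\R)$-orbit and counting the $\Gamma_p(\g,\h)$-orbits inside it. Your construction of the embedding via the $\Gamma$-equivariance of $\varphi$ (after showing $\Nm_{G_0}(\Fm)=\Nm_{G_0}(\h_p^\cC)$) is more explicit than the paper's, which instead obtains the embedding from the orbit inclusion $\Gamma_p(\g,\h)(p)\subseteq\Gamma_p(\R)(p)$ together with freeness and then asserts that ``it is not hard to see that this natural set inclusion is a group homomorphism''; the underlying content is the same.
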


\begin{proof}
	
	(1) If  $\h^{\cC,\circ}_p = \h^{\cC, \mathrm{reg}}_p$  then $\Gamma_p (\g, \h)$  acts
	on $\h_p$   without fixed  point.   Then   the  orbit  $\Gamma_p (\g, \h) (p)$  is a   subset of   the orbit $\Gamma_p (\R) (p)$  by Lemma \ref{lem:weylr} (2).
It is not hard to see  that this natural set inclusion  is     group homomorphism.  The first  assertion
	of Corollary  \ref{cor:weylr} then  follows  from Lemma \ref{lem:weylr}.

The second assertion follows directly from the first one.
\end{proof}

\subsection{Conjugacy  and centralizers of mixed elements}\label{subs:mix}

Let  $x = p_x +e_x $   and $y = p_y + e_y$ be the   Jordan decompositions
of   homogeneous elements  $x ,y \in \g_1$  of a  real  (resp. complex) $\Z_m$-graded  semisimple
Lie algebra  $\g$ (resp. $\g^\cC )$.

The following proposition is straightforward.

\begin{proposition}\label{prop:mix1}  Elements $x, y \in \g_1 ^\cC $  (resp. in $\g_1$) are  in the same $G_0$-orbit  (resp. in the same $\GG_0 (\R)$-orbit)
	if  and  only if there  exists  an element $g_0 \in G_0$  (resp. $g_0 \in G_0 (\R)$) such that
	$\Ad _{g_0} p_x = p_y$ and $\Ad_{g_0} e_x$ and $e_y$  are  $\Zz_{G_0}(p_y)$-conjugate  (resp.  $\Zz_{ \GG_0 (\R)}(p_y)$ -conjugate), that is, they are  in the  same $\Zz_{G_0}(p_y)$-orbit  (resp.  $\Zz_{ \GG_0 (\R)}(p_y)$-orbit)  in the  $\Z_m$-graded   reductive   Lie algebra $\Zgg_{\g^\cC } (p_y)$  (resp. $\Zgg_\g (p_y)$).
\end{proposition}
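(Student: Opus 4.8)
The plan is to reduce the statement to two assertions and prove each. First, if $x$ and $y$ lie in the same $G_0$-orbit (resp.\ $\GG_0(\R)$-orbit), say $\Ad_{g_0} x = y$ with $g_0\in G_0$ (resp.\ $g_0\in \GG_0(\R)$), then applying $\Ad_{g_0}$ to the Jordan decomposition $x = p_x + e_x$ gives $y = \Ad_{g_0}p_x + \Ad_{g_0}e_x$. Since $\Ad_{g_0}$ is an automorphism of $\g^\cC$ (resp.\ of $\g$) preserving the grading (because $G_0$ commutes with $\theta$, and over $\R$ it is $\sigma$-equivariant), it sends semisimple elements to semisimple elements, nilpotent to nilpotent, and preserves the bracket, so $\Ad_{g_0}p_x$ is semisimple, $\Ad_{g_0}e_x$ is nilpotent, both lie in $\g_1^\cC$ (resp.\ $\g_1$) by Proposition~\ref{prop:gradedjordan}, and they commute. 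By uniqueness of the graded Jordan decomposition (Proposition~\ref{prop:gradedjordan}), $p_y = \Ad_{g_0}p_x$ and $e_y = \Ad_{g_0}e_x$. In particular $\Ad_{g_0}p_x = p_y$, and $\Ad_{g_0}e_x = e_y$, so trivially $\Ad_{g_0}e_x$ and $e_y$ are $\Zz_{G_0}(p_y)$-conjugate (resp.\ $\Zz_{\GG_0(\R)}(p_y)$-conjugate). This gives the ``only if'' direction.

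For the ``if'' direction, suppose $g_0\in G_0$ (resp.\ $\GG_0(\R)$) satisfies $\Ad_{g_0}p_x = p_y$, and that $\Ad_{g_0}e_x$ and $e_y$ are conjugate under $\Zz_{G_0}(p_y)$ (resp.\ $\Zz_{\GG_0(\R)}(p_y)$), say $\Ad_{g_1}(\Ad_{g_0}e_x) = e_y$ for some $g_1\in \Zz_{G_0}(p_y)$ (resp.\ $g_1\in \Zz_{\GG_0(\R)}(p_y)$). Set $g = g_1 g_0$. Then $g\in G_0$ (resp.\ $\GG_0(\R)$), and $\Ad_g p_x = \Ad_{g_1}(\Ad_{g_0}p_x) = \Ad_{g_1}p_y = p_y$ since $g_1$ centralizes $p_y$; moreover $\Ad_g e_x = \Ad_{g_1}(\Ad_{g_0}e_x) = e_y$. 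Hence $\Ad_g x = \Ad_g(p_x + e_x) = p_y + e_y = y$, so $x$ and $y$ are in the same orbit. One should note here that $\z_{\g^\cC}(p_y)$ (resp.\ $\z_\g(p_y)$) is indeed a $\Z_m$-graded reductive Lie algebra: the grading is induced from that of $\g^\cC$ because $p_y$ is homogeneous of degree $1$ and hence $\theta$-semi-invariant, so $\z_{\g^\cC}(p_y)$ is $\theta$-invariant, and reductivity follows from Lemma~\ref{lem:centr1} (or directly, $\z_{\g^\cC}(p_y)$ is the centralizer of the semisimple element $p_y$ in a reductive algebra). The statements ``$\Ad_{g_0}e_x$ and $e_y$ are in the same $\Zz_{G_0}(p_y)$-orbit in $\z_{\g^\cC}(p_y)$'' make sense because both elements lie in $\z_{\g^\cC}(p_y)$, indeed in its degree-$1$ part.

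There is really no substantial obstacle here: the whole argument is just the $G_0$-equivariance and uniqueness of the graded Jordan decomposition combined with a bookkeeping of stabilizers. The only point that requires a line of care is checking that $\Ad_{g_0}$ preserves the graded Jordan decomposition, which is where Proposition~\ref{prop:gradedjordan} is used together with the fact that $G_0$ (being connected, or by construction the fixed-point group of $\Theta$) acts on $\g^\cC$ by graded automorphisms, and in the real case that it acts by $\sigma$-equivariant automorphisms so that $\Ad_{g_0}$ of a real element is real. I expect the authors' proof to be a few lines, essentially as above, which is why they call it ``straightforward.''
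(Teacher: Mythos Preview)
Your proof is correct and is exactly the straightforward argument the paper has in mind; indeed, the paper gives no proof at all, simply stating that the proposition is straightforward. Your write-up makes explicit the two ingredients (equivariance and uniqueness of the graded Jordan decomposition for the ``only if'' direction, and composing with an element of the centralizer for the ``if'' direction) that justify this.
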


Vinberg observed that  any  $\Z_m$-graded   reductive complex  Lie algebra $\g$  is a direct  sum of
its commutative  ideal $\g _a$   and     its   derived  algebra  $D\g$   with  induced  grading,
\cite[\S 3]{Vinberg1976}. The analogous  assertion    also holds   for   $\Z_m$-graded   reductive  real Lie algebras   $\g$ by considering the  complexification $\g^\cC$.

 Assume  that  $p_x +e_x$ is the    Jordan decomposition  of a homogeneous element $x\in \g_1 ^\cC$  (resp. in $\g_1$)  in a
 $\Z_m$-graded   complex (resp.  real) semisimple Lie algebra  $\g ^\cC$  (resp. $\g$). Then $e_x $ lies in the  derived algebra $D(\Zgg_{\g^\cC} (p_x))$ of
$\Zgg_{\g^\Cc}(p)$   (resp.   in  $D(\Zgg_{\g} (p_x))$  of
$\Zgg_{\g}(p_x)$).   Note   that  $\Zz_{G} (p_x)$  is  a connected    algebraic    reductive   group  and
its can  be  written as   a  product   of  its   connected component  of its center  $Z (\Zz_{G} (p_x))$  and
 a connected semisimple   Lie   subgroup    $D\Zz_{G} (p_x)$  whose Lie algebra  is the derived  Lie algebra
 $D(\Zgg_{\g} (p_x))$.
 Thus we have    the  projection
 $$\pi: \Zz_{G}  (p_x) \to    D\Zz_{G} (p_x).$$
 The    center $Z(\Zz_{G} (p_x))$ acts   trivially  on    $D(\Zgg_{\g} (p_x))$.
 Denote  by $D\Zz_{G_0} (p_x)$ the image $\pi (\Zz_{G_0} (p_x))\subset D(\Zz_{G} (p_x))$.  Hence  we obtain  from
 Proposition \ref{prop:mix1}     immediately \cite[p. 80]{VE1978}

\begin{proposition}\label{prop:mix2} Let $ p_x +e_x$ and  $p_x + e_y$ be the   Jordan decomposition    of two    homogeneous  elements   $  x, y \in \g_1^\cC  \subset \g^\cC $  (resp. in $\g_1 \subset \g$)  with $p_x = p_y = p$.   Then     $p + e_x $ and $p + e_y$  are in the same  $G_0$-orbit  (resp. $\GG_0 (\R)$-orbit), if and  only if  $e_x$ and $e_y$  are in the same  orbit  of the  action of
	$D\Zz_{G_0}(p)$  (resp.  of $D\Zz_{\GG_0 (\R)} (p)$)  on the         $\Z_m$-graded   derived  algebra     $D\Zgg_{\g^\cC }(p)$ of $\Zgg_{\g^\cC }(p)$ (resp.  of $D\Zgg_\g (p)\subset \Zgg_{\g } (p)$).
\end{proposition}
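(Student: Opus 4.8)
The statement to prove is Proposition~\ref{prop:mix2}: given $x=p+e_x$ and $y=p+e_y$ two homogeneous elements of $\g_1^\cC$ (resp.\ $\g_1$) with the same semisimple part $p$, we have that $p+e_x$ and $p+e_y$ are $G_0$-conjugate (resp.\ $\GG_0(\R)$-conjugate) if and only if $e_x$ and $e_y$ lie in the same orbit of $D\Zz_{G_0}(p)$ (resp.\ $D\Zz_{\GG_0(\R)}(p)$) acting on $D\Zgg_{\g^\cC}(p)$ (resp.\ $D\Zgg_\g(p)$).

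The plan is to deduce this from Proposition~\ref{prop:mix1} together with Vinberg's decomposition of the $\Z_m$-graded reductive Lie algebra $\Zgg_{\g^\cC}(p)$ as the direct sum of its commutative ideal $Z(\Zgg_{\g^\cC}(p))$ and its derived algebra $D\Zgg_{\g^\cC}(p)$. First I would invoke Proposition~\ref{prop:mix1} with $p_x=p_y=p$: since any conjugating element must send $p$ to $p$, the element $g_0$ lies in $\Zz_{G_0}(p)$, and $p+e_x\sim p+e_y$ if and only if $e_x$ and $e_y$ are $\Zz_{G_0}(p)$-conjugate as nilpotent homogeneous elements of the graded reductive Lie algebra $\Zgg_{\g^\cC}(p)$. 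Now both $e_x$ and $e_y$ are nilpotent elements of $\Zgg_{\g^\cC}(p)$, hence (being nilpotent) they lie in the derived subalgebra $D\Zgg_{\g^\cC}(p)$, which carries the induced grading; the commutative part contributes only semisimple elements. So the question reduces to conjugacy of $e_x,e_y$ inside $D\Zgg_{\g^\cC}(p)$ under the action of $\Zz_{G_0}(p)$.

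The next step is to observe that the connected reductive group $\Zz_G(p)$ (connected by \cite[Theorem 3.14]{Steinberg1975}) factors as $Z(\Zz_G(p))^\circ\cdot D\Zz_G(p)$, where $D\Zz_G(p)$ is the connected semisimple subgroup with Lie algebra $D\Zgg_\g(p)$, and the central factor $Z(\Zz_G(p))$ acts trivially on $D\Zgg_\g(p)$ because the adjoint action of the center of a reductive Lie algebra on its derived subalgebra is trivial. Hence the action of $\Zz_{G_0}(p)$ on $D\Zgg_{\g^\cC}(p)$ factors through the projection $\pi\colon \Zz_{G_0}(p)\to D\Zz_{G_0}(p)$, and the $\Zz_{G_0}(p)$-orbit of $e_x$ equals the $D\Zz_{G_0}(p)$-orbit of $e_x$. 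Combining this with the reduction of the previous paragraph gives the complex statement; the real statement ($\GG_0(\R)$, $D\Zz_{\GG_0(\R)}(p)$) follows by taking $\sigma$-fixed points throughout, i.e.\ by the same argument applied verbatim over $\R$ using Proposition~\ref{prop:mix1} in its real form.

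Since Proposition~\ref{prop:mix2} is explicitly flagged as ``straightforward'' and is attributed to \cite[p.~80]{VE1978}, I do not anticipate a serious obstacle; the one point that requires a little care is the claim that $e_x$, being nilpotent in $\Zgg_{\g^\cC}(p)$, automatically lands in the derived subalgebra and that the graded Jordan decomposition (Proposition~\ref{prop:gradedjordan}) is compatible with Vinberg's splitting $\Zgg_{\g^\cC}(p)=\g_a^\cC\oplus D\Zgg_{\g^\cC}(p)$. This is immediate because $\g_a^\cC$ is the center, hence consists of semisimple elements and is $\theta$-invariant, so a nilpotent element has zero component there; I would state this in one line and then conclude. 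Thus the whole proof amounts to: (i) apply Proposition~\ref{prop:mix1}; (ii) note nilpotents live in $D\Zgg$; (iii) note the center of $\Zz_{G_0}(p)$ acts trivially on $D\Zgg$ so the action factors through $D\Zz_{G_0}(p)$; (iv) repeat over $\R$.
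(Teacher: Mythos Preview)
Your proposal is correct and matches the paper's approach exactly: the paper also derives Proposition~\ref{prop:mix2} ``immediately'' from Proposition~\ref{prop:mix1} together with the observations (made in the paragraph preceding the statement) that $e_x$ lies in the derived algebra $D\Zgg_{\g^\cC}(p)$, that $\Zz_G(p)$ factors as center times $D\Zz_G(p)$, and that the center acts trivially on $D\Zgg_\g(p)$. Your careful remark that nilpotents have zero component in the commutative ideal is a helpful clarification of what the paper leaves implicit.
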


Now  we   shall relate   the centralizer  $\Zz_{G_0} (p +e)$  of  a  mixed  homogeneous  element  $p+e$  with
  the   centralizer   $\Zz_{G_0}  (p)$.

\begin{theorem}\label{thm:centrmix}  Let  $ p +e$ be  the      Jordan decomposition  of a  homogeneous  element  $p +e \in \g_1$ in a    $\Z_m$-graded  semisimple Lie  algebra $\g$ over  $\C$.  Then   $\Zz_{G_0} (p +e)  $  is  a subgroup    of  the   reductive  algebraic   group $\Zz_{G_0} (p)$   and
	the Lie  algebra    $\Zgg_{\g_0} (p + e)$   of the group $\Zz_{G_0} (p +e)  $     has the following
	 Levi decomposition
	 $$\Zgg_{\g_0} (p + e)  =   (\Zgg_{\g_0}  (p) \cap [\Zgg_{\g_0} (p) , e]) \oplus \Zgg_{\g_0} (p) [  t(e)]$$
	where   $t(e)$ is  the   $\ssl_2$-triple  associated  to the nilpotent  element  $e$ in
	$D\Zgg_{\g} (p)$.
\end{theorem}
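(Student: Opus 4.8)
The plan is to reduce the statement to the Levi decomposition already established for a single homogeneous nilpotent element (Lemma~\ref{lem:red-part}, equivalently its group form Theorem~\ref{thm:misha}), applied not to $\g$ itself but to the reductive centralizer $\mathfrak a:=\Zgg_\g(p)$, which inherits a $\Z_m$-grading.

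First I would record that $\Zz_{G_0}(p+e)=\Zz_{G_0}(p)\cap\Zz_{G_0}(e)$, so in particular $\Zz_{G_0}(p+e)\subseteq\Zz_{G_0}(p)$, and correspondingly $\Zgg_{\g_0}(p+e)=\Zgg_{\g_0}(p)\cap\Zgg_{\g_0}(e)$. The inclusion $\supseteq$ is trivial; for $\subseteq$ one uses uniqueness of the Jordan decomposition: $\ad(p+e)=\ad p+\ad e$ is the Jordan decomposition of $\ad(p+e)$ in $\mathrm{End}(\g)$, so if $g\in G_0$ fixes $p+e$ then $\Ad(g)$ commutes with $\ad(p+e)$, hence with $\ad p$ and $\ad e$ separately, and $\ad$ is injective as $\g$ is semisimple, so $g$ fixes both $p$ and $e$ (the corresponding Lie algebra identity follows from the same argument applied through $\ad$). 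I would then note that $\Zz_{G_0}(p)$ is reductive: this is standard for stabilizers of semisimple elements in a $\theta$-group, since $p$ semisimple means the orbit $G_0\cdot p$ is closed (Remark~\ref{rem:jm}(4)) and $G_0$ is reductive, so reductivity of the stabilizer follows from Matsushima's criterion (see also \cite[\S 2]{Vinberg1976}); this gives the first assertion of the theorem.

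Next I would locate $e$ precisely inside $\mathfrak a$. Since $\theta(p)=\om\,p$, the subalgebra $\mathfrak a$ is $\theta$-stable, hence $\Z_m$-graded with $\mathfrak a_i=\Zgg_\g(p)\cap\g_i$, and it is reductive by Lemma~\ref{lem:centr1}. Writing $\mathfrak a=Z(\mathfrak a)\oplus D\mathfrak a$ as a direct sum of $\Z_m$-graded ideals with $[Z(\mathfrak a),D\mathfrak a]=0$ (Vinberg, \cite[\S 3]{Vinberg1976}), set $\mathfrak s:=D\mathfrak a$, a $\Z_m$-graded semisimple Lie algebra. Since $[p,e]=0$ we have $e\in\mathfrak a_1$; writing $e=z+e'$ with $z\in Z(\mathfrak a)_1$, $e'\in\mathfrak s_1$, the fact that $z$ and $e'$ commute and $e$ is nilpotent forces both to be nilpotent, while $z$, being central in the reductive algebra $\mathfrak a$, is semisimple, so $z=0$ and $e\in\mathfrak s_1$. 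By the graded Jacobson--Morozov lemma (Lemma~\ref{lem:jm1}), $e$ lies in a homogeneous $\ssl_2$-triple $t(e)=(h,e,f)$ in $\mathfrak s$, with $h\in\mathfrak s_0\subseteq\mathfrak a_0$ and $f\in\mathfrak s_{-1}\subseteq\mathfrak a_{-1}$; this is the triple of the statement, and by Lemma~\ref{lem:2} its choice affects the subalgebras below only up to conjugacy.

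Finally I would apply Lemma~\ref{lem:red-part} --- valid for $\Z_m$-graded reductive Lie algebras by Remark~\ref{rem:grading}(3) --- to $\mathfrak a$ and the homogeneous nilpotent $e\in\mathfrak a_1$ contained in $t(e)$: this yields that $\Zgg_{\mathfrak a_0}(t(e))$ is a reductive Levi subalgebra of $\Zgg_{\mathfrak a_0}(e)$ and that $\Zgg_{\mathfrak a_0}(e)=\bigl(\Zgg_{\mathfrak a_0}(e)\cap[\mathfrak a,e]\bigr)\oplus\Zgg_{\mathfrak a_0}(t(e))$. Substituting $\mathfrak a_0=\Zgg_{\g_0}(p)$ and, from the first step, $\Zgg_{\mathfrak a_0}(e)=\Zgg_{\g_0}(p)\cap\Zgg_{\g_0}(e)=\Zgg_{\g_0}(p+e)$, and matching $\Zgg_{\mathfrak a_0}(t(e))$ with the symbol $\Zgg_{\g_0}(p)[t(e)]$ of the statement, gives the asserted decomposition. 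I do not expect a serious obstacle: the nontrivial content --- the Barbasch--Vogan--Kostant-type Levi decomposition of the centralizer of a homogeneous nilpotent --- is already isolated in Lemma~\ref{lem:red-part}/Theorem~\ref{thm:misha}, and the present theorem is essentially a transport of that result along the inclusion $\mathfrak a=\Zgg_\g(p)\hookrightarrow\g$; the only points needing care are the reductivity of $\Zz_{G_0}(p)$ and the verification that $e$ really lies in $D\mathfrak a$, both handled above.
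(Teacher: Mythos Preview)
Your proposal is correct and follows essentially the same route as the paper: both reduce to the identity $\Zz_{G_0}(p+e)=\Zz_{\Zz_{G_0}(p)}(e)$ via uniqueness of the Jordan decomposition and then invoke Lemma~\ref{lem:red-part} inside the graded reductive algebra $\mathfrak a=\Zgg_\g(p)$. You supply more detail than the paper (reductivity of $\Zz_{G_0}(p)$, the verification $e\in D\mathfrak a$, the appeal to Remark~\ref{rem:grading}(3)), but the argument is the same.
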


\begin{proof}  Since the  Jordan  decomposition is unique  we  have
	$$ \Zz_{G_0} (p +e) = \Zz_{\Zz_{G_0} (p)} (e).$$
	Using this  we  obtain  Theorem   \ref{thm:centrmix} immediately from Lemma \ref{lem:red-part}.
	\end{proof}


\section{Computational methods}
\label{sec:compmeth}

In this section we discuss the computational problems that we dealt with
in order to compute the various objects that play a role in our investigation.

First we describe a special construction of the representation
$\rho : \SL(9,\C)\to \GL( \bigwedge^3 \C^9)$, also used by Vinberg
and Elashvili (\cite{VE1978}, see also \cite[Example 3.3.v]{Le2011}).
Let $\g^\cC$ denote the simple complex Lie algebra
of type $E_8$. We let $\Phi$ denote its root system with basis of simple
roots $\Delta$. We use a fixed Chevalley basis of $\g^\cC$. This basis
consists of vectors $h_\alpha$ for $\alpha\in \Delta$ and $x_\alpha$ for
$\alpha\in \Phi$ (see \cite{Humphreys1980}, Chapter 25). We use the following
enumeration of the Dynkin diagram
\begin{center}
\begin{tikzpicture}
\draw (-1,0.5) node[anchor=east] {};
\node[dnode,label=below:{\small $1$}] (1) at (0,0) {};
\node[dnode,label=above:{\small $2$}] (2) at (2,1) {};
\node[dnode,label=below:{\small $3$}] (3) at (1,0) {};
\node[dnode,label=below:{\small $4$}] (4) at (2,0) {};
\node[dnode,label=below:{\small $5$}] (5) at (3,0) {};
\node[dnode,label=below:{\small $6$}] (6) at (4,0) {};
\node[dnode,label=below:{\small $7$}] (7) at (5,0) {};
\node[dnode,label=below:{\small $8$}] (8) at (6,0) {};
\path (1) edge[sedge] (3)
(3) edge[sedge] (4)
(4) edge[sedge] (5)
edge[sedge] (2)
(5) edge[sedge] (6)
(6) edge[sedge] (7)
(7) edge[sedge] (8);
\end{tikzpicture}
\end{center}

We write $\Delta=\{\alpha_1,\ldots,\alpha_8\}$. Then the set consisting of
$h_{\alpha_i}$, $x_{\alpha_i}$, $x_{-\alpha_i}$ for $1\leq i\leq 8$ is a
canonical generating set of $\g^\cC$. Fix a primitive third root of unity
$\zeta\in\C$. For $i\neq 2$ we set $h_{\alpha_i}'=h_{\alpha_i}$,
$x'_{\alpha_i} = x_{\alpha_i}$, $x'_{-\alpha_i}=x_{-\alpha_i}$. Furthermore we set
$h'_{\alpha_2}=h_{\alpha_2}$, $x'_{\alpha_2}=\zeta x_{\alpha_2}$,
$x'_{-\alpha_2} = \zeta^2 x_{-\alpha_2}$. Then the set consisting of the
$h'_{\alpha_i}$, $x'_{\alpha_i}$, $x'_{-\alpha_i}$ for $1\leq i\leq 8$ is also a
canonical generating set of $\g^\cC$. Hence mapping $h_{\alpha_i}\mapsto
h'_{\alpha_i}$, $x_{\alpha_i} \mapsto x'_{\alpha_i}$, $x_{-\alpha_i}\mapsto x'_{-\alpha_i}$
extends to a unique automorphism $\theta$ of $\g^\cC$ (\cite{jac}, Chapter IV,
Theorem 3). We let $\g_i^\cC$ be the eigenspace of $\theta$ corresponding to
the eigenvalue $\zeta^i$. Then $\g_0^\cC$ is isomorphic to $\ssl(9,\C)$. The
space $\g_1^\cC$ is a $\g_0^\cC$-module, and hence a $\ssl(9,\C)$-module,
isomorphic to $\bigwedge^3 \C^9$.

From the construction it is clear that
the spaces $\g_i^\cC$ are defined over $\R$, that is, they have bases whose
elements are $\R$-linear combinations of the basis elements of $\g^\cC$.
By $\g_i$ we denote the $\R$-span of these bases.
Furthermore, the $h_{\alpha_i}$ span a Cartan subalgebra of $\g_0^\cC$ that
is split over $\R$. Hence we have an isomorphism $\psi^\cC : \ssl(9,\C)
\to \g_0^\cC$ which restricts to an isomorphism $\psi : \ssl(9,\R) \to
\g_0$. Because $\SL(9,\C)$ is simply connected, $\psi^\cC$ lifts to a
homomorphism of algebraic groups $\Psi^\cC : \SL(9,\C) \to G_0$, which in turn
restricts to a homomorphism $\Psi: \SL(9,\R)\to G_0(\R)$.

The map $\Psi^\cC $ is not explicitly given, but we do have its differential
$\psi^\cC  : \ssl(9,\C) \to \g_0^\cC $ explicitly (that is, we have an algorithm
that computes $\psi^\cC (x)$ for any given $x\in \ssl(9,\C)$).
Via the map $\psi^\cC $ we let $\ssl(9,\C)$ act on $\g_1^\cC $, and we fix
a $\ssl(9,\C)$-module isomorphism $\varphi^\cC  : \g_1^\cC \to \bigwedge^3(\C^9)$.
This map is not unique, but if we fix a choice of positive roots in the root
system of $\ssl(9,\C)$, then the highest-weight vectors in $\g_1^\cC $ and
$\bigwedge^3(\C^9)$ are uniquely determined (up to multiplication by nonzero
scalars), and they in turn determine
$\varphi^\cC $ up to a scalar multiple. Furthermore, $\varphi^\cC $ is also a
$\SL(9,\C)$-module isomorphism.

We use this maps to realize the action of $\SL(9,\C)$ on $\g_0^\cC$ and
on $\g_1^\cC$. For $g\in \SL(9,\C)$ and $x\in \g_0^\cC $, $u\in \g_1^\cC $ we
have $g\cdot x = \psi^\cC ( g (\psi^\cC )^{-1}(x) g^{-1})$ and
$g\cdot u = (\varphi^\cC )^{-1}( g\cdot \varphi^C(u) )$.

\subsection{Computing centralizers}\label{sec:compcen}

Let $e\in \g_1^\cC $ be a nilpotent element lying in the homogeneous
$\ssl_2$-triple $t=(h,e,f)$. We want to compute the centralizer
$$\Zm_0(t) = \{ g\in \SL(9,\C) \mid g\cdot h=h, g\cdot e=e, g\cdot f=f\}.$$
Moreover, we require a description of $\Zm_0(t)$ that is useful for the
determination of the cohomology set $\Ho^1\Zm_0(t)$ (which we use to find
the real forms of the complex orbit of $e$, see Theorem \ref{thm:galois1}).
 We cannot say
precisely what that means, but as a general rule it means that we need a
description from which we can easily read off the structure of the group
(for example, we need to know the isomorphism type of the identity
component, and the structure of the component group).

As a first step we determine a set of polynomial equations that
define $\Zm_0(t)$ as a subgroup of $\SL(9,\C)$. Observe that
by Lemma \ref{lem:fisf} we have that $\Zm_0(t)$ consists of $g\in \SL(9,\C)$
with $g\cdot h=h$ and $g\cdot e=e$.
We note that the set of $g\in \SL(9,\C)$ with $g\cdot h=h$ is the same
as the set of $g\in \SL(9,\C)$ with $g (\psi^\cC )^{-1}(h) =(\psi^\cC )^{-1}(h)g$.
In order to compute the corresponding polynomial equations we
let $g$ be a matrix whose entries are indeterminates, $x_{11}, x_{12},\ldots,
x_{99}$. Then the equality $g (\psi^\cC )^{-1}(h) =(\psi^\cC )^{-1}(h)g$ simply
translates to a set of linear equations in the $x_{ij}$.
For the equations corresponding to $g\cdot e = e$ we set $u=\varphi^\cC (e)$.
Again we let $g$ be a $9\times 9$-matrix whose entries are indeterminates.
Then $g\cdot u$ is a linear combination of basis elements of $\wedge^3 (\C^9)$
with coefficients that are polynomials in the indeterminates. This way we
get a set of polynomials describing the set of $g\in \SL(9,\C)$ with
$g\cdot e = e$.
Finally we add the polynomial $\det(g)-1$ to the set of polynomials.
In order to arrive at a description of the group $\Zm_0(t)$ we first compute a
Gr\"obner basis of the ideal generated by the polynomials that we obtained.
For a general introduction into the theory of Gr\"obner bases we refer to
\cite{CLO15}.
Here we use a lexicographical monomial order to get a Gr\"obner basis with a
triangular structure. In many cases the Gr\"obner basis computation succeeds
and allows us to read off the group structure.

In quite a few cases however, the Gr\"obner basis computation turned out to
be too difficult. Here we list the methods that we have applied in those cases.

First of all, we note that the centralizer $\z_0(t) = \{ x\in \ssl(9,\C) \mid
[\psi^\cC (x),h] = [\psi^\cC (x),e]=[\psi^\cC (x),f]=0\}$ can readily be obtained by
solving a set of linear equations. We have that $\z_0(t) = \Lie(\Zm_0(t))$,
so from $\z_0(t)$ the identity component of $\Zm_0(t)$ can in principle be
determined. This is particularly straightforward if the identity component
is a torus. In that case $\Zm_0(t)^\circ$ is determined by a lattice in the
character group of a maximal torus containing it; and this lattice
is easily computed from $\z_0(t)$ (see \cite{Graaf2017}, Example 4.2.5).
If $\Zm_0(t)$ is semisimple then from $\z_0(t)$ we can deduce the root datum
of $\Zm_0(t)^\circ$, and hence its isomorphism type. In general $\Zm_0(t)^\circ$
is the product of its semisimple part and a central torus. However, it can
happen that these two groups have a finite but nontrivial intersection.
This intersection needs to be determined as well.

Now we give some examples of situations where additional tricks helped to
determine $\Zm_0(t)$.

\begin{example}\label{exa:cencomps}
\begin{enumerate}
\item Suppose that $\Zm_0(t)^\circ$ is a 1-dimensional torus consisting of
  elements $T(a)$ for $a\in \C^\times$.
  This torus has two algebraic automorphisms,
  the identity and the map $T(a)\mapsto T(a^{-1})$. As $\Zm_0(t)^\circ$ is normal
  in $\Zm_0(t)$ we see that the latter group is the disjoint union of
  $$A=\{ g\in \Zm_0(t) \mid gT(a)g^{-1} = T(a)\}, B=\{ g\in \Zm_0(t) \mid
  gT(a)g^{-1} = T(a^{-1})\}.$$
  Let $u_0$ be a fixed nonzero element of $\z_0(t)$. Then
  \begin{align*}
A&=\{ g\in \Zm_0(t) \mid gu_0g^{-1} = u_0\} = \{g\in \Zm_0(t) \mid gu_0 = u_0g\},\\
B&=\{ g\in \Zm_0(t) \mid gu_0g^{-1} = -u_0\} = \{g\in \Zm_0(t) \mid gu_0 = -u_0g\}.
  \end{align*}
  So we compute two Gr\"obner bases: one of the ideal generated by the
  original polynomials along with linear polynomials equivalent to
  $gu_0 = u_0g$. For the second we add the linear polynomials that are
  equivalent to $gu_0 = -u_0g$. These two Gr\"obner basis computations
  are significantly easier than the original one.
\item Suppose that $\z_0(t)$ has a semisimple part $\s$. Then $\Zm_0(t)$ acts
  by conjugation on $\s$. Let $S$ be the connected subgroup of $\Zm_0(t)$
  with Lie algebra $\s$. Then the inner automorphisms of $\s$ are realized
  as conjugation by elements of $S$.

  Let $h_i,e_i,f_i$ for $1\leq i\leq \ell$ be
  a canonical generating set of $\s$. This means that $e_i$ are root vectors
  corresponding to the positive simple roots, the $f_i$ are root vectors
  corresponding to the negative simple roots and the $h_i=[e_i,f_i]$ satisfy
  $[h_i,e_i]=2e_i$, $[h_i,f_i]=-2f_i$ (see \cite{jac}, Chapter IV).
  Canonical generating sets are by no means unique. But
  it is well known that if $h_i',e_i',f_i'$ is a second canonical generating
  set then there is an inner automorphism $\sigma$ of $\s$ such
  that $\sigma(h_i') = h_{\pi(i)}$, $\sigma(e_i') = e_{\pi(i)}$,
  $\sigma(f_i') = f_{\pi(i)}$, where $\pi$ is a permutation of
  $\{1,\ldots,\ell\}$. Here $\pi$ corresponds to a diagram automorphisms of
  the Dynkin diagram of $\s$.

  This implies the following. Let $h_i,e_i,f_i$ for $1\leq i\leq \ell$ be
  a fixed canonical generating set of $\s$. Then each component of
  $\Zm_0(t)$ has an element whose conjugation action on $\s$ maps
  $h_i \mapsto h_{\pi(i)}$, $e_i\mapsto e_{\pi(i)}$, $f_i\mapsto f_{\pi(i)}$,
  where $\pi$ is a diagram automorphism of the Dynkin diagram of $\s$.
  Now for example $gh_ig^{-1} = h_{\pi(i)}$ is the same (for invertible $g$)
  as $gh_i = h_{\pi(i)}g$. This means that the condition that $g$ conjugates
  $h_i$ to $h_{\pi(i)}$ can be expressed by linear equations in the entries of
  $g$. Now if we want to find an element of each component of $\Zm_0(t)$ then
  we add the resulting linear equations to the original polynomial equations.
  This usually leads to a much easier Gr\"obner basis computation.
  The resulting polynomial equations tend to be easily solvable. So we can
  try to determine the component group this way. Of course, it may happen
  that the same $\pi$ is realized by elements in more then one component.

  We illustrate this with the determination of $\Zm_0(t)$ in case {\bf 54} of
  the list of nilpotent orbits. Here $\z_0(t)$ is of type $A_1$, and the
  9-dimensional module splits as a direct sum of modules of dimensions 2, 3, 4.
  Let $v_1,\ldots,v_9$ denote the standard basis of the 9-dimensional module.
  Then the submodules are $\langle v_6,v_7\rangle$, $\langle v_3,v_4,v_5\rangle$,
  $\langle v_1,v_2,v_8,v_9\rangle$. So a matrix in $\Zm_0(t)^\circ$ consists of
  three blocks of respective sizes $2\times 2$, $3\times 3$ and $4\times 4$.
  The rows and columns 6, 7 have a block of size $2\times 2$. So on that
  position we have a copy of $\SL(2,\C)$, and the other blocks contain
  homomorphic images of that group. The Lie algebra $\z_0(t)$ has only inner
  automorphisms. So
  each coset of the component group contains an element that is the
  identity on the Lie algebra. Adding the corresponding linear equations we
  find that the subgroup of $\Zm_0(t)$ that acts as the identity on $\z_0(t)$
  consists of $diag(t,t,t^4,t^4,t^4,t,t,t,t)$, where $t^6=1$. For $t=1$ and
  $t=-1$ these elements lie in the identity component of $\Zm_0(t)$.
  Hence $\Zm_0(t) \cong \SL(2,\C)\times \mu_3$.
\item If the identity component is a torus whose weight spaces on the
  natural 9-dimensional module are all 1-dimensional, then we can use the
  corresponding characters to determine the component group. We illustrate
  this by the computation of $\Zm_0(p_1+2p_2)$, where $p=p_1+2p_2$ is an
  element of the third canonical set $\Fm_3$
  considered in Section \ref{subsec:semcent}.

  From computing the Lie algebra we see that $T_4=\Zm_0(p)^\circ$ is a
  4-dimensional torus consisting of
$$T_4(t_1,t_2,t_3,t_4)=
\diag(t_1,t_2,(t_1t_2)^{-1},t_3,t_4,(t_3t_4)^{-1},(t_1t_3)^{-1},(t_2t_4)^{-1},
t_1t_2t_3t_4),$$
for $t_1,t_2,t_3,t_4\in \C^\times$.
Let ${\sf X}(T_4)$ denote its character lattice,
whose group operation we write additively. Then ${\sf X}(T_4) =
\Z \mu_1+\cdots +\Z \mu_4$, where $\mu_i(T_4(t_1,t_2,t_3,t_4)) =
t_i$. Let ${\sf X}_9$ denote the set of characters of $T_4$ that correspond
to the natural $9$-dimensional module. Then
$${\sf X}_9 = \{ \mu_1,\mu_2,\mu_3,\mu_4,-\mu_1-\mu_2,\ldots,\mu_1+\mu_2+\mu_3+\mu_4\}.
$$
If $g\in \GL(9,\C)$ normalizes $T_4$ then for $\mu \in {\sf X}(T_4)$ we have
that
$g\cdot \mu$, defined by $(g\cdot \mu)(t) = \mu(g^{-1}tg)$, also lies in
${\sf X}(T_4)$. The map $\mu \mapsto g\cdot \mu$ is an automorphism of ${\sf X}(T_4)$
and thus can be represented by an element of $\GL(4,\Z)$.
Moreover, $\mu \in {\sf X}_9$ if and only if $g\cdot \mu \in {\sf X}_9$. Now for an element
of $\GL(4,\Z)$ that stabilizes ${\sf X}_9$ we only have 9 choices for the image of
each $\mu_i$. So we can write down all possibilities and check by computer
which correspond to elements of $\GL(4,\Z)$ stabilizing ${\sf X}_9$. The result
is a group $\mathcal{G}_{72}$
of order 72 generated by $\pi_1$, $\pi_2$, $\pi_3$, where
\begin{enumerate}
\item $\pi_1$ interchanges $\mu_1$, $\mu_4$ and fixes $\mu_2$, $\mu_3$,
\item $\pi_2$ maps $\mu_1\mapsto \mu_2\mapsto \mu_4\mapsto \mu_3\mapsto \mu_1$,
\item $\pi_3$ fixes $\mu_1$, $\mu_2$ and maps $\mu_3\mapsto -\mu_1-\mu_3$,
  $\mu_4 \mapsto -\mu_2-\mu_4$.
\end{enumerate}

Write ${\sf X}_9 = \{\nu_1,\ldots,\nu_9\}$. The weight spaces corresponding to these
characters are spanned by the standard basis vectors $v_1,\ldots,v_9$.
Let $g\in \GL(9,\C)$ normalize $T_4$ and let $\pi\in S_9$ be such that
$g\cdot \nu_i = \nu_{\pi(i)}$. A small calculation shows that $gv_i$ lies in
the weight space corresponding to $\nu_{\pi(i)}$ so that $gv_i$ is a nonzero
scalar multiple of $v_{\pi(i)}$. Hence $g=M_\pi U$, where $M_\pi \in \GL(9,\C)$
is such that $M_\pi(v_i) = v_{\pi(i)}$ and $U\in T_9$, where $T_9$ is the
standard 9-dimensional torus of $\GL(9,\C)$.

The possibilities for the permutations $\pi$ follow directly from the group
$\mathcal{G}_{72}$. For each of the possible $\pi$ we determine the intersection
of $\Zm_0(p)$ and $M_\pi T_9$ (this can be done by Gr\"obner bases as we know
the defining equations for the group $\Zm_0(p)$). It turns out that this
intersection is either empty or equal to $M_\pi T_4$. Hence the component group
consists of the cosets of the $M_\pi$ where $\pi$ is such that the intersection
is not empty.
\item The next example concerns just one case. We want to compute the
  centralizer $\Zm_0(t)$ of the homogeneous $\ssl_2$-triple $t=(h,e,f)$ with
  $$e=e_{137}-e_{246}-e_{247}+e_{348}-e_{358}+e_{368}+e_{458}+e_{569}.$$
  (This is case {\bf 31} in Section \ref{sec:nilpcen}.) From the Lie algebra
  of the centralizer we know that $\Zm_0(t)$ is a finite group.

  We let $e_1,\ldots,e_9$ denote the standard basis of $\C^9$. Let
  $V_4 = \langle e_1,e_2,e_8,e_9\rangle$, $V_5=\langle e_3,\ldots,e_7\rangle$.
  Then the elements of $\Zm_0(h)$ consist of block-matrices, with a
  $4\times 4$ and a $5\times 5$-block, the first one stabilizing $V_4$ and
  the second one stabilizing $V_5$ So we identify $\Zm_0(h)$ with
  $$\{ (g_1,g_2) \in \GL(V_5)\times \GL(V_4) \mid \det g_1 \det g_2 = 1\}.$$
  Write $v_1=e_3,\ldots,v_5=e_7$, $w_1=e_1$, $w_2=e_2$, $w_3=e_8$, $w_4=e_9$.
  Then with $u_1=v_1\wedge v_5$, $u_2= -v_2\wedge v_4 -v_2\wedge v_5$,
  $u_3=v_1\wedge v_2-v_1\wedge v_3+v_1\wedge v_4+v_2\wedge v_3$,
  $u_4=v_3\wedge v_4$ (so $u_i\in \bigwedge^2 V_5$)  we have
  $$e=u_1\wedge w_1+u_2\wedge w_1+u_3\wedge w_3+u_4\wedge w_4.$$
  Now let $g\in \Zm_0(t)$ then in particular $g\in \Zm_0(h)$ so that
  $g=(g_1,g_2)$, $g_1\in \GL(V_5)$, $g_2\in \GL(V_4)$. Write
  $g_2\cdot w_j = \sum_{k=1}^4 a_{kj} w_k$, then
  $$g\cdot e = \sum_{j=1}^4 g_1\cdot u_j\wedge g_2\cdot w_j =
  \sum_{k=1}^4 (\sum_{j=1}^4 a_{kj} g_1\cdot u_j) \wedge w_k.$$
  But this is equal to $e$ so $\sum_{j=1}^4 a_{kj} g_1\cdot u_j = u_k$
  for $1\leq k\leq 4$. Let $(b_{ij})$ be the inverse of the matrix
  $(a_{kj})$, that is $\sum_{k=1}^4 b_{ik} a_{kj} = \delta_{ij}$ for
  $1\leq i,j\leq 4$. Applying $\sum_{k=1}^4 b_{ik}$ to the equality just obtained
  we get $g_1\cdot u_i = \sum_{k=1}^4 b_{ik} u_k$. In particular, if we let
  $U$ be the subspace of $\bigwedge^2 V_5$ spanned by $u_1,\ldots,u_4$, then
  $g_1\cdot U=U$.

  Now we look for elements of order dividing 2 in $\Zm_0(t)$. For this we
  first consider the group $H_5 = \{g_1\in \GL(V_5)\mid g_1\cdot U = U\}$,
  which is defined by a set of polynomial equations of degree 2. To these
  we add the equations expressing that $g_1^2=1$. This yields a 0-dimensional
  ideal whose Gr\"obner basis we can easily compute. By inspecting it we
  see that the set of polynomial equations has 52 solutions, which we denote
  by $g_1^1,\ldots,g_1^{52}$. Next, for each $r$ we find $b_{ik}^r$ with
  $g_1^r\cdot u_i = \sum_{k=1}^4 b_{ik}^r u_k$. We let the matrix $(a_{ik}^r)$ be
  the inverse of $(b_{ik}^r)$ and set $g_2^r = (a_{ik}^r)$. Finally we check
  whether $\det(g_1^r)\det(g_2^r)=1$.
  This then yields 26 elements of $\Zm_0(t)$ of
  order dividing 2, and hence 25 elements $h_1,\ldots,h_{25}$ of order
  exactly 2.

  Finally, for $1\leq i<j \leq 25$ we consider the subgroups
  $M_{ij} = \{ g\in \Zm_0(t) \mid gh_i = h_jg\}$ (note that a $g\in \Zm_0(t)$
  conjugates an element of order 2 to an element of order 2, so each such $g$
  lies in some of these subgroups). To compute them we add the linear
  polynomials expressing that $gh_i = h_jg$ to the defining polynomials
  of $\Zm_0(t)$. It turned out that the corresponding ideals have easily
  computable Gr\"obner bases. By solving the corresponding polynomial equations
  we then arrive at the group outlined in case {\bf 31} of Section
  \ref{sec:nilpcen}.
\end{enumerate}
\end{example}

\subsection{Computing representatives}\label{sec:comprep}

Let $e\in \g_1^\cC $ be nilpotent lying in the homogeneous $\ssl_2$-triple
$t=(h,e,f)$.  The elements of $\Ho^1\Zm_0(t)$ correspond to the
$\SL(9,\R)$-orbits that are contained in the $\SL(9,\C)$-orbit of $t$.
Here we consider the problem of computing representatives of these orbits.
That is, for a given $[g] \in \Ho^1 \Zm_0(t)$ we want to find a
$h\in \SL(9,\C)$ such that $h\cdot t$ is a representative of the
$\SL(9,\R)$-orbit corresponding to $[g]$.

The map $\delta$ from Proposition \ref{p:serre} maps the $\SL(9,\R)$-orbit
of $h\cdot t$ to $[c]\in \Ho^1\Zm_0(t)$ where $c=h^{-1} \upgam h$. In our
situation the cocycle $c=g$ is given, the problem is to find $h$.
So we need to find $h\in \SL(9,\C)$ such that $\upgam h = hg$.

For this we work in the space $M_9(\C)$ of $9\times 9$-complex matrices,
which we view as a vector space over $\R$ of dimension 162. We note that
the equation $\upgam h = hg$ is equivalent to a set of 81 $\R$-linear equations
on the entries of $h$. We solve this set of linear equations over $\R$,
to get a basis $b_1,\ldots,b_m$
of an $\R$-subspace of $M_9(\C)$. Any complex matrix $h$
that satisfies $\upgam h = hg$ is an $\R$-linear combination of the elements
of this basis.

Now we can take indeterminates $x_1,\ldots,x_m$, form the matrix
$b= \sum_i x_i b_i$, compute its determinant $d\in \C[x_1,\ldots,x_m]$,
and determine $\alpha_1,\ldots,\alpha_m\in \R$ such that
$d(\alpha_1,\ldots,\alpha_m)=1$. Then $h=\sum_i \alpha_i b_i$ does the job.

Note that it is straightforward to find a subset $b_{i_1},\ldots,b_{i_r}$
of the basis whose span contains invertible matrices. Then we can do
the same as before, that is, set $b= \sum_k x_{i_k} b_{i_k}$ and find
values for the $x_{i_j}$ such that the determinant is 1. This way we have
to deal with a lot fewer indeterminates. Alternatively, in practice it
turned out also to be possible to try some random values of the
$x_{i_k}$ to find an element of determinant 1.

\subsection{Computing the Galois cohomology of a torus}\label{sec:H1T}

Let $\TT=(T,\sigma_T)$ be a complex torus. Let $X_T$ be its cocharacter
group which we write additively, so that $X_T\cong \Z^r$ for some $r>0$.
Let $\tau_T$ be as in Definition \ref{def:tauT} and let
\[e\colon X_T\to \TT(\C),\quad x\mapsto x(-1).\]
be as in Construction \ref{cons:e*}. As seen in Proposition \ref{p:e*}
the map $e_*^{(1)}\colon \Ho^1 X_T\to \Ho^1\hssh \TT$ is an isomorphism.
We have $e_*^{(1)} ([x]) = [ x(-1) ]$.

Now $X_T$ is abelian so Definition \ref{d:H1} translates to
$$ \Ho^1 X_T = \ker(1+\tau_T)/\im(1-\tau_T).$$
So computing $\Ho^1 \hssh \TT$ reduces to computing $\ker(1+\tau_T)$,
$\im(1-\tau_T)$ and the quotient of the two. We write the matrix of $\tau_T$
with respect to the standard basis of $\Z^r$. Then a basis of $\im(1-\tau_T)$
can be computed using algorithms for computing Hermite normal forms
(see \cite{Sims}). Computing $\ker(1+\tau_T)$ amounts to solving a set of
homogeneous linear equations over $\Z$. There are well-known algorithms for
this, but they are not very well documented; one reference is
\cite{Graaf2017}, \S 6.2. Finally computing the quotient can be done using
a Smith normal form computation (see \cite{Sims}).

\begin{example}
  Let $T$ be a 3-dimensional torus consisting of the elements $T(s,t,u)$ where
  $s,t,u\in\C^\times$ and $T(s,t,u)T(s',t',u') = T(ss',tt',uu')$.
  Let $\sigma : T\to T$ be the conjugation given by
  $$\sigma( T(s,t,u) ) = T(\ov s^{-2} \ov t^{-2}\ov u^{-1},
  \ov s^{2} \ov t^{-3}\ov u^{2}, \ov s^{-1} \ov t^{-2}\ov u^{-2} ).$$
  The cocharacter group has basis $x_s,x_t,x_u$, where $x_s(z)=T(z,1,1)$,
  $x_t(z)=T(1,z,1)$, $x_u(z)=T(1,1,z)$. We have
  $$\tau_T(x_s)(z) = \sigma(x_s(\ov z )) = T(z^{-2},z^2,z^{-1})
  =x_s(z)^{-2}x_t(z)^2x_u(z)^{-1}.$$
  Similarly we compute $\tau_T(x_t)$, $\tau_T(x_u)$. Then the matrix of $\tau_T$
  with respect to the standard basis $e_1,e_2,e_3$ of $\Z^3$ is
  $$M=\SmallMatrix{-2 &-2 & -1\\ 2 & 3 & 2\\ -1 & -2 & -2}.$$
  The kernel of $1+M$ is spanned by $e_1-e_3$, $e_2-2e_3$. The image of
  $1-M$ is spanned by $e_1-e_3$, $2e_2-4e_3$. We see that the quotient has
  order 2 and consists of $[0]$ and $[e_2-2e_3]$. The second element
  corresponds to $x=x_tx_u^{-2}$ and $x(-1) = T(1,-1,1)$. Hence $\Ho^1
  \TT = \{ [1], [T(1,-1,1)]\}$.
\end{example}

\subsection{Deciding conjugacy of nilpotent elements}\label{sec:decconj}

Here we let $\a$ be a {\em complex} semisimple Lie algebra with a
$\Z_m$-grading $\a = \oplus_{i\in \Zm} \a_i$. We note that $\a_0$ is
reductive in $\a$ (see \cite[Lemma 8.1(c)]{Kac1995}). This means that
$\a_0=\s\oplus \t$, where $\s$ is semisimple and $\t$ is a central subalgebra
such that $\ad_\a x$ is diagonalizable for all $x\in\t$.
We let $A_0$ be an algebraic group
of automorphisms of $\a$ stabilizing each component $\a_i$ and
such that the identity component $A_0^\circ$
is the connected algebraic subgroup of the adjoint group of $\a$
with Lie algebra $\a_0$ (or, more precisely, $\ad_{\a} \a_0$).
We also assume that the component group is given explicitly (that is, we have
explicit automorphisms of $\a$ whose cosets form the component group).
The problem that we consider is the following: given nilpotent $e_1,e_2
\in \a_1$, decide whether they are conjugate under $A_0$.

First we describe how to associate a sequence of integers, called the
{\em characteristic} to a nilpotent element.
Let $e\in \a_1$ be nilpotent lying in the homogeneous $\ssl_2$-triple
$(h,e,f)$. Let $\h_0\subset \a_0$ be a Cartan subalgebra containing $h$.
Let $\Phi_0$ denote the root system of $\a_0$ with respect to $\h_0$, and
let $\Delta_0$ be a set of simple roots. The Weyl group $W(\Phi_0)$ of
$\Phi_0$ acts on $\h_0$ in the following way. Let $\alpha\in \Phi$
and let $s_\alpha$ be the corresponding reflection in $W(\Phi_0)$.
Let $h_\alpha$ be the unique element of $[\a_{0,\alpha},\a_{0,-\alpha}]$
(where $\a_{0,\pm \alpha}$ are the root spaces corresponding to $\pm \alpha$)
with $\alpha(h_\alpha)=2$. Then $s_\alpha(u) = u -\alpha(u)h_\alpha$ for
$u\in \h_0$. Let $z=x+iy$, $x,y\in \R$, then we write $z>0$ if
$x>0$ or $x=0$ and $y>0$. Let $C_0 = \{ u\in \h \mid \alpha(u)\geq 0
\text{ for all } \alpha\in \Delta_0\}$. It is routine to show that every
$u\in \h_0$ is $W(\Phi_0)$-conjugate to a unique element of $C_0$.
Let $h'\in C_0$ be conjugate to $h$. Then the characteristic of the
orbit of $e$ is the sequence consisting of $\alpha(h')$ for $\alpha\in
\Delta_0$.

Now we discuss to which extent the $A_0^\circ$-orbit of $e$ determines its
characteristic. For the computation of the characteristic we make many choices.
All Cartan subalgebras of $\a_0$ are conjugate under
$A_0^\circ$ hence the characteristic does not depend on the choice of
Cartan subalgebra containing $h$ and neither does it depend on the choice
of representative $e$ of the orbit, nor on the choice of $\ssl_2$-triple, nor
on the choice of set of simple roots $\Delta_0$. However, it does depend
on the order in which we list the simple roots. This order can be prescribed
to some extend by fixing an enumeration of the nodes of the Dynkin diagram.
But if the Dynkin diagram has nontrivial automorphisms there are more
ways to list the simple roots according to the prescribed ordering.
So the characteristic is determined by the $A_0^\circ$-orbit of $e$ up to
an automorphism of the Dynkin diagram of $\Phi_0$. We say that two sequences
of integers, whose entries correspond to the nodes of the Dynkin diagram
of $\Phi_0$, are essentially different if there is no diagram automorphism
mapping one to the other. Otherwise we say that they are essentially equal.
So using Theorem \ref{thm:3} we conclude that
if the characteristics of two nilpotent elements are essentially different
then they lie in different orbits.

We note also that if $\t$ is nonzero then there are infinitely many
elements of the set $C_0$ yielding the same characteristic. So in that case
quite a few nilpotent orbits may share the same characteristic. The extreme
case happens when $\s=0$, as then all characteristics are empty, so all
nilpotent elements have the same characteristic.

Now we describe an algorithm by which we can always decide
$A_0^\circ$-conjugacy of two nilpotent elements.
It is, however, computationally more difficult than
the algorithm to compute the characteristic.
Let $t_i = (h_i,e_i,f_i)$ be homogeneous $\ssl_2$-triples for $i=1,2$.
By \cite[Theorem 8.3.6]{Graaf2017} the $e_i$ are $A_0^\circ$-conjugate
if and only if $h_1$, $h_2$ are $A_0^\circ$-conjugate.
Therefore, starting with $e_1,e_2$ the first step is to compute homogeneous
$\ssl_2$-triples $t_i$. After that we compute Cartan subalgebras
$\h_0^1,\h_0^2$ of $\a_0$ such that $\h_0^i$ contains $h_i$.
Then we compute the root systems $\Phi_0^i$ of $\a_0$ corresponding to
$\h_0^i$. Corresponding to each root system we compute a canonical
generating set of $\a_0$. Mapping the elements of the first canonical generating
set (corresponding to $\Phi_0^1$) to the corresponding elements of
the second canonical generating set (corresponding to $\Phi_0^2$) extends
to a unique automorphism $\sigma$ of $\a_0$ (see \cite{jac}, Chapter IV,
Theorem 3). By the methods of \cite{CMT04}, see also
\cite[Section 5.7]{Graaf2017}, we can decide whether $\sigma$ is an
inner automorphism, that is, whether $\sigma\in A_0^\circ$. If not we
compose $\sigma$ with a permutation of the canonical generators corresponding
to $\Phi_0^1$ to obtain a $\sigma\in A_0^\circ$ with $\sigma(\h_1)=\h_2$.
Set $h_1'=\sigma(h_1)$. Let $C_0^2 = \{ u\in \h_2\mid \alpha(u)\geq 0\}$
be as above. Compute $w_1,w_2\in W(\Phi_2)$ such that $w_1h_1'$ and $w_2h_2$
lie in $C_0^2$. Then $h_1, h_2$ (and hence $e_1$, $e_2$) are $A_0^\circ$-conjugate
if and only if $wh_1' = wh_2$.

If the component group of $A_0$ is not trivial then for each coset $\varphi
A_0^\circ$ in the component group one has to check whether $e_1$ and
$\varphi(e_1)$ are $A_0^\circ$-conjugate.

\begin{example}
  Let $\g^\cC$
  be the $\Z_3$-graded Lie algebra considered in this paper (see the
  beginning of the section). Then $\g^\cC_0$ is simple of type $A_8$. Hence the
  Dynkin diagram has one nontrivial automorphism. However, by inspecting the
  characteristics of the nilpotent orbits (they are listed in Table
  \ref{tab:orbitreps})
  it is seen that applying the corresponding permutation to a characteristic
  does not yield the characteristic of a different orbit. So in this case
  the characteristic determines the nilpotent orbit.

  Let $p\in \g_1^{\cC}$ be a semisimple element of the set $\Fm_6$ (see Section
  \ref{sec:semsim}). Let $\a = \z_{\g^\cC}(p)$ be the centralizer of $p$ in
  $\g^\cC$. Then $\a$ inherits the grading from $\g^\cC$. Here $\a$ is of
  type $E_6$, and $\a_0$ is semisimple of type $3A_2$. So there are many
  nontrivial diagram automorphisms. However, here $A_0 = \Zm_{\Gtil_0}^\circ$ is
  not connected and the elements of the component group happen to permute
  the nilpotent $A_0^\circ$-orbits in such a way that from each group of
  orbits with essentially equal characteristics only one remains. So also in
  this case the characteristic can be used to determine conjugacy of
  nilpotent elements.

  Let $p\in \g_1^{\cC}$ be a semisimple element of the set $\Fm_5$. Again set
  $\a = \z_{\g^\cC}(p)$. Then $\a_0$ is reductive of type $A_2+T_2$ where
  $T_2$ denotes a 2-dimensional toral center. In this case there are
  different nilpotent orbits having the essentially the same characteristic,
  because of the $T_2$ component. (It turns out that there are six
  characteristics and each of them is the characteristic of three nilpotent
  orbits.) So here we cannot decide conjugacy by only looking at the
  characteristic and we have to resort to the more brute force method
  outlined above.
\end{example}


\section{The centralizers of the nilpotent orbits and their Galois cohomology}\label{sec:nilpcen}

For each nilpotent orbit we describe the centralizer in $\SL(9,\C)$ of
a corresponding $\ssl_2$-triple. In all cases this centralizer is denoted
by $\Zm_0$. Its Lie algebra is denoted by $\z_0$.
Secondly, we determine the Galois cohomology set $\Ho^1\hm \Zm_0$.

Throughout we use the facts that $\Ho^1 \GL(n,\C) = \Ho^1 \SL(n,\C)=1$. In
particular, if $T$ is a torus with the standard $\Gamma$-action then
$\Ho^1 T=1$. For tori with a different $\Gamma$-action the Galois cohomology
is computed using the method of Section \ref{sec:H1T}. When
we say that two groups with $\Gamma$-action are isomorphic we mean
that they are $\Gamma$-equivariantly isomorphic.

The numbers in bold refer the position in the list of Elashvili and
Vinberg (\cite{VE1978}). However, we used different representatives
that were computed with the algorithms described in \cite{Graaf2017}, Chapter 8.

The {\em rank} of a trivector $u$ in $\bigwedge^3 \C^9$ is the dimension of the
smallest subspace $U$ of $\C^9$ such that $u\in \bigwedge^3 U$. The
classification of the nilpotent trivectors in $\bigwedge^3 \C^9$
of rank $<9$ coincides with the classification of the trivectors in
$\bigwedge^3 \C^8$ (we remark that all trivectors in this space are
nilpotent). The latter classification has been carried out by Gurevich,
see \cite{Gurevich1935a}, who obtained 23 orbits. To each orbit Gurevich
assigned a roman numeral, see \cite{Gurevich1964}. Below for each trivector
of rank $<9$ we also give its rank, and its Gurevich number. The orbit
with number I just consists of 0, so we do not list it.

\noindent{\bf 1} Representative: $e_{126}+e_{135}-e_{234}+e_{279}+e_{369}+e_{459}+e_{478}+e_{568}$.\\
Here
\[\Zm_0 = \mu_3:=\{\diag(a,a,a,a,a,a,a,a,a)\mid a^3=1\}.\]
Hence $\Ho^1\hm \Zm_0$ is trivial by Corollary \ref{c:2m+1}.

\noindent{\bf 2} Representative: $e_{126}+e_{145}-e_{234}+e_{279}+e_{369}-e_{378}+e_{478}+e_{568}$.\\
Here $\Zm_0 = \mu_3$. Hence $\Ho^1\hm \Zm_0$ is trivial by
Corollary \ref{c:2m+1}.

\noindent{\bf 3} Representative: $e_{126}+e_{145}-e_{235}+e_{279}+e_{369}-e_{378}+e_{478}+e_{568}$.\\
Here $\Zm_0 = \mu_3$. Hence $\Ho^1\hm \Zm_0$ is trivial by
Corollary \ref{c:2m+1}.

\noindent{\bf 4} Representative: $e_{127}+e_{145}-e_{234}+e_{279}+e_{369}-e_{378}+e_{478}+e_{568}$.\\
Here $\Zm_0$ is of order 6 and consists of the elements of $\mu_3$ along with
$$X(\zeta) = \SmallMatrix{ 0 & 0 & 0 & 0 & 0 & 0 & 0 & 0 & \zeta\\
                0 & -\zeta & 0 & 0 & 0 & 0 & 0 & 0 & 0\\
                0 & 0 & 0 & -\zeta & 0 & 0 & 0 & 0 & 0\\
                0 & 0 & -\zeta & 0 & 0 & 0 & 0 & 0 & 0\\
                0 & 0 & 0 & 0 & 0 & -\zeta & 0 & 0 & 0\\
                0 & 0 & 0 & 0 & -\zeta & 0 & 0 & 0 & 0\\
                0 & 0 & 0 & 0 & 0 & 0 & -\zeta & 0 & 0\\
                0 & 0 & 0 & 0 & 0 & 0 & 0 & -\zeta & 0\\
                \zeta & 0 & 0 & 0 & 0 & 0 & 0 & 0 & 0\\},$$
where $\zeta^3=1$. This is a cyclic group of order $6$, generated by
$X(\zeta_0)$ where $\zeta_0$ is a primitive third root of unity.
By Lemma \ref{l:explicit} we have $\Ho^1\hm \Zm_0=\{[1],[c]\}$, where
$c=X(\zeta_0)^3$.

\noindent{\bf 5} Representative: $e_{127}+e_{145}-e_{235}+e_{279}+e_{369}-e_{468}+e_{478}+e_{568}$.\\
Here $\Zm_0$ is of order 6 and consists of the elements of $\mu_3$ along with
$$X(\zeta)=\SmallMatrix{ -\zeta & 0 & 0 & 0 & 0 & 0 & 0 & 0 & \zeta\\
                0 & 0 & -\zeta & 0 & 0 & 0 & 0 & 0 & 0\\
                0 & -\zeta & 0 & 0 & 0 & 0 & 0 & 0 & 0\\
                0 & 0 & 0 & \zeta & 0 & 0 & 0 & 0 & 0\\
                0 & 0 & -\zeta & -\zeta & 0 & 0 & 0 & 0 & 0\\
                0 & 0 & 0 & 0 & 0 & 0 & -\zeta & 0 & 0\\
                0 & 0 & 0 & 0 & 0 & -\zeta & 0 & 0 & 0\\
                0 & 0 & 0 & 0 & 0 & 0 & 0 & \zeta & 0\\
                0 & 0 & 0 & 0 & 0 & 0 & 0 & 0 & \zeta\\},$$
where $\zeta^3=1$. This is a cyclic group of order $6$, generated by
$X(\zeta_0)$ where $\zeta_0$ is a primitive third root of unity.
By Lemma \ref{l:explicit} we have $\Ho^1\hm \Zm_0=\{[1],[c]\}$, where
$c=X(\zeta_0)^3$.

\noindent{\bf 6}
Representative: $e_{136}+e_{145}-e_{234}+e_{279}-e_{378}+e_{469}+e_{568}$.\\
Here $\Zm_0$ consists of the elements
$$\diag(a,a,a^{-2},a,a^{-2},a,a,a,a^{-2})\text{ for } a\in \C.$$
We see that $\Zm_0$ is a 1-dimensional torus,
therefore $\Ho^1\hm \Zm_0$ is trivial.

\noindent{\bf 7} Representative: $e_{127}+e_{145}-e_{235}+e_{368}-e_{378}-e_{468}+e_{469}+e_{579}$.\\
Here $\Zm_0$ is cyclic of order 6, generated by
$$X(\zeta_0)= \SmallMatrix{ 0 & -\zeta_0 & 0 & 0 & 0 & 0 & 0 & 0 & 0\\
                -\zeta_0 & 0 & 0 & 0 & 0 & 0 & 0 & 0 & 0\\
                0 & 0 & 0 & \zeta_0 & 0 & 0 & 0 & 0 & 0\\
                0 & 0 & \zeta_0 & 0 & 0 & 0 & 0 & 0 & 0\\
                0 & 0 & 0 & 0 & -\zeta_0 & 0 & 0 & 0 & 0\\
                0 & 0 & 0 & 0 & 0 & -\zeta_0 & 0 & 0 & 0\\
                0 & 0 & 0 & 0 & 0 & \zeta_0 & \zeta_0 & 0 & 0\\
                0 & 0 & 0 & 0 & 0 & 0 & 0 & -\zeta_0 & 0\\
                0 & 0 & 0 & 0 & 0 & 0 & 0 & \zeta_0 & \zeta_0\\},$$
where $\zeta_0$ is a primitive sixth root of unity. By Lemma \ref{l:explicit} we
have $\Ho^1\hm \Zm_0=\{[1],[c]\}$, where $c=X(\zeta_0)^3$.

\noindent{\bf 8} Representative: $e_{136}+e_{145}-e_{235}+e_{279}-e_{378}+e_{469}+e_{478}+e_{568}$.\\
Here $\Zm_0$ is cyclic of order 6, generated by
$$X(\zeta_0)=\SmallMatrix{ -\zeta_0 & 0 & 0 & 0 & 0 & 0 & 0 & 0 & 0\\
                0 & 0 & 0 & 0 & 0 & 0 & 0 & 0 & \zeta_0\\
                0 & 0 & 0 & -\zeta_0 & 0 & 0 & 0 & 0 & 0\\
                0 & 0 & -\zeta_0 & 0 & 0 & 0 & 0 & 0 & 0\\
                0 & 0 & 0 & 0 & 0 & \zeta_0 & 0 & 0 & 0\\
                0 & 0 & 0 & 0 & \zeta_0 & 0 & 0 & 0 & 0\\
                0 & 0 & 0 & 0 & 0 & 0 & -\zeta_0 & 0 & 0\\
                0 & 0 & 0 & 0 & 0 & 0 & 0 & -\zeta_0 & 0\\
                0 & \zeta_0 & 0 & 0 & 0 & 0 & 0 & 0 & 0\\},$$
where $\zeta_0$ is a primitive third root of unity. By Lemma \ref{l:explicit} we
have $\Ho^1\hm \Zm_0=\{[1],[c]\}$, where $c=X(\zeta_0)^3$.

\noindent{\bf 9} Representative: $e_{127}+e_{145}-e_{234}+e_{279}+e_{369}+e_{568}-e_{578}+e_{678}$.\\
Here $\Zm_0\simeq A\times \mu_3$,  where the group $A$ is isomorphic to $S_3$ and
generated by the matrices
$$X=\SmallMatrix{ 0 & 0 & 0 & 0 & 0 & 0 & 0 & 0 & 1\\
                0 & -1 & 0 & 0 & 0 & 0 & 0 & 0 & 0\\
                0 & 0 & 0 & -1 & 0 & 0 & 0 & 0 & 0\\
                0 & 0 & -1 & 0 & 0 & 0 & 0 & 0 & 0\\
                0 & 0 & 0 & 0 & 0 & -1 & 0 & 0 & 0\\
                0 & 0 & 0 & 0 & -1 & 0 & 0 & 0 & 0\\
                0 & 0 & 0 & 0 & 0 & 0 & -1 & 0 & 0\\
                0 & 0 & 0 & 0 & 0 & 0 & 0 & -1 & 0\\
                1 & 0 & 0 & 0 & 0 & 0 & 0 & 0 & 0\\},
\quad
Y=\SmallMatrix{ 0 & 0 & 0 & 0 & 0 & 0 & 0 & 0 & -1 \\
                0 & 0 & -1 & 0 & 0 & 0 & 0 & 0 & 0 \\
                0 & 0 & 0 & 1 & 0 & 0 & 0 & 0 & 0 \\
                0 & -1 & 0 & 0 & 0 & 0 & 0 & 0 & 0 \\
                0 & 0 & 0 & 0 & 0 & 0 & 1 & 0 & 0 \\
                0 & 0 & 0 & 0 & 1 & 0 & 0 & 0 & 0 \\
                0 & 0 & 0 & 0 & 0 & 1 & 0 & 0 & 0 \\
                0 & 0 & 0 & 0 & 0 & 0 & 0 & 1 & 0 \\
                1 & 0 & 0 & 0 & 0 & 0 & 0 & 0 & -1}. $$
Here $X$ is of order 2 and $Y$ is of order 3. By Lemma \ref{l:explicit} we
have $\Ho^1\hm \Zm_0=\{[1],[X]\}$.

\noindent{\bf 10} Representative: $e_{127}+e_{136}+e_{145}-e_{234}+e_{379}+e_{469}+e_{478}+e_{568}$.\\
Here $\Zm_0$ consists of the elements
$$\diag(a^2b^2,b,a^2b^2,a,b,a^2b^2,a,a,b)\text{ for } a,b\in \C \text{ with }
a^3=b^3=1.$$
This is a finite abelian group of order $9$. Hence $\Ho^1\hm \Zm_0$ is trivial by
Corollary \ref{c:2m+1}.

\noindent{\bf 11} Representative: $e_{136}+e_{145}-e_{236}+e_{279}-e_{378}+e_{469}+e_{568}$.\\
Here $\Zm_0$ consists of the elements
$$\diag(a,a,a^{-2},a,a^{-2},a,a,a,a^{-2})\text{ for } a\in \C.$$
We see that $\Zm_0$ is a 1-dimensional torus, therefore $\Ho^1\hm \Zm_0=1$.

\noindent{\bf 12} Representative: $e_{127}+e_{145}-e_{234}+e_{369}+e_{479}+e_{568}-e_{578}$.\\
Here $\Zm_0$ consists of the elements
$$\diag(a,a^{-2},a,a,a^{-2},a,a,a,a^{-2})\text{ for } a\in \C.$$
We see that $\Zm_0$ is a 1-dimensional torus, therefore $\Ho^1\hm \Zm_0=1$.

\noindent{\bf 13} Representative: $e_{126}+e_{145}-e_{235}+e_{379}+e_{469}+e_{478}+e_{568}$.\\
Here $\Zm_0$ consists of the elements
$$\diag(a,a^{-2},a^4,a,a^{-2},a,a^{-2},a,a^{-2})\text{ for } a\in \C.$$
We see that $\Zm_0$ is a 1-dimensional torus, therefore $\Ho^1\hm \Zm_0=1$.

\noindent{\bf 14} Representative: $e_{127}+e_{136}+e_{145}-e_{235}+e_{379}+e_{469}+e_{478}+e_{568}$.\\
Here $\Zm_0$ consists of the elements
$$\diag(a^7,a^4,a,a^7,a^4,a,a^7,a^4,a) \text{ with } a^9=1.$$
This is a finite abelian group of order $9$. Hence $\Ho^1\hm \Zm_0$ is trivial by
Corollary \ref{c:2m+1}.

\noindent{\bf 15} Representative: $e_{136}+e_{145}-e_{235}+e_{279}+e_{469}+e_{478}-e_{578}$.\\
Here $\Zm_0$ is the union of two sets. One set consists of the matrices
$$X(a,b,c)=\diag(a,b,c,a,a,b,a,a,c) \text{ with } a^3=1,~ bc=a^2.$$
So it is a diagonal algebraic group of dimension 1 with three components.

The other set consists of the matrices
$$Y(a,b,c)=\SmallMatrix{ a & 0 & 0 & 0 & 0 & 0 & 0 & 0 & 0\\
                0 & 0 & 0 & 0 & 0 & 0 & 0 & 0 & -b\\
                0 & 0 & 0 & 0 & 0 & -c & 0 & 0 & 0\\
                0 & 0 & 0 & 0 & a & 0 & 0 & 0 & 0\\
                0 & 0 & 0 & a & 0 & 0 & 0 & 0 & 0\\
                0 & 0 & b & 0 & 0 & 0 & 0 & 0 & 0\\
                0 & 0 & 0 & 0 & 0 & 0 & a & 0 & 0\\
                0 & 0 & 0 & 0 & 0 & 0 & 0 & a & 0\\
                0 & c & 0 & 0 & 0 & 0 & 0 & 0 & 0\\},$$
with $a^3=-1$ and $bc=-a^2$. This set has three components too.

The multiplication law in $\Zm_0$ is as follows:
\[ X(a,b,c)\cdot X(a',b',c')=X(aa',bb', cc').\]
\[X(a,b,c)\cdot Y(a',b',c')=Y(aa',bb',cc')\]
\[Y(a',b',c')\cdot X(a,b,c)=Y(aa',cb',bc')\]
\[ \quad Y(a,b,c)\cdot Y(a',b',c')=X(aa',-bc',-cb')\]

\begin{lemma*} $\Ho^1\hm \Zm_0=\{1,[b_1],[b_2]\}$, where $b_1=Y(-1,-1,1)$
and $b_2=Y(-1,1,-1)$.
\end{lemma*}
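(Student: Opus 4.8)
The plan is to compute $\Ho^1\hm\Zm_0$ by first understanding the $\Gamma$-group structure of $\Zm_0$ with the $\Gamma$-action induced by complex conjugation of the matrix entries, and then exploiting the short exact sequence relating $\Zm_0$ to its ``diagonal part'' $X$-subgroup. Concretely, let $\AA\subseteq\Zm_0$ be the subgroup of matrices of the form $X(a,b,c)$; from the multiplication law $X(a,b,c)\cdot Y(a',b',c')=Y(aa',bb',cc')$ and $Y(a,b,c)\cdot Y(a',b',c')=X(aa',-bc',-cb')$ one sees $\AA$ is normal of index $2$, so we have a short exact sequence of $\Gamma$-groups $1\to\AA\to\Zm_0\to\CC\to 1$ with $\CC\cong\Z/2\Z$ (trivial $\Gamma$-action). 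The identity component $\AA^\circ$ consists of the matrices $X(1,b,b^{-1})$, a $1$-dimensional torus; since this torus has the standard $\Gamma$-action ($b\mapsto\bar b$), we get $\Ho^1\hm\AA^\circ=1$. The component group $\AA/\AA^\circ\cong\mu_3$ is of odd order $3$, so by Proposition \ref{p:C-3} (with $p=3$, using $\#\Ho^1\hm\AA^\circ=1<3$) we conclude $\Ho^1\hm\AA=1$.

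Next I would apply the machinery for a group of order $2p^n$. Note $\CC=\Zm_0/\AA$ has order $2$ and $\#\Zm_0/\#\AA^\circ$-type counting gives $\Zm_0$ a component group of order $6=2\cdot 3$. Actually the cleaner route: $\Zm_0^\circ=\AA^\circ$ is a torus with trivial $\Ho^1$, and the component group $\Zm_0/\Zm_0^\circ$ has order $6$. One cannot directly apply Proposition \ref{p:C-3-2} to $\Zm_0$ because the relevant order is $6$, not a prime power times $2$ in the needed form — wait, $6=2\cdot 3^1$, so it does fit. The plan is therefore: verify hypothesis (1) $\Ho^1\hm\AA=1$ (done above, where here $\AA=\Zm_0^\circ$ — but $\Zm_0^\circ$ is just the torus, with $\Ho^1=1$); hypothesis (2) $C=\Zm_0/\Zm_0^\circ$ has order $6=2\cdot 3$; hypothesis (3) an element of order $2$ in $C(\R)$ lifts to a cocycle $b\in\Zl^1\Zm_0$; hypothesis (4) $\Ho^1\hm{}_b\AA=1$. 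But this only yields $\#\Ho^1\hm\Zm_0=2$, whereas the claim asserts three elements. So the correct decomposition is the index-$2$ one above with $\AA$ the full $X$-subgroup (so $\#\AA/\AA^\circ=3$), not $\Zm_0^\circ$.

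With that corrected setup, the main computation is the fiber analysis over $\CC=\Z/2\Z$. Since $\Ho^1\hm\AA=1$ and $\Ho^1\hm\CC=\{[1],[c]\}$ where $c$ is the nontrivial class, Proposition \ref{p:serre-prop38} and Corollary \ref{c:39-cor1} give $\ker[\Ho^1\Zm_0\to\Ho^1\CC]=\{[1]\}$; the remaining classes of $\Ho^1\Zm_0$ form the fiber $j_*^{-1}[c]$. By Corollary \ref{c:39-cor2}, this fiber is in bijection with $\Ho^1\hm{}_b\AA$ modulo the action of $(\hs_c\CC)(\R)$, where $b\in\Zl^1\Zm_0$ is a lift of $c$; one checks $\CC$ acts trivially so the action is trivial. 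I would pick the explicit lift $b=Y(-1,1,1)$ (or another convenient $Y$-element with $b\cdot{}^\gamma b=1$ — note $Y(a,b,c)\cdot{}^\gamma Y(a,b,c)=Y(a,b,c)\cdot Y(\bar a,\bar b,\bar c)=X(|a|^2,-b\bar c,-c\bar b)$, which is $1$ when $|a|^2=1$ and $b\bar c=-1$, so $b=Y(i,b_0,-\bar b_0^{-1})$ type elements), then compute the twisted group ${}_b\AA$: twisting changes the $\Gamma$-action on $\AA$ by conjugation by $b$, which by the formula $Y(a',b',c')\cdot X(a,b,c)=Y(aa',cb',bc')$ swaps the roles of the $b$ and $c$ coordinates. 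So ${}_b\AA^\circ$ becomes a torus with the \emph{twisted} action $b\mapsto\bar b^{-1}$, i.e.\ isomorphic to $\C^\times_\tw$, giving $\Ho^1\hm{}_b\AA^\circ=\{[1],[-1]\}$ by Examples \ref{x:cohom}(3), and then Proposition \ref{p:C-3}-type reasoning on the odd component group $\mu_3$ yields $\#\Ho^1\hm{}_b\AA=2$. Hence the fiber has two elements, giving $\#\Ho^1\Zm_0=1+2=3$. Finally, to identify the cocycles explicitly as $b_1=Y(-1,-1,1)$ and $b_2=Y(-1,1,-1)$, I would check these are indeed cocycles (compute $b_i\cdot{}^\gamma b_i=b_i^2$ using the multiplication law and verify it equals $1$: $Y(-1,-1,1)^2=X(1,-1\cdot 1,-1\cdot(-1))=X(1,-1,1)\ne 1$ — so one must be careful, perhaps $b_i$ has $b_i\cdot{}^\gamma b_i=1$ with the conjugation acting nontrivially, but entries $\pm1$ are real so ${}^\gamma b_i=b_i$ and we genuinely need $b_i^2=1$; thus the right representatives satisfy $Y(a,b,c)^2=X(a^2,-bc,-bc)=1$, forcing $a^2=1$, $bc=-1$, e.g.\ $a=-1$, $(b,c)=(-1,1)$ or $(1,-1)$), confirming $b_1^2=X(1,1,1)=1$ and $b_2^2=X(1,1,1)=1$; then show $[b_1]\ne[b_2]$ in $\Ho^1\Zm_0$ and both are nontrivial, using the bijection with $\Ho^1\hm{}_b\AA$ and the explicit description of that set via the torus cohomology computation (Section \ref{sec:H1T}).

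The step I expect to be the main obstacle is the explicit verification that $b_1$ and $b_2$ represent \emph{distinct} nontrivial classes and that these are \emph{all} the classes — in other words, pinning down the bijection $\Ho^1\hm{}_b\AA\isoto j_*^{-1}[c]$ concretely enough to name the representatives. This requires carefully tracking the twisting map $t_b\colon\Zl^1{}_b\AA\to\Zl^1\Zm_0$, $x\mapsto xb$ (Subsection \ref{subsec:twistedbij}), through the identification of ${}_b\AA$'s torus part with $\C^\times_\tw$, and making sure the two nontrivial cocycles of $\C^\times_\tw$ (coming from $\pm1\in\R^\times$ modulo norms, cf.\ Examples \ref{x:cohom}(3)) pull back to exactly $b_1,b_2$; the bookkeeping of which $Y(a,b,c)$ corresponds to which sign is the delicate part, and one must also rule out that $b_1\sim b_2$ in $\Zm_0$ (which would collapse the fiber), i.e.\ check there is no $g\in\Zm_0$ with $g^{-1}b_1{}^\gamma g=b_2$, a finite computation using the four multiplication formulas.
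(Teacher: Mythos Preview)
Your approach is correct and essentially matches the paper's: both use the short exact sequence with the $X$-subgroup as kernel, compute that the kernel has trivial $\Ho^1$, twist by a $Y$-cocycle over the nontrivial class, identify the twisted identity component with $\C^\times_\tw$, and read off the two extra classes via $t_{b_1}$. Two small points: your intermediate computation $Y(-1,-1,1)^2=X(1,-1,1)$ is a slip (the formula $Y(a,b,c)\cdot Y(a',b',c')=X(aa',-bc',-cb')$ gives $X(1,1,1)=1$, confirming $b_1$ is a cocycle of order $2$ directly); and the step you gloss as ``one checks $\CC$ acts trivially'' is exactly what the paper pins down via Lemma~\ref{lem:1c}: since $b_1^2=1$, the generator $c\in C^\Gamma$ sends $[1]\mapsto[b_1^{-2}]=[1]$, and as $\#\Ho^1{}_{b_1}\AA^\circ=2$ the action is forced to be trivial.
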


\begin{proof}
Write $C=\mu_6=\{a\in\C^\times\mid a^6=1\}$.
Consider the homomorphism
\[j\colon \Zm_0\to C,\quad X(a,b,c)\mapsto a,\quad Y(a,b,c)\mapsto a.\]
Set
\[A=\ker\,j=\{X(1,b,c)\mid bc=1\}.\]
We have a short exact sequence
\begin{equation}\label{e:short-85}
1\to A\labelto{i} \Zm_0\labelto{j} C\to 1
\end{equation}
where $i$ is the inclusion map.
By Proposition \ref{p:serre-prop38} we obtain a cohomology exact sequence
\begin{equation}\label{e:cohom-85}
1\to A^\Gamma\to \Zm_0^\Gamma\to C^\Gamma\labelto{\delta} \Ho^1\hm A\labelto{i_*} \Ho^1\hm \Zm_0\labelto{j_*} \Ho^1 C.
\end{equation}

We have $\Ho^1 C=\{[1],[-1]\}$.
Thus $\Ho^1\hm  \Zm_0=j_*^{-1}[1]\cup j_*^{-1}[-1]$.

Since $A$ is a torus we have $\Ho^1\hm A=1$. Hence from \eqref{e:cohom-85}
it follows that $j_*^{-1}[1]=\{[1]\}$.

We compute $j_*^{-1}[-1]$.
Write $c_1=-1\in C$, $b_1=Y(-1,-1,1)\in \Zm_0$; then $b_1$ is a lift of $c_1$.
We have   $\upgam b_1=\bar b_1=b_1$ and $b_1^2=1$.
Thus $b_1\in \Zl^1\hm \Zm_0$, and we can twist the exact sequence \eqref{e:short-85} by the cocycle $b_1$.
By Corollary \ref{c:39-cor1}, the set $j_*^{-1}[-1]$ is in a canonical bijection with the set of orbits
of the group $(\hs_{b_1}\hm C)^\Gamma$ in the set $\Ho^1\hm \hs_{b_1}\hm A$.

Since $C$ is an abelian group, we have $_{b_1}\hm C=\hs_{c_1}\hm C=C$.
Thus $(\hs_{b_1}\hm C)^\Ga=C^\Ga=\{1,-1\}$.

For $b\in \C^\times$, we have
\[^{\gamma*} X(1,b,b^{-1})=b_1\cdot\overline{X(1,b,b^{-1})}\cdot b_1^{-1}=
b_1\cdot X(1,\bar b,\bar b^{-1})\cdot b_1^{-1}=X(1,\bar b^{-1},\bar b).\]
We  $\Gamma$-equivariantly  identify $_{b_1}\hm A$ with
\[\C_{\rm tw}\ :=\C^\times\text{ with the twisted   $\Gamma$-action}\  \upgam b=\bar b^{-1}.\]
By Example \ref{x:cohom}(3) we have
\[ \Ho^1\hm \hs_{b_1}\hm A=\Ho^1\hs \C_{\rm tw}=\{[1],[-1]\}.\]

We compute the action of $c_1=-1\in (\hs_{b_1} C)^\Ga=C^\Ga$ on $1\in \Ho^1\hm\hs_{b_1}\hm A$.
Since $b_1^2=1$, by Lemma \ref{lem:1c} the element $c_1\in C^\Ga$ fixes
$[1]\in \Ho^1\hm \hs_{b_1}\hm A$.
Since $\#  \Ho^1\hm \hs_{b_1}\hm A=2$, we see that $c_1$ acts on
$\Ho^1\hm \hs_{b_1}\hm A$ trivially.
Thus $(\hs_{b_1}\hm C)^\Ga=C^\Ga$ acts on $ \Ho^1\hm \hs_{b_1}\hm A$ trivially.
We see that $(\hs_{b_1}\hm C)^\Ga$ has {\em two} orbits in  $\Ho^1\hm \hs_{b_1}\hm A$,
with representatives $1$ and $-1=X(1,-1-1)$ in $\Zl^1\hm  \hs_{b_1}\hm A$.
Their images in $\Ho^1\hm \Zm_0$ are $\tau_{\hs b_1}(1)=[1\cdot b_1]=[b_1]$ and
\[ \tau_{\hs b_1}[X(1,-1,-1)]=[X(1,-1,-1)\cdot Y(-1,-1, 1)]=[Y(-1,1,-1)]=[b_2], \]
where $\tau_{\hs b_1}$ is the map of \eqref{e:tau-con}.
\end{proof}

\begin{proof}[Alternative proof]
Consider the homomorphism
\[ \alpha_2\colon \Zm_0\to \mu_3,\quad  X(a,b,c)\mapsto a^2,\quad Y(a,b,c)\mapsto a^2.\]
We have a short exact sequence
\[1\to B'\to \Zm_0\to \mu_3\to 1,\]
where $B'=\ker\alpha_2$\hs.
Since $\Ho^1\hs\mu^3=\{1\}$ and $\mu_3(\R)=\{1\}$,
we see from the cohomology exact sequence \eqref{e:ABC-long}
that the natural map $\Ho^1\hs B'\to \Ho^1\hs \Zm_0$ is bijective.

We consider the homomorphism
\[ B'\to \mu_2,\quad X(a,b,c)\mapsto a,\quad Y(a,b,c)\mapsto a.\]
We have a short exact sequence
\begin{equation*}
1\to A\to B'\to \mu_2\to 1.
\end{equation*}
It admits a {\em splitting}
\[ -1\mapsto b_1=Y(-1,-1,1)\in B'.\]
Indeed, $\upgam b_1=b_1$ and $b_1^2=1$.
We have $\Ho^1\hs\mu_2=\{[1],[-1]\}$.
By Corollary \ref{c:split}(iv) we have
\[ \Ho^1\hs B'\cong (\Ho^1 A)/\mu_2\, \sqcup\hs (\Ho^1{}_{b_1}\hm A)/\mu_2\hs.\]
We have $\Ho^1 A=1$, whence $(\Ho^1 A)/\mu_2=1$.
As above, we have ${}_{b_1}\hm A\cong\C^\times_\tw$,
whence $\Ho^1{}_{b_1}\hm A\cong \Ho^1\hs\C^\times_\tw=\{[1],[-1]\}$.
By Corollary \ref{c:split-action}(i) the group $\mu_2$ acts trivially on $1\in \Ho^1{}_{b_1}\hm A$.
Hence it acts  on $\Ho^1{}_{b_1}\hm A$ with two orbits with representatives
$1$ and $-1=X(1,-1-1)$ in $\Zl^1\hm  \hs_{b_1}\hm A$.
As above, we obtain that $\Ho^1\Zm_0=\{[1], [b_1], [b_2]\}$.
\end{proof}

\noindent{\bf 16} Representative: $e_{127}+e_{136}+e_{145}-e_{234}+e_{379}+e_{469}+e_{568}$.\\
Here $\Zm_0$ consists of the elements
$$\diag(a^2b^2,b,a^2b^2,a,b,a^2b^2,a,a,b)
   \text{ for } a,b\in \C \text{ with } (ab)^3=1.$$
We see that $\Zm_0^\circ$ is a 1-dimensional torus (so that $\Ho^1 \Zm_0^\circ = 1$)
and the component group is of order 3. By Proposition \ref{p:C-3} it follows
that $\Ho^1 \Zm_0=1$.

\noindent{\bf 17} Representative: $e_{137}+e_{146}-e_{245}-e_{268}+e_{278}+e_{368}+e_{479}+e_{569}$.\\
Let $\zeta$ be a primitive third root of unity.
The centralizer is of order 18 and generated by the three elements
$h_1=\diag(\zeta,\ldots,\zeta)$,
$$h_2 = \SmallMatrix{-1 &  0 &  0 &  0 &  0 &  0 &  0 &  1 &  0\\
 0 & -1 & -1 &  0 &  0 &  0 &  0 &  0 &  0\\
 0 &  0 &  1 &  0 &  0 &  0 &  0 &  0 &  0\\
 0 &  0 &  0 & -1 &  1 &  0 &  0 &  0 &  0\\
 0 &  0 &  0 &  0 &  1 &  0 &  0 &  0 &  0\\
 0 &  0 &  0 &  0 &  0 &  1 &  1 &  0 &  0\\
 0 & 0 &  0 &  0 &  0 &  0 & -1 &  0 &  0\\
 0 & 0 &  0 &  0 &  0 &  0 &  0 &  1 &  0\\
 0 &  0 &  0 &  0 &  0 &  0 &  0 &  0 &  1},
\quad
h_3 = \SmallMatrix{ -\zeta - 1 & 0 & 0 & 0 & 0 & 0 & 0 & 0 & -\zeta - 2\\
0 & 1 & 0 & 0 & 0 & 0 & 0 & 0 & 0\\
0 & 0 & 1 & 0 & 0 & 0 & 0 & 0 & 0\\
0 & 3\zeta + 6 & 2\zeta + 4 & -\zeta - 1 & 0 & 0 & 0 & 0 & 0\\
0 & 6\zeta + 3 & 3\zeta + 3 & -2\zeta - 1 & \zeta & 0 & 0 & 0 & 0\\
0 & 0 & 0 & 0 & 0 & -\zeta - 1 & 0 & 0 & 0\\
0 & 0 & 0 & 0 & 0 & 2\zeta + 1 & \zeta & 0 & 0\\
-2\zeta - 1 & 0 & 0 & 0 & 0 & 0 & 0 & \zeta & -3\\
0 & 0 & 0 & 0 & 0 & 0 & 0 & 0 & 1}.$$

The elements $h_i$ satisfy the relations $h_1^3=h_2^2=h_3^3=1$, $h_2h_3h_2^{-1} = h_3^2$.
Moreover, these elements are polycyclic generators, meaning that every
element of the group can uniquely be written $h_1^ih_2^jh_3^k$ with
$0\leq i\leq 3$, $0\leq j\leq 1$, $0\leq k\leq 2$.

The element $h_2$ is of order 2 and fixed by $\Gamma$. Hence
by Lemma \ref{l:explicit} we conclude that $H^1=\{[1], [h_2]\}$.

\noindent{\bf 18} Representative: $e_{127}+e_{136}+e_{145}-e_{235}+e_{379}+e_{469}+e_{478}-e_{578}$.\\
Here $\Zm_0$ consists of
$$\diag(a^2b^2,a,b,a^2b^2,a^2b^2,a,b,a,b) \text{ with } a^3=b^3=1.$$
This is a finite abelian group of order 9. Hence $\Ho^1\hm \Zm_0$ is trivial by
Corollary \ref{c:2m+1}.

\noindent{\bf 19} Representative: $e_{136}+e_{145}-e_{236}+e_{279}-e_{378}+e_{478}+e_{569}$.\\
Here the identity component $\Zm_0^\circ$ consists of
$$\diag(a,a,a^{-2},a^{-2},a,a,a,a,a^{-2}) \text{ with } a\in \C^\times.$$
Set
$$Y=\SmallMatrix{ 1 & 0 & 0 & 0 & 0 & 0 & 0 & 0 & 0\\
                -1 & -1 & 0 & 0 & 0 & 0 & 0 & 0 & 0\\
                0 & 0 & 0 & -1 & 0 & 0 & 0 & 0 & 0\\
                0 & 0 & -1 & 0 & 0 & 0 & 0 & 0 & 0\\
                0 & 0 & 0 & 0 & 0 & -1 & 0 & 0 & 0\\
                0 & 0 & 0 & 0 & -1 & 0 & 0 & 0 & 0\\
                0 & 0 & 0 & 0 & 0 & 0 & 1 & 0 & 0\\
                0 & 0 & 0 & 0 & 0 & 0 & 0 & 1 & 0\\
                0 & 0 & 0 & 0 & 0 & 0 & 0 & 0 & -1\\}.$$
Then $Y$ is of order $2$. We set  $C=\{1,Y\}$; then we have
$\Zm_0 = C\times \Zm_0^\circ$.  Thus
\[\Ho^1\hm \Zm_0=\Ho^1\hm \Zm_0^\circ\times \Ho^1 C=1\times \Ho^1 C=\{1,[Y]\}.\]

\noindent{\bf 20} Representative: $e_{127}+e_{145}-e_{234}+e_{379}+e_{469}+e_{478}+e_{568}$.\\
Here $\Zm_0$ consists of the elements
$$\SmallMatrix{ d & 0 & 0 & 0 & 0 & 0 & 0 & 0 & -c\\
                0 & a & b & 0 & 0 & 0 & 0 & 0 & 0\\
                0 & c & d & 0 & 0 & 0 & 0 & 0 & 0\\
                0 & 0 & 0 & e & 0 & 0 & 0 & 0 & 0\\
                0 & 0 & 0 & 0 & a & b & 0 & 0 & 0\\
                0 & 0 & 0 & 0 & c & d & 0 & 0 & 0\\
                0 & 0 & 0 & 0 & 0 & 0 & e & 0 & 0\\
                0 & 0 & 0 & 0 & 0 & 0 & 0 & e & 0\\
                -b & 0 & 0 & 0 & 0 & 0 & 0 & 0 & a\\},$$
with $ad-bc=e^2$ and $e^3=1$. So $\Zm_0$ has three connected components, one for
each value of $e$. The identity component is isomorphic to $\SL(2,\C)$.
Since  $\Ho^1\hs \SL(2,\C)=1$, by Proposition \ref{p:C-3} we have $\Ho^1\hm \Zm_0=1$.

\noindent{\bf 21} Representative: $e_{127}+e_{136}-e_{245}+e_{379}+e_{469}+e_{478}+e_{568}$.\\
Here $\Zm_0$ consists of
$$\diag(b^4,b^{-5},b^{-2},b,b^4,b^{-2},b,b^{-2},b) \text{ with } b\in \C^\times.$$
Hence $\Zm_0$ is a 1-dimensional torus, and therefore $\Ho^1\hm \Zm_0$ is trivial.

\noindent{\bf 22} Representative: $e_{127}+e_{136}-e_{235}+e_{379}+e_{469}+e_{478}-e_{578}$.\\
Here $\Zm_0$ consists of
$$\diag(a,b,c,a,a,b,c,b,c) \text{ with } ab=c^2,~ c^3=1.$$
The identity component is a torus and hence has trivial cohomology.
The component group has order 3. By Proposition \ref{p:C-3}
it follows that $\Ho^1\hm \Zm_0=1$.

\noindent{\bf 23} Representative: $e_{127}+e_{136}+e_{145}-e_{235}-e_{468}+e_{479}+e_{568}$.\\
Here $\Zm_0$ is the union of two sets: the set
$$X(a,b)=\diag(a^2b^2,b,a,a^2b^2,a^2b^2,b,a,a,b) \text{ with } a^3b^3=1,$$
which is a diagonal algebraic group with three components, and the set
$$Y(a,b)=\SmallMatrix{ -a^2b^2 & 0 & 0 & 0 & 0 & 0 & 0 & 0 & 0\\
                0 & 0 & -b & 0 & 0 & 0 & 0 & 0 & 0\\
                0 & -a & 0 & 0 & 0 & 0 & 0 & 0 & 0\\
                0 & 0 & 0 & -a^2b^2 & 0 & 0 & 0 & 0 & 0\\
                0 & 0 & 0 & a^2b^2 & a^2b^2 & 0 & 0 & 0 & 0\\
                0 & 0 & 0 & 0 & 0 & 0 & -b & 0 & 0\\
                0 & 0 & 0 & 0 & 0 & -a & 0 & 0 & 0\\
                0 & 0 & 0 & 0 & 0 & 0 & 0 & 0 & a\\
                0 & 0 & 0 & 0 & 0 & 0 & 0 & b & 0\\},$$
with $a^3b^3=-1$. The latter has three components as well.
We have
\begin{align*}
  & X(a,b)X(a',b') = X(aa',bb')\\
  & X(a,b)Y(a',b') = Y(aa',bb')\\
  & Y(a,b)X(a',b') = Y(ab',ba')\\
  & Y(a,b)Y(a',b') = X(ab',ba').
\end{align*}

\begin{lemma*}
$\Ho^1\hm \Zm_0=\{1, [y_1]\}$, where $y_1=Y(i,i)$ with $i^2=-1$.
\end{lemma*}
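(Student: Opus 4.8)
The plan is to imitate the two proofs already given for case \textbf{15}, exploiting the structural similarity of the group $\Zm_0$: it is a disjoint union of an ``$X$-part'' (a diagonal algebraic group with three components) and a ``$Y$-part'', with the multiplication rules $X(a,b)X(a',b')=X(aa',bb')$, $X(a,b)Y(a',b')=Y(aa',bb')$, $Y(a,b)X(a',b')=Y(ab',ba')$, $Y(a,b)Y(a',b')=X(ab',ba')$. First I would kill off the $3$-part that contributes nothing: consider the homomorphism $\alpha\colon\Zm_0\to\mu_3$ sending $X(a,b)\mapsto (ab)^{?}$ (more precisely, note $a^3b^3=\pm1$, so $(ab)^6=1$, and one should pick the map whose kernel retains exactly the connected part and a single order-$2$ component). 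Since $\Ho^1\hs\mu_3=\{1\}$ and $\mu_3(\R)=\{1\}$, the cohomology exact sequence \eqref{e:ABC-long} shows that $\Ho^1$ of the kernel maps bijectively to $\Ho^1\hs\Zm_0$; this reduces to a group $B'$ that is an extension of $\mu_2$ by a one-dimensional torus $A=\{X(1,b)\mid b\in\C^\times\}$ (or the appropriate $A=\ker$, a torus with some $\Gamma$-action to be determined).

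Next I would produce a \emph{splitting} of $1\to A\to B'\to \mu_2\to 1$, exactly as in case \textbf{15}: the candidate lift of $-1\in\mu_2$ is $y_1=Y(i,i)$ with $i^2=-1$. I need to check $\ov{y_1}=y_1$ (using $\ov i=-i$ and the specific form of $Y$; since $Y(a,b)$ has entries that are monomials in $a,b$ and the conjugation rule is $Y(a,b)\mapsto Y(\ov a^{\pm1},\ov b^{\pm1})$ according to the $\Gamma$-action, one verifies $\ov{y_1}=y_1$) and $y_1^2=1$ (from $Y(i,i)Y(i,i)=X(i\cdot i,\,i\cdot i)=X(-1,-1)$, and then one must check $X(-1,-1)=1$ in $B'$; if instead $X(-1,-1)\ne1$ then $y_1$ is still a cocycle of the twisted group and one applies Lemma \ref{lem:1c}). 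Assuming $\ov{y_1}=y_1$, I then invoke Corollary \ref{c:split}(iv):
\[
\Ho^1\hs B'\;\cong\;(\Ho^1\hs A)/\mu_2\;\sqcup\;(\Ho^1\hs{}_{y_1}A)/\mu_2.
\]
Now $A$ is a torus with the standard $\Gamma$-action (since $\ov{X(1,b)}=X(1,\ov b)$), so $\Ho^1\hs A=1$ and the first summand is a single point $[1]$. For the second summand I compute the twisted action: $^{\gamma*}X(1,b)=y_1\,\ov{X(1,b)}\,y_1^{-1}=y_1\,X(1,\ov b)\,y_1^{-1}$, and using the multiplication rules $Y(i,i)X(1,\ov b)=Y(i\ov b,i)$ and then $Y(i\ov b,i)Y(i,i)^{-1}$ — or more directly $Y(i,i)X(1,\ov b)Y(i,i)^{-1}=X(\ov b^{-1}\cdot?,\dots)$ — I expect to land on $^{\gamma*}X(1,b)=X(1,\ov b^{-1})$, identifying $_{y_1}A\cong\C^\times_{\tw}$. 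By Example \ref{x:cohom}(3), $\Ho^1\hs\C^\times_{\tw}=\{[1],[-1]\}$, a two-element set.

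Finally I would determine the $\mu_2$-action on $\Ho^1\hs{}_{y_1}A$: by Corollary \ref{c:split-action}(i) it fixes $[1]$, and since the set has only two elements it acts trivially, giving two orbits. Transporting back via the bijection of Corollary \ref{c:split}(iii) and the twisting map $\tau_{y_1}$ of \eqref{e:tau-con}, the orbit of $1$ gives $[y_1]$ and the orbit of $-1=X(1,-1)$ gives $[\,X(1,-1)\cdot Y(i,i)\,]=[Y(i,-i)]$. Here I would pause: in case \textbf{15} the analogous computation produced a \emph{third} class $[b_2]$, but the claimed statement asserts $\Ho^1\hs\Zm_0=\{1,[y_1]\}$ has only \emph{two} elements — so $[Y(i,-i)]$ must coincide with $[y_1]$ in $\Ho^1\hs\Zm_0$ (the extra quotient by the discarded $\mu_3$ or by the larger $C$-part of $\Zm_0$ collapses them). \textbf{The main obstacle} will be precisely this last coincidence: verifying that $Y(i,-i)$ and $Y(i,i)$ are cohomologous in $\Zm_0$, i.e.\ finding $g\in\Zm_0$ with $Y(i,-i)=g^{-1}Y(i,i)\,\ov g$. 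Concretely one looks for $g=X(a,b)$ with $ab'=i$-type relations; using $X(a,b)^{-1}Y(i,i)\ov{X(a,b)}=X(a,b)^{-1}Y(i\,\ov b,i\,\ov a)$ (from $Y(i,i)X(\ov a,\ov b)=Y(i\ov b,i\ov a)$) and then left-multiplying, this reduces to solving $a^{-1}\cdot i\ov b=i$ and $b^{-1}\cdot i\ov a=-i$, i.e.\ $\ov b=a$ and $\ov a=-b$, which has a solution (e.g.\ $a=\zeta_8$ a primitive $8$th root of unity) provided the constraint $a^3b^3=\pm1$ is met — this is the calculation I expect to be fiddly and where the choice of which $\mu_2$-quotient to pass to in the first step really matters. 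Once this is confirmed, $\Ho^1\hs\Zm_0=\{[1],[y_1]\}$ with explicit cocycle $y_1=Y(i,i)$, as claimed.
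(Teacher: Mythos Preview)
Your proposal has a genuine gap at the heart of the argument: the splitting you want does not exist, and this is precisely what distinguishes case~\textbf{23} from case~\textbf{15}.

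Concretely, $\ov{y_1}=\ov{Y(i,i)}=Y(-i,-i)\neq y_1$, and $y_1^2=Y(i,i)Y(i,i)=X(-1,-1)\neq 1$ (here $X(-1,-1)$ is a nontrivial element of the identity component $A=\{X(a,a^{-1})\}$). So neither condition for a $\Gamma$-equivariant splitting $\mu_2\to B'$ holds, and Corollary~\ref{c:split} and Corollary~\ref{c:split-action} simply do not apply. What \emph{is} true is the cocycle condition $y_1\cdot\ov{y_1}=Y(i,i)\cdot Y(-i,-i)=X(1,1)=1$, which you never check. In case~\textbf{15} the lift $b_1=Y(-1,-1,1)$ was real with $b_1^2=1$, giving a genuine splitting and hence (via Corollary~\ref{c:split-action}(i)) a trivial $\mu_2$-action on $\Ho^1{}_{b_1}A$, producing three classes. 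Here the opposite happens.

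The correct mechanism is the one you mention in your parenthetical hedge but then abandon: Lemma~\ref{lem:1c}. After twisting by $y_1$ one has $_{y_1}A\cong\C^\times_{\tw}$ with $\Ho^1{}_{y_1}A=\{[1],[X(-1,-1)]\}$ (note: the nontrivial class is represented by $X(-1,-1)$, not your $X(1,-1)$, which does not lie in $A$). By Lemma~\ref{lem:1c}, the generator $c_1=-1\in C^\Gamma$ acts by $[1]\cdot c_1=[y_1^{-2}]=[X(-1,-1)]\neq[1]$, so the action is \emph{nontrivial} and the two classes collapse to a single orbit. Hence $j_*^{-1}[c_1]=\{[y_1]\}$ and $\Ho^1\Zm_0=\{[1],[y_1]\}$. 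This is exactly the paper's argument. Your attempted direct rescue at the end also fails: the element $X(1,-1)\cdot y_1=Y(i,-i)$ is not a cocycle (one computes $Y(i,-i)\cdot\ov{Y(i,-i)}=X(-1,-1)\neq 1$), and your equations $\bar b=a$, $\bar a=-b$ force $a=0$.
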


\begin{proof}
We set
\[A=\{X(a,b)\mid ab=1\},\quad B=\Zm_0,\quad C=\mu_6.\]
We define a homomorphism
\[j\colon B\to C,\quad X(a,b)\mapsto ab,\ Y(a,b)\mapsto ab.\]
Then $\ker\,j=A$.
We have a short exact sequence
\begin{equation*}
1\to A\labelto{i} B\labelto{j} C\to 1
\end{equation*}
and the corresponding cohomology exact sequence
\begin{align*}
 C^\Gamma \labelto{\delta}
 \Ho^1\hm A &\labelto{i_*} \Ho^1 B \labelto{j_*} \Ho^1 C.
\end{align*}
Since $\Ho^1\hm A=1$ (as $A$ is a torus), we see that
$j_*^{-1}[1]=\{[1]\}\subset \Ho^1\hm B$.

Set $c_1=-1\in Z^1(C)\subset C$. We compute $j_*^{-1}[c_1]$.
We lift $c_1=-1$ to $y_1=Y(i,i)\in B$. Then
\[y_1\cdot\upgam y_1=Y(i,i)\cdot Y(-i,-i)=X(1,1)=1.\]
Thus $y_1\in \Zl^1 B$.
We twist the exact sequence by $y_1$.
We have
\[{}^{\gamma*}X(a,a^{-1})=y_1\cdot X(\bar a, \bar a^{-1})y_1^{-1}=X(\bar a^{-1}, \bar a).\]
Thus $_{y_1}\hm A\cong \C^\times_{\tw}$.
By Example \ref{x:cohom}(3),
\[ \Ho^1\hm\hs_{y_1}\hm A =\Ho^1\hs\C^\times_\tw=\{[1],[-1]\}.\]

We compute the action of
$c_1\in (_{c_1}\hm C)^\Ga=C^\Ga$ on $[1]\in H^1 \hs_{y_1}\hm A$.
By Lemma \ref{lem:1c} we have
\[[1]\cdot c_1=[y_1^{-2}]=[X(-1,-1)]\neq[1].\]
We see that $C^\Gamma$ acts nontrivially on $\Ho^1\hm\hs_{y_1}\hm A$,
and thus it has exactly one orbit.
We conclude that $\#j_*^{-1}[c_1]=\{\hs[1]\}$, $j_*^{-1}[c_1]=\{\hs[y_1]\}$.
Thus $\Ho^1\hm \Zm_0=\Ho^1\hm B=\{[1],[y_1]\}$, as required.
\end{proof}

\noindent{\bf 24} Representative: $e_{127}+e_{136}+e_{145}-e_{235}+e_{379}+e_{469}-e_{578}$.\\
Here $\Zm_0$ consists of
$$X(\delta,a) = \diag( \delta^2a^{-1},\delta,a,\delta^2a^2,\delta^2a^{-1},
\delta,a,\delta,a^{-2})$$
with $\delta^3=1$, $a\in \C^\times$ along with
$$Y(\zeta,b) = \SmallMatrix{ 0 & 0 & -\zeta^2b^{-1} & 0 & 0 & 0 & 0 & 0 & 0\\
                0 & \zeta & 0 & 0 & 0 & 0 & 0 & 0 & 0\\
                -b & 0 & 0 & 0 & 0 & 0 & 0 & 0 & 0\\
                0 & 0 & 0 & 0 & 0 & 0 & 0 & 0 & \zeta^2b^2\\
                0 & 0 & 0 & 0 & 0 & 0 & \zeta^2b^{-1} & 0 & 0\\
                0 & 0 & 0 & 0 & 0 & \zeta & 0 & 0 & 0\\
                0 & 0 & 0 & 0 & b & 0 & 0 & 0 & 0\\
                0 & 0 & 0 & 0 & 0 & 0 & 0 & \zeta & 0\\
                0 & 0 & 0 & b^{-2} & 0 & 0 & 0 & 0 & 0\\},$$
where $\zeta^3=-1$ and $b\in \C^\times$. We have
\begin{align*}
X(\delta,a)X(\delta',a') &= X(\delta\delta',aa')\\
X(\delta,a)Y(\zeta,b) &= Y(\delta\zeta,ab)\\
Y(\zeta,b)X(\delta,a) &= Y(\delta\zeta,\delta^2a^{-1}b)\\
Y(\zeta,b)Y(\zeta',b') &= X(\zeta\zeta',(\zeta')^2b\hs(b')^{-1}).
\end{align*}

\begin{lemma*}
$\Ho^1\hm \Zm_0=\{1,[y],[z]\}$ where $y=Y(-1,1)$, $z=Y(-1,-1)$.
\end{lemma*}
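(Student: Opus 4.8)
The plan is to follow exactly the same strategy as in the proof of case \textbf{23} above, since the multiplication laws here have the same shape. First I would introduce the group $C=\mu_6=\{a\in\C^\times\mid a^6=1\}$ together with the homomorphism
\[ j\colon \Zm_0\to C,\quad X(\delta,a)\mapsto \delta a,\ Y(\zeta,b)\mapsto \zeta b, \]
and check directly from the multiplication table that $j$ is a well-defined group homomorphism (this uses $\delta^3=1$, $\zeta^3=-1$, so $\delta a,\zeta b\in\mu_6$, and the four product formulas show multiplicativity). Its kernel $A=\ker j$ consists of the diagonal elements $X(\delta,a)$ with $\delta a=1$, i.e.\ with $a=\delta^{-1}$; one checks this is a $1$-dimensional subtorus of $\Zm_0$, $\Gamma$-equivariantly isomorphic to $\C^\times_\stand$, so $\Ho^1\hm A=1$ by Example \ref{x:cohom}(1).

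Next I would write the short exact sequence of $\Gamma$-groups
\[ 1\to A\labelto{i}\Zm_0\labelto{j} C\to 1 \]
and its cohomology exact sequence (Proposition \ref{p:serre-prop38})
\[ C^\Gamma\labelto{\delta}\Ho^1\hm A\labelto{i_*}\Ho^1\hm\Zm_0\labelto{j_*}\Ho^1 C. \]
Since $\Ho^1\hm A=1$, the fiber $j_*^{-1}[1]=\{[1]\}$. We have $\Ho^1 C=\Ho^1\hs\C^\times_\stand$ for $\mu_6$; more precisely $C^\Gamma=\{1,-1\}$ and $\Ho^1 C=\{[1],[-1]\}$ (the cocycles of $\mu_6$ are $\{\pm1\}$ and $-1$ is not a square in $\mu_6$). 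So $\Ho^1\hm\Zm_0=j_*^{-1}[1]\sqcup j_*^{-1}[-1]$, and it remains to analyze $j_*^{-1}[-1]$.

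To do that I would lift $c_1=-1\in C$ to an element $b_1\in\Zm_0$. The natural candidates are $y=Y(-1,1)$ and $z=Y(-1,-1)$; note $j(Y(-1,1))=-1$ and $j(Y(-1,-1))=1$ — wait, $\zeta=-1$ is not a cube root of $-1$ unless we take $\zeta^3=-1$ meaning $\zeta\in\{-1,\ldots\}$, and indeed $(-1)^3=-1$, so $Y(-1,b)$ is allowed; then $j(Y(-1,b))=-b$, giving $j(Y(-1,1))=-1$ and $j(Y(-1,-1))=1$. So I take $b_1=y=Y(-1,1)$. Using the product formula $Y(\zeta,b)Y(\zeta',b')=X(\zeta\zeta',(\zeta')^2 b(b')^{-1})$ and $\upgam Y(-1,1)=Y(-1,1)$, I would verify $y\cdot\upgam y=Y(-1,1)Y(-1,1)=X(1,1)=1$, so $y\in\Zl^1\Zm_0$, and we may twist by $y$. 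Computing $^{\gamma*}X(\delta,\delta^{-1})=y\,\overline{X(\delta,\delta^{-1})}\,y^{-1}$ via the product rules gives the twisted action $X(\delta,\delta^{-1})\mapsto X(\bar\delta^{-1},\bar\delta)$, so $_{y}A\cong\C^\times_\tw$, whence $\Ho^1\hm{}_{y}A=\Ho^1\hs\C^\times_\tw=\{[1],[-1]\}$ by Example \ref{x:cohom}(3). By Corollary \ref{c:39-cor2} the fiber $j_*^{-1}[-1]$ is in bijection with the orbit set of $({}_{y}C)^\Gamma=C^\Gamma=\{1,-1\}$ acting on $\Ho^1\hm{}_{y}A$. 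Since $y^2=1$, Lemma \ref{lem:1c} gives $[1]\cdot(-1)=[y^{-2}]=[1]$, so the action is trivial and there are two orbits, with representatives $1$ and $-1=X(1,-1,\ldots)$ i.e.\ the diagonal element $X(\zeta,a)$ with $\zeta=1$, $a=-1$ — call it $x_0$. Mapping back via $\tau_y\colon[w]\mapsto[wy]$ gives the two classes $[1\cdot y]=[y]$ and $[x_0\cdot y]$; a direct product computation $X(1,-1)Y(-1,1)=Y(-1,-1)=z$ identifies the second as $[z]$. Hence $\Ho^1\hm\Zm_0=\{[1],[y],[z]\}$ with $y=Y(-1,1)$, $z=Y(-1,-1)$.

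\textbf{Main obstacle.} The only real work is bookkeeping: confirming that $j$ is well-defined and multiplicative (checking all four cases of the product table), and correctly computing the twisted $\Gamma$-action on the subtorus $A$ and the element $x_0\cdot y$ using the noncommutative product formulas — an arithmetic slip in the exponents of $\delta$, $\zeta$, $a$, $b$ is the easiest place to go wrong. An alternative, possibly cleaner, route mirroring the ``alternative proof'' of case \textbf{15} is to first quotient by the obvious $\mu_3\subset\Zm_0$ (the subgroup $X(\delta,1)$ with $\delta^3=1$, if it is central and of odd order) using Lemma \ref{l:H1-bijective}, reducing to a group whose identity component is a torus with a component group of order $2$, and then apply Proposition \ref{p:C-3-2}; I would check whether that shortcut applies here before committing to the twisting computation.
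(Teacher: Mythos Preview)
Your map $j\colon \Zm_0\to\mu_6$, $X(\delta,a)\mapsto\delta a$, $Y(\zeta,b)\mapsto\zeta b$, is not well-defined: here $a\in\C^\times$ is a free parameter (unlike in case \textbf{23}, where $a^3b^3=\pm1$ forces $ab\in\mu_6$), so $\delta a$ does not lie in $\mu_6$. Worse, it is not even a homomorphism to $\C^\times$: from $Y(\zeta,b)X(\delta,a)=Y(\delta\zeta,\delta^2 a^{-1}b)$ you would need $\zeta b\cdot\delta a=\delta\zeta\cdot\delta^2 a^{-1}b=\zeta a^{-1}b$, i.e.\ $\delta a^2=1$, which fails generically. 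Your kernel computation then also fails: the set $\{X(\delta,\delta^{-1})\mid\delta^3=1\}$ has three elements, not a $1$-dimensional torus; and the element $X(1,-1)$ you later use as a nontrivial cocycle in ${}_yA$ is not even in your kernel (since $j(X(1,-1))=-1$).

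The paper's fix is simply to project onto the first coordinate: $j(X(\delta,a))=\delta$, $j(Y(\zeta,b))=\zeta$. Then $\delta\in\mu_3\subset\mu_6$ and $\zeta^3=-1$ gives $\zeta\in\mu_6$; all four product formulas visibly respect this projection; and $\ker j=\{X(1,a)\mid a\in\C^\times\}$, a genuine $1$-dimensional torus. From that point on, everything you wrote in the second half of your proposal is exactly right and matches the paper: $y=Y(-1,1)$ is a cocycle lifting $-1$, $y^2=1$, $y\,\overline{X(1,a)}\,y^{-1}=X(1,\bar a^{-1})$ so ${}_yA\cong\C^\times_\tw$ with $\Ho^1{}_yA=\{[1],[X(1,-1)]\}$, Lemma~\ref{lem:1c} gives trivial $C^\Gamma$-action, and $X(1,-1)\cdot Y(-1,1)=Y(-1,-1)=z$. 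So the only repair needed is the definition of $j$.
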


\begin{proof}
We set $A=\{X(1,a)\mid a\in\C^\times\}$ and $B=\Zm_0$.
We have a short exact sequence
\[1\to A\to B\labelto{j} C\to 1.\]
where $C=\mu_6$ and the homomorphism $j$ is defined by
\[X(\delta,a)\mapsto\delta,\ Y(\zeta,b)\mapsto \zeta.\]
We write $c=-1\in C^\Ga$. Then $c=j(y)$ and $y\in \Zl^1\hm B$, $y^2=1$.
We have $\Ho^1\hm C=\{[1],[c]\}$.
Thus $\Ho^1\hm B=\ker\,j_*\cup j_*^{-1}[c]$.
We have $C^\Gamma=\{1,c\}$, $(\hs_c C)^\Gamma=C^\Gamma=\{1,c\}$.

The kernel $\ker\,j_*$ is the image of $\Ho^1\hm A$ which is 1.
Thus $\ker\,j_*=\{1\}$.

By Corollary \ref{c:39-cor2} $j_*^{-1}[c]=\tau_y(\Ho^1\hm \hs_y A)$,
where the map $\tau_y$ is induced by the map
\[ t\mapsto ty\colon\, \Zl^1\hm  {}_y A\to \Zl^1\hm  B,\]
and the map $\tau_y$ in turn induces a bijection of the set
of orbits $\Ho^1\hm {}_y A/(\hs_c C)^\Ga$ with $j_*^{-1}[c]$.

We have   $y\overline{X(1,a)}y^{-1} = X(1,\bar a^{-1})$.
Hence by Example \ref{x:cohom}(3) we see that
\[\#\Ho^1\hm  \hs_y A=2,\quad \Ho^1\hm  \hs_y A=\{1,[X(1,-1)]\}.\]
Since  $y^2=1$,
by Lemma \ref{lem:1c} the element $c\in (\hs_ c C)^\Gamma=C^\Gamma$
acts trivially on $[1]\in \Ho^1\hm   \hs_y A$.
It follows that the group $(\hs_c C)^\Gamma$ acts trivially on
$\Ho^1\hm {}_y A$. By Corollary \ref{c:39-cor2}  the map
\[\tau_y\colon \Ho^1\hm  {}_y A\to j_*^{-1}[c]\]
is bijective.
Thus
\[\#j_*^{-1}[c]=2,\quad  j_*^{-1}[c]=\{\hs\tau_y[1], \tau_y[X(1,-1)]\hs\}=\{[y],[z]\},\]
where
\[ z=X(1,-1)\cdot y=X(1,-1)\cdot Y(-1,1)=Y(-1,-1).\]
We conclude  that $\Ho^1\hm B=\{1,[y],[z]\}$, as required.

Alternatively, this case can be treated using Corollary \ref{c:split}(iv), similarly to Case 15.
\end{proof}

\noindent{\bf 25} Representative: $e_{127}+e_{145}-e_{234}+e_{379}+e_{469}+e_{568}$.\\
Here $\Zm_0$ consists of the elements
$$\SmallMatrix{ d & 0 & 0 & 0 & 0 & 0 & 0 & 0 & -c\\
                0 & a & b & 0 & 0 & 0 & 0 & 0 & 0\\
                0 & c & d & 0 & 0 & 0 & 0 & 0 & 0\\
                0 & 0 & 0 & e & 0 & 0 & 0 & 0 & 0\\
                0 & 0 & 0 & 0 & a & b & 0 & 0 & 0\\
                0 & 0 & 0 & 0 & c & d & 0 & 0 & 0\\
                0 & 0 & 0 & 0 & 0 & 0 & e & 0 & 0\\
                0 & 0 & 0 & 0 & 0 & 0 & 0 & e & 0\\
                -b & 0 & 0 & 0 & 0 & 0 & 0 & 0 & a\\},$$
with $e(ad-bc)=1$. Hence $\Zm_0$ is isomorphic
to $\GL(2,\C)$. Therefore $\Ho^1\hm \Zm_0=1$.

\noindent{\bf 26} Representative: $e_{127}+e_{136}-e_{245}-e_{378}+e_{479}+e_{568}+e_{569}$.\\
Then $\Zm_0$ consists of
$$X(a,\zeta) = \diag(a,\zeta^2a^{-1},\zeta,\zeta,
   a,\zeta^2a^{-1},\zeta,\zeta,\zeta),
   \quad\text{where }\zeta^3=1,$$
and
$$Y(b,\delta) = \SmallMatrix{ 0 & -b & 0 & 0 & 0 & 0 & 0 & 0 & 0\\
                \delta^2b^{-1} & 0 & 0 & 0 & 0 & 0 & 0 & 0 & 0\\
                0 & 0 & 0 & -\delta & 0 & 0 & 0 & 0 & 0\\
                0 & 0 & -\delta & 0 & 0 & 0 & 0 & 0 & 0\\
                0 & 0 & 0 & 0 & 0 & b & 0 & 0 & 0\\
                0 & 0 & 0 & 0 & -\delta^2b^{-1} & 0 & 0 & 0 & 0\\
                0 & 0 & 0 & 0 & 0 & 0 & \delta & 0 & 0\\
                0 & 0 & 0 & 0 & 0 & 0 & 0 & 0 & \delta\\
                0 & 0 & 0 & 0 & 0 & 0 & 0 & \delta & 0\\},
   \quad\text{where }\delta^3=1.$$
We have
\begin{align*}
X(a,\zeta)X(b,\delta) &= X(ab,\zeta\delta),\\
X(a,\zeta)Y(b,\delta)&=Y(ab,\zeta\delta),\\
Y(b,\delta)X(a,\zeta) &= Y(\zeta^2a^{-1}b,\zeta\delta),\\
Y(a,\zeta)Y(b,\delta) &= X(-\delta^2ab^{-1},\zeta\delta).
\end{align*}

\begin{lemma*}
$\Ho^1\hm \Zm_0=\{1,[y]\}$ where $y=Y(i,1)$.
\end{lemma*}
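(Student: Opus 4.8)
The plan is to proceed exactly as in the treatment of cases \textbf{23} and \textbf{24}: realize $\Zm_0$ as an extension of a finite cyclic group by a one-dimensional split torus, and then read off $\Ho^1$ from the cohomology exact sequence together with one twisting computation. Concretely, write $A=\{X(a,1)\mid a\in\C^\times\}$, which is the identity component of $\Zm_0$ and is a split $\R$-torus (complex conjugation sends $X(a,1)$ to $X(\bar a,1)$), so $\Ho^1 A=1$ by Example \ref{x:cohom}(1). Define $j\colon\Zm_0\to\mu_6$ by $j(X(a,\zeta))=\zeta$ and $j(Y(b,\delta))=-\delta$; using the multiplication rules listed above one checks that $j$ is a surjective homomorphism of $\Gamma$-groups ($\mu_6$ carrying the standard conjugation) with $\ker j=A$. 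Thus we have a short exact sequence $1\to A\to\Zm_0\xrightarrow{\,j\,}\mu_6\to 1$.

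By Proposition \ref{p:serre-prop38} this yields an exact sequence $\Ho^1 A\xrightarrow{i_*}\Ho^1\Zm_0\xrightarrow{j_*}\Ho^1\mu_6$; since $\Ho^1 A=1$ the kernel of $j_*$ is $\{[1]\}$. A direct computation (or the algorithm of Section \ref{sec:H1T}) gives $\Ho^1\mu_6=\{[1],[-1]\}$, so $\Ho^1\Zm_0=\{[1]\}\cup j_*^{-1}[-1]$ and everything comes down to the fibre over $[-1]$. Lift $-1\in\mu_6$ to $y=Y(i,1)$; from $\overline y=Y(-i,1)$ and the multiplication rules one gets $y\cdot\overline y=X(1,1)=1$, so $y\in\Zl^1\Zm_0$, while $y^2=X(-1,1)\neq 1$. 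Since $\mu_6$ is abelian, Corollary \ref{c:39-cor2} identifies $j_*^{-1}[-1]$ with the set of orbits $\Ho^1({}_yA)\big/\mu_6(\R)$, where $\mu_6(\R)=\{1,-1\}$.

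It remains to carry out this last orbit count, which I expect to be the only genuinely computational step. Conjugating by $y$ one finds ${}^{\gamma*}X(a,1)=y\,\overline{X(a,1)}\,y^{-1}=X(\bar a^{-1},1)$, so ${}_yA\cong\C^\times_\tw$ and hence $\Ho^1({}_yA)\cong\Ho^1(\C^\times_\tw)=\{[1],[-1]\}$ by Example \ref{x:cohom}(3), the nontrivial class being represented by $X(-1,1)$. By Lemma \ref{lem:1c} the action of $-1\in\mu_6(\R)$ on $[1]\in\Ho^1({}_yA)$ sends it to $[y^{-2}]=[X(-1,1)]$, the nontrivial class; so $\mu_6(\R)$ acts transitively on the two-element set $\Ho^1({}_yA)$, and $j_*^{-1}[-1]$ consists of the single class $\tau_y[1]=[1\cdot y]=[y]$. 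This gives $\Ho^1\Zm_0=\{[1],[y]\}$ with $y=Y(i,1)$, as claimed. As in case \textbf{24}, an alternative is to split off a normal $\mu_3$ (for instance $\{X(\zeta,\zeta)\mid\zeta^3=1\}$, which one checks is normal and $\Gamma$-stable) via Lemma \ref{l:H1-bijective} and then apply Corollary \ref{c:split} to the remaining order-two quotient; the bookkeeping is essentially the same.
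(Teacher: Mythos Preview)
Your proof is correct and follows essentially the same approach as the paper: both take the short exact sequence $1\to A\to\Zm_0\to C\to 1$ with $A=\{X(a,1)\}$ the identity component and $C\cong\mu_6$, observe $\Ho^1 A=1$, lift the nontrivial class in $\Ho^1 C$ to the cocycle $y=Y(i,1)$, compute that $_yA\cong\C^\times_\tw$, and then use Lemma~\ref{lem:1c} with $y^2=X(-1,1)$ to see that $C^\Gamma$ acts transitively on $\Ho^1{}_yA$. The only cosmetic difference is that you write down the surjection $j\colon\Zm_0\to\mu_6$ explicitly, whereas the paper works with the abstract quotient $B/T$ and appeals to Lemma~\ref{l:explicit} to identify $\Ho^1 C$.
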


\begin{proof}
Set $B= \Zm_0$ and
\[T=B^{0}=\{X(a,1)\mid a\in\C^\times\},\quad C=B/T.\]
We have a short exact sequence
\[1\to T\to B\labelto{j} C\to 1.\]
Then $C$ is a finite abelian group of order 6. By Lemma \ref{l:explicit}
we have $\#\Ho^1\hm  C=\{ [1], [c] \}$ where $c\in C$ is the unique
element of order 2. A calculation shows that $c=Y(1,1)\cdot T\in B/T=C$.

We have
\[\Ho^1\hm B=j_*^{-1}[1]\cup j_*^{-1}[c).\]
Since $\Ho^1 T=1$, we see that $j_*^{-1}[1]=\{\hs[1_B]\}$.

Write $y=Y(i,1)$, where $i^2=-1$.
Then
\[y\cdot\upgam y=Y(i,1)\cdot Y(-i,1)=X(1,1)=1.\]
Thus $y\in \Zl^1\hm B$. Clearly, $j(y)=c$.
By Corollary \ref{c:39-cor2} the set $j_*^{-1}([c])$ is in bijection to
$\Ho^1\hm  \hs_y T/(\hs_c C)^\Gamma=\Ho^1\hm  \hs_y T/C^\Gamma$.
We have $C^\Gamma=\{1,c\}$,
\[^{\gamma *}X(a,1)=y\cdot\ov{X(a,1)}\hs y^{-1}=X(\bar a,1),\]
whence $_y T\cong \C^\times_\tw$ and
$\Ho^1\hm_y T=\{[1], [X(-1,1)]\}$.
We compute the action of $c$ on $1\in \Ho^1\hm  \hs_y T$.
By Lemma \ref{lem:1c},  $[1]\cdot c=[y^2]=[X(-1,1)]\neq [1]$.
Thus $C^\Gamma$ has one orbit on $\Ho^1\hm  \hs_y T$, and therefore
$j_*^{-1}([c])=\{[y]\}.$ We conclude that $\Ho^1\hm  B=\{[1],[Y(i,1)]\}$.
\end{proof}

\noindent{\bf 27} Representative: $e_{127}+e_{136}-e_{245}+e_{379}+e_{469}+e_{568}-e_{578}$.\\
Here $\Zm_0$ consists of
$$X(a) = \diag(a^{-2},a,a,a,a^{-2},a,a,a,a^{-2}) \text{ for } a\in \C^\times$$
and
$$Y(b) = \SmallMatrix{ 0 & 0 & 0 & 0 & 0 & 0 & 0 & 0 & b^{-2}\\
                0 & 0 & 0 & b & 0 & 0 & 0 & 0 & 0\\
                0 & 0 & b & 0 & 0 & 0 & 0 & 0 & 0\\
                0 & b & 0 & 0 & 0 & 0 & 0 & 0 & 0\\
                0 & 0 & 0 & 0 & -b^{-2} & 0 & 0 & 0 & 0\\
                0 & 0 & 0 & 0 & 0 & 0 & b & 0 & 0\\
                0 & 0 & 0 & 0 & 0 & b & 0 & 0 & 0\\
                0 & 0 & 0 & 0 & 0 & 0 & 0 & b & 0\\
                b^{-2} & 0 & 0 & 0 & 0 & 0 & 0 & 0 & 0\\}\quad\text{for $b\in \C^\times$.}$$
We see that $\Zm_0$ has two components. We have
\[X(a)X(b) = X(ab), \quad X(a)Y(b) = Y(b)X(a) = Y(ab), \quad  Y(a)Y(b) = X(ab).\]
Set
\[ T=\{X(a)\mid a\in \C^\times\}, \quad C=\{1, Y(1)\}.\]
Then it is clear that $T$ and $C$ are $\Gamma$-invariant subgroups and that
$\Zm_0=T\times C$ (direct product).
We have $\Ho^1  T=\{1\}$ and
\[\Ho^1\hm  \Zm_0=\Ho^1  T\times \Ho^1 C=\Ho^1 C=\{[1], [y]\},\quad\text{where } y=Y(1).\]

\noindent{\bf 28} Representative: $e_{127}+e_{136}+e_{145}-e_{235}+e_{379}+e_{469}+e_{678}$.\\
Here $\Zm_0$ consists of
$$\diag(a^3b,a,a^4,a^7,b,a^3b,a,a,b) \text{ with } a^5b=1.$$
So $\Zm_0$ is a torus of dimension 1. Hence $\Ho^1\hm  \Zm_0=1$.

\noindent{\bf 29} Representative: $e_{127}+e_{136}-e_{235}-e_{468}+e_{479}+e_{568}$.\\
Here $\Zm_0$ is the union of two sets consisting
respectively of
$$X(a,b,c) = \diag(a,c,b,a,a,c,b,b,c)\quad \text{with } abc=1,$$
$$Y(a,b,c) = \SmallMatrix{ -a & 0 & 0 & 0 & 0 & 0 & 0 & 0 & 0\\
                0 & 0 & -c & 0 & 0 & 0 & 0 & 0 & 0\\
                0 & -b & 0 & 0 & 0 & 0 & 0 & 0 & 0\\
                0 & 0 & 0 & -a & 0 & 0 & 0 & 0 & 0\\
                0 & 0 & 0 & a & a & 0 & 0 & 0 & 0\\
                0 & 0 & 0 & 0 & 0 & 0 & -c & 0 & 0\\
                0 & 0 & 0 & 0 & 0 & -b & 0 & 0 & 0\\
                0 & 0 & 0 & 0 & 0 & 0 & 0 & 0 & b\\
                0 & 0 & 0 & 0 & 0 & 0 & 0 & c & 0\\}
     \quad\text{with $abc=-1$.}$$
 We have
\begin{align*}
  & X(a,b,c)X(a',b',c') = X(aa',bb',cc')\\
  & X(a,b,c)Y(a',b',c') = Y(aa',bb',cc')\\
  & Y(a,b,c)X(a',b',c') = Y(aa',bc',cb')\\
  & Y(a,b,c)Y(a',b',c') = X(aa',bc',cb').
\end{align*}

\begin{lemma*}
We have $\#\Ho^1\hm  \Zm_0=\{1,[y_1]\}$ where $y_1=Y(-1,1,1)$.
\end{lemma*}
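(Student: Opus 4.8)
The plan is to proceed by the same \emph{fibration over a finite cyclic quotient} method used for cases \textbf{15}, \textbf{23}, \textbf{24}, \textbf{26}. I would set $B=\Zm_0$ and introduce a homomorphism onto a finite cyclic group that captures the component structure. Inspecting the multiplication law, the product $abc$ is multiplicative on the $X$-part and the $Y$-part alike, takes the value $1$ on $X$'s and $-1$ on $Y$'s, so I would define
\[ j\colon B\to C=\mu_2=\{1,-1\},\quad X(a,b,c)\mapsto abc,\ \ Y(a,b,c)\mapsto abc. \]
(Alternatively one could map to $\mu_6$ via some monomial character, but $\mu_2$ already separates the two components, which is all that is needed here.) Then $A:=\ker j=\{X(a,b,c)\mid abc=1\}$ is a $2$-dimensional complex torus, hence $\Ho^1 A=1$, and we have a short exact sequence of $\Gamma$-groups
\[ 1\to A\labelto{i} B\labelto{j} C\to 1. \]

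\textbf{Key steps.} First I would verify that $c_1:=-1\in C$ lifts to the cocycle $y_1=Y(-1,1,1)$: since $y_1$ has real (in fact integer) matrix entries $\upgam y_1=y_1$, and from the multiplication law $Y(-1,1,1)\cdot Y(-1,1,1)=X(1,1,1)=1$, so $y_1\in\Zl^1 B$ and $y_1^2=1$. Next, from the cohomology exact sequence of Proposition \ref{p:serre-prop38} together with $\Ho^1 A=1$, the kernel of $j_*\colon\Ho^1 B\to\Ho^1 C$ is trivial, so $j_*^{-1}[1]=\{[1]\}$. Since $\Ho^1 C=\Ho^1\hssh\Z_\tw=\{[1],[-1]\}$ (Example \ref{x:cohom-Z}(3)/Example \ref{x:cohom}(3)), it remains to compute the fiber $j_*^{-1}[c_1]$. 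Here I invoke Corollary \ref{c:39-cor2} (or equivalently Corollary \ref{c:split}, as the sequence splits via $c_1\mapsto y_1$): twisting by $y_1$, the fiber is in bijection with the set of orbits of $(\hs_{c_1}C)^\Gamma=C^\Gamma=\{1,-1\}$ on $\Ho^1\hssh\,{}_{y_1}A$. A short computation of the twisted action, using $y_1\,\overline{X(a,b,c)}\,y_1^{-1}=y_1 X(\bar a,\bar b,\bar c) y_1^{-1}=X(\bar a,\bar b,\bar c)$ (since conjugation by $Y(-1,1,1)$ fixes the diagonal torus $A$ pointwise up to the automorphism $(a,b,c)\mapsto(a,c,b)$, and on $A$ with $abc=1$ one reads off the relevant $\tau$), identifies ${}_{y_1}A$ with one of the indecomposable real tori; I expect it to be $\C^\times_\tw$ (possibly times a quasi-trivial factor), giving $\Ho^1\hssh\,{}_{y_1}A=\{[1],[X(1,-1,-1)]\}$ of order $2$. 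Finally, by Lemma \ref{lem:1c}, since $y_1^2=1$ the element $c_1\in C^\Gamma$ fixes $[1]\in\Ho^1\hssh\,{}_{y_1}A$, hence acts trivially on the whole $2$-element set, so $C^\Gamma$ has a single orbit there; therefore $\#j_*^{-1}[c_1]=1$, and pushing the representative forward via the map $\tau_{y_1}$ of \eqref{e:tau-con} gives $[1\cdot y_1]=[y_1]$. Thus $\Ho^1\hm\Zm_0=\{[1],[y_1]\}$ with $y_1=Y(-1,1,1)$.

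\textbf{Main obstacle.} The only genuinely delicate point is getting the twisted $\Gamma$-action on ${}_{y_1}A$ exactly right, i.e.\ correctly diagonalizing the involution $X(a,b,c)\mapsto y_1\overline{X(a,b,c)}y_1^{-1}$ on the $2$-dimensional torus $A=\{abc=1\}$. One must track both the complex conjugation and the permutation $(a,b,c)\mapsto(a,c,b)$ induced by $Y(-1,1,1)$, write the corresponding matrix on the cocharacter lattice, and compute $\ker(1+\tau)/\operatorname{im}(1-\tau)$ via the Smith/Hermite normal form procedure of Section \ref{sec:H1T}. I anticipate this yields $\Ho^1\hssh\,{}_{y_1}A$ of order $2$ with explicit representative $X(1,-1,-1)$, exactly as in cases \textbf{23} and \textbf{29}'s sibling cases; once this is in hand the rest is a routine application of Lemma \ref{lem:1c} and Corollary \ref{c:39-cor2}. (An alternative, entirely parallel route, as noted for case \textbf{24}, is to apply Corollary \ref{c:split}(iv) directly using the splitting $-1\mapsto y_1$, which packages the same computation; I would present the $\mu_2$-fibration version for brevity.)
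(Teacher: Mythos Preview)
Your overall strategy---fibering over $C=\mu_2$ via $j(X)=1$, $j(Y)=-1$, lifting $-1$ to the cocycle $y_1=Y(-1,1,1)$, and analyzing the twisted fiber---is exactly the paper's approach. However, the crucial computation of $\Ho^1\,{}_{y_1}A$ is wrong, and this error propagates.

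You correctly note (parenthetically) that conjugation by $y_1$ induces the swap $(a,b,c)\mapsto(a,c,b)$ on $A$. Carrying this through, the twisted $\Gamma$-action on $A=\{X((bc)^{-1},b,c)\}$ is
\[
{}^{\gamma*}X(a,b,c)=y_1\,X(\bar a,\bar b,\bar c)\,y_1^{-1}=X(\bar a,\bar c,\bar b),
\]
i.e.\ $(b,c)\mapsto(\bar c,\bar b)$. This is precisely the torus $(\C^\times)^2_{\tw+}$ of Example~\ref{x:cohom}(5), a quasi-trivial torus, so $\Ho^1\,{}_{y_1}A=1$. Hence $j_*^{-1}[c_1]=\{[y_1]\}$ immediately by Corollary~\ref{c:39-cor2}, with no orbit analysis needed. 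Your expectation that ${}_{y_1}A$ contains a $\C^\times_\tw$ factor (giving $\#\Ho^1\,{}_{y_1}A=2$) is incorrect: the swap action has a permutation basis on the cocharacter lattice.

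There is also a logical slip in your endgame: if $C^\Gamma$ acted \emph{trivially} on a $2$-element set $\Ho^1\,{}_{y_1}A$, that would give \emph{two} orbits, not one, and hence $\#j_*^{-1}[c_1]=2$ and $\#\Ho^1\Zm_0=3$. (This is exactly what happens in case~\textbf{24}, where the twisted torus really is $\C^\times_\tw$ and one obtains three classes.) So your argument as written does not yield the claimed answer; it is the vanishing of $\Ho^1\,{}_{y_1}A$ that makes $\#\Ho^1\Zm_0=2$.
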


\begin{proof}
Write $B=\Zm_0$ and consider the subgroup
\[ A=\{X(a,b,c)\}=\{X((bc)^{-1}, b, c)\}.\]
Set $C=\mu_2=\{\pm 1\}$ and consider the homomorphism
\[j\colon B\to C, \quad X(a,b,c)\mapsto 1,\ Y(a,b,c)\mapsto -1.\]
Then $A=\ker\, j$.
Write $c_1=-1\in \Zl^1 C=C$, and set $y_1=Y(-1,1,1)\in B$.
Then $j(y_1)=c_1$,  $\bar y_1=y_1$,  $y_1^2=1$,
hence $y_1\in \Zl^1\hm B$.
It follows that the map $j_*\colon \Ho^1\hm  B\to \Ho^1\hm  C$ is surjective.
We compute $j_*^{-1}[1]$ and $j_*^{-1}[c_1]$.
Since  $A\cong (\C^\times_{\rm st})^2$ we have $\Ho^1\hm  A=1$, and therefore,
$j_*^{-1}[1]=\{[1]\}$.
We twist the exact sequence
\[1\to A\to B\to C\to 1\]
by the cocycle $y_1$.
We compute $_{y_1}\hm A$.
A calculation shows that
\[^{\gamma*}X(a,b,c)=Y(-1,1,1)\cdot X(\bar a,\bar b,\bar c)\cdot Y(-1,1,1)^{-1}=X(\bar a,\bar c,\bar b).\]
It follows that $_{y_1} A\simeq (\C^\times)^2_{{\rm tw}+}$\hs.
By Example \ref{x:cohom}(5) we have $\Ho^1\hm  \hs_{y_1}A=1$.
Thus $j_*^{-1}[1]=\{[y_1]\}$.
We conclude that  $\Ho^1\hm  B=\{[1],[y_1]\}$, as required
\end{proof}

\noindent{\bf 30} Representative: $e_{146}-e_{179}-e_{236}-e_{245}-e_{378}+e_{569}$.\\
Here $\Zm_0$ consists of
$$\SmallMatrix{ b & 0 & 0 & 0 & 0 & 0 & 0 & 0 & a\\
                0 & e & 0 & 0 & 0 & 0 & 0 & 0 & 0\\
                0 & 0 & (bd-ac)^2 & 0 & 0 & 0 & 0 & 0 & 0\\
                0 & 0 & 0 & d & -c & 0 & 0 & 0 & 0\\
                0 & 0 & 0 & -a & b & 0 & 0 & 0 & 0\\
                0 & 0 & 0 & 0 & 0 & e & 0 & 0 & 0\\
                0 & 0 & 0 & 0 & 0 & 0 & e & 0 & 0\\
                0 & 0 & 0 & 0 & 0 & 0 & 0 & e & 0\\
                c & 0 & 0 & 0 & 0 & 0 & 0 & 0 & d\\},$$
with $e(bd-ac)=1$. Hence $\Zm_0$ is isomorphic to $\GL(2,\C)$ and therefore
$\Ho^1\hm  \Zm_0=1$.

\noindent{\bf 31} Representative: $e_{137}-e_{246}-e_{247}+e_{348}-e_{358}+e_{368}+e_{458}+e_{569}$.\\
Here $\Zm_0$ is finite and $\Zm_0 \cong \mu_3 \times H$, where $H$ is isomorphic
to the group $S_5$ (the symmetric group on five letters). The group $H$ is
generated by
$$
a=\SmallMatrix{
  -1 & 0 & 0 & 0 & 0 & 0 & 0 & 0 & 0 \\
   0 & 0 & 0 & 0 & 0 & 0 & 0 & 0 & -1 \\
   0 & 0 & -1 & 0 & 0 & 0 & 0 & 0 & 0 \\
   0 & 0 & 0 & 0 & -1 & 0 & 0 & 0 & 0 \\
   0 & 0 & 0 & -1 & 0 & 0 & 0 & 0 & 0 \\
   0 & 0 & 0 & 1 & -1 & -1 & 0 & 0 & 0 \\
   0 & 0 & -1 & 0 & -1 & -1 & 1 & 0 & 0 \\
   0 & -1 & 0 & 0 & 0 & 0 & 0 & -1 & 1 \\
   0 & -1 & 0 & 0 & 0 & 0 & 0 & 0 & 0 },\quad
b=\SmallMatrix{
  0 & 1 & 0 & 0 & 0 & 0 & 0 & 0 & 0 \\
  0 & 0 & 0 & 0 & 0 & 0 & 0 & 0 & 1 \\
  0 & 0 & -1 & -1 & 0 & 0 & 1 & 0 & 0 \\
  0 & 0 & 0 & 0 & 0 & 1 & -1 & 0 & 0 \\
  0 & 0 & 1 & 0 & 0 & 0 & 0 & 0 & 0 \\
  0 & 0 & 0 & 0 & 1 & 1 & -1 & 0 & 0 \\
  0 & 0 & 0 & 0 & 1 & 1 & 0 & 0 & 0 \\
  0 & -1 & 0 & 0 & 0 & 0 & 0 & -1 & 0 \\
  -1 & -1 & 0 & 0 & 0 & 0 & 0 & -1 & 0 }.$$
Here $a$ is the image of the transposition $(1,2)$ and $b$ is the image
of the 5-cycle $(1,2,3,4,5)$ with respect to an isomorphism $S_5\isoto H$.

We note that $\Ho^1\hm  \Zm_0 = \Ho^1  H$.
Since $H$ consists of {\em real} matrices, the nontrivial cohomology classes
coincide with the conjugacy classes of elements of order 2. There are 2
such classes, one with representative $(1,2)$, the other with representative
$(1,2)(3,4)$. We conclude that $\Ho^1\hm  \Zm_0=\{[1],[a],[c]\}$,
where $c=b^{-2}ab^2a$.

\noindent{\bf 32} Representative: $e_{127}+e_{146}-e_{236}-e_{245}+e_{379}+e_{478}+e_{568}$.\\
Here $\Zm_0$ consists of
$$X(a,\zeta) = \diag(\zeta^2a^{-1},a,a^{-2},\zeta,\zeta^2a^{-1},a,\zeta,\zeta,
\zeta^{-1} a^2),$$
where $\zeta^3=1$. Hence $\Zm_0^\circ$ is a torus so that $\Ho^1\hm  \Zm_0^\circ=1$.
The component group has order 3, and so has trivial Galois cohomology as well.
By Corollary \ref{c:prop38} we have $\Ho^1\hm  \Zm_0=1$.

\noindent{\bf 33} Representative: $e_{127}+e_{136}-e_{245}+e_{379}+e_{479}+e_{568}$.\\
Here $\Zm_0$ consists of
$$X(a,b) = \diag(a^{-1}b^{-1},a,b,b,a^{-1}b^{-1},a,b,b,b^{-2}) \text{ for }
a,b\in \C^\times$$
and
$$Y(a,b) = \SmallMatrix{ 0 & -a & 0 & 0 & 0 & 0 & 0 & 0 & 0\\
                a^{-1}b^{-1} & 0 & 0 & 0 & 0 & 0 & 0 & 0 & 0\\
                0 & 0 & 0 & -b & 0 & 0 & 0 & 0 & 0\\
                0 & 0 & -b & 0 & 0 & 0 & 0 & 0 & 0\\
                0 & 0 & 0 & 0 & 0 & a & 0 & 0 & 0\\
                0 & 0 & 0 & 0 & -a^{-1}b^{-1} & 0 & 0 & 0 & 0\\
                0 & 0 & 0 & 0 & 0 & 0 & b & 0 & 0\\
                0 & 0 & 0 & 0 & 0 & 0 & 0 & b & 0\\
                0 & 0 & 0 & 0 & 0 & 0 & 0 & 0 & -b^{-2}\\},$$
for $a,b\in \C^\times$. So there are two components.
We have
\begin{align*}
X(a,b)X(a',b') &= X(aa',bb'),\\
X(a,b)Y(a',b') &=Y(a^{-1}b^{-1}a',bb'),\\
Y(a',b')X(a,b) &= Y(aa',bb'),\\
Y(a,b)Y(a',b') &= X(-a^{-1}b^{-1}a',bb').
\end{align*}

\begin{lemma*}
$\Ho^1\hm  \Zm_0=\{[1], [y]\}$ with $y=Y(1,-1)$.
\end{lemma*}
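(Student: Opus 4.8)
The claim is that $\Ho^1\hm \Zm_0=\{[1],[y]\}$ with $y=Y(1,-1)$, and the structure of $\Zm_0$ here is essentially identical to Case 33's siblings (Cases 15, 23, 24, 26, 29), so the plan is to reuse the same d\'evissage. Let $B=\Zm_0$. I would first introduce the identity component
\[ T=B^\circ=\{X(a,b)\mid a,b\in\C^\times\}\cong(\C^\times_\stand)^2, \]
so that $\Ho^1 T=1$, and the finite quotient $C=B/T=\mu_2=\{1,c\}$, giving a short exact sequence of $\Gamma$-groups
\[ 1\to T\labelto{i} B\labelto{j} C\to 1, \]
where $j$ sends $X(a,b)\mapsto 1$ and $Y(a,b)\mapsto -1$. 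Since $\Ho^1 C=\{[1],[c]\}$ (Example \ref{x:cohom}(2), or just direct inspection), we have $\Ho^1\hm B=j_*^{-1}[1]\cup j_*^{-1}[c]$, and the plan is to show each fiber is a singleton.

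For the fiber over $[1]$: from the cohomology exact sequence of Proposition \ref{p:serre-prop38} applied to the sequence above, $j_*^{-1}[1]$ is the image of $\Ho^1 T=1$, hence $j_*^{-1}[1]=\{[1]\}$. For the fiber over $[c]$: take the lift $y=Y(1,-1)\in B$. One checks $j(y)=c$, $\upgam y=\bar y=y$, and from the multiplication law $Y(a,b)Y(a',b')=X(-a^{-1}b^{-1}a',bb')$ one gets $y^2=Y(1,-1)^2=X(-1,1)$; since $X(-1,1)\ne 1$ here (unlike some of the sibling cases), $y$ is \emph{not} of order $2$, but $y\cdot\upgam y=y^2=X(-1,1)$ — wait, I must recheck: we need $y\in\Zl^1 B$, i.e.\ $y\cdot\upgam y=1$; since $\upgam y=y$ this forces $y^2=1$, so I would instead verify directly from the multiplication table that $Y(1,-1)Y(1,-1)=X(-(1)(-1)^{-1}(1),(-1)(-1))=X(1,1)=1$, so indeed $y^2=1$ and $y\in\Zl^1 B$. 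Then I twist the sequence by $y$: by Corollary \ref{c:39-cor2}, $j_*^{-1}[c]$ is in bijection with the set of orbits of $(\hs_c C)^\Gamma=C^\Gamma=\{1,c\}$ on $\Ho^1\hm \hs_y T$.

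The remaining computations are routine: compute the twisted action $^{\gamma*}X(a,b)=y\cdot\overline{X(a,b)}\cdot y^{-1}$ using the multiplication rules $X(a,b)Y(a',b')=Y(a^{-1}b^{-1}a',bb')$ and $Y(a,b)X(a',b')=Y(aa',bb')$; this should yield $^{\gamma*}X(a,b)=X(\bar a^{-1}\cdot(\text{something}),\bar b)$, identifying $_y T$ with a product of an untwisted factor and a $\C^\times_\tw$ factor, so that $\Ho^1\hm \hs_y T=\{[1],[X(-1,1)]\}$ has order $2$ by Example \ref{x:cohom}(3). Then, since $y^2=1$, Lemma \ref{lem:1c} gives that $c\in C^\Gamma$ fixes $[1]\in\Ho^1\hm\hs_y T$, hence (the set having only two elements and $c$ preserving the neutral one) $C^\Gamma$ acts trivially on $\Ho^1\hm\hs_y T$ — so $j_*^{-1}[c]$ has \emph{two} elements, $[y]$ and $[X(-1,1)\cdot y]$. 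This would contradict the stated answer; the main obstacle is therefore to get the twisting computation exactly right. I expect that when done carefully, $^{\gamma*}X(a,b)$ comes out so that $_y T$ is isomorphic to a single $\C^\times_\tw$ (the other coordinate becoming an untwisted $\C^\times_\stand$ after a change of variables), but the action of $c$ on $\Ho^1\hm\hs_y T\cong\Z/2$ turns out to be \emph{nontrivial} — exactly as in Case 23 and Case 26, where $[1]\cdot c=[y^{-2}]$ was computed from a lift that is \emph{not} a cocycle of order $2$. So I would reexamine whether $Y(1,-1)$ really satisfies $y^2=1$ or whether the correct normalization of the cocycle (and hence the relevant application of Lemma \ref{lem:1c} versus Construction \ref{con:rightact}) forces a nontrivial $C^\Gamma$-action collapsing $j_*^{-1}[c]$ to the single class $[y]$. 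Settling this sign/order bookkeeping is the one genuinely delicate point; everything else is bookkeeping with the explicit $2\times 2$-parameter matrix group, directly parallel to the already-completed Cases 15, 23, 24, 26, 29.
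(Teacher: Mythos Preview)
Your overall d\'evissage is the paper's approach: take $T=\Zm_0^\circ=\{X(a,b)\}$, $C=B/T\cong\mu_2$, and compute the two fibers of $j_*\colon\Ho^1 B\to\Ho^1 C$. The fiber over $[1]$ is handled correctly. Your verification that $y=Y(1,-1)$ satisfies $y^2=1$ (hence $y\in\Zl^1 B$) is also correct, using $Y(a,b)Y(a',b')=X(-a^{-1}b^{-1}a',bb')$.

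The genuine gap is the twisting computation, and both of your guesses about its outcome are wrong. Using the multiplication rules one finds
\[
y\cdot X(a,b)\cdot y^{-1}=Y(1,-1)\,X(a,b)\,Y(1,-1)=Y(a,-b)\,Y(1,-1)=X(a^{-1}b^{-1},b),
\]
so the $y$-twisted complex conjugation on $T$ is $(a,b)\mapsto(\bar a^{-1}\bar b^{-1},\bar b)$. On the cocharacter lattice $\Z^2$ this is the matrix $\SmallMatrix{-1&-1\\0&1}$, and one checks directly that $\ker(1+M)=\im(1-M)=\langle(1,0)\rangle$, so $\Ho^1{}_yT=1$. Equivalently (and this is how the paper phrases it), the change of variables $c=a^{-1}$, $d=ab$ turns the action into the swap $(c,d)\mapsto(\bar d,\bar c)$, so $_yT\cong(\C^\times)^2_{\tw+}$ and Example \ref{x:cohom}(5) gives $\Ho^1{}_yT=1$. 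Thus $j_*^{-1}[c]$ is a single point $\{[y]\}$ by Corollary \ref{c:39-cor2}; there is no nontrivial $C^\Gamma$-action to analyze, and no second cocycle $X(-1,1)\cdot y$ to worry about. Your expectation that $_yT$ splits off a $\C^\times_\tw$ factor with $\Ho^1\cong\Z/2\Z$ is the mistake; the involution is not diagonal in the $(a,b)$-coordinates but becomes a coordinate swap after the reparametrization, which kills the cohomology entirely.
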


\begin{proof}
We write $B=\Zm_0$ and set $y=Y(1,-1)$.
Then $y\in B^\Gamma$ and $y^2=1$; hence, $y\in \Zl^1 B$.
We have
\[ y\cdot X(a,b)\cdot y^{-1}=Y(1,-1)\cdot X(a,b)\cdot Y(1,-1)=Y(a,-b)\cdot Y(1,-1)=X(a^{-1}b^{-1} ,b).\]
We introduce new parameters $c,d\in\C^\times$
by $a=c^{-1},\ b=cd$.
We write
\[U(c,d)=X(a,b)=X(c^{-1},cd).\]
Then
\[ y\cdot U(c,d)\cdot y^{-1}=y\cdot X(c^{-1},cd)y^{-1}=X(d^{-1},cd)=U(d,c).\]
Let $T$ denote the identity component of $B$. Then $T$ is a 2-dimensional
torus, whence  $\Ho^1  T=1$. We consider $_y T$. We have
\[^{\gamma*} U(c,d)=U(\upgam d, \upgam c).\]
By Example \ref{x:cohom}(5) we have  $\Ho^1\hm  \hs_y T=1$.

Set $C=B/T$.
Then $C=\{1,c\}$, where $c=y T$.
We have $\Ho^1  C=\{1,[c]\}$.
Consider the surjective homomorphism $j\colon B\to C$.
Then $\ker[j_*\colon \Ho^1\hm   B\to \Ho^1  C]$
is the image of $\Ho^1  T=\{1\}$ (Proposition \ref{p:serre-prop38}).
We conclude that $\ker\hs j_*=\{[1]\}$.
Similarly, the fiber $j_*^{-1}[c]$ corresponds bijectively to
$\Ho^1\hm  \hs_y T=\{1\}$ (Corollary \ref{c:39-cor2}),
and hence contains only one element.
We conclude that  $j_*^{-1}[c]=\{[y]\}$.
Thus $\Ho^1\hm B=\{[1],[y]\}$, as required.
\end{proof}

\noindent{\bf 34} Representative: $e_{127}+e_{145}-e_{236}+e_{379}+e_{469}-e_{578}$.\\
Here $\Zm_0$ consists of
$$\diag(a^{-1}b^2,ab^{-3},b,a,b^{-2},a^{-1}b^2,b,b,b^{-2}) \text{ for } a,b\in \C^\times.
$$
So $\Zm_0$ is a 2-dimensional torus. Therefore $\Ho^1\hm  \Zm_0=1$.

\noindent{\bf 35} Representative: $e_{127}+e_{136}-e_{245}+e_{379}+e_{479}+e_{569}-e_{578}+e_{678}$.\\
Let $\zeta$ be a primitive third root of unity.
Here $\Zm_0$ is of order 18 and generated by
$$h_1=\SmallMatrix{ 0 & 1 & 0 & 0 & 0 & 0 & 0 & 0 & 0\\
                1 & 0 & 0 & 0 & 0 & 0 & 0 & 0 & 0\\
                0 & 0 & 0 & 1 & 0 & 0 & 0 & 0 & 0\\
                0 & 0 & 1 & 0 & 0 & 0 & 0 & 0 & 0\\
                0 & 0 & 0 & 0 & 0 & -1 & 0 & 0 & 0\\
                0 & 0 & 0 & 0 & -1 & 0 & 0 & 0 & 0\\
                0 & 0 & 0 & 0 & 0 & 0 & -1 & 0 & 0\\
                0 & 0 & 0 & 0 & 0 & 0 & 0 & -1 & 0\\
                0 & 0 & 0 & 0 & 0 & 0 & 0 & 0 & -1\\}, $$
$h_2=\diag(\zeta,\zeta,1,1,\zeta^2,\zeta^2,\zeta,1,\zeta^2)$,
$h_3=\diag(\zeta,\zeta,\zeta^2,\zeta^2,1,1,\zeta,\zeta^2,1)$.
We have $\Zm_0\simeq B_1\times B_2\times B_3$, where $h_1$ is the generator of the group $B_1$ of order 2,
$h_2$ is a generator of the group $B_2$ of order 3,
and $h_3$ is a generator of the group $B_3$ of order 3.
We obtain
\[ \Ho^1\hm  \Zm_0=\Ho^1\hm  B_1\times \Ho^1\hm  B_2\times \Ho^1\hm  B_3=
\Ho^1\hm  B_1=\{[1], [h_1]\}.\]

\noindent{\bf 36} Representative: $e_{136}-e_{245}+e_{379}+e_{479}+e_{568}-e_{578}+e_{678}$.\\
The identity component $T$ of $\Zm_0$ consists of
$$X(a)=\diag(a,a,a^{-2},a^{-2},a,a, a, a^{-2},a), a\in \C^\times.$$
The component group $C$ is nonabelian of order 6 (hence isomorphic to $S_3$),
generated by
$$h_1 = \SmallMatrix{ 0 & -1 & 0 & 0 & 0 & 0 & 0 & 0 & 0\\
                -1 & 0 & 0 & 0 & 0 & 0 & 0 & 0 & 0\\
                0 & 0 & 0 & 1 & 0 & 0 & 0 & 0 & 0\\
                0 & 0 & 1 & 0 & 0 & 0 & 0 & 0 & 0\\
                0 & 0 & 0 & 0 & 0 & 1 & 0 & 0 & 0\\
                0 & 0 & 0 & 0 & 1 & 0 & 0 & 0 & 0\\
                0 & 0 & 0 & 0 & 0 & 0 & 1 & 0 & 0\\
                0 & 0 & 0 & 0 & 0 & 0 & 0 & -1 & 0\\
                0 & 0 & 0 & 0 & 0 & 0 & 0 & 0 & 1\\},\quad
h_2= \SmallMatrix{ 0 & 0 & 0 & 0 & 0 & 0 & 0 & 0 & -1\\
                -1 & 0 & 0 & 0 & 0 & 0 & 0 & 0 & 0\\
                0 & 0 & 0 & -1 & 0 & 0 & 0 & 0 & 0\\
                0 & 0 & 1 & -1 & 0 & 0 & 0 & 0 & 0\\
                0 & 0 & 0 & 0 & 0 & 1 & 0 & 0 & 0\\
                0 & 0 & 0 & 0 & 0 & 0 & 1 & 0 & 0\\
                0 & 0 & 0 & 0 & 1 & 0 & 0 & 0 & 0\\
                0 & 0 & 0 & 0 & 0 & 0 & 0 & 1 & 0\\
                0 & 1 & 0 & 0 & 0 & 0 & 0 & 0 & 0\\},
$$
which satisfy the relations $h_1^2=h_2^3 =1$, $h_2h_1 = h_1h_2^2$
 modulo the identity component.

Since $\Gamma$ acts trivially on $C$, by definition we have
\[\Ho^1  C = C_2/\text{conjugation},\]
where $C_2$ denote the set of elements of order dividing 2 in $C$.
Thus $\Ho^1 C=\{[1],[c_1]\}$, where $c_1$ is the image of $h_1$ in $C$.

We have a short exact sequence
\[1\to T\labelto{ }\Zm_0\labelto{j} C\to 1,\]
where $j$ is the canonical epimorphism.
We obtain that
\[\Ho^1\hm \Zm_0=j_*^{-1}[1]\,\cup\, j_*^{-1}[c_1].\]
Now $j_*^{-1}[1]$ is the image of $\Ho^1 T=1$ (Proposition
\ref{p:serre-prop38}). Thus $j_*^{-1}[1]=\{[1]\}$.
Similarly, $j_*^{-1}[c_1]$ corresponds bijectively to $(\Ho^1{}_{h_1}\hm  T)/(\hs_{h_1} \hm C)^\Gamma$
(Corollary \ref{c:39-cor2}). We have
\[h_1^2=1\quad\text{and} \quad h_1 X(a) h_1^{-1}=X(a).\]
Thus
\[_{h_1}\hm  T=T\quad\text{and} \quad \Ho^1{}_{h_1}\hm  T=\Ho^1 T=1.\]
We see that $j_*^{-1}[c_1]=\{[h_1]\}$ and conclude that
\[ \Ho^1\hm \Zm_0=\{[1],[h_1]\}.\]

\noindent{\bf 37} Representative: $e_{127}+e_{146}-e_{236}-e_{245}+e_{379}+e_{568}$.\\
Here $\Zm_0$ consists of
$$X(a,b) = \diag(a^{-3}b,a,a^{-2},a^2b^{-1},a^{-3}b,a,a^2b^{-1},a^2b^{-1},b), a,b\in \C^\times.$$
So $\Zm_0$ is a 2-dimensional torus. Therefore $\Ho^1\hm  \Zm_0=1$.

\noindent{\bf 38} Representative: $e_{127}+e_{146}-e_{236}-e_{245}+e_{379}+e_{569}-e_{578}$.\\
Here $\Zm_0$ consists of
$$\diag(\zeta^2 a^{-2}, \zeta^2a, \zeta a^{-2}, \zeta a, a^{-2}, a, \zeta^2 a,
\zeta a, a ),\quad\text{where $a\in \C^\times$ and $\zeta^3=1$.} $$
 So the identity component is a 1-dimensional
torus, and the component group has order 3.  Hence $\Ho^1\hm  \Zm_0^\circ=1$ and
the component group has trivial Galois cohomology as well.
By Corollary \ref{c:prop38} we conclude that $\Ho^1\hm  \Zm_0=1$.

\noindent{\bf 39} Representative: $e_{136}-e_{245}+e_{379}+e_{479}+e_{569}-e_{578}+e_{678}$.\\
Here $\Zm_0$ consists of
$$X(a,\zeta) = \diag(\zeta^2a^{-1},\zeta^2 a^{-1}, a, a, \zeta, \zeta,
\zeta^2 a^{-1}, a, \zeta)\quad\text{where $\zeta^3=1$ and $a\in \C^\times$,}$$
$$Y(b,\delta)= \SmallMatrix{ 0 & -\delta^2b^{-1} & 0 & 0 & 0 & 0 & 0 & 0 & 0\\
                -\delta^2b^{-1} & 0 & 0 & 0 & 0 & 0 & 0 & 0 & 0\\
                0 & 0 & 0 & -b & 0 & 0 & 0 & 0 & 0\\
                0 & 0 & -b & 0 & 0 & 0 & 0 & 0 & 0\\
                0 & 0 & 0 & 0 & 0 & -\delta & 0 & 0 & 0\\
                0 & 0 & 0 & 0 & -\delta & 0 & 0 & 0 & 0\\
                0 & 0 & 0 & 0 & 0 & 0 & \delta^2b^{-1} & 0 & 0\\
                0 & 0 & 0 & 0 & 0 & 0 & 0 & b & 0\\
                0 & 0 & 0 & 0 & 0 & 0 & 0 & 0 & -\delta\\},\quad\text{where $\delta^3=1$ and $b\in \C^\times$.} $$
We have
\begin{align*}
  X(a,\zeta)X(b,\delta) &= X(ab,\zeta\delta)\\
  X(a,\zeta)Y(b,\delta) &= Y(ab,\zeta\delta)\\
  Y(b,\delta)X(a,\zeta) &= Y(ab,\zeta\delta)\\
  Y(a,\zeta)Y(b,\delta) &= X(ab,\zeta\delta).
\end{align*}

We define three $\Gamma$-invariant subgroups of $\Zm_0$:
\begin{align*}
B_1&=\{X(a,1)\mid a\in\C^\times\},\\
B_2&=\{X(1,\zeta)\mid\zeta^3=1\},\\
B_3&=\{1,Y(1,1)\}\cong\{\pm1\}.
\end{align*}
We see that $\Zm_0=B_1\times B_2\times B_3$, and therefore,
\[\Ho^1\hm B=\Ho^1\hm B_1\times \Ho^1\hm B_2\times \Ho^1\hm B_3=\{1\}\times
\{1\}\times \Ho^1\hm B_3.\]
We see that $\Ho^1\hm \Zm_0=\Ho^1\hm B_3=\{[1], [Y(1,1)]\}$.

\noindent{\bf 40} Representative: $e_{137}-e_{236}-e_{245}-e_{468}+e_{478}+e_{569}$.\\
Here $\Zm_0$ consists of
$$\SmallMatrix{ a_{11} & 0 & 0 & 0 & 0 & 0 & 0 & a_{18} & a_{19}\\
                0 & a_{22} & a_{23} & a_{24} & 0 & 0 & 0 & 0 & 0\\
                0 & a_{32} & a_{33} & a_{34} & 0 & 0 & 0 & 0 & 0\\
                0 & a_{42} & a_{43} & a_{44} & 0 & 0 & 0 & 0 & 0\\
                0 & 0 & 0 & 0 & a_{99} & -a_{91} & a_{98} & 0 & 0\\
                0 & 0 & 0 & 0 & -a_{19} & a_{11} & a_{18} & 0 & 0\\
                0 & 0 & 0 & 0 & a_{89} & -a_{81} & a_{88} & 0 & 0\\
                a_{81} & 0 & 0 & 0 & 0 & 0 & 0 & a_{88} & a_{89}\\
                a_{91} & 0 & 0 & 0 & 0 & 0 & 0 & a_{98} & a_{99}\\} $$
where
$$ \det \SmallMatrix{ a_{22} & a_{23} & a_{24}\\
                      a_{32} & a_{33} & a_{34}\\
                      a_{42} & a_{43} & a_{44} } = 1,\quad
\det\SmallMatrix{a_{11} & a_{18} & a_{19}\\
                      a_{81} & a_{88} & a_{89}\\
                      a_{91} & a_{98} & a_{99} } = 1.$$
Hence $\Zm_0$ is isomorphic to $\SL(3,\C)\times \SL(3,\C)$, so that
$\Ho^1\hm  \Zm_0=1$.

\noindent{\bf 41} Representative: $e_{137}+e_{145}-e_{236}+e_{479}+e_{569}-e_{578}+e_{678}$.\\
Here $\Zm_0$ consists of
$$\diag(\zeta^2a^{-1},a^{-2}, \zeta^2 a^2, a, \zeta, \zeta,
\zeta^2 a^{-1}, a, \zeta)$$
(where $\zeta^3=1$ and $a\in \C^\times$). Hence $\Zm_0^\circ$ is a torus
so that $\Ho^1\hm \Zm_0^\circ=1$. The component group is of order 3 and
so has trivial cohomology too.
By Corollary \ref{c:prop38} we conclude that $\Ho^1\hm  \Zm_0=1$.

\noindent{\bf 42} Representative: $e_{127}+e_{136}-e_{245}+e_{379}+e_{479}+e_{569}+e_{678}$.\\
Here $\Zm_0$ consists of
$$X\left(\SmallMatrix{a_{76} & a_{77}\\ a_{91} & a_{99}\\},a_{88}\right)=
\SmallMatrix{ a_{77} & 0 & 0 & 0 & 0 & 0 & 0 & 0 & -a_{76}\\
  0 & a_{88}^2a_{99}^2 &  a_{88}^2a_{91}a_{99} &  -2a_{88}^2a_{91}a_{99} &
  -a_{88}^2a_{91}^2 & 0 & 0 & 0 & 0\\
                0 & 0 & a_{88} & 0 & 0 & 0 & 0 & 0 & 0\\
      0 & a_{76}a_{88}^2a_{99} & -a_{77}a_{88}^2a_{99} + a_{88} &
      2a_{77}a_{88}^2a_{99} - a_{88} &  a_{77}a_{88}^2a_{91} & 0 & 0 & 0 & 0\\
      0 & -a_{76}^2a_{88}^2 &  a_{76}a_{77}a_{88}^2 &
      -2a_{76}a_{77}a_{88}^2 &  a_{77}^2a_{88}^2 & 0 & 0 & 0 & 0\\
                0 & 0 & 0 & 0 & 0 & a_{99} & -a_{91} & 0 & 0\\
                0 & 0 & 0 & 0 & 0 & a_{76} & a_{77} & 0 & 0\\
                0 & 0 & 0 & 0 & 0 & 0 & 0 & a_{88} & 0\\
                a_{91} & 0 & 0 & 0 & 0 & 0 & 0 & 0 & a_{99}\\} $$

with $a_{88}^3=1$, $a_{76}a_{91} + a_{77}a_{99} = a_{88}^2$.

We have $X(A,a)X(B,b) = X(AB,ab)$. Hence $\Zm_0^\circ$ is
isomorphic to $\SL(2,\C)$ so that $\Ho^1\hm \Zm_0^\circ=1$. The component group
is of order 3 and so has trivial cohomology as well.
By Corollary \ref{c:prop38} we conclude that $\Ho^1\hm  \Zm_0=1$.

\noindent{\bf 43} Representative: $e_{127}+e_{136}-e_{245}-e_{378}+e_{478}+e_{579}+e_{679}$.\\
Here $\Zm_0^\circ$ consists of
$$X(a,b,c,d)=\SmallMatrix{ 1 & 0 & 0 & 0 & 0 & 0 & 0 & 0 & 0\\
                0 & 1 & 0 & 0 & 0 & 0 & 0 & 0 & 0\\
                0 & 0 & d & 0 & 0 & c & 0 & 0 & 0\\
                0 & 0 & 0 & d & -c & 0 & 0 & 0 & 0\\
                0 & 0 & 0 &-b & a & 0 & 0 & 0 & 0\\
                0 & 0 & b & 0 & 0 & a & 0 & 0 & 0\\
                0 & 0 & 0 & 0 & 0 & 0 & 1 & 0 & 0\\
                0 & 0 & 0 & 0 & 0 & 0 & 0 & a & b\\
                0 & 0 & 0 & 0 & 0 & 0 & 0 & c & d\\}
                $$
with $ad-bc=1$. The component group $C$ is of order 6 and generated by
the image of
$$h=\SmallMatrix{ 0 & \zeta^2 & 0 & 0 & 0 & 0 & 0 & 0 & 0\\
                \zeta^2 & 0 & 0 & 0 & 0 & 0 & 0 & 0 & 0\\
                0 & 0 & 0 & \zeta & 0 & 0 & 0 & 0 & 0\\
                0 & 0 & \zeta & 0 & 0 & 0 & 0 & 0 & 0\\
                0 & 0 & 0 & 0 & 0 & -1 & 0 & 0 & 0\\
                0 & 0 & 0 & 0 & -1 & 0 & 0 & 0 & 0\\
                0 & 0 & 0 & 0 & 0 & 0 & -\zeta^2 & 0 & 0\\
                0 & 0 & 0 & 0 & 0 & 0 & 0 & 1 & 0\\
                0 & 0 & 0 & 0 & 0 & 0 & 0 & 0 & \zeta\\} $$
where $\zeta$ is a primitive third root of unity.

The element $h^3$ is real and of order 2, in particular it is a cocycle.
Furthermore,
\begin{equation}\label{eq:hX}
  hX(a,b,c,d)h^{-1} = X(a,\zeta^2 b, \zeta c, d ).\tag{$*$}
\end{equation}

\begin{lemma*}
  $\Ho^1\hm \Zm_0 = \{ [1], [h^3] \}$.
\end{lemma*}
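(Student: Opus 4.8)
The plan is to apply Proposition \ref{p:C-3-2} to $\BB=\Zm_0$, with $\AA=\Zm_0^\circ$ and $\CC=C=\Zm_0/\Zm_0^\circ$, taking the element of order $2$ in $\CC(\R)$ to be $c_1:=j(h^3)$, where $j\colon\Zm_0\to C$ is the canonical epimorphism. First I would assemble the ingredients. The parametrization $X(a,b,c,d)$ with $ad-bc=1$ identifies $\Zm_0^\circ$ with $\SL(2,\C)$, and since complex conjugation of matrix entries acts on the parameters entrywise, this identification is defined over $\R$ and takes the split real form to the split real form; hence $\Ho^1\hm\Zm_0^\circ=\Ho^1\hm\SL(2,\C)=1$, which is hypothesis (1). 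For hypothesis (2), $C$ is cyclic of order $6=2\cdot 3$, generated by $j(h)$, so $p=3$ and $n=1$. Because $j(h)$ has order $6$, the element $c_1=j(h^3)=j(h)^3$ is the unique element of order $2$ in $C$; as $h^3$ is real it is $\Gamma$-fixed, so $c_1\in C(\R)$, and $h^3$ is a lift of $c_1$. Moreover $h^3$ is real and of order $2$, so $h^3\cdot{}^\gamma(h^3)=(h^3)^2=1$, i.e.\ $h^3\in\Zl^1\Zm_0$; this is hypothesis (3), with cocycle $b=h^3$.

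The only step with genuine content is hypothesis (4), namely $\Ho^1\hm{}_{h^3}\Zm_0^\circ=1$. Here I would use \eqref{eq:hX}: conjugating three times gives $h^3 X(a,b,c,d) h^{-3}=X(a,\zeta^6 b,\zeta^3 c,d)=X(a,b,c,d)$ since $\zeta^3=1$, so conjugation by $h^3$ is trivial on $\Zm_0^\circ$. Consequently the $h^3$-twisted $\Gamma$-action on $\Zm_0^\circ$, given by ${}^{\gamma*}X=h^3\cdot{}^\gamma X\cdot h^{-3}$, coincides with the original one, whence ${}_{h^3}\Zm_0^\circ=\Zm_0^\circ$ as $\R$-groups and $\Ho^1\hm{}_{h^3}\Zm_0^\circ=\Ho^1\hm\Zm_0^\circ=1$.

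With all four hypotheses verified, Proposition \ref{p:C-3-2} yields $\#\Ho^1\hm\Zm_0=2$ with cocycles $1$ and $h^3$, that is, $\Ho^1\hm\Zm_0=\{[1],[h^3]\}$, as claimed. If one prefers to avoid invoking Proposition \ref{p:C-3-2}, the same conclusion follows directly from the exact sequence $1\to\Zm_0^\circ\to\Zm_0\xrightarrow{j}C\to 1$: Proposition \ref{p:serre-prop38} together with $\Ho^1\hm\Zm_0^\circ=1$ shows $\ker j_*=\{[1]\}$, while $c_1=j(h^3)$ shows $j_*$ is surjective onto $\Ho^1 C=\{[1],[c_1]\}$ (using Lemma \ref{l:explicit}), and Corollary \ref{c:39-cor2} identifies $j_*^{-1}[c_1]$ with a quotient of $\Ho^1\hm{}_{h^3}\Zm_0^\circ=1$, forcing $j_*^{-1}[c_1]=\{[h^3]\}$. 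Either way, the sole obstacle is the centrality of $h^3$ in $\Zm_0^\circ$, which is immediate from \eqref{eq:hX}.
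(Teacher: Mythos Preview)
Your proof is correct and follows essentially the same approach as the paper's. The paper argues directly via the exact sequence $1\to A\to B\to C\to 1$ (your alternative route), while you package this as an application of Proposition~\ref{p:C-3-2}; both hinge on the same key computation from \eqref{eq:hX} that conjugation by $h^3$ is trivial on $\Zm_0^\circ$, so that $_{h^3}\Zm_0^\circ=\Zm_0^\circ$ and $\Ho^1{}_{h^3}\Zm_0^\circ=1$.
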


\begin{proof}
  Write $A=\Zm_0^\circ$, $B=\Zm_0$, $C=B/A$. Then $C$ is cyclic of order 6
  and $\Ho^1 C = \{ [1], [h^3] \}$. Also we have the short exact sequence
  $$1\to A\labelto{} B\labelto{j} C\to 1.$$
  Then $\Ho^1\hm B= j_*^{-1}[1]\cup j_*^{-1}[h^3]$. Firstly, $j_*^{-1}[1]$
  is the image of $\Ho^1\hm A$, which is trivial because $A\cong \SL(2,\C)$.
  Hence $j_*^{-1}[1]=\{[1]\}$.

  For the second set we twist the exact sequence by $h^3$ and obtain
  $$1\to \hs_{h^3} A\labelto{}\hs _{h^3} B\labelto{j}\hs _{h^3} C\to 1.$$
  There is a canonical bijection from $j_*^{-1}[h^3]$ to the set of orbits
  of $_{h^3}C^\Ga$ on $\Ho^1\hm _{h^3}A$ (Corollary \ref{c:39-cor2}). But from
  \eqref{eq:hX} it follows that $_{h^3} A$ is a split form of $\SL(2,\C)$,
  and hence it is isomorphic to $\SL(2,\C)$ with the standard $\Gamma$-action.
  It follows that $\Ho^1\hm _{h^3}A=1$.
  Therefore $j_*^{-1}[h^3] = \{ [h^3] \}$.
\end{proof}

\noindent{\bf 44} Representative: $e_{137}+e_{146}-e_{236}-e_{245}+e_{478}+e_{569}$.\\
Here $\Zm_0$ consists of
$$\diag(a^{-1}b^{-1},a^{-3}b^{-2},a^4b^3,a^2b^2,a,a^{-1}b^{-1},a^{-3}b^{-2},a,b).$$
So it is a 2-dimensional torus, hence $\Ho^1\hm \Zm_0=1$.

\noindent{\bf 45} Representative: $e_{127}+e_{146}-e_{245}+e_{379}+e_{478}+e_{568}$.\\
Here $\Zm_0$ consists of
$$\SmallMatrix{ a_{55} & a_{56} & 0 & 0 & 0 & 0 & 0 & 0 & 0\\
                a_{65} & a_{66} & 0 & 0 & 0 & 0 & 0 & 0 & 0\\
                0 & 0 & a_{33} & 0 & 0 & 0 & 0 & 0 & a_{39}\\
                0 & 0 & 0 & a_{88} & 0 & 0 & 0 & 0 & 0\\
                0 & 0 & 0 & 0 & a_{55} & a_{56} & 0 & 0 & 0\\
                0 & 0 & 0 & 0 & a_{65} & a_{66} & 0 & 0 & 0\\
                0 & 0 & 0 & 0 & 0 & 0 & a_{88} & 0 & 0\\
                0 & 0 & 0 & 0 & 0 & 0 & 0 & a_{88} & 0\\
                0 & 0 & a_{93} & 0 & 0 & 0 & 0 & 0 & a_{99}\\} $$
with $a_{88}^3=1$, $a_{55}a_{66}-a_{56}a_{65}=a_{88}^2$, $a_{33}a_{99}-a_{39}a_{93}=
a_{88}^2$. So $\Zm_0^\circ$ is the direct product
of two copies of $\SL(2,\C)$, which has trivial cohomology. The component
group has order 3, which has trivial cohomology as well.
So by Corollary \ref{c:prop38} we conclude that $\Ho^1\hm  \Zm_0=1$.

\noindent{\bf 46} Representative: $e_{137}+e_{146}-e_{236}-e_{245}+e_{479}+e_{569}-e_{578}$.\\
Here $\Zm_0$ consists of
$$\diag(a^4,a^7,a^{-8},a^{-5},a^{-2},a,a^4,a^{-2},a).$$
This is a torus, therefore $\Ho^1\hm \Zm_0=1$.

\noindent{\bf 47} Representative: $e_{136}+e_{147}-e_{245}+e_{379}+e_{569}+e_{678}$.\\
The identity component $\Zm_0^\circ$ consists of
$$X(a,b) = \diag(a^{-1}b^{-1},a^{-2},a,a^2b^2,b^{-2},b,a^{-1}b^{-1},a,b),
\text{ for } a,b\in \C^\times.$$
The component group $C$ is of order 2 and generated by the image of
$$h=\SmallMatrix{ 0 & 0 & 0 & 0 & 0 & 0 & 0 & 0 & -1\\
                0 & -1 & 0 & 0 & 0 & 0 & 0 & 0 & 0\\
                0 & 0 & 1 & 0 & 0 & 0 & 0 & 0 & 0\\
                0 & 0 & 0 & 0 & -1 & 0 & 0 & 0 & 0\\
                0 & 0 & 0 & -1 & 0 & 0 & 0 & 0 & 0\\
                0 & 0 & 0 & 0 & 0 & 0 & -1 & 0 & 0\\
                0 & 0 & 0 & 0 & 0 & 1 & 0 & 0 & 0\\
                0 & 0 & 0 & 0 & 0 & 0 & 0 & 1 & 0\\
                1 & 0 & 0 & 0 & 0 & 0 & 0 & 0 & 0\\}. $$
We have
\begin{align*}
h^2=X(1,-1),\quad h\hs X(a,b)\hs h^{-1}=X(a,a^{-1}b^{-1}).
\end{align*}
We set $h_1=h\cdot X(-1,1)$. Then
$$h_1^2=1,\quad h_1\hs X(a,b)\hs h_1^{-1}=X(a,a^{-1}b^{-1}).$$
In particular, $h_1$ is a cocycle with image $c_1$ in $C$.
We have $\Ho^1\hm \Zm_0^\circ=1$, hence $\ker\,j_*=\{[1]\}$, where
$j\colon \Zm_0\to C$ is the canonical epimorphism.
Let $T(s,t)$ denote the torus in Example \ref{x:cohom}(5). Then
$^\gamma T(s,t) = T(\bar t, \bar s)$.  Define
$\tau : \hs_{h_1}\hm  \Zm_0^\circ \to T$ by $\tau(X(a,b)) = T(b,a^{-1}b^{-1})$.
Then $\tau$ is a $\Ga$-equivariant isomorphism.
It follows that  $\Ho^1\hm \hs_{h_1}\hm  \Zm_0^\circ=1$ and $j_*^{-1}[c_1]=\{[h_1]\}$.
By Corollary \ref{c:39-cor2} we conclude that $\Ho^1\hm \Zm_0=\{[1], [h_1]\}$.

\noindent{\bf 48} Representative: $e_{136}-e_{245}-e_{378}+e_{478}+e_{579}+e_{679}$.\\
The identity component $\Zm_0^\circ$ consists of
$$X(a,b,c,d,e)=\SmallMatrix{ e & 0 & 0 & 0 & 0 & 0 & 0 & 0 & 0\\
                0 & e & 0 & 0 & 0 & 0 & 0 & 0 & 0\\
                0 & 0 & d & 0 & 0 & c & 0 & 0 & 0\\
                0 & 0 & 0 & d & -c & 0 & 0 & 0 & 0\\
                0 & 0 & 0 & -b & a & 0 & 0 & 0 & 0\\
                0 & 0 & b & 0 & 0 & a & 0 & 0 & 0\\
                0 & 0 & 0 & 0 & 0 & 0 & e & 0 & 0\\
                0 & 0 & 0 & 0 & 0 & 0 & 0 & a & b\\
                0 & 0 & 0 & 0 & 0 & 0 & 0 & c & d\\} $$
with $e(ad-bc)=1$. So it is isomorphic to $\GL(2,\C)$. The component group is
of order 2 and generated by
$$h=\SmallMatrix{ 0 & 1 & 0 & 0 & 0 & 0 & 0 & 0 & 0\\
                1 & 0 & 0 & 0 & 0 & 0 & 0 & 0 & 0\\
                0 & 0 & 0 & 1 & 0 & 0 & 0 & 0 & 0\\
                0 & 0 & 1 & 0 & 0 & 0 & 0 & 0 & 0\\
                0 & 0 & 0 & 0 & 0 & -1 & 0 & 0 & 0\\
                0 & 0 & 0 & 0 & -1 & 0 & 0 & 0 & 0\\
                0 & 0 & 0 & 0 & 0 & 0 & -1 & 0 & 0\\
                0 & 0 & 0 & 0 & 0 & 0 & 0 & 1 & 0\\
                0 & 0 & 0 & 0 & 0 & 0 & 0 & 0 & 1\\}. $$
Clearly, $h^2=1$. Since  $h$ commutes with $X(a,b,c,d,e)$,
we have $\Zm_0=\Zm_0^\circ\times C$, where $C=\{1,h\}$,
and hence,
\[\Ho^1\hm \Zm_0=\Ho^1\hm \Zm_0^\circ \times \Ho^1 C=\Ho^1 C=\{[1],[h]\}.\]

\noindent{\bf 49} Representative: $e_{127}+e_{156}-e_{236}-e_{245}-e_{378}+e_{479}$.\\Here $\Zm_0$ consists of
$$
X(a,b,c,d,e)=\SmallMatrix{ e^{-2} & 0 & 0 & 0 & 0 & 0 & 0 & 0 & 0\\
                0 & e & 0 & 0 & 0 & 0 & 0 & 0 & 0\\
                0 & 0 & e^{-3}d & e^{-3}c & 0 & 0 & 0 & 0 & 0\\
                0 & 0 & e^{-3}b & e^{-3}a & 0 & 0 & 0 & 0 & 0\\
                0 & 0 & 0 & 0 & d & -c & 0 & 0 & 0\\
                0 & 0 & 0 & 0 & -b & a & 0 & 0 & 0\\
                0 & 0 & 0 & 0 & 0 & 0 & e & 0 & 0\\
                0 & 0 & 0 & 0 & 0 & 0 & 0 & a & b\\
                0 & 0 & 0 & 0 & 0 & 0 & 0 & c & d\\} $$
with $ad-bc=e^2$.

We define a surjective homomorphism
\[j\colon \Zm_0\to\GL(2,\C),\quad X(a,b,c,d,e)\mapsto \SmallMatrix{a&b\\c&d}\]
with kernel $\mu_2$. We obtain a cohomology exact sequence
\[\Zm_0(\R)\labelto{j}\GL(2,\R)\labelto\delta \Ho^1\mu_2\labelto{} \Ho^1\Zm_0\labelto{j_*} \Ho^1\hs \GL(2,\C)=1.\]
We have $j(\Zm_0(\R))=\GL(2,\R)^+$, the subgroup of $\GL(2,\R)$
consisting of matrices with positive determinant.
Since $\GL(2,\R)^+$ is a subgroup of index 2 in $\GL(2,\R)$, and $\# \Ho^1\mu_2=2$, we see that
the map $\delta$ is surjective, whence $\ker j_* =1$, and we obtain that $\Ho^1\Zm_0=1$.

\noindent{\bf 50} Representative: $e_{127}+e_{146}-e_{245}+e_{379}+e_{568}$.\\
Here $\Zm_0$ consists of
$$\SmallMatrix{ a_{55} & a_{56} & 0 & 0 & 0 & 0 & 0 & 0 & 0\\
                a_{65} & a_{66} & 0 & 0 & 0 & 0 & 0 & 0 & 0\\
                0 & 0 & a_{33} & 0 & 0 & 0 & 0 & 0 & a_{39}\\
                0 & 0 & 0 & a_{88} & 0 & 0 & 0 & 0 & 0\\
                0 & 0 & 0 & 0 & a_{55} & a_{56} & 0 & 0 & 0\\
                0 & 0 & 0 & 0 & a_{65} & a_{66} & 0 & 0 & 0\\
                0 & 0 & 0 & 0 & 0 & 0 & a_{88} & 0 & 0\\
                0 & 0 & 0 & 0 & 0 & 0 & 0 & a_{88} & 0\\
                0 & 0 & a_{93} & 0 & 0 & 0 & 0 & 0 & a_{99}\\} $$
with $a_{88}(a_{55}a_{66}-a_{56}a_{65})=1$ and $a_{33}a_{99}-a_{39}a_{93} =
a_{55}a_{66}-a_{56}a_{65}$. This is isomorphic to $B=\{ (g,h) \in \GL(2,\C)
\times \GL(2,\C) \mid \det(g)=\det(h)\}$.

Write $G=\GL(2,\C)\times \GL(2,\C)$ and define
\[d\colon G\to \C^\times,\quad(g_1,g_2)\mapsto \det(g_1)\cdot\det(g_2)^{-1}.\]
We have a short exact sequence
\[1\to B\to G\labelto{d} \C^\times\to 1.\]
By Proposition \ref{p:serre-prop38} we get an exact sequence
\[ 1\to B(\R)\to G(\R)\labelto{d}\R^\times\to \Ho^1\hm B\to \Ho^1 G=1.\]
Since the map $d\colon \GL(2,\R)\times\GL(2,\R)\to\R^\times$
is clearly surjective, we conclude that $\Ho^1\hm B=1$.

\noindent{\bf 51} Representative: $e_{137}+e_{146}-e_{236}-e_{245}+e_{479}+e_{569}+e_{678}$.\\
Here $\Zm_0$ consists of
$$\diag(\zeta a^2, \zeta^2 a^{3}, \zeta^2 a^{-3}, a^{-2}, \zeta a^{-1}, \zeta^2,
a,\zeta a^{-1},a)$$
with $\zeta^3 =1$ and $a\in \C^\times$.
So $\Zm_0^\circ$ is a torus and hence has trivial cohomology. The component
group has order 3, which has trivial cohomology as well.
So by Corollary \ref{c:prop38} we conclude that $\Ho^1\hm  \Zm_0=1$.

\noindent{\bf 52} Representative: $e_{137}+e_{146}-e_{235}+e_{479}+e_{579}+e_{678}$.\\
Here $\Zm_0$ consists of
$$\diag(a^{-1}b^{-1},a^{-3}b^{-2},a^2b^2,a,a,b,a^{-1}b^{-1},a,b)\quad\text{with $a,b\in \C^\times$.}$$
Hence it is a torus, and therefore $\Ho^1\hm \Zm_0=1$.

\noindent{\bf 53} Representative: $e_{156}-e_{236}-e_{245}-e_{378}+e_{479}$.\\
Here $\Zm_0$ consists of
$$\SmallMatrix{ d^{-1} & 0 & 0 & 0 & 0 & 0 & 0 & 0 & 0\\
                0 & a_{77} & 0 & 0 & 0 & 0 & 0 & 0 & 0\\
                0 & 0 & d^{-1}a_{77}^{-1}a_{99} & d^{-1}a_{77}^{-1}a_{98} & 0 & 0 & 0 & 0 & 0\\
                0 & 0 & d^{-1}a_{77}^{-1}a_{89} & d^{-1}a_{77}^{-1}a_{88} & 0 & 0 & 0 & 0 & 0\\
                0 & 0 & 0 & 0 & a_{99} & -a_{98} & 0 & 0 & 0\\
                0 & 0 & 0 & 0 & -a_{89} & a_{88} & 0 & 0 & 0\\
                0 & 0 & 0 & 0 & 0 & 0 & a_{77} & 0 & 0\\
                0 & 0 & 0 & 0 & 0 & 0 & 0 & a_{88} & a_{89}\\
                0 & 0 & 0 & 0 & 0 & 0 & 0 & a_{98} & a_{99}\\}, $$
where $d=a_{88}a_{99}-a_{89}a_{98}$ and $a_{77}\in\C^\times$. Hence $\Zm_0$ is
isomorphic to $\GL(1,\C)\times \GL(2,\C)$. Therefore
$\Ho^1\hm \Zm_0=1$.

\noindent{\bf 54} Representative: $e_{147}+e_{156}-e_{237}-e_{246}-e_{345}+e_{368}+e_{579}$.\\
Here $\Zm_0$ is isomorphic to $\SL(2,\C)\times \mu_3$.
Hence $\Ho^1\hm \Zm_0=1$.

\noindent{\bf 55} Representative: $e_{137}+e_{156}-e_{236}-e_{245}+e_{479}-e_{578}$.\\
Here $\Zm_0$ consists of $\diag(a,a^2b^{-1},a^{-2},a^{-1}b^2,
a^{-1}b^{-1}, b,a,b,b^{-2})$ with $a,b\in \C^\times$. Hence $\Zm_0$ is a 2-dimensional
torus. Therefore $\Ho^1\hm \Zm_0=1$.

\noindent{\bf 56} Representative: $e_{146}+e_{157}-e_{237}+e_{458}+e_{478}+e_{569}$.\\
Here $\Zm_0 \cong \SL(2,\C)\times \PSL(2,\C) \times \mu_3$. Hence
\begin{align*}
\Ho^1\hm \Zm_0 = \Ho^1\hs \SL(2,\C) \times \Ho^1\hs \PSL(2,\C)\times
\Ho^1 \mu_3
= \Ho^1\hs \PSL(2,\C)=\{[1],[w]\},
\end{align*}
where $w$ is the image of the matrix $\SmallMatrix{\phantom{-}0&1\\-1&0}$
in the direct factor of $\Zm_0$ that is isomorphic
to $\PSL(2,\C)$. Let $h,e,f$ be the standard generators of $\ssl(2,\C)$;
then the matrix above is $\exp(e)\exp(-f)\exp(e)$. Let $h_2,e_2,f_2$ be the
standard generators of the direct summand of $\z_0$ that is the Lie algebra
of the direct factor of $\Zm_0$ isomorphic to $\PSL(2,\C)$. Then $w=
\exp(e_2)\exp(-f_2)\exp(e_2)$.

\noindent{\bf 57} Representative: $e_{127}+e_{136}-e_{245}+e_{379}+e_{479}+e_{569}$.
Rank 8, Gurevich number XXIII.\\
Here the Lie algebra $\z_0$ of the centralizer $\Zm_0$ is isomorphic to
$\ssl(3,\C)$. The natural $9$-dimensional $\z_0$-module splits as a direct sum of
two modules $V_1$, $V_8$ of dimensions 1 and 8. Here
$V_1$ is the trivial representation and $V_8$ is the adjoint representation,
that is, we may identify $V_8$ with $\z_0$.
Let $N$ denote the normalizer of $\z_0$ in $\SL(9,\C)$.
Then we have a natural homomorphism
\[\phi\colon N\to \Aut(\z_0).\]
Set
\[A=\{\diag(v^{-8},v,v,v,v,v,v,v,v)\mid v\in\C^\times\}.\]
Then it follows from Schur's lemma that $\ker\phi=A$.
Thus we have a short exact sequence
\[1\to A\labelto{ } N\labelto{\phi} \Aut(\z_0)\to 1,\]
which admits a canonical splitting
\[\Aut(\z_0)\into \GL(\z_0)=\GL(V_8)\into\SL(9,\C).\]
Thus
\[ N=A\times\Aut(\z_0).\]

It is known that $\Aut(\z_0)=\Aut(\ssl_3)$
fits into a short exact sequence
\[1\to \Inn(\ssl_3)\to\Aut(\ssl_3)\to \Out(\ssl_3)\to 1,\]
where $\Inn(\ssl_3)=\PSL(3,\C)$ and $\Out(\ssl_3)$ is a group of order two, whose nontrivial element
lifts to the involution $X\mapsto -X^T$ of $\ssl_3$.

Let $N_1$ denote the preimage of $\Inn(\ssl_3)$ in $N$, and set $B_1=B\cap N_1$.
Set $A_1=A\cap B_1$.
A computer calculation shows that
\[A_1=\{\diag(\zeta,\ldots,\zeta),\quad \zeta^3=1\}.\]
It is easy to show that
\[ B_1=A_1\,\times\, \Inn(\ssl_3)=A_1\,\times\, \PSL(3,\C).\]

Let
\[e_1,\ e_2,\ e_{12}=[e_1, e_2],\ f_1,\ f_2,\ f_{12}=[f_1,f_2],\ h_1,\ h_2\]
be a ``canonical basis'' of $\z_0=\ssl_3$.
Consider the linear map
\[X\colon\
\z_0\to\z_0,\quad
e_1\leftrightarrow e_2,\ e_{12}\mapsto -e_{12},\
f_1\leftrightarrow f_2,\ f_{12}\mapsto -f_{12},\
h_1\leftrightarrow h_2\hs.
\]
Then $X\in \Aut(\z_0)$, $X$ is an {\em outer} automorphism of order 2.
Using methods outlined in Section \ref{sec:compcen} we see by a
computer calculation that  there exists an element $b$ of $\Zm_0$
acting on $\ssl_3$ by the outer automorphism $X$. We have
$$b=\SmallMatrix{ 0 & 0 & 0 & 0 & 0 & 0 & -1 & 0 & 0\\
                0 & -1 & 0 & 0 & 0 & 0 & 0 & 0 & 0\\
                0 & 0 & -1 & 0 & 0 & 0 & 0 & 0 & 0\\
                0 & 0 & -1 & 1 & 0 & 0 & 0 & 0 & 0\\
                0 & 0 & 0 & 0 & -1 & 0 & 0 & 0 & 0\\
                0 & 0 & 0 & 0 & 0 & 0 & 0 & 0 & -1\\
                -1 & 0 & 0 & 0 & 0 & 0 & 0 & 0 & 0\\
                0 & 0 & 0 & 0 & 0 & 0 & 0 & -1 & 0\\
                0 & 0 & 0 & 0 & 0 & -1 & 0 & 0 & 0\\}, $$
which satisfies $b^2=1$.

We conclude that $\Zm_0\cong\mu_3\times\Aut(\ssl_3)$,
and hence $\Ho^1\hm   \Zm_0\cong \Ho^1 \Aut(\ssl_3)$.
Now $\Ho^1 \Aut(\ssl_3)$  classifies real forms of $\ssl_3$.
There are three of them: $\g_0^{(1)}=\ssl(3,\R),\ \g_0^{(2)}=\su(2,1)$, and
$\g_0^{(3)}=\su(3)$. Thus $\#H^1=3$ in this case.
For explicit cocycles we can take {\em real} elements $b_1,\ b_2, b_3\in B$ such that
$b_i^2=1$  for $i=1,2,3$ and
\[\{y\in \z_0\mid \Ad_{b_i}(\bar y)=y\}\simeq \g_0^{(i)}\hs.\]
Clearly, we can take $b_1=1$, $b_2=b$.

Consider the outer automorphism
\[X'\colon\
\z_0\to\z_0,\quad
e_1\leftrightarrow -f_1,\ e_2\leftrightarrow -f_2,\
h_1\mapsto -h_1,\ h_2\mapsto -h_2\hs.
\]
Then
\[\{y\in\ssl_3\mid X'(\bar y)=y\}=\su(3).\]
Again by Gr\"obner basis methods we find
$b'\in B$ realizing $X'$. It is
$$b'=\SmallMatrix{ 0 & 0 & 0 & 0 & 0 & 1 & 0 & 0 & 0 \\
                0 & 0 & 0 & 0 & -1 & 0 & 0 & 0 & 0 \\
                0 & 0 & -1 & 0 & 0 & 0 & 0 & 0 & 0 \\
                0 & 0 & 0 & -1 & 0 & 0 & 0 & 0 & 0 \\
                0 & -1 & 0 & 0 & 0 & 0 & 0 & 0 & 0 \\
                1 & 0 & 0 & 0 & 0 & 0 & 0 & 0 & 0 \\
                0 & 0 & 0 & 0 & 0 & 0 & 0 & 0 & 1 \\
                0 & 0 & 0 & 0 & 0 & 0 & 0 & -1 & 0 \\
                0 & 0 & 0 & 0 & 0 & 0 & 1 & 0 & 0}. $$
Then $b'$ is real and of order 2, so we can take $b_3=b'$.

\noindent{\bf 58} Representative: $e_{137}-e_{246}-e_{345}+e_{479}+e_{569}-e_{578}$.\\
Here $\Zm_0$ consists of
$$\diag(s^{-1}t^{-4},s^{-1}t^2,st^3,t^{-2},s^{-1}t^{-1},s,t,s,t), \text{ where }
s,t\in \C^\times.$$
Hence $\Zm_0$ is a 2-dimensional torus. Therefore $\Ho^1\hm \Zm_0=1$.

\noindent{\bf 59} Representative: $e_{147}+e_{156}-e_{237}-e_{246}+e_{368}+e_{579}$.\\
The Lie algebra of $\Zm_0$ is isomorphic to $A_1+T_1$. Let $\s$ denote the
semisimple part of $\z_0$. The natural 9-dimensional $\s$-module splits
as a direct sum of irreducible modules of dimensions 2, 3, 4 with bases
$\{v_6,v_7\}$, $\{v_3,v_4,v_5\}$, $\{v_1,v_2,v_8,v_9\}$ respectively.
Hence the connected subgroup $S$ of $\Zm_0$ with Lie algebra $\s$ is isomorphic
to $\SL(2,\C)$.

The Gr\"obner basis of the ideal generated by the defining polynomials of
$\Zm_0$ is difficult to compute. However, by adding equations it
is easy to check that the elements of $\Zm_0$ that act as the
identity on $\s$ form the 1-dimensional connected torus consisting of
$$T(a)=(a,a,a^{-2},a^{-2},a^{-2},a,a,a,a), \text{ for } a\in \C^\times.$$
We denote this torus by $T$. Since $\s$ has no outer automorphisms it follows
that $\Zm_0$ is connected and equal to $ST$. We have that $\Zm_0\cong \GL(2,\C)$.
Hence $\Ho^1\hm \Zm_0=1$.

\noindent{\bf 60} Representative: $e_{137}+e_{146}-e_{236}-e_{245}+e_{568}+e_{679}$.\\
Here $\Zm_0^\circ$ consists of
$$X(\SmallMatrix{ a_{22} & a_{24} \\ a_{42} & a_{44} } ) =
\SmallMatrix{  s a_{22} & 0 & - s a_{24} & 0 & 0 & 0 & 0 & 0 & 0\\
                0 & a_{22} & 0 & a_{24} & 0 & 0 & 0 & 0 & 0\\
                - s a_{42} & 0 &  s a_{44} & 0 & 0 & 0 & 0 & 0 & 0\\
                0 & a_{42} & 0 & a_{44} & 0 & 0 & 0 & 0 & 0\\
                0 & 0 & 0 & 0 & s & 0 & 0 & 0 & 0\\
                0 & 0 & 0 & 0 & 0 & 1 & 0 & 0 & 0\\
                0 & 0 & 0 & 0 & 0 & 0 &  s^{-1} & 0 & 0\\
                0 & 0 & 0 & 0 & 0 & 0 & 0 &  s^{-1} & 0\\
                0 & 0 & 0 & 0 & 0 & 0 & 0 & 0 & s\\} $$
where $s=(a_{22}a_{44}-a_{24}a_{42})^{-1}$. Hence $\Zm_0^\circ$ is isomorphic
to $\GL(2,\C)$. The component group is abelian of order 6 and generated by
the image of
$$q=\SmallMatrix{ 0 & -\zeta & 0 & 0 & 0 & 0 & 0 & 0 & 0\\
                1 & 0 & 0 & 0 & 0 & 0 & 0 & 0 & 0\\
                0 & 0 & 0 & -\zeta & 0 & 0 & 0 & 0 & 0\\
                0 & 0 & 1 & 0 & 0 & 0 & 0 & 0 & 0\\
                0 & 0 & 0 & 0 & 0 & 0 & -1 & 0 & 0\\
                0 & 0 & 0 & 0 & 0 & -\zeta-1 & 0 & 0 & 0\\
                0 & 0 & 0 & 0 & -\zeta & 0 & 0 & 0 & 0\\
                0 & 0 & 0 & 0 & 0 & 0 & 0 & 0 & \zeta\\
                0 & 0 & 0 & 0 & 0 & 0 & 0 & 1 & 0\\}, $$
where $\zeta$ is a primitive third root of unity.

Here the computation is based on the method of Example \ref{exa:cencomps}(1).
The torus used is the center of $\Zm_0^\circ$.

We have
\begin{align*}
  & X(A)X(B) = X(AB)\\
  & q^6 = X\SmallMatrix{ -1 & 0\\ 0 & -1}\\
  & \bar q = q^5 X\SmallMatrix{ \zeta^2 & 0\\ 0 & \zeta^2}\\
  & qX\SmallMatrix{ a_{22} & a_{24} \\ a_{42} & a_{44} } q^{-1} =
  X\SmallMatrix{s a_{22} & -sa_{24} \\ -sa_{42} & sa_{44} }  \text{ where }
  s = (a_{22}a_{44}-a_{24}a_{42})^{-1}.\\
\end{align*}
The latter also implies that $q^2X(A)q^{-2} = X(A)$.

Now we set $p=q^3$, then
$$p=\SmallMatrix{ 0 & -\zeta-1 & 0 & 0 & 0 & 0 & 0 & 0 & 0\\
                -\zeta & 0 & 0 & 0 & 0 & 0 & 0 & 0 & 0\\
                0 & 0 & 0 & -\zeta-1 & 0 & 0 & 0 & 0 & 0\\
                0 & 0 & -\zeta & 0 & 0 & 0 & 0 & 0 & 0\\
                0 & 0 & 0 & 0 & 0 & 0 & -\zeta & 0 & 0\\
                0 & 0 & 0 & 0 & 0 & 1 & 0 & 0 & 0\\
                0 & 0 & 0 & 0 & \zeta+1 & 0 & 0 & 0 & 0\\
                0 & 0 & 0 & 0 & 0 & 0 & 0 & 0 & -\zeta-1\\
                0 & 0 & 0 & 0 & 0 & 0 & 0 & \zeta & 0\\}. $$

We fix a primitive  12-th root of unity $\delta$  and set $\zeta=\delta^4$,
$\eta = \delta^{11} = \bar \delta$. Also we set
\[ p'=X\SmallMatrix{\eta&0\\0&\eta}\cdot p,\qquad p''=X\SmallMatrix{0&1\\1&0}\cdot p'.\]
Calculations show that
\[p'\cdot\hs\ov{p'}= p''\cdot\hs\ov{p''}=1,\quad ( p')^2=(p'')^2=X\SmallMatrix{-1&0\\0&-1},\quad
( p'')^{-1}\cdot\hs\ov{p''}=X\SmallMatrix{-1&0\\0&-1}.\]

\begin{lemma*}
$\Ho^1\hm \Zm_0=\{[1], [p'], [p'']\}$.
\end{lemma*}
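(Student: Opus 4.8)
The structure of $\Zm_0$ is that of a (nonsplit) extension
\[
1\to A\labelto{ } \Zm_0\labelto{j} C\to 1,
\]
where $A=\Zm_0^\circ\cong\GL(2,\C)$ (the elements $X(\cdot)$) and $C=\Zm_0/A$ is abelian of order $6$, generated by the image of $q$. Since $\Ho^1\hs\GL(2,\C)=1$, the map $j_*$ has trivial kernel by Proposition \ref{p:serre-prop38}, so $\Ho^1\hs\Zm_0$ is the disjoint union of the fibres $j_*^{-1}[c]$ over the classes $[c]\in\Ho^1 C$. Thus the first task is to compute $\Ho^1 C$.

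First I would determine $\Ho^1 C$. Here $C$ carries a $\Gamma$-action induced from complex conjugation, which is \emph{not} trivial: from $\bar q=q^5 X\SmallMatrix{\zeta^2&0\\0&\zeta^2}$ one reads off that $\gamma$ sends the generator $\bar c$ of $C$ to $c^{-1}$ (the $X$-factor dies in $C$), so $C\cong \Z/6\Z_{\tw}$ with the twisted action $x\mapsto -x$. By the computation of group cohomology for a group of order $2$, $\Zl^1 C=\{x\in C\mid x+{}^\gamma x=0\}=C$ (since $x-x=0$ always), and $\Bd^1 C=\{x-{}^\gamma x\}=\{2x\}=2C\cong\Z/3\Z$, so $\Ho^1 C=C/2C\cong\Z/2\Z$, with the nontrivial class represented by the image of $q^3=p$, i.e.\ by any odd power of $q$. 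Hence $\Ho^1\hs\Zm_0=j_*^{-1}[1]\cup j_*^{-1}[\bar p]$. The first fibre is the image of $\Ho^1 A=1$, so $j_*^{-1}[1]=\{[1]\}$.

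Next I would compute $j_*^{-1}[\bar p]$. The element $p'=X\SmallMatrix{\eta&0\\0&\eta}\cdot p$ satisfies $p'\cdot\overline{p'}=1$, so $p'\in\Zl^1\Zm_0$ with $j(p')=\bar p$, and we may twist the exact sequence by $p'$. By Corollary \ref{c:39-cor2}, $j_*^{-1}[\bar p]$ is in bijection with the orbit set $\Ho^1\hs{}_{p'}A/({}_{\bar p}C)^\Gamma$. The key point is to identify the twisted group ${}_{p'}A$: conjugation by $p'$ acts on $A=\GL(2,\C)$ (up to the inner automorphism coming from the $X\SmallMatrix{\eta&0\\0&\eta}$ factor) by the formula $q X(\SmallMatrix{a&b\\c&d}) q^{-1}=X(\SmallMatrix{sa&-sb\\-sc&sd})$ with $s=(ad-bc)^{-1}$, composed with complex conjugation of entries. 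A short check shows this twisted conjugation is an anti-regular involution whose fixed subgroup is a \emph{unitary} group $\mathrm{U}(1,1)$ (the off-diagonal sign change together with conjugation realizes the Hermitian form $\diag(1,-1)$ after a change of basis, and the determinant-scaling is absorbed). Via the determinant map $\det\colon{}_{p'}A\to\C^\times_\tw$ with kernel $\cong\SU(1,1)$, and using $\Ho^1\hs\SU(1,1)$ together with $\Ho^1\hs\C^\times_\tw=\{[1],[-1]\}$ (Example \ref{x:cohom}(3)), one gets $\#\Ho^1\hs{}_{p'}A=2$ (this is Proposition \ref{p:U(m,n)} for $(m,n)=(1,1)$: $S_{1,1}=\{0,2\}$ or the analogous count). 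Then one checks, using Lemma \ref{lem:1c} and the relation $(p'')^{-1}\overline{p''}=X\SmallMatrix{-1&0\\0&-1}$, that the generator of $({}_{\bar p}C)^\Gamma=C^\Gamma$ acts on the two-element set $\Ho^1\hs{}_{p'}A$ \emph{trivially} — the relation $(p')^2=X\SmallMatrix{-1&0\\0&-1}$ feeds directly into Lemma \ref{lem:1c}. Therefore $C^\Gamma$ has two orbits on $\Ho^1\hs{}_{p'}A$, giving $\#j_*^{-1}[\bar p]=2$.

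Finally I would read off explicit cocycles. The two classes in $j_*^{-1}[\bar p]$ are $\tau_{p'}([1])=[p']$ and $\tau_{p'}$ of the nontrivial class of ${}_{p'}A$; the latter is represented by $X\SmallMatrix{0&1\\1&0}\cdot p'=p''$ (this is exactly the Sylvester-type representative that changes the signature of the Hermitian form, as in the proof of Proposition \ref{p:U(m,n)}). Hence $\Ho^1\hs\Zm_0=\{[1],[p'],[p'']\}$, as claimed. The main obstacle is the identification of ${}_{p'}A$ with the real form $\mathrm{U}(1,1)$ of $\GL(2,\C)$ and the consequent count $\#\Ho^1\hs{}_{p'}A=2$: one must track carefully the interplay of the sign changes in $q X(\cdot)q^{-1}$, the inner correction by $X\SmallMatrix{\eta&0\\0&\eta}$, and complex conjugation, and verify that the resulting anti-regular involution is the one defining the indefinite unitary group rather than the compact one or a split form; everything else is a routine application of Corollary \ref{c:39-cor2} and Lemma \ref{lem:1c}.
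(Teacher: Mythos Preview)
Your strategy---working directly with $1\to A\to\Zm_0\to C\to 1$ where $A=\Zm_0^\circ$---is sound, and you correctly identify ${}_{p'}A$ with $\mathrm{U}(1,1)$. The error is the count $\#\Ho^1\hs{}_{p'}A=2$: in fact $\#\Ho^1\hs\mathrm{U}(1,1)=3$. Proposition~\ref{p:U(m,n)} is about $\SU(m,n)$, and for $(m,n)=(1,1)$ it yields $S_{1,1}=\{1\}$, so $\#\Ho^1\hs\SU(1,1)=1$, not $2$. For the full group $\mathrm{U}(1,1)$ the cohomology classifies nondegenerate Hermitian forms on $\C^2$, and there are three signature classes $(2,0),(1,1),(0,2)$. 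In terms of your determinant sequence: over $[-1]\in\Ho^1\hs\C^\times_\tw$ the relevant kernel is the \emph{twisted} $\SU(1,1)$, which is $\SU(2)$, contributing two classes (the group $(\C^\times_\tw)^\Gamma=U(1)$ is connected and hence acts trivially on them).

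With three classes in $\Ho^1\hs{}_{p'}A$, the $C^\Gamma$-action cannot be trivial, or you would obtain $\#\Ho^1\hs\Zm_0=4$. Your use of Lemma~\ref{lem:1c} via $(p')^2=X(-I)$ only shows that $c$ fixes the neutral class $[1]$; it does not address the two definite classes, and in fact $c$ \emph{swaps} them: for $a=X\SmallMatrix{0&1\\1&0}\in\Zl^1\hs{}_{p'}A$ one computes $[a]\cdot c=[(p')^{-1}a\,(p')^{-1}]=[X\SmallMatrix{0&-1\\-1&0}]$, which is the other definite class. This nontrivial action is what collapses three classes to the two orbits $\{[1]\}$ and $\{[a],[-a]\}$, yielding $j_*^{-1}[\bar p]=\{[p'],[p'']\}$. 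The paper sidesteps this computation by first quotienting by $S\cong\SL(2,\C)$: it computes $\Ho^1(\Zm_0/S)=\{[1],[p'S],[p''S]\}$ via the torus $T=A/S$, and then shows each fibre of $\Ho^1\hs\Zm_0\to\Ho^1(\Zm_0/S)$ is a singleton. The delicate step there---that $({}_{p''S}(\Zm_0/S))(\R)$ acts transitively on $\Ho^1\hs{}_{p''}S\cong\Ho^1\hs\SU(2)$---is the same phenomenon in a different guise.
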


\begin{proof}
 We write $B=\Zm_0$ and
 \[G=\left\{X\SmallMatrix{x&y\\z&t}\right\}.\]
Then $G\cong\GL(2,\C)$. We set $C=B/G$.
Then $C\simeq \mu_6$ over $\R$,
that is, $\#C=6$, $C$ is generated by $q G$,
and $\ov q  G=q^{-1}\hs G$.
Thus $\Zl^1 C=C$, $B^1(\Ga,C)=C^2$, and
$$\Ho^1 C=C/C^2\cong \Z/2\Z$$
with generator $c=pG=p'G$.

We have a short exact sequence
\[1\to G\to B\to C\to 1.\]
We write
\[S=\left\{X \SmallMatrix{x&y\\z&t}\mid xt-yz=1\right\}.\]
Then $S\cong \SL(2,\C)$.
We set $B_1=B/S$, $T=G/S\cong \C^\times$.
We have a short exact sequence
\[1\to T\labelto{ } B_1\labelto{\pi} C\to 1.\]

We compute $\Ho^1\hm  B_1$.
We have
\[\Ho^1\hm B_1=\pi_*^{-1}[1]\cup\pi_*^{-1}[c].\]
where $c=pG\in \Zl^1 C=C$.
Since $\Ho^1 T=1$, we have $\pi_*^{-1}[1]=\{[1]\}$.

We compute $\pi_*^{-1}[c]$ by twisting the short exact sequence by
$c=p'G \in \Zl^1\hm C=C$.
We can lift the cocycle $c$
to the cocycle $p'S$ in $B_1$. We twist the exact sequence by $p'S$.

A class $tS$ in $T$ has a representative $t=X\SmallMatrix{ a & 0\\0 & a}$.
We have
$$p' t(p')^{-1} = X\SmallMatrix{ \eta & 0\\0 & \eta}\hs p\hs
X\SmallMatrix{ a & 0\\0 & a}\hs p^{-1} X\SmallMatrix{ \eta^{-1} & 0\\0 & \eta^{-1}}
= X\SmallMatrix{ a^{-1} & 0\\0 & a^{-1}}.$$
It follows that the $\Ga$-action in $_{c}T$ is
\[tS\mapsto\bar t^{-1}S.\]
Thus $(\hs_{c}T)(\R)\simeq \C^\times_\tw$ and by Example \ref{x:cohom}(3)
we have $\Ho^1\hm \C^\times_\tw=\{[1],[-1]\}$; hence
$\#\Ho^1\hm \hs_{c} T=2$.

We consider the right action of the group $(\hs_c C)(\R)=C(\R)=\{1,c\}$ on
$\Ho^1\hm \hs_c T$; see Construction \ref{con:rightact}.
Since $c\in(\hs_c C)(\R)$, $c^2=1$,  $c=p'S$,
and $p'S\in \Zl^1\hm  B_1$, by Lemma \ref{lem:1c} we have
\[ [1]\cdot c=[(p')^{-2}S]=[1]\in \Ho^1\hm \hs_c T\]
(because $(p')^{-2}\in S$). We conclude that $C(\R)$ acts trivially on
$\Ho^1\hm \hs_c T$.
We consider
$$X\SmallMatrix{0&1\\1&0}\cdot S\in \Zl^1\hm \hs_c T,$$
which is not cohomologous to 1, and
\[p'' = X\SmallMatrix{0&1\\1&0}\cdot p';\]
then $p''\cdot \ov{p''}=1$, that is, $p''\in \Zl^1\hm B$.
Since $C(\R)$ acts trivially on  $\Ho^1\hm \hs_c T$,
by Corollary \ref{c:39-cor2} the cocycles $p'S$ and $p''S$ are not
cohomologous in $\Zl^1 B_1$ and we have $\#\pi_*^{-1}[c]=2$ with cocycles
$[p'S]$ and $[p''S]$.
Thus $\#\Ho^1\hm B_1=3$ with cocycles $[1],\ [p'S],\ [p''S]$.

We proceed to compute $\Ho^1\hm B$.
We have a short exact sequence
\[1\to S\labelto{ }B\labelto{\phi} B_1\to 1.\]
We see that
\[\Ho^1\hm B=\phi_*^{-1}[1]\,\cup\,\phi_*^{-1}[p'S]\,\cup\,
 \phi_*^{-1}[p''S].\]
 We compute these three sets.

 The set $\phi_*^{-1}[1]$ comes from $\Ho^1\hm S$. Since
 $S\cong\SL(2,\C)$ ($\Gamma$-equivariantly), we have
 $\Ho^1\hm S=1$, and hence, $\phi_*^{-1}[1]=[1]$.

The set $\phi_*^{-1}[p'S]$ comes from $\Ho^1\hm \hs_{p'}S$.
Consider the subgroup
\[\left\{s_\lambda=X\SmallMatrix{\lambda&0\\0&\lambda^{-1}}\mid\lambda\in\R^\times\right\}.\]
One can easily check that
\[p'\cdot\ov{s_\lambda}\cdot (p')^{-1}=s_\lambda.\]
It follows that the group $(\hs_{p'}S)(\R)$ contains a closed subgroup isomorphic to $\R^\times$.
We see that $(\hs_{p'}S)(\R)$ is not compact, and therefore, isomorphic to $\SL(2,\R)$.
We conclude that  $\Ho^1\hm \hs_{p'}S=1$, and hence
$\#\phi_*^{-1}[p'S]=1$ with cocycle $p'$.

The set $\phi_*^{-1}[p''S]$
comes from $\Ho^1\hm \hs_{p''}S$ and is in a canonical bijection
with the set of orbits $\Ho^1\hm \hs_{p''}S/(\hs_{p''}B_1)(\R)$.
We compute $(\hs_{p''}S)(\R)$.
For
$$s=X\SmallMatrix{x&y\\z&t}\quad\text{with \,}xt-yz=1,$$
we have
\[p''\cdot\bar s\cdot (p'')^{-1}=
X\SmallMatrix{\bar t&-\bar z\\-\bar y&\bar x}.\]
Hence, the condition
\[p''\cdot\bar s\cdot (p'')^{-1}=s\]
gives $t=\bar x$ and $z=-\bar y$.
It follows that
\[(\hs_{p''}S)(\R)=\left\{X\SmallMatrix{x&y\\-\bar y&\bar x}\mid x\bar x+y\bar y=1\right\}.\]
We see that  $(\hs_{p''}S)(\R)\simeq \SU(2)$, and hence
$\#\Ho^1\hm \hs_{p''}S=2$ with cocycles 1 and $X\SmallMatrix{-1&0\\0&-1}$;
see \cite[Section III.4.5, Examples]{Serre1997}.

We compute the orbits of the action of $(\hs_{p''}B_1)(\R)$
on $\Ho^1\hm \hs_{p''}S$. Since
\[\ov{p''}\cdot (p'')^{-1}=X\SmallMatrix{-1&0\\0&-1}\qquad\text{and}\qquad (p'')^{2}=X\SmallMatrix{-1&0\\0&-1},\]
we have $p''S\in B_1(\R)$ and $(p''S)^2=1$.
We compute $[1]\cdot p''S\in \Ho^1\hm \hs_{p''}S$. By Lemma \ref{lem:1c}
it is the class of the cocycle
\[(p'')^{-2}=X\SmallMatrix{-1&0\\0&-1}.\]
We see that the group $(\hs_{p''}B_1)(\R)$ acts on $\Ho^1\hm \hs_{p''}S$
transitively.
It follows that  $\#\phi_*^{-1}[p''S]=1$
 and $\phi_*^{-1}[p''S]=\{[p'']\}$.
Thus $\Ho^1\hm B=\{[1],[p'],[p'']\}$, as required.
\end{proof}

\noindent{\bf 61} Representative: $e_{137}+e_{146}-e_{236}-e_{245}+e_{479}+e_{569}$.
Rank 8, Gurevich number XXII.\\
Here $\z_0 = \s+\t_1$, where $\s$ is semisimple of type $A_1$ and $\t_1$ denotes
a 1-dimensional center. We consider the natural 9-dimensional $\s$-module.
It splits into a direct sum of three irreducible modules of dimensions 5, 3, 1
(of highest weights 4,2,0).
The centralizer in $\Zm_0$ of $\s$ is a 1-dimensional torus $T_1$ consisting of
$\diag(a^{-2},a^{-2},a,a,a,a,a,a,a^{-2})$ for $a\in \C^\times$, which is
exactly the connected torus with Lie algebra $\t_1$. Let $S$ be the connected
subgroup of $\Zm_0$ with Lie algebra $\s$, then $S\cap T_1 = \{1\}$
(this follows because one of the summands of the $\s$-module is trivial).
Since $\s$ has no outer
automorphisms this implies that $\Zm_0$ is isomorphic
to $\mathrm{PSL}(2,\C)\times T_1$. (Since the weight lattice in this case is
equal to the root lattice.)

Since $\PSL(2,\C)=\Aut(\ssl(2,\C))$, the set $\Ho^1  \PSL(2,\C)$ is in a
bijection with the set of isomorphism classes of real forms of $\ssl(2,\C)$.
There two nonisomorphic real forms, $\ssl(2,\R)$ and $\su(2)$.
Thus $\# \Ho^1\hm \PSL(2,\C)=2$.

For explicit cocycles we can take 1 and the image $b$ in $\PSL(2,\C)$ of the
matrix
\[ s=\SmallMatrix{ 0 & 1\\ -1 & 0 }\in \SL(2,\R).\]
Note that this image is real and of order 2, hence a cocycle.

\noindent{\bf 62} Representative: $e_{146}-e_{235}+e_{479}+e_{579}+e_{678}$.\\
Here $\Zm_0^\circ$ consists of
$$X(u,v,x,y;b)=\SmallMatrix{ a^{-1}b^{-1} & 0 & 0 & 0 & 0 & 0 & 0 & 0 & 0\\
                0 & u & v & 0 & 0 & 0 & 0 & 0 & 0\\
                0 & x & y & 0 & 0 & 0 & 0 & 0 & 0\\
                0 & 0 & 0 & a & 0 & 0 & 0 & 0 & 0\\
                0 & 0 & 0 & 0 & a & 0 & 0 & 0 & 0\\
                0 & 0 & 0 & 0 & 0 & b & 0 & 0 & 0\\
                0 & 0 & 0 & 0 & 0 & 0 & a^{-1}b^{-1} & 0 & 0\\
                0 & 0 & 0 & 0 & 0 & 0 & 0 & a & 0\\
                0 & 0 & 0 & 0 & 0 & 0 & 0 & 0 & b\\}$$
where $a=(uy-vx)^{-1}$, $b\in \C^\times$;
so it is isomorphic to $\GL(2,\C)\times \GL(1,\C)$.
The component group $C$ is of order 2 and generated by the image $c$ of
$$h=\SmallMatrix{ 0 & 0 & 0 & 0 & 0 & 0 & 0 & 0 & -1\\
                0 & 1 & 0 & 0 & 0 & 0 & 0 & 0 & 0\\
                0 & 0 & 1 & 0 & 0 & 0 & 0 & 0 & 0\\
                0 & 0 & 0 & -1 & 0 & 0 & 0 & 0 & 0\\
                0 & 0 & 0 & -1 & 1 & 0 & 0 & 0 & 0\\
                0 & 0 & 0 & 0 & 0 & 0 & 1 & 0 & 0\\
                0 & 0 & 0 & 0 & 0 & 1 & 0 & 0 & 0\\
                0 & 0 & 0 & 0 & 0 & 0 & 0 & -1 & 0\\
                -1 & 0 & 0 & 0 & 0 & 0 & 0 & 0 & 0\\}.$$

Here we have $h^2=1$ and
\begin{equation}\label{e:h1}
  h X(u,v,x,y;b)h^{-1}=X(u,v,x,y;b^{-1}(uy-vx)).\tag{$*$}
\end{equation}

\begin{lemma*}
$\Ho^1\hm \Zm_0=\{[1],[h]\}$.
\end{lemma*}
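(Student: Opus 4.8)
The plan is to follow the by-now familiar pattern used for cases \textbf{47}, \textbf{60}, \textbf{62}, etc.: express $\Zm_0$ as an extension of its component group $C=\{1,c\}$ by the identity component $\Zm_0^\circ\cong\GL(2,\C)\times\GL(1,\C)$, and analyze the cohomology via the short exact sequence
\[1\to \Zm_0^\circ\to \Zm_0\labelto{j} C\to 1.\]
First I would record that, since $\Gamma$ acts trivially on $C\cong\mu_2$, we have $\Ho^1 C=\{[1],[c]\}$, so $\Ho^1\hm\Zm_0 = j_*^{-1}[1]\cup j_*^{-1}[c]$. For $j_*^{-1}[1]$: by Proposition \ref{p:serre-prop38} this set is the image of $\Ho^1\hm\Zm_0^\circ$ under $i_*$, and $\Zm_0^\circ\cong\GL(2,\C)\times\GL(1,\C)$ has trivial Galois cohomology (using $\Ho^1\GL(n,\C)=1$), so $j_*^{-1}[1]=\{[1]\}$.

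The substance is in computing $j_*^{-1}[c]$. Since $h$ is a real matrix with $h^2=1$, it is a cocycle lifting $c$, so I would twist the sequence by $h$ and apply Corollary \ref{c:39-cor2}: $j_*^{-1}[c]$ is in bijection with the set of orbits of $(\hs_c C)(\R)=C(\R)=\{1,c\}$ acting on $\Ho^1\hm\hs_h\Zm_0^\circ$. The key computation is the $\gamma$-action on $\hs_h\Zm_0^\circ$, governed by the conjugation formula \eqref{e:h1}: conjugation by $h$ fixes the $\GL(2,\C)$ block and sends the $\GL(1,\C)$ parameter $b\mapsto b^{-1}(uy-vx)$. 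So on the $\SL(2,\C)$-part the twisted action is just the standard action (and contributes trivial cohomology), while on the remaining one-dimensional torus the twist produces the $\C^\times_{\mathrm{tw}}$-action $b\mapsto \bar b^{-1}$; more precisely, I would split off $\SL(2,\C)$ as in case \textbf{60} via the determinant and check that the quotient torus carries the twisted action. By Example \ref{x:cohom}(3), $\Ho^1\hs\C^\times_{\mathrm{tw}}=\{[1],[-1]\}$ has two elements, so $\#\Ho^1\hm\hs_h\Zm_0^\circ=2$.

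Finally I would compute the action of $c$ on this two-element set. Since $h^2=1$, Lemma \ref{lem:1c} gives $[1]\cdot c=[h^{-2}]=[1]$, so $c$ fixes the neutral class; as $\Ho^1\hm\hs_h\Zm_0^\circ$ has only two elements and the action is by a group of order $2$, it must be trivial, giving two orbits. Hence $\#j_*^{-1}[c]=2$, with representatives $[h]$ and $[h\cdot X(1,0,0,1;-1)]$ (or the appropriately chosen second real cocycle). Combined with $j_*^{-1}[1]=\{[1]\}$ this would yield $\#\Ho^1\hm\Zm_0=3$ — but the claimed answer is $\{[1],[h]\}$, i.e.\ $\#\Ho^1=2$. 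Therefore I expect the main obstacle is exactly this point: one must check whether the $\GL(1,\C)$ parameter $b$ is genuinely a free $\C^\times$ or is constrained (so that $\Zm_0^\circ$ is really $\{(g_2,g_1)\in\GL(2,\C)\times\GL(1,\C): \det g_2 = (\det g_1)^{\pm 1}\}$ or similar, forcing $\hs_h\Zm_0^\circ$ to have trivial cohomology after all, as in cases \textbf{50}, \textbf{59}). I would resolve this by carefully reading off from the explicit matrix form of $X(u,v,x,y;b)$ whether $b$ and $(uy-vx)$ are independent; if $\Zm_0^\circ$ is in fact isomorphic to $\GL(2,\C)$ alone (with $b$ determined by the $2\times2$ block), then $\hs_h\Zm_0^\circ$ is a real form of $\GL(2,\C)$, hence has trivial $\Ho^1$, and one gets $j_*^{-1}[c]=\{[h]\}$, yielding the stated $\Ho^1\hm\Zm_0=\{[1],[h]\}$. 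This reconciliation is the crux and the only nonroutine step.
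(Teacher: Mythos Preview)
Your overall strategy matches the paper's, but there is a genuine gap in the computation of $\Ho^1\hs_h\Zm_0^\circ$, and your proposed resolution is not the correct one.

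The parameter $b$ \emph{is} a free $\C^\times$: the paper states explicitly that $\Zm_0^\circ\cong\GL(2,\C)\times\GL(1,\C)$ with $b\in\C^\times$ independent of the $2\times2$ block. So your speculation that $b$ might be determined by $(u,v,x,y)$ is incorrect. The actual point you miss is that the twisted $\Gamma$-action on $_h\Zm_0^\circ$ does \emph{not} respect the product decomposition $\GL(2,\C)\times\C^\times$: from $(\ast)$ one has $b\mapsto\bar b^{-1}(\bar u\bar y-\bar v\bar x)$, not $b\mapsto\bar b^{-1}$. The determinant of the $\GL(2,\C)$ factor leaks into the $b$-coordinate, so you cannot simply declare the $\C^\times$ factor to be $\C^\times_{\tw}$ and read off two cohomology classes.

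The paper's fix is exactly the one you gesture at but do not carry out: split off $H\cong\SL(2,\C)$ via the surjection $\phi\colon\Zm_0^\circ\to T=(\C^\times)^2$, $X(u,v,x,y;b)\mapsto((uy-vx)^{-1},b)$. Twisting by $h$, one has $_hH=H$ (trivial $\Ho^1$), and the induced $\Gamma$-action on $_hT$ sends $(a,b)\mapsto(\bar a,\bar a^{-1}\bar b^{-1})$. The matrix of this action on $\X_*(T)=\Z^2$ is $\SmallMatrix{1&0\\-1&-1}$, which \emph{permutes} the basis $\SmallMatrix{1\\0}$, $\SmallMatrix{1\\-1}$. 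Hence $_hT$ is quasi-trivial and $\Ho^1\hs_hT=1$ by Proposition~\ref{p:quasi}. From the twisted sequence $1\to{_hH}\to{_h\Zm_0^\circ}\to{_hT}\to1$ and Corollary~\ref{c:prop38} one gets $\Ho^1\hs_h\Zm_0^\circ=1$, so $j_*^{-1}[c]=\{[h]\}$ and $\Ho^1\Zm_0=\{[1],[h]\}$ as claimed. Equivalently, if you insist on the sequence $1\to\C^\times_{\tw}\to{_h\Zm_0^\circ}\to\GL(2,\C)_{\stand}\to1$, the connecting map $\delta\colon\GL(2,\R)\to\Ho^1\C^\times_{\tw}=\{[1],[-1]\}$ sends $g\mapsto[\det g]$ and is surjective, killing the extra class.
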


\begin{proof}
Write $A=\Zm_0^\circ$, $B=\Zm_0$, $C=B/A = \{[1],[c]\}$.
We have a short exact sequence
\[1\to A\labelto{ } B\labelto{j} C\to 1,\]
where $j$ is the canonical epimorphism.
We have $\Ho^1\hm C=\{[1],[c]\}$ and  hence $\Ho^1\hm B=
\ker\,j_*\,\cup\,j_*^{-1}[c]$.

Since $A$ is isomorphic to $\GL(2,\C)\times \GL(1,\C)$, we have
$\Ho^1\hm A=1$, so that $\ker\, j_*=\{[1]\}$.

The fiber $j_*^{-1}[c]$ is in a canonical bijection with the image of
$\Ho^1\hm \hs_{h} A$ (Corollary \ref{c:39-cor2}). In order to compute
this set we first construct a $\Gamma$-equivariant surjective homomorphism
\[\phi\colon A\to (\C^\times)^2,\quad X(u,v,x,y;b)\mapsto ((uy-vx)^{-1},b).\]
We write $T=(\C^\times)^2$. Then we have a short exact sequence
\[1\to H \labelto{ } A\labelto{\phi} T\to 1,\]
where $H\cong \SL(2,\C)$. We twist this sequence to get the short exact
sequence
\[1\to {}_{h}\hm  H\to {}_{h}\hm  A\to {}_{h}\hm  T\to 1.\]
It follows from \eqref{e:h1} that $_{h}\hm  H=H$, and therefore
$\Ho^1\hm \hs_{h}\hm  H=1$.

The conjugation on $\hs_{h}\hm  T$ is given by $(a,b)\mapsto (\ov a, \ov a^{-1}
\ov b^{-1})$, hence it acts on $\X_*(T)=\Z^2$ by the matrix $\SmallMatrix{1&0\\-1&-1}$.
This matrix permutes the two vectors $\SmallMatrix{1\\0}$ and  $\SmallMatrix{1\\-1}$
constituting a basis of $\Z^2$.
It follows that $_h T$ is a quasi-trivial torus,
and by Proposition \ref{p:quasi} we have $\Ho^1{}_{h}\hm  T=1$.
We see that $\Ho^1\hm  \hs_{h}\hm  A=1$ and thus $j_*^{-1}[c]$ has only one
element, namely, $[h]$. This concludes the proof.
\end{proof}

\noindent{\bf 63} Representative: $e_{127}+e_{146}-e_{236}-e_{345}+e_{579}+e_{678}$.\\
Here $\Zm_0$ consists of
$$X(u,v,w,x,a)=\SmallMatrix{ a^3u & a^3v & 0 & 0 & 0 & 0 & 0 & 0 & 0\\
                a^3w & a^3x & 0 & 0 & 0 & 0 & 0 & 0 & 0\\
                0 & 0 & u & v & 0 & 0 & 0 & 0 & 0\\
                0 & 0 & w & x & 0 & 0 & 0 & 0 & 0\\
                0 & 0 & 0 & 0 & a^4 & 0 & 0 & 0 & 0\\
                0 & 0 & 0 & 0 & 0 & a & 0 & 0 & 0\\
                0 & 0 & 0 & 0 & 0 & 0 & a^{-2} & 0 & 0\\
                0 & 0 & 0 & 0 & 0 & 0 & 0 & a & 0\\
                0 & 0 & 0 & 0 & 0 & 0 & 0 & 0 & a^{-2}\\}, $$
with $ux-vw=a^{-4}$.

Consider the surjective homomorphism
\[j\colon  \Zm_0 \to \GL(2,\C),\quad \  X(u,v,w,x,a)\,\longmapsto\,\SmallMatrix{ u & v \\ w & x }\]
with kernel $\mu_4$\hs.
The short exact sequence
$$1\to \mu_4\labelto{i} \Zm_0\labelto{j} \GL(2,\C)\to 1$$
gives rise to a cohomology exact sequence
\[\Zm_0(\R)\labelto{j} \GL(2,\R)\labelto\delta \Ho^1\hm \mu_4 \labelto{i_*}
         \Ho^1\hm \Zm_0 \labelto{j_*} \Ho^1 \GL(2,\C)=1.\]
The image of the homomorphism $j\colon \Zm_0(\R)\to \GL(2,\R)$ is the group
$\GL(2,\R)^+$ consisting of matrices with {\em positive} determinant.
Since $\GL(2,\R)^+$ is a subgroup of index 2 in $\GL(2,\R)$, we see that the image of $\delta$ is a subgroup of order 2 in $\Ho^1\hm \mu_4$.
However, $\Ho^1\hm \mu_4$ is a group of order 2, and therefore the connecting
homomorphism $\delta$ is surjective. It follows that
\[\ker\big[\hs j_*\colon \Ho^1\hm \Zm_0 \to \Ho^1\hm \GL(2,\C))\hs\big]=\{[1]\}.\]
Taking into account that $\Ho^1 \GL(2,\C)=\{1\}$, we conclude that
$\Ho^1\hm \Zm_0=1$.

\noindent{\bf 64} Representative: $e_{137}-e_{246}-e_{345}+e_{568}+e_{579}$.\\
Here the identity component $\Zm_0^\circ$ consists of
\[D(s,t,u)=\diag(s^{-1}u^{-1},s^{-1}t^{-2}u,u,stu^{-1},s^{-1}t^{-1},t,s,s,t),\]
where $s,t,u\in \C^\times$.
The component group $C$ is of order 2 and is generated by the image of
$$Q=\SmallMatrix{ 0 & -1 & 0 & 0 & 0 & 0 & 0 & 0 & 0\\
                1 & 0 & 0 & 0 & 0 & 0 & 0 & 0 & 0\\
                0 & 0 & 0 & 1 & 0 & 0 & 0 & 0 & 0\\
                0 & 0 & -1 & 0 & 0 & 0 & 0 & 0 & 0\\
                0 & 0 & 0 & 0 & 1 & 0 & 0 & 0 & 0\\
                0 & 0 & 0 & 0 & 0 & 0 & 1 & 0 & 0\\
                0 & 0 & 0 & 0 & 0 & 1 & 0 & 0 & 0\\
                0 & 0 & 0 & 0 & 0 & 0 & 0 & 0 & 1\\
                0 & 0 & 0 & 0 & 0 & 0 & 0 & 1 & 0\\}. $$
We have $Q^2 = D(1,1,-1)$, $Q\cdot D(s,t,u)\cdot Q^{-1} = D(t,s,stu^{-1})$.

\begin{lemma*}
$\Ho^1\hm \Zm_0=\{[1], [P] \}$ where $P=D(1,1,i)\cdot Q$.
\end{lemma*}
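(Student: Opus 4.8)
The plan is to follow the same template used in the immediately preceding cases (\textbf{47}, \textbf{60}, \textbf{62}, \textbf{64}), setting $A=\Zm_0^\circ$, $B=\Zm_0$, $C=B/A\cong\mu_2$, and to analyze the short exact sequence $1\to A\to B\labelto{j} C\to 1$ together with its $j$-image and twists. First I would record the group law and the relevant identities: from the description $A$ consists of the tori $D(s,t,u)$, so $A$ is a $3$-dimensional torus; hence $\Ho^1\hm A=\Ho^1\hm \X_*(A)$ is computed by the method of Section \ref{sec:H1T}. The conjugation $\sigma$ on $A$ is determined by composing the standard complex conjugation $D(s,t,u)\mapsto D(\bar s,\bar t,\bar u)$ with the inner automorphism by the cocycle we choose; but to get started I only need $\Ho^1\hm A$ for the \emph{untwisted} $A$: since $A$ is a subgroup of the diagonal matrices over $\C$ with the standard conjugation, it is split, so $\Ho^1\hm A=1$. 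This immediately gives $j_*^{-1}[1]=\{[1]\}$ via Proposition \ref{p:serre-prop38} (as in every preceding case).

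The substantive part is computing $j_*^{-1}[c]$, where $c$ is the nontrivial class in $\Ho^1\hm C=\{[1],[c]\}$ (here $C\cong\mu_2$ with trivial $\Gamma$-action, so $\Ho^1\hm C$ has order $2$ by Lemma \ref{l:explicit}, or just directly). I would verify that $P=D(1,1,i)\cdot Q$ is a $1$-cocycle in $B$: using $Q^2=D(1,1,-1)$ and the conjugation rule one checks $P\cdot\upgam P=1$, and that $j(P)=c$. Then, following Corollary \ref{c:39-cor2}, $j_*^{-1}[c]$ is in bijection with the set of orbits of $(\hs_c C)(\R)=C$ acting on $\Ho^1\hm{}_P A$, where ${}_P A$ is $A$ twisted by $P$ (with $B$ acting on $A$ by inner automorphisms). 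The key computation is to determine the $\Gamma$-action on $\X_*({}_P A)\cong\Z^3$: it is the standard action composed with the permutation/linear map induced by conjugation by $Q$, namely $D(s,t,u)\mapsto D(t,s,stu^{-1})$, so on cocharacters the matrix is $\SmallMatrix{0&1&1\\1&0&1\\0&0&-1}$ (swapping the first two coordinates and sending the third appropriately). Then I would run the linear-algebra recipe of Section \ref{sec:H1T}: compute $\ker(1+M)$ and $\im(1-M)$ over $\Z$ and the quotient, expecting $\Ho^1\hm{}_P A$ to have order $2$ — otherwise the claimed answer $\#\Ho^1\hm\Zm_0=2$ would be wrong, so getting this matrix right is the crux.

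Next I would compute the action of $c\in C$ on $[1]\in\Ho^1\hm{}_P A$: since $P^2=D(1,1,-1)\in A$, Lemma \ref{lem:1c} gives $[1]\cdot c=[P^{-2}]=[D(1,1,-1)]$. If this class is trivial in $\Ho^1\hm{}_P A$ (equivalently $D(1,1,-1)\in\Bd^1\hm{}_P A$ — one checks whether the cocharacter $(0,0,1)$ composed with $-1$, i.e.\ $D(1,1,-1)$, lies in the image of $1-M$), then by the same reasoning as in Case \textbf{64} the group $C$ acts trivially on the $2$-element set $\Ho^1\hm{}_P A$, hence has \emph{two} orbits, giving $\#j_*^{-1}[c]=2$ and $\#\Ho^1\hm\Zm_0=3$. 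If instead $[D(1,1,-1)]\neq[1]$, then $C$ acts with a single orbit and $\#j_*^{-1}[c]=1$, giving $\#\Ho^1\hm\Zm_0=2$ — this is the case the lemma asserts. So the decisive (and the only delicate) computation is whether $D(1,1,-1)$ is a coboundary in the twisted torus; I expect it is not, and checking it reduces to a Smith-normal-form computation on the $3\times3$ integer matrix $1-M$. Finally, having found $\#j_*^{-1}[c]=1$ with representative cocycle $P$, I conclude $\Ho^1\hm\Zm_0=\{[1],[P]\}$ with $P=D(1,1,i)\cdot Q$, matching the statement; one should also note $\upgam P=P^{-1}$ and double-check $P\notin\Bd^1\hm B$ so that $[P]\neq[1]$, which is automatic since $j_*[P]=c\neq[1]$.
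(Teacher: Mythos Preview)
Your proposal is correct and follows essentially the same approach as the paper: reduce to the fiber over $[c]$ via the exact sequence $1\to T\to\Zm_0\to C\to 1$, verify $P=D(1,1,i)Q$ is a cocycle lifting $c$, compute $\Ho^1{}_PT=\{[1],[D(1,1,-1)]\}$ from the induced $\Gamma$-action on $\X_*(T)$, and use Lemma~\ref{lem:1c} with $P^{-2}=D(1,1,-1)$ to see that $C^\Gamma$ acts nontrivially, collapsing the fiber to a single class. Your matrix $\SmallMatrix{0&1&1\\1&0&1\\0&0&-1}$ is the transpose of the one the paper writes (a row-versus-column convention), but the cohomology computation is identical and your prediction that $[D(1,1,-1)]\neq[1]$ is exactly what the paper confirms.
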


\begin{proof}
Write $B=\Zm_0$ and $T=\Zm_0^\circ$.
Clearly $\Ho^1 C=\{[1],[c]\}$, where $c$ is the image of $Q$.
We have a short exact sequence
\[ 1\to T\labelto{ } B\labelto{j} C\to 1.\]
We have $\Ho^1\hm B=j_*^{-1}[1]\cup j_*^{-1}[c]$.
Since $\Ho^1\hm T=1$, we have $j_*^{-1}[1]=\{[1]\}$.
We compute $j_*^{-1}[c]$.

Set $P=D(1,1,i)\cdot Q$, where $i^2=-1$. Then
\[ P\cdot \ov P=D(1,1,i)\cdot Q\cdot D(1,1,-i)\cdot Q^{-1}\cdot Q^2=D(1,1,i)\cdot D(1,1,i)\cdot D(1,1,-1)=1.\]
Thus $P\in B$ is a cocycle. We twist the above exact sequence by $P$. The complex
conjugation on $\hs_PT$ is given by
$$D(s,t,u)\mapsto D(\ov t, \ov s, \ov s\ov t \ov u^{-1}).$$
We see that $\gamma$ acts on the cocharacter group $\X_*(\hs_P T)=\Z^3$ by the matrix
\[ \SmallMatrix{0&1&0\\1&0&0\\1&1&-1}.\]
The lattice $\X_*(T)$ has a basis
\[ v_1=\SmallMatrix{1\\0\\0},\ \ v_2=\SmallMatrix{0\\1\\1},\ \ v_3=\SmallMatrix{0\\0\\1}\]
such that $\upgam v_1=v_2$ and $\upgam v_3=-v_3$.
We have $v_3(-1)=D(1,1,-1)$.
It follows that $\Ho^1\hs\X_*(\hs_P T)=\{0,[v_3]\}$, and
by Proposition \ref{p:e*} we have $\Ho^1 \hs_PT = \{ [1], [D(1,1,-1)] \}$.

Clearly, $c$ is the image of $P$ in $C$.
We have
\begin{align*}
P^2=D(1,1,i)\cdot Q\cdot &D(1,1,i)\cdot Q^{-1}\cdot Q^2\\
=&D(1,1,i)\cdot D(1,1,-i)\cdot D(1,1,-1)=D(1,1,-1),
\end{align*}
and hence, $P^{-2}=D(1,1,-1)$.
By Lemma \ref{lem:1c}
\[[1]\cdot c=[P^{-2}]=[D(1,1,-1)]\in \Ho^1\hm \hs_P T.\]
Thus $C^\Ga$ acts on $\Ho^1\hm \hs_P T$ nontrivially, and
$\#j_*^{-1}[c]=1$.
We conclude that $\Ho^1\hm B=\{\hs[1],\hs[P]\hs\}$, as required.
\end{proof}

\noindent{\bf 65} Representative: $e_{137}-e_{246}-e_{247}-e_{345}+e_{569}+e_{678}$.\\
Here $\Zm_0^\circ$ is a 2-dimensional torus consisting of
$$D(s,t) = \diag(st^{-1}, s^{-1},s^{-1}t,s,t^{-1},1,1,1,t) \text{ for }
s,t\in \C^\times.$$
In the following $\zeta$ is a primitive third root of unity.
The component group is of order 18 and generated by $f_0,f_2,f_3$, where
$f_0 = \diag( -1, -1, -\zeta, -\zeta, \zeta, \zeta^2, \zeta^2, \zeta^2, 1 )$ and
$$f_2= \SmallMatrix{ 0 & -\zeta & 0 & 0 & 0 & 0 & 0 & 0 & 0\\
                \zeta^2 & 0 & 0 & 0 & 0 & 0 & 0 & 0 & 0\\
                0 & 0 & 0 & -\zeta & 0 & 0 & 0 & 0 & 0\\
                0 & 0 & 1 & 0 & 0 & 0 & 0 & 0 & 0\\
                0 & 0 & 0 & 0 & \zeta^2 & 0 & 0 & 0 & 0\\
                0 & 0 & 0 & 0 & 0 & \zeta & -\zeta & 0 & 0\\
                0 & 0 & 0 & 0 & 0 & 0 & -\zeta & 0 & 0\\
                0 & 0 & 0 & 0 & 0 & 0 & 0 & -\zeta & 0\\
                0 & 0 & 0 & 0 & 0 & 0 & 0 & 0 & 1\\},
\qquad
f_3 = \SmallMatrix{ 0 & 0 & 0 & 0 & 0 & 0 & 0 & 0 & 1\\
                1 & 0 & 0 & 0 & 0 & 0 & 0 & 0 & 0\\
                0 & 0 & 0 & 0 & 1 & 0 & 0 & 0 & 0\\
                0 & 0 & 1 & 0 & 0 & 0 & 0 & 0 & 0\\
                0 & 0 & 0 & 1 & 0 & 0 & 0 & 0 & 0\\
                0 & 0 & 0 & 0 & 0 & 0 & -1 & 0 & 0\\
                0 & 0 & 0 & 0 & 0 & 1 & -1 & 0 & 0\\
                0 & 0 & 0 & 0 & 0 & 0 & 0 & 1 & 0\\
                0 & 1 & 0 & 0 & 0 & 0 & 0 & 0 & 0\\},
$$
They satisfy the following relations:
$$
  f_3f_2 = f_2f_3^2,\quad
  f_2^2 = f_0\hs,\quad
  f_3^3 = 1,\quad
  f_0 \text{ is central},\quad
  f_0^3=D(-1,1).
$$
Using these relations we can write every element of the component group
uniquely as $f_2^if_3^jf_0^k$ with $i=0,1$, $j,k=0,1,2$. Furthermore, we
have
\begin{equation*}
f_2D(s,t)f_2^{-1} = D(s^{-1}t,t),\quad f_3D(s,t)f_3^{-1} = D(t^{-1},st^{-1}).
\end{equation*}
We have
\[\bar f_2=f_2\cdot f_0^2\cdot D(\zeta^2,1).\]
Set $f_2'=f_2\cdot D(i\zeta,1)$.

\begin{lemma*}
  $\Ho^1\hm  \Zm_0=\{[1],[f_2']\}$.
\end{lemma*}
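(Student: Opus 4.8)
The plan is to compute $\Ho^1\hs\Zm_0$ for case \textbf{65} by the now-familiar strategy: write $\Zm_0$ as an extension of its component group $C$ (of order $18 = 2\cdot 3^2$) by its identity component, the two-dimensional torus $T=\Zm_0^\circ$, and exploit the twisting machinery of Subsection~\ref{subsec:twistedbij} together with Corollaries~\ref{c:39-cor1} and~\ref{c:39-cor2}. First I would identify the $3$-part and $2$-part of $C$. The relations show that $\langle f_3\rangle$ together with $\langle f_0^3\cdot(\text{torus correction})\rangle$ — more precisely the subgroup generated by $f_3$ and by $f_0^2$ (which has order $3$ since $f_0$ has order $6$ modulo $T$, but $f_0^3=D(-1,1)\in T$, so $f_0$ has order $3$ in $C$) — forms the Sylow $3$-subgroup $P$ of $C$, normal of order $9$, with $C/P$ of order $2$ generated by the image of $f_2$. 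The element $f_2'=f_2\cdot D(i\zeta,1)$ is the designated cocycle lifting the order-$2$ class; one checks $f_2'\cdot\upgam f_2'=1$ using $\upgam f_2=f_2 f_0^2 D(\zeta^2,1)$ and $f_2^2=f_0$, $f_0^3=D(-1,1)$, so that $f_2'$ squares into $T$ appropriately and the cocycle condition holds (this is the ``routine calculation'' I would not grind through).

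Next I would apply the method of Subsection~\ref{s:group-coh} and Proposition~\ref{p:C-3-2} in spirit, or more directly: from the short exact sequence $1\to T\to \Zm_0\labelto{j} C\to 1$ we get $\Ho^1\hs\Zm_0=j_*^{-1}[1]\cup\bigcup_{[c]\in\Ho^1 C}j_*^{-1}[c]$. To handle $\Ho^1 C$ itself, note $C$ has order $2\cdot 3^2$, so by Lemma~\ref{l:p} we have $\#\Ho^1 C=2$, with classes $[1]$ and $[f_2]$ (an order-$2$ $\Gamma$-fixed element exists by Lemma~\ref{l:fixed-order2}, and $f_2$ has order $2$ in $C$ since $f_2^2=f_0\notin$... wait — $f_2^2=f_0$ which is nontrivial in $C$; so instead the relevant order-$2$ element is $f_0^? $ or a suitable power; I would verify which power of $f_2 f_3^j f_0^k$ has order exactly $2$ in $C$, using $f_0^3=D(-1,1)\in T$ so $f_0$ has order $3$ in $C$, hence $f_2^2=f_0$ has order $3$ and $f_2$ has order $6$ in $C$; then $f_2^3$ has order $2$). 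This bookkeeping must be done carefully but is elementary. Then $j_*^{-1}[1]$ is the image of $\Ho^1 T$; since $T\cong(\C^\times_\stand)^2$ (the $\Gamma$-action on $T$: from the explicit diagonal entries with real matrix, conjugation is $D(s,t)\mapsto D(\bar s,\bar t)$, standard), Example~\ref{x:cohom}(1) gives $\Ho^1 T=1$, so $j_*^{-1}[1]=\{[1]\}$.

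Finally, for the nontrivial class I would twist the exact sequence by the cocycle $f_2'$ and compute $\Ho^1\hs{}_{f_2'}T$ as a torus cohomology via Proposition~\ref{p:e*} and the method of Subsection~\ref{sec:H1T}. The twisted $\Gamma$-action on $T$ combines complex conjugation with conjugation by $f_2'$, i.e. $D(s,t)\mapsto \mathrm{Ad}_{f_2'}(D(\bar s,\bar t))$; using $f_2 D(s,t)f_2^{-1}=D(s^{-1}t,t)$ and the diagonal correction $D(i\zeta,1)$ one reads off the matrix of $\tau_T$ on $\X_*(T)=\Z^2$, which should come out to $\SmallMatrix{-1&1\\0&1}$ (up to conjugation). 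Computing $\ker(1+\tau_T)/\im(1-\tau_T)$ for this matrix should give a group of order... I expect it to be trivial, forcing $\Ho^1\hs{}_{f_2'}T=1$, whence by Corollary~\ref{c:39-cor2} the fiber $j_*^{-1}[f_2']$ is a single class $[f_2']$ (the action of $(\hs_{f_2'}C)^\Gamma$ on a one-point set is trivial), giving $\Ho^1\hs\Zm_0=\{[1],[f_2']\}$. The main obstacle will be keeping the twisting corrections (the factor $D(i\zeta,1)$, the shifts by $f_0^2 D(\zeta^2,1)$ in $\upgam f_2$, and the noncommutativity encoded in $f_3 f_2=f_2 f_3^2$) consistent throughout, and in particular verifying that the Sylow $3$-subgroup $P$ contributes nothing — which follows from Lemma~\ref{l:H1-bijective} since $P$ is a normal abelian... actually $P$ need not be abelian here ($f_3 f_2=f_2 f_3^2$ involves $f_2\notin P$, but within $P=\langle f_0,f_3\rangle$ with $f_0$ central the group $P$ \emph{is} abelian of order $9$), so Lemma~\ref{l:H1-bijective} applies and $\Ho^1\hs\Zm_0\to\Ho^1(\Zm_0/P)$ is a bijection, reducing to a group of order $2\cdot\#T$-coset-data; I would use whichever of these two routes is cleaner.
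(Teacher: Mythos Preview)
Your main line of attack is correct and is essentially the same strategy as the paper's: work with the extension $1\to T\to \Zm_0\to C\to 1$, use that $\Ho^1 T=1$, lift the nontrivial class of $\Ho^1 C$ to the cocycle $f_2'$, and verify that the twisted torus $_{f_2'}T$ has trivial $\Ho^1$ (your matrix $\SmallMatrix{-1&1\\0&1}$ is correct; the lattice $\X_*(\hs_{f_2'}T)$ is in fact permutation with basis $(1,1)^T,(0,1)^T$). The paper streamlines this by first applying Lemma~\ref{l:H1-bijective} to kill the central order-$3$ contribution of $f_0$, which reduces the component group from order $18$ to $S_3$; this is precisely what spares you the muddle about which element of $C$ has order $2$ (in the quotient, $f_2$ itself has order $2$, since $f_2^2=f_0$ becomes trivial). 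Your detour through $f_2^3$ is not wrong, but the paper's preliminary quotient avoids it entirely.

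Your ``alternative route'' at the end is garbled. You define $P=\langle f_0,f_3\rangle$ as a subgroup of $C$ and then speak of $\Zm_0/P$, which does not make sense. If instead you mean the subgroup $\langle f_0,f_3\rangle$ of $\Zm_0$ itself, this is \emph{not} of odd order (it contains $f_0^3=D(-1,1)$ of order $2$), so Lemma~\ref{l:H1-bijective} does not apply to it. What does work---and what the paper does---is to quotient by the order-$3$ central subgroup generated by $f_0^2$ (or equivalently to observe that modulo $T$ the element $f_0$ has order $3$); this is the correct way to excise the $3$-part.
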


\begin{proof}
  Consider the central subgroup $\langle f_0\rangle$ of order 3 and set
  $G=\Zm_0/\langle f_0\rangle$.
  By Lemma \ref{l:H1-bijective} the canonical epimorphism $\Zm_0\to G$ induces a bijection
  $\Ho^1\hm  \Zm_0\simeq \Ho^1\hs G$.

We compute $\Ho^1\hs G$. Write $T=\Zm_0^\circ$, which is a 2-dimensional torus.
Set $C=G/T$, which  is a finite group of order 6.
Let $h_2$ and $h_3$ denote the images in $G$ of $f_2$ and $f_3$, respectively.
Then the images $c_2$ of $h_2$ and $c_3$ of $h_3$ in $C$ generate $C$.
We have
\[ h_3^3=1,\quad h_2^2=1, \quad h_2h_3h_2^{-1}=h_3^2\hs,\]
whence
\[ c_3^3=1,\quad c_2^2=1, \quad c_2c_3c_2^{-1}=c_3^2\hs.\]
It follows that $C\simeq S_3$. From the formula for $\bar f_2$ it follows that
$\bar c_2=c_2$. Furthermore, obviously $\bar f_3=f_3$ so that $\bar c_3=c_3$.
Hence $\Gamma$ acts trivially on $C$.
It follows that $\#\Ho^1\hs  C=2$ and $\Ho^1\hs C=\{[1],[c_2]\}$.

We have a short exact sequence
\begin{equation}\label{e:TGC}
 1\to T\labelto{ } G\labelto{j} C\to 1.
\end{equation}
Since $\Ho^1\hs T=1$, we see that $j_*^{-1}[1]=\{[1]\}$.

We compute $j_*^{-1}[c_2]$. For this end we first show that $f_2'$ is a 1-cocycle.
Indeed,
\begin{align*}
f'_2\cdot\bar f'_2&=f_2\hs D(i\zeta, 1)\cdot f_2 f_0^2\hs D(\zeta^2,1)\hs D(-i\zeta^2,1)=f_2^2\hs f_0^2 \cdot D(-i\zeta^2, 1)D(\zeta^2,1)\hs D(-i\zeta^2,1)\\
&=f_0\hs f_0^2\cdot D(-\zeta^4,1)\hs D(\zeta^2,1)=f_0^3\cdot D(-1,1)=1.
\end{align*}
Let $h_2'$ denote the image of $f_2'$ in $G$.
The image of $h_2'$ in $C$ is $c_2$. Now we twist the exact sequence \eqref{e:TGC} by $h_2'$.
We have
$$f_2'\hs \ov{D(s,t)}\hs f_2^{\prime\hs-1} = D(\ov s^{\hs-1} \ov t, \ov t).$$
We see that $\gamma$ acts on $\X_*(\hs_{h_2'} T)=\Z^2$ by the matrix
$\SmallMatrix{-1&0\\1&1}$ and preserves the basis
$\SmallMatrix{1\\0},\ \SmallMatrix{-1\\1}$ of $\Z^2$.
It follows that $_{h_2'} T$ is a quasi-trivial torus,
and by Proposition \ref{p:quasi} we have  $\Ho^1 \hs_{h_2'} T=1$.
Now from Corollary \ref{c:39-cor2} it follows that $\#j_*^{-1}[c_2]=1$.
We conclude that $\#\Ho^1  G=2$ and hence $\#\Ho^1\hm \Zm_0=2$.

Since the image of $f_2'$ in $C$ is $c_2$\hs, it follows that
$\Ho^1\hm \Zm_0=\{\hs 1,\hs[f_2']\hs\}$, as required.
\end{proof}

\noindent{\bf 66} Representative: $e_{137}-e_{246}+e_{479}+e_{569}-e_{578}$.\\
Here $\Zm_0$ consists of
$$\SmallMatrix{ a_{11} & 0 & a_{13} & 0 & 0 & 0 & 0 & 0 & 0 \\
                0 & s^{-1}t^2 & 0 & 0 & 0 & 0 & 0 & 0 & 0 \\
                a_{31} & 0 & a_{33} & 0 & 0 & 0 & 0 & 0 & 0 \\
                0 & 0 & 0 & t^{-2} & 0 & 0 & 0 & 0 & 0 \\
                0 & 0 & 0 & 0 & s^{-1}t^{-1} & 0 & 0 & 0 & 0 \\
                0 & 0 & 0 & 0 & 0 & s & 0 & 0 & 0 \\
                0 & 0 & 0 & 0 & 0 & 0 & t & 0 & 0 \\
                0 & 0 & 0 & 0 & 0 & 0 & 0 & s & 0 \\
                0 & 0 & 0 & 0 & 0 & 0 & 0 & 0 & t } $$
with $s,t\in \C^\times$ and $a_{11}a_{33}-a_{13}a_{31} = t^{-1}$.
Since  $\Zm_0$ is isomorphic to $\GL(2,\C)\times \C^\times$,
we have $\Ho^1\hm \Zm_0 = 1$.

\noindent{\bf 67} Representative: $e_{137}+e_{146}-e_{236}-e_{245}+e_{579}$.
Rank 8, Gurevich number XXI.\\
Here $\Zm_0$ is of type $A_1$ plus a 2-dimensional torus.
The natural module splits as a direct sum of two 2-dimensional irreducible
representations and five 1-dimensional ones. The identity component of
the centralizer consists of
$$X(\SmallMatrix{ u & v\\ w & x },a,b)=
\SmallMatrix{ ab^{-1}u & 0 & -ab^{-1}v & 0 & 0 & 0 & 0 & 0 & 0\\
                0 & u & 0 & v & 0 & 0 & 0 & 0 & 0\\
                -ab^{-1}w & 0 & ab^{-1}x & 0 & 0 & 0 & 0 & 0 & 0\\
                0 & w & 0 & x & 0 & 0 & 0 & 0 & 0\\
                0 & 0 & 0 & 0 & a^2b^{-1} & 0 & 0 & 0 & 0\\
                0 & 0 & 0 & 0 & 0 & a & 0 & 0 & 0\\
                0 & 0 & 0 & 0 & 0 & 0 & b & 0 & 0\\
                0 & 0 & 0 & 0 & 0 & 0 & 0 & a & 0\\
                0 & 0 & 0 & 0 & 0 & 0 & 0 & 0 & a^{-2}\\}$$
with $a,b\in \C^\times$, and $\det(A)=a^{-2}b$, where
\[A=\SmallMatrix{u&v\\w&x}.\]
 The component group has order two and is generated by
$$Q=\SmallMatrix{ 0 & -1 & 0 & 0 & 0 & 0 & 0 & 0 & 0\\
                -1 & 0 & 0 & 0 & 0 & 0 & 0 & 0 & 0\\
                0 & 0 & 0 & 1 & 0 & 0 & 0 & 0 & 0\\
                0 & 0 & 1 & 0 & 0 & 0 & 0 & 0 & 0\\
                0 & 0 & 0 & 0 & 0 & 0 & 1 & 0 & 0\\
                0 & 0 & 0 & 0 & 0 & 1 & 0 & 0 & 0\\
                0 & 0 & 0 & 0 & 1 & 0 & 0 & 0 & 0\\
                0 & 0 & 0 & 0 & 0 & 0 & 0 & 1 & 0\\
                0 & 0 & 0 & 0 & 0 & 0 & 0 & 0 & -1\\}. $$
We have
$$X(A,a,b)X(A',a',b') = X(AA',aa',bb'),\quad Q^2=1,\quad  QX(A,a,b)Q^{-1} = X(ab^{-1}A,a,a^2b^{-1}).$$

\begin{lemma*}
Set
\[x=X\left(\hs\SmallMatrix{ 0 & i\\ -i & 0 },1,-1\right)\cdot Q.\]
Then $\Ho^1\hm \Zm_0 = \{ [1], [Q], [x] \}$.
\end{lemma*}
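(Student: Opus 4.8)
The plan is to follow the same pattern used in the preceding cases (notably \textbf{15}, \textbf{60}, \textbf{67}), i.e.\ to set up a short exact sequence of $\Gamma$-groups, compute the cohomology of the pieces, and assemble the fibres. Write $B=\Zm_0$ and let $G$ denote the identity component $\Zm_0^\circ = \{X(A,a,b)\}$, which is isomorphic (over $\C$, and $\Gamma$-equivariantly since all matrix entries are complex-conjugated in the standard way) to $\GL(2,\C)\times(\C^\times)^2$ via $X(A,a,b)\mapsto (A,a,b)$ --- but note the constraint $\det A = a^{-2}b$, so more precisely $G\cong \{(A,a,b)\in \GL(2,\C)\times(\C^\times)^2\mid \det A=a^{-2}b\}$, which is still a connected reductive group with $\Ho^1 G=1$ (it fibres over $(\C^\times)^2$ with fibre $\SL(2,\C)$, both with trivial $\Ho^1$, cf.\ the argument in \textbf{50}; one uses surjectivity of a determinant-type map on real points). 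Set $C=B/G=\{1,c\}$ where $c$ is the image of $Q$, so $\Ho^1 C=\{[1],[c]\}$ (as $\Gamma$ acts trivially on the order-$2$ group $C$). The exact sequence
\[1\to G\labelto{}B\labelto{j}C\to 1\]
gives $\Ho^1 B=j_*^{-1}[1]\cup j_*^{-1}[c]$, and $j_*^{-1}[1]=\{[1]\}$ because $\Ho^1 G=1$ (Proposition \ref{p:serre-prop38}).

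The substance is computing $j_*^{-1}[c]$. First one checks that $Q$ itself is a cocycle: $Q\in B^\Gamma$ and $Q^2=1$, so $Q\in\Zl^1 B$ and $j(Q)=c$. By Corollary \ref{c:39-cor2}, $j_*^{-1}[c]$ is the image of $\Ho^1\hs{}_Q G$ under $\tau_Q$, and the fibres of $\tau_Q$ are the orbits of $(\hs_cC)^\Gamma=C^\Gamma=\{1,c\}$ on $\Ho^1\hs{}_Q G$. So I would compute $\Ho^1$ of the twisted group ${}_Q G$. Using $Q X(A,a,b)Q^{-1}=X(ab^{-1}A,a,a^2b^{-1})$ and the standard conjugation $X(A,a,b)\mapsto X(\bar A,\bar a,\bar b)$, the twisted conjugation is $X(A,a,b)\mapsto X(\bar a\bar b^{-1}\bar A,\bar a,\bar a^2\bar b^{-1})$. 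I expect ${}_Q G$ to be an inner twist in the toral part but a nontrivial twist in the $\SL(2)$-direction. Concretely, one passes to the quotient torus $T=G/S$ (where $S\cong\SL(2,\C)$ is the kernel of a determinant-type map) and then analyses $S$ separately: on the torus side the twisted action should be quasi-trivial or give $\C^\times_{\mathrm{st}}\times\C^\times_{\mathrm{tw}}$ type factors; on the $\SL(2)$ side the twist produces a real form which one identifies from the $\Gamma$-fixed subgroup of real points --- this is where $\SU(2)$ versus $\SL(2,\R)$ enters and accounts for the third class $[x]$. The claimed cocycle $x=X(\SmallMatrix{0&i\\-i&0},1,-1)\cdot Q$ is precisely the representative whose associated real form in the relevant $\SL(2)$-factor is compact, so I would verify $x\in\Zl^1 B$ (i.e.\ $x\cdot{}^\gamma x=1$) directly, and then show that $[Q]$ and $[x]$ are distinct in $\Ho^1\hs{}_Q G$ and that together with $[1]$ they exhaust $j_*^{-1}[c]$.

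The step I expect to be the main obstacle is the careful bookkeeping of the twisted group ${}_Q G$: one must correctly split off the $\SL(2,\C)$-part from the toral part, compute the $\Gamma$-action on the cocharacter lattice of the toral quotient (applying Proposition \ref{p:quasi} or the explicit description of indecomposable $\R$-tori in Proposition \ref{p:indecomposable}), and then correctly count the orbits of $C^\Gamma$ on the resulting $\Ho^1$. In particular one has to check whether $c\in C^\Gamma$ acts trivially or not on $\Ho^1\hs{}_Q S$; by Lemma \ref{lem:1c}, since $Q^2=1$ we get $[1]\cdot c=[Q^{-2}]=[1]$, so $c$ fixes the neutral class, hence (if $\#\Ho^1\hs{}_Q S=2$) acts trivially and there are two orbits --- giving the classes $[Q]$ and $[x]$ --- while the toral twist contributes nothing new. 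Assembling, $\Ho^1\hs{}_Q G$ has two elements, $j_*^{-1}[c]=\{[Q],[x]\}$, and therefore $\Ho^1\hm\Zm_0=\{[1],[Q],[x]\}$, as claimed. A sanity check at the end: the three classes should correspond to the three real forms of the relevant $\ssl(2,\C)\oplus(\text{torus})$ appearing among the centralizers in the table, consistent with the $\z(\pg)$-entry for case \textbf{67}.
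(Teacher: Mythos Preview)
Your overall strategy is sound, but the execution of the key step --- computing $\Ho^1{}_Q G$ for $G=\Zm_0^\circ$ --- is wrong in a way that matters. You claim the $Q$-twist is ``nontrivial in the $\SL(2)$-direction'' and condition on ``$\#\Ho^1{}_Q S=2$''. In fact, putting $a=b=1$ in $QX(A,a,b)Q^{-1}=X(ab^{-1}A,a,a^2b^{-1})$ shows $Q$ centralises $S=\{X(B,1,1):\det B=1\}$, so ${}_Q S=S$ with the standard real structure and $\Ho^1{}_Q S=1$. The nontrivial cohomology of ${}_Q G$ comes from the \emph{toral} quotient ${}_Q T$: there $\gamma$ acts on cocharacters by $\SmallMatrix{1&0\\2&-1}$, giving $\Ho^1{}_Q T=\{[1],[t(1,-1)]\}$. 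To conclude $\#\Ho^1{}_Q G=2$ you still need the fibre over $[t(1,-1)]$ to be a single class: lifting to $z=X(\SmallMatrix{0&i\\-i&0},1,-1)$ one finds ${}_z({}_Q S)\cong\SU(2)$, so $\Ho^1{}_z({}_Q S)=\{[1],[-1]\}$, and one must check that $({}_Q T)^\Gamma$ acts \emph{transitively} on this pair (for instance $(-1,1)\in({}_Q T)^\Gamma$, lifted to $X(I,-1,1)$, sends $[1]$ to $[X(-I,1,1)]=[-1]$). Your argument skips this collapse step and misattributes the source of the two classes; without it you cannot rule out $\#\Ho^1{}_Q G=3$.

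The paper filters the other way: it first quotients by $H=S$ to get $G'=\Zm_0/H$, computes $\Ho^1 G'=\{[1],[g],[t(1,-1)g]\}$ via the sequence $1\to T\to G'\to C\to 1$, and then lifts along $1\to H\to\Zm_0\to G'\to 1$. The $\SU(2)$ appears as ${}_x H$ for $x$ lifting $t(1,-1)g$, and the collapse of its two cohomology classes follows directly from $x^2=X(-I,1,1)\in H$ via Lemma~\ref{lem:1c}. Either route ultimately needs this same transitivity computation; the paper's order isolates it as a single clean step rather than burying it inside a nested d\'evissage.
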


\begin{proof}
Set
\[ H=\{X(A,1,1)\mid A\in\SL(2,\C)\}\subset \Zm_0.\]
We see from the formulas that $H$ is a normal subgroup of $\Zm_0$.
Set $G=\Zm_0/H$.
Write $T=G^\circ$, the identity component of $G$.
Then $T$ is a torus.
Let $t(a,b)$ denote the image of $X(A,a,b)$ in $T$.
Then the group $\{1,Q\}$ acts on $T$ by
$Q *t(a,b)=t( a, a^2\hs b^{-1})$.  We consider the twisted $\R$-torus $_Q T$.
Then $\gamma$ acts in $_Q T$ by
\[^{\gamma*} t(a,b)=t(\bar a,\bar a ^2\bar b^{-1}).\]
We see that $\gamma$ acts on the cocharacter group $\X_*(\hs_Q T)=\Z^2$ by the matrix
$M=\SmallMatrix{1&0\\2&-1}$. The lattice $\X_*(\hs_Q T)$ has the basis
$v_1=\SmallMatrix{1\\1}$, $v_2=\SmallMatrix{0\\1}$ such that $M v_1=v_1$, $M v_2=-v_2$.
It follows that $\Ho^1 \hs_Q T=\{1,t(1,-1)\}$.

We compute the Galois cohomology of $G=\Zm_0/H$.
Set $C=\Zm_0/\Zm_0^\circ=G/T$. Then $C=\{1,c\}$, where  $c$ is the image of $Q$ in
$C$. Clearly, we have $\Ho^1\hs  C=\{[1],[c]\}$.

We have a short exact sequence
\[1\to T\to G\labelto{\pi} C\to 1.\]
We see that
\[\Ho^1  G=\pi_*^{-1}[1]\cup \pi_*^{-1}[c].\]
Since $\Ho^1\hs T=1$, we see that $\pi_*^{-1}[1]=\{[1]\}$.
We compute $\pi_*^{-1}[c]$. It comes from $\Ho^1 \hs_Q T$.

By Corollary \ref{c:39-cor2} $\pi_*^{-1}[c]\cong \Ho^1\hm \hs_Q T/C^\Ga.$
Since  $Q^2=1$, by Lemma \ref{lem:1c} the element $c\in C^\Ga=C$ acts trivially
on $[1]\in \Ho^1\hm \hs_Q T$.
Thus $C^\Ga$ acts trivially on $\Ho^1\hm \hs_Q T$.
We conclude that $\#\pi_*^{-1}[c]=2$ and $\#\Ho^1  G=3$.
We write $g=Q_G$, the image of $Q$ in $G$.
Then
\[\Ho^1 G=\{[1], [g], [t(1,-1)g]\}.\]

We compute the Galois cohomology of $\Zm_0$.
We have a short exact sequence
\[1\to H\to \Zm_0\labelto{\phi} G\to 1.\]
We show that the three cohomology classes $[1], [g], [y]\in \Ho^1\hm G$ can be lifted to $\Zm_0$.
Indeed, the cocycle $1$ lifts to $1$, and the cocycle $g$ lifts to $Q\in B$.
Moreover, it is clear that the cocycle $y\in \Zl^1\hm G$ is the image of the element
\[x=X\left(\hs\SmallMatrix{ 0 & i\\ -i & 0 },1,-1\right)\cdot Q\in \Zm_0,\]
and an easy calculation  shows that $x$ is a cocycle.

For any cocycle $b\in \Zl^1\hm  \Zm_0$, we have
\[\phi_*^{-1}(\phi_*[b])\cong \Ho^1\hm  \hs_b H/H^0(\Ga,\hs_b G);\]
see Corollary \ref{c:39-cor2}.
We have $\Ho^1\hm H=\{1\}$, and therefore, $\phi_*^{-1}[1]=\{1\}$.
Moreover, $_Q H=H$, hence $\Ho^1\hm \hs_Q H=\Ho^1\hm H=\{1\}$, and therefore,
$\phi_*^{-1}[g]$ contains only one class $[Q]$.
Furthermore, $_x H\simeq \SU(2)$ and $\Ho^1\hm  \hs_x H=\{[1],[-1]\}$.
We show below that $H^0(\Ga,\hs_x G)$ acts on $\Ho^1\hm  \hs_x H$
nontrivially, hence transitively,
and therefore, $\phi_*^{-1}(\phi_*[x])$ contains only one class $[x]$.

Write $y=t(1,-1)g$, where $g=Q_G\in G$.
We calculate:
\[^{\gamma*} y=y\cdot\bar y\cdot y^{-1}=yyy^{-1}=y,\]
because the element $y\in G$ is real.
Thus $y\in H^0(\Ga,\hs_y G)$.
We have
\begin{align*}
&x^2=X\left(\hs\SmallMatrix{ 0 & i\\ -i & 0 },1,-1\right)\cdot
QX\left(\hs\SmallMatrix{ 0 & i\\ -i & 0 },1,-1\right)Q^{-1}\cdot Q^2\\
&= X\left(\hs\SmallMatrix{ 0 & i\\ -i & 0 },1,-1\right)\cdot
X\left(\hs-\hs\SmallMatrix{ 0 & i\\ -i & 0 },1,-1\right)
=X\left(\hs\SmallMatrix{ -1 & 0\\ 0 & -1 },1,1\right)=-1\in H.
\end{align*}
By Lemma  \ref{lem:1c}, in $\Ho^1\hm \hs_x H$ we have
\[ [1]\cdot y=[x^{-2}]=[-1].\]
Thus $y\in H^0(\Ga,\hs_y G)$ acts on $\Ho^1\hm  \hs_x H$ nontrivially,
and therefore, $H^0(\Ga,\hs_x G)$ acts on $\Ho^1\hm  \hs_x H$  transitively.
We obtain that $\phi_*^{-1}[y]$ contains only one class $[x]$.
Thus $\Ho^1\hm \Zm_0=\{[1],[Q],[x]\}$, as required.
\end{proof}

\noindent{\bf 68} Representative: $e_{147}+e_{156}-e_{234}-e_{578}+e_{679}$.\\
Here $\Zm_0$ consists of
$$\SmallMatrix{ d^{-1} & 0 & 0 & 0 & 0 & 0 & 0 & 0 & 0\\
                0 & a_{22} & a_{23} & 0 & 0 & 0 & 0 & 0 & 0\\
                0 & a_{32} & a_{33} & 0 & 0 & 0 & 0 & 0 & 0\\
                0 & 0 & 0 & d^2 & 0 & 0 & 0 & 0 & 0\\
                0 & 0 & 0 & 0 & a_{99} & a_{98} & 0 & 0 & 0\\
                0 & 0 & 0 & 0 & a_{89} & a_{88} & 0 & 0 & 0\\
                0 & 0 & 0 & 0 & 0 & 0 & d^{-1} & 0 & 0\\
                0 & 0 & 0 & 0 & 0 & 0 & 0 & a_{88} & a_{89}\\
                0 & 0 & 0 & 0 & 0 & 0 & 0 & a_{98} & a_{99}\\} $$
where $d=a_{88}a_{99}-a_{89}a_{98}$, and $a_{22}a_{33}-a_{23}a_{32} = d^{-2}$.

We have a $\Gamma$-equivariant homomorphism $j\colon \Zm_0\to\C^\times$ sending
an element $(a_{ij};d)$ of $\Zm_0$ to $d$.
We obtain a short exact sequence
\[1\to G\labelto{ } \Zm_0\labelto{j} \C^\times\to 1,\]
where $G\cong\SL(2,\C)\times\SL(2,\C)$. So $\Ho^1 G=1$ and by
Corollary \ref{c:prop38} we conclude that $\Ho^1\hm \Zm_0=1$.

\noindent{\bf 69} Representative: $e_{156}-e_{237}-e_{246}-e_{345}+e_{479}+e_{678}$.\\
Here $\Zm_0$ consists of $\diag(s^{-3}t^{-2},s^3t,s^{-4}t^{-1},s^{-1}t^{-1},s^5t^2,
s^{-2},s,s,t)$ for $s,t\in \C^\times$. It follows that $\Ho^1\hm \Zm_0=1$.

\noindent{\bf 70} Representative: $e_{137}-e_{246}-e_{247}-e_{345}+e_{569}$.
Rank 8, Gurevich number XX.\\
Here $\Zm_0^\circ$ consists of
$$D(s,t,u)=\diag(st^{-2}u^{-1},s^{-1}t^{-1},s^{-1}tu,s,t^{-1}u^{-1},t,t,t,u)$$
for $s,t,u\in \C^\times$. The component group is of order 6 and is isomorphic to $S_3$.
It is generated by
$$g_2= \SmallMatrix{ 0 & -1 & 0 & 0 & 0 & 0 & 0 & 0 & 0\\
                -1 & 0 & 0 & 0 & 0 & 0 & 0 & 0 & 0\\
                0 & 0 & 0 & 1 & 0 & 0 & 0 & 0 & 0\\
                0 & 0 & 1 & 0 & 0 & 0 & 0 & 0 & 0\\
                0 & 0 & 0 & 0 & -1 & 0 & 0 & 0 & 0\\
                0 & 0 & 0 & 0 & 0 & -1 & 1 & 0 & 0\\
                0 & 0 & 0 & 0 & 0 & 0 & 1 & 0 & 0\\
                0 & 0 & 0 & 0 & 0 & 0 & 0 & 1 & 0\\
                0 & 0 & 0 & 0 & 0 & 0 & 0 & 0 & 1\\},\quad
g_3 = \SmallMatrix{ 0 & 0 & 0 & 0 & 0 & 0 & 0 & 0 & 1\\
                1 & 0 & 0 & 0 & 0 & 0 & 0 & 0 & 0\\
                0 & 0 & 0 & 0 & 1 & 0 & 0 & 0 & 0\\
                0 & 0 & 1 & 0 & 0 & 0 & 0 & 0 & 0\\
                0 & 0 & 0 & 1 & 0 & 0 & 0 & 0 & 0\\
                0 & 0 & 0 & 0 & 0 & 0 & -1 & 0 & 0\\
                0 & 0 & 0 & 0 & 0 & 1 & -1 & 0 & 0\\
                0 & 0 & 0 & 0 & 0 & 0 & 0 & 1 & 0\\
                0 & 1 & 0 & 0 & 0 & 0 & 0 & 0 & 0\\}
$$

We have
\[g_2\cdot D(s,t,u)\cdot g_2^{-1}=D(s^{-1}tu,t,u)\]
and
\[  g_3\cdot D(s,t,u)\cdot g_3^{-1}=D(s^{-1}tu,t,s^{-1}t^{-1}).\]
Furthermore
\[g_2^2=1,\quad g_3^3=1,\]
and
\[ g_2 g_3 g_2^{-1}=g_3^2\cdot D(1,1,-1).\]

\begin{lemma*}
$\Ho^1\hm  \Zm_0=\{[1],[g_2]\}$.
\end{lemma*}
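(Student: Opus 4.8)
The plan is to run the same dévissage as in the earlier cases with a non-abelian component group (cf. cases \textbf{36} and \textbf{65}): peel off the identity component $T=\Zm_0^\circ$ and study the two fibers of the induced map $j_*$ on $\Ho^1$ over $\Ho^1$ of the component group.

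First I would put $T=\Zm_0^\circ$, the $3$-dimensional torus of the matrices $D(s,t,u)$, and $C=\Zm_0/T$, with $j\colon\Zm_0\to C$ the canonical epimorphism. Since the generators $g_2,g_3$ are real matrices, $\Gamma$ acts trivially on $C$, and from the relations $g_2^2=1$, $g_3^3=1$, $g_2g_3g_2^{-1}=g_3^2 D(1,1,-1)$ one sees that $C\cong S_3$, a group of order $2\cdot 3$. By Lemmas \ref{l:p} and \ref{l:explicit} we get $\#\Ho^1 C=2$ with $\Ho^1 C=\{[1],[c_2]\}$, where $c_2$ is the image of $g_2$ (a $\Gamma$-fixed element of order $2$). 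The short exact sequence $1\to T\labelto{ }\Zm_0\labelto{j} C\to 1$ gives $\Ho^1\hm\Zm_0=j_*^{-1}[1]\cup j_*^{-1}[c_2]$; since $T$ carries the standard $\Gamma$-action, $\Ho^1 T=1$, so Proposition \ref{p:serre-prop38} yields $j_*^{-1}[1]=\{[1]\}$.

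The remaining point is to compute $j_*^{-1}[c_2]$. Here $\upgam g_2=g_2$ and $g_2^2=1$, so $g_2\in\Zl^1\hm\Zm_0$ and $j(g_2)=c_2$; by Corollary \ref{c:39-cor2} the fiber $j_*^{-1}[c_2]$ is in bijection with the orbit set $\Ho^1\hm {}_{g_2}T/(\hs_{c_2}C)^\Gamma$, where ${}_{g_2}T$ is $T$ with the $\Gamma$-action twisted by $g_2$. Using $g_2 D(s,t,u) g_2^{-1}=D(s^{-1}tu,t,u)$, the twisted complex conjugation on $T$ is $D(s,t,u)\mapsto D(\bar s^{-1}\bar t\hs\bar u,\bar t,\bar u)$, so the induced involution $\tau_T$ on the cocharacter lattice $\X_*({}_{g_2}T)\cong\Z^3$ is given by $M=\SmallMatrix{-1&1&1\\0&1&0\\0&0&1}$. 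By the method of Section \ref{sec:H1T} together with Proposition \ref{p:e*}, $\Ho^1\hm {}_{g_2}T\cong\ker(1+M)/\im(1-M)$, and a direct computation shows $\ker(1+M)=\Z e_1=\im(1-M)$, hence $\Ho^1\hm {}_{g_2}T=1$. (Alternatively, $M$ preserves the $\Z$-basis $(0,1,-1),(1,1,0),(0,1,0)$ of $\Z^3$, fixing the first vector and interchanging the other two, so ${}_{g_2}T$ is quasi-trivial and $\Ho^1\hm {}_{g_2}T=1$ by Proposition \ref{p:quasi}.)

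Since $\Ho^1\hm {}_{g_2}T$ is a single point, $(\hs_{c_2}C)^\Gamma$ has exactly one orbit on it, so $j_*^{-1}[c_2]=\{[g_2]\}$; together with $j_*^{-1}[1]=\{[1]\}$ this gives $\Ho^1\hm\Zm_0=\{[1],[g_2]\}$, as claimed. The only step needing care is the twisted-torus computation: getting the matrix $M$ right (twisting the conjugation by $g_2$, and tracking the direction of the $\tau_T$-action via Definition \ref{def:tauT}), after which the rank-$3$ lattice computation is routine; everything else is bookkeeping with the exact sequences of Section \ref{sec:galcohom}.
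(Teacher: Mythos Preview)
Your proof is correct and follows essentially the same approach as the paper: set up the short exact sequence $1\to T\to\Zm_0\to S_3\to 1$, use $\Ho^1 T=1$ to handle the trivial fiber, and twist by $g_2$ to show $\Ho^1{}_{g_2}T=1$ for the other fiber. The paper simply states the twisted conjugation $D(s,t,u)\mapsto D(\bar s^{-1}\bar t\hs\bar u,\bar t,\bar u)$ and asserts $\Ho^1{}_{g_2}T=1$ without further detail, whereas you supply the explicit cocharacter-lattice computation (and the quasi-trivial basis), which is a welcome elaboration but not a different method.
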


\begin{proof}
Set
\[ T=\Zm_0^\circ=\{D(s,t,u)\mid s,t,u\in\C^\times\}.\]
We have a short exact sequence
\[1\to T\to \Zm_0\labelto{\phi} S_3\to 1.\]
The group $\Ga$ acts trivially on $S_3$\hs, whence
\[\Ho^1\hm S_3=\{[1],[\bar g_2]\},\]
where $\bar g_2=g_2 T\in S_3$\hs.
The cocycle $\bar g_2\in \Zl^1\hm S_3$ is the image of the cocycle
$g_2\in \Zl^1\hm  \Zm_0$. We have
\[ \Ho^1\hm \Zm_0=\phi_*^{-1}[1]\cup \phi_*^{-1}[\bar g_2].\]
Since $\Ho^1 T=1$, we see that $\phi_*^{-1}[1]$ contains only one class $[1]$.

We compute $\phi_*^{-1}[\bar g_2]$.
By Corollary \ref{c:39-cor2} there is a bijection between $\phi_*^{-1}[\bar g_2]$
and the set of orbits of $(\hs_{\bar g_2} S_3)^\Ga$ on $\Ho^1\hm \hs_{g_2} T$.

We compute $\Ho^1\hm \hs_{g_2} T$. The conjugation on $\hs_{g_2} T$ is given by
$$D(s,t,u) \mapsto D(\ov s^{-1} \ov t \ov u, \ov t, \ov u ).$$
Hence $\Ho^1\hm \hs_{g_2} T=1$.

Thus $\phi_*^{-1}[\bar g_2]$ contains only one class $[g_2]$.
We conclude that $\Ho^1\hm \Zm_0=\{[1],[g_2]\}$, as required.
\end{proof}

\noindent{\bf 71} Representative: $e_{137}-e_{246}+e_{568}+e_{579}$.\\
Here $\Zm_0^\circ$  consists of
$$X( \SmallMatrix{ a_{11} & a_{13} \\a_{31} & a_{33}},\SmallMatrix{ a_{22} & a_{24} \\a_{42} & a_{44}} )=
  \SmallMatrix{ a_{11} & 0 & a_{13} & 0 & 0 & 0 & 0 & 0 & 0 \\
                0 & a_{22} & 0 & a_{24} & 0 & 0 & 0 & 0 & 0 \\
                a_{31} & 0 & a_{33} & 0 & 0 & 0 & 0 & 0 & 0 \\
                0 & a_{42} & 0 & a_{44} & 0 & 0 & 0 & 0 & 0 \\
                0 & 0 & 0 & 0 & s^{-1}t^{-1} & 0 & 0 & 0 & 0 \\
                0 & 0 & 0 & 0 & 0 & t & 0 & 0 & 0 \\
                0 & 0 & 0 & 0 & 0 & 0 & s & 0 & 0 \\
                0 & 0 & 0 & 0 & 0 & 0 & 0 & s & 0 \\
                0 & 0 & 0 & 0 & 0 & 0 & 0 & 0 & t} $$
with $s=(a_{11}a_{33}-a_{13}a_{31})^{-1}$, $t=(a_{22}a_{44}-a_{24}a_{42})^{-1}$.
So $\Zm_0^\circ$ is isomorphic to $\GL(2,\C)\times \GL(2,\C)$. The component group $C$
is of order 2 and generated by the image $c$ of
$$Q=\SmallMatrix{ 0 & 1 & 0 & 0 & 0 & 0 & 0 & 0 & 0 \\
                1 & 0 & 0 & 0 & 0 & 0 & 0 & 0 & 0 \\
                0 & 0 & 0 & -1 & 0 & 0 & 0 & 0 & 0 \\
                0 & 0 & -1 & 0 & 0 & 0 & 0 & 0 & 0 \\
                0 & 0 & 0 & 0 & 1 & 0 & 0 & 0 & 0 \\
                0 & 0 & 0 & 0 & 0 & 0 & 1 & 0 & 0 \\
                0 & 0 & 0 & 0 & 0 & 1 & 0 & 0 & 0 \\
                0 & 0 & 0 & 0 & 0 & 0 & 0 & 0 & 1 \\
                0 & 0 & 0 & 0 & 0 & 0 & 0 & 1 & 0} $$

We have $Q^2=1$ and
$$Q X( \SmallMatrix{ a_{11} & a_{13} \\a_{31} & a_{33}},
\SmallMatrix{ a_{22} & a_{24} \\a_{42} & a_{44}} ) Q^{-1} =
X( \SmallMatrix{ a_{22} & -a_{24} \\-a_{42} & a_{44}},\SmallMatrix{ a_{11} & -a_{13} \\-a_{31} & a_{33}} ).$$

\begin{lemma*}
$\Ho^1\hm \Zm_0=\{[1],[Q]\}$.
\end{lemma*}

\begin{proof}
We have a short exact sequence
\[1\to \Zm_0^\circ\labelto{} \Zm_0\labelto{\pi} C\to 1.\]
Clearly $\Ho^1\hm C=\{[1],[c]\}$.
We have that $\pi_*^{-1}[1]\subset \Ho^1\hm \Zm_0$ is the image of
$\Ho^1\hm  \Zm_0^\circ$. As the latter is trivial we see that
$\pi_*^{-1}[1]=\{[1]\}$.
We compute $\pi_*^{-1}[c]\subset \Ho^1\hm \Zm_0$.
It comes from $\Ho^1\hm \hs_Q \Zm_0^\circ$.
Now $_Q \Zm_0^\circ\simeq R_{\C/\R}\GL(2,\C)$, and by Proposition \ref{p:Weil} we have
$\Ho^1\hm \hs_Q \Zm_0^\circ=1$.  Thus $\pi_*^{-1}[c]=\{[Q]\}\subset \Ho^1\hm \Zm_0$.
We see that $\Ho^1\hm \Zm_0=\{[1],[Q]\}$, as required.
\end{proof}

\noindent{\bf 72} Representative: $e_{147}+e_{156}-e_{237}-e_{246}-e_{345}-e_{578}+e_{679}$.\\
Here $\Zm_0$ consists of
$$\SmallMatrix{ \zeta d & 0 & 0 & 0 & 0 & 0 & 0 & 0 & 0 \\
                0 & \zeta d a_{99} & -\zeta d a_{98} & 0 & 0 & 0 & 0 & 0 & 0 \\
                0 & -\zeta d a_{89} & \zeta d a_{88} & 0 & 0 & 0 & 0 & 0 & 0 \\
                0 & 0 & 0 & d^2 & 0 & 0 & 0 & 0 & 0 \\
                0 & 0 & 0 & 0 & \zeta^2 d^2 a_{99} & \zeta^2 d^2a_{98} & 0 & 0 & 0 \\
                0 & 0 & 0 & 0 & \zeta^2 d^2 a_{89} & \zeta^2 d^2 a_{88} & 0 & 0 & 0 \\
                0 & 0 & 0 & 0 & 0 & 0 & \zeta & 0 & 0 \\
                0 & 0 & 0 & 0 & 0 & 0 & 0 & a_{88} & a_{89} \\
                0 & 0 & 0 & 0 & 0 & 0 & 0 & a_{98} & a_{99}} $$
with $\zeta^3=1$ and $d= a_{88}a_{99}-a_{98}a_{89}$ and $d^3=1$. So $\Zm_0^\circ$ is
isomorphic to $\SL(2,\C)$ and the component group $C$ is abelian of order 9.
By Corollary \ref{c:2m+1} we see that $\Ho^1\hm C=1$. Also we have $\Ho^1\hm
\Zm_0^\circ=1$. Hence by Corollary \ref{c:prop38} we also have $\Ho^1\hm \Zm_0=1$.

\noindent{\bf 73} Representative: $e_{137}-e_{256}-e_{346}+e_{479}-e_{578}$.\\
Here $\Zm_0$ consists of
$$
X(\SmallMatrix{ a_{33} & a_{34} \\ a_{43} & a_{44} }, a_{88} )=
\SmallMatrix{ a_{66}a_{88}^{-1}a_{44} & 0 & 0 & 0 & 0 & 0 & 0 & 0 &
  -a_{66}a_{88}^{-1}a_{43}\\
                0 & a_{66}^{-1}a_{88}^{2} & 0 & 0 & 0 & 0 & 0 & 0 & 0 \\
                0 & 0 & a_{33} & a_{34} & 0 & 0 & 0 & 0 & 0 \\
                0 & 0 & a_{43} & a_{44} & 0 & 0 & 0 & 0 & 0 \\
                0 & 0 & 0 & 0 & a_{88}^{-2} & 0 & 0 & 0 & 0 \\
                0 & 0 & 0 & 0 & 0 & a_{66} & 0 & 0 & 0 \\
                0 & 0 & 0 & 0 & 0 & 0 & a_{88} & 0 & 0 \\
                0 & 0 & 0 & 0 & 0 & 0 & 0 & a_{88} & 0 \\
                -a_{66}a_{88}^{-1}a_{34} & 0 & 0 & 0 & 0 & 0 & 0 & 0 &
                a_{66}a_{88}^{-1}a_{33}} $$
with $a_{66}=(a_{33}a_{44}-a_{34}a_{43})^{-1}$, $a_{88}\in \C^\times$. We have
$$X(A,a_{88}) X(A',a_{88}') = X(AA',a_{88}a_{88}').$$
We see that $\Zm_0\simeq \GL(2,\C)\times \C^\times$ and therefore $\Ho^1\hm \Zm_0=1$.

\noindent{\bf 74} Representative: $e_{156}-e_{237}-e_{246}-e_{345}+e_{479}$.  Rank 8. Gurevich number XVIII.\\
Here $\Zm_0$ consists of
$$X(\SmallMatrix{a_{55} & a_{56}\\ a_{65} & a_{66}},s,t)=
\SmallMatrix{ s^{-3}t^{-2} & 0 & 0 & 0 & 0 & 0 & 0 & 0 & 0\\
                0 & s^{-2}t^{-1}a_{55} & -s^{-2}t^{-1}a_{56} & 0 & 0 & 0 & 0 & 0 & 0\\
                0 & -s^{-2}t^{-1}a_{65} & s^{-2}t^{-1}a_{66} & 0 & 0 & 0 & 0 & 0 & 0\\
                0 & 0 & 0 & s^{-1}t^{-1} & 0 & 0 & 0 & 0 & 0\\
                0 & 0 & 0 & 0 & a_{55} & a_{56} & 0 & 0 & 0\\
                0 & 0 & 0 & 0 & a_{65} & a_{66} & 0 & 0 & 0\\
                0 & 0 & 0 & 0 & 0 & 0 & s & 0 & 0\\
                0 & 0 & 0 & 0 & 0 & 0 & 0 & s & 0\\
                0 & 0 & 0 & 0 & 0 & 0 & 0 & 0 & t\\} $$
with $a_{55}a_{66}-a_{56}a_{65} = s^3t^2$, $s,t\in \C^\times$.

Consider the homomorphism
\[\phi\colon \Zm_0\to (\C^\times)^2,\quad X(\SmallMatrix{a_{55} & a_{56}\\ a_{65} & a_{66}},s,t)\mapsto (s,t).\]
We obtain a short exact sequence
\[1\to\SL(2,\C)\to \Zm_0\to (\C^\times)^2\to 1.\]
Since $\Ho^1\SL(2,\C)=1$ and $\Ho^1 (\C^\times)^2=1$, we obtain that
$\Ho^1\hm \Zm_0=1$ (Corollary \ref{c:prop38}).

\noindent{\bf 75} Representative: $e_{137}-e_{246}-e_{247}+e_{569}+e_{678}$.\\
Here we have $\Zm_0= G\times \mu_3$.
The identity component $A$ of $G$ is semisimple of type $3A_1$ and consists of
$$X(\SmallMatrix{ a_{11} & a_{13} \\a_{31} & a_{33}},
\SmallMatrix{ a_{22} & a_{24} \\a_{42} & a_{44}},
\SmallMatrix{ a_{55} & a_{59} \\a_{95} & a_{99}})=
\SmallMatrix{ a_{11} & 0 & a_{13} & 0 & 0 & 0 & 0 & 0 & 0 \\
                0 & a_{22} & 0 & a_{24} & 0 & 0 & 0 & 0 & 0 \\
                a_{31} & 0 & a_{33} & 0 & 0 & 0 & 0 & 0 & 0 \\
                0 & a_{42} & 0 & a_{44} & 0 & 0 & 0 & 0 & 0 \\
                0 & 0 & 0 & 0 & a_{55} & 0 & 0 & 0 & a_{59} \\
                0 & 0 & 0 & 0 & 0 & 1 & 0 & 0 & 0 \\
                0 & 0 & 0 & 0 & 0 & 0 & 1 & 0 & 0 \\
                0 & 0 & 0 & 0 & 0 & 0 & 0 & 1 & 0 \\
                0 & 0 & 0 & 0 & a_{95} & 0 & 0 & 0 & a_{99}},$$
where the three obvious determinants are 1.
The component group $C$ of $G$ is isomorphic to $S_3$, and generated by the images of the matrices

$$P=\SmallMatrix{ 0 & 0 & 0 & 0 & -1 & 0 & 0 & 0 & 0 \\
                0 & 1 & 0 & 0 & 0 & 0 & 0 & 0 & 0 \\
                0 & 0 & 0 & 0 & 0 & 0 & 0 & 0 & 1 \\
                0 & 0 & 0 & 1 & 0 & 0 & 0 & 0 & 0 \\
                -1 & 0 & 0 & 0 & 0 & 0 & 0 & 0 & 0 \\
                0 & 0 & 0 & 0 & 0 & 0 & 1 & 0 & 0 \\
                0 & 0 & 0 & 0 & 0 & 1 & 0 & 0 & 0 \\
                0 & 0 & 0 & 0 & 0 & 0 & 0 & -1 & 0 \\
                0 & 0 & 1 & 0 & 0 & 0 & 0 & 0 & 0},
\qquad
Q=\SmallMatrix{ 0 & 0 & 0 & 0 & -1 & 0 & 0 & 0 & 0 \\
                0 & 0 & i & 0 & 0 & 0 & 0 & 0 & 0 \\
                0 & 0 & 0 & 0 & 0 & 0 & 0 & 0 & 1 \\
                i & 0 & 0 & 0 & 0 & 0 & 0 & 0 & 0 \\
                0 & 0 & 0 & -i & 0 & 0 & 0 & 0 & 0 \\
                0 & 0 & 0 & 0 & 0 & 0 & -1 & 0 & 0 \\
                0 & 0 & 0 & 0 & 0 & 1 & -1 & 0 & 0 \\
                0 & 0 & 0 & 0 & 0 & 0 & 0 & 1 & 0 \\
                0 & i & 0 & 0 & 0 & 0 & 0 & 0 & 0},$$
(where $i^2=-1$).
The computation used the method outlined in Example \ref{exa:cencomps}.

We have $P^2=1$, \[Q^3=\diag(-1,-1,-1,-1,-1,1,1,1,-1),\ \  P^{-1}QPQ^{-2} =\diag(1,1,1,1,-1,1,1,1,-1),\]
 and
\begin{align*}
PX(\SmallMatrix{ a_{11} & a_{13} \\a_{31} & a_{33}},
\SmallMatrix{ a_{22} & a_{24} \\a_{42} & a_{44}},
\SmallMatrix{ a_{55} & a_{59} \\a_{95} & a_{99}})P^{-1} &=
X(\SmallMatrix{ a_{55} & -a_{59} \\-a_{95} & a_{99}},
\SmallMatrix{ a_{22} & a_{24} \\a_{42} & a_{44}},
\SmallMatrix{ a_{11} & -a_{13} \\-a_{31} & a_{33}}),\\
QX(\SmallMatrix{ a_{11} & a_{13} \\a_{31} & a_{33}},
\SmallMatrix{ a_{22} & a_{24} \\a_{42} & a_{44}},
\SmallMatrix{ a_{55} & a_{59} \\a_{95} & a_{99}})Q^{-1} &=
X(\SmallMatrix{ a_{55} & -a_{59} \\-a_{95} & a_{99}},
\SmallMatrix{ a_{33} & a_{31} \\a_{13} & a_{11}},
\SmallMatrix{ a_{44} & -a_{42} \\-a_{24} & a_{22}}).
\end{align*}

\begin{lemma*}
$\Ho^1\hm  \Zm_0=\{[1],[P]\}$.
\end{lemma*}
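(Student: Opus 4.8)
The plan is to follow the template of the earlier centralizers that are extensions of $S_3$ (such as Case \textbf{70}): peel off the central $\mu_3$, reduce to the component group by a short exact sequence, and dispose of the two fibres using the vanishing results of Section \ref{sec:galcohom}.

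Since $\Zm_0=G\times\mu_3$ and $\mu_3$ has odd order, Corollary \ref{c:2m+1} gives $\Ho^1\mu_3=1$, so the projection induces a bijection $\Ho^1\hm\Zm_0\isoto\Ho^1 G$, and it suffices to compute $\Ho^1 G$. From the explicit description, the identity component $A$ is the product of three copies of $\SL(2,\C)$, acting on the coordinate pairs $\{1,3\}$, $\{2,4\}$, $\{5,9\}$, with the standard $\Gamma$-action (the entries appearing in the $X(\cdots)$ matrices are precisely the matrix entries of the three $\SL_2$-blocks); hence $\Ho^1 A=\Ho^1\SL(2,\C)^3=1$. Next I would pin down the $\Gamma$-action on $C=G/A\cong S_3$. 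Since $P$ is a real matrix, $\bar P=P$, so $P$ has a $\Gamma$-fixed image in $C$; and a direct computation gives $Q^{-1}\bar Q=X(\diag(-1,-1),\diag(-1,-1),\diag(1,1))\in A$, so $\bar Q$ and $Q$ have the same image in $C$ as well. Therefore $\Gamma$ acts trivially on $C$, and by Definition \ref{d:H1-nonab}, $\Ho^1 C$ is the set of conjugacy classes of elements of order dividing $2$ in $S_3$, i.e.\ $\Ho^1 C=\{[1],[\bar P]\}$ (here $\bar P\ne 1$ in $C$ because $P\notin A$, and $\bar P^2=1$, so $\bar P$ is a transposition).

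Now consider the short exact sequence $1\to A\to G\labelto{j}C\to 1$. By Proposition \ref{p:serre-prop38}, $\Ho^1 G=j_*^{-1}[1]\cup j_*^{-1}[\bar P]$; since $\Ho^1 A=1$ the kernel fibre is $j_*^{-1}[1]=\{[1]\}$. Because $P$ is real with $P^2=1$ it is a $1$-cocycle ($P\cdot{}^\gamma P=P^2=1$), so I would twist the sequence by $P$: Corollary \ref{c:39-cor2} identifies $j_*^{-1}[\bar P]$ with the orbit set $\Ho^1{}_P A/({}_{\bar P}C)^\Gamma$. Using the displayed formula for $P\,X(\cdots)\,P^{-1}$, the twisted $\Gamma$-action on ${}_P A$ acts as ordinary complex conjugation on the middle $\SL(2,\C)$-factor (giving the split real form, $\Ho^1=1$) and interchanges the first and third factors up to conjugation by $\diag(1,-1)$; a change of coordinates on the third factor turns the latter into the standard flip, so ${}_P(A_1\times A_3)\cong \Res\SL(2,\C)$, whose $\Ho^1$ is trivial by Proposition \ref{p:Weil}(i). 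Hence $\Ho^1{}_P A=1$, so $j_*^{-1}[\bar P]$ consists of a single class, necessarily $[P]$, and we conclude $\Ho^1\hm\Zm_0=\Ho^1 G=\{[1],[P]\}$.

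The only step requiring genuine care is the twist: one must verify that $P$ is indeed a cocycle in $G$ and then push the conjugation formula through the coordinate change to recognize ${}_P(A_1\times A_3)$ as a Weil restriction; everything else is a routine application of the exact sequences and vanishing theorems already established, so I do not anticipate a serious obstacle, this case being structurally identical to the earlier semisimple-plus-$S_3$ centralizers.
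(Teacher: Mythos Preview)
Your proposal is correct and follows essentially the same route as the paper: reduce from $\Zm_0=G\times\mu_3$ to $G$, use the short exact sequence $1\to A\to G\to C\to 1$ with $A\cong\SL(2,\C)^3$ and $C\cong S_3$, observe $\Ho^1 A=1$ and $\Ho^1 C=\{[1],[c]\}$, and handle the nontrivial fibre by twisting by the real involution $P$ to see $_P A\simeq\SL_{2,\R}\times R_{\C/\R}\SL_{2,\C}$. Your explicit check that $Q^{-1}\bar Q=X(\diag(-1,-1),\diag(-1,-1),\diag(1,1))\in A$ is a nice addition the paper omits; one small notational warning: using $\bar P$ for the image of $P$ in $C$ clashes with the paper's use of the bar for complex conjugation.
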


\begin{proof}
Clearly, $\Ho^1\hm \Zm_0=\Ho^1\hm G$.
We have a short exact sequence
\[1\to A\labelto{} G\labelto{\pi} C\to 1.\]
Because $\Ga$ acts trivially on $C$ we have $\Ho^1\hm C=\{1,[c]\}$,
where $c$ is the image in $C$  of the real matrix $P$.
We compute $\pi_*^{-1}[1]\subset \Ho^1\hm G.$ Since $\Ho^1\hm  A=1$,
Corollary \ref{c:39-cor1} shows that  $\pi_*^{-1}[1]=\{[1]\}$.
We compute $\pi_*^{-1}[c]\subset \Ho^1\hm G$. For that we twist the exact
sequence by $P$. We have that $_P A\simeq \SL_{2,\R}\times R_{\C/\R}\SL_{2,\C}$,
and by Proposition \ref{p:Weil} we have $\Ho^1\hm \hs_P A=1$. So by Corollary
\ref{c:39-cor2} we conclude that $\pi_*^{-1}[c]=\{[P]\}\subset \Ho^1 G$.
We see that $\Ho^1\hm B=\Ho^1 G=\{[1],[P]\}$, as required.
\end{proof}

\noindent{\bf 76} Representative: $e_{147}+e_{156}-e_{237}-e_{246}-e_{345}+e_{679}$.  Rank 8, Gurevich number XV.\\
Here $\Zm_0$ consists of
$$\SmallMatrix{ \zeta s a_{66} & -\zeta s a_{67} &  0 &  0 &  0 &  0 &  0 &  0 &  0\\
-\zeta s a_{76} & \zeta s a_{77} &  0 &  0 &  0 &  0 &  0 &  0 &  0\\
0 &  0 &  (-\zeta - 1) s a_{66}^2 & (\zeta + 1) s a_{66} a_{67} & (-\zeta - 1) s a_{67}^2 &  0 &  0 &  0 &  0 \\
0 &  0 &  (2 \zeta + 2) s a_{66} a_{76} & (-2 \zeta - 2) s a_{67} a_{76} -\zeta - 1 &  (2 \zeta + 2) s a_{67} a_{77} &  0 &  0 &  0 &  0 \\
0 &  0 &  (-\zeta - 1) s a_{76}^2 & (\zeta + 1) s a_{76} a_{77} &  (-\zeta - 1) s a_{77}^2 &  0 &  0 &  0 &  0 &\\
0 &  0 &  0 &  0 &  0 &  a_{66} &  a_{67} &  0 &  0 &\\
0 &  0 &  0 &  0 &  0 &  a_{76} &  a_{77} &  0 &  0 &\\
0 &  0 &  0 &  0 &  0 &  0 &  0 &  \zeta s^{-1} &  0 &\\
0 &  0 &  0 &  0 &  0 &  0 &  0 &  0 &  s}$$
where $\zeta^3=1$ and $s = (a_{66}a_{77}-a_{67}a_{76})^{-1}$. We see that the
identity component is isomorphic to $\GL(2,\C)$ and the component
group $C$ is of order 3. So $\Ho^1\hm \Zm_0^\circ=1$ and $\Ho^1\hm C=1$
(the latter by Corollary \ref{c:2m+1}). Hence by Corollary \ref{c:prop38},
$\Ho^1\hm \Zm_0=1$.

\noindent{\bf 77} Representative: $e_{137}-e_{246}-e_{356}+e_{579}+e_{678}$.\\
Here $\Zm_0$ consists of
$$
X(\SmallMatrix{ a_{11} & a_{19} \\ a_{91} & a_{99} },
\SmallMatrix{ a_{22} & a_{24} \\ a_{42} & a_{44} }, s ) =
\SmallMatrix{ a_{11} & 0 & 0 & 0 & 0 & 0 & 0 & 0 & a_{19} \\
                0 & a_{22} & 0 & a_{24} & 0 & 0 & 0 & 0 & 0 \\
                0 & 0 & s^3a_{99} & 0 & -s^3a_{91} & 0 & 0 & 0 & 0 \\
                0 & a_{42} & 0 & a_{44} & 0 & 0 & 0 & 0 & 0 \\
                0 & 0 & -s^3a_{19} & 0 & s^3a_{11} & 0 & 0 & 0 & 0 \\
                0 & 0 & 0 & 0 & 0 & s^{-2} & 0 & 0 & 0 \\
                0 & 0 & 0 & 0 & 0 & 0 & s & 0 & 0 \\
                0 & 0 & 0 & 0 & 0 & 0 & 0 & s & 0 \\
                a_{91} & 0 & 0 & 0 & 0 & 0 & 0 & 0 & a_{99}} $$
with $a_{22}a_{44}-a_{24}a_{42}=s^2$, $a_{11}a_{99}-a_{19}a_{91} = s^{-4}$
and $s\in \C^\times$. We have
$$X(A,B,s)X(A',B',s') = X(AA',BB',ss').$$

We define a surjective homomorphism
\[\phi\colon \Zm_0\to \C^\times,\quad X(A,B,s)\mapsto s.\]
The kernel $K$ of $\phi$ is isomorphic to $\SL(2,\C)\times\SL(2,\C)$.
Hence $\Ho^1\hm K=1$. We have the exact sequence
\[ 1\to\, K\,\to \Zm_0\to \C^\times\to 1.\]
Since $\Ho^1 \C^\times=1$, we see by Corollary \ref{c:prop38} that
$\Ho^1\hm \Zm_0=1$ as well.

\noindent{\bf 78} Representative: $e_{137}-e_{246}-e_{247}+e_{569}$.  Rank 8, Gurevich number XIX.\\
Here $\Zm_0^\circ$ has a semisimple
part isomorphic to $\SL(2,\C)^3$, consisting of
$$X(\SmallMatrix{ a_{11} & a_{13} \\a_{31} & a_{33}},
\SmallMatrix{ a_{22} & a_{24} \\a_{42} & a_{44}},
\SmallMatrix{ a_{55} & a_{59} \\a_{95} & a_{99}})=
\SmallMatrix{ a_{11} & 0 & a_{13} & 0 & 0 & 0 & 0 & 0 & 0\\
                0 & a_{22} & 0 & a_{24} & 0 & 0 & 0 & 0 & 0\\
                a_{31} & 0 & a_{33} & 0 & 0 & 0 & 0 & 0 & 0\\
                0 & a_{42} & 0 & a_{44} & 0 & 0 & 0 & 0 & 0\\
                0 & 0 & 0 & 0 & a_{55} & 0 & 0 & 0 & a_{59}\\
                0 & 0 & 0 & 0 & 0 & 1 & 0 & 0 & 0\\
                0 & 0 & 0 & 0 & 0 & 0 & 1 & 0 & 0\\
                0 & 0 & 0 & 0 & 0 & 0 & 0 & 1 & 0\\
                0 & 0 & 0 & 0 & a_{95} & 0 & 0 & 0 & a_{99}\\} $$
(with the three obvious determinants equal to 1)
and a central torus consisting of $D(s)=\diag(s,s,s,s,s,s^{-2},s^{-2},s^{-2},s)$.
The component group is isomorphic to $S_3$ and
generated by the following elements
$$a=\SmallMatrix{ 0 & -1 & 0 & 0 & 0 & 0 & 0 & 0 & 0\\
                -1 & 0 & 0 & 0 & 0 & 0 & 0 & 0 & 0\\
                0 & 0 & 0 & -1 & 0 & 0 & 0 & 0 & 0\\
                0 & 0 & -1 & 0 & 0 & 0 & 0 & 0 & 0\\
                0 & 0 & 0 & 0 & 1 & 0 & 0 & 0 & 0\\
                0 & 0 & 0 & 0 & 0 & 1 & -1 & 0 & 0\\
                0 & 0 & 0 & 0 & 0 & 0 & -1 & 0 & 0\\
                0 & 0 & 0 & 0 & 0 & 0 & 0 & -1 & 0\\
                0 & 0 & 0 & 0 & 0 & 0 & 0 & 0 & 1\\},\quad
b=\SmallMatrix{ 0 & 0 & 0 & 0 & 0 & 0 & 0 & 0 & -1\\
                i & 0 & 0 & 0 & 0 & 0 & 0 & 0 & 0\\
                0 & 0 & 0 & 0 & 1 & 0 & 0 & 0 & 0\\
                0 & 0 & i & 0 & 0 & 0 & 0 & 0 & 0\\
                0 & 0 & 0 & -1 & 0 & 0 & 0 & 0 & 0\\
                0 & 0 & 0 & 0 & 0 & 0 & 1 & 0 & 0\\
                0 & 0 & 0 & 0 & 0 & -1 & 1 & 0 & 0\\
                0 & 0 & 0 & 0 & 0 & 0 & 0 & -1 & 0\\
                0 & 1 & 0 & 0 & 0 & 0 & 0 & 0 & 0\\}
$$
(with $i^2=-1$). The computation used the method outlined in Example
\ref{exa:cencomps}.

We have $a^2=1$, $b^3=D(-i)$ and
$a^{-1}bab^{-2} = \diag(-i,i,-i,i,i,-1,-1,-1,i)$ which is equal to
$D(-i)\diag(-1,1,-1,1,1,1,1,1,1)$. We have
\begin{align*}
aX(\SmallMatrix{ a_{11} & a_{13} \\a_{31} & a_{33}},
\SmallMatrix{ a_{22} & a_{24} \\a_{42} & a_{44}},
\SmallMatrix{ a_{55} & a_{59} \\a_{95} & a_{99}})a^{-1}&=
X(\SmallMatrix{ a_{22} & a_{24} \\a_{42} & a_{44}},
\SmallMatrix{ a_{11} & a_{13} \\a_{31} & a_{33}},
\SmallMatrix{ a_{55} & a_{59} \\a_{95} & a_{99}})\\
bX(\SmallMatrix{ a_{11} & a_{13} \\a_{31} & a_{33}},
\SmallMatrix{ a_{22} & a_{24} \\a_{42} & a_{44}},
\SmallMatrix{ a_{55} & a_{59} \\a_{95} & a_{99}})b^{-1} &=
X(\SmallMatrix{ a_{99} & -a_{95} \\-a_{59} & a_{55}},
\SmallMatrix{ a_{11} & a_{13} \\a_{31} & a_{33}},
\SmallMatrix{ a_{44} & -a_{42} \\-a_{24} & a_{22}}).
\end{align*}

\begin{lemma*}
$\Ho^1\hm  \Zm_0=\{[1],[a]\}$.
\end{lemma*}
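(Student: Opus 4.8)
The plan is to imitate the argument used for case \textbf{75}: analyse $\Zm_0$ through the short exact sequence $1\to\Zm_0^\circ\to\Zm_0\labelto{\pi}C\to1$, where $C=\Zm_0/\Zm_0^\circ\cong S_3$. First I would record the relevant structure of the identity component. The derived subgroup $S\cong\SL(2,\C)^3$ acts on the coordinate pairs $\{1,3\}$, $\{2,4\}$, $\{5,9\}$ and trivially on $\{6,7,8\}$, the central torus is $T_1=\{D(s)\}$, and $S\cap T_1=\{D(\pm1)\}\cong\mu_2$, so that $\Zm_0^\circ/S\cong T_1/\mu_2\cong\C^\times$ with the standard real structure. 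Since $\Ho^1\hs\C^\times=1$ and $\Ho^1 S=\Ho^1\SL(2,\C)^3=1$, Corollary \ref{c:prop38} applied to $1\to S\to\Zm_0^\circ\to\C^\times\to1$ gives $\Ho^1\hs\Zm_0^\circ=1$.

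Next, the component group $C\cong S_3$ has order $2\cdot3$, so by Lemma \ref{l:p} we have $\#\Ho^1 C=2$. Because $a$ has real entries and $a^2=1$, its image $c$ in $C$ is a $\Ga$-fixed element of order $2$, so Lemma \ref{l:explicit} yields $\Ho^1 C=\{[1],[c]\}$ with cocycles $1,c$. Hence $\Ho^1\hs\Zm_0=\pi_*^{-1}[1]\cup\pi_*^{-1}[c]$. By exactness in Proposition \ref{p:serre-prop38}, $\pi_*^{-1}[1]$ is the image of $\Ho^1\hs\Zm_0^\circ=1$, so $\pi_*^{-1}[1]=\{[1]\}$.

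For the remaining fiber I would twist by the cocycle $a$ (which lifts $c$, as $\bar a=a$ and $a^2=1$) and invoke Corollary \ref{c:39-cor2}, which identifies $\pi_*^{-1}[c]$ with $\Ho^1({}_a\Zm_0^\circ)/({}_cC)^\Ga$. The decisive observation is that conjugation by $a$ on $\Zm_0^\circ$ is, on the displayed coordinates, exactly $X(A_1,A_2,A_3)\mapsto X(A_2,A_1,A_3)$ (interchanging the first two $\SL(2,\C)$-factors and fixing the third one pointwise) and that it fixes $T_1$ pointwise, since $D(s)$ is scalar on each of the $a$-invariant coordinate blocks. Consequently ${}_aS\cong R_{\C/\R}\SL(2,\C)\times\SL_{2,\R}$ and ${}_a(\Zm_0^\circ/S)\cong\C^\times$ with the standard structure; thus $\Ho^1\hs{}_aS=1$ by Proposition \ref{p:Weil}(i) together with $\Ho^1\SL_{2,\R}=1$, and a second application of Corollary \ref{c:prop38} to $1\to{}_aS\to{}_a\Zm_0^\circ\to\C^\times\to1$ gives $\Ho^1\hs{}_a\Zm_0^\circ=1$. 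Therefore $\pi_*^{-1}[c]$ consists of a single class, necessarily $[a]$, and $\Ho^1\hs\Zm_0=\{[1],[a]\}$.

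The one genuinely computational point — and the main obstacle — is to read off from the explicit matrix for $a$ that its conjugation action fixes the third $\SL(2,\C)$-factor and the torus $T_1$ pointwise; this is exactly what pins down the twisted form ${}_a\Zm_0^\circ$ and forces $\Ho^1\hs{}_a\Zm_0^\circ=1$. Everything else is a routine assembly of the exact-sequence and twisting results of Section \ref{sec:galcohom}, as in cases \textbf{70} and \textbf{75}.
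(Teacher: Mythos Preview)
Your argument is correct and matches the paper's proof essentially line for line: the paper writes $G=T\cdot S=\Zm_0^\circ$, uses the short exact sequence $1\to G\to\Zm_0\to C\to1$ with $C\cong S_3$, computes $\Ho^1 G=1$ via $1\to S\to G\to \ov T\to1$, and then twists by $a$ to get $\Ho^1{}_aG=1$ from ${}_aS\cong\SL(2,\C)\times\SL(2,\R)$ (using that $T$ is central, hence ${}_aT=T$). Your verification that $a$ swaps the first two $\SL(2,\C)$-factors and fixes both the third factor and $T_1$ pointwise is exactly the computational input the paper records in the displayed conjugation formula $aX(A_1,A_2,A_3)a^{-1}=X(A_2,A_1,A_3)$ together with the remark that $T$ is central in $\Zm_0$.
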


\begin{proof}
We write
\[ T=\{D(s)\}, \quad S=\left\{
X(\SmallMatrix{ a_{11} & a_{13} \\a_{31} & a_{33}},
\SmallMatrix{ a_{22} & a_{24} \\a_{42} & a_{44}},
\SmallMatrix{ a_{55} & a_{59} \\a_{95} & a_{99}})\right\},
\quad G=T\cdot S\subset \Zm_0.
\]
Then $T\cap S=\{D(\pm1)\}$.
The subgroup $G$ is normal in $\Zm_0$; we set $C=\Zm_0/G$.
We have a short exact sequence
\[ 1\to G\labelto{ } \Zm_0\labelto{\pi} C \to 1.\]

We compute $\Ho^1 C$. Since $C$ is of order 6, we conclude that
$\#\Ho^1 C=2$ with cocycles $[1]$ and $[aG]$ (Lemma \ref{l:explicit};
note that $a^2=1$, $\ov a=a$). Thus
\[\Ho^1\hm \Zm_0=\pi_*^{-1}[1]\,\cup\,\pi_*^{-1}[aG].\]

We compute $\Ho^1 G$.
We have  $G=T\cdot S$.
We obtain a short exact sequence
\[ 1\to S\to G\to \ov T\to 1,\]
where $\ov T=G/S=T/(T\cap S)$.
We have $\Ho^1 S=1$.
Since $\ov T$ is a one-dimensional torus, we have $\Ho^1\hs \ov T=1$.
We see that $\Ho^1 G=1$.
Hence by Corollary \ref{c:39-cor1} we conclude that $\pi_*^{-1}[1]=\{[1]\}$.

We compute $\pi_*^{-1}[aG]\subset \Ho^1\hm \Zm_0$.
Note that $a^2=1$ and $a$ is real.
It follows that $a$ is a cocycle in $\Zm_0$.
Consider the twisted group
\[_a G=\hs_aT\cdot \hs_aS,\]
where $\hs_aT=T$ because $T$ is central in $\Zm_0$.
We obtain a short exact sequence
\[1\to \hs_a S\to\hs_aG\to\hs_a \ov T\to 1,\]
where $\hs_a \ov T=\ov T$.
 The group $(\hs_a S)(\R)$ is isomorphic to
$\SL(2,\C)\times \SL(2,\R)$. It follows that $\Ho^1\hm \hs_aS=1$.
Since also $\Ho^1\hs \ov T=1$, we see that $\Ho^1\hm \hs_aG=1$.
Using Corollary \ref{c:39-cor2}, we conclude that
$\#\pi_*^{-1}[aG]=[a]$ .

It follows that $\Ho^1\hm  \Zm_0=\{[1],[a]\}$, as required.
\end{proof}

\noindent{\bf 79} Representative: $e_{157}-e_{234}+e_{568}+e_{679}$.\\
Here $\Zm_0$ consists of
$$\SmallMatrix{ a_{11} & 0 & 0 & 0 & 0 & 0 & 0 & a_{18} & a_{19}\\
                0 & a_{22} & a_{23} & a_{24} & 0 & 0 & 0 & 0 & 0\\
                0 & a_{32} & a_{33} & a_{34} & 0 & 0 & 0 & 0 & 0\\
                0 & a_{42} & a_{43} & a_{44} & 0 & 0 & 0 & 0 & 0\\
                0 & 0 & 0 & 0 & a_{99} & -a_{91} & a_{98} & 0 & 0\\
                0 & 0 & 0 & 0 & -a_{19} & a_{11} & a_{18} & 0 & 0\\
                0 & 0 & 0 & 0 & a_{89} & -a_{81} & a_{88} & 0 & 0\\
                a_{81} & 0 & 0 & 0 & 0 & 0 & 0 & a_{88} & a_{89}\\
                a_{91} & 0 & 0 & 0 & 0 & 0 & 0 & a_{98} & a_{99}\\} $$
where
$$ \det \SmallMatrix{ a_{22} & a_{23} & a_{24}\\
                      a_{32} & a_{33} & a_{34}\\
                      a_{42} & a_{43} & a_{44} } = 1,
\det\SmallMatrix{a_{11} & a_{18} & a_{19}\\
                      a_{81} & a_{88} & a_{89}\\
                      a_{91} & a_{98} & a_{99} } = 1.$$
Hence $\Zm_0\simeq \SL(3,\C)\times \SL(3,\C)$, so that $\Ho^1\hm \Zm_0=1$.

\noindent{\bf 80} Representative: $e_{147}-e_{237}-e_{256}-e_{346}+e_{579}+e_{678}$.\\
Here $\Zm_0$ consists of
$$\SmallMatrix{ \zeta^2 s^{-2}a_{55} & 0 & 0 & 0 & 0 & 0 & 0 & 0 & -\zeta^2 s^{-2}
  a_{54}\\
                0 & \zeta^2 s a_{99} & \zeta^2 s a_{91} & 0 & 0 & 0 & 0 & 0 & 0 \\
                0 & -\zeta s^{-1} a_{54} & \zeta s^{-1} a_{55} & 0 & 0 & 0 & 0 & 0 & 0 \\
                0 & 0 & 0 & \zeta s^2 a_{99} & -\zeta s^2 a_{91} & 0 & 0 & 0 & 0 \\
                0 & 0 & 0 & a_{54} & a_{55} & 0 & 0 & 0 & 0 \\
                0 & 0 & 0 & 0 & 0 & \zeta^2 s^{-1} & 0 & 0 & 0 \\
                0 & 0 & 0 & 0 & 0 & 0 & \zeta & 0 & 0 \\
                0 & 0 & 0 & 0 & 0 & 0 & 0 & s & 0 \\
                a_{91} & 0 & 0 & 0 & 0 & 0 & 0 & 0 & a_{99}} $$
with $\zeta^3=1$, $a_{54}a_{91} + a_{55}a_{99} = \zeta^2$. So the identity
component is isomorphic to $\SL(2,\C)\times T_1$. The component group is
isomorphic to $\mu_3$. So using Corollary \ref{c:prop38} we see that
$\Ho^1\hm \Zm_0=1$.

\noindent{\bf 81} Representative: $e_{157}-e_{237}-e_{246}-e_{345}+e_{568}+e_{679}$.\\
Here $\Zm_0^\circ$ is isomorphic to
$\SL(3,\C)$. The natural 9-dimensional module decomposes as a direct sum
of three 3-dimensional submodules. So each matrix in the identity component
is block diagonal with $3\times 3$-blocks on the diagonal. The bases of the
three submodules are $\{v_1,v_8,v_9\}$, $\{v_2,v_3,v_4\}$ and $\{v_5,v_6,v_7\}$
(where $v_1,\ldots,v_9$ is the standard basis of the natural 9-dimensional
module). The component
group is of order 3 and is generated by
$$a=\diag(\zeta, \zeta^7,\zeta^7,\zeta^7,\zeta^4, \zeta^4, \zeta^4,\zeta,\zeta),
$$
where $\zeta$ is a primitive ninth root of unity. So $a^3\in \Zm_0^\circ$.
By Corollary \ref{c:prop38} we see that $\Ho^1\hm \Zm_0=1$.

\noindent{\bf 82} Representative: $e_{137}-e_{246}-e_{356}+e_{579}$.  Rank 8, Gurevich number XVII.\\
Here $\Zm_0$ consists of
$$
X(\SmallMatrix{ a_{22} & a_{24} \\ a_{42} & a_{ 44} },
\SmallMatrix{ a_{33} & a_{35} \\ a_{53} & a_{55} }, s ) =
\SmallMatrix{ ua_{55} & 0 & 0 & 0 & 0 & 0 & 0 & 0 & -ua_{53}\\
                0 & a_{22} & 0 & a_{24} & 0 & 0 & 0 & 0 & 0\\
                0 & 0 & a_{33} & 0 & a_{35} & 0 & 0 & 0 & 0\\
                0 & a_{42} & 0 & a_{44} & 0 & 0 & 0 & 0 & 0\\
                0 & 0 & a_{53} & 0 & a_{55} & 0 & 0 & 0 & 0\\
                0 & 0 & 0 & 0 & 0 & \delta^{-1} & 0 & 0 & 0\\
                0 & 0 & 0 & 0 & 0 & 0 & s & 0 & 0\\
                0 & 0 & 0 & 0 & 0 & 0 & 0 & s & 0\\
                -ua_{35} & 0 & 0 & 0 & 0 & 0 & 0 & 0 & ua_{33}\\} $$
where $u=s^{-1}\delta^{-1}$, $\delta = a_{33}a_{55}-a_{35}a_{53}$ and
$a_{22}a_{44}-a_{24}a_{42}=a_{33}a_{55}-a_{35}a_{53}$.
We have
$$X(A,B,s) X(A',B',s') = X(AA',BB',ss').$$

We define a surjective homomorphism
\[\phi\colon \Zm_0\to (\C^\times)^2\hs,\quad X(A,B,s)\mapsto (\det(A), s)\]
with kernel $K$ isomorphic to $\SL(2,\C)\times\SL(2,\C)$.
We have the short exact sequence
\[1\to\,K\,\to \Zm_0\to (\C^\times)^2\to 1.\]
Since $\Ho^1\hm K =1$ and $\Ho^1\hm  (\C^\times)^2=1$, we infer by
Corollary \ref{c:prop38} that $\Ho^1\hm \Zm_0=1$.

\noindent{\bf 83} Representative: $e_{157}-e_{247}-e_{256}-e_{346}+e_{458}+e_{679}$.\\
Here $\Zm_0^\circ$ is of type $B_2$. The matrices
in this component decompose into a $4\times 4$ - and a $5\times 5$-block,
corresponding to the 4- and 5-dimensional irreducible representations.
The Lie algebra of type $B_2$ has no outer automorphisms, so any other
component of $\Zm_0$ has to have elements that commute with all elements of the
Lie algebra. Adding the
corresponding equations to the original ones we find that the elements
that commute with the Lie algebra form a finite group of six elements,
consisting of $\diag(u,u,u,v,v,v,v,u,u)$ with $u^3=1$ and $v = \pm u$.
The group of diagonal matrices contained in $\Zm_0$ can be computed by adding the
equations expressing that a matrix is diagonal to the original equations.
It turns out that this group is
$$\diag(\delta^2s^{-2}t^{-1},\delta^2,\delta^2s^2t,\delta s^{-1},\delta s t ,
s^{-1}t^{-1},s,\delta t^{-1}, t ),$$
where $\delta^3=1$. This is not connected so $\Zm_0$ is not connected
either (because a maximal torus of a connected semisimple algebraic group
is connected as well). But by the above computation all components of $\Zm_0$
have a representative lying in the diagonal part. Hence
$\Zm_0 = \mu_3 \times \Zm_0^\circ$,
and $\Zm_0^\circ$ is isomorphic to $\Sp(4,\C)$.
Since  $\Ho^1\hs \Sp(4,\C)=1$ and $\Ho^1\hm  \mu_3=1$, we infer by
Corollary \ref{c:prop38} that $\Ho^1\hm \Zm_0=1$.

\noindent{\bf 84} Representative: $e_{147}-e_{237}-e_{256}-e_{346}+e_{579}$. Rank 8, Gurevich number XIV.\\
Here $\Zm_0$ consists of
$$
X(\SmallMatrix{ a_{44} & a_{45} \\ a_{54} & a_{55} }, u, a_{88} )=
\SmallMatrix{ a_{88}^{-2}u^{-2}a_{55} & 0 & 0 & 0 & 0 & 0 & 0 & 0 &
  -a_{88}^{-2}u^{-2}a_{54} \\
                0 & a_{88}^{-1}u^{-1}a_{44} & -a_{88}^{-1}u^{-1}a_{45} & 0 & 0 & 0 & 0 & 0 & 0 \\
                0 & -a_{88}^{-1}u^{-1}a_{54} & a_{88}^{-1}u^{-1}a_{55} & 0 & 0 & 0 & 0 & 0 & 0 \\
                0 & 0 & 0 & a_{44} & a_{45} & 0 & 0 & 0 & 0 \\
                0 & 0 & 0 & a_{54} & a_{55} & 0 & 0 & 0 & 0 \\
                0 & 0 & 0 & 0 & 0 & u^{-2}a_{88}^{-1} & 0 & 0 & 0 \\
                0 & 0 & 0 & 0 & 0 & 0 & u^{-1} & 0 & 0 \\
                0 & 0 & 0 & 0 & 0 & 0 & 0 & a_{88} & 0 \\
                -a_{88}^{-2}u^{-2}a_{45} & 0 & 0 & 0 & 0 & 0 & 0 & 0 &
                a_{88}^{-2}u^{-2}a_{44}} $$
where $u^3a_{88}^2= a_{55}a_{44}-a_{54}a_{45}$. Furthermore,
$$X(A,u,a_{88}) X(A',u',a_{88}') = X(AA',uu',a_{88}a_{88}').$$

We define a homomorphism
\[\phi\colon \Zm_0\to (\C^\times)^2,\quad X(A,u,a_{88})\mapsto (u,a_{88}).\]
Set $\ker\hs\phi=K$, then $K\simeq \SL(2,\C)$.
We obtain a short exact sequence
\[1\to K\to \Zm_0\to (\C^\times)^2\to 1.\]
Since $\Ho^1\hm K=1$ and $\Ho^1\hm (\C^\times)^2=1$, we see by
Corollary \ref{c:prop38} that $\Ho^1\hm \Zm_0=1$.

\noindent{\bf 85} Representative: $e_{147}-e_{256}-e_{346}+e_{579}+e_{678}$.\\
The Lie algebra of $\Zm_0$ is isomorphic to $\s+\mathfrak{t}_1$,
where $\s$ is isomorphic to $\ssl(2,\C)+\ssl(2,\C)$ and $\mathfrak{t}_1$
denotes a 1-dimensional center consisting of diagonal matrices.
In this case it is possible to compute the Gr\"obner basis and even to do
a primary decomposition of the corresponding ideal. The latter shows that the
group is connected (the ideal is prime). The natural 9-dimensional module
splits as direct sum of four irreducible submodules of dimensions 1, 2, 2, 4
with highest weights $(0,0)$, $(1,0)$, $(0,1)$, $(1,1)$. Let $S$ be the
connected algebraic subgroup of $\Zm_0$ with Lie algebra $\s$. Then the module decomposition
shows that $S \simeq \SL(2,\C)\times \SL(2,\C)$. Let $T_1$ be the connected
subgroup of $\Zm_0$ with Lie algebra $\mathfrak{t}_1$. Then the intersection of
$T_1$ and the diagonal maximal torus of $S$ consists just of 1. This implies that
$\Zm_0$ is isomorphic to $\SL(2,\C)\times \SL(2,\C)\times \C^\times$.
Hence $\Ho^1\hm \Zm_0=1$.

\noindent{\bf 86} Representative: $e_{147}-e_{256}-e_{346}+e_{579}$.  Rank 8, Gurevich number XIII.\\
Here the Lie algebra of $\Zm_0$ is isomorphic to $\s+\mathfrak{t}_2$
where $\s$ is isomorphic to $\ssl(2,\C)+\ssl(2,\C)$ and $\mathfrak{t}_2$
denotes a 2-dimensional center. Let $S$, $T_2$ denote the connected subgroups
of $\Zm_0$ with Lie algebras $\s$ and $\mathfrak{t}_2$ respectively. Then
$S$ consists of the elements
$$\SmallMatrix{ * & * & * & 0 & 0 & 0 & 0 & 0 & * \\
              * & * & * & 0 & 0 & 0 & 0 & 0 & * \\
              * & * & * & 0 & 0 & 0 & 0 & 0 & * \\
              0 & 0 & 0 & a_{44} & a_{45} & 0 & 0 & 0 & 0 \\
              0 & 0 & 0 & a_{54} & a_{55} & 0 & 0 & 0 & 0 \\
              0 & 0 & 0 & 0 & 0 & a_{66} & a_{67} & 0 & 0 \\
              0 & 0 & 0 & 0 & 0 & a_{76} & a_{77} & 0 & 0 \\
              0 & 0 & 0 & 0 & 0 & 0 & 0 & 1 & 0 \\
              * & * & * & 0 & 0 & 0 & 0 & 0 & *} $$
(where the obvious determinants are 1), whereas $T_2$ consists of
$$T_2(a,b)=\diag(a^{-1}b^{-1},a^{-1}b^{-1},a^{-1}b^{-1},a,a,b,b,a^2b^2,a^{-1}b^{-1})
\text{ for } a,b\in \C^\times.$$
We see that $\Zm_0^\circ \simeq \GL(2,\C)\times \GL(2,\C)$. In the sequel we
write the elements of $\Zm_0^\circ$ as $(g_1,g_2)$, where $g_i\in \GL(2,\C)$.
The component group is of order 2 and generated by the image of
$$Q=\SmallMatrix{ 0 & -1 & 0 & 0 & 0 & 0 & 0 & 0 & 0 \\
                -1 & 0 & 0 & 0 & 0 & 0 & 0 & 0 & 0 \\
                0 & 0 & 1 & 0 & 0 & 0 & 0 & 0 & 0 \\
                0 & 0 & 0 & 0 & 0 & -1 & 0 & 0 & 0 \\
                0 & 0 & 0 & 0 & 0 & 0 & -1 & 0 & 0 \\
                0 & 0 & 0 & 1 & 0 & 0 & 0 & 0 & 0 \\
                0 & 0 & 0 & 0 & 1 & 0 & 0 & 0 & 0 \\
                0 & 0 & 0 & 0 & 0 & 0 & 0 & -1 & 0 \\
                0 & 0 & 0 & 0 & 0 & 0 & 0 & 0 & 1}. $$
The computation used the methods of Example \ref{exa:cencomps}(2).

We have
\begin{gather*}
Q^2= \diag(1,1,1,-1,-1,-1,-1,1,1) =(-1,-1)\in \Zm_0^\circ,\\
Q\cdot(g_1,g_2)\cdot Q^{-1} = (g_2,g_1).
\end{gather*}

\begin{lemma*}
  $\Ho^1\hm \Zm_0 = \{ [1], [P] \}$ where $P=(w,w)\cdot Q$ with
  $w=\SmallMatrix{ \phantom{-}0 &1\\-1 &0}\in\GL(2,\C)$.
\end{lemma*}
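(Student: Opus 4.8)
The plan is to run the same exact-sequence argument that was used for case \textbf{71}. I would write $A=\Zm_0^\circ$, which here is $\GL(2,\C)\times\GL(2,\C)$ with elements $(g_1,g_2)$ and the standard componentwise $\Gamma$-action, $B=\Zm_0$, and $C=B/A$, a group of order $2$ generated by the image $c$ of $Q$. Since $Q$ has real entries, $\Gamma$ acts trivially on $C$, and $Q^2=(-1,-1)\in A$ shows $c^2=1$; hence $c\in\Zl^1 C$ and $\Ho^1 C=\{[1],[c]\}$. From the short exact sequence $1\to A\to B\labelto{\pi}C\to1$ and Proposition \ref{p:serre-prop38} one gets that $\Ho^1 B$ is the disjoint union $\ker\pi_*\,\sqcup\,\pi_*^{-1}[c]$, once one checks $\pi_*$ is surjective (which will follow from the explicit lift $P$ below), so the task splits into identifying these two fibers.

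For the kernel: since $A$ carries the split $\Gamma$-action we have $\Ho^1 A=1$, hence by Corollary \ref{c:39-cor1} $\ker\pi_*=\{[1]\}$. For $\pi_*^{-1}[c]$ I would first produce an honest $1$-cocycle over $c$, since $Q$ itself is not one ($Q\,\ov Q=Q^2=(-1,-1)$). Searching for it in the form $(g,h)Q$ and using $Q(\ov g,\ov h)Q^{-1}=(\ov h,\ov g)$ together with $Q^2=(-1,-1)$, the cocycle equation $(g,h)Q\cdot\ov{(g,h)Q}=1$ reduces to $(-g\ov h,-h\ov g)=(1,1)$, i.e. $h=-\ov g^{-1}$; taking $g=w$, where the only computation needed is $w^2=-I$, gives $h=w$, so $P=(w,w)Q$ is the desired cocycle with $\pi(P)=c$. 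Then I would twist the sequence by $P$: a short calculation shows the twisted $\Gamma$-action on $A$ is $(g_1,g_2)\mapsto(w\ov g_2w^{-1},w\ov g_1w^{-1})$, and after the inner change of coordinates $(g_1,g_2)\mapsto(g_1,wg_2w^{-1})$ — legitimate because $w^2=-I$ is central — this becomes the standard factor-swap $(h_1,h_2)\mapsto(\ov h_2,\ov h_1)$. Hence ${}_PA\cong\Res\GL(2,\C)$, so $\Ho^1({}_PA)=1$ by Proposition \ref{p:Weil}(i). By Corollary \ref{c:39-cor2}, $\pi_*^{-1}[c]$ is then in bijection with $(\Ho^1({}_PA))/({}_cC)^\Gamma$, which is a single class, namely $[P]$. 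Combining the two fibers gives $\Ho^1\hm\Zm_0=\{[1],[P]\}$.

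I expect the main obstacle to be only a modest bookkeeping one: verifying that $P=(w,w)Q$ really is a cocycle (which comes down to the identity $w^2=-I$) and recognizing the twisted form ${}_PA$ as the Weil restriction $\Res\GL(2,\C)$ after the inner change of coordinates. Both steps are entirely parallel to case \textbf{71}, where the same $\Zm_0^\circ\cong\GL(2,\C)\times\GL(2,\C)$ with a factor-swapping order-$2$ component group was handled in exactly this way, so no genuinely new input is needed.
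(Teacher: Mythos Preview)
Your proof is correct and follows essentially the same route as the paper's: set up the short exact sequence $1\to A\to B\to C\to 1$ with $A=\Zm_0^\circ\cong\GL(2,\C)^2$, use $\Ho^1 A=1$ to handle the kernel, lift $c$ to the cocycle $P=(w,w)Q$ (the paper verifies this more tersely via $P^2=(1,1)$ together with $\ov P=P$), and identify ${}_PA$ with $\Res\GL(2,\C)$ to conclude the other fiber is a singleton. Your explicit change of coordinates $(g_1,g_2)\mapsto(g_1,wg_2w^{-1})$ making the factor-swap visible is a nice touch the paper omits, but the argument is the same.
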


\begin{proof}
We have a short exact sequence
\[1\to \Zm_0^\circ\to \Zm_0\labelto{\pi} \mu_2\to 1, \quad (g_1,g_2)\cdot Q^m\,\longmapsto\, (-1)^m,\]
whence a cohomology exact sequence
\[\Ho^1\hm \Zm_0^\circ\to \Ho^1\hm \Zm_0\to \Ho^1\hm \mu_2=\{[1],[-1]\}.\]
Since $\Ho^1\hm \Zm_0^\circ=1$, we see that the preimage in  $\Ho^1\hm \Zm_0$ of
$[1]\in  \Ho^1\hm \mu_2$ is only one class $[1]\in \Ho^1\hm \Zm_0$.

We compute the preimage in   $\Ho^1\hm \Zm_0$  of $[-1]\in  \Ho^1\hm \mu_2$.
We have $Q^2=(-1,-1)$ so that $P^2=(1,1)$; thus $P$ is a 1-cocycle lifting
$[-1]\in \Ho^1\hm \mu_2$.
We have a short exact sequence
\[1\to \hs_P\Zm_0^\circ\to \hs_P\Zm_0\to \mu_2\to 1, \quad (g_1,g_2)\cdot  P^m\,\longmapsto\, (-1)^m.\]
We wish to compute $\Ho^1\hm \hs_P\Zm_0^\circ$.
We have $_P\Zm_0^\circ\simeq R_{\C/\R}\GL(2,\C)$, and by Proposition \ref{p:Weil},
$\Ho^1\hm \hs_P\Zm_0^\circ=1$.
Thus $\pi_*^{-1}([-1])=[P]$,  which completes the proof.
\end{proof}

\noindent{\bf 87} Representative: $e_{127}+e_{379}-e_{456}+e_{678}$.\\
Here $\Zm_0$ is connected (here this can be established by computing a
primary decomposition of the defining ideal) and isomorphic to
$\Sp(4,\C)\times \SL(2,\C)\times \C^\times$ (the latter follows
by considering the Lie algebra). Hence $\Ho^1\hm \Zm_0=1$.

\noindent{\bf 88} Representative: $e_{157}-e_{247}-e_{256}-e_{346}+e_{679}$. Rank 8, Gurevich number XII.\\
Here $\Zm_0$ consists of
$$\SmallMatrix{ s^{-1}a_{66}^2 & s^{-1}a_{66}a_{67} & -s^{-1}a_{67}^2 & 0 & 0 & 0 & 0 & 0 & 0 \\
                2s^{-1}a_{66}a_{76} & 2s^{-1}a_{67}a_{76}+s^{-1}t^{-1} & -2s^{-1}a_{67}a_{77} & 0 & 0 & 0 & 0 & 0 & 0 \\
                -s^{-1}a_{76}^2 & -s^{-1}a_{76}a_{77} & s^{-1}a_{77}^2 & 0 & 0 & 0 & 0 & 0 & 0 \\
                0 & 0 & 0 & st^2a_{66} & st^2a_{67} & 0 & 0 & 0 & 0 \\
                0 & 0 & 0 & st^2a_{76} & st^2a_{77} & 0 & 0 & 0 & 0 \\
                0 & 0 & 0 & 0 & 0 & a_{66} & a_{67} & 0 & 0 \\
                0 & 0 & 0 & 0 & 0 & a_{76} & a_{77} & 0 & 0 \\
                0 & 0 & 0 & 0 & 0 & 0 & 0 & s & 0 \\
                0 & 0 & 0 & 0 & 0 & 0 & 0 & 0 & t} $$
with $a_{66}a_{77}-a_{67}a_{76} = t^{-1}$. So $\Zm_0$ is connected and
isomorphic to $\GL(2,\C)\times \C^\times$. Therefore $\Ho^1\hm \Zm_0=1$.

\noindent{\bf 89} Representative: $e_{157}-e_{237}-e_{456}+e_{478}+e_{679}$.\\
Here $\Zm_0^\circ$ is of type $A_1+A_2$. It consists of
matrices that decompose into two $3\times 3$-blocks, a $2\times 2$-block
and a $1\times 1$-block. The two $3\times 3$-blocks correspond to two dual
representations of $\SL(3,\C)$, the $2\times 2$-block is a copy of $\SL(2,\C)$,
in the $1\times 1$-block there simply is 1. The component group is isomorphic
to $\mu_3$. So $\Zm_0 = \mu_3 \times \SL(2,\C)\times \SL(3,\C)$.
Hence $\Ho^1\hm \Zm_0=1$.

\noindent{\bf 90} Representative: $e_{147}+e_{156}-e_{237}-e_{246}-e_{345}$.  Rank 7, Gurevich number X.\\
Here $\Zm_0$ is semisimple of type $G_2+A_1$.
The matrices in the centralizer have a $7\times 7$-block for $G_2$ and a
$2\times 2$-block for $A_1$. The centralizer of the Lie algebra $\z_0$ of
$\Zm_0$ consists of
$$\diag(t^4,t^4,t^4,t^4,t^4,t^4,t^4,t,t)$$
with $t^6=1$. The elements with $t=1$, $t=-1$ lie in the identity component,
the others do not. So $\Zm_0\simeq G_2(\C)\times \SL(2,\C) \times \mu_3$.

Since the $H^1$ is trivial for $\SL_2$ and $\mu_3$,
it suffices to compute $\Ho^1\hm  G_2$. It is known that $\#H^1=2$
(because $G_2$ has exactly two real forms: the split one and the compact one).
We explain how to find explicit cocycles.

Let $G=G_2$, $T\subset G$ be a split torus, $R=R(G,T)\subset {\sf X}^*(T)$
denote the root system,
$\Pi=\{\alpha_l,\alpha_s\}$ be a system of simple roots in $R$, where
$\alpha_l$ is a long root and $\alpha_s$ is a short root.
Then $\Pi$ is a basis of ${\sf X}^*(T)$.
Let $t\in T$ be the element such that
$$\alpha_l(t)=-1,\quad  \alpha_s(t)=1.$$
Then $t$ is real, $t^2=1$, so $t$ is a cocycle.
It is cohomologous to 1, because it is contained in the split torus $T$.

Now let $T'\subset G$ be an {\em anisotropic} maximal torus defined over $\R$,
that is, $T'\subset G$ is defined over $\R$ and $T'(\R)\simeq U(1)\times U(1)$.

We can find $g\in G$ such that $gTg^{-1}=T'$.
Set $t'=gtg^{-1}\in T'(\C)_2= T'(\R)_2$.
By Borovoi and Timashev  \cite[Theorem 13.3]{BT2021}, $1,t'$ are explicit cocycles for $G=G_2$.

Our computations show that we can take
$$t'=\SmallMatrix{ 0 & 0 & 0 & 0 & 0 & 0 & 1 & 0 & 0 \\
                0 & 0 & 0 & 0 & 0 & 1 & 0 & 0 & 0 \\
                0 & 0 & 0 & 0 & 1 & 0 & 0 & 0 & 0 \\
                0 & 0 & 0 & -1 & 0 & 0 & 0 & 0 & 0 \\
                0 & 0 & 1 & 0 & 0 & 0 & 0 & 0 & 0 \\
                0 & 1 & 0 & 0 & 0 & 0 & 0 & 0 & 0 \\
                1 & 0 & 0 & 0 & 0 & 0 & 0 & 0 & 0 \\
                0 & 0 & 0 & 0 & 0 & 0 & 0 & 1 & 0 \\
                0 & 0 & 0 & 0 & 0 & 0 & 0 & 0 & 1}. $$

\noindent{\bf 91} Representative: $e_{127}+e_{379}-e_{456}$.  Rank 8, Gurevich number XVI.\\
Here $\Zm_0$ is connected (this is established by computing a primary
decomposition of the defining ideal of $\Zm_0$) and isomorphic to
$\SL(3,\C)\times \Sp(4,\C) \times \C^\times$ (this follows by inspecting
the Lie algebra of $\Zm_0$).
In matrix form it splits as a $4\times 4$-block (for
$\Sp(4,\C)$) a $3\times 3$-block (for $\SL(3,\C)$) and two $1\times 1$-blocks.
Hence $\Ho^1\hm \Zm_0=1$.

\noindent{\bf 92} Representative: $e_{157}-e_{247}-e_{356}+e_{679}$.
Rank 8, Gurevich number XI.\\
The Lie algebra of $\Zm_0$ is isomorphic to $\ssl(2,\C)+\ssl(2,\C)+
\mathfrak{t}_2$, where the last summand denotes the 2-dimensional center.
Computing the primary decomposition of the defining ideal shows that
$\Zm_0$ is connected. The natural 9-dimensional module
splits as direct sum of six irreducible submodules of dimensions 1, 1, 1,
2, 2, 2. Looking at the torus with Lie algebra $\mathfrak{t}_2$ it is seen
that $\Zm_0$ is isomorphic to $\GL(2,\C)\times\GL(2,\C)$. So
$\Ho^1\hm \Zm_0=1$.

\noindent{\bf 93} Representative: $e_{157}-e_{247}-e_{256}-e_{346}$.
Rank 7, Gurevich number IX.\\
The Lie algebra of $\Zm_0$ is isomorphic to $\ssl(2,\C)+\ssl(2,\C)+\ssl(2,\C)+
\mathfrak{t}_1$, where the last summand denotes the 1-dimensional center.
The Gr\"obner basis is difficult to compute. However, by adding equations it
is easy to check that the elements of $\Zm_0$ map the summands isomorphic to
$\ssl(2,\C)$ in the Lie algebra to themselves. Secondly, the elements of $\Zm_0$
that act as the identity on the semisimple part form a 1-dimensional not
connected diagonal group, which we denote $U$. The diagonal matrices in
$\Zm_0$ form a connected 4-dimensional torus containing $U$. Therefore $\Zm_0$ is
connected.

The group $\Zm_0$ is the image of $\widehat{\Zm_0}
=G_1\times G_2\times G_3\times T_1$, where each $G_i$ is $\SL(2,\C)$ and
$T_1=\C^\times$, under the 9-dimensional representation
\[\rho \colon \widehat{\Zm_0}\to GL(V_9),\]
where  the 9-dimensional $\Zm_0$-module $V_9$ splits as a direct sum of 3 modules
\[ V_9=V_3\oplus V_4\oplus  V_2\]
of dimensions 3, 4, 2, with highest weights $(2,0,0,-2)$, $(1,1,0,1)$,
$(0,0,1,1)$, respectively.
It is easy to see that $\ker\rho$ is a group of order 4, namely,
\[\ker\rho\,=\,\{1,\,(-1,-1,1,1),\,(1,-1,-1,-1),\,(-1,1,-1,-1)\}.\]

\begin{lemma*}
We have $\# \Ho^1\hm \Zm_0=2$ with  cocycles
\[1\ \text{and } \rho(w,w,1,1),\quad\text{where }  w=\SmallMatrix{\phantom{-} 0 &1\\-1 &0}\in\SL(2,\C).   \]
\end{lemma*}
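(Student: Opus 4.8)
The plan is to realize $\Zm_0$ as the quotient of $\widehat{\Zm_0}=\SL(2,\C)^3\times\C^\times$ by the central subgroup $K:=\ker\rho$ of order $4$ described above, and to analyse the short exact sequence of $\R$-groups $1\to K\to\widehat{\Zm_0}\labelto{\rho}\Zm_0\to1$ via Galois cohomology. Since the four elements of $K$ are represented by real $(\pm I)$ matrices, $\Gamma$ acts trivially on $K\cong(\Z/2\Z)^2$; as $K$ is an elementary abelian $2$-group this gives $\Ho^1 K=\Ho^2 K=K$. By Table~\ref{tab:orbitreps} the centralizer in $\ssl(9,\R)$ of a real homogeneous $\ssl_2$-triple with nilpotent part $e_{157}-e_{247}-e_{256}-e_{346}$ is $3\,\ssl(2,\R)+\ttt$, so $\widehat{\Zm_0}$ carries the standard (split) $\Gamma$-action and $\Ho^1\widehat{\Zm_0}=\Ho^1\SL(2,\R)^3\times\Ho^1\R^\times=1$. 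Feeding this into the exact sequence of Proposition~\ref{p:Serre-43} produces a connecting map $\Delta\colon\Ho^1\Zm_0\to\Ho^2 K$ whose fibre over the neutral element is $\rho_*(\Ho^1\widehat{\Zm_0})=\{[1]\}$.

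Next I would determine $\operatorname{im}\Delta$. For $\beta=[b]\in\Ho^1\Zm_0$ the class $\Delta(\beta)$ is represented by $\widehat b\,\ov{\widehat b}\in K$, where $\widehat b=(g_1,g_2,g_3,t)\in\widehat{\Zm_0}(\C)$ is any lift of $b$; its $\C^\times$-coordinate equals $t\ov t>0$, and among the four elements of $K$ only $1$ and $\epsilon:=(-I,-I,1,1)$ have positive $\C^\times$-coordinate, so $\Delta(\beta)\in\{1,\epsilon\}$. On the other hand $b_0:=\rho(w,w,1,1)$ with $w=\SmallMatrix{0&1\\-1&0}$ is a $1$-cocycle: it is real and $(w,w,1,1)^2=\epsilon\in K$, so $b_0\ov{b_0}=\rho(\epsilon)=1$; moreover $\Delta([b_0])=[\epsilon]=\epsilon\neq1$. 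Hence $[b_0]\neq[1]$ and $\operatorname{im}\Delta=\{1,\epsilon\}$.

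The crux, and the step I expect to be the main obstacle, is to prove $\Delta^{-1}(\epsilon)=\{[b_0]\}$. The naive idea of twisting everything by $b_0$ and rerunning the first paragraph fails, since the twisted group is no longer split: $\Zm_0$ acts on $\widehat{\Zm_0}$ by conjugation, $b_0$ lifts to $\widehat b_0:=(w,w,1,1)$, and conjugation by $w$ on the first two $\SL_2$-factors turns them into $\SU(2)$ (a short check: $g=w\ov g w^{-1}$ iff $g=\SmallMatrix{a&b\\-\ov b&\ov a}$), so ${}_{b_0}\widehat{\Zm_0}\cong\SU(2)\times\SU(2)\times\SL(2,\R)\times\R^\times$ and $\Ho^1{}_{b_0}\widehat{\Zm_0}\cong(\Z/2\Z)^2$ by Proposition~\ref{p:U(m,n)}. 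What rescues the argument is that $\Ho^1 K\to\Ho^1{}_{b_0}\widehat{\Zm_0}$ is \emph{surjective}: the classes of $1,\epsilon,\eta:=(1,-I,-I,-1),\epsilon\eta$ hit the four distinct classes of $\Ho^1{}_{b_0}\widehat{\Zm_0}$, because the nontrivial class of each $\SU(2)$-factor is represented by $-I$ while $-I\in\SL(2,\R)$ and $-1\in\R^\times$ are cohomologically trivial. Granting this, take $\beta=[b]\in\Delta^{-1}(\epsilon)$, choose a lift $\widehat b$ with $\widehat b\,\ov{\widehat b}=\epsilon$, and set $c:=\widehat b_0^{-1}\widehat b$. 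Using that $\widehat b_0$ is real with $\widehat b_0^2=\epsilon$ central, a short computation with the twisted action ${}^{\gamma*}x=\widehat b_0\,\ov x\,\widehat b_0^{-1}$ gives $c\cdot{}^{\gamma*}c=1$, so $c\in\Zl^1{}_{b_0}\widehat{\Zm_0}$; by the surjectivity just established $c$ is cohomologous in ${}_{b_0}\widehat{\Zm_0}$ to an element of $K$, hence its image $b_0^{-1}b$ represents the neutral class in $\Ho^1{}_{b_0}\Zm_0$. Transporting this back through the twisting bijection $\tau_{b_0}\colon[x]\mapsto[xb_0]$, and using $b_0^{-1}bb_0=b_0^{-1}b\ov{b_0}\sim b$, yields $\beta=[b_0]$.

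Combining the three parts,
\[\#\Ho^1\Zm_0=\#\Delta^{-1}(1)+\#\Delta^{-1}(\epsilon)=1+1=2,\]
with representing cocycles $1$ and $\rho(w,w,1,1)$, as claimed; as a consistency check, these two classes correspond exactly to the two real forms recorded for case \textbf{93} in Table~\ref{tab:orbitreps}, namely $3\,\ssl(2,\R)+\ttt$ (for $[1]$) and $\ssl(2,\R)+2\,\su(2)+\ttt$ (for $[\rho(w,w,1,1)]$). The only genuinely delicate point is the surjectivity of $\Ho^1 K\to\Ho^1{}_{b_0}\widehat{\Zm_0}$, which forces the fibre $\Delta^{-1}(\epsilon)$ to be a singleton despite $\Ho^1$ of the twisted cover being nontrivial.
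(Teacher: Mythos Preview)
Your proof is correct and takes a different, arguably cleaner route than the paper's. The paper first passes to the semisimple part $\Zm_0^{\mathrm{ss}}$, reduces to $G=(G_1\times G_2)/\mu_2$ using $\Ho^1 G_3=1$, and computes $\Ho^1 G$ via the compact-torus method of \cite{BT2021}, obtaining three candidate cocycles $1,\ \rho(w,w),\ \rho(w,-w)$; it then shows the last two become equivalent in $\Zm_0$ by exhibiting the explicit conjugating element $b=\rho(1,1,i,i)$, and finally uses the same connecting map $\Delta$ as you do to see $[\rho(w,w)]\neq[1]$. Your approach instead works directly with the central extension $1\to K\to\widehat{\Zm_0}\to\Zm_0\to1$: the positivity observation $t\bar t>0$ pins down $\operatorname{im}\Delta\subseteq\{1,\epsilon\}$ without first listing cocycles, and the twisting argument (together with the bijection $\Ho^1 K\isoto\Ho^1{}_{b_0}\widehat{\Zm_0}$, which you verify elementwise) replaces the paper's explicit collapse $\rho(w,w)\sim\rho(w,-w)$. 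Your route yields a more uniform, structural argument that avoids the external compact-torus reference; the paper's route is more constructive in that it explicitly names the element $\rho(1,1,i,i)$ responsible for the collapse.

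One small presentational point: invoking Table~\ref{tab:orbitreps} to justify that $\widehat{\Zm_0}$ carries the split $\Gamma$-action is mildly circular, since that table is the paper's output. The underlying fact---that the real centralizer Lie algebra of the given real $\ssl_2$-triple is $3\,\ssl(2,\R)+\ttt$---is an independent Lie-algebra computation and is what you actually need; the paper's own proof makes the same implicit use of it when it asserts $\Ho^1 G_3=1$ and works with the split form of $G_1\times G_2$.
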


\begin{proof}
We write $\Zm_0^\mathrm{ss}=[\Zm_0,\Zm_0]=\rho(G_1\times G_2\times G_3\times 1)$.
First we compute $\Ho^1\hm  \Zm_0^\mathrm{ss}$.
Note that we have $\Zm_0^\mathrm{ss}\cong\rho(G_1\times G_2\times 1\times 1)\times G_3$, and $\Ho^1  G_3=1$.
Thus the embedding
\[\rho(G_1\times G_2\times 1 \times 1)\into \Zm_0^\mathrm{ss}\]
induces a bijection on $H^1$.
Therefore, it suffices to compute the $H^1$ of $\rho(G_1\times G_2)$,
where we write  $\rho(G_1\times G_2)$ for $\rho(G_1\times G_2\times 1 \times 1)$.
One can compute the $H^1$ by the method of Borovoi and Timashev \cite[Theorem 13.3]{BT2021}.
Here we explain the answer elementarily.

It suffices to compute the $H^1$ of $G=(G_1\times G_2)/\mu_2$, where $G_i=\SL(2,\C)$.
Consider $G_1$ and the compact maximal torus
\[T_1=\left\{\SmallMatrix{ \cos\alpha &\sin\alpha\\-\sin\alpha &\cos\alpha},\quad \alpha\in\R\right\}.\]
We set $T=(T_1\times T_2)/\mu_2\subset G$.
It is known (see \cite[Theorem 1]{Borovoi1988} or \cite[Theorem 9]{Borovoi2014}) that $\Ho^1 G$ comes from $T(\R)_2$.
Let $\xi\in T(\R)_2$; then $\xi$ comes from $(w^{n_1}, w^{n_2})\in T_1\times T_2$, where $n_1,n_2\in \{0,1,2,3\}$.

Note that the cocycles $(1,-1),\ (-1,1),\ (-1,-1)$ of $G$ come from cocycles of $G_1\times G_2$.
Since $\Ho^1  (G_1\times G_2)=1$, we see that these three cocycles are coboundaries in $G_1\times G_2$,
and therefore, they are coboundaries in $G$.
This we have 5 cocycles: $$1,\ (w,w),\ (w,-w),\ (-w,w),\ (-w,-w).$$
Since $\rho(-w,-w)=\rho(w,w)$ and $\rho(w,-w)=\rho(-w,w)$, we have three different cocycles: $1,\ \rho(w,w),\ \rho(w,-w)$ in $G$.
Thus
\[\Ho^1 G=\{1, [\rho(w,w)], [\rho(w,-w)]\}.\]
One can show that these three cohomology classes in $G$ are pairwise different, but we shall not need that.

We consider the embedding $G\into G\times \SL(2,\C)$.
Since $\Ho^1 \SL(2,\C)=1$, this embedding induces a bijection on $H^1$.

Write $Z_{123}=\rho(G_1\times G_2\times G_3\times 1)\cong G\times G_3$.
We have a short exact sequence
\[1\to Z_{123}\to \Zm_0\to\C^\times\to 1.\]
Since $\Ho^1 \C^\times=1$,
we see that the embedding
\[G\into Z_{123}\into \Zm_0\]
induces a surjection on $H^1$.
Thus
\[\Ho^1\hm \Zm_0=\{1, [\rho(w,w)], [\rho(w,-w)]\},\]
but maybe some of these cohomology classes coincide.

We show that $\rho(w,w)$ and $\rho(w,-w)$ are equivalent in $\Zm_0$.
Indeed, consider $b=\rho(1,1,i,i)\in \Zm_0$.
Then
\[b^{-1}\cdot\rho(w,w) \cdot \bar b:= b^{-1}\cdot\rho(w,w,1,1) \cdot \bar b=\rho(w,w,-1,-1)=\rho(w,-w,1,1)=\rho(w,-w),\]
as required.

Write $a=\rho(w,w)\in \Zm_0$.
We show that $a\not\sim 1$.
We consider the short exact sequence
\[1\to\ker\,\rho\labelto{ } \widehat{\Zm_0}\labelto{\rho}\Zm_0\to 1\]
and the coboundary
\[\Delta[a]=a\bar a =\rho(w^2,w^2,1,1)=\rho(-1,-1,1,1)\in \ker\,\rho=
\Ho^2 \ker\,\rho.\]
Since $\Delta[a]\neq 1$, we conclude that $[a]\neq 1\in \Ho^1\hm \Zm_0$.
Thus $\#\Ho^1\hm \Zm_0=2$ with cocycles  [1] and $a=\rho(w,w,1,1)$, as required.
\end{proof}

\noindent{\bf 94} Representative: $e_{167}-e_{247}-e_{356}$.  Rank 7, Gurevich number VIII.\\
In this case the primary decomposition of the defining ideal of $\Zm_0$ shows
that $\Zm_0$ is not connected.
We have that $\Zm_0^\circ$ consists of
$$ X(\SmallMatrix{ a_{22} & a_{24} \\a_{42} & a_{44}},
\SmallMatrix{ a_{33} & a_{35} \\a_{53} & a_{55}},
\SmallMatrix{ a_{88} & a_{89} \\a_{98} & a_{99}}) =
\SmallMatrix{ a_{11} & 0 & 0 & 0 & 0 & 0 & 0 & 0 & 0 \\
                0 & a_{22} & 0 & a_{24} & 0 & 0 & 0 & 0 & 0 \\
                0 & 0 & a_{33} & 0 & a_{35} & 0 & 0 & 0 & 0 \\
                0 & a_{42} & 0 & a_{44} & 0 & 0 & 0 & 0 & 0 \\
                0 & 0 & a_{53} & 0 & a_{55} & 0 & 0 & 0 & 0 \\
                0 & 0 & 0 & 0 & 0 & a_{66} & 0 & 0 & 0 \\
                0 & 0 & 0 & 0 & 0 & 0 & a_{77} & 0 & 0 \\
                0 & 0 & 0 & 0 & 0 & 0 & 0 & a_{88} & a_{89} \\
                0 & 0 & 0 & 0 & 0 & 0 & 0 & a_{98} & a_{99}} $$
with
\begin{align*}
  a_{11} &= (a_{88}a_{99}-a_{89}a_{98})^{-1}\\
  a_{66} &= (a_{33}a_{55}-a_{35}a_{53})^{-1}\\
  a_{77} &= (a_{22}a_{44}-a_{24}a_{42})^{-1},
\end{align*}
and
$$(a_{33}a_{55}-a_{35}a_{53})(a_{22}a_{44}-a_{24}a_{42})(a_{88}a_{99}-a_{89}a_{98})=1.$$
The component group is of order 2 and generated by
$$Q=\SmallMatrix{ -1 & 0 & 0 & 0 & 0 & 0 & 0 & 0 & 0 \\
                0 & 0 & 1 & 0 & 0 & 0 & 0 & 0 & 0 \\
                0 & 1 & 0 & 0 & 0 & 0 & 0 & 0 & 0 \\
                0 & 0 & 0 & 0 & 1 & 0 & 0 & 0 & 0 \\
                0 & 0 & 0 & 1 & 0 & 0 & 0 & 0 & 0 \\
                0 & 0 & 0 & 0 & 0 & 0 & 1 & 0 & 0 \\
                0 & 0 & 0 & 0 & 0 & 1 & 0 & 0 & 0 \\
                0 & 0 & 0 & 0 & 0 & 0 & 0 & 1 & 0 \\
                0 & 0 & 0 & 0 & 0 & 0 & 0 & 0 & 1}. $$
We have $Q^2=1$ and $QX(A,B,C)Q = X(B,A,C)$.

\begin{lemma*}
  $\Ho^1\hm \Zm_0 = \{[1],[Q]\}$.
\end{lemma*}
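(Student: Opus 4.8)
The plan is to run the standard dévissage along the connected–component exact sequence, exactly as in cases \textbf{70}, \textbf{71}, \textbf{86}. Write $A=\Zm_0^\circ$, $B=\Zm_0$, and $C=\Zm_0/\Zm_0^\circ=\{1,\bar Q\}\cong\mu_2$; since $Q$ is a real matrix with $Q^2=1$, the $\Gamma$-action on $C$ is trivial, so $\Ho^1\hm C=\{[1],[\bar Q]\}$ and $Q\in\Zl^1\hm B$ is a cocycle lifting $\bar Q$. From the short exact sequence
\[1\to A\labelto{}B\labelto{\pi}C\to1\]
one gets $\Ho^1\hm B=\pi_*^{-1}[1]\cup\pi_*^{-1}[\bar Q]$, and it remains to compute these two fibers.

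First I would show $\Ho^1\hm A=1$. Identifying $A=\Zm_0^\circ$ with $\{(X,Y,Z)\in\GL(2,\C)^3\mid \det X\cdot\det Y\cdot\det Z=1\}$ carrying the standard (entrywise) complex conjugation, the determinant map gives a short exact sequence of $\R$-groups
\[1\to \SL(2,\C)^3\to A\to \TT\to1,\qquad \TT=\{(x,y,z)\in(\C^\times)^3\mid xyz=1\}\cong(\C^\times)^2,\]
with $\TT$ quasi-trivial (standard action). Since $\Ho^1\hs\SL(2,\C)^3=1$ and $\Ho^1\hs\TT=1$, Corollary \ref{c:prop38} gives $\Ho^1\hm A=1$; then Corollary \ref{c:39-cor1} forces $\pi_*^{-1}[1]=\{[1]\}$. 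Next, twisting the exact sequence by the cocycle $Q$, Corollary \ref{c:39-cor2} identifies $\pi_*^{-1}[\bar Q]$ with the quotient of $\Ho^1\hm{}_Q A$ by the action of $(\hs_{\bar Q}C)^\Gamma$, so it suffices to prove $\Ho^1\hm{}_Q A=1$. Because conjugation by $Q$ acts on $A$ by $X(X',Y',Z')\mapsto X(Y',X',Z')$, the twisted group ${}_Q A$ contains the normal $\R$-subgroup $\{X(1,1,Z)\mid\det Z=1\}$, on which the twisted $\Gamma$-action is plain complex conjugation, so it is $\SL_{2,\R}$; the quotient is $\GL(2,\C)^2$ with $\gamma$ acting by $(X',Y')\mapsto(\overline{Y'},\overline{X'})$, i.e. $\Res\GL_{2,\C}$. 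As $\Ho^1\hs\SL_{2,\R}=1$ and $\Ho^1\hs\Res\GL_{2,\C}=1$ (Proposition \ref{p:Weil}(i)), Corollary \ref{c:prop38} gives $\Ho^1\hm{}_Q A=1$, hence $\pi_*^{-1}[\bar Q]=\{[Q]\}$. Combining, $\Ho^1\hm\Zm_0=\{[1],[Q]\}$.

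I expect the only delicate point to be the bookkeeping inside ${}_Q A$: one must verify that conjugation by $Q$ preserves $\{X(1,1,Z)\}$ and acts trivially on it (so that subgroup becomes the \emph{split} form $\SL_{2,\R}$, not some other real form), and that on the quotient the twisted action is precisely the Weil-restriction swap. Both are immediate from the identity $QX(X',Y',Z')Q=X(Y',X',Z')$ together with the fact that the ambient $\Gamma$-action on $\Zm_0\subset\SL(9,\C)$ is entrywise conjugation — but these are the spots where a slot could be mislabeled. Everything else is the routine dévissage already used repeatedly in this section, relying only on the vanishing facts $\Ho^1\hs\SL(n,\C)=1$, $\Ho^1\hs(\C^\times)^k=1$, $\Ho^1\hs\Res\GL_{2,\C}=1$, and Corollaries \ref{c:prop38}, \ref{c:39-cor1}, \ref{c:39-cor2}.
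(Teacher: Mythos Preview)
Your proof is correct and follows essentially the same dévissage as the paper. The paper uses the projection $(A,B,C)\mapsto(A,B)$ with kernel $\SL(2,\C)$ for both the untwisted and twisted computations, whereas you use the determinant map for the untwisted case and the same projection for the twisted case; these are equivalent routine variations, and your verification that $Q$ fixes the $Z$-slot (so the kernel stays split $\SL_{2,\R}$ and the quotient becomes $\Res\GL_{2,\C}$) is exactly the point the paper is making.
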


\begin{proof}
We have $\Zm_0= \Zm_0^\circ\rtimes\{1,Q\}$. Moreover,
\[\Zm_0^\circ=\ker[\GL(2,\C)\times\GL(2,\C)\times\GL(2,\C)\to \C^\times],\quad (A,B,C)\mapsto \det(A)\hs\det(B)\hs\det(C).\]
Consider the surjective homomorphism
\[\phi\colon \Zm_0^\circ\to \GL(2,\C)\times\GL(2,\C),\quad (A,B,C)\mapsto (A,B)\]
with kernel $\SL(2,\C)$.
From the short exact sequence
\[ 1\to \SL(2,\C)\to \Zm_0^\circ\labelto\phi \GL(2,\C)\times\GL(2,\C)\to 1\]
we see that $H^1 \Zm_0^\circ=1$.
From the short exact sequence
\[ 1\to \SL(2,\C)\to\hs_Q \Zm_0\labelto\phi R_{\C/\R}\GL(2,\C)\to 1\]
we see that $H^1 \hs_Q \Zm_0^\circ=1$.
Let $\pi\colon \Zm_0\to\Zm_0/\Zm_0^\circ\cong\mu_2$ denote the canonical homomorphism.
We see that
\[\Ho^1\hm \Zm_0=\ker \pi_*\,\cup\,\pi_*^{-1}\pi_*[Q]=\{1,[Q]\},\]
as required.
\end{proof}

\noindent{\bf 95} Representative: $e_{167}-e_{257}-e_{347}-e_{456}$. Rank 7, Gurevich number VII.\\
In this case we can compute the primary decomposition of the ideal
generated by the defining polynomials, and it shows that the centralizer is
connected. The Lie algebra of $\Zm_0$ is equal to $\s+\mathfrak{t}_1$,
where the second summand denotes the 1-dimensional center and $\s\simeq
\ssl(2,\C)+\ssl(3,\C)$. The natural 9-dimensional module has basis
$v_1,\ldots,v_9$. As $\s$-module it splits as direct sum of two 3-dimensional
modules (with bases $\{v_1,v_2,v_3\}$, $\{v_4,v_5,v_6\}$ respectively),
a 1-dimensional module with basis $\{v_7\}$ and a 2-dimensional module with
basis $\{v_8,v_9\}$.
Let $S$ denote the connected algebraic subgroup of $\Zm_0$ with Lie algebra
$\s$. Then the matrices in $S$ are block diagonal with first two
$3\times$-blocks, containing two dual representations of $\SL(3,\C)$,
then a $1\times 1$-block containing 1, and finally a $2\times$-block
containing the natural representation of $\SL(2,\C)$. Let $T$ be the connected
algebraic
subgroup with Lie algebra $\mathfrak{t}_1$. Then $T$ consists of
$\diag(t,t,t,1,1,1,t^{-1}, t^{-1}, t^{-1})$ for $t\in \C^\times$. Hence the
product is direct and $\Zm_0\simeq \SL(3,\C)\times \SL(2,\C)\times
T_1$, so that $\Ho^1\hm \Zm_0=1$.

\noindent{\bf 96} Representative: $-e_{247}-e_{356}$. Rank 6, Gurevich number V.\\
Here $\Zm_0^\circ$ is isomorphic to $\SL(3,\C)^3$. More precisely,
it consists of
$$ X(\SmallMatrix{a_{11} & a_{18} & a_{19}\\a_{81}&a_{88}&a_{89}\\a_{91}&a_{98}&a_{99}},
\SmallMatrix{a_{22} & a_{24} & a_{27}\\a_{42}&a_{44}&a_{47}\\a_{72}&a_{74}&a_{77}},
\SmallMatrix{a_{33} & a_{35} & a_{36}\\a_{53}&a_{55}&a_{56}\\a_{63}&a_{65}&a_{66}})=
\SmallMatrix{ a_{11} & 0 & 0 & 0 & 0 & 0 & 0 & a_{18} & a_{19} \\
                0 & a_{22} & 0 & a_{24} & 0 & 0 & a_{27} & 0 & 0 \\
                0 & 0 & a_{33} & 0 & a_{35} & a_{36} & 0 & 0 & 0 \\
                0 & a_{42} & 0 & a_{44} & 0 & 0 & a_{47} & 0 & 0 \\
                0 & 0 & a_{53} & 0 & a_{55} & a_{56} & 0 & 0 & 0 \\
                0 & 0 & a_{63} & 0 & a_{65} & a_{66} & 0 & 0 & 0 \\
                0 & a_{72} & 0 & a_{74} & 0 & 0 & a_{77} & 0 & 0 \\
                a_{81} & 0 & 0 & 0 & 0 & 0 & 0 & a_{88} & a_{89} \\
                a_{91} & 0 & 0 & 0 & 0 & 0 & 0 & a_{98} & a_{99}} $$
(with the determinants of the submatrices equal to 1). It is straightforward to
obtain this from the Lie algebra. The component group is determined with the
method of Example \ref{exa:cencomps}(2). Here
the diagram automorphism group of the Lie
algebra has order 48. For each element of that group we have added the equations
that express that an element of the centralizer permutes the Chevalley
generators in the way prescribed by the chosen outer automorphism. For most
diagram automorphisms this gave no elements in the centralizer, except for the
identity (which yielded elements already in the identity component) and
the diagram automorphism that permutes the second and third components. This
yields the element
$$ Q =\SmallMatrix{ -1 & 0 & 0 & 0 & 0 & 0 & 0 & 0 & 0 \\
                0 & 0 & 1 & 0 & 0 & 0 & 0 & 0 & 0 \\
                0 & 1 & 0 & 0 & 0 & 0 & 0 & 0 & 0 \\
                0 & 0 & 0 & 0 & 1 & 0 & 0 & 0 & 0 \\
                0 & 0 & 0 & 1 & 0 & 0 & 0 & 0 & 0 \\
                0 & 0 & 0 & 0 & 0 & 0 & 1 & 0 & 0 \\
                0 & 0 & 0 & 0 & 0 & 1 & 0 & 0 & 0 \\
                0 & 0 & 0 & 0 & 0 & 0 & 0 & -1 & 0 \\
                0 & 0 & 0 & 0 & 0 & 0 & 0 & 0 & -1} $$
that thus generates the component group.
We have $Q^2=1$ and $QX(A,B,C) Q = X(A,C,B)$.

\begin{lemma*}
$\Ho^1\hm \Zm_0=\{[1],[Q]\}$.
\end{lemma*}

\begin{proof}
Note that $Q$ is a 1-cocycle in $\Zm_0$. Let $C=\Zm_0/\Zm_0^\circ =
\{ \bar 1, \bar Q\}$. Then $\Ho^1\hm C = \{ [\bar 1], [\bar Q] \}$.
Consider the natural homomorphism $\pi\colon \Zm_0\to C$;
then
\[\Ho^1\hm  \Zm_0=\ker\pi_*\cup\pi_*^{-1}([\bar Q]).\]

Since $\Zm_0^\circ\simeq \SL(2,\C)^3$ we have $\Ho^1\hm \Zm_0^\circ=1$.
Thus $\ker\pi_*=1$.

Since $_Q \Zm_0^\circ\simeq R_{\C/\R}\SL(2,\C)\times\SL_2(\R)$ as a real algebraic
group, we have  $\Ho^1\hm \hs_Q \Zm_0^\circ=1$.
Thus by Corollary \ref{c:39-cor2}
$\# \pi_*^{-1}([\bar Q])=1$ and $\pi_*^{-1}([\bar Q])=[Q]$.
Thus $\Ho^1\hm \Zm_0=\{1,[Q]\}$, as required.
\end{proof}

\noindent{\bf 97} Representative: $-e_{267}-e_{357}-e_{456}$. Rank 6, Gurevich number IV.\\
By computing the primary decomposition of the defining ideal
we see that the centralizer is connected. It is of type $A_2+A_2+T_1$. The
centralizer consists of
block diagonal matrices, consisting of three $3\times 3$-blocks. One block
has a copy of $\SL(3,\C)$. The other copy of $\SL(3,\C)$  occupies the two
other blocks (where the corresponding $3$-dimensional representations are
isomorphic). The intersection of $T_1$ and the semisimple part is
$\mu_3$, so the product is not direct. Let $G= \SL(3,\C)\times \SL(3,\C)
\times T_1$; then $\Zm_0 \simeq G/N$, where $N$ is of order 3. Hence by
Lemma \ref{l:H1-bijective} there is a bijection between $\Ho^1\hm \Zm_0$ and
$\Ho^1\hm G$. As the latter is trivial, so is the former.

\noindent{\bf 98} Representative: $e_{167}-e_{257}-e_{347}-e_{789}$.\\
The centralizer is of type $C_4$. The matrices of the identity component
consist of a $8\times 8$-block (containing $\Sp(8,\C)$) and a $1\times 1$-block
(containing just 1). There are no diagram automorphisms. Adding the equations
expressing that an element commutes with the Lie algebra yields that the
centralizer is not connected and isomorphic to $\Sp(4,\C)\times \mu_3$.
Thus $\Ho^1\hm \Zm_0=1$.

\noindent{\bf 99} Representative: $e_{167}-e_{257}-e_{347}$. Rank 7, Gurevich number VI.\\
Here the Lie algebra of $\Zm_0$ is $\z_0=\s+\mathfrak{t}_1$, where the latter
denotes the 1-dimensional center. Furthermore, $\s$ is isomorphic to $\mathfrak{sp}(6,\C)
+\ssl(2,\C)$. Let $S$ be the connected subgroup of $\Zm_0$ with Lie algebra $\s$.
Then $S$ consists of matrices that are block
diagonal with first a $6\times 6$-block, then a $1\times 1$-block, then a
$2\times 2$-block. The $6\times 6$-block contains the 6-dimensional irreducible
representation of $\Sp(6,\C)$ with highest weight $(1,0,0)$. So in that block we
have a copy of $\Sp(6,\C)$. The $1\times 1$-block just has 1. The
$2\times 2$-block has a copy of $\SL(2,\C)$. Let $T_1$ be the connected subgroup
of $\Zm_0$ with Lie algebra $\mathfrak{t}_1$. Then $T_1$ consists of
$\diag(a,a,a,a,a,a,a^{-2},a^{-2},a^{-2})$. Its intersection with $S$
consists of 1 and $\diag(-1,-1,-1,-1,-1,-1,1,1,1)$. Here there are no
outer automorphisms, so each component has elements that commute with
the Lie algebra. Adding the corresponding linear equations we find that
the elements commuting with the Lie algebra form a diagonal group, contained in
the identity component. Hence the centralizer is connected.

It follows that
\[\Zm_0\simeq (\Sp(6,\C)\times T_1)/\mu_2\,\times \SL(2,\C).\]
We have that $\Ho^1\hm (\Sp(6,\C)\times T_1)/\mu_2=1$,
which follows from the short exact sequence
\[ 1\to \Sp(6,\C)\to\,  (\Sp(6,\C)\times T_1)/\mu_2\,\labelto{\phi}
\C^\times\to 1,\]
where $\phi((g_{ij})) = g_{77}$. Hence $\Ho^1 \Zm_0 = 1$.

\noindent{\bf 100} Representative: $-e_{367}-e_{457}$. Rank 5, Gurevich number III.\\
The Lie algebra of $\Zm_0$ is isomorphic to $\s+\mathfrak{t}_1$, where the
latter denotes the 1-dimensional center. Furthermore, $\s$ is isomorphic to
$\ssl(4,\C)+\mathfrak{sp}(6,\C)$. Let $S$ denote the connected subgroup of
$\Zm_0$ with Lie algebra $\s$. Then $S$ consists of matrices that are block
diagonal with two $4\times 4$-blocks and one $1\times 1$-block. The first
$4\times 4$-block (occupying the entries $(i,j)$ with $i,j\in \{1,2,8,9\}$)
contains a copy of $\SL(4,\C)$, the second one (occupying the entries
$(i,j)$ with $i,j\in \{3,4,5,6\}$) has a copy of $\Sp(4,\C)$.
The $1\times 1$-block (in position $(7,7)$) just has 1. Let $T_1$
denote the connected subgroup of $\Zm_0$ with Lie algebra $\mathfrak{t}_1$.
Then $T_1$ consists of $T_1(a)=\diag(a,a,a^{-2},a^{-2},a^{-2},a^{-2},a^{4},a,a)$.
Its intersection with the semisimple part consists of $T_1(a)$ with $a^4=1$.
Here there is one diagram automorphism. But adding the corresponding
equations gives that there is no element of the centralizer realizing that.
Adding the equations expressing that an elements commutes with the Lie algebra
yields a diagonal group, all of whose elements lie in $S$. It follows that
the centralizer is connected.

We have a short exact sequence
\[1\to\, \SL(4,\C)\times\Sp(4,\C)\,\labelto{ }\Zm_0\labelto{\phi}
\C^\times\to 1,\]
where $\phi((g_{ij})) = g_{77}$. It follows that $\Ho^1 \Zm_0=1$.

\noindent{\bf 101} Representative: $-e_{567}$. Rank 3, Gurevich number II.\\
The centralizer is isomorphic to $\SL(3,\C)\times \SL(6,\C)$.
(The identity component is directly obtained from the Lie algebra; computations
like in the previous cases show that the centralizer is connected.)
Hence $\Ho^1\hm \Zm_0=1$.


\section{The semisimple orbits}\label{sec:semsim}

In this section we identify the spaces $\g_1^\cC$ and $\bigwedge^3 \C^9$ on which
the group $\Gtil_0=\SL(9,\C)$ acts.
Similarly we identify $\g_1$ and $\bigwedge^3 \R^9$ on which the group
$\SL(9,\R)$ acts.

\subsection{Summary of some results of Vinberg-Elashvili}\label{sec:sumVE}

In \cite{VE1978} it is shown that

\begin{align*}
  p_1 &= e_{123}+e_{456}+e_{789}\\
  p_2 &= e_{147}+e_{258}+e_{369}\\
  p_3 &= e_{159}+e_{267}+e_{348}\\
  p_4 &= e_{168}+e_{249}+e_{357}
\end{align*}
span a Cartan subspace in $\g_1^\cC$. Throughout this section we will denote
this Cartan subspace by $\Cg$. As in Section \ref{subs:weyl} we denote the
corresponding Weyl group by $W$. Also for $p\in \Cg$ we define
\begin{align*}
  \Cg_p &= \{ h\in \Cg \mid wh=h \text{ for all } w\in W_p\}\\
  \Cg_p^\circ & = \{ q\in \Cg_p \mid W_q = W_p \},
\end{align*}
see  (\ref{eq:hp}), (\ref{eq:h0p}).

Let $\h$ be a Cartan subalgebra of $\g^\cC$ containing $\Cg$. It turns out
that there is a unique such Cartan subalgebra and $\h = (\h\cap \g_{-1}^\cC)
\oplus \Cg$. Let $\Phi$ be the root system of $\g^\cC$ with respect to
$\h$. Let $\theta$ denote the automorphism of $\g^\cC$ corresponding to
the given $\Z_3$-grading. Then $\h$ is $\theta$-stable and we define a
map $\theta_* : \h^*\to \h^*$, $\theta_*\gamma (h) = \gamma(\theta^{-1}h)$
for $\gamma$ in the dual space $\h^*$. We have $\theta_*^2+\theta_*+1=0$
so that for $\alpha\in \Phi$ the set $\Phi(\alpha) = \{ \pm\alpha,
\pm \theta_*(\alpha), \pm \theta_*^2(\alpha)\}$ lies in a 2-dimensional
space and forms a root subsystem of type $A_2$. The $\Phi(\alpha)$ form
a partition of $\Phi$, hence there are 40 such sets.
Note that for $h\in \Cg$ and $\alpha\in \Phi$ we have that $\theta_*(\alpha)(h)
= \zeta^2 \alpha(h)$ (where $\zeta$ is a third root of unity such that
$\theta(x) = \zeta x$ for all $x\in \g_1^\cC$). Hence the restrictions of
the elements of $\Phi(\alpha)$ to $\Cg$ are proportional.

To each $\Phi(\alpha)$ a complex reflection $w_\alpha$ (of order 3) is
associated, so we have 40 of those reflections.
For the details we refer to \cite{VE1978}, where it is also
shown that there exist $p_\alpha\in \Cg$ such that
\begin{equation}\label{eq:refl}
  w_\alpha(h) = h -\alpha(h) p_\alpha \text{ for } h\in \Cg.
\end{equation}

Furthermore, these are all complex reflections in $W$, and they form a
single conjugacy class.

Let $p\in \Cg$ and let $W_p$ denote the stabilizer of $p$ in $W$. It is known that
$W_p$ is generated by complex reflections (\cite[Proposition 14]{Vinberg1976})
so from \eqref{eq:refl} it follows that $W_p$ is generated by the $w_\alpha$
such that $\alpha(p)=0$.

As in Section \ref{subs:weyl} we define
$$\Cg_p^{\mathrm{reg}} = \{ q\in \Cg_p \mid \alpha(q)\neq 0 \text{ for all }
\alpha\in\Phi \text{ such that } \alpha |_{\Cg_p} \neq 0\}.$$
Let $\alpha\in \Phi$. Then $\alpha(p)=0$ if and only if $w_\alpha(p)=p$
if and only if $w_\alpha(q)=q$ for all $q\in \Cg_p$ if and only if
$\alpha(q)=0$ for all $q\in \Cg_p$. It follows that $\Cg_p^{\mathrm{reg}} =
\Cg_p^\circ$, which is a hypothesis for some of the results in Section
\ref{subs:weyl}. Hence all of these can be applied here.

\subsection{Using Galois cohomology for finding real orbits in semisimple trivectors}

Here we fix a $p\in \Cg$ and write $\Fm = \Cg_p^\circ$.
Let $\OOm=\Gtil_0\cdot p$ denote the $\Gtil_0$-orbit of $p$.
We wish to know whether $\OOm$ contains an $\R$-point
and whether $\OOm\cap\Fm$ contains an $\R$-point.
If $\OOm$ has a real point, we wish to classify real orbits in $\OOm$.

Define

\begin{align*}
  \Zm_{\Gtil_0}(\Fm) & = \{ g\in \Gtil_0 \mid gq=q \text{ for all }
  q\in \Fm\},\\
  \Nm_{\Gtil_0}(\Fm) & = \{ g\in \Gtil_0 \mid gq\in \Fm \text{ for all }
  q\in \Fm\}.
\end{align*}

As in Section \ref{subs:weyl} we set $\Gamma_p = \Nm_W(W_p)/W_p$.
Then from Lemma \ref{lem:Np} we get a surjective group homomorphism
$\varphi : \Nm_{\Gtil_0}(\Fm) \to \Gamma_p$ with kernel $\Zm_{\Gtil_0}(\Fm)$.
So $\varphi$ induces an isomorphism, which we also denote
$\varphi$, between $\Am =
\Nm_{\Gtil_0}(\Fm)/\Zm_{\Gtil_0}(\Fm)$ and $\Gamma_p$. Furthermore
for $g\in \Nm_{\Gtil_0}(\Fm)$ and $q\in \Fm$ we have that $gq = \varphi(g)q$.

\begin{proposition}\label{prop:ssorb}
  As before let $\OOm = \Gtil_0\cdot p$. Write $\Nm =  \Nm_{\Gtil_0}(\Fm)$,
  $\Zm=\Zm_{\Gtil_0}(\Fm)$.
\begin{enumerate}
  \item[\rm (i)]  $\OOm$ has an $\R$-point if and only if
  $\pbar=n^{-1}\cdot p$ for some $n\in \Zl^1\Nm$\hs.
  \item[\rm (ii)] Assume that $\OOm$ has an $\R$-point and let $n$ be as in (i).
  Write $a=n\Zm\in\Zl^1\Am$ and $\xi=[a]\in\Ho^1\Am$\hs.
  Then $\OOm$ has an $\R$-point in $\Fm$ if and only if $\xi=1$.
\end{enumerate}
\end{proposition}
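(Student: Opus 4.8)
The plan is to deduce Proposition~\ref{prop:ssorb} from the general Galois-cohomology machinery of Sections~\ref{s:group-coh} and~\ref{s:Galois-coh}, applied to the action of $\Gtil_0=\SL(9,\C)$ on the orbit $\OOm=\Gtil_0\cdot p$ and on the set $\Fm=\Cg_p^\circ$. Throughout I write $\sigma$ (equivalently $\upgam{(\cdot)}$) for the complex conjugation on $\g_1^\cC$ coming from the real structure $\g_1$, so $\pbar=\sigma(p)$; the point is that $\Fm$ is defined over $\R$ because the bracket on $\g^\cC$ is, hence $\sigma(\Fm)=\Fm$ and $\sigma$ normalizes $\Nm=\Nm_{\Gtil_0}(\Fm)$ and $\Zm=\Zm_{\Gtil_0}(\Fm)$, making $\Nm$, $\Zm$ and $\Am=\Nm/\Zm$ into $\R$-groups and $\Am\xrightarrow{\ \sim\ }\Gamma_p$ a $\Gamma$-equivariant isomorphism.

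For part~(i): if $\OOm$ has an $\R$-point $q$, then since $q\in\OOm$ there is $g\in\Gtil_0$ with $g\cdot p=q$; applying $\sigma$ and using $\sigma(q)=q$ gives $\sigma(g)\cdot\pbar=q=g\cdot p$, so $n:=g^{-1}\sigma(g)$ satisfies $n\cdot p=\sigma(p)^{-1}\!\cdot$\,\dots more precisely $n^{-1}\cdot p=\pbar$, and $n\cdot\sigma(n)=1$ is the usual cocycle identity, so $n\in\Zl^1\Gtil_0$; the content is that $n$ actually lies in $\Nm$. I would argue this as in Construction~\ref{con:e-c}: because $p,\pbar$ both lie in the $W$-orbit picture — indeed $p\in\Fm$ and, $\Fm$ being $\sigma$-stable and $\pbar=n^{-1}p$, one checks $n$ sends $\Fm$ to $\Fm$ using Lemma~\ref{lem:gp1p2} together with $\Cg_p^\circ=\Cg_p^{\mathrm{reg}}$ (established in Section~\ref{sec:sumVE}) and Lemma~\ref{lem:redweyl}, which forces the Weyl-group element realizing $p\mapsto\pbar$ to lie in $\Nm_W(W_p)$, hence $n\in\Nm$. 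Conversely, given $n\in\Zl^1\Nm$ with $n^{-1}\cdot p=\pbar$, Hilbert 90 ($\Ho^1\Gtil_0=1$, since $\Ho^1\SL(9,\C)=1$) gives $u\in\Gtil_0$ with $n=u^{-1}\sigma(u)$, and then $q:=u\cdot p$ is checked to be $\sigma$-fixed exactly as in Construction~\ref{con:e-c}: $\sigma(q)=\sigma(u)\cdot\pbar=\sigma(u)\,n^{-1}\cdot p=u\cdot p=q$.

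For part~(ii): let $\CC=\Zm_{\Gtil_0}(\Fm)$ and apply Proposition~\ref{p:serre} (the Borel--Serre exact sequence) to $\CC\subseteq\Nm$ with quotient $\YY:=\Nm/\CC$, which via $\varphi$ is $\Gamma$-equivariantly identified with $\Gamma_p\cong\Am$ acting on $\Fm$; alternatively apply Proposition~\ref{p:coh-orbits} directly with $\GG=\Nm$ acting transitively on $\Fm$ (transitivity on $\Fm$ follows from $\Cg_p^\circ=\Cg_p^{\mathrm{reg}}$ and Lemma~\ref{lem:gp1p2}) and stabilizer $\CC$ of $p$. Part~(i) says $\OOm$ has an $\R$-point, witnessed by $n\in\Zl^1\Nm$; set $a=n\Zm\in\Zl^1\Am$, $\xi=[a]$. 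The class of the $\Nm(\R)$-orbit corresponding to $n$ under the bijection of Proposition~\ref{p:coh-orbits} is precisely $\xi$, and that orbit contains $p$ itself (i.e. $\OOm\cap\Fm\ni$ a real point in the same $\Nm(\R)$-orbit as $p$, namely a $\sigma$-fixed element of $\Fm$) if and only if the image of $\xi$ under the connecting map — equivalently $\xi$ itself, via $\ker[\Ho^1\CC\to\Ho^1\Nm]\cong\Fm^\Gamma/\Nm(\R)$ — is the neutral element, which by the identification $\Am\cong\Gamma_p$ and the triviality statement for $\Ho^1\CC$-to-$\Ho^1\Nm$ translates into $\xi=1\in\Ho^1\Am$. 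I expect the main obstacle to be the careful bookkeeping in part~(i): verifying that the Galois cocycle $n=g^{-1}\sigma(g)$ can be chosen inside $\Nm$ rather than merely in $\Gtil_0$, which is where $\Cg_p^\circ=\Cg_p^{\mathrm{reg}}$, Lemma~\ref{lem:redweyl} and Lemma~\ref{lem:gp1p2} must be combined; once that is in place, part~(ii) is a direct reading of Propositions~\ref{p:serre} and~\ref{p:coh-orbits}.
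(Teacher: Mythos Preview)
Your part~(i) is correct and essentially the same as the paper's: from a real point $q=g\cdot p$ you set $n=g^{-1}\gbar$, observe $p,\pbar\in\Fm$ (the canonical sets are $\sigma$-stable), and invoke Lemma~\ref{lem:gp1p2} to conclude $n\in\Nm$; the converse uses $\Ho^1\Gtil_0=1$. Lemma~\ref{lem:redweyl} is not needed here---Lemma~\ref{lem:gp1p2} alone suffices.

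Your part~(ii), however, has a genuine gap. The claim that $\Nm$ acts transitively on $\Fm$ is false: $\Am=\Nm/\Zm\cong\Gamma_p$ is finite while $\Fm$ has positive dimension, so Proposition~\ref{p:coh-orbits} cannot be applied with $\YY=\Fm$. Your alternative, applying Proposition~\ref{p:serre} to $\Zm\subseteq\Nm$ with $\YY=\Nm/\Zm=\Am$, yields information about $\ker[\Ho^1\Zm\to\Ho^1\Nm]$, not about $\Ho^1\Am$; your formula ``$\ker[\Ho^1\CC\to\Ho^1\Nm]\cong\Fm^\Gamma/\Nm(\R)$'' conflates the group $\Am$ with the variety $\Fm$. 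Moreover, to apply Proposition~\ref{p:coh-orbits} to $\OOm\cap\Fm$ you would already need a real base point there, which is precisely what is at issue.

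The correct way to realize your intuition is this: by Lemma~\ref{lem:gp1p2} one has $\OOm\cap\Fm=\Nm\cdot p$, and by Lemma~\ref{lem:Zp} the stabilizer of $p$ in $\Nm$ is $\Zm$, so $\Am$ acts \emph{simply transitively} on $\OOm\cap\Fm$. Thus $\OOm\cap\Fm$ is an $\Am$-torsor over $\R$; its class in $\Ho^1\Am$ is computed from $\pbar=a^{-1}\cdot p$ and equals $\xi$, and the torsor has a real point iff $\xi=1$. The paper instead argues directly: it first checks that $\xi$ depends only on $\OOm$ (any two choices of $n$ differ by an element of $\Zm_{\Gtil_0}(p)=\Zm$, and changing $p$ within $\OOm\cap\Fm$ replaces $a$ by a cohomologous cocycle), and then, when $a=(a')^{-1}\,\ov{a'}$, exhibits $a'\cdot p\in\Fm$ as an explicit real point.
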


\begin{proof}
(i) Assume that $\OOm$ has an $\R$-point $p_\R=g\cdot p$.
Then $\ov{g\cdot p}=g\cdot p$, whence
\[ \pbar=\gbar^{-1}\cdot g\cdot p=(g^{-1}\gbar)^{-1}\cdot p.\]
Write
\begin{equation}\label{e:n-g-gbar}
n= g^{-1}\gbar.
\end{equation}
Since $p,\pbar\in\Fm$\hs, we see that $n\in\Nm$ by Lemma \ref{lem:gp1p2}.
It follows from \eqref{e:n-g-gbar} that $n\in \Zl^1\Nm$\hs.

Conversely, assume that $\pbar=n^{-1}\cdot p$, where $n\in\Zl^1\Nm$.
Since $\Ho^1\Gtil_0=\{1\}$, there exists $g\in \Gtil_0$ such that
$n=g^{-1}\gbar$. Set
\[ p_\R=g\cdot p\in\OOm.\]
Then
\[\ov{p_\R}=\gbar\cdot\pbar=gn\cdot n^{-1}\cdot p = g\cdot p=p_\R\hs.\]
Thus $p_\R$ is an $\R$-point of $\OOm$, which proves (i).

(ii) Assume that $\OOm$ has an $\R$-point and let $n$ be as in (i).
We show that $\xi$ depends only on $\OOm$.
First suppose that $\pbar=\hat n^{-1}\cdot p$ for a $\hat n\in \Nm$.
Then $\hat n n^{-1} \in \Zm_{\Gtil_0}(p)$, which is equal to $\Zm$ by Lemma
\ref{lem:Zp}. Hence $p$ uniquely determines $a$ and $\xi$.

We show that $\xi$ does not depend on the choice of $p$.
Indeed, if $p'\in\OOm\cap\Fm$\hs,
then $p'=a'\cdot p$ for some $a'\in\Am$\hs,
and we have
\[\ov{p'}=\ov{a'}\cdot\pbar=\ov{a'}\cdot a^{-1}\cdot p
   =\ov{a'}\cdot a^{-1}(a')^{-1}\cdot a'\cdot p
   =(a'a\hs \ov{a'}^{\hs -1})^{-1}\cdot p'.\]
We obtain the 1-cocycle
\[a'a\hs\ov{a'}^{\hs -1}\sim a.\]
Thus $\xi=[a]$ does not depend on the choice of $p$.
We write $\xi(\OOm)$ for $\xi$.

Let $p\in \OOm\cap\Fm$\hs, and assume that $\xi(\OOm)=1$.
We have
\[\pbar=a^{-1}\cdot p\quad\text{and} \quad a=(a')^{-1}\hs \ov{a'}
   \text{ for some }a'\in\Am\hs.\]
Set $p_\R=a'\cdot p\in \OOm\cap\Fm$\hs.
Then
\[\ov{p_\R}=\ov{a'}\cdot\pbar=\ov{a'}\cdot a^{-1}\cdot p=\ov{a'}
     \cdot\ov{a'}^{\hs -1} \cdot a'\cdot p=a'\cdot p=p_\R\hs.\]
Thus $p_\R$ is real.

Conversely, if $\OOm\cap \Fm$ contains a real point $p_\R$\hs,
then clearly $\xi(\OOm)=1$, which proves (ii).
\end{proof}

\begin{subsec}\label{subsec:realpoints2}
Assume that $\OOm$ contains a real point $p_\R=g\cdot p$.
We wish to classify real orbits in $\OOm$.
We write $q$ for $p_\R$.
Write $C_q=\Zm_{\Gtil_0}(q)$ and set
\[\CC_q=(C_q,\sigma_q),\quad\text{where }\sigma_q(c)=\ov c.\]

Similarly we write $C_p=\Zm_{\Gtil_0}(p)$.
Since $q=g\cdot p$, we have an isomorphism
\[ \iota_g\colon C_p\labelto{\sim} C_q\hs, \quad c\mapsto gcg^{-1}.\]
We {\em transfer} the real structure $\sigma_q$ on $C_q$ to $C_p$ using $\iota_g$.
We obtain a real structure $\sigma_p$ on $C_p$:
\begin{align*}
\sigma_p\colon\, C_p\labelto{\iota_g} C_q\labelto{\sigma _q} C_q\labelto{\iota_g^{-1}} C_p\hs,\quad
c\mapsto gcg^{-1}\mapsto\ov{gcg^{-1}}\mapsto g^{-1}\hs\ov{gcg^{-1}} g.
\end{align*}
Let $n=g^{-1}\ov g$ (see also the proof of Proposition \ref{prop:ssorb}), then
$\sigma_p(c)=n\ov c n^{-1}$ for $c\in C_p$.
We obtain a real algebraic group
\[ \CC_p=(C_p,\sigma_p),\quad \text{where } \sigma_p(c)=n\ov c n^{-1}\ \text{ for }c\in C_p\hs,\]
and an isomorphism
\[\iota_g\colon \CC_p\labelto{\sim}\CC_q,\quad c\mapsto gcg^{-1}\]
inducing a bijection on cohomology
\[\Ho^1\CC_p\labelto{\sim}\Ho^1\CC_q\hs.\]

By Proposition \ref{p:coh-orbits}, the real orbits in $\OOm$
are classified by $\Ho^1\CC_q$\hs, and hence by $\Ho^1\CC_p$\hs.
The map is as follows.
To $c\in\Zl^1\CC_p$ we associate $gcg^{-1}\in \Zl^1\CC_q$.
We find  $g_1\in \Gtil_0$ such that $g_1^{-1}\hs\ov g_1=gcg^{-1}$ (see Section
\ref{sec:comprep}) and set $r=g_1\cdot q =g_1\hs g\cdot p$.
To $[c]$ we associate the real orbit $\Gtil_0(\R)\cdot r\subseteq \OOm$.

In order to check our calculations, we show that $r=g_1\hs g\cdot q$ is real.
We calculate:
\begin{align*}
\ov r =\ov{g_1\hs g\cdot p}=\ov g_1\cdot\gbar\cdot \pbar= g_1\hs gcg^{-1}\cdot gn\cdot n^{-1}\cdot p
=g_1\hs gc\cdot p=g_1\hs g\cdot p=r,
\end{align*}
because $c\in C_p=\Zm_{\Gtil_0}(p)$.
Thus $r$ is real.
\end{subsec}

\begin{subsec}\label{subsec:semsimgal}
{\em Algorithm of finding real orbits in $G\cdot \Fm$.}

We compute $\Ho^1\hm\Am$.
For any cohomology class $\xi=[a]\in\Ho^1\hm\Am$\hs,
we find all $p\in \Fm$ such that $\pbar=a^{-1}\cdot p$.
Further, we try to lift our $a\in\Zl^1\hm\Am$ to a {\em cocycle}
$n\in\Zl^1\Nm_{\Gtil_0}(\Fm)$ (this turns out to be always possible in the
cases that we dealt with).
Then we find $g\in \Gtil_0$ such that $g^{-1} \gbar=n$.
We set $p_\R=g\cdot p$; then $p_\R$ is real.
Thus we have a real point $q=p_\R=g\cdot p$ in the complex orbit
$\OOm=\Gtil_0\cdot p$.

In order to find {\em all} real orbits in $\OOm$, we consider the stabilizer
$C_p$ of $p$ in $\Gtil_0$, and the $n$-twisted complex conjugation $\sigma_p$ of $C_p$
given by $c\mapsto n\ov c n^{-1}$ and set $\CC_p = (C_p,\sigma_p)$
as in \ref{subsec:realpoints2}.
We compute the Galois cohomology $\Ho^1\CC_p$. For a cocycle
$c\in\Zl^1\CC_p$, we consider $gcg^{-1} \in C_q$ and we find $g_1\in \Gtil_0$
such that
\[g_1^{-1}\hs\ov g_1=gcg^{-1}.\]
We set
\begin{equation}\label{e:r=hgp}
r=g_1\cdot q=g_1\hs g\cdot p.
\end{equation}
To a cohomology class $[c]\in\Ho^1\CC_p$ we associate
the real orbit $\Gtil_0(\R)\cdot r\subseteq \OOm$, where $r$ is as in \eqref{e:r=hgp},
and in this way we obtain a bijection between $\Ho^1\CC_p$
and the set of real orbits in $\OOm$.
\end{subsec}

\subsection{Centralizers of semisimple elements}\label{subsec:semcent}

As written in Section \ref{sec:sumVE} there are seven canonical sets
$\Fm_k=\Cg_{p_k}^\circ$ for $1\leq k\leq 7$. They are explicitly described
in \cite{VE1978}. The elements of the same
canonical set have the same centralizer in $\Gtil_0$ by Lemma \ref{lem:Zp}.
Here we describe those centralizers. Throughout we write $\Zm_0(p)$ instead
of $\Zm_{\Gtil_0}(p)$.

The seventh canonical set just consists of 0, so we do not say anything about
it.

\begin{subsec}\label{subsec:cen6}
  $\Fm_6$ consists of the elements $\lambda p_1$
with $\lambda\neq 0$. We can easily compute the Lie algebra $\z_0(p_1)$ of
$\Zm_0(p_1)$, from which it follows that $\Zm_0(p_1)^\circ$ is isomorphic to
$\SL(3,\C)^3$. Its elements are of the form $\diag(A_1,A_2,A_3)$, where each
$A_i$ is a $3\times 3$-matrix with determinant 1. With the methods of
Example \ref{exa:cencomps}(2) we determined the component group. It is
generated by (the image of)
$$h_1=\SmallMatrix{ 0 & 0 & 0 & 0 & 0 & 0 & 1 & 0 & 0 \\
                0 & 0 & 0 & 0 & 0 & 0 & 0 & 1 & 0 \\
                0 & 0 & 0 & 0 & 0 & 0 & 0 & 0 & 1 \\
                1 & 0 & 0 & 0 & 0 & 0 & 0 & 0 & 0 \\
                0 & 1 & 0 & 0 & 0 & 0 & 0 & 0 & 0 \\
                0 & 0 & 1 & 0 & 0 & 0 & 0 & 0 & 0 \\
                0 & 0 & 0 & 1 & 0 & 0 & 0 & 0 & 0 \\
                0 & 0 & 0 & 0 & 1 & 0 & 0 & 0 & 0 \\
                0 & 0 & 0 & 0 & 0 & 1 & 0 & 0 & 0}. $$
We have $h_1^3=1$ and conjugation by $h_1$ permutes the blocks on the diagonal
of the elements of $\Zm_0(p_1)^\circ$ cyclically.
So $\Zm_0(p_1) = \langle h_1\rangle \ltimes \SL(3,\C)^3$.
\end{subsec}

\begin{subsec}\label{subsec:cen5}
  $\Fm_5$ consists of $\lambda(p_3-p_4)$, $\lambda\neq 0$.
From the computation of the Lie algebra it follows that $\Zm_0(p_3-p_4)^\circ$ is
equal to $H\cdot T_2$ where $H$ is the group consisting of $\diag(A_1,A_1,A_1)$
where $A_1$ is a $3\times 3$-matrix with determinant 1. Furthermore $T_2$
is a 2-dimensional central torus consisting of $\diag(a,a,a,b,b,b,(ab)^{-1},
(ab)^{-1},(ab)^{-1})$ for $a,b\in \C^*$. So the intersection of $T_2$ and $H$
is $\mu_3$. The component group of $\Zm_0(p_3-p_4)$ is computed in the same way
as for family six, by adding the equations corresponding to the diagram
automorphisms of the semisimple part. The component group is again
generated by the image of $h_1$. Hence
$\Zm_0(p_3-p_4) = \langle h_1\rangle \ltimes H\cdot T_2$.
\end{subsec}

\begin{subsec} $\Fm_4$ consists of $\lambda p_1 +\mu(p_3-p_4)$ with
$\lambda\mu(\lambda^3-\mu^3)(\lambda^3+8\mu^3)\neq 0$. Here the stabilizer
can be computed as in the previous two cases, or by observing that
from Lemma \ref{lem:Zp} it follows that the stabilizer in this case
is $\Zm_0(p_1) \cap \Zm_0(p_3-p_4)$. It follows that the identity component of the
stabilizer is $H$ (the same one as in Family five).
The component group is generated by cosets
of $h_1$ and $h_2=\diag(\zeta,\zeta,\zeta,\zeta^2,\zeta^2,\zeta^2,1,1,1)$,
where $\zeta\in \C$ is a primitive third root of unity. We have that
$h_1$, $h_2$ commute with $H$, and they commute with each other modulo
$\mu_3\subset H$. Hence the component group is isomorphic to $C_3\times C_3$.
\end{subsec}

\begin{subsec} $\Fm_3$ consists of $\lambda_1 p_1 + \lambda_2 p_2$ with
$\lambda_1\lambda_2 (\lambda_1^6-\lambda_2^6)\neq 0$.
By Lemma  \ref{lem:Zp} the stabilizer is equal to $\Zm_0(p_1)\cap \Zm_0(p_2)$.
The first of these already
has been determined. It is possible to determine $\Zm_0(p_2)$ in the same
way (in fact, it is a conjugate of $\Zm_0(p_1)$, the conjugating matrix can be
determined directly from the expressions for $p_1$, $p_2$). Then we can
determine the intersection; this poses no fundamental difficulty. Alternatively
we can proceed as in Example \ref{exa:cencomps}(3).

From the Lie algebra of $\Zm_0(\Fm_3)$ we conclude that the identity component
is a 4-dimensional torus consisting of the elements
$$T_4(t_1,t_2,t_3,t_4)=
\diag(t_1,t_2,(t_1t_2)^{-1},t_3,t_4,(t_3t_4)^{-1},(t_1t_3)^{-1},(t_2t_4)^{-1},
t_1t_2t_3t_4),$$
for $t_1,t_2,t_3,t_4\in \C^*$. It turns out that the
component group is generated by the cosets of  $h_1$ and
$h_3=\diag(N,N,N)$, where
$$N=\SmallMatrix{ 0 & 0 & 1\\ 1 & 0 & 0 \\ 0 & 1 & 0}.$$
Hence $\Zm_0(p) = \langle h_1, h_3 \rangle \ltimes T_4$.
Here $h_1$, $h_3$ commute and are
of order 3. They generate a group isomorphic to $C_3\times C_3$. An element
of $T_4$ naturally composes in three blocks of three. Conjugation by $h_1$
permutes these blocks cyclically. Conjugation by $h_3$ permutes the elements of
each block cyclically.
\end{subsec}

\begin{subsec} $\Fm_2$ consists of $\lambda_1 p_1+\lambda_2 p_2 -\lambda_3p_3$,
such that $F(\lambda_1,\lambda_2,\lambda_3)\neq 0$, where $F$ is a polynomial
that we do not repeat here. Write $\mathcal{B}_{12} = \Zm_0(p_1)\cap \Zm_0(p_2)$ and
similarly $\mathcal{B}_{123} = \Zm_0(p_1)\cap \Zm_0(p_2)\cap \Zm_0(p_3)$. We have determined
$\mathcal{B}_{12}$ in the previous case, and now are interested in $\mathcal{B}_{123}$.

From the Lie algebra of $\mathcal{B}_{123}$ we see that $\mathcal{B}_{123}^\circ$ consists of
$$\diag(a_1,a_2,(a_1a_2)^{-1},a_2,(a_1a_2)^{-1},a_1,(a_1a_2)^{-1},a_1,a_2)$$
for $a_i\in \C^*$.

For a subset $S\subset \SL(9,\C)$ write $C(S,p_3) = \{ g\in S\mid g\cdot
p_3 = p_3\}$, then
$$\mathcal{B}_{123} = C(\mathcal{B}_{12},p_3) =\bigcup_{i,j=0}^2 C( h_1^ih_3^j \mathcal{B}_{12}^\circ, p_3).
$$

It is straightforward to see that $C(\mathcal{B}_{12}^\circ,p_3)$ is normal in $\mathcal{B}_{123}$
and that the $C( h_1^ih_3^j \mathcal{B}_{12}^\circ, p_3)$ are its cosets. Moreover, if
$g_1\in C( h_1 \mathcal{B}_{12}^\circ, p_3)$, $g_2\in C( h_3 \mathcal{B}_{12}^\circ, p_3)$ then
$C( h_1^ih_3^j \mathcal{B}_{12}^\circ, p_3) = g_1^ig_2^j C(\mathcal{B}_{12}^\circ,p_3)$.

By Gr\"obner basis computations we can determine the sets
$C( h_1^ih_3^j \mathcal{B}_{12}^\circ, p_3)$. It turns out that
$C( \mathcal{B}_{12}^\circ, p_3) = \langle h_2 \rangle \mathcal{B}_{123}^\circ$.
Furthermore, $C( h_1\mathcal{B}_{12}^\circ, p_3)$ contains the element
$$g_1=\SmallMatrix{ 0 & 0 & 0 & 0 & 0 & 0 & 1 & 0 & 0 \\
                0 & 0 & 0 & 0 & 0 & 0 & 0 & 1 & 0 \\
                0 & 0 & 0 & 0 & 0 & 0 & 0 & 0 & 1 \\
                \zeta^2 & 0 & 0 & 0 & 0 & 0 & 0 & 0 & 0 \\
                0 & \zeta^2 & 0 & 0 & 0 & 0 & 0 & 0 & 0 \\
                0 & 0 & \zeta^2 & 0 & 0 & 0 & 0 & 0 & 0 \\
                0 & 0 & 0 & \zeta & 0 & 0 & 0 & 0 & 0 \\
                0 & 0 & 0 & 0 & \zeta & 0 & 0 & 0 & 0 \\
                0 & 0 & 0 & 0 & 0 & \zeta & 0 & 0 & 0}, $$
and $C( h_3\mathcal{B}_{12}^\circ, p_3)$ contains the element
$g_2 = \diag(A_1,A_2,A_3)$ where $A_1 = \zeta^2 N$, $A_2=\zeta N$, $A_3=N$,
and $N$ is as in Family three.
The elements $h_2,g_1,g_2$ have order 3 and commute modulo $\mu_3\subset
\mathcal{B}_{123}^\circ$. Hence the component group has order 27 and consists of the
cosets of $h_2^ig_1^jg_2^k$ for $i,j,k =0,1,2$.

We have $h_2g_1 = h_1 \cdot \diag(\zeta,\ldots,\zeta)$ and $h_2g_2 = h_3$.
Hence
$$\mathcal{B}_{123} = \bigcup_{i,j,k=0}^2 h_1^ih_2^jh_3^k \mathcal{B}_{123}^\circ.$$
The $h_i$, $1\leq i\leq 3$ commute modulo $\mu_3\subset \mathcal{B}_{123}^\circ$.
Hence the component group is isomorphic to $C_3^3$.
\end{subsec}

\begin{subsec}\label{subsec:F1}
  $\Fm_1$ consists of $\lambda_1p_1+\cdots +\lambda_4p_4$ where the
$\lambda_i\in \C$ satisfy $F_i(\lambda_1,\ldots,\lambda_4)\neq 0$, where the
$F_i$, $1\leq i\leq 5$  are polynomials in four indeterminates that we do not
repeat here. Set $\mathcal{B}_{1234} = \Zm_0(p_1)\cap \cdots \cap \Zm_0(p_4)$. Then
$\mathcal{B}_{1234}$ is a finite group.
It can be determined with the same strategy as in the previous case.
Alternatively, it is not difficult with Gr\"obner bases to determine the sets
$\Zm_0(p_3)\cap \Zm_0(p_4) \cap h_1^ih_3^j \mathcal{B}_{12}^\circ$; each has 27 elements.
(Note that the union of $h_1^ih_3^j \mathcal{B}_{12}^\circ$ is $\Zm_0(p_1)\cap \Zm_0(p_2)$,
see the computation of the centralizer for Family three.)
We have used both approaches and in both cases obtained the same group.
It is generated by $h_1,h_2,h_3,h_4,h_5$ where $h_4=
\diag(\zeta,\zeta^2,1,\zeta,\zeta^2,1,\zeta,\zeta^2,1)$ and $h_5 =
\diag(\zeta,\ldots,\zeta)$. The group is of order $3^5$; $\mu_3 = \langle
h_5\rangle$ is a normal subgroup of order 3. The other generators commute
modulo $\mu_3$ and have order 3. Hence the quotient group is isomorphic to
$C_3^4$. (In \cite{VE1978} it is stated that $\mathcal{B}_{1234}$ is cyclic
of order 81.)
\end{subsec}

\subsection{The Weyl group and the groups $\Gamma_p$}\label{subsec:cartsp}

In \cite{VE1978} a construction is given of a set of generators of the Weyl
group $W$. We have carried it out and found that $W$ is generated by
$$ w_1 = \SmallMatrix{ \zeta & 0 & 0 & 0 \\0 & 1 & 0 & 0 \\
  0 & 0 & 1 & 0 \\ 0 & 0 & 0 & 1},
 w_2 = \SmallMatrix{ 0 & -1 & 0 & 0 \\0 & 0 & 0 & 1 \\
  -1 & 0 & 0 & 0 \\ 0 & 0 & -1 & 0},
 w_3 = \SmallMatrix{
  \tfrac{\zeta+2}{3} & \tfrac{2\zeta+1}{3} & 0 & \tfrac{\zeta+2}{3}\\
  -\tfrac{\zeta+2}{3} & \tfrac{\zeta+2}{3} & 0 & \tfrac{2\zeta+1}{3}\\
  0 & 0 & 1 & 0\\
  -\tfrac{2\zeta+1}{3} & -\tfrac{\zeta+2}{3} & 0 & \tfrac{\zeta+2}{3}},$$
where $\zeta$ is a primitive third root of unity.
(These matrices are given with respect to the basis $p_1,\ldots,p_4$ of
$\Cg$. The third generator is a reflection of the form $w_\alpha$, see
Section \ref{sec:sumVE}.)
We have also found elements $\hat w_i\in \Nm_0$ inducing the
$w_i$. They are
$$\hat w_1 = \diag(\omega,\omega,\omega,\omega,\omega,\omega,\omega^7,\omega^7,
\omega^7),$$
$$\hat w_2=\SmallMatrix{ -1 & 0 & 0 & 0 & 0 & 0 & 0 & 0 & 0 \\
                0 & 0 & 0 & -1 & 0 & 0 & 0 & 0 & 0 \\
                0 & 0 & 0 & 0 & 0 & 0 & -1 & 0 & 0 \\
                0 & 0 & 0 & 0 & 0 & 0 & 0 & -1 & 0 \\
                0 &-1 & 0 & 0 & 0 & 0 & 0 & 0 & 0 \\
                0 & 0 & 0 & 0 & -1 & 0 & 0 & 0 & 0 \\
                0 & 0 & 0 & 0 & 0 & -1 & 0 & 0 & 0 \\
                0 & 0 & 0 & 0 & 0 & 0 & 0 & 0 & -1\\
                0 & 0 & -1& 0 & 0 & 0 & 0 & 0 & 0}, $$
$$\hat w_3=\frac{1}{3}\SmallMatrix{
  -\omega^4+\omega & 0 & 0 & 0 & -\omega^4-2\omega &
  0 & 0 & 0 & -\omega^4-2\omega \\
  0 & -\omega^4+\omega & 0 & 0 & 0 & -\omega^4+\omega &
  2\omega^4+\omega & 0 & 0 \\
  0 & 0 & -\omega^4+\omega &
  2\omega^4+\omega & 0 & 0 & 0 & -\omega^4+\omega & 0 \\
0 & 0 & -\omega^4+\omega & -\omega^4+\omega & 0 & 0 & 0 &
2\omega^4+\omega & 0 \\
-\omega^4-2\omega & 0 & 0 & 0 & -\omega^4+\omega & 0 & 0 & 0 &
-\omega^4-2\omega\\
0 & 2\omega^4+\omega & 0 & 0 & 0 & -\omega^4+\omega & -\omega^4+\omega &
0 & 0 \\
0 & -\omega^4+\omega & 0 & 0 & 0 & 2\omega^4+\omega & -\omega^4+\omega &
0 & 0 \\
0 & 0 & 2\omega^4+\omega & -\omega^4+\omega & 0 & 0 & 0 & -\omega^4+\omega &
0 \\
-\omega^4-2\omega & 0 & 0 & 0 & -\omega^4-2\omega & 0 & 0 & 0 &
-\omega^4+\omega}, $$
where $\omega$ is a primitive $9$-th root of unity such that $\omega^3=\zeta$.
Using these we can find elements of $\Nm_{\Gtil_0}(\Cg)$ mapping to any given
element of $W$.

Now for each canonical set $\Fm_k$ $1\leq k\leq 6$
we give generators of the group $\Gamma_p=N(W_p)/W_p$. By searching through
subgroups of $N(W_p)$ (with the help of {\sf GAP}4) in each case we found coset
representatives of $W_p$ in $N(W_p)$ that form a subgroup of $N(W_p)$.
Hence we give the generators of $\Gamma_p$ as elements of $W$. We also
determined the Galois cohomology sets $\Ho^1\Gamma_p$ using a brute force
computer program. We also give the output of that.

\begin{itemize}
\item[$\Fm_1$] In this case $\Gamma_p=W$. We have $\Ho^1 \Gamma_p = \{[1]\}$.
\item[$\Fm_2$] Here $\Gamma_p$ is generated by
  $$a_1 = \diag(-1,-1,-\zeta,\zeta+1)=w_2w_1w_2w_1^{-1}w_2^2,$$
  $$a_2 = \frac{1}{3}\SmallMatrix{-\zeta+1 & -\zeta+1 & \zeta+2 & 0 \\
  2\zeta+1 & -\zeta-2 & \zeta+2 & 0\\
  \zeta+2 & -2\zeta-1 & -\zeta-2 & 0\\
  0 & 0 & 0 & -3\zeta}=w_3w_1^{-1}w_3(w_2^{-1}w_1^{-1})^2w_3w_2^{-1}.$$
  We have $\Ho^1 \Gamma_p = \{ [1], [-1] \}$.
\item[$\Fm_3$]   Here $\Gamma_p$ is generated by
  $$b_1 = \SmallMatrix{0 & -1 & 0 & 0\\ -1 & 0 & 0 & 0\\
  0 & 0 & -1 & 0\\ 0 & 0 & 0 & 1} = w_1^{-1}w_2(w_3^{-1}w_1)^2w_2^{-1}w_1,$$
  $$b_2 = \SmallMatrix{ -1 & 0 & 0 & 0\\ 0 & \zeta & 0 & 0\\
  0 & 0 & 0 & 1\\ 0 & 0 & 1 & 0}=w_2w_1w_2w_1^{-1}w_2w_3w_1^{-1}w_3w_2.$$
  We have $\Ho^1 \Gamma_p = \{ [1], [-1], [u_1], [u_2] \}$ where
  $$u_1 = \SmallMatrix{ -1 & 0 & 0 & 0 \\  0 & 1 & 0 & 0 \\ 0 & 0 & 0 & 1 \\
    0 & 0 & 1 & 0 }, u_2 = \SmallMatrix{ 0 & 1 & 0 & 0 \\ 1 & 0 & 0 & 0 \\
    0 & 0 & 1 & 0 \\ 0 & 0 & 0 & -1}.$$
\item[$\Fm_4$]  Here $\Gamma_p$ is generated by
  $$c_1 = \diag(-1,-1,-\zeta,-\zeta)=(w_2w_1)^2w_2^2,$$
  $$c_2 = \frac{1}{3}\SmallMatrix{ -\zeta + 1 & 0 & -\zeta + 1  & \zeta - 1\\
       0  & -\zeta + 1  & -\zeta + 1 &  -\zeta + 1\\
 2\zeta + 1 &  2\zeta + 1 & -2\zeta - 1  &  0\\
 -2\zeta - 1 &  2\zeta + 1 & 0 & -2\zeta - 1}=w_3^{-1}w_1^{-1}w_2^2w_3^{-1}w_1w_3w_2^{-1}w_1^{-1}.$$
  We have $\Ho^1\Gamma_p = 1$.
\item[$\Fm_5$] Here $\Gamma_p$ is generated by the element $c_1$ of the
  previous case. We have $\Ho^1\Gamma_p = \{ [1],[-1] \}$.
\item[$\Fm_6$]   Here $\Gamma_p$ is generated by
  $$d= \diag(-\zeta,-1,-1,-1)=w_2^4w_1.$$
   We have $\Ho^1\Gamma_p = \{ [1],[-1] \}$.
\end{itemize}

\begin{theorem}\label{thm:cartanr}
All Cartan subspaces in $\g_1$ are conjugate under $\SL(9,\R)$.
\end{theorem}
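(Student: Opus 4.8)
The plan is to apply Theorem~\ref{thm:cartan} to the Cartan subspace $\h=\Cg$ spanned by the \emph{real} trivectors $p_1,\dots,p_4$. Since $p_1,\dots,p_4\in\g_1$ and $\Cg^\cC\subset\g_1^\cC$ is the complex Cartan subspace of Vinberg--Elashvili, Corollary~\ref{cor:complc} shows that $\Cg$ is indeed a Cartan subspace of $\g_1$, and $\Cg$ is defined over $\R$ with $\Cg^\cC=\Cg\otimes_\R\C$. Theorem~\ref{thm:cartan} then puts the set of $\SL(9,\R)$-conjugacy classes of Cartan subspaces of $\g_1$ in bijection with $\ker[\,\Ho^1\Nn_0\to\Ho^1\GG_0\,]$, where $\Nn_0=\Nn_{\GG_0}(\Cg^\cC)$. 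As $\GG_0\cong\SL_{9,\R}/\mu_3$, Corollary~\ref{c:9-mu3} gives $\Ho^1\GG_0=1$, so it suffices to prove $\Ho^1\Nn_0=1$.

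To this end I would use the $\Gamma$-equivariant short exact sequence of $\R$-groups
\[
1\longrightarrow \mathcal C\longrightarrow \Nn_0\longrightarrow W\longrightarrow 1,
\]
where $\mathcal C=\Zm_{\GG_0}(\Cg^\cC)$ and $W=W(\g^\cC,\theta)$ is the little Weyl group; equivariance holds because $\Cg$ is defined over $\R$. Identifying $\g_1^\cC$ with $\bigwedge^3\C^9$, the subgroup $\mu_3=Z(\SL(9,\C))$ acts trivially, so $\mathcal C=\Zm_{\GG_0}(\Cg^\cC)=\mathcal B_{1234}/\mu_3$, where $\mathcal B_{1234}=\Zm_{\SL(9,\C)}(\Cg^\cC)$ is the group computed in Subsection~\ref{subsec:F1}: it has order $3^5$, contains $\mu_3$ centrally, and has abelian quotient $\mathcal B_{1234}/\mu_3\cong(\Z/3\Z)^4$. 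Thus $\mathcal C$ is a normal abelian $\R$-subgroup of $\Nn_0$ of odd order, and Lemma~\ref{l:H1-bijective} provides a bijection $\Ho^1\Nn_0\isoto\Ho^1 W$.

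It remains to note that $\Ho^1 W=\{[1]\}$. A generic point $p$ of $\Cg^\cC$ has trivial stabilizer $W_p$ in $W$ (the stabilizer is generated by the reflections fixing $p$, and $p$ lies on no reflection hyperplane), so $\Gamma_p=\Nn_W(W_p)/W_p=W$; the value $\Ho^1\Gamma_p=\{[1]\}$ for the canonical set $\Fm_1$ was recorded in Subsection~\ref{subsec:cartsp}. Combining the two displayed facts, $\Ho^1\Nn_0=\Ho^1 W=1$, and therefore there is exactly one $\SL(9,\R)$-conjugacy class of Cartan subspaces of $\g_1$.

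I expect essentially no hard obstacle, since every ingredient has already been established; the point to watch is that Proposition~\ref{prop:transit} cannot be invoked here, as the simply connected group $\Gtil(\R)$ is the split form $E_{8(8)}$ rather than a compact group, so the conjugacy must be obtained through the Galois cohomology of $\Nn_0$. The one computational input that needs to be read off correctly from the earlier sections is the structure of $\mathcal C=\Zm_{\GG_0}(\Cg^\cC)$ as a finite abelian group of odd order, which is precisely what makes Lemma~\ref{l:H1-bijective} directly applicable.
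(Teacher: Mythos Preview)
Your proof is correct and follows essentially the same route as the paper: apply Theorem~\ref{thm:cartan}, reduce to showing $\Ho^1\Nn_0=1$ via the short exact sequence $1\to\mathcal C\to\Nn_0\to W\to 1$, and use that $\mathcal C$ is a $3$-group together with $\Ho^1 W=1$ from Subsection~\ref{subsec:cartsp}. The only cosmetic difference is that the paper works with $\SL(9,\C)$ (so $\mathcal B_{1234}$ of order $3^5$) and first shows $\Ho^1\mathcal B_{1234}=1$ before concluding via the exact sequence, whereas you pass to $\GG_0=\SL_9/\mu_3$, observe that $\mathcal C=\mathcal B_{1234}/\mu_3\cong(\Z/3\Z)^4$ is abelian of odd order, and invoke Lemma~\ref{l:H1-bijective} directly.
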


\begin{proof}
The short exact sequence
\[1\to\Zm_{\Gtil_0}(\Cg)\to \Nm_{\Gtil_0}(\Cg)\to W\to 1,\]
gives rise to a cohomology exact sequence
\[ \Ho^1\Zm_{\Gtil_0}(\Cg) \to \Ho^1\Nm_{\Gtil_0}(\Cg)\to \Ho^1 W.\]
By brute force (computer) computations it is easily established that
$\Ho^1 W = 1$. As seen in \ref{subsec:F1} $\Zm_{\Gtil_0}(\Cg)$ is a
group of order $3^5$. It has a normal subgroup of order $3$ and the quotient
is abelian. Hence by Corollary \ref{c:2m+1} and Lemma \ref{l:H1-bijective}
we see that $\Ho^1 \Zm_{\Gtil_0}(\Cg) = 1$.
It follows that $\Ho^1\Nm_{\Gtil_0}(\Cg)=1$, and the theorem follows from Theorem
\ref{thm:cartan}.
\end{proof}

\subsection{Classification of semisimple elements}\label{sec:semsimclas}

We have seven canonical sets $\Fm_k$, $1\leq k\leq 7$,
in $\Cg$ such that each semisimple element
of $\g_1^\cC$ is conjugate to an element of precisely one canonical set.
In this subsection we obtain the $\Gtil_0(\R)$-orbits of the real points
in each complex semisimple orbit using the method of \ref{subsec:semsimgal}.

\begin{subsec}
Let $p\in \Fm_1$ and set $\OOm = \Gtil_0\cdot p$.

We have that $\Gamma_p=W$ and $\Ho^1 \Gamma_p=1$. So by Proposition
\ref{prop:ssorb} $\OOm$ has an $\R$-point if and only if it has one in
$\Fm_1$. Alternatively, this follows from Proposition \ref{prop:sconj}(2) and
Theorem \ref{thm:cartanr}.
So assume that $\ov p =p$. Then the $\Gtil_0(\R)$-orbits
in $\OOm$ are classified by $\Ho^1 \Zm_{\Gtil_0}(p)$.
We have that $Z_{\Gtil_0}(p)$ is a finite group of order $3^5$.
It has a normal subgroup of order $3$ and the quotient
is abelian. Hence by Corollary \ref{c:2m+1} and Lemma \ref{l:H1-bijective}
we see that $\Ho^1 \Zm_{\Gtil_0}(p) = 1$. We conclude
that $\OOm$ corresponds to one real orbit.
\end{subsec}

\begin{subsec}\label{subsec:F2}
If $p\in \Fm_2$ then $\Ho^1 \Gamma_p=\{[1],[-1]\}$. So by Proposition
\ref{prop:ssorb} the $\Gtil_0$-orbits of elements in $\Fm_2$ having real points
fall into two classes: the orbits of $p\in \Fm_2$ with $\ov p= p$ and the
orbits of $p\in \Fm_2$ with $\ov p = -p$.

Let $\OOm = \Gtil_0\cdot p$ with $\ov p = p$. The $\Gtil_0(\R)$-orbits
in $\OOm$ are classified by $\Ho^1 \Zm_{\Gtil_0}(p)$. The identity component
of $\Zm_{\Gtil_0}(p)$ is a 2-dimensional torus and the component group is
abelian of order 27. Hence by Corollary \ref{c:2m+1} and Corollary
\ref{c:prop38} $\Ho^1 \Zm_{\Gtil_0}(p)=1$. Hence $\OOm$ corresponds to
one real orbit.

Let $\OOm = \Gtil_0\cdot p$ with $\ov p = -p$. Then $p=i\lambda_1p_1
+i\lambda_2p_2 -i\lambda_3p_3$ with $\lambda_i\in \R$.
Set
$$n_1=\SmallMatrix{ 1 & 0 & 0 & 0 & 0 & 0 & 0 & 0 & 0 \\
                0 & 0 & 1 & 0 & 0 & 0 & 0 & 0 & 0 \\
                0 & 1 & 0 & 0 & 0 & 0 & 0 & 0 & 0 \\
                0 & 0 & 0 & 0 & 0 & 0 & 1 & 0 & 0 \\
                0 & 0 & 0 & 0 & 0 & 0 & 0 & 0 & 1 \\
                0 & 0 & 0 & 0 & 0 & 0 & 0 & 1 & 0 \\
                0 & 0 & 0 & 1 & 0 & 0 & 0 & 0 & 0 \\
                0 & 0 & 0 & 0 & 0 & 1 & 0 & 0 & 0 \\
                0 & 0 & 0 & 0 & 1 & 0 & 0 & 0 & 0},
g_1 = \SmallMatrix{ 1 & 0 & 0 & 0 & 0 & 0 & 0 & 0 & 0 \\
  0 & \tfrac{1}{2} & \tfrac{1}{2} & 0 & 0 & 0 & 0 & 0 & 0 \\
  0 & 0 & 0 & 1 & 0 & 0 & 1 & 0 & 0 \\
  0 & 0 & 0 & 0 & 0 & \tfrac{1}{2} & 0 & \tfrac{1}{2} & 0 \\
  0 & 0 & 0 & 0 & 1 & 0 & 0 & 0 & 1 \\
  0 & -\tfrac{1}{2}i & \tfrac{1}{2}i & 0 & 0 & 0 & 0 & 0 & 0 \\
  0 & 0 & 0 & -i & 0 & 0 & i & 0 & 0 \\
  0 & 0 & 0 & 0 & 0 & -i & 0 & i & 0 \\
  0 & 0 & 0 & 0 & -\tfrac{1}{2}i & 0 & 0 & 0 & \tfrac{1}{2}i}.
$$
Then $n_1$ is a cocycle, it normalizes $\Cg$ and acts as $-1$ on $\Cg$.
(It is found by writing $-1 = w_2^2$, and $n_1= \hat w_2^4$.)
Furthermore $g_1^{-1}\ov g_1 = n_1$. (It is found by the methods of Section
\ref{sec:comprep}.) Now
\begin{align*}
g_1\cdot p =& \lambda_1(-\tfrac{1}{2}e_{126}-\tfrac{1}{2}e_{349}+2e_{358}-e_{457}+e_{789})\\
+&\lambda_2(-2e_{137}-\tfrac{1}{4}e_{249}-e_{258}-\tfrac{1}{2}e_{456}-\tfrac{1}{2}e_{689})\\
-&\lambda_3(-e_{159}-e_{238}-\tfrac{1}{2}e_{247}-\tfrac{1}{2}e_{346}-e_{678}).
\end{align*}
This is the real representative of $\OOm$.

Now we consider the group $C_p = \Zm_{\Gtil_0}(p)$. Its identity component
is a 2-dimensional torus $T_2$ with elements $T_2(a_1,a_2)$. We have
$$n\overline{T_2(a_1,a_2)}n^{-1} = T_2(\ov a_1,\ov a_1^{-1}\ov a_2^{-1}).$$
So we set $\CC_p = (C_p, \sigma )$ where $\sigma(T_2(a_1,a_2)) =
T_2(\ov a_1,\ov a_1^{-1}\ov a_2^{-1})$. The matrix of $\sigma_*$ on
${\sf X}_*(T_2)$ is $\SmallMatrix{1 & 0\\ -1 & -1}$. We have that
$\ker(1+\sigma_*) = \im(1-\sigma_*) = \langle e_2 \rangle$ where $e_2$
denotes the second basis vector of $\Z^2$. Hence $\Ho^1 T_2=1$ and in the
same way as for $\Zm_{\Gtil_0}(p)$ we see that $\Ho^1 \CC_p = 1$. We conclude that
also in this case $\OOm$ contains exactly one real orbit.
\end{subsec}

\begin{subsec}
If $p\in \Fm_3$ then $\Ho^1 \Gamma_p=\{[1],[-1], [u_1],[u_2]\}$.
So by Proposition
\ref{prop:ssorb} the $\Gtil_0$-orbits of elements in $\Fm_3$ having real points
fall into four classes: the orbits of $p\in \Fm_3$ with $\ov p= p$, the
orbits of $p\in \Fm_3$ with $\ov p = -p$, the orbits of $p\in \Fm_3$ with
$\ov p = u_1\cdot p$, and the orbits of $p\in \Fm_3$ with $\ov p = u_2\cdot p$.

Let $\OOm = \Gtil_0\cdot p$ with $\ov p = p$. The identity component of
$\Zm_{\Gtil_0}(p)$ is a 4-dimensional torus. The component group is abelian
of order 9. In the same way as in \ref{subsec:F2} we conclude that
$\Ho^1 \Zm_{\Gtil_0}(p)=1$. Hence $\OOm$ corresponds to one real orbit.

Let $\OOm = \Gtil_0\cdot p$ with $\ov p = -p$. Then $p=i\lambda_1p_1
+i\lambda_2p_2$ with $\lambda_i\in \R$. We use the $n_1$, $g_1$ from
\ref{subsec:F2}. The real representative of $\OOm$ is
\begin{align*}
g_1\cdot p =& \lambda_1(-\tfrac{1}{2}e_{126}-\tfrac{1}{2}e_{349}+2e_{358}-e_{457}+e_{789})\\
+&\lambda_2(-2e_{137}-\tfrac{1}{4}e_{249}-e_{258}-\tfrac{1}{2}e_{456}-\tfrac{1}{2}e_{689})
\end{align*}
Let $C_p = \Zm_{\Gtil_0}(p)$. Its identity component is a 4-dimensional torus
$T_4$ with elements $T_4(t_1,t_2,t_3,t_4)$. We have
$$n \overline{T_4(t_1,t_2,t_3,t_4)} n^{-1} =
T_2(\bar t_1,\bar t_1^{-1}\bar t_2^{-1},\bar t_1^{-1}\bar t_3^{-1}, \bar t_1
\bar t_2 \bar t_3 \bar t_4).$$
So we set $\CC_p = (C_p, \sigma )$ where $\sigma$ on $T_4$ is given by the
above formula. The matrix of $\sigma_*$ on ${\sf X}_*(T_4)$ is
$\SmallMatrix{1&0&0&0\\-1&-1&0&0\\-1&0&-1&0\\1&1&1&1}$.
We have $\ker(1+\sigma) = \im(1-\sigma)$ and both are spanned by
$(0,1,1,-1)$, $(0,0,2,-1)$. Hence $\Ho^1 \CC_p=1$. Hence $\OOm$ contains
exactly one real orbit.

Now we consider $u_1\in W$. We have $u_1\cdot p_1=-p_1$, $u_2\cdot p_2 = p_2$.
So the set of $p\in \Fm_3$ with $\ov p = u_1\cdot p$ consists of
$p=i\lambda_1 p_1 +\lambda_2p_2$ with $\lambda_1,\lambda_2\in \R$. Let
$\OOm$ be the orbit with this representative.

We have $u_1 = w_3w_2w_3(w_2w_3^{-1}w_2)^2$. Set $n_2 = \hat w_3\hat w_2
\hat w_3(\hat w_2\hat w_3^{-1}\hat w_2)^2$. We also compute $g_2\in \Gtil_0$
with $g_2^{-1}\ov g_2= n_2$. We obtain
$$n_2= \SmallMatrix{ -1 & 0 & 0 & 0 & 0 & 0 & 0 & 0 & 0 \\
                0 & -1 & 0 & 0 & 0 & 0 & 0 & 0 & 0 \\
                0 & 0 & -1 & 0 & 0 & 0 & 0 & 0 & 0 \\
                0 & 0 & 0 & 0 & 0 & 0 & -1 & 0 & 0 \\
                0 & 0 & 0 & 0 & 0 & 0 & 0 & -1 & 0 \\
                0 & 0 & 0 & 0 & 0 & 0 & 0 & 0 & -1 \\
                0 & 0 & 0 & -1 & 0 & 0 & 0 & 0 & 0 \\
                0 & 0 & 0 & 0 & -1 & 0 & 0 & 0 & 0 \\
                0 & 0 & 0 & 0 & 0 & -1 & 0 & 0 & 0},
g_2 = \SmallMatrix{ 0 & 0 & 0 & -\tfrac{1}{2} & 0 & 0 & \tfrac{1}{2} & 0 & 0 \\
                0 & 0 & 0 & 0 & -1 & 0 & 0 & 1 & 0 \\
                0 & 0 & 0 & 0 & 0 & 1 & 0 & 0 & -1 \\
                \tfrac{1}{2}i & 0 & 0 & 0 & 0 & 0 & 0 & 0 & 0 \\
                0 & \tfrac{1}{2}i & 0 & 0 & 0 & 0 & 0 & 0 & 0 \\
                0 & 0 & -i & 0 & 0 & 0 & 0 & 0 & 0 \\
                0 & 0 & 0 & i & 0 & 0 & i & 0 & 0 \\
                0 & 0 & 0 & 0 & i & 0 & 0 & i & 0 \\
                0 & 0 & 0 & 0 & 0 & -i & 0 & 0 & -i}. $$
Also $n_2^2=1$ so that $n_2$ is a cocycle in $\Nm_{\Gtil_0}(\Fm_3)$. So the real
representative of $\OOm$ is
$$g_2\cdot p = \lambda_1(e_{129}-e_{138}+2e_{237}-\tfrac{1}{4}e_{456}-2e_{789}) +
\lambda_2(-\tfrac{1}{2}e_{147}-e_{258}+2e_{369}).$$
Now
$$n_2 \overline{T_4(t_1,t_2,t_3,t_4)} n_2^{-1} =
T_2(\bar t_1,\bar t_2,\bar t_1^{-1}\bar t_3^{-1}, \bar t_2^{-1}
\bar t_4^{-1} ).$$
We set $\CC_p = (C_p, \sigma )$ where $\sigma$ on $T_4$ is given by the
above formula. The matrix of $\sigma_*$ on ${\sf X}_*(T_4)$ is
$\SmallMatrix{1&0&0&0\\0&1&0&0\\-1&0&-1&0\\0&-1&0&-1}$.
Then $\ker(1+\sigma_*) = \im(1-\sigma_*)$ and both are spanned by
$(0,0,1,0)$, $(0,0,0,1)$. Hence the $\Ho^1\CC_p=1$ and $\OOm$ contains one
real orbit.

Now we consider $u_2\in W$. We have $u_2\cdot p_1=p_2$, $u_2\cdot p_2 = p_1$.
So the set of $p\in \Fm_3$ with $\ov p = u_2\cdot p$ consists of
$p=\lambda_1 p_1 +\lambda_2p_2$ with $\lambda_1=x+iy$, $\lambda_2=x-iy$,
$x,y\in \R$. Let $\OOm$ be the orbit with this representative.

We have $u_2 = w_3^{-1}w_2^2w_3^{-1}(w_2w_3)^2w_2$. Set $n_3 = \hat w_3^{-1}
\hat w_2^2\hat w_3^{-1}(\hat w_2\hat w_3)^2\hat w_2$. We also compute
$g_3\in \Gtil_0$ with $g_3^{-1}\ov g_3= n_3$. We obtain
$$n_3=\SmallMatrix{ -1 & 0 & 0 & 0 & 0 & 0 & 0 & 0 & 0 \\
                0 & 0 & 0 & 0 & 0 & 0 & -1 & 0 & 0 \\
                0 & 0 & 0 & -1 & 0 & 0 & 0 & 0 & 0 \\
                0 & 0 & -1 & 0 & 0 & 0 & 0 & 0 & 0 \\
                0 & 0 & 0 & 0 & 0 & 0 & 0 & 0 & -1 \\
                0 & 0 & 0 & 0 & 0 & -1 & 0 & 0 & 0 \\
                0 & -1 & 0 & 0 & 0 & 0 & 0 & 0 & 0 \\
                0 & 0 & 0 & 0 & 0 & 0 & 0 & -1 & 0 \\
                0 & 0 & 0 & 0 & -1 & 0 & 0 & 0 & 0},
g_3 = \SmallMatrix{ 0 & 0 & -1 & 1 & 0 & 0 & 0 & 0 & 0 \\
                0 & -1 & 0 & 0 & 0 & 0 & 0 & 0 & 0 \\
                0 & 0 & 0 & 0 & -\tfrac{1}{2} & 0 & 0 & 0 & \tfrac{1}{2} \\
                \tfrac{1}{2}i & 0 & 0 & 0 & 0 & 0 & 0 & 0 & 0 \\
                0 & 0 & i & i & 0 & 0 & 0 & 0 & 0 \\
                0 & 0 & 0 & 0 & 0 & i & 0 & 0 & 0 \\
                0 & i & 0 & 0 & 0 & 0 & i & 0 & 0 \\
                0 & 0 & 0 & 0 & 0 & 0 & 0 & \tfrac{1}{2}i & 0 \\
                0 & 0 & 0 & 0 & -i & 0 & 0 & 0 & -i}. $$
Also $n_3^2=1$ so that $n_3$ is a cocycle in $\Nm_{\Gtil_0}(\Fm_3)$. So the real
representative of $\OOm$ is
$$g_3\cdot p = x(e_{147}-2e_{169}-e_{245}+e_{289}-e_{356}-\tfrac{1}{2}e_{378}) + y(
e_{124}+e_{136}+\tfrac{1}{2}e_{238}-e_{457}+2e_{569}-e_{789}),$$
where $x,y\in \R$ are such that $xy(x^2-3y^2)(x^2-\tfrac{1}{3}y^2)\neq 0$.
Now
$$n_3 \overline{T_4(t_1,t_2,t_3,t_4)} n_3^{-1} =
T_2(\bar t_1,\bar t_1^{-1}\bar t_3^{-1}, \bar t_1^{-1}\bar t_2^{-1},\bar t_1
\bar t_2 \bar t_3 \bar t_4).$$
We set $\CC_p = (C_p, \sigma )$ where $\sigma$ on $T_4$ is given by the
above formula. The matrix of $\sigma$ on ${\sf X}_*(T_4)$ is
$\SmallMatrix{1&0&0&0\\-1&0&-1&0\\-1&-1&0&0\\1&1&1&1}$.
We have $\ker(1+\sigma_*) = \im(1-\sigma_*)$ and both are spanned by
$(0,1,1,-1)$. Hence $\Ho^2\CC_p=1$ and also in this case $\OOm$
contains one real orbit.
\end{subsec}

\begin{subsec}
If $p\in \Fm_4$ then $\Ho^1 \Gamma_p=\{[1]\}$.
So by Proposition \ref{prop:ssorb} the $\Gtil_0$-orbits of elements in
$\Fm_3$ having real points all have a representative in $\Fm_4$.

Let $p\in \Fm_4$ be such that $\ov p = p$ and let $\OOm$ be its $\Gtil_0$-orbit.
The identity component of $Z_{\Gtil_0}(p)$ is isomorphic to $\SL(3,\C)$ and
the component group is abelian of order 9. Hence $\Ho^1 Z_{\Gtil_0}(p)=1$ and
we conclude that $\OOm$ contains exactly one real orbit.
\end{subsec}

\begin{subsec}\label{subsec:F5}
If $p\in \Fm_5$ then $\Ho^1 \Gamma_p=\{[1],[-1]\}$. So by Proposition
\ref{prop:ssorb} the $\Gtil_0$-orbits of elements in $\Fm_5$ having real points
fall into two classes: the orbits of $p\in \Fm_5$ with $\ov p= p$ and the
orbits of $p\in \Fm_5$ with $\ov p = -p$.

Let $\OOm = \Gtil_0\cdot p$ with $\ov p = p$. The identity component
of $\Zm_{\Gtil_0}(p)$ is $T_2\cdot H$ where $H$ consists of $M(A)=\diag(A,A,A)$
for $A\in \SL(3,\C)$ and $T_2$ consists of the elements
$$T_2(a,b) = \diag(a,a,a,b,b,b,(ab)^{-1},(ab)^{-1},(ab)^{-1}).$$
The surjective map $T_2\times H\to T_2H$, $(t,h)\mapsto th$ has a kernel of
order 3. Hence by Lemma \ref{l:H1-bijective} we see that $\Ho^1 T_2H=1$.
The component group is abelian of order 9. Hence by Corollary \ref{c:2m+1}
and Corollary \ref{c:prop38} $\Ho^1 \Zm_{\Gtil_0}(p)=1$. So $\OOm$ contains
one real orbit.

Now let $p\in \Fm_5$ be such that $\ov p = -p$. Hence $p = \lambda i(p_3-p_4)$
with $\lambda\in \R$. We use $n_1,g_1$ from \ref{subsec:F2}. The real
representative of $\OOm$ is
$$g_1\cdot p = \lambda(e_{148}-e_{159}-e_{238}+\tfrac{1}{2}e_{239}-
\tfrac{1}{2}e_{247}+e_{257}-\tfrac{1}{2}e_{346}-e_{356}-e_{678}-
\tfrac{1}{2}e_{679}).$$

Let $C_p = \Zm_{\Gtil_0}(p)$ and $\CC_p=(C_p,\sigma)$, where $\sigma(g) =
n_1\ov g n_1^{-1}$. We have
\begin{align*}
\sigma(M(A)) &= M(V\overline{A}V) \text{ where }
V = \SmallMatrix{1&0&0\\0&0&1\\0&1&0},\\
\sigma(T_2(a,b)) &= T_2(\bar a, \bar a^{-1} \bar b^{-1}).
\end{align*}

The matrix of $\sigma$ on ${\sf X}_*(T_2)$ is $\SmallMatrix{1&0\\-1&-1}$.
Hence $\ker(1+\sigma) = \im(1-\sigma)$ and $\Ho^1 (T_2,\sigma)=1$.

For any natural number $n$ and for any cocycle $V\in {\rm PGL}(n,\C)$,
(that is, satisfying $V\ov V=1$), the involution  $\sigma(A) = V\ov{A}V^{-1}$
is called {\em inner}.
The group of real points of the corresponding twisted form  is isomorphic to
either $\SL(n,\R)$ or $\SL(n/2,{\mathbb H})$,
where ${\mathbb H}$ denotes the division algebra of Hamilton's quaternions.
In the first case we have $H^1(\SL(n,\C),\sigma)=\{1\}$.
In the second case, which can happen only for {\em even} $n=2m$, we have
$\#H^1(\SL(2m,\C),\sigma)=2$. Since here we have $n=3$ it follows that
$\Ho^1 \CC_p = 1$.
\end{subsec}

\begin{subsec}
If $p\in \Fm_6$ then $\Ho^1 \Gamma_p=\{[1],[-1]\}$. So by Proposition
\ref{prop:ssorb} the $\Gtil_0$-orbits of elements in $\Fm_6$ having real points
fall into two classes: the orbits of $p\in \Fm_6$ with $\ov p= p$ and the
orbits of $p\in \Fm_6$ with $\ov p = -p$.

Let $\OOm = \Gtil_0\cdot p$ with $\ov p = p$. The identity component
of $\Zm_{\Gtil_0}(p)$ is isomorphic to $\SL(3,\C)^3$, consisting of
$M(A_1,A_2,A_3) = \diag(A_1,A_2,A_3)$, where each $A_i\in \SL(3,\C)$.
The component group is of order 3. Hence $\Ho^1 \Zm_{\Gtil_0}(p)=1$.
Therefore $\OOm$ contains one real orbit.

Let $p\in \Fm_6$ be such that $\ov p = -p$. Hence $p = \lambda ip_1$
with $\lambda\in \R$. We use $n_1,g_1$ from \ref{subsec:F2}. The real
representative of $\OOm$ is
$$g_1\cdot p = \lambda(-\tfrac{1}{2}e_{126}-\tfrac{1}{2}e_{349}+2e_{358}-e_{457}
+e_{789}).$$
Let $C_p = \Zm_{\Gtil_0}(p)$ and $\CC_p=(C_p,\sigma)$, where $\sigma(g) =
n_1\ov g n_1^{-1}$. We have
$$\sigma(M(A_1,A_2,A_3)) = M(V\overline{A_1}V,V\overline{A_3}V,
V\overline{A_2}V),$$
with $V$ as in \ref{subsec:F5}.

In the same way as in the previous case it follows that $\Ho^1 \CC_p=1$.
\end{subsec}


\section{The orbits of mixed type}\label{sec:mixed}

\subsection{Methods}\label{sec:mixmeth}

As in Section \ref{sec:semsim} we let $\Gtil_0=\SL(9,\C)$ and
$\Gtil_0(\R)=\SL(9,\R)$.

As seen in Section \ref{subs:jordan} every $x\in \g_1^\cC$ can be written
as $x=p+e$ where $p\in \g_1^\cC$ is semisimple and $e\in \g_1^\cC$ is
nilpotent and $[p,e]=0$ (and the same statement holds for $\g_1$).
It is said that $x$ is
of {\em mixed type} if both $p,e$ are nonzero. Similarly, we say that
an orbit consisting of such elements is of mixed type.

First we briefly comment on the classification of the
$\Gtil_0$-orbits of elements of mixed type, see  also  Subsection
\ref{subs:mix}. It is clear that every such
orbit has a representative $p+e$ such that $p$ lies in one of the sets
$\Fm_k$ for $1\leq k\leq 6$ (see Section \ref{sec:semsim}). Now fix a
$p\in \Fm_k$ and write $\a = \z_{\g^\cC}(p)$. Then $\a$ inherits the grading from
$\g^\cC$. The nilpotent $e$ such that $p+e$ is of mixed type lie in $\a_1$.
Furthermore, $p+e_1$, $p+e_2$ are $\Gtil_0$-conjugate if and only if
$e_1,e_2$ are $\Zm_{\Gtil_0}(p)$-conjugate. The groups $\Zm_{\Gtil_0}(p)$ have been
determined in Section \ref{subsec:semcent}. The Lie algebra of
$\Zm_{\Gtil_0}(p)$ is $\a_0$. Hence by using the method of \cite{Vinberg1979}
(see also \cite{Graaf2017}, Chapter 8) we can determine the
$\Zm_{\Gtil_0}(p)^\circ$-orbits of nilpotent elements in $\a_1$. Finally we
reduce the list by considering conjugacy under the elements of the component
group of $\Zm_{\Gtil_0}(p)$. \cite{VE1978} contains lists of representatives
of nilpotent parts of elements of mixed type, for $p \in \Fm_k$,
$1\leq k\leq 6$. Let $\Tt^\cC_p$ be the set of homogeneous $\ssl_2$-triples in
$\a$. Note that by Theorem \ref{thm:3} the nilpotent
$\Zm_{\Gtil_0}(p)$-orbits in $\a_1$ correspond bijectively to the
$\Zm_{\Gtil_0}(p)$-orbits in $\Tt^\cC_p$.
Note also that $\Zm_{\Gtil_0}(p)$, $\z_{\g^\cC}(p)$ only depend
on $\Fm_k$, not on the particular element $p$. Hence the orbits of the
nilpotent parts also only depend only on $\Fm_k$.

Now we turn to the problem of classifying $\Gtil_0(\R)$-orbits of mixed type.
Section \ref{sec:semsim} has descriptions of the $\Gtil_0(\R)$-orbits of
semisimple elements in $\g_1$. Let $p$ be a representative of such an orbit and
again let $\a=\z_{\g^\cC}(p)$. Analogously to the complex case we need to
determine the $\Zm_{\Gtil_0(\R)}(p)$-orbits of real nilpotent elements in
$\a_1$. By Theorem \ref{thm:3} these correspond bijectively to the
$\Zm_{\Gtil_0(\R)}(p)$-orbits in the set $\Tt_p$ of homogeneous and real
$\ssl_2$-triples in $\a$. Let $t=(h,e,f)$ be a homogeneous and real
$\ssl_2$-triple in $\a$ containing $e$. Let
$$\Zm_0(p,t) = \{ g\in \Zm_{\Gtil_0}(p) \mid g\cdot h =h, g\cdot e=e,
g\cdot f=f\}.$$
Then by Proposition \ref{p:coh-orbits} there is a bijection between
the real $\Zm_{\Gtil_0(\R)}(p)$-orbits contained in the $\Zm_{\Gtil_0}(p)$-orbit
of $t$ and $\ker [ \Ho^1 \Zm_0(p,t) \to \Ho^1 \Zm_{\Gtil_0}(p)]$. Note that
in Section \ref{sec:semsimclas} it has been established hat
$\Ho^1 \Zm_{\Gtil_0}(p)=1$ in all cases. Hence the orbits we are interested
in here correspond bijectively to $\Ho^1 \Zm_0(p,t)$.

\begin{remark}
  More in general (when considering a different $\theta$-group) it may happen
  that $\Ho^1 \Zm_{\Gtil_0}(p)\neq 1$. Then $\Ho^1 \Zm_0(p,t)$ corresponds
  bijectively to the $\Gtil_0(\R)$-orbits contained in the $\Gtil_0$-orbit
  of the $4$-tuple $(p,h,e,f)$. So for each $[c]\in \Ho^1 \Zm_0(p,t)$ we
  compute a $g\in \Gtil_0$ with $g^{-1}\ov g = c$ and obtain the mixed element
  $g\cdot (p+e)$ with semisimple part $g\cdot p$ and nilpotent part $g\cdot e$.
  This way we get all real orbits of mixed elements contained in the
  complex orbit of $p+e$.
\end{remark}

We say that a real semisimple element is {\em canonical} if it lies in one of
the canonical sets $\Fm_k$.
The procedure that we use to classify the mixed elements whose semisimple
part is noncanonical is markedly more complex than the procedure for
classifying those with canonical semisimple part.

If the semisimple element $p$ is canonical then we consider a real nilpotent
$e\in \a=\z_{\g^\cC}(p)$ lying in the homogeneous real $\ssl_2$-triple $t=
(h,e,f)$. In order to compute representatives of the real $\Gtil_0(\R)$-orbits
contained in the $\Gtil_0$-orbit of $p+e$ we compute
the centralizer $\Zm_0(p,t)$ and its Galois cohomology $\Ho^1 \Zm_0(p,t)$.
The elements of the latter set correspond to the real orbits that we are looking
for.
It is possible to take the same approach when $p$ is not canonical.
However, in that case the groups $\Zm_0(p,t)$ tend to be difficult to describe
(they can be nonsplit, for example) and therefore difficult to work with.
In the next paragraphs we describe a different method for this case.
The main idea is the following. We have that $p$ is conjugate over $\C$
to a canonical
semisimple element $q$. So if $t_1$ is a real homogeneous $\ssl_2$-triple
in $\z_{\g^\cC}(p)$ then $t_1$ is conjugate to a homogeneous $\ssl_2$-triple
$t$ in $\z_{\g^\cC}(q)$. Also the stabilizers $\Zm_0(p,t_1)$ and $\Zm_0(q,t)$ are
conjugate. We define a conjugation on the latter so that these two groups
are $\Gamma$-equivariantly isomorphic. Finally we compute $\Ho^1 \Zm_0(q,t)$
where we use the modified conjugation.

Let $p\in \g_1$ be a real noncanonical semisimple element and of the form
$p = gq$, where $g\in \Gtil_0$ and $q\in \Fm_k$ where $\Fm_k$ is one of the
canonical sets found in Section \ref{sec:sumVE}. From our construction
(Proposition \ref{prop:ssorb}) it follows that setting $n=g^{-1}\ov g$
entails $n\in \Zl^1\Nm_{\Gtil_0}(\Fm_k)$ and $n\cdot q = \ov q$.
In all cases we have that $n$ is real, so that $n^2=1$. In the sequel these
$g$ and $n$ are fixed.

Also define $\varphi : \z_{\g^\cC}(q)\cap \g_1^\cC \to \z_{\g^\cC}(p)\cap \g_1^\cC$
by $\varphi(x) = g\cdot x$. Because $\Zm_{\Gtil_0}(p) = g\Zm_{\Gtil_0}(q)g^{-1}$
this is a bijection between the sets of nilpotent
elements in the respective spaces mapping $\Zm_{\Gtil_0}(q)$-orbits to
$\Zm_{\Gtil_0}(p)$-orbits. For $x\in \z_{\g^\cC}(q)\cap \g_1^\cC$
we have that $g\cdot x$ is real (that is, $\ov{g\cdot x} = g\cdot x$) if and
only if $n\ov x = x$. Since $n\in \Nm_{\Gtil_0}(\Fm_k)$ we have that
$q$ and $n\cdot q$ both lie in $\Fm_k$ and therefore
have the same centralizer in $\g^\C$ (Section
\ref{sec:sumVE}), hence $n\cdot \z_{\g^\cC}(q) = \z_{\g^\cC}(q)$.
From $n\cdot q=\ov q$ it follows  that $\z_{\g^\cC}(q)$ is stable under
complex conjugation $x\mapsto \ov x$. We set $\u = \z_{\g^\cC}(q)\cap \g_1^\cC$
and define $\mu : \u \to \u$ by $\mu(x) = n\ov x$. So for $x\in \u$ we have
that $\varphi(x)$ is real if and only if $\mu(x)=x$.
Because $n$ is a cocycle we have $\mu^2(x) = x$ for all $x\in \u$.

Now fix a nilpotent $e\in \z_{\g^\cC}(q)\cap \g_1^\cC=\u$ lying in a homogenous
$\ssl_2$-triple $t=(h,e,f)$. We let $Y = \Zm_{\Gtil_0}(q)\cdot e\subset \u$
be its orbit.
Then $\varphi(Y)$ is a $\Zm_{\Gtil_0}(p)$-orbit in $\z_{\g^\cC}(p)\cap \g_1^\cC$
(and all nilpotent $\Zm_{\Gtil_0}(p)$-orbits in $\z_{\g^\cC}(p)\cap \g_1^\cC$ are
obtained in this way).
We want to determine the real $\Zm_{\Gtil_0(\R)}(p)$-orbits contained in
$\varphi(Y)$.

\begin{lemma}\label{lem:Ymu}
Let $y_0$ be any element of $Y$.
We have $\mu(Y)=Y$ if and only if $\mu(y_0)\in Y$.
\end{lemma}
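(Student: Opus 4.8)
The plan is to exploit that $\mu$ is not merely a self-map of $\u$ but is compatible with the group action of $\Zm_{\Gtil_0}(q)$ up to the twist by $n$, so that it permutes the orbits of that group. First I would record the key compatibility: for $g_0\in \Zm_{\Gtil_0}(q)$ and $x\in\u$ we have $\mu(g_0\cdot x)=n\,\ov{g_0\cdot x}=n\,\ov{g_0}\cdot\ov x=(n\,\ov{g_0}\,n^{-1})\cdot(n\cdot\ov x)=(n\,\ov{g_0}\,n^{-1})\cdot\mu(x)$. Since $n\in\Zl^1\Nm_{\Gtil_0}(\Fm_k)$ is a cocycle normalizing $q$ (indeed $n\cdot q=\ov q$, $n^2=1$), conjugation $g_0\mapsto n\,\ov{g_0}\,n^{-1}$ is an automorphism of $\Zm_{\Gtil_0}(q)$ — call it $\nu$. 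Thus $\mu(g_0\cdot x)=\nu(g_0)\cdot\mu(x)$ for all $g_0\in\Zm_{\Gtil_0}(q)$, $x\in\u$.

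The forward direction is trivial: if $\mu(Y)=Y$ then in particular $\mu(y_0)\in\mu(Y)=Y$. For the converse, suppose $\mu(y_0)\in Y$. Any element of $Y$ has the form $y=g_0\cdot y_0$ for some $g_0\in\Zm_{\Gtil_0}(q)$, since $Y$ is a single $\Zm_{\Gtil_0}(q)$-orbit. Then by the compatibility just established, $\mu(y)=\mu(g_0\cdot y_0)=\nu(g_0)\cdot\mu(y_0)$. Because $\nu$ is an automorphism of $\Zm_{\Gtil_0}(q)$, $\nu(g_0)$ again lies in $\Zm_{\Gtil_0}(q)$, and because $\mu(y_0)\in Y$ and $Y$ is $\Zm_{\Gtil_0}(q)$-stable, $\nu(g_0)\cdot\mu(y_0)\in Y$. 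Hence $\mu(y)\in Y$ for every $y\in Y$, i.e. $\mu(Y)\subseteq Y$. Applying the same reasoning to $\mu^{-1}=\mu$ (which is legitimate since $\mu^2=\mathrm{id}$ on $\u$, as $n$ is a cocycle) gives $\mu(Y)\supseteq Y$ as well, so $\mu(Y)=Y$.

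There is essentially no hard part here; the only point requiring a little care is checking that $\nu\colon g_0\mapsto n\,\ov{g_0}\,n^{-1}$ genuinely maps $\Zm_{\Gtil_0}(q)$ into itself, which follows because $n$ normalizes $\Fm_k$ hence $\z_{\g^\cC}(q)$ (as $q$ and $n\cdot q$ lie in the same canonical set and so have the same centralizer), and because complex conjugation preserves $\z_{\g^\cC}(q)$ (since $n\cdot q=\ov q$ forces $\ov{\z_{\g^\cC}(q)}=\z_{\g^\cC}(q)$); both of these facts are already recorded in the paragraph preceding the lemma. I would present the proof in exactly this order: state the twisted-equivariance identity $\mu(g_0\cdot x)=\nu(g_0)\cdot\mu(x)$, note $\nu\in\Aut(\Zm_{\Gtil_0}(q))$, then deduce $\mu(Y)\subseteq Y$ from $\mu(y_0)\in Y$, and finally invoke $\mu^2=\mathrm{id}$ to upgrade the inclusion to an equality.
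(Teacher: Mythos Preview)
Your proof is correct and follows essentially the same approach as the paper: establish the twisted-equivariance identity $\mu(g_0\cdot x)=(n\,\ov{g_0}\,n^{-1})\cdot\mu(x)$, check that $g_0\mapsto n\,\ov{g_0}\,n^{-1}$ preserves $\Zm_{\Gtil_0}(q)$, and conclude that $\mu$ maps the orbit $Y$ into itself. One small imprecision: your justification that $\nu$ preserves $\Zm_{\Gtil_0}(q)$ cites facts about the Lie algebra $\z_{\g^\cC}(q)$, which do not by themselves control the full (possibly disconnected) group stabilizer; the paper instead argues at the group level via the isomorphism $\psi\colon \Zm_{\Gtil_0}(q)\to\Zm_{\Gtil_0}(p)$, $h\mapsto ghg^{-1}$, noting that $\Zm_{\Gtil_0}(p)$ is closed under complex conjugation since $p$ is real and that $\psi^{-1}(\ov{\psi(h)})=n\,\ov h\,n^{-1}$. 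Equivalently one can argue directly: if $h\cdot q=q$ then $\ov h\cdot\ov q=\ov q$, and since $\ov q=n\cdot q$ and $n^2=1$ this gives $(n\,\ov h\,n^{-1})\cdot q=q$.
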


\begin{proof}
Only one direction needs proof, so suppose that $n\ov y_0 = g_1y_0$ for some
$g_1\in  \Zm_{\Gtil_0}(q)$.

Note that $\psi : \Zm_{\Gtil_0}(q) \to \Zm_{\Gtil_0}(p)$,
$\psi(h) = ghg^{-1}$, is an isomorphism. As $p$ is real, i.e., $\ov p = p$,
we have that $\Zm_{\Gtil_0}(p)$ is closed under conjugation. So for $h\in
\Zm_{\Gtil_0}(q)$ we see that $\psi^{-1} (\ov{\psi(h)})$ lies in $\Zm_{\Gtil_0}(q)$.
But the latter element is equal to $n \ov hn^{-1}$. We conclude that
$\Zm_{\Gtil_0}(q)$ is closed under $h\mapsto n \ov hn^{-1}$.

Let $g_2\in \Zm_{\Gtil_0}(q)$, then  $\mu(g_2y_0) =
n\ov{g_2y_0} = n \ov g_2 n^{-1} n\ov y_0 = n \ov g_2 n^{-1} g_1 y_0 =g_3y_0$ with
$g_3\in \Zm_{\Gtil_0}(q)$. We conclude that $\mu(Y)=Y$.
\end{proof}

Now there are two possibilities. If $\mu(Y)\neq Y$ then $\varphi(Y)$ has no
real points by the previous lemma. In this case there are no real mixed elements
of the form $p+e_1$ conjugate to $q+e$. So we do not consider this $e$.
Note that we can check whether $\varphi(Y) = Y$ using Lemma \ref{lem:Ymu} and
the methods for determining conjugacy of nilpotent elements outlined in
Section \ref{sec:decconj}.

On the other hand, if $\mu(Y)=Y$ then we consider the restriction of $\mu$ to
$Y$. We set $\YY = (Y,\mu)$. With the methods of Section \ref{sec:findreal}
we establish whether $\YY$ has a real point (that is a $y\in Y$ with $\mu(y)=y$)
and, if so, we find one.
If, on the other hand, $\YY$ does not have a real point, then $\varphi(Y)$
has no real points either and also in this case we do not consider this $e$.

We briefly summarize the main steps of the method of Section \ref{sec:findreal}
to find a real point in $\YY$. We set $H= \Zm_{\Gtil_0}(q)$ and define the
conjugation $\tau : H\to H$, $\tau(h) =n\ov hn^{-1}$. Set $\HH=(H,\tau)$ and we
assume that $\Ho^1\HH=1$. Then we do the following
\begin{enumerate}
\item Compute $h_0\in H$ such that $\mu(e) = h_0^{-1}e$.
\item Set $C=\Zm_H (e)$ and let $\nu : C\to C$ be the conjugation defined by
  $\nu(c) = h_0\tau(c)h_0^{-1}$. Set $\CC = (C,\nu)$.
\item Set $d=h_0\tau(h_0)$ and consider the class $[d]\in \Ho^2 \CC$.
  If $[d]\neq 1$ then $\YY$ has no real point and we stop.
\item Otherwise we can find a $c\in C$ with $c\nu(c)d=1$. Set $h_1 = ch_0$.
  (Then $\mu(e) = h_1^{-1}e$ and $h_1\tau(h_1) = 1$.)
\item Because $\Ho^1\HH=1$ we can find $u\in H$ such that $uh_1\tau(u)^{-1} =1$.
  Then $y=u\cdot e$ is a real point of $\YY$.
\end{enumerate}

Now assume that $\YY$ has a real point $e'$.
Then we set $e_1 = g\cdot e'$ and find a real homogeneous $\ssl_2$-triple
$t_1=(h_1,e_1,f_1)$ in $\z_{\g^\cC}(p)$. Set $t'=(h',e',f')$ where
$h'=g^{-1}\cdot h_1$, $f'=g^{-1}\cdot f_1$. Furthermore, we determine a $g' \in
\Zm_{\Gtil_0}(q)$ such that $g' \cdot t = t'$, then $gg'\cdot t = t_1$.
We set $g_0 = gg'$ and $n_0 = g_0^{-1}\ov g_0$. Then $n_0 \in \Zl^1
\Nm_{\Gtil_0}(\Fm_k)$.
We compute $Z_q = \Zm_{\Gtil_0}(q,t)$ (the stabilizer of $t$ in $\Zm_{\Gtil_0}(q)$).
We define $\sigma : Z_q \to Z_q$ by $\sigma(z) = n_0 \ov z n_0^{-1}$ (in the same
way as in the proof of Lemma \ref{lem:Ymu} it is seen that $Z_q$ is closed under
$\sigma$). We set $\mathbf{Z_q} =(Z_q,\sigma)$. Also, we set
$Z_p = \Zm_{\Gtil_0}(p,t_1)$
and $\mathbf{Z}_p =(Z_p,\ov{\phantom u})$. Then $\psi : \mathbf{Z}_q\to
\mathbf{Z}_p$, $\psi(z) = g_0zg_0^{-1}$ is a $\Gamma$-equivariant isomorphism.
Therefore the real orbits in $\varphi(Y)$ correspond bijectively to
$\Ho^1\mathbf{Z}_q$.

In order to find representatives of the orbits we use the methods of Section
\ref{sec:comprep}. The most straightforward way to do this is to use the group
$\Zm_{\Gtil_0}(q)$, the conjugation $\sigma$ and the elements of
$\Ho^1\mathbf{Z}_q$. For a class $[c]\in \Ho^1\mathbf{Z}_q$ we find an
$a\in \Zm_{\Gtil_0}(q)$ with $ac = \sigma(a) = n_0 \ov a n_0^{-1}$.
Then $g_0 a\cdot e$ is a nilpotent element in $\z_{\g^\cC}(p)\cap \g_1^\cC$
and $p+g_0 a\cdot e$ is a representative of the orbit of mixed elements
corresponding to the class $[c]$.

\begin{remark}\label{rem:adhc}
We have used an ad-hoc method by which it is possible to show in many
cases that $Y$ has a real point. The map $\mu : \u\to \u$ is an $\R$-linear
involution. Set $\u_\R = \{ x \in \u \mid \ov x =x\}$; as $\ov n=n$ in all cases
we have that $\mu(\u_\R)=\u_\R$. Hence $\u_\R = \u_+\oplus \u_-$, where the
latter are the eigenspaces of $\mu$ corresponding to the eigenvalues $1$ and
$-1$ respectively. We compute bases of these spaces and consider elements of
three forms: $u\in \u_+$, $iu$ for $u\in \u_-$ and $u_1+iu_2$ for $u_1\in \u_+$,
$u_2\in \u_-$. All these elements are fixed by $\mu$. We list many of these
elements that are nilpotent and check to which $\Zm_{\Gtil_0}(q)$-orbit they
belong (the latter by the methods described in Section \ref{sec:decconj}).
This way we have found real points in most cases.
\end{remark}

\subsection{The orbits}

Here we list the data that we have computed in order to find the classification
of the real orbits of mixed type. The semisimple part $p$ of an element of mixed
type is conjugate to an element of a canonical set $\Fm_k$ for $2\leq k\leq 6$.
(The semisimple elements in $\Fm_1$ have trivial centralizer and
$\Fm_7=\{0\}$.) For each $k$ between 2 and 6 we have a subsection containing
the data for the semisimple elements in $\Fm_k$.

If $p$ is canonical (that is, $p\in \Fm_k$) then we give the representatives
of the nilpotent $\Zm_{\Gtil_0}(p)$-orbits in $\z_{\g^\cC}(p)\cap \g_1^\cC$
(they have been taken from \cite{VE1978}). These representatives are
denoted $e$. We also give a description of $\Zm_{\Gtil_0}(p,t)$ where
$t$ is a homogeneous $\ssl_2$-triple containing $e$. For simplicity we
denote this group by $\Zm_{p,e}$. Thirdly we give $\Ho^1 \Zm_{p,e}$.

If $p$ is not canonical then $p=g\cdot q$ where $q\in \Fm_k$, $g\in \Gtil_0$.
In these cases we go through the nilpotent $\Zm_{\Gtil_0}(q)$-orbits in
$\z_{\g^\cC}(q)\cap \g_1^\cC$ {\em in the same order in which this was done
in the list for canonical $p$}. So we do not repeat the representatives $e$.
Instead we consider $\YY=(Y,\mu)$ where $Y=\Zm_{\Gtil_0}(q)\cdot e$ and
$\mu : Y\to Y$ is defined by $\mu(y) = n\ov y$ where $n=g^{-1}\ov g$.
We start by giving a real point $e'$ in $\YY$. If $\YY$ has no real points then
the corresponding orbit is simply not considered. This only happens once for
$\Fm_3$: for one of the $p$ we only give data corresponding to two nilpotent
orbits, instead of eight.
We give the element $g_0 = gg'$ which has the property that
$g_0\cdot e =g\cdot e'$ is a real nilpotent element in $\z_{\g^\cC}(p)\cap
\g_1^\cC$ (conjugate to $e$). Furthermore, we give the element $n_0 =
g_0^{-1} \ov g_0$. The centralizer $\Zm_{q,e} = \Zm_{\Gtil_0}(q,t)$ has already been
described in the list relative to the canonical semisimple element, so we
do not repeat it. However, we now use the conjugation $\sigma$ given by
$\sigma(z) = n_0 \ov z n_0^{-1}$. We also describe this conjugation more in
detail, allowing us to compute $\Ho^1 (\Zm_{q,e},\sigma)$ of which we also
give the elements.

Throughout we use the elements $h_1,h_2,h_3,h_4$ from Section
\ref{subsec:semcent}, and $n_1,n_2,n_3$ from Section \ref{sec:semsimclas}.
Also we use the Galois cohomology sets $\Ho^1 \Gamma_p$ determined in Section
\ref{subsec:cartsp}.
In particular, $\Ho^1 \Gamma_p=1$ if $p$ is in $\Fm_4$, it is $\{[1],[-1]\}$
if $p$ is in $\Fm_2$, $\Fm_5$, $\Fm_6$ and it is $\{[1],[-1],[u_1],[u_2]\}$
if $p$ is in $\Fm_3$. This means that for $\Fm_4$ we only have a list
of data corresponding to canonical $p$. For $\Fm_2$, $\Fm_5$, $\Fm_6$ we
have two lists, one for canonical $p$ and one for noncanonical $p$.
For $\Fm_3$ we have four lists, one for canonical $p$ and three for
noncanonical $p$.

The Galois cohomology of tori $\TT = (T,\sigma)$ is computed
using the algorithm of Section \ref{sec:H1T}. In all cases the computation is
straightforward and we do not go into the details.

\begin{subsec} Let $p\in \Fm_2$, $p=\ov p$. Then $\Zm_{\Gtil_0}(p)$ has two
nonzero nilpotent orbits in $\z_{\g^\cC}(p)\cap \g_1^\cC$.
\begin{enumerate}
\item  $e=e_{168}+e_{249}$. Let $\zeta$ be a primitive third root of unity
  and set $g_1=h_2$, $g_2=h_1h_3^2$, $g_3=\diag(\zeta^2,1,\zeta,1,\zeta,\zeta^2,
  \zeta,\zeta^2,1)$ and $g_4=\diag(\zeta^2,\zeta,1,\zeta,1,\zeta^2,1,\zeta^2,
  \zeta)$.
  Then $\Zm_{p,e}$ is of order 81 and generated by $g_1,\ldots,g_4$.
  So by Proposition \ref{p:C-3} we see that $\Ho^1 \Zm_{p,e}=1$.
\item $e=e_{168}$. Let $T_1$ denote the 1-dimensional torus consisting of
  the elements
  $T_1(t)=\diag(1,t,t^{-1},t,t^{-1},1,t^{-1},1,t)$, $t\in \C^\times$. Let
  $g_1,g_2,g_3$ be as in the previous case.
  Then $\Zm_{p,e} = g_1^{i_1}g_2^{i_2}g_3^{i_3} T_1$, $0\leq i_j \leq 2$.
  So using Proposition \ref{p:C-3} we see that $\Ho^1 \Zm_{p,e}=1$.
\end{enumerate}

Now let $p$ be a real semisimple element conjugate to $q\in\Fm_2$ with
$\ov q = -q$. Then the map $\mu$ is given by $\mu(y) = n_1\ov y$.
\begin{enumerate}
\item  $e_{249}-e_{357}$ is a real element in $\YY$. In this case
  $\Zm_{q,e}$ is a finite group of order 81. So whatever the conjugation is,
  by Proposition \ref{p:C-3} we have that $\Ho^1 \Zm_{q,e}=1$.
\item $ie_{168}$ is a real element in $\YY$. We have
  $$g_0 = \SmallMatrix{ -i & 0 & 0 & 0 & 0 & 0 & 0 & 0 & 0 \\
                0 & \tfrac{1}{2} & \tfrac{1}{2}i & 0 & 0 & 0 & 0 & 0 & 0 \\
                0 & 0 & 0 & 1 & 0 & 0 & i & 0 & 0 \\
                0 & 0 & 0 & 0 & 0 & -\tfrac{1}{2}i & 0 & -\tfrac{1}{2}i & 0 \\
                0 & 0 & 0 & 0 & i & 0 & 0 & 0 & 1 \\
                0 & -\tfrac{1}{2}i & -\tfrac{1}{2} & 0 & 0 & 0 & 0 & 0 & 0 \\
                0 & 0 & 0 & -i & 0 & 0 & -1 & 0 & 0 \\
                0 & 0 & 0 & 0 & 0 & -1 & 0 & 1 & 0 \\
                0 & 0 & 0 & 0 & \tfrac{1}{2} & 0 & 0 & 0 & \tfrac{1}{2}i},
  n_0 = \SmallMatrix{ -1 & 0 & 0 & 0 & 0 & 0 & 0 & 0 & 0 \\
                0 & 0 & -i & 0 & 0 & 0 & 0 & 0 & 0 \\
                0 & -i & 0 & 0 & 0 & 0 & 0 & 0 & 0 \\
                0 & 0 & 0 & 0 & 0 & 0 & -i & 0 & 0 \\
                0 & 0 & 0 & 0 & 0 & 0 & 0 & 0 & -i \\
                0 & 0 & 0 & 0 & 0 & 0 & 0 & -1 & 0 \\
                0 & 0 & 0 & -i & 0 & 0 & 0 & 0 & 0 \\
                0 & 0 & 0 & 0 & 0 & -1 & 0 & 0 & 0 \\
                0 & 0 & 0 & 0 & -i & 0 & 0 & 0 & 0}. $$
  For the conjugation $\sigma$ of $\Zm_{q,e}$ we have $\sigma(T_1(t)) =
  T_1(\ov t^{-1})$. So $\Ho^1 T_1 = \{[1],[ T_1(-1)]\}$.
  Hence by Proposition \ref{p:C-3} $\Ho^1 \Zm_{q,e}=\{[1],[ T_1(-1)]\}$.
\end{enumerate}
\end{subsec}

\begin{subsec} Let $p\in \Fm_3$, $p=\ov p$. Then $\Zm_{\Gtil_0}(p)$ has eight
nonzero
nilpotent orbits in $\z_{\g^\cC}(p)\cap \g_1^\cC$. In all cases the identity
component of $\Zm_{p,e}$ is a torus and the component group has order
$3^k$. So by Proposition \ref{p:C-3} we have that $\Ho^1 \Zm_{p,e}=1$ in
all cases.
\begin{enumerate}
\item  $e=e_{159}+e_{168}+e_{249}+e_{267}$. Here $\Zm_{p,e}$ consists of
  $$\diag(\zeta^2\delta^2, \zeta^2\delta\epsilon, \zeta^2\epsilon^2, \zeta
  \delta^2\epsilon, \zeta\delta\epsilon^2, \zeta,
  \delta^2\epsilon^2,\delta,\epsilon)
  $$
  where $\zeta^3=\delta^3=\epsilon^3=1$. Hence it is abelian of order 27.
\item  $e=e_{159}+e_{168}+e_{249}$. Let $T_1$ be the 1-dimensional torus
  consisting of
  $$T_1(s)=\diag(1,s^{-1},s,s,1,s^{-1},s^{-1},s,1)\text{ for }s\in \C^\times.$$
  Let $g=\diag(\delta,\delta^2,1,1,\delta,\delta^2,\delta^2,1,\delta)$
  where $\delta$ is a primitive third root of unity.
  Then $\Zm_{p,e}=\cup_{i,j=0}^2 h_2^ig^j T_1$.
\item  $e=e_{159}+e_{168}+e_{267}$. Let $T_1$ be the 1-dimensional torus
  consisting of
  $$T_1(s)= \diag(1,s,s^{-1},s,s^{-1},1,s^{-1},1,s)\text{ for }s\in \C^\times.$$
  Then $\Zm_{p,e}=\cup_{i,j=0}^2 h_2^ih_4^j T_1$.
\item  $e=e_{159}+e_{168}$. Let $T_2$ be the 2-dimensional torus consisting of
  $$T_2(s,t)= \diag(1,s^{-1}t,st^{-1},st,t^{-1},s^{-1},s^{-1}t^{-1},s, t)
  \text{ for }s,t\in \C^\times.$$
  Then $\Zm_{p,e}=\cup_{i=0}^2 h_2^i T_2$.
\item  $e=e_{159}+e_{267}$. Let $T_2$ be the 2-dimensional torus consisting of
  $$T_2(s,t)=\diag(s,t,s^{-1}t^{-1},t,s^{-1}t^{-1},s,s^{-1}t^{-1},s, t)
  \text{ for }s,t\in \C^\times.$$
  Then $\Zm_{p,e}=\cup_{i,j=0}^2 h_2^ig^j T_2$ where $g=h_1h_3$.
\item $e=e_{168}+e_{249}$. Let $T_2$ be the 2-dimensional torus consisting of
  $$T_2(s,t)= \diag(t,s^{-1}t^{-1},s,s,t,s^{-1} t^{-1}, s^{-1}t^{-1},s, t)
  \text{ for }s,t\in \C^\times.$$
\item $e=e_{159}$. Let $T_3$ be the 3-dimensional torus consisting of
  $$T_3(s,t,u)=\diag(s^{-1}u^{-1},s^{-1}t^{-1},s^{2}tu,stu^2,s,s^{-2}t^{-1}u^{-2},
  t^{-1}u^{-1},t,u) \text{ for }s,t,u\in \C^\times.$$
  Then $\Zm_{p,e} = \cup_{i=0}^2 g^i T_3$ where $g=h_1h_3$.
\item $e=e_{168}$. Let $T_3$ be the 3-dimensional torus consisting of
  $$T_3(s,t,u)=\diag(s^{-1}t^{-1},s^2tu,s^{-1}u^{-1},st^2u,s^{-2}t^{-2}u^{-1},s,
  t^{-1}u^{-1},t,u)\text{ for }s,t,u\in \C^\times.$$
  Then $\Zm_{p,e}=\cup_{i=0}^2 g^i T_3$, where $g=h_1h_3^2$.
\end{enumerate}
Let $p$ be a real semisimple element conjugate to $q\in\Fm_3$ with
$\ov q = -q$. Then the map $\mu$ is given by $\mu(y) = n_1 \ov y$.
\begin{enumerate}
\item $e_{249}-ie_{267}-ie_{348}-e_{357}$ is a real element in $\YY$. In this case
  the centralizer is abelian of order 27. Hence by Proposition \ref{p:C-3}
  we see that $\Ho^1 \Zm_{p,e}=1$.
\item $-ie_{159}+e_{249}-e_{357}$ is a real element in $\YY$. We have
  $$g_0 = \SmallMatrix{ 0 & 0 & 0 & 0 & -i & 0 & 0 & 0 & 0 \\
                0 & 0 & 0 & -\tfrac{1}{2} & 0 & -\tfrac{1}{2}i & 0 & 0 & 0 \\
                0 & i & 0 & 0 & 0 & 0 & 0 & 1 & 0 \\
                0 & 0 & -\tfrac{1}{2} & 0 & 0 & 0 & \tfrac{1}{2}i & 0 & 0 \\
                i & 0 & 0 & 0 & 0 & 0 & 0 & 0 & -i \\
                0 & 0 & 0 & -\tfrac{1}{2}i & 0 & -\tfrac{1}{2} & 0 & 0 & 0 \\
                0 & -1 & 0 & 0 & 0 & 0 & 0 & -i & 0 \\
                0 & 0 & -i & 0 & 0 & 0 & 1 & 0 & 0 \\
                -\tfrac{1}{2} & 0 & 0 & 0 & 0 & 0 & 0 & 0 & -\tfrac{1}{2}},
  n_0=\SmallMatrix{ 0 & 0 & 0 & 0 & 0 & 0 & 0 & 0 & 1 \\
                0 & 0 & 0 & 0 & 0 & 0 & 0 & -i & 0 \\
                0 & 0 & 0 & 0 & 0 & 0 & i & 0 & 0 \\
                0 & 0 & 0 & 0 & 0 & -i & 0 & 0 & 0 \\
                0 & 0 & 0 & 0 & -1 & 0 & 0 & 0 & 0 \\
                0 & 0 & 0 & -i & 0 & 0 & 0 & 0 & 0 \\
                0 & 0 & i & 0 & 0 & 0 & 0 & 0 & 0 \\
                0 & -i & 0 & 0 & 0 & 0 & 0 & 0 & 0 \\
                1 & 0 & 0 & 0 & 0 & 0 & 0 & 0 & 0}. $$
    For the conjugation $\sigma$ of $\Zm_{q,e}$ we have $\sigma(T_1(s)) =
    T_1(\ov s^{-1})$. So $\Ho^1 T_1 = \{[1],[ T_1(-1)]\}$.
    By  Proposition \ref{p:C-3} we have $\Ho^1 \Zm_{q,e}= \{[1],[ T_1(-1)]\}$.
\item $ie_{168}-e_{267}+e_{348}$ is a real element in $\YY$. We have
$$g_0=\SmallMatrix{ 0 & 0 & 0 & 0 & 0 & 0 & 0 & i & 0 \\
                0 & 0 & 0 & 0 & 0 & 0 & -\tfrac{1}{2}i & 0 & \tfrac{1}{2} \\
                0 & -1 & 0 & 0 & i & 0 & 0 & 0 & 0 \\
                \tfrac{1}{2}i & 0 & 0 & 0 & 0 & -\tfrac{1}{2}i & 0 & 0 & 0 \\
                0 & 0 & i & 1 & 0 & 0 & 0 & 0 & 0 \\
                0 & 0 & 0 & 0 & 0 & 0 & \tfrac{1}{2} & 0 & -\tfrac{1}{2}i \\
                0 & i & 0 & 0 & -1 & 0 & 0 & 0 & 0 \\
                1 & 0 & 0 & 0 & 0 & 1 & 0 & 0 & 0 \\
                0 & 0 & \tfrac{1}{2} & \tfrac{1}{2}i & 0 & 0 & 0 & 0 & 0},
n_0=\SmallMatrix{ 0 & 0 & 0 & 0 & 0 & 1 & 0 & 0 & 0 \\
                0 & 0 & 0 & 0 & i & 0 & 0 & 0 & 0 \\
                0 & 0 & 0 & -i & 0 & 0 & 0 & 0 & 0 \\
                0 & 0 & -i & 0 & 0 & 0 & 0 & 0 & 0 \\
                0 & i & 0 & 0 & 0 & 0 & 0 & 0 & 0 \\
                1 & 0 & 0 & 0 & 0 & 0 & 0 & 0 & 0 \\
                0 & 0 & 0 & 0 & 0 & 0 & 0 & 0 & i \\
                0 & 0 & 0 & 0 & 0 & 0 & 0 & -1 & 0 \\
                0 & 0 & 0 & 0 & 0 & 0 & i & 0 & 0}.  $$
For the conjugation $\sigma$ of $\Zm_{q,e}$ we have $\sigma(T_1(s)) =
T_1(\ov s^{-1})$. As in the previous case we see that $\Ho^1 \Zm_{q,e} =
\{[1],[ T_1(-1)]\}$.
\item $-ie_{159}+ie_{168}$ is a real element in $\YY$. We have
$$g_0=\SmallMatrix{ 1 & 0 & 0 & 0 & 0 & 0 & 0 & 0 & 0 \\
                0 & \tfrac{1}{2}i & -\tfrac{1}{2}i & 0 & 0 & 0 & 0 & 0 & 0 \\
                0 & 0 & 0 & 1 & 0 & 0 & 1 & 0 & 0 \\
                0 & 0 & 0 & 0 & 0 & \tfrac{1}{2}i & 0 & \tfrac{1}{2} & 0 \\
                0 & 0 & 0 & 0 & -i & 0 & 0 & 0 & 0 \\
                0 & \tfrac{1}{2} & \tfrac{1}{2} & 0 & 0 & 0 & 0 & 0 & 0 \\
                0 & 0 & 0 & -i & 0 & 0 & i & 0 & 0 \\
                0 & 0 & 0 & 0 & 0 & 1 & 0 & i & 0 \\
                0 & 0 & 0 & 0 & -\tfrac{1}{2} & 0 & 0 & 0 & \tfrac{1}{2}i},
  n_0=\SmallMatrix{ 1 & 0 & 0 & 0 & 0 & 0 & 0 & 0 & 0 \\
                0 & 0 & 1 & 0 & 0 & 0 & 0 & 0 & 0 \\
                0 & 1 & 0 & 0 & 0 & 0 & 0 & 0 & 0 \\
                0 & 0 & 0 & 0 & 0 & 0 & 1 & 0 & 0 \\
                0 & 0 & 0 & 0 & 0 & 0 & 0 & 0 & i \\
                0 & 0 & 0 & 0 & 0 & 0 & 0 & -i & 0 \\
                0 & 0 & 0 & 1 & 0 & 0 & 0 & 0 & 0 \\
                0 & 0 & 0 & 0 & 0 & -i & 0 & 0 & 0 \\
                0 & 0 & 0 & 0 & i & 0 & 0 & 0 & 0}.
  $$
  For the conjugation $\sigma$ of $\Zm_{q,e}$ we have $\sigma(T_2(s,t)) =
  T_2(\ov s^{-1},\ov t^{-1})$. So $\Ho^1 T_2 = \{[1], [T_2(-1,1)],
  [T_2(1,-1)], [T_2(-1,-1)] \}$. Because $\# \Ho^1 T_2 =4$ we cannot use
  Proposition \ref{p:C-3} in this case.  The component group of $\Zm_{q,e}$ is
  generated by $h_2=\diag(\zeta,\zeta,\zeta,\zeta^2,\zeta^2,\zeta^2,1,1,1)$
  and $\sigma(h_2) = h_2^2 T_2(\zeta,\zeta)$. Let $A=T_2$, $B=\Zm_{q,e}$ and
  $C=B/A$. Then we have the exact sequence $1\to A\to B\to C\to 1$,
  which by Proposition \ref{p:serre-prop38} gives the exact sequence
  $$\Ho^1 A \to \Ho^1 B \to \Ho^1 C.$$
  As $C$ is of order 3 we have that $\Ho^1 C=1$. The conjugation acts
  nontrivially on $C$ so that $C^\Ga$ is trivial. Hence by Proposition
  \ref{p:action-C-Gamma} we conclude that $\Ho^1 B = \{[1], [T_2(-1,1)],
  [T_2(1,-1)], [T_2(-1,-1)] \}$.
\item $-e_{267}+e_{348}$ is a real element in $\YY$. We have
$$g_0=\SmallMatrix{ 0 & 0 & -1 & 0 & 0 & 0 & 0 & 0 & 0 \\
                \tfrac{1}{2} & -\tfrac{1}{2} & 0 & 0 & 0 & 0 & 0 & 0 & 0 \\
                0 & 0 & 0 & 0 & 0 & -1 & 0 & 0 & 1 \\
                0 & 0 & 0 & 0 & -\tfrac{1}{2} & 0 & \tfrac{1}{2} & 0 & 0 \\
                0 & 0 & 0 & 0 & 1 & 0 & 0 & 1 & 0 \\
                -\tfrac{1}{2}i & -\tfrac{1}{2}i & 0 & 0 & 0 & 0 & 0 & 0 & 0 \\
                0 & 0 & 0 & 0 & 0 & i & 0 & 0 & i \\
                0 & 0 & 0 & 0 & i & 0 & i & 0 & 0 \\
                0 & 0 & 0 & -\tfrac{1}{2}i & 0 & 0 & 0 & \tfrac{1}{2}i & 0},
  n_0 = \SmallMatrix{ 0 & -1 & 0 & 0 & 0 & 0 & 0 & 0 & 0 \\
                -1 & 0 & 0 & 0 & 0 & 0 & 0 & 0 & 0 \\
                0 & 0 & 1 & 0 & 0 & 0 & 0 & 0 & 0 \\
                0 & 0 & 0 & 0 & 0 & 0 & 0 & 1 & 0 \\
                0 & 0 & 0 & 0 & 0 & 0 & -1 & 0 & 0 \\
                0 & 0 & 0 & 0 & 0 & 0 & 0 & 0 & -1 \\
                0 & 0 & 0 & 0 & -1 & 0 & 0 & 0 & 0 \\
                0 & 0 & 0 & 1 & 0 & 0 & 0 & 0 & 0 \\
                0 & 0 & 0 & 0 & 0 & -1 & 0 & 0 & 0}. $$
For the conjugation $\sigma$ of $\Zm_{q,e}$ we have $\sigma(T_2(s,t)) =
T_2(\ov t, \ov s)$.  It follows that $\Ho^1 \Zm_{q,e} = 1$.
\item $e_{249}-e_{357}$ is a real element in $\YY$. We have
$$g_0=\SmallMatrix{ 0 & 0 & -1 & 0 & 0 & 0 & 0 & 0 & 0 \\
                -\tfrac{1}{2} & \tfrac{1}{2} & 0 & 0 & 0 & 0 & 0 & 0 & 0 \\
                0 & 0 & 0 & 0 & 0 & -1 & 0 & 0 & 1 \\
                0 & 0 & 0 & 0 & \tfrac{1}{2} & 0 & \tfrac{1}{2} & 0 & 0 \\
                0 & 0 & 0 & -1 & 0 & 0 & 0 & 1 & 0 \\
                \tfrac{1}{2}i & \tfrac{1}{2}i & 0 & 0 & 0 & 0 & 0 & 0 & 0 \\
                0 & 0 & 0 & 0 & 0 & i & 0 & 0 & i \\
                0 & 0 & 0 & 0 & -i & 0 & i & 0 & 0 \\
                0 & 0 & 0 & \tfrac{1}{2}i & 0 & 0 & 0 & \tfrac{1}{2}i & 0},
  n_0= \SmallMatrix{ 0 & -1 & 0 & 0 & 0 & 0 & 0 & 0 & 0 \\
                -1 & 0 & 0 & 0 & 0 & 0 & 0 & 0 & 0 \\
                0 & 0 & 1 & 0 & 0 & 0 & 0 & 0 & 0 \\
                0 & 0 & 0 & 0 & 0 & 0 & 0 & -1 & 0 \\
                0 & 0 & 0 & 0 & 0 & 0 & 1 & 0 & 0 \\
                0 & 0 & 0 & 0 & 0 & 0 & 0 & 0 & -1 \\
                0 & 0 & 0 & 0 & 1 & 0 & 0 & 0 & 0 \\
                0 & 0 & 0 & -1 & 0 & 0 & 0 & 0 & 0 \\
                0 & 0 & 0 & 0 & 0 & -1 & 0 & 0 & 0}. $$
For the conjugation $\sigma$ of $\Zm_{q,e}$ we have $\sigma(T_2(s,t)) =
T_2(\ov s, \ov s^{-1} \ov t^{-1})$. It follows that $\Ho^1 \Zm_{q,e} = 1$.
\item $ie_{159}$ is a real element in $\YY$. We have
$$g_0=\SmallMatrix{ i & 0 & 0 & 0 & 0 & 0 & 0 & 0 & 0 \\
                0 & \tfrac{1}{2} & -\tfrac{1}{2}i & 0 & 0 & 0 & 0 & 0 & 0 \\
                0 & 0 & 0 & -i & 0 & 0 & 1 & 0 & 0 \\
                0 & 0 & 0 & 0 & 0 & \tfrac{1}{2}i & 0 & \tfrac{1}{2} & 0 \\
                0 & 0 & 0 & 0 & 1 & 0 & 0 & 0 & 0 \\
                0 & -\tfrac{1}{2}i & \tfrac{1}{2} & 0 & 0 & 0 & 0 & 0 & 0 \\
                0 & 0 & 0 & -1 & 0 & 0 & i & 0 & 0 \\
                0 & 0 & 0 & 0 & 0 & 1 & 0 & i & 0 \\
                0 & 0 & 0 & 0 & -\tfrac{1}{2}i & 0 & 0 & 0 & \tfrac{1}{2}i},
n_0 = \SmallMatrix{ -1 & 0 & 0 & 0 & 0 & 0 & 0 & 0 & 0 \\
  0 & 0 & i & 0 & 0 & 0 & 0 & 0 & 0 \\
                0 & i & 0 & 0 & 0 & 0 & 0 & 0 & 0 \\
                0 & 0 & 0 & 0 & 0 & 0 & i & 0 & 0 \\
                0 & 0 & 0 & 0 & 0 & 0 & 0 & 0 & 1 \\
                0 & 0 & 0 & 0 & 0 & 0 & 0 & -i & 0 \\
                0 & 0 & 0 & i & 0 & 0 & 0 & 0 & 0 \\
                0 & 0 & 0 & 0 & 0 & -i & 0 & 0 & 0 \\
                0 & 0 & 0 & 0 & 1 & 0 & 0 & 0 & 0}.$$
For the conjugation $\sigma$ of $\Zm_{q,e}$ we have $\sigma(T_3(s,t,u))=
T_3(\ov u, \ov s^{-2}\ov t^{-1}\ov u^{-2},\ov s )$. Hence
$\Ho^1 T_3 = \{ [1], [ T(1,-1,1)] \}$.  By  Proposition \ref{p:C-3} it
follows that $\Ho^1 \Zm_{q,e}= \{ [1], [ T(1,-1,1)] \}$.
\item $ie_{168}$ is a real element in $\YY$. We have
  $$g_0=\SmallMatrix{ i & 0 & 0 & 0 & 0 & 0 & 0 & 0 & 0 \\
                0 & -\tfrac{1}{2}i & \tfrac{1}{2} & 0 & 0 & 0 & 0 & 0 & 0 \\
                0 & 0 & 0 & -i & 0 & 0 & 1 & 0 & 0 \\
                0 & 0 & 0 & 0 & 0 & \tfrac{1}{2} & 0 & \tfrac{1}{2} & 0 \\
                0 & 0 & 0 & 0 & i & 0 & 0 & 0 & 1 \\
                0 & -\tfrac{1}{2} & \tfrac{1}{2}i & 0 & 0 & 0 & 0 & 0 & 0 \\
                0 & 0 & 0 & -1 & 0 & 0 & i & 0 & 0 \\
                0 & 0 & 0 & 0 & 0 & -i & 0 & i & 0 \\
                0 & 0 & 0 & 0 & \tfrac{1}{2} & 0 & 0 & 0 & \tfrac{1}{2}i},
  n_0=\SmallMatrix{ -1 & 0 & 0 & 0 & 0 & 0 & 0 & 0 & 0 \\
                0 & 0 & i & 0 & 0 & 0 & 0 & 0 & 0 \\
                0 & i & 0 & 0 & 0 & 0 & 0 & 0 & 0 \\
                0 & 0 & 0 & 0 & 0 & 0 & i & 0 & 0 \\
                0 & 0 & 0 & 0 & 0 & 0 & 0 & 0 & -i \\
                0 & 0 & 0 & 0 & 0 & 0 & 0 & 1 & 0 \\
                0 & 0 & 0 & i & 0 & 0 & 0 & 0 & 0 \\
                0 & 0 & 0 & 0 & 0 & 1 & 0 & 0 & 0 \\
                0 & 0 & 0 & 0 & -i & 0 & 0 & 0 & 0}. $$
For the conjugation $\sigma$ of $\Zm_{q,e}$ we have $\sigma(T_3(s,t,u))=
T_3(\ov t, \ov s \ov s^{-2}\ov t^{-2}\ov u^{-1} )$. Hence
$\Ho^1 T_3 = \{ [1], [ T(1,1,-1)] \}$.  By  Proposition \ref{p:C-3} we see that
$\Ho^1 \Zm_{q,e}=\{ [1], [ T(1,1,-1)] \}$.
\end{enumerate}
Let $p$ be a real semisimple element conjugate to $q\in\Fm_3$ with
$\ov q = u_1 q$. Then the map $\mu$ is given by $\mu(y) = n_2 \ov y$.
Computer calculations based on Lemma \ref{lem:Ymu} show that only two orbits
are stable under $\mu$, namely orbits (1) and (4).
\begin{enumerate}
\item[(1)] $-e_{159}-e_{168}+e_{249}+e_{267}$ is a real element in $\YY$.
  In this case
  the centralizer is abelian of order 27. Hence by Proposition \ref{p:C-3}
  we see that $\Ho^1 \Zm_{p,e}=1$.
\item[(4)] $e_{249}+e_{267}$ is a real element in $\YY$. We have
$$g_0=\SmallMatrix{ 0 & 0 & 0 & 0 & 0 & -\tfrac{1}{2} & 0 & 0 & \tfrac{1}{2} \\
                0 & 0 & 0 & -1 & 0 & 0 & 1 & 0 & 0 \\
                0 & 0 & 0 & 0 & 1 & 0 & 0 & -1 & 0 \\
                0 & 0 & \tfrac{1}{2}i & 0 & 0 & 0 & 0 & 0 & 0 \\
                \tfrac{1}{2}i & 0 & 0 & 0 & 0 & 0 & 0 & 0 & 0 \\
                0 & -i & 0 & 0 & 0 & 0 & 0 & 0 & 0 \\
                0 & 0 & 0 & 0 & 0 & i & 0 & 0 & i \\
                0 & 0 & 0 & i & 0 & 0 & i & 0 & 0\\
                0 & 0 & 0 & 0 & -i & 0 & 0 & -i & 0 },
  n_0=\SmallMatrix{ -1 & 0 & 0 & 0 & 0 & 0 & 0 & 0 & 0 \\
                0 & -1 & 0 & 0 & 0 & 0 & 0 & 0 & 0 \\
                0 & 0 & -1 & 0 & 0 & 0 & 0 & 0 & 0 \\
                0 & 0 & 0 & 0 & 0 & 0 & -1 & 0 & 0 \\
                0 & 0 & 0 & 0 & 0 & 0 & 0 & -1 & 0 \\
                0 & 0 & 0 & 0 & 0 & 0 & 0 & 0 & -1 \\
                0 & 0 & 0 & -1 & 0 & 0 & 0 & 0 & 0 \\
                0 & 0 & 0 & 0 & -1 & 0 & 0 & 0 & 0 \\
                0 & 0 & 0 & 0 & 0 & -1 & 0 & 0 & 0}.$$
  For the conjugation $\sigma$ we have $\sigma(T_2(s,t)) = T_2(\ov t^{-1},
  \ov s^{-1}  )$. Hence $\Ho^1 T_2 = 1$.  By  Proposition \ref{p:C-3} we also
  have $\Ho^1 \Zm_{q,e}=1$.
\end{enumerate}
Let $p$ be a real semisimple element conjugate to $q\in\Fm_3$ with
$\ov q = u_2 q$. Then the map $\mu$ is given by $\mu(y) = n_3 \ov y$.
In this case all eight orbits have real elements.
\begin{enumerate}
\item   $-e_{159}+e_{249}-e_{267}-e_{357}$ is a real element in $\YY$.
  In this case
  the centralizer is abelian of order 27. Hence by Proposition \ref{p:C-3}
  we see that $\Ho^1 \Zm_{p,e}=1$.
\item  $-e_{159}+e_{249}-e_{357}$ is a real element in $\YY$.
  Here
  $$g_0=\SmallMatrix{ 0 & 0 & 0 & 1 & 0 & 0 & 0 & 1 & 0 \\
                0 & 1 & 0 & 0 & 0 & 1 & 0 & 0 & 0 \\
                -\tfrac{1}{2} & 0 & 0 & 0 & 0 & 0 & 0 & 0 & -\tfrac{1}{2} \\
                0 & 0 & 0 & 0 & \tfrac{1}{2}i & 0 & 0 & 0 & 0 \\
                0 & 0 & 0 & -i & 0 & 0 & 0 & i & 0 \\
                0 & 0 & 0 & 0 & 0 & 0 & i & 0 & 0 \\
                0 & i & 0 & 0 & 0 & -i & 0 & 0 & 0 \\
                0 & 0 & -\tfrac{1}{2}i & 0 & 0 & 0 & 0 & 0 & 0 \\
                i & 0 & 0 & 0 & 0 & 0 & 0 & 0 & -i},
  n_0=\SmallMatrix{ 0 & 0 & 0 & 0 & 0 & 0 & 0 & 0 & 1 \\
                0 & 0 & 0 & 0 & 0 & 1 & 0 & 0 & 0 \\
                0 & 0 & -1 & 0 & 0 & 0 & 0 & 0 & 0 \\
                0 & 0 & 0 & 0 & 0 & 0 & 0 & 1 & 0 \\
                0 & 0 & 0 & 0 & -1 & 0 & 0 & 0 & 0 \\
                0 & 1 & 0 & 0 & 0 & 0 & 0 & 0 & 0 \\
                0 & 0 & 0 & 0 & 0 & 0 & -1 & 0 & 0 \\
                0 & 0 & 0 & 1 & 0 & 0 & 0 & 0 & 0 \\
                1 & 0 & 0 & 0 & 0 & 0 & 0 & 0 & 0}. $$
Furthermore $\sigma(T_1(s)) = T_1(\ov s)$. Hence $\Ho^1 \Zm_{q,e}=1$.
\item $-e_{159}+ie_{168}-e_{267}$ is a real element in $\YY$.
  Here
  $$g_0=\SmallMatrix{ 0 & 0 & i & -1 & 0 & 0 & 0 & 0 & 0 \\
                0 & -1 & 0 & 0 & 0 & 0 & i & 0 & 0 \\
                0 & 0 & 0 & 0 & -\tfrac{1}{2}i & 0 & 0 & 0 & \tfrac{1}{2} \\
                -\tfrac{1}{2} & 0 & 0 & 0 & 0 & 0 & 0 & 0 & 0 \\
                0 & 0 & 1 & -i & 0 & 0 & 0 & 0 & 0 \\
                0 & 0 & 0 & 0 & 0 & -1 & 0 & 0 & 0 \\
                0 & i & 0 & 0 & 0 & 0 & -1 & 0 & 0 \\
                0 & 0 & 0 & 0 & 0 & 0 & 0 & \tfrac{1}{2} & 0 \\
                0 & 0 & 0 & 0 & 1 & 0 & 0 & 0 & -i},
  n_0=\SmallMatrix{ 1 & 0 & 0 & 0 & 0 & 0 & 0 & 0 & 0 \\
                0 & 0 & 0 & 0 & 0 & 0 & i & 0 & 0 \\
                0 & 0 & 0 & i & 0 & 0 & 0 & 0 & 0 \\
                0 & 0 & i & 0 & 0 & 0 & 0 & 0 & 0 \\
                0 & 0 & 0 & 0 & 0 & 0 & 0 & 0 & i \\
                0 & 0 & 0 & 0 & 0 & 1 & 0 & 0 & 0 \\
                0 & i & 0 & 0 & 0 & 0 & 0 & 0 & 0 \\
                0 & 0 & 0 & 0 & 0 & 0 & 0 & 1 & 0 \\
                0 & 0 & 0 & 0 & i & 0 & 0 & 0 & 0}. $$
Furthermore $\sigma(T_1(s)) = T_1(\ov s^{-1})$. Hence
$\Ho^1 \Zm_{q,e}=\{[1],[T_1(-1)]\}$.
\item $-e_{159}+ie_{168}$ is a real element in $\YY$.
  Here
  $$ g_0=\SmallMatrix{ 0 & 0 & -1 & -i & 0 & 0 & 0 & 0 & 0 \\
                0 & i & 0 & 0 & 0 & 0 & 1 & 0 & 0 \\
                0 & 0 & 0 & 0 & -\tfrac{1}{2}i & 0 & 0 & 0 & \tfrac{1}{2} \\
                -\tfrac{1}{2} & 0 & 0 & 0 & 0 & 0 & 0 & 0 & 0 \\
                0 & 0 & i & 1 & 0 & 0 & 0 & 0 & 0 \\
                0 & 0 & 0 & 0 & 0 & i & 0 & 0 & 0 \\
                0 & 1 & 0 & 0 & 0 & 0 & i & 0 & 0 \\
                0 & 0 & 0 & 0 & 0 & 0 & 0 & \tfrac{1}{2}i & 0 \\
                0 & 0 & 0 & 0 & 0 & 1 & 0 & 0 & -i},
  n_0=\SmallMatrix{ 1 & 0 & 0 & 0 & 0 & 0 & 0 & 0 & 0 \\
                0 & 0 & 0 & 0 & 0 & 0 & -i & 0 & 0 \\
                0 & 0 & 0 & -i & 0 & 0 & 0 & 0 & 0 \\
                0 & 0 & -i & 0 & 0 & 0 & 0 & 0 & 0 \\
                0 & 0 & 0 & 0 & 0 & 0 & 0 & 0 & i \\
                0 & 0 & 0 & 0 & 0 & -1 & 0 & 0 & 0 \\
                0 & -i & 0 & 0 & 0 & 0 & 0 & 0 & 0 \\
                0 & 0 & 0 & 0 & 0 & 0 & 0 & -1 & 0 \\
                0 & 0 & 0 & 0 & i & 0 & 0 & 0 & 0}. $$
Furthermore $\sigma(T_2(s,t)) = T_2(\ov s, \ov t^{-1})$. Hence
$\Ho^1 \Zm_{q,e}=\{[1],[T_2(1,-1)]\}$.
\item $-e_{159}-e_{267}$ is a real element in $\YY$.
  Here
  $$ g_0=\SmallMatrix{ 0 & 0 & 1 & -1 & 0 & 0 & 0 & 0 & 0 \\
                0 & -1 & 0 & 0 & 0 & 0 & 0 & 0 & 0 \\
                0 & 0 & 0 & 0 & -\tfrac{1}{2} & 0 & 0 & 0 & \tfrac{1}{2} \\
                -\tfrac{1}{2}i & 0 & 0 & 0 & 0 & 0 & 0 & 0 & 0 \\
                0 & 0 & -i & -i & 0 & 0 & 0 & 0 & 0 \\
                0 & 0 & 0 & 0 & 0 & -i & 0 & 0 & 0 \\
                0 & i & 0 & 0 & 0 & 0 & i & 0 & 0 \\
                0 & 0 & 0 & 0 & 0 & 0 & 0 & \tfrac{1}{2}i & 0 \\
                0 & 0 & 0 & 0 & -i & 0 & 0 & 0 & -i},
  n_0=\SmallMatrix{ -1 & 0 & 0 & 0 & 0 & 0 & 0 & 0 & 0 \\
                0 & 0 & 0 & 0 & 0 & 0 & -1 & 0 & 0 \\
                0 & 0 & 0 & -1 & 0 & 0 & 0 & 0 & 0 \\
                0 & 0 & -1 & 0 & 0 & 0 & 0 & 0 & 0 \\
                0 & 0 & 0 & 0 & 0 & 0 & 0 & 0 & -1 \\
                0 & 0 & 0 & 0 & 0 & -1 & 0 & 0 & 0 \\
                0 & -1 & 0 & 0 & 0 & 0 & 0 & 0 & 0 \\
                0 & 0 & 0 & 0 & 0 & 0 & 0 & -1 & 0 \\
                0 & 0 & 0 & 0 & -1 & 0 & 0 & 0 & 0}.$$
Furthermore $\sigma(T_2(s,t)) = T_2(\ov s, \ov s^{-1}\ov t^{-1})$. Hence
$\Ho^1 \Zm_{q,e}=1$.
\item $e_{249}-e_{357}$ is a real element in $\YY$.
  Here
  $$g_0=\SmallMatrix{ 0 & -1 & 0 & 0 & 0 & -1 & 0 & 0 & 0 \\
                1 & 0 & 0 & 0 & 0 & 0 & 0 & 0 & 1 \\
                0 & 0 & 0 & \tfrac{1}{2} & 0 & 0 & 0 & \tfrac{1}{2} & 0 \\
                0 & 0 & -\tfrac{1}{2}i & 0 & 0 & 0 & 0 & 0 & 0 \\
                0 & i & 0 & 0 & 0 & -i & 0 & 0 & 0 \\
                0 & 0 & 0 & 0 & i & 0 & 0 & 0 & 0 \\
                -i & 0 & 0 & 0 & 0 & 0 & 0 & 0 & i \\
                0 & 0 & 0 & 0 & 0 & 0 & \tfrac{1}{2}i & 0 & 0 \\
                0 & 0 & 0 & i & 0 & 0 & 0 & -i & 0},
  n_0=\SmallMatrix{ 0 & 0 & 0 & 0 & 0 & 0 & 0 & 0 & 1 \\
                0 & 0 & 0 & 0 & 0 & 1 & 0 & 0 & 0 \\
                0 & 0 & -1 & 0 & 0 & 0 & 0 & 0 & 0 \\
                0 & 0 & 0 & 0 & 0 & 0 & 0 & 1 & 0 \\
                0 & 0 & 0 & 0 & -1 & 0 & 0 & 0 & 0 \\
                0 & 1 & 0 & 0 & 0 & 0 & 0 & 0 & 0 \\
                0 & 0 & 0 & 0 & 0 & 0 & -1 & 0 & 0 \\
                0 & 0 & 0 & 1 & 0 & 0 & 0 & 0 & 0 \\
                1 & 0 & 0 & 0 & 0 & 0 & 0 & 0 & 0}. $$
Furthermore $\sigma(T_2(s,t)) = T_2(\ov s, \ov t)$. Hence
$\Ho^1 \Zm_{q,e}=1$.
\item $e_{159}$ is a real element in $\YY$.
  Here
  $$g_0=\SmallMatrix{ 0 & 0 & -1 & 1 & 0 & 0 & 0 & 0 & 0 \\
                0 & -1 & 0 & 0 & 0 & 0 & 1 & 0 & 0 \\
                0 & 0 & 0 & 0 & -\tfrac{1}{2} & 0 & 0 & 0 & \tfrac{1}{2} \\
                \tfrac{1}{2}i & 0 & 0 & 0 & 0 & 0 & 0 & 0 & 0 \\
                0 & 0 & i & i & 0 & 0 & 0 & 0 & 0 \\
                0 & 0 & 0 & 0 & 0 & i & 0 & 0 & 0 \\
                0 & i & 0 & 0 & 0 & 0 & i & 0 & 0 \\
                0 & 0 & 0 & 0 & 0 & 0 & 0 & \tfrac{1}{2}i & 0 \\
                0 & 0 & 0 & 0 & -i & 0 & 0 & 0 & -i},
  n_0=\SmallMatrix{ -1 & 0 & 0 & 0 & 0 & 0 & 0 & 0 & 0 \\
                0 & 0 & 0 & 0 & 0 & 0 & -1 & 0 & 0 \\
                0 & 0 & 0 & -1 & 0 & 0 & 0 & 0 & 0 \\
                0 & 0 & -1 & 0 & 0 & 0 & 0 & 0 & 0 \\
                0 & 0 & 0 & 0 & 0 & 0 & 0 & 0 & -1 \\
                0 & 0 & 0 & 0 & 0 & -1 & 0 & 0 & 0 \\
                0 & -1 & 0 & 0 & 0 & 0 & 0 & 0 & 0 \\
                0 & 0 & 0 & 0 & 0 & 0 & 0 & -1 & 0 \\
                0 & 0 & 0 & 0 & -1 & 0 & 0 & 0 & 0}. $$
Furthermore $\sigma(T_3(s,t,u)) = T_3(\ov u, \ov t, \ov s)$. Hence
$\Ho^1 \Zm_{q,e}=1$.
\item $ie_{168}$ is a real element in $\YY$.
  Here
  $$g_0=\SmallMatrix{ 0 & 0 & -1 & -i & 0 & 0 & 0 & 0 & 0 \\
                0 & i & 0 & 0 & 0 & 0 & 1 & 0 & 0 \\
                0 & 0 & 0 & 0 & -\tfrac{1}{2}i & 0 & 0 & 0 & \tfrac{1}{2} \\
                -\tfrac{1}{2} & 0 & 0 & 0 & 0 & 0 & 0 & 0 & 0 \\
                0 & 0 & i & 1 & 0 & 0 & 0 & 0 & 0 \\
                0 & 0 & 0 & 0 & 0 & i & 0 & 0 & 0 \\
                0 & 1 & 0 & 0 & 0 & 0 & i & 0 & 0 \\
                0 & 0 & 0 & 0 & 0 & 0 & 0 & \tfrac{1}{2}i & 0 \\
                0 & 0 & 0 & 0 & 1 & 0 & 0 & 0 & -i},
  n_0= \SmallMatrix{ 1 & 0 & 0 & 0 & 0 & 0 & 0 & 0 & 0 \\
                0 & 0 & 0 & 0 & 0 & 0 & -i & 0 & 0 \\
                0 & 0 & 0 & -i & 0 & 0 & 0 & 0 & 0 \\
                0 & 0 & -i & 0 & 0 & 0 & 0 & 0 & 0 \\
                0 & 0 & 0 & 0 & 0 & 0 & 0 & 0 & i \\
                0 & 0 & 0 & 0 & 0 & -1 & 0 & 0 & 0 \\
                0 & -i & 0 & 0 & 0 & 0 & 0 & 0 & 0 \\
                0 & 0 & 0 & 0 & 0 & 0 & 0 & -1 & 0 \\
                0 & 0 & 0 & 0 & i & 0 & 0 & 0 & 0} $$
Furthermore $\sigma(T_3(s,t,u)) = T_3(\ov s, \ov t, \ov s^{-2} \ov t^{-2}
\ov u^{-1})$. Hence $\Ho^1 \Zm_{q,e}=\{[1],[T(1,1,-1)]\}$.
\end{enumerate}
\end{subsec}

\begin{subsec} Let $p\in \Fm_4$, $p=\ov p$. Then $\Zm_{\Gtil_0}(p)$ has five
nonzero nilpotent orbits in $\z_{\g^\cC}(p)\cap \g_1^\cC$.

We have $\Zm_{\Gtil_0}(p)= \cup_{i,j=0}^2 h_1^i h_2^j \Zm^\circ$. Moreover the
elements of $\Zm^\circ$ are block diagonal with equal $3\times 3$-blocks.
Write $\mathfrak{a} = \z_{\g^\cC}(p)$ which inherits the grading from $\g^\cC$.
Let $e\in \mathfrak{a}_1$ be a nilpotent element, and $(h,e,f)$ be a
homogeneous $\ssl_2$-triple in $\mathfrak{a}$. Let $C$ denote the stabilizer
of this triple in $\Zm^\circ$. Then also the elements of
$C$ are block diagonal with equal blocks. Below we
describe $C$ by just giving such a block. A computation shows that
$h_1,h_2$ act as the identity on $\mathfrak{a}_1$. This implies that the
stabilizer $\Zm_{p,e}$ of the triple in $\Zm_{\Gtil_0}(p)$ is
$\cup_{i,j=0}^2 h_1^i h_2^j C$.

In Case (3) we have $\#\Ho^1 C =2$ and in the other cases $\Ho^1 C =1$. Hence by
Proposition \ref{p:C-3} we have that $\Ho^1 \Zm_{p,e}$ consists of the classes of
the same elements as $\Ho^1 C$.
\begin{enumerate}
\item  $e=e_{149}+e_{167}+e_{258}+e_{347}$. Here $C=\mu_3$.
\item $e=e_{149}+e_{158}+e_{167}+e_{248}+e_{257}+e_{347}$. Here $C$ is $\mu_3$.
\item $e=e_{147}+e_{258}$. Here $C$ is the union of two sets with respective
  blocks
  $$X(\delta,\epsilon)=\SmallMatrix{ \delta^2\epsilon^2 & 0 & 0 \\ 0 & \delta & 0\\
    0 & 0 & \epsilon } \text{ and }
 Y(\delta,\epsilon)= \SmallMatrix{ 0 & \delta^2\epsilon^2 & 0 \\ \delta & 0 & 0\\
    0 & 0 & -\epsilon}$$
  where $\delta^3=\epsilon^3=1$.
  So $|C|=18$. Hence by Lemma \ref{l:explicit} $\Ho^1 C = \{ [1], [Y(1,1)] \}$.
\item $e=e_{148}+e_{157}+e_{247}$. Here the $3\times 3$-block is
  $\diag(t,t^{-2},t)$, where $t\in \C^*$.
\item $e=e_{147}$. The $3\times 3$-block is
  $$\SmallMatrix{ \delta & 0 & 0 \\ 0 & a_{22} & a_{23} \\ 0 & a_{32} & a_{33}}$$
  with $\delta^3=1$ and the determinant of the $2\times 2$-block is 1 as well.
\end{enumerate}
\end{subsec}

\begin{subsec} Let $p\in \Fm_5$, $p=\ov p$. Then $\Zm_{\Gtil_0}(p)$ has seventeen
nonzero nilpotent orbits in $\z_{\g^\cC}(p)\cap \g_1^\cC$.

Here $\Zm_{\Gtil_0}(p)^\circ$ is equal to $H\cdot T_2$, where $H$ is
block diagonal with three equal $3\times 3$-blocks of determinant 1,
and $T_2$ is a 2-dimensional
central torus. Let $e\in \z_{\g^\cC}(p)\cap \g_1^\cC$ be nilpotent, contained
in the homogeneous $\ssl_2$-triple $(h,e,f)$.
Let $C$ denote the stabilizer of $(h,e,f)$ in
$\Zm_{\Gtil_0}(p)^\circ$. It turns out that in all cases $C =
\widehat{H} \cdot \widehat{T}_2$ where $\widehat{H}$ is a subgroup of $H$ and
$\widehat{T}_2$ is a subgroup of $T_2$. We describe
$\widehat{H}$ by just giving one block denoted $B$.
The elements of $T_2$ are of the form
$$T_2(s,t) = \diag(s,s,s,t,t,t,(st)^{-1},(st)^{-1},(st)^{-1})$$
so we describe $\widehat{T}_2$ by giving the general form of its elements,
as $T_2(a,b)$.

We have $\Zm_{\Gtil_0}(p) = \cup_{i=0}^2 h_1^i \Zm_{\Gtil_0}(p)^\circ$.
It turns out that in all cases we have two possibilities: either
$\Zm_{p,e}=C$ or $\Zm_{p,e} =\cup_{i=0}^2 h_1^i C$. In the next list we
only describe $\Ho^1 \Zm_{p,e}$ when it is nontrivial.
\begin{enumerate}
\item $e=e_{123}+e_{149}+e_{167}+e_{258}+e_{347}+e_{456}$.  Here
  $B=\diag(\zeta,\zeta,\zeta)$, $\zeta^3=1$ and $\widehat{T}_2 =
  T_2(\epsilon,\delta)$ where $\epsilon^3=\delta^3=1$. Hence $\widehat{H}
  \subset \widehat{T}_2$ so that $C=\widehat{T}_2$, which is a group of order 9.
  We have $\Zm_{p,e}=C$.
\item $e=e_{123}+e_{149}+e_{158}+e_{167}+e_{248}+e_{257}+e_{347}+e_{456}$.
  Here $\Zm_{p,e}$ is exactly the same as in the previous case.
\item  $e=e_{123}+e_{149}+e_{167}+e_{258}+e_{347}$. Here
  $B=\diag(\zeta,\zeta,\zeta)$, $\zeta^3=1$ and $\widehat{T}_2 =
  T_2(\delta,\delta^2 t)$, where $\delta^3=1$, $t\in \C^\times$. Again we
  see that $C=\widehat{T}_2$. We have $\Zm_{p,e}=C$.
\item $e=e_{123}+e_{147}+e_{258}+e_{456}$. In this case $\widehat{H}$ is the
  union of two sets
  with respective blocks $\diag(\delta,\epsilon,\delta^2\epsilon^2)$ and
  $$Y(\delta,\epsilon)=\SmallMatrix{ 0 & \delta^2\epsilon^2 & 0 \\ \delta & 0 & 0 \\
    0 & 0 & -\epsilon},$$
  where $\delta^3=\epsilon^3=1$. Furthermore,
  $\widehat{T}_2= T_2(\zeta,\xi)$ with $\zeta^3=\xi^3=1$. We have $\Zm_{p,e}=C$.
  The group $C$ has order 54 ($|\widehat{H}|=18$, $|\widehat{T}_2|=9$ and
  $\widehat{H}\cap \widehat{T}_2 = \mu_3$). Hence by Lemma \ref{l:explicit}
  $\Ho^1 \Zm_{p,e} = \{ [1], [a] \}$, where $a\in \widehat{H}$ has
  $3\times 3$-block $Y(1,1)$.
\item $e=e_{123}+e_{149}+e_{158}+e_{167}+e_{248}+e_{257}+e_{347}$.
  Here $B=\diag(\zeta,\zeta,\zeta)$, $\zeta^3=1$ and $\widehat{T}_2 =
  T_2(\delta,\delta^2 t)$, where $\delta^3=1$, $t\in \C^\times$. Hence
  $\widehat{H}\subset \widehat{T}_2$ so that $C=\widehat{T}_2$.
  We have $\Zm_{p,e}=C$.
\item $e=e_{149}+e_{167}+e_{258}+e_{347}$. Here $B=\diag(\zeta,\zeta,\zeta)$,
  $\zeta^3=1$ and $\widehat{T}_2 = T_2(s,t)$, where $s,t\in \C^\times$. Also here
  we see that $C=\widehat{T}_2$.   We have
  $\Zm_{p,e} = \cup_{i=0}^2 h_1^i C$.
\item $e=e_{123}+e_{148}+e_{157}+e_{247}+e_{456}$. Here we have $B=\diag(u,u^{-2},u)$,
  $u\in \C^\times$, and $\widehat{T}_2=T_2(\delta,\epsilon)$ with $\delta^3=
  \epsilon^3=1$. Furthermore $\Zm_{p,e}=C$.
\item $e=e_{123}+e_{147}+e_{258}$. In this case $\widehat{H}$ is the same as in
  case (4). Furthermore  $\widehat{T}_2=T_2(\zeta,\zeta^2 t)$,
  where $\zeta^3=1$, $t\in \C^\times$. We have $\Zm_{p,e}=C$.

  Furthermore, $\widehat{H}\cap \widehat{T}_2 = \mu_3$. Hence
  $\Zm_{p,e} = \widehat{H}\times S$, where $S$ is a 1-dimensional torus
  consisting of $T_2(1,t)$, where $t\in \C^\times$.
  Since $\Ho^1 S=1$ we have that $\Ho^1 \widehat{H} = \{ [1], [a]\}$ and
  hence $\Ho^1 \Zm_{p,e} = \{[1],[a]\}$, where $a$ is as in case (4).
\item $e=e_{149}+e_{158}+e_{167}+e_{248}+e_{257}+e_{347}$. Here $B=\diag(\zeta,
  \zeta,\zeta)$, $\zeta^3=1$, $\widehat{T}_2 = T_2(s,t)$, where
  $s,t\in \C^\times$.
  So $\widehat{H}\subset \widehat{T}_2$.
  We have $\Zm_{p,e} = \cup_{i=0}^2 h_1^i C$.
\item $e=e_{123}+e_{148}+e_{157}+e_{247}$. Here we have $B=\diag(u,u^{-2},u)$,
  $u\in \C^\times$, and $\widehat{T}_2=T_2(\delta,\delta^2 t)$, where
  $\delta^3=1$, $t\in \C^\times$.  We have $\Zm_{p,e}=C$.
\item $e=e_{147}+e_{258}$. In this case $\widehat{H}$ is the same as in
  case (4). Furthermore $\widehat{T}_2 = T_2(s,t)$,
  where $s,t\in \C^\times$. We have $\Zm_{p,e} = \cup_{i=0}^2 h_1^i C$.

  In the same way as in case (8) we see that $\Ho^1 \Zm_{p,e} = \{ [1], [a]\}$
  where $a$ is as in case (4).
\item $e=e_{123}+e_{147}+e_{456}$. Here
  $$B = \SmallMatrix{ \zeta^2 & 0 & 0 \\ 0 & a_{22} & a_{23} \\
  0 & a_{32} & a_{33} } \text{ with } a_{22}a_{33}-a_{23}a_{32}=\zeta
  \text{ and } \zeta^3 = 1,$$
  and $\widehat{T}_2 =T_2(\delta,\epsilon)$ with $\delta^3=\epsilon^3=1$.
  We have  $\Zm_{p,e} = \cup_{i=0}^2 h_1^i C$.
\item $e=e_{148}+e_{157}+e_{247}$. We have $B=\diag(u,u^{-2},u)$, $u\in \C^\times$,
  and $\widehat{T}_2 = T_2(s,t)$, where $s,t\in \C^\times$. We have
  $\Zm_{p,e} = \cup_{i=0}^2 h_1^i C$.
\item $e=e_{123}+e_{147}$.  Here $B$ is as in case (12) and
  $\widehat{T}_2 = T_2(\delta,\delta^2 t)$, where $\delta^3=1$. We have
  $\Zm_{p,e}=C$.
\item $e=e_{147}$. Here $B$ is as in case (12) and $\widehat{T}_2 = T_2(s,t)$
  where $s,t\in \C^\times$. We have
  $\Zm_{p,e} = \cup_{i=0}^2 h_1^i C$.
\item $e=e_{123}+e_{456}$. Here
  $$B = \SmallMatrix{ a_{11} & a_{12} & a_{13}\\ a_{21} & a_{22} & a_{23}\\
  a_{31} & a_{32} & a_{33}} \text{ with } \det(B)=1$$
  $\widehat{T}_2 = T_2(\delta,\epsilon)$ with $\delta^3=\epsilon^3=1$.
  We have $\Zm_{p,e}=C$.
\item $e=e_{123}$. Here $B$ is as in the previous case
  and $\widehat{T}_2 = T_2(\delta,\delta^2 t)$, where $\delta^3=1$,
  $t\in \C^\times$.  We have $\Zm_{p,e}=C$.
\end{enumerate}
Let $p$ be a real semisimple element conjugate to $q\in\Fm_5$ with
$\ov q = -q$. Then the map $\mu$ is given by $\mu(y) = n_1 \ov y$.
\begin{enumerate}
\item $-ie_{148}-ie_{149}-ie_{157}-ie_{167}-ie_{247}-e_{258}+e_{259}+e_{268}-e_{269}-
  ie_{347}+e_{358}-e_{359}-e_{368}+e_{369}-e_{456}+e_{789}$ is a real element in $\YY$.
  In this case $\Zm_{q,e}$ is of order 9. Hence $\Ho^1 \Zm_{q,e}=1$.
\item $-\tfrac{1}{2}e_{148}+\tfrac{1}{2}e_{149}-\tfrac{1}{2}e_{157}+
  \tfrac{1}{2}e_{167}-\tfrac{1}{2}e_{247}-\tfrac{3}{4}ie_{258}+
  \tfrac{1}{4}ie_{259}+\tfrac{1}{4}ie_{268}+\tfrac{1}{4}ie_{269}+
  \tfrac{1}{2}2e_{347}+\tfrac{1}{4}ie_{358}+\tfrac{1}{4}ie_{359}+
  \tfrac{1}{4}ie_{368}-\tfrac{3}{4}ie_{369}-e_{456}+e_{789}$ is a real element in
  $\YY$.
  In this case $\Zm_{q,e}$ is of order 9. Hence $\Ho^1 \Zm_{q,e}=1$.
\item $ie_{123}+e_{148}-e_{149}+e_{157}-e_{167}+e_{247}+ie_{258}+ie_{259}+ie_{268}+
  ie_{269}-e_{347}+ie_{358}+ie_{359}+ie_{368}+ie_{369}$ is a real element in $\YY$.
  We have
  $$g_0=\SmallMatrix{ 2 & 0 & 0 & 0 & 0 & 0 & 0 & 0 & 0 \\
                0 & -1 & 0 & 0 & 0 & 0 & 0 & 0 & 0 \\
                0 & 0 & 0 & 2i & 0 & 0 & -2 & 0 & 0 \\
                0 & 0 & 0 & 0 & -\tfrac{1}{2}i & \tfrac{1}{8} & 0 & \tfrac{1}{2} & -\tfrac{1}{8}i \\
                0 & 0 & 0 & 0 & -i & -\tfrac{1}{4} & 0 & 1 & \tfrac{1}{4}i \\
                0 & 0 & \tfrac{1}{4} & 0 & 0 & 0 & 0 & 0 & 0 \\
                0 & 0 & 0 & 2 & 0 & 0 & -2i & 0 & 0 \\
                0 & 0 & 0 & 0 & -1 & -\tfrac{1}{4}i & 0 & i & \tfrac{1}{4} \\
                0 & 0 & 0 & 0 & -\tfrac{1}{2} & \tfrac{1}{8}i & 0 & \tfrac{1}{2}i & -\tfrac{1}{8}},
  n_0=\SmallMatrix{ 1 & 0 & 0 & 0 & 0 & 0 & 0 & 0 & 0 \\
                    0 & 1 & 0 & 0 & 0 & 0 & 0 & 0 & 0 \\
                    0 & 0 & 1 & 0 & 0 & 0 & 0 & 0 & 0 \\
                    0 & 0 & 0 & 0 & 0 & 0 & i & 0 & 0 \\
                    0 & 0 & 0 & 0 & 0 & 0 & 0 & i & 0 \\
                    0 & 0 & 0 & 0 & 0 & 0 & 0 & 0 & i \\
                    0 & 0 & 0 & i & 0 & 0 & 0 & 0 & 0 \\
                    0 & 0 & 0 & 0 & i & 0 & 0 & 0 & 0 \\
                    0 & 0 & 0 & 0 & 0 & i & 0 & 0 & 0}.
  $$
  The identity component $\Zm_{q,e}^\circ$ consists of $T_2(1,t)$ for
  $t\in \C^\times$. The conjugation $\sigma$ satisfies
  $\sigma(T_2(1,t)) = T_2(1,\ov t^{-1})$. Hence $\Ho^1 \Zm_{q,e}^\circ =
  \{[1],[T_2(1,-1)]\}$. Moreover, by Proposition \ref{p:C-3} also
  $\Ho^1 \Zm_{q,e}=\{[1],[T_2(1,-1)]\}$.
\item $-e_{258}+e_{369}-e_{456}+e_{789}$ is a real element in $\YY$. We have
$$g_0=\SmallMatrix{ 0 & 0 & 0 & 0 & 0 & 0 & 0 & 0 & -1 \\
                0 & 0 & 0 & 0 & 0 & 0 & -\tfrac{1}{2} & \tfrac{1}{2} & 0 \\
                0 & 0 & -1 & 0 & 0 & 1 & 0 & 0 & 0 \\
                -\tfrac{1}{2} & 0 & 0 & 0 & -\tfrac{1}{2} & 0 & 0 & 0 & 0 \\
                0 & 1 & 0 & 1 & 0 & 0 & 0 & 0 & 0 \\
                0 & 0 & 0 & 0 & 0 & 0 & -\tfrac{1}{2}i & -\tfrac{1}{2}i & 0 \\
                0 & 0 & i & 0 & 0 & i & 0 & 0 & 0 \\
                i & 0 & 0 & 0 & -i & 0 & 0 & 0 & 0 \\
                0 & -\tfrac{1}{2}i & 0 & \tfrac{1}{2}i & 0 & 0 & 0 & 0 & 0},
n_0=\SmallMatrix{ 0 & 0 & 0 & 0 & 1 & 0 & 0 & 0 & 0 \\
                0 & 0 & 0 & 1 & 0 & 0 & 0 & 0 & 0 \\
                0 & 0 & 0 & 0 & 0 & -1 & 0 & 0 & 0 \\
                0 & 1 & 0 & 0 & 0 & 0 & 0 & 0 & 0 \\
                1 & 0 & 0 & 0 & 0 & 0 & 0 & 0 & 0 \\
                0 & 0 & -1 & 0 & 0 & 0 & 0 & 0 & 0 \\
                0 & 0 & 0 & 0 & 0 & 0 & 0 & -1 & 0 \\
                0 & 0 & 0 & 0 & 0 & 0 & -1 & 0 & 0 \\
                0 & 0 & 0 & 0 & 0 & 0 & 0 & 0 & 1}.
  $$
In this case $\Zm_{q,e}$ is of order 54. Let $M(\delta,\epsilon)$ denote
the matrix with $3\times 3$-block equal to $Y(\delta,\epsilon)$. Then
$n_0 \overline{M(\delta,\epsilon)} n_0^{-1} = M(\ov \delta^2\ov\epsilon^2,
\ov \epsilon)$. Hence $M(1,1)$ is invariant under the conjugation $\sigma$
and by Lemma \ref{l:explicit} it follows that $\Ho^1 \Zm_{q,e} =
\{[1],[M(1,1)]\}$.
\item $ie_{123}-ie_{148}-ie_{149}-ie_{157}+\tfrac{1}{4}ie_{158}-\tfrac{1}{4}ie_{159}-
  ie_{167}-\tfrac{1}{4}ie_{168}+\tfrac{1}{4}ie_{169}-ie_{247}+\tfrac{1}{4}ie_{248}-
  \tfrac{1}{4}ie_{249}+\tfrac{1}{4}ie_{257}-\tfrac{1}{4}ie_{267}-ie_{347}-
  \tfrac{1}{4}ie_{348}+\tfrac{1}{4}ie_{349}-\tfrac{1}{4}ie_{357}+
  \tfrac{1}{4}ie_{367}$ is a real element in $\YY$. We have
  $$g_0=\SmallMatrix{ 1 & 0 & 0 & 0 & 0 & 0 & 0 & 0 & 0 \\
                0 & 0 & 1 & 0 & 0 & 0 & 0 & 0 & 0 \\
                0 & 0 & 0 & i & 0 & 0 & -1 & 0 & 0 \\
                0 & 0 & 0 & 0 & \tfrac{1}{4} & \tfrac{1}{2}i & 0 & -\tfrac{1}{4}i & -\tfrac{1}{2} \\
                0 & 0 & 0 & 0 & -\tfrac{1}{2} & i & 0 & \tfrac{1}{2}i & -1 \\
                0 & \tfrac{1}{2} & 0 & 0 & 0 & 0 & 0 & 0 & 0 \\
                0 & 0 & 0 & 1 & 0 & 0 & -i & 0 & 0 \\
                0 & 0 & 0 & 0 & -\tfrac{1}{2}i & 1 & 0 & \tfrac{1}{2} & -i \\
                0 & 0 & 0 & 0 & \tfrac{1}{4}i & \tfrac{1}{2} & 0 & -\tfrac{1}{4} & -\tfrac{1}{2}i},
  n_0=\SmallMatrix{ 1 & 0 & 0 & 0 & 0 & 0 & 0 & 0 & 0 \\
                0 & 1 & 0 & 0 & 0 & 0 & 0 & 0 & 0 \\
                0 & 0 & 1 & 0 & 0 & 0 & 0 & 0 & 0 \\
                0 & 0 & 0 & 0 & 0 & 0 & i & 0 & 0 \\
                0 & 0 & 0 & 0 & 0 & 0 & 0 & i & 0 \\
                0 & 0 & 0 & 0 & 0 & 0 & 0 & 0 & i \\
                0 & 0 & 0 & i & 0 & 0 & 0 & 0 & 0 \\
                0 & 0 & 0 & 0 & i & 0 & 0 & 0 & 0 \\
                0 & 0 & 0 & 0 & 0 & i & 0 & 0 & 0}. $$
  The identity component $\Zm_{q,e}^\circ$ consists of $T_2(1,t)$, $t\in \C^\times$.
  We have $\sigma(T_2(1,t)) = T_2(1,\ov t^{-1})$. Hence $\Ho^1 \Zm_{q,e}^\circ
  = \{[1],[T_2(1,-1)]\}$. The component group is of order 3, so by
  Proposition \ref{p:C-3} also $\Ho^1 \Zm_{q,e}= \{[1],[T_2(1,-1)]\}$.
\item $-ie_{148}-ie_{149}-ie_{157}-ie_{167}-ie_{247}-e_{258}+e_{259}+e_{268}-e_{269}-
  ie_{347}+e_{358}-e_{359}-e_{368}+e_{369}$ is a real element in $\YY$.
  We have
  $$g_0=\SmallMatrix{ 2i & 0 & 0 & 0 & 0 & 0 & 0 & 0 & 0 \\
                0 & 0 & \tfrac{1}{4}i & 0 & 0 & 0 & 0 & 0 & 0 \\
                0 & 0 & 0 & 2i & 0 & 0 & 2i & 0 & 0 \\
                0 & 0 & 0 & 0 & \tfrac{1}{2} & \tfrac{1}{8}i & 0 & -\tfrac{1}{2} & \tfrac{1}{8}i \\
                0 & 0 & 0 & 0 & -1 & \tfrac{1}{4}i & 0 & 1 & \tfrac{1}{4}i &\\
                0 & i & 0 & 0 & 0 & 0 & 0 & 0 & 0 \\
                0 & 0 & 0 & 2 & 0 & 0 & -2 & 0 & 0 \\
                0 & 0 & 0 & 0 & -i & \tfrac{1}{4} & 0 & -i & -\tfrac{1}{4} \\
                0 & 0 & 0 & 0 & \tfrac{1}{2}i & \tfrac{1}{8} & 0 & \tfrac{1}{2}i & -\tfrac{1}{8}},
  n_0=\SmallMatrix{ -1 & 0 & 0 & 0 & 0 & 0 & 0 & 0 & 0 \\
                0 & -1 & 0 & 0 & 0 & 0 & 0 & 0 & 0 \\
                0 & 0 & -1 & 0 & 0 & 0 & 0 & 0 & 0 \\
                0 & 0 & 0 & 0 & 0 & 0 &-1 & 0 & 0 \\
                0 & 0 & 0 & 0 & 0 & 0 & 0 & -1 & 0 \\
                0 & 0 & 0 & 0 & 0 & 0 & 0 & 0 & -1 \\
                0 & 0 & 0 & -1 & 0 & 0 & 0 & 0 & 0 \\
                0 & 0 & 0 & 0 & -1 & 0 & 0 & 0 & 0 \\
                0 & 0 & 0 & 0 & 0 & -1 & 0 & 0 & 0}. $$
  The identity component $\Zm_{q,e}^\circ$ consists of $T_2(s,t)$,
  $s,t\in \C^\times$. We have $\sigma(T_2(s,t)) =
  T_2(\ov s, \ov s^{-1}\ov t^{-1})$. Hence $\Ho^1 \Zm_{q,e}^\circ = 1$, and as
  the component group is of order 3, by
  Proposition \ref{p:C-3} also $\Ho^1 \Zm_{q,e}$ is trivial.
\item $e_{148}-e_{149}+e_{157}-e_{167}+e_{247}-e_{347}-e_{456}+e_{789}$ is a real
  element in $\YY$. We have
$$g_0=\SmallMatrix{ 0 & 0 & 0 & 0 & 0 & 0 & -2 & 0 & 0 \\
                0 & 0 & 0 & 0 & 0 & 0 & 0 & 0 & -1 \\
                -2 & 0 & 0 & 2 & 0 & 0 & 0 & 0 & 0 \\
                0 & \tfrac{1}{8} & -\tfrac{1}{2} & 0 & \tfrac{1}{8} & \tfrac{1}{2} & 0 & 0 & 0 \\
                0 & -\tfrac{1}{4} & -1 & 0 & -\tfrac{1}{4} & 1 & 0 & 0 & 0 \\
                0 & 0 & 0 & 0 & 0 & 0 & 0 & \tfrac{1}{4}i & 0 \\
                2i & 0 & 0 & 2i & 0 & 0 & 0 & 0 & 0 \\
                0 & -\tfrac{1}{4}i & i & 0 & \tfrac{1}{4}i & i & 0 & 0 & 0 \\
                0 & \tfrac{1}{8}i & \tfrac{1}{2}i & 0 & -\tfrac{1}{8}i & \tfrac{1}{2}i & 0 & 0 & 0},
  n_0=\SmallMatrix{ 0 & 0 & 0 & -1 & 0 & 0 & 0 & 0 & 0 \\
                0 & 0 & 0 & 0 & 1 & 0 & 0 & 0 & 0 \\
                0 & 0 & 0 & 0 & 0 & -1 & 0 & 0 & 0 \\
                -1 & 0 & 0 & 0 & 0 & 0 & 0 & 0 & 0 \\
                0 & 1 & 0 & 0 & 0 & 0 & 0 & 0 & 0 \\
                0 & 0 & -1 & 0 & 0 & 0 & 0 & 0 & 0 \\
                0 & 0 & 0 & 0 & 0 & 0 & 1 & 0 & 0 \\
                0 & 0 & 0 & 0 & 0 & 0 & 0 & -1 & 0 \\
                0 & 0 & 0 & 0 & 0 & 0 & 0 & 0 & 1}.
  $$
  The identity component $\Zm_{q,e}^\circ$ is a torus consisting of
  $S(u)$, $u\in \C^\times$. We have $\sigma(S(u)) =
  S(\ov u)$. Hence $\Ho^1 \Zm_{q,e}^\circ = 1$, and as
  the component group is of order 9, by
  Proposition \ref{p:C-3} also $\Ho^1 \Zm_{q,e}$ is trivial.
\item $ie_{123}+ie_{258}+ie_{369}$ is a real element in $\YY$. We have
  $$g_0= \SmallMatrix{ 0 & 0 & -i & 0 & 0 & 0 & 0 & 0 & 0 \\
                -\tfrac{1}{2} & -\tfrac{1}{2} & 0 & 0 & 0 & 0 & 0 & 0 & 0 \\
                0 & 0 & 0 & 0 & 0 & i & 0 & 0 & 1 \\
                0 & 0 & 0 & \tfrac{1}{2} & 0 & 0 & 0 & -\tfrac{1}{2}i & 0 \\
                0 & 0 & 0 & 0 & 1 & 0 & -i & 0 & 0 \\
                -\tfrac{1}{2}i & \tfrac{1}{2}i & 0 & 0 & 0 & 0 & 0 & 0 & 0 \\
                0 & 0 & 0 & 0 & 0 & 1 & 0 & 0 & i \\
                0 & 0 & 0 & -i & 0 & 0 & 0 & 1 & 0 \\
                0 & 0 & 0 & 0 & -\tfrac{1}{2}i & 0 & \tfrac{1}{2} & 0 & 0},
n_0 = \SmallMatrix{ 0 & 1 & 0 & 0 & 0 & 0 & 0 & 0 & 0 \\
                1 & 0 & 0 & 0 & 0 & 0 & 0 & 0 & 0 \\
                0 & 0 & -1 & 0 & 0 & 0 & 0 & 0 & 0 \\
                0 & 0 & 0 & 0 & 0 & 0 & 0 & i & 0 \\
                0 & 0 & 0 & 0 & 0 & 0 & i & 0 & 0 \\
                0 & 0 & 0 & 0 & 0 & 0 & 0 & 0 & -i \\
                0 & 0 & 0 & 0 & i & 0 & 0 & 0 & 0 \\
                0 & 0 & 0 & i & 0 & 0 & 0 & 0 & 0 \\
                0 & 0 & 0 & 0 & 0 & -i & 0 & 0 & 0}.
$$
We have $\Zm_{q,e} = \widehat{H}\times S$, where $\widehat{H}$ is of order 18 and
$S$ is a 1-dimensional torus consisting of $T_2(1,t)$, where $t\in \C^\times$.
By Lemma \ref{l:explicit} $\Ho^1 \widehat{H} =\{[1],[a]\}$, where
$a\in \widehat{H}$ is of order 2 and fixed under the conjugation $\sigma$.
A small computation shows that the matrix $a$ with three $3\times 3$-blocks
$Y(1,1)$ satisfies these requirements. Secondly, $\sigma(T_2(1,t)) =
T_2(1,\ov t^{-1})$. Hence $\Ho^1 S = \{[1],[T_2(1,-1)]\}$. It follows that
$\Ho^1 \Zm_{q,e} =\{[1],[a],[T_2(1,-1)],[aT_2(1,-1)]\}$.
\item $-ie_{148}-ie_{149}-ie_{157}+\tfrac{1}{4}ie_{158}-\tfrac{1}{4}ie_{159}-
  ie_{167}-\tfrac{1}{4}ie_{168}+\tfrac{1}{4}ie_{169}-ie_{247}+\tfrac{1}{4}ie_{248}-
  \tfrac{1}{4}ie_{249}+\tfrac{1}{4}ie_{257}-\tfrac{1}{4}ie_{267}-ie_{347}-
  \tfrac{1}{4}ie_{348}+\tfrac{1}{4}ie_{349}-\tfrac{1}{4}ie_{357}+
  \tfrac{1}{4}ie_{367}$ is a real element in $\YY$. We have
  $$g_0= \SmallMatrix{ i & 0 & 0 & 0 & 0 & 0 & 0 & 0 & 0 \\
                0 & 0 & i & 0 & 0 & 0 & 0 & 0 & 0 \\
                0 & 0 & 0 & i & 0 & 0 & i & 0 & 0 \\
                0 & 0 & 0 & 0 & \tfrac{1}{4} & \tfrac{1}{2}i & 0 & -\tfrac{1}{4} & \tfrac{1}{2}i \\
                0 & 0 & 0 & 0 & -\tfrac{1}{2} & i & 0 & \tfrac{1}{2} & i \\
                0 & \tfrac{1}{2}i & 0 & 0 & 0 & 0 & 0 & 0 & 0 \\
                0 & 0 & 0 & 1 & 0 & 0 & -1 & 0 & 0 \\
                0 & 0 & 0 & 0 & -\tfrac{1}{2}i & 1 & 0 & -\tfrac{1}{2}i & -1 \\
                0 & 0 & 0 & 0 & \tfrac{1}{4}i & \tfrac{1}{2} & 0 & \tfrac{1}{4}i & -\tfrac{1}{2}},
n_0=\SmallMatrix{ -1 & 0 & 0 & 0 & 0 & 0 & 0 & 0 & 0 \\
                0 & -1 & 0 & 0 & 0 & 0 & 0 & 0 & 0 \\
                0 & 0 & -1 & 0 & 0 & 0 & 0 & 0 & 0 \\
                0 & 0 & 0 & 0 & 0 & 0 &-1 & 0 & 0 \\
                0 & 0 & 0 & 0 & 0 & 0 & 0 & -1 & 0 \\
                0 & 0 & 0 & 0 & 0 & 0 & 0 & 0 & -1 \\
                0 & 0 & 0 & -1 & 0 & 0 & 0 & 0 & 0 \\
                0 & 0 & 0 & 0 & -1 & 0 & 0 & 0 & 0 \\
                0 & 0 & 0 & 0 & 0 & -1 & 0 & 0 & 0}.
$$
Here $\Zm_{q,e}^\circ=\widehat{T_2}$ which consists of the elements $T_2(s,t)$,
$s,t\in \C^\times$. The component group is of order 3. We have
$\sigma(T_2(s,t)) = T_2(\ov s, \ov s^{-1} \ov t^{-1})$. It follows that
$\Ho^1 \widehat{T}_2 = 1$ and by Proposition \ref{p:C-3} also
$\Ho^1 \Zm_{q,e}$ is trivial.
\item $ie_{123}-ie_{148}-ie_{149}-ie_{157}-ie_{167}-ie_{247}-ie_{347}$ is a real
  element of $\YY$. We have
  $$g_0=\SmallMatrix{ -2 & 0 & 0 & 0 & 0 & 0 & 0 & 0 & 0 \\
                0 & \tfrac{1}{4} & 0 & 0 & 0 & 0 & 0 & 0 & 0 \\
                0 & 0 & 0 & 2 & 0 & 0 & -2i & 0 & 0 \\
                0 & 0 & 0 & 0 & -\tfrac{1}{8} & \tfrac{1}{2}i & 0 & \tfrac{1}{8}i & -\tfrac{1}{2} \\
                0 & 0 & 0 & 0 & -\tfrac{1}{4} & -i & 0 & \tfrac{1}{4}i & 1 \\
                0 & 0 & 1 & 0 & 0 & 0 & 0 & 0 & 0 \\
                0 & 0 & 0 & -2i & 0 & 0 & 2 & 0 & 0 \\
                0 & 0 & 0 & 0 & \tfrac{1}{4}i & 1 & 0 & -\tfrac{1}{4} & -i \\
                0 & 0 & 0 & 0 & \tfrac{1}{8}i & -\tfrac{1}{2} & 0 & -\tfrac{1}{8} & \tfrac{1}{2}i},
n_0=\SmallMatrix{ 1 & 0 & 0 & 0 & 0 & 0 & 0 & 0 & 0 \\
                0 & 1 & 0 & 0 & 0 & 0 & 0 & 0 & 0 \\
                0 & 0 & 1 & 0 & 0 & 0 & 0 & 0 & 0 \\
                0 & 0 & 0 & 0 & 0 & 0 & i & 0 & 0 \\
                0 & 0 & 0 & 0 & 0 & 0 & 0 & i & 0 \\
                0 & 0 & 0 & 0 & 0 & 0 & 0 & 0 & i \\
                0 & 0 & 0 & i & 0 & 0 & 0 & 0 & 0 \\
                0 & 0 & 0 & 0 & i & 0 & 0 & 0 & 0 \\
                0 & 0 & 0 & 0 & 0 & i & 0 & 0 & 0}.
$$
Here $\Zm_{q,e}^\circ$ is a 2-dimensional torus consisting of diagonal elements
$S(u,t)$. The component group is of order 3. We have $\sigma(S(u,t)) =
S(\ov u, \ov t)$. Hence $\Ho^1 \Zm_{q,e}^\circ=1$ and by Proposition
\ref{p:C-3} also $\Ho^1 \Zm_{q,e}$ is trivial.
\item $-e_{258}+e_{369}$ is a real element in $\YY$. We have
  $$g_0=\SmallMatrix{ 0 & 0 & -1 & 0 & 0 & 0 & 0 & 0 & 0 \\
                -\tfrac{1}{2} & \tfrac{1}{2} & 0 & 0 & 0 & 0 & 0 & 0 & 0 \\
                0 & 0 & 0 & 0 & 0 & -1 & 0 & 0 & -1 \\
                0 & 0 & 0 & 0 & \tfrac{1}{2} & 0 & -\tfrac{1}{2} & 0 & 0 \\
                0 & 0 & 0 & -1 & 0 & 0 & 0 & 1 & 0 \\
                \tfrac{1}{2}i & \tfrac{1}{2}i & 0 & 0 & 0 & 0 & 0 & 0 & 0 \\
                0 & 0 & 0 & 0 & 0 & i & 0 & 0 & -i \\
                0 & 0 & 0 & 0 & -i & 0 & -i & 0 & 0 \\
                0 & 0 & 0 & \tfrac{1}{2}i & 0 & 0 & 0 & \tfrac{1}{2}i & 0},
n_0=\SmallMatrix{ 0 & -1 & 0 & 0 & 0 & 0 & 0 & 0 & 0 \\
                -1 & 0 & 0 & 0 & 0 & 0 & 0 & 0 & 0 \\
                0 & 0 & 1 & 0 & 0 & 0 & 0 & 0 & 0 \\
                0 & 0 & 0 & 0 & 0 & 0 & 0 & -1 & 0 \\
                0 & 0 & 0 & 0 & 0 & 0 & -1 & 0 & 0 \\
                0 & 0 & 0 & 0 & 0 & 0 & 0 & 0 & 1 \\
                0 & 0 & 0 & 0 & -1 & 0 & 0 & 0 & 0 \\
                0 & 0 & 0 & -1 & 0 & 0 & 0 & 0 & 0 \\
                0 & 0 & 0 & 0 & 0 & 1 & 0 & 0 & 0}.
  $$
Here $\Zm_{q,e}^\circ$ is a 2-dimensional torus consisting of elements
$T_2(s,t)$ for $s,t\in \C^\times$. We have $\sigma(T_2(s,t)) =
T_2(\ov s, \ov s^{-1} \ov t^{-1})$ so that $\Ho^1 \Zm_{q,e}^\circ = 1$.
The component group is of order $54 = 2\cdot 3^3$. Let $A(\epsilon,\delta)$
be the element with $3\times 3$-blocks $Y(\delta,\epsilon)$. Then
$\sigma(A(\delta,\epsilon)) = A(\ov\delta^2\ov\epsilon^2,\ov\epsilon^2)$.
So $\sigma(A(1,,1)) = A(1,1)$. Moreover $A(1,1)$ is of order 2. Hence
by Proposition \ref{p:C-3-2} we see that $\Ho^1 \Zm_{q,e} = \{ [1], [A(1,1)]\}$.
\item $-e_{258}+e_{259}+e_{268}-e_{269}+e_{358}-e_{359}-e_{368}+e_{369}-e_{456}+e_{789}$
  is a real element in $\YY$. We have
$$g_0=\SmallMatrix{ 0 & 0 & 0 & 0 & 0 & 0 & 0 & -\tfrac{1}{2} & 0\\
                0 & 0 & 0 & 0 & 0 & 0 & 0 & 0 & -1 \\
                0 & -\tfrac{1}{2} & 0 & 0 & \tfrac{1}{2} & 0 & 0 & 0 & 0 \\
                -\tfrac{1}{2} & 0 & -\tfrac{1}{2} & -\tfrac{1}{2} & 0 & \tfrac{1}{2} & 0 & 0 & 0 \\
                1 & 0 & -1 & 1 & 0 & 1 & 0 & 0 & 0 \\
                0 & 0 & 0 & 0 & 0 & 0 & -i & 0 & 0 \\
                0 & \tfrac{1}{2}i & 0 & 0 & \tfrac{1}{2}i & 0 & 0 & 0 & 0 \\
                i & 0 & i & -i & 0 & i & 0 & 0 & 0 \\
                -\tfrac{1}{2}i & 0 & \tfrac{1}{2}i & \tfrac{1}{2}i & 0 & \tfrac{1}{2}i & 0 & 0 & 0},
n_0=\SmallMatrix{ 0 & 0 & 0 & 1 & 0 & 0 & 0 & 0 & 0 \\
                0 & 0 & 0 & 0 & -1 & 0 & 0 & 0 & 0 \\
                0 & 0 & 0 & 0 & 0 & -1 & 0 & 0 & 0 \\
                1 & 0 & 0 & 0 & 0 & 0 & 0 & 0 & 0 \\
                0 & -1 & 0 & 0 & 0 & 0 & 0 & 0 & 0 \\
                0 & 0 & -1 & 0 & 0 & 0 & 0 & 0 & 0 \\
                0 & 0 & 0 & 0 & 0 & 0 & -1 & 0 & 0 \\
                0 & 0 & 0 & 0 & 0 & 0 & 0 & 1 & 0 \\
                0 & 0 & 0 & 0 & 0 & 0 & 0 & 0 & 1}.
  $$
The identity component $\Zm_{q,e}^\circ$ consists of matrices $M(B)$ that are block
diagonal with equal $3\times 3$-blocks
$$B=\SmallMatrix{1&0&0\\0&a_{22}&a_{23}\\0&a_{32}&a_{33}} \text{ with }
a_{22}a_{33}-a_{23}a_{32}=1.$$
We have $\sigma( M(B) ) = M(\overline{B})$. Hence $\Ho^1 \Zm_{q,e}^\circ=1$.
The component group is of order 27, so by Proposition \ref{p:C-3}
we see that $\Ho^1 \Zm_{q,e}=1$.
\item $e_{148}-e_{149}+e_{157}-e_{167}+e_{247}-e_{347}$ is a real element in $\YY$.
  We have
$$g_0=\SmallMatrix{ 2 & 0 & 0 & 0 & 0 & 0 & 0 & 0 & 0 \\
                0 & 0 & 1 & 0 & 0 & 0 & 0 & 0 & 0 \\
                0 & 0 & 0 & 2 & 0 & 0 & 2 & 0 & 0 \\
                0 & 0 & 0 & 0 & -\tfrac{1}{8} & \tfrac{1}{2} & 0 & \tfrac{1}{8} & \tfrac{1}{2} \\
                0 & 0 & 0 & 0 & \tfrac{1}{4} & 1 & 0 & -\tfrac{1}{4} & 1 \\
                0 & -\tfrac{1}{4}i & 0 & 0 & 0 & 0 & 0 & 0 & 0 \\
                0 & 0 & 0 & -2i & 0 & 0 & 2i & 0 & 0 \\
                0 & 0 & 0 & 0 & \tfrac{1}{4}i & -i & 0 & \tfrac{1}{4}i & i \\
                0 & 0 & 0 & 0 & -\tfrac{1}{8}i & -\tfrac{1}{2}i & 0 & -\tfrac{1}{8}i & \tfrac{1}{2}i},
n_0= \SmallMatrix{ 1 & 0 & 0 & 0 & 0 & 0 & 0 & 0 & 0 \\
                0 & -1 & 0 & 0 & 0 & 0 & 0 & 0 & 0 \\
                0 & 0 & 1 & 0 & 0 & 0 & 0 & 0 & 0 \\
                0 & 0 & 0 & 0 & 0 & 0 & 1 & 0 & 0 \\
                0 & 0 & 0 & 0 & 0 & 0 & 0 & -1 & 0 \\
                0 & 0 & 0 & 0 & 0 & 0 & 0 & 0 & 1 \\
                0 & 0 & 0 & 1 & 0 & 0 & 0 & 0 & 0 \\
                0 & 0 & 0 & 0 & -1 & 0 & 0 & 0 & 0 \\
                0 & 0 & 0 & 0 & 0 & 1 & 0 & 0 & 0}.
$$
Here $\Zm_{q,e}^\circ$ is a torus consisting of elements $Q(u,s,t)$. We have
$\sigma(Q(u,s,t)) = Q(\ov u,\ov s,\ov s^{-1}\ov t^{-1})$. It follows that
$\Ho^1  \Zm_{q,e}^\circ=1$. The component group is of order 3, so by
Proposition \ref{p:C-3} we see that $\Ho^1 \Zm_{q,e}=1$.
\item $ie_{123}+ie_{147}$ is a real element in $\YY$. We have
  $$g_0=\SmallMatrix{ -1 & 0 & 0 & 0 & 0 & 0 & 0 & 0 & 0 \\
                0 & \tfrac{1}{2} & -\tfrac{1}{2}i & 0 & 0 & 0 & 0 & 0 & 0 \\
                0 & 0 & 0 & 1 & 0 & 0 & -i & 0 & 0 \\
                0 & 0 & 0 & 0 & 0 & \tfrac{1}{2}i & 0 & \tfrac{1}{2}i & 0 \\
                0 & 0 & 0 & 0 & -1 & 0 & 0 & 0 & 1 \\
                0 & -\tfrac{1}{2}i & \tfrac{1}{2} & 0 & 0 & 0 & 0 & 0 & 0 \\
                0 & 0 & 0 & -i & 0 & 0 & 1 & 0 & 0 \\
                0 & 0 & 0 & 0 & 0 & 1 & 0 & -1 & 0 \\
                0 & 0 & 0 & 0 & \tfrac{1}{2}i & 0 & 0 & 0 & \tfrac{1}{2}i},
n_0=\SmallMatrix{ 1 & 0 & 0 & 0 & 0 & 0 & 0 & 0 & 0 \\
                0 & 0 & i & 0 & 0 & 0 & 0 & 0 & 0 \\
                0 & i & 0 & 0 & 0 & 0 & 0 & 0 & 0 \\
                0 & 0 & 0 & 0 & 0 & 0 & i & 0 & 0 \\
                0 & 0 & 0 & 0 & 0 & 0 & 0 & 0 & -1 \\
                0 & 0 & 0 & 0 & 0 & 0 & 0 & -1 & 0 \\
                0 & 0 & 0 & i & 0 & 0 & 0 & 0 & 0 \\
                0 & 0 & 0 & 0 & 0 & -1 & 0 & 0 & 0 \\
                0 & 0 & 0 & 0 & -1 & 0 & 0 & 0 & 0}.
  $$
Here $\Zm_{q,e}^\circ = H\times S$, where $H$ is isomorphic to $\SL(2,\C)$ and
consists of elements $M(A)$, where $A$ is a $2\times 2$-matrix of
determinant 1; furthermore $S$ is a 1-dimensional torus consisting of
elements $T_2(1,t)$, $t\in \C^\times$. Let $U=\SmallMatrix{0&1\\1&0}$. Then
$\sigma(M(A)) = U\ov A U^{-1}$. A straightforward calculation shows that
$M(A)$ is invariant under $\sigma$ if and only if $A=
\SmallMatrix{a&b\\\bar b&\bar a}$ with $a\bar a- b\bar b=1$. It follows that
the group of $\sigma$-fixed points of $H$ is a noncompact form of $\SL(2,\R)$,
and hence isomorphic to $\SL(2,\R)$. It follows that $H$ is
$\Gamma$-equivariantly isomorphic to $\SL(2,\C)$ and hence $\Ho^1 H = 1$.
Furthermore, $\sigma(T_2(1,t)) = T_2(1,\ov t^{-1})$. Therefore $\Ho^1
S = \{[1],[T_2(1,-1)]\}$. The component group is of order 9. So by
Proposition \ref{p:C-3} we see that $\Ho^1 \Zm_{q,e}=\{[1],[T_2(1,-1)]\}$.
\item $-e_{258}+e_{259}+e_{268}-e_{269}+e_{358}-e_{359}-e_{368}+e_{369}$ is a real
  element in $\YY$. We have
$$g_0=\SmallMatrix{ 0 & 1 & 0 & 0 & 0 & 0 & 0 & 0 & 0 \\
                0 & 0 & \tfrac{1}{2} & 0 & 0 & 0 & 0 & 0 & 0 \\
                0 & 0 & 0 & 0 & 1 & 0 & 0 & 1 & 0 \\
                0 & 0 & 0 & \tfrac{1}{2} & 0 & \tfrac{1}{4} & -\tfrac{1}{2} & 0 & \tfrac{1}{4} \\
                0 & 0 & 0 & -1 & 0 & \tfrac{1}{2} & 1 & 0 & \tfrac{1}{2} \\
                i & 0 & 0 & 0 & 0 & 0 & 0 & 0 & 0 \\
                0 & 0 & 0 & 0 & -i & 0 & 0 & i & 0 \\
                0 & 0 & 0 & -i & 0 & -\tfrac{1}{2}i & -i & 0 & \tfrac{1}{2}i \\
                0 & 0 & 0 & \tfrac{1}{2}i & 0 & -\tfrac{1}{4}i & \tfrac{1}{2}i & 0 & \tfrac{1}{4}i},
n_0=\SmallMatrix{ -1 & 0 & 0 & 0 & 0 & 0 & 0 & 0 & 0 \\
                0 & 1 & 0 & 0 & 0 & 0 & 0 & 0 & 0 \\
                0 & 0 & 1 & 0 & 0 & 0 & 0 & 0 & 0 \\
                0 & 0 & 0 & 0 & 0 & 0 & -1 & 0 & 0 \\
                0 & 0 & 0 & 0 & 0 & 0 & 0 & 1 & 0 \\
                0 & 0 & 0 & 0 & 0 & 0 & 0 & 0 & 1 \\
                0 & 0 & 0 & -1 & 0 & 0 & 0 & 0 & 0 \\
                0 & 0 & 0 & 0 & 1 & 0 & 0 & 0 & 0 \\
                0 & 0 & 0 & 0 & 0 & 1 & 0 & 0 & 0}.
$$
Here $\Zm_{q,e}^\circ = H\times S$, where $H$ is isomorphic to $\SL(2,\C)$ and
consists of elements $M(A)$, where $A$ is a $2\times 2$-matrix of
determinant 1; furthermore $S$ is a 2-dimensional torus consisting of
elements $T_2(s,t)$, $s,t\in \C^\times$. We have $\sigma(M(A)) = M(\ov A)$,
$\sigma(T_2(s,t)) = T_2(\ov t, \ov s )$. Hence $\Ho^1 H = \Ho^1 S =1$.
The component group is of order 3. So by
Proposition \ref{p:C-3} we see that $\Ho^1 \Zm_{q,e}=1$.
\item $-e_{456}+e_{789}$ is a real element in $\YY$. We have
 $$g_0=\SmallMatrix{ 0 & 0 & 0 & 0 & 0 & 0 & -1 & 0 & 0 \\
                0 & 0 & 0 & 0 & 0 & 0 & 0 & -\tfrac{1}{2} & \tfrac{1}{2} \\
                -1 & 0 & 0 & 1 & 0 & 0 & 0 & 0 & 0 \\
                0 & 0 & -\tfrac{1}{2} & 0 & \tfrac{1}{2} & 0 & 0 & 0 & 0 \\
                0 & -1 & 0 & 0 & 0 & 1 & 0 & 0 & 0 \\
                0 & 0 & 0 & 0 & 0 & 0 & 0 & \tfrac{1}{2}i & -\tfrac{1}{2}i \\
                i & 0 & 0 & i & 0 & 0 & 0 & 0 & 0 \\
                0 & 0 & i & 0 & i & 0 & 0 & 0 & 0 \\
                0 & \tfrac{1}{2}i & 0 & 0 & 0 & \tfrac{1}{2}i & 0 & 0 & 0},
n_0= \SmallMatrix{ 0 & 0 & 0 & -1 & 0 & 0 & 0 & 0 & 0 \\
                0 & 0 & 0 & 0 & 0 & -1 & 0 & 0 & 0 \\
                0 & 0 & 0 & 0 & -1 & 0 & 0 & 0 & 0 \\
                -1 & 0 & 0 & 0 & 0 & 0 & 0 & 0 & 0 \\
                0 & 0 & -1 & 0 & 0 & 0 & 0 & 0 & 0 \\
                0 & -1 & 0 & 0 & 0 & 0 & 0 & 0 & 0 \\
                0 & 0 & 0 & 0 & 0 & 0 & 1 & 0 & 0 \\
                0 & 0 & 0 & 0 & 0 & 0 & 0 & 0 & 1 \\
                0 & 0 & 0 & 0 & 0 & 0 & 0 & 1 & 0}.  $$
Here $\Zm_{q,e}^\circ$ consists of matrices $M(B)$, where $B$ is a
$3\times 3$-matrix of determinant 1. Let
$U=\SmallMatrix{ -1&0&0\\0&0&-1\\0&-1&0}$. Then $\sigma(M(B)) =
M(U\ov B U^{-1})$. Since $U$ lies in $\SL(3,\C)$ we see that
the group of $\sigma$-fixed points of $\Zm_{q,e}^\circ$ is an inner real form of
$\SL(3,\C)$. Now any inner real form of $\SL(3,\C)$ is isomorphic to
$\SL(3,\R)$. Hence $\Zm_{q,e}$ is $\Gamma$-equivariantly isomorphic to
$\SL(3,\C)$, so that $\Ho^1 \Zm_{q,e}^\circ = 1$. The component group is of
order 3. So by Proposition \ref{p:C-3} we see that $\Ho^1 \Zm_{q,e}=1$.
\item $ie_{123}$ is a real element in $\YY$. We have
$$g_0=\SmallMatrix{ -i & 0 & 0 & 0 & 0 & 0 & 0 & 0 & 0 \\
                0 & -\tfrac{1}{2}i & -\tfrac{1}{2}i & 0 & 0 & 0 & 0 & 0 & 0 \\
                0 & 0 & 0 & 1 & 0 & 0 & i & 0 & 0 \\
                0 & 0 & 0 & 0 & 0 & \tfrac{1}{2} & 0 & \tfrac{1}{2}i & 0 \\
                0 & 0 & 0 & 0 & 1 & 0 & 0 & 0 & i \\
                0 & -\tfrac{1}{2} & \tfrac{1}{2} & 0 & 0 & 0 & 0 & 0 & 0 \\
                0 & 0 & 0 & -i & 0 & 0 & -1 & 0 & 0 \\
                0 & 0 & 0 & 0 & 0 & -i & 0 & -1 & 0 \\
                0 & 0 & 0 & 0 & -\tfrac{1}{2}i & 0 & 0 & 0 & -\tfrac{1}{2}},
n_0=\SmallMatrix{ -1 & 0 & 0 & 0 & 0 & 0 & 0 & 0 & 0 \\
                0 & 0 & -1 & 0 & 0 & 0 & 0 & 0 & 0 \\
                0 & -1 & 0 & 0 & 0 & 0 & 0 & 0 & 0 \\
                0 & 0 & 0 & 0 & 0 & 0 & -i & 0 & 0 \\
                0 & 0 & 0 & 0 & 0 & 0 & 0 & 0 & -i \\
                0 & 0 & 0 & 0 & 0 & 0 & 0 & -i & 0 \\
                0 & 0 & 0 & -i & 0 & 0 & 0 & 0 & 0 \\
                0 & 0 & 0 & 0 & 0 & -i & 0 & 0 & 0 \\
                0 & 0 & 0 & 0 & -i & 0 & 0 & 0 & 0}.
$$
Here $\Zm_{q,e}=H\times S$ where $H$ consists of matrices $M(B)$, where $B$ is a
$3\times 3$-matrix of determinant 1, and $S$ is a 1-dimensional torus
consisting of elements $T_2(1,t)$. We have that $\sigma(M(B))$ is exactly
the same as in the previous case, so that $\Ho^1 H = 1$. Furthermore,
$\sigma(T_2(1,t))=T_2(1,\ov t^{-1})$. We conclude that $\Ho^1 S =
\{ [1], [T_2(1,-1)] \}$ so that also
$\Ho^1 \Zm_{q,e} = \{ [1], [T_2(1,-1)] \}$.
\end{enumerate}

For the cases 2, 5, 9, of the previous list
the ad-hoc procedure indicated in Remark \ref{rem:adhc} did not succeed in
finding a real point in $\YY$. So we resort to the methods
of Section \ref{sec:findreal}. These are summarized in Section
\ref{sec:mixmeth}, here we give the
details of the computation. The centralizer $H = \Zm_{\Gtil_0}(q)$ is described
in \ref{subsec:cen5}. We have $H=\langle h_1\rangle \ltimes M\cdot T_2$,
where $M$ consists of $M(A)=\diag(A,A,A)$, $\det(A)=1$ and $T_2$ consists of the
elements $T_2(a,b)=\diag(a,a,a,b,b,b,(ab)^{-1},
(ab)^{-1},(ab)^{-1})$ for $a,b\in \C^*$. For the conjugation $\tau$ we have
$\tau(h) = n_1\ov hn_1^{-1}$. Set $\HH = (H,\tau)$.
\begin{lemma}
  $\Ho^1 \HH = 1$.
\end{lemma}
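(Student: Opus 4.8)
<br>

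The plan is to compute $\Ho^1\HH$ by decomposing $\HH$ using the structure $H=\langle h_1\rangle\ltimes M\cdot T_2$ described in Subsection~\ref{subsec:cen5}, and pushing the computation through the short exact sequences this decomposition provides, as was done repeatedly in Section~\ref{sec:nilpcen}. First I would observe that $M\cdot T_2$ is a normal $\R$-subgroup of $\HH$ (it is the identity component) with quotient of order $3$; by Corollary~\ref{c:prop38} (or Proposition~\ref{p:C-3} once the $\Ho^1$ of the identity component is known to have order $<3$), it suffices to show $\Ho^1(\HH^\circ)=1$, where $\HH^\circ=M\cdot T_2$ equipped with the restricted conjugation $\tau$.

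Next I would analyze the conjugation $\tau$ explicitly on $M\cdot T_2$. Since $n_1$ is a monomial matrix permuting the three blocks of size $3$ (it is $\hat w_2^4$, see Subsection~\ref{subsec:cartsp}), conjugation by $n_1$ composed with complex conjugation will act on $M(A)=\diag(A,A,A)$ by $M(A)\mapsto M(V\overline A V^{-1})$ for some fixed $V\in\GL(3,\C)$ (coming from the permutation and sign pattern of $n_1$ restricted to a single block), exactly as in cases~{\bf 78} and in Subsection~\ref{subsec:F5}, and it will act on $T_2(a,b)$ by some monomial rule in $\overline a,\overline b$. I would then use the surjection $M\cdot T_2\to \overline T_2:=M\cdot T_2/M\cong T_2/(T_2\cap M)$ (with $T_2\cap M=\mu_3$), giving a short exact sequence of $\R$-groups
\[
1\to \MM\to \HH^\circ\labelto{\phi}\overline{\TT}\to 1,
\]
where $\MM\cong\SL(3,\C)$ with an \emph{inner} real structure and $\overline{\TT}$ is a torus. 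Any inner real form of $\SL(3,\C)$ is $\SL(3,\R)$ (the quaternionic form needs even rank), so $\Ho^1\MM=1$, exactly the argument used in Subsection~\ref{subsec:F5}. For $\overline{\TT}$ I would compute the action of $\tau$ on the cocharacter lattice $\X_*(\overline{\TT})\cong\Z$ (or $\Z^2$ depending on how $\mu_3$ sits inside) using the method of Section~\ref{sec:H1T}; since $n_1$ acts with order $2$ by a permutation-type involution, I expect $\tau_*$ to be either $+1$ or a permutation of a basis, so that $\overline{\TT}$ is quasi-trivial and $\Ho^1\overline{\TT}=1$ by Proposition~\ref{p:quasi}. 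Then Corollary~\ref{c:prop38} gives $\Ho^1\HH^\circ=1$, and a final application of Proposition~\ref{p:C-3} (the component group $\langle h_1\rangle$ has order $3$, an odd prime) yields $\Ho^1\HH=1$.

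The main obstacle I anticipate is pinning down the precise form of $\tau$ on $M\cdot T_2$: one must track exactly how $n_1$ permutes and rescales the three $3\times3$ blocks, and in particular whether $V$ lies in $\SL(3,\C)$ (so that the twisted form of $M$ is genuinely inner, not involving an outer automorphism combined with conjugation). If $\tau$ were to act on $M$ by an outer automorphism the twisted group could be a unitary form $\SU(2,1)$ or $\SU(3)$ with $\#\Ho^1=2$, which would break the argument; so verifying that $n_1$ induces only an inner automorphism of each block copy of $\SL(3,\C)$ — equivalently, that $n_1$ does not compose the block permutation with a transpose-inverse — is the crucial check. This is a finite explicit matrix computation with the given $n_1$, so it is routine in principle, but it is the step on which the whole lemma hinges. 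Once that is settled, the rest is assembly of already-proved results (Corollary~\ref{c:prop38}, Proposition~\ref{p:C-3}, Proposition~\ref{p:quasi}, and the classification of inner real forms of $\SL(3,\C)$).
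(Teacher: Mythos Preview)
Your proposal is correct and follows essentially the same strategy as the paper: reduce to the identity component via Proposition~\ref{p:C-3} (the component group has order $3$), then split $M\cdot T_2$ into the $\SL(3)$-part and the toral part, verify that the $\tau$-twist of $M$ is an \emph{inner} form of $\SL(3,\R)$ (hence has trivial $\Ho^1$), and check that the torus has trivial $\Ho^1$. The only organizational difference is that the paper uses the isogeny $M\times T_2\to M\cdot T_2$ with kernel $\mu_3$ (so $\Ho^1(M\cdot T_2)\simeq\Ho^1 M\times\Ho^1 T_2$ by Lemma~\ref{l:H1-bijective}) rather than your exact sequence $1\to M\to M\cdot T_2\to T_2/\mu_3\to 1$; both routes work and require the same two computations. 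The explicit check you flag as the crux --- that $n_1$ acts on each block by conjugation by a matrix $U$ with $U\overline U=1$ in $\PGL(3)$, so the twist is inner --- is exactly what the paper does (and $\tau$ on $T_2$ is $T_2(a,b)\mapsto T_2(\bar a,\bar a^{-1}\bar b^{-1})$, which is quasi-trivial as you anticipated, permuting the basis $(1,0),(1,-1)$ of $\X_*(T_2)$).
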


\begin{proof}
We have $M\cap T_2=\mu_3$. Therefore,
\[ \Ho^1(M\cdot T_2,\tau)\simeq \Ho^1(M,\tau)\times H^1(T_2,\tau).\]

We have $n_1 \overline{M(A)} n_1^{-1} = M(U\overline{A}U^{-1})$ where
$$U=\SmallMatrix{ -1&0&0\\0&0&-1\\0&-1&0}$$
and $n_1\overline{T_2(a,b)} n_1^{-1} =  T(\bar a, \bar a^{-1}\bar b^{-1} )$.
Hence $(M,\tau)$ is an {\em inner }form of $\SL(3,\R)$. So it is isomorphic to
$\SL(3,\R)$ and therefore $\Ho^1(M,\tau)=1$. By the method of Section
\ref{sec:H1T} it is straightforward to see that $\Ho^1(T_2,\tau)=1$ as well.
Hence $\Ho^1 (M\cdot T_2,\tau)=1$.  By Proposition \ref{p:C-3} we conclude
that $\Ho^1 \HH = 1$.
\end{proof}

The first step now is to find $h_0\in H$ such that $e=h_0\mu(e)$.
Let $U$ be as in the proof of the previous lemma.
For orbit 2 we have $h_0 = h_1 \diag(-U,U,-U)$ (where $h_1$ is as in
\ref{subsec:cen6}).
For orbit 5 we have $h_0=\diag(-U,-U,U)$. For orbit 9 we have
$h_0=\diag(U,U,U)$.

Then we set $d=h_0\tau(h_0)$. For orbits 2 and 9 we have $d=1$, whereas for
orbit 5 we have $d=\diag(1,1,1,-1,-1,-1,-1,-1,-1)=T_2(1,-1)$. So only
for the latter orbit we need to change $h_0$. Since $d$ lies in $T_2$ we
try and just work with that group. With $\nu : T_2\to T_2$,
$\nu(c) = h_0\tau(c)h_0^{-1}$ we have $\nu(T_2(a,b)) = T_2(\ov a, \ov a^{-1}
\ov b^{-1})$. So $T_2(a,b)\nu(T_2(a,b))d = T_2(a\ov a,-b\ov a^{-1}\ov b^{-1})$,
and we want $a,b$ such that $a\ov a=1$ and $\ov a^{-1} b\ov b^{-1}=-1$.
We see that we can take $a=1$, $b=i$. Therefore for orbit 5 we replace $h_0$
by $T_2(1,i)h_0$. We get $h_0=\diag(-U,-iU,-iU)$.

Finally we compute $u\in H$ such that $uh_0\tau(u)^{-1} =1$. For this we
use the methods of Section \ref{sec:comprep}. For orbits 2, 5, 9 we get
respectively that $u$ is
$$\SmallMatrix{ 0 & 0 & 0 & 0 & 0 & 0 & 0 & i & 0 \\
                0 & 0 & 0 & 0 & 0 & 0 & -\tfrac{1}{2} & 0 & i \\
                0 & 0 & 0 & 0 & 0 & 0 & \tfrac{1}{2} & 0 & i \\
                0 & i & 0 & 0 & 0 & 0 & 0 & 0 & 0 \\
                -\tfrac{1}{2} & 0 & i & 0 & 0 & 0 & 0 & 0 & 0 \\
                \tfrac{1}{2} & 0 & i & 0 & 0 & 0 & 0 & 0 & 0 \\
                0 & 0 & 0 & 0 & -i & 0 & 0 & 0 & 0 \\
                0 & 0 & 0 & \tfrac{1}{2} & 0 & -i & 0 & 0 & 0 \\
                0 & 0 & 0 & -\tfrac{1}{2} & 0 & -i & 0 & 0 & 0},
$$
$\diag(A,iA,-A)$ with
$A=\SmallMatrix{ 1&0&0\\0&\tfrac{1}{2}i &1\\0&-\tfrac{1}{2}i &1}$,
and $\diag(B,B,B)$ with
$B=\SmallMatrix{ i & 0 & 0 \\
                0 & -\tfrac{1}{2} & i \\
                0 & \tfrac{1}{2} & i}$.
\end{subsec}

\begin{subsec} Let $p\in \Fm_6$, $p=\ov p$. Then $\Zm_{\Gtil_0}(p)$ has twenty
four nonzero nilpotent orbits in $\z_{\g^\cC}(p)\cap \g_1^\cC$.

Here $\Zm_{\Gtil_0}(p)^\circ$ consists of the elements $\diag(A_1,A_2,A_3)$
where each $A_i$ is a $3\times 3$-matrix of determinant 1. The component
group is generated by the coset of $h_1$.

In the next list, where we omit the computation of $\Ho^1 \Zm_{p,e}$, it
follows directly from the results in Section \ref{sec:galcohom}, especially
Proposition \ref{p:C-3} that it is trivial.

\begin{enumerate}
\item $e=e_{159}+e_{168}+e_{249}+e_{258}+e_{267}+e_{347}$. Here $\Zm_{p,e}$ consists of
  $$\diag(\delta^2\zeta^2,\delta^2\zeta^2,\delta^2\zeta^2,\delta,\delta,
  \delta,\zeta,\zeta,\zeta),$$
  where $\delta^3=\zeta^3=1$.
\item $e=e_{159}+e_{168}+e_{249}+e_{257}+e_{258}+e_{347}$. Here $\Zm_{p,e}$ is
the same as in the previous case.
\item  $e=e_{149}+e_{158}+e_{167}+e_{248}+e_{259}+e_{347}$. Here $\Zm_{p,e}$
  consists of
  $$X(\delta,\zeta,\epsilon)=\diag(\delta^2\zeta,\epsilon\delta^2\zeta,\epsilon\delta^2\zeta,
  \delta\zeta,\epsilon\delta\zeta,\epsilon \delta\zeta,\epsilon\zeta,
  \epsilon\zeta,\zeta),$$
  where $\delta^3=\zeta^3=1$ and $\epsilon^2=1$.

  By Lemma \ref{l:explicit} it follows that $\Ho^1 \Zm_{p,e} =
  \{[1],[X(1,1,-1)]\}$.
\item $e=e_{149}+e_{158}+e_{248}+e_{257}+e_{367}$. Here $\Zm_{p,e}$ consists of
  $$\diag(\zeta^2t^{-1},\zeta^2t^{-1},\zeta^2t^{2},t,t,t^{-2},
  \zeta,\zeta,\zeta)$$
  where $\zeta^3=1$ and $t\in \C^\times$.
\item $e=e_{149}+e_{167}+e_{168}+e_{257}+e_{348}$. Here $\Zm_{p,e}$ consists of
  $$\diag(\zeta^2t^{-1},\zeta^2t^2,\zeta^2t^{-1},t,t^{-2}, t,
  \zeta,\zeta,\zeta)$$
  where $\zeta^3=1$ and $t\in \C^\times$.
\item $e=e_{149}+e_{158}+e_{248}+e_{267}+e_{357}$. Here $\Zm_{p,e}$ consists of
  $$\diag(\zeta^2t,\zeta^2t^{-2},\zeta^2t,\zeta t,\zeta t^{-2}, \zeta t,
  t,t,t^{-2})$$
  where $\zeta^3=1$ and $t\in \C^\times$.
\item $e=e_{149}+e_{158}+e_{167}+e_{248}+e_{357}$. Here $\Zm_{p,e}$ consists of
  $$\diag(\zeta^2,\zeta^2t^3,\zeta^2t^{-3},\zeta t^{-1},\zeta t^2, \zeta t^{-1},
  t,t^{-2},t)$$
  where $\zeta^3=1$ and $t\in \C^\times$.
\item $e=e_{149}+e_{167}+e_{258}+e_{347}$. Here $\Zm_{p,e}^\circ$ consists of
  $$\diag(s^{-1}t^{-1},s^2t^2,s^{-1}t^{-1},s,s^{-2},s,t,t^{-2},t),$$
  where $s,t\in \C^\times$ and $\Zm_{p,e}=\cup_{i=0}^2 h_1^i \Zm_{p,e}^\circ$.
\item $e=e_{147}+e_{158}+e_{258}+e_{269}$. Then $\Zm_{p,e}^\circ$ consists of
  $$T(s,t)=\diag(1,1,1,st,s^{-1},t^{-1},(st)^{-1},s,t)$$
  where $s,t\in \C^\times$. The component group of $\Zm_{p,e}$ is of order 18
  and generated
  by the classes of $Q_1=\diag(\zeta^2,\zeta^2,\zeta^2,\zeta,\zeta,\zeta,1,1,1)$
  where $\zeta$ is a primitive third root of unity, and
  $$Q_2 = \diag\left( \SmallMatrix{ 1 & -1 & 0\\ 0 & -1 & 0\\ 0 & 0 & -1},
  \SmallMatrix{ -1 & 0 & 0\\0 & 0 & -1\\0 & -1 & 0},
  \SmallMatrix{ -1 & 0 & 0\\0 & 0 & 1\\0 & 1 & 0} \right),$$
  $$Q_3 = \diag\left( \SmallMatrix{ -1 & 1 & 0\\ -1 & 0 & 0\\ 0 & 0 & 1},
  \SmallMatrix{ 0 & 0 & 1\\-1 & 0 & 0\\0 & -1 & 0},
  \SmallMatrix{ 0 & 0 & 1\\1 & 0 & 0\\0 & 1 & 0} \right).$$
  Here $[Q_1]$ commutes with $[Q_2]$, $[Q_3]$ and the latter two generate a
  group isomorphic to $S_3$. We have that $[Q_2]$ is of order 2 and $[Q_3]$ is
  of order 3. Furthermore, $Q_2T(s,t)Q_2 = T(t,s)$ and $Q_3T(s,t)Q_3^{-1} =
  T((st)^{-1},s)$.

  We use Proposition \ref{p:C-3-2} to show that $\Ho^1 \Zm_{p,e} =
  \{ [1], [Q_2] \}$. Conditions (1) and (2) of that proposition are obviously
  satisfied. For condition (3) we note that $Q_2^2=1$, so that $[Q_2]$ in the
  component group lifts to a cocycle $Q_2\in \Zl^1 \Zm_{p,e}$. For the
  corresponding twisted conjugation $\sigma$ of $\Zm_{p,e}^\circ$ we have
  $\sigma(T(s,t)) = Q_2T(\ov s, \ov t)Q_2 = T(\ov t, \ov s)$. Hence
  we have $\Ho^1 _{Q_2} \Zm_{p,e}^\circ = 1$. So the desired conclusion
  indeed follows by Proposition \ref{p:C-3-2}.
\item $e=e_{149}+e_{158}+e_{167}+e_{248}+e_{257}+e_{347}$. Here $\Zm_{p,e}$ is of
  order $81$ and generated by
  $\diag(\zeta^2,\zeta^2,\zeta^2,\zeta,\zeta,\zeta,1,1,1)$,
  $\diag(\zeta,1,\zeta^2,\zeta^2,\zeta,1,\zeta^2,\zeta,1)$,
  $\diag(\zeta,\zeta^2,1,\zeta,\zeta^2,1,\zeta^2,1,\zeta)$ and $h_1$.
  From Lemma \ref{l:H1-bijective} it follows that $\Ho^1 \Zm_{p,e}=1$.
\item $e=e_{149}+e_{167}+e_{248}+e_{357}$. Here $\Zm_{p,e}$ consists of
  $$\diag(s,st^3,s^{-2}t^{-3},s^{-1}t^{-1},s^2t^2,s^{-1}t^{-1},
  t,t^{-2},t),$$
  where $s,t\in \C^\times$.
\item $e=e_{149}+e_{167}+e_{247}+e_{258}$. Here $\Zm_{p,e}$ consists of
  $$\diag(s,s^{-2},s,t,s^3t^{-2},s^{-3}t,s^2t^{-1},s^{-1}t^2,
  s^{-1}t^{-1}),$$
  where $s,t\in \C^\times$.
\item $e= e_{149}+e_{158}+e_{167}+e_{248}+e_{257}$. Here $\Zm_{p,e}$ consists of
  $$\diag(\zeta^2\delta,\zeta^2 s, \zeta^2\delta^2s^{-1},\zeta\delta^2s^{-1},
  \zeta\delta,\zeta s, \delta^2s^{-1},\delta,s),$$
  where $\zeta^3=\delta^3=1$ and $s\in \C^\times$.
\item $e=e_{149}+e_{157}+e_{168}+e_{247}+e_{348}$. Here
  $\Zm_{p,e}=H\times \Zm_{p,e}^\circ$. The identity component $\Zm_{p,e}^\circ$
  consists of $\diag(1,A,1,A,A^{-T},1)$ where $A$ is a
  $2\times 2$-matrix of determinant 1.
  The group $H$ is of order 9 and consists of the elements
  $$\diag(\zeta^2\delta^2,\zeta^2\delta^2,\zeta^2\delta^2,\zeta,\zeta,\zeta,
  \delta,\delta,\delta)$$
  where $\zeta^3=\delta^3=1$.
  Hence $\Ho^1 \Zm_{p,e}=\Ho^1 H \times \Ho^1 \Zm_{p,e}^\circ$. Now
  $\Ho^1 H = 1$ by Corollary \ref{c:2m+1} and $\Zm_{p,e}^\circ$ is isomorphic to
  $\SL(2,\C)$ and hence has trivial cohomology as well. We conclude that
  $\Ho^1 \Zm_{p,e}=1$.
\item $e=e_{158}+e_{169}+e_{247}$. Here $\Zm_{p,e}= D\times T_1$. The elements of
  $D$ are
  of the form $\diag(1,1,1,d^{-1},A,d,A^{-T})$ where $A$ is a $2\times 2$-matrix
  and $d=\det(A)\in \C^\times$. Furthermore $T_1$ consists of
  $\diag(s,s^{-2},s,1,1,1,s^2,s^{-1},  s^{-1})$  where $s\in \C^\times$.
  We see that $\Ho^1 D=\Ho^1 T_1=1$ and therefore $\Ho^1 \Zm_{p,e}=1$.
\item $e=e_{149}+e_{158}+e_{167}+e_{247}$. Here $\Zm_{p,e}$ consists of
    $$\diag(\zeta^2,\zeta^2st^2,\zeta^2s^{-1}t^{-2},\zeta t^{-1}, \zeta s^{-1},
    \zeta st, s^{-1}t^{-1},s,t),$$
    where $s\in \C^\times$ and $\zeta^3=1$.
  \item $e=e_{148}+e_{157}+e_{249}+e_{267}$. Here $\Zm_{p,e}^\circ$ consists of
    elements of the form
    $$\diag(A,d^{-1},d^{-1},dA^{-T},d^{-1},dA^{-T}),$$
    where $A$ is a $2\times 2$-matrix and $d=\det(A)\neq 0$.
    Let $g=\diag(\zeta^2,\zeta^2,\zeta^2,\zeta,\zeta,\zeta,
    1,1,1)$ where $\zeta$ is a primitive third root of unity. Then
    $\Zm_{p,e} = \cup_{i=0}^2 g^i \Zm_{p,e}^\circ$.
\item $e=e_{147}+e_{158}+e_{248}+e_{259}$. Here $\Zm_{p,e}^\circ$ consists of
  elements of the form $\diag(A_1,A_2,B)$ where $A_1,A_2,B$ are
  $3\times 3$-matrices. We have $A_1=\diag(A,d^{-1})$ with $A$ a
  $2\times 2$-matrix and $d=\det(A)$. Secondly, $A_2 = \diag(d^{-1}A,d)$.
  Thirdly, there is a surjective homomorphism $\sigma: \GL(2,\C) \to \PSL(2,\C)$
  with $B= \sigma(A)$.
  Let $g= \diag(\zeta^2,\zeta^2,\zeta^2,1,1,1,\zeta,\zeta,\zeta)$,
  where $\zeta$ is a primitive third root of unity. Then
  $\Zm_{p,e} = \cup_{i=0}^2 g^i \Zm_{p,e}^\circ$.
\item $e=e_{149}+e_{157}+e_{248}$. Here $\Zm_{p,e}$ consists of
  $$\diag(s^{-1}tu,s^{-1}u^2,s^2t^{-1}u^{-3},st^{-1}u^{-2},s,s^{-2}tu^2,t^{-1}u^{-1},
  t,u)$$
  with $s,t,u\in \C^\times$.
\item $e=e_{147}+e_{258}$. We have $\Zm_{p,e}=H\ltimes \Zm_{p,e}^\circ$,
  where $\Zm_{p,e}^\circ$ consists of
  $$T_4(s,t,u,v)=
  \diag(s^{-1}u^{-1},t^{-1}v^{-1},stuv,s,t,s^{-1}t^{-1},u,v,u^{-1}v^{-1})$$
  with $s,t,u,v\in \C^\times$.
  Let $g=\diag(A,A,A)$ where $A$ is the $3\times 3$-matrix
  $$\SmallMatrix{ 0 & 1 & 0 \\ 1 & 0 & 0 \\ 0 & 0 & -1 }.$$
  Then $H$ is generated by $g$ and $h_1$. This group is abelian of order
  6. We have $\Ho^1 \Zm_{p,e}^\circ =1$. Consider the conjugation $\tau$ of
  $\Zm_{p,e}$ given by $\tau(a) = g\ov a g^{-1}$. Then
  $$\tau(T_4(s,t,u,v)) = T_4(\ov t, \ov s, \ov v, \ov u ).$$
  So $\Ho^1( \Zm_{p,e}^\circ,\tau) = 1$.
  By Proposition \ref{p:C-3-2} it now follows that
  $\Ho^1 \Zm_{p,e} =\{[1],[g]\}$.
\item $e=e_{148}+e_{157}+e_{247}$. Here $\Zm_{p,e}^\circ$ consists of
  $$\diag(s^{-1}tu,s^{-1}t^3u^2,s^2t^{-4}u^{-3},st^{-2}u^{-1},s,s^{-2}t^2u,
  t^{-1}u^{-1},t,u)$$
  where $s,t,u\in \C^\times$. Furthermore $\Zm_{p,e}=\cup_{i=0}^2 h_1^i
  \Zm_{p,e}^\circ$.
\item $e= e_{147}+e_{158}+e_{169}$. Here $\Zm_{p,e} = H\times \Zm_{p,e}^\circ$
  where $\Zm_{p,e}^{\circ}$ consists of elements of the form
  $\diag(1,A,B^{-T},B)$, where $A$ is a $2\times 2$-matrix of
  determinant 1 and $B$ is a $3\times 3$-matrix of determinant 1. (So that
  $\Zm_{p,e}^\circ \cong \SL(2,\C)\times \SL(3,\C)$.) The group $H$ consists of
  $\diag(\epsilon, \epsilon, \epsilon,
  \delta^2\epsilon^2, \delta^2\epsilon^2,\delta^2\epsilon^2,\delta,\delta,
  \delta)$ with $\delta^3=\epsilon^3=1$.
\item $e=e_{147}+e_{158}$. Here $\Zm_{p,e}$ consists of the elements
  $$\diag(s^{-1}, A, B, t^{-1}, sB^{-T}, s^{-2}t )$$
  where $A,B$ are $2\times 2$-matrices and $s=\det(A)$, $t=\det(B)$.
  We see that $\Zm_{p,e}$ is isomorphic to $\GL(2,\C)\times \GL(2,\C)$.
  Hence $\Ho^1 \Zm_{p,e}=1$.
\item $e=e_{147}$. Here $\Zm_{p,e}^\circ$ consists of
  $$\diag(st, A, s^{-1}, B, t^{-1}, C)$$
  where $A,B,C$ are $2\times 2$-matrices and $s=\det(B)$, $t=\det(C)$ and
  such that $\det(A) = s^{-1}t^{-1}$. We have $\Zm_{p,e}=\cup_{i=0}^2 h_1^i
  \Zm_{p,e}^\circ$.
\end{enumerate}

Let $p$ be a real semisimple element conjugate to $q\in\Fm_6$ with
$\ov q = -q$. Then the map $\mu$ is given by $\mu(y) = n_1 \ov y$.
\begin{enumerate}
\item $-ie_{149}-ie_{157}+ie_{168}-e_{247}-e_{258}+e_{269}+e_{347}-e_{358}+e_{369}$
  is a real element in $\YY$.
  In this case $\Zm_{q,e}$ is of order 9. Hence $\Ho^1 \Zm_{q,e}=1$.
\item  $-e_{149}+e_{157}+e_{259}-e_{267}+e_{348}-e_{359}$
  is a real element in $\YY$.
  In this case $\Zm_{q,e}$ is of order 9. Hence $\Ho^1 \Zm_{q,e}=1$.
\item $-e_{149}+e_{157}+e_{158}-e_{169}-e_{267}+e_{348}$ is a real element in $\YY$.
  We have
  $$g_0=\SmallMatrix{ 0 & 0 & 0 & 0 & 0 & 0 & -2\alpha & 0 & 0 \\
                0 & 0 & 0 & 0 & 0 & 0 & 0 & \alpha & 0 \\
                \alpha & \alpha & 0 & -\alpha & \alpha & 0 & 0 & 0 & 0 \\
                \tfrac{1}{2}\alpha & -\tfrac{1}{2}\alpha & 0 & -\tfrac{1}{2}\alpha & -\tfrac{1}{2}\alpha & 0 & 0 & 0 & 0 \\
                0 & 0 & -2\alpha & 0 & 0 & -2\alpha & 0 & 0 & 0 \\
                0 & 0 & 0 & 0 & 0 & 0 & 0 & 0 & i\alpha \\
                -i\alpha & -i\alpha & 0 & -i\alpha & i\alpha & 0 & 0 & 0 & 0 \\
                -i\alpha & i\alpha & 0 & -i\alpha & -i\alpha & 0 & 0 & 0 & 0 \\
                0 & 0 & i\alpha & 0 & 0 & -i\alpha & 0 & 0 & 0},
  n_0 = \SmallMatrix{ 0 & 0 & 0 & -1 & 0 & 0 & 0 & 0 & 0 \\
                0 & 0 & 0 & 0 & 1 & 0 & 0 & 0 & 0 \\
                0 & 0 & 0 & 0 & 0 & 1 & 0 & 0 & 0 \\
                -1 & 0 & 0 & 0 & 0 & 0 & 0 & 0 & 0 \\
                0 & 1 & 0 & 0 & 0 & 0 & 0 & 0 & 0 \\
                0 & 0 & 1 & 0 & 0 & 0 & 0 & 0 & 0 \\
                0 & 0 & 0 & 0 & 0 & 0 & 1 & 0 & 0 \\
                0 & 0 & 0 & 0 & 0 & 0 & 0 & 1 & 0 \\
                0 & 0 & 0 & 0 & 0 & 0 & 0 & 0 & -1}.$$
  In this case $\Zm_{q,e}\simeq C_2\times C_3\times C_3$. So the cohomology
  is determined by that of $C_2$. The order 2 element is
  $u=\diag(1,-1,-1,1,-1,-1,-1,-1,1)$. We have $\sigma(h)=n_0\ov h n_0^{-1}=
  h^{-1}$ for $h\in \Zm_{q,e}$. So $\sigma(u)=u$  and hence
  $\Ho^1 \Zm_{q,e} = \{ [1], [u]\}$.
\item $-ie_{159}-ie_{247}+ie_{258}+ie_{268}-ie_{347}+ie_{368}+ie_{369}$ is a real
  element in $\YY$. We have
  $$g_0=\SmallMatrix{ 0 & 0 & 0 & 0 & 0 & 0 & 0 & 0 & -i \\
                0 & 0 & 0 & 0 & 0 & 0 & i & 0 & -\tfrac{1}{4}i \\
                0 & 0 & 4i & 0 & 0 & \tfrac{1}{4}i & 0 & 0 & 0 \\
                0 & \tfrac{1}{4} & 0 & 0 & 1 & 0 & 0 & 0 & 0 \\
                -\tfrac{1}{2}i & \tfrac{1}{4} & 0 & 2i & 1 & 0 & 0 & 0 & 0 \\
                0 & 0 & 0 & 0 & 0 & 0 & 0 & \tfrac{1}{2}i & 0 \\
                0 & 0 & 4 & 0 & 0 & -\tfrac{1}{4} & 0 & 0 & 0 \\
                0 & -\tfrac{1}{2}i & 0 & 0 & 2i & 0 & 0 & 0 & 0 \\
                -\tfrac{1}{4} & -\tfrac{1}{8}i & 0 & -1 & \tfrac{1}{2}i & 0 & 0 & 0 & 0},
  n_0=\SmallMatrix{ 0 & 0 & 0 & 4 & 0 & 0 & 0 & 0 & 0 \\
                0 & 0 & 0 & 0 & 4 & 0 & 0 & 0 & 0 \\
                0 & 0 & 0 & 0 & 0 & -\tfrac{1}{16} & 0 & 0 & 0 \\
                \tfrac{1}{4} & 0 & 0 & 0 & 0 & 0 & 0 & 0 & 0 \\
                0 & \tfrac{1}{4} & 0 & 0 & 0 & 0 & 0 & 0 & 0 \\
                0 & 0 & -16 & 0 & 0 & 0 & 0 & 0 & 0 \\
                0 & 0 & 0 & 0 & 0 & 0 & -1 & 0 & 0 \\
                0 & 0 & 0 & 0 & 0 & 0 & 0 & -1 & 0 \\
                0 & 0 & 0 & 0 & 0 & 0 & 0 & 0 & -1}.$$
  The identity component of $\Zm_{q,e}$ is a 1-dimensional torus consisting of
  elements $T_1(t)$. We have $n_0\overline{T_1(t)}n_0 ^{-1} = T_1(\ov t^{-1})$.
  Hence $\Ho^1 \Zm_{q,e}^\circ = \{[1],[T_1(-1)]\}$. The component group is of
  order 3 hence by Proposition \ref{p:C-3} also $\Ho^1 \Zm_{q,e} =
  \{[1],[T_1(-1)]\}$.
\item $-ie_{159}+ie_{249}+ie_{268}+ie_{357}+ie_{368}$ is a real element in $\YY$.
  We have
  $$\SmallMatrix{ 0 & 0 & 0 & 0 & 0 & 0 & 0 & 0 & -1 \\
                0 & 0 & 0 & 0 & 0 & 0 & i & -\tfrac{1}{2}i & 0 \\
                0 & 0 & -1 & 0 & 0 & -i & 0 & 0 & 0 \\
                0 & \tfrac{1}{2}i & 0 & 0 & -\tfrac{1}{2}i & 0 & 0 & 0 & 0 \\
                i & 0 & 0 & 1 & 0 & 0 & 0 & 0 & 0 \\
                0 & 0 & 0 & 0 & 0 & 0 & 0 & -\tfrac{1}{2} & 0 \\
                0 & 0 & i & 0 & 0 & 1 & 0 & 0 & 0 \\
                0 & 1 & 0 & 0 & 1 & 0 & 0 & 0 & 0 \\
                \tfrac{1}{2} & 0 & 0 & \tfrac{1}{2}i & 0 & 0 & 0 & 0 & 0},
  n_0=\SmallMatrix{ 0 & 0 & 0 & -i & 0 & 0 & 0 & 0 & 0 \\
                0 & 0 & 0 & 0 & 1 & 0 & 0 & 0 & 0 \\
                0 & 0 & 0 & 0 & 0 & -i & 0 & 0 & 0 \\
                -i & 0 & 0 & 0 & 0 & 0 & 0 & 0 & 0 \\
                0 & 1 & 0 & 0 & 0 & 0 & 0 & 0 & 0 \\
                0 & 0 & -i & 0 & 0 & 0 & 0 & 0 & 0 \\
                0 & 0 & 0 & 0 & 0 & 0 & -1 & 1 & 0 \\
                0 & 0 & 0 & 0 & 0 & 0 & 0 & 1 & 0 \\
                0 & 0 & 0 & 0 & 0 & 0 & 0 & 0 & 1}.$$
  The identity component of $\Zm_{q,e}$ is a 1-dimensional torus consisting of
  elements $T_1(t)$. We have $n_0\overline{T_1(t)}n_0 ^{-1} = T_1(\ov t^{-1})$.
  Hence $\Ho^1 \Zm_{q,e}^\circ = \{[1],[T_1(-1)]\}$. The component group is of
  order 3 hence by Proposition \ref{p:C-3} also $\Ho^1 \Zm_{q,e} =
  \{[1],[T_1(-1)]\}$.
\item $-e_{148}+e_{167}+e_{249}-e_{257}+e_{259}+e_{349}-e_{357}-e_{359}$ is a real
  element in $\YY$. We have
  $$g_0=\SmallMatrix{ 0 & 0 & 0 & 0 & 0 & 0 & -1 & 0 & 0 \\
                0 & 0 & 0 & 0 & 0 & 0 & 0 & \tfrac{1}{2} & 0 \\
                0 & -2 & 0 & 0 & 2 & 0 & 0 & 0 & 0 \\
                0 & 0 & -\tfrac{1}{4} & 0 & 0 & -\tfrac{1}{4} & 0 & 0 & 0 \\
                -1 & 0 & 0 & -1 & 0 & 0 & 0 & 0 & 0 \\
                0 & 0 & 0 & 0 & 0 & 0 & 0 & 0 & -i \\
                0 & 2i & 0 & 0 & 2i & 0 & 0 & 0 & 0 \\
                0 & 0 & \tfrac{1}{2}i & 0 & 0 & -\tfrac{1}{2}i & 0 & 0 & 0 \\
                \tfrac{1}{2}i & 0 & 0 & -\tfrac{1}{2}i & 0 & 0 & 0 & 0 & 0},
  n_0=\SmallMatrix{ 0 & 0 & 0 & 1 & 0 & 0 & 0 & 0 & 0 \\
                0 & 0 & 0 & 0 & -1 & 0 & 0 & 0 & 0 \\
                0 & 0 & 0 & 0 & 0 & 1 & 0 & 0 & 0 \\
                1 & 0 & 0 & 0 & 0 & 0 & 0 & 0 & 0 \\
                0 & -1 & 0 & 0 & 0 & 0 & 0 & 0 & 0 \\
                0 & 0 & 1 & 0 & 0 & 0 & 0 & 0 & 0 \\
                0 & 0 & 0 & 0 & 0 & 0 & 1 & 0 & 0 \\
                0 & 0 & 0 & 0 & 0 & 0 & 0 & 1 & 0 \\
                0 & 0 & 0 & 0 & 0 & 0 & 0 & 0 & -1}.   $$
  The identity component of $\Zm_{q,e}$ is a 1-dimensional torus consisting of
  elements $T_1(t)$. We have $n_0\overline{T_1(t)}n_0 ^{-1} = T_1(\ov t)$.
  Hence $\Ho^1 \Zm_{q,e}^\circ = 1$. The component group is of
  order 3 hence by Proposition \ref{p:C-3} also $\Ho^1 \Zm_{q,e} =1$.
\item $-ie_{149}-ie_{157}+ie_{168}-ie_{267}-ie_{348}$  is a real element in $\YY$.
  We have
  $$g_0=\SmallMatrix{ -i & 0 & 0 & 0 & 0 & 0 & 0 & 0 & 0 \\
                0 & \tfrac{1}{2} & -\tfrac{1}{2}i & 0 & 0 & 0 & 0 & 0 & 0 \\
                0 & 0 & 0 & 1 & 0 & 0 & i & 0 & 0 \\
                0 & 0 & 0 & 0 & -\tfrac{1}{2}i & 0 & 0 & -\tfrac{1}{2}i & 0 \\
                0 & 0 & 0 & 0 & 0 & -i & 0 & 0 & 1 \\
                0 & \tfrac{1}{2}i & -\tfrac{1}{2} & 0 & 0 & 0 & 0 & 0 & 0 \\
                0 & 0 & 0 & -i & 0 & 0 & -1 & 0 & 0 \\
                0 & 0 & 0 & 0 & -1 & 0 & 0 & 1 & 0 \\
                0 & 0 & 0 & 0 & 0 & -\tfrac{1}{2} & 0 & 0 & \tfrac{1}{2}i},
n_0=\SmallMatrix{ -1 & 0 & 0 & 0 & 0 & 0 & 0 & 0 & 0 \\
                0 & 0 & i & 0 & 0 & 0 & 0 & 0 & 0 \\
                0 & i & 0 & 0 & 0 & 0 & 0 & 0 & 0 \\
                0 & 0 & 0 & 0 & 0 & 0 & -i & 0 & 0 \\
                0 & 0 & 0 & 0 & 0 & 0 & 0 & -1 & 0 \\
                0 & 0 & 0 & 0 & 0 & 0 & 0 & 0 & i \\
                0 & 0 & 0 & -i & 0 & 0 & 0 & 0 & 0 \\
                0 & 0 & 0 & 0 & -1 & 0 & 0 & 0 & 0 \\
                0 & 0 & 0 & 0 & 0 & i & 0 & 0 & 0}.  $$
  The identity component of $\Zm_{q,e}$ is a 1-dimensional torus consisting of
  elements $T_1(t)$. We have $n_0\overline{T_1(t)}n_0 ^{-1} = T_1(\ov t^{-1})$.
  Hence $\Ho^1 \Zm_{q,e}^\circ = \{[1],[T_1(-1)]\}$. The component group is of
  order 3 hence by Proposition \ref{p:C-3} also $\Ho^1 \Zm_{q,e} =
  \{[1],[T_1(-1)]\}$.
\item $e_{158}-e_{169}-ie_{247}-e_{268}-ie_{347}+e_{368}$ is a real element in $\YY$.
  We have
  $$g_0=\SmallMatrix{ -1 & 0 & 0 & 0 & 0 & 0 & 0 & 0 & 0 \\
                0 & -i & 0 & 0 & 0 & 0 & 0 & 0 & 0 \\
                0 & 0 & 0 & 0 & -2i & 0 & 0 & \tfrac{1}{2}i & 0 \\
                0 & 0 & 0 & \tfrac{1}{2} & 0 & 0 & i & 0 & 0 \\
                0 & 0 & 0 & 0 & 0 & \tfrac{1}{2}i & 0 & 0 & 1 \\
                0 & 0 & \tfrac{1}{2} & 0 & 0 & 0 & 0 & 0 & 0 \\
                0 & 0 & 0 & 0 & -2 & 0 & 0 & -\tfrac{1}{2} & 0 \\
                0 & 0 & 0 & -i & 0 & 0 & -2 & 0 & 0 \\
                0 & 0 & 0 & 0 & 0 & \tfrac{1}{4} & 0 & 0 & \tfrac{1}{2}i},
n_0= \SmallMatrix{ 1 & 0 & 0 & 0 & 0 & 0 & 0 & 0 & 0 \\
                0 & -1 & 0 & 0 & 0 & 0 & 0 & 0 & 0 \\
                0 & 0 & 1 & 0 & 0 & 0 & 0 & 0 & 0 \\
                0 & 0 & 0 & 0 & 0 & 0 & -2i & 0 & 0 \\
                0 & 0 & 0 & 0 & 0 & 0 & 0 & \tfrac{1}{4} & 0 \\
                0 & 0 & 0 & 0 & 0 & 0 & 0 & 0 & -2i \\
                0 & 0 & 0 & -\tfrac{1}{2}i & 0 & 0 & 0 & 0 & 0 \\
                0 & 0 & 0 & 0 & 4 & 0 & 0 & 0 & 0 \\
                0 & 0 & 0 & 0 & 0 & -\tfrac{1}{2}i & 0 & 0 & 0}.$$
  The identity component of $\Zm_{q,e}$ is a 2-dimensional torus consisting of
  elements $T_2(s,t)$. We have $n_0\overline{T_2(s,t)}n_0 ^{-1} =
  T_2(\ov t,\ov s)$.
  Hence $\Ho^1 \Zm_{q,e}^\circ = 1$. The component group is of
  order 3 hence by Proposition \ref{p:C-3} also $\Ho^1 \Zm_{q,e} =1$.
\item $e_{249}-e_{268}-e_{357}+e_{368}$ is a real element in $\YY$. We have
$$g_0=\SmallMatrix{ 1 & 0 & 0 & 0 & 0 & 0 & 0 & 0 & 0 \\
                0 & \tfrac{1}{2} & \tfrac{1}{2} & 0 & 0 & 0 & 0 & 0 & 0 \\
                0 & 0 & 0 & 1 & 0 & 0 & 1 & 0 & 0 \\
                0 & 0 & 0 & 0 & 0 & \tfrac{1}{2} & 0 & \tfrac{1}{2} & 0 \\
                0 & 0 & 0 & 0 & 1 & 0 & 0 & 0 & 1 \\
                0 & -\tfrac{1}{2}i & \tfrac{1}{2}i & 0 & 0 & 0 & 0 & 0 & 0 \\
                0 & 0 & 0 & -i & 0 & 0 & i & 0 & 0 \\
                0 & 0 & 0 & 0 & 0 & -i & 0 & i & 0 \\
                0 & 0 & 0 & 0 & -\tfrac{1}{2}i & 0 & 0 & 0 & \tfrac{1}{2}i},
n_0=\SmallMatrix{ 0 & -1 & 0 & 0 & 0 & 0 & 0 & 0 & 0 \\
                -1 & 0 & 0 & 0 & 0 & 0 & 0 & 0 & 0 \\
                0 & 0 & 1 & 0 & 0 & 0 & 0 & 0 & 0 \\
                0 & 0 & 0 & 0 & 0 & 0 & 0 & 0 & -1 \\
                0 & 0 & 0 & 0 & 0 & 0 & 0 & 1 & 0 \\
                0 & 0 & 0 & 0 & 0 & 0 & -1 & 0 & 0 \\
                0 & 0 & 0 & 0 & 0 & -1 & 0 & 0 & 0 \\
                0 & 0 & 0 & 0 & 1 & 0 & 0 & 0 & 0 \\
                0 & 0 & 0 & -1 & 0 & 0 & 0 & 0 & 0}.
$$
We write $C=\Zm_{q,e}$. We consider the conjugation $\sigma$ of $C$
defined by $\sigma(c) = n_0 \ov c n_0^{-1}$. The group $C^\circ$ is a
2-dimensional torus consisting of the elements $T_2(s,t)$. We have
$\sigma(T_2(s,t)) = T_2(\ov s^{-1},\ov s \ov t)$. The component group is
generated by the classes of $Q_1,Q_2,Q_3$. We have $\sigma(Q_1) =
Q_1 T_2(\zeta,\zeta)$, $\sigma(Q_2) = Q_3^{-1}Q_2Q_3$, $\sigma(Q_3) = Q_3^{-1}$.

Write $K=C/C^\circ$ and $\CC^\circ = (C^\circ,\sigma)$, $\CC = (C,\sigma)$ and
$\KK = (K,\sigma)$. As $K$ is of order $2\cdot 9$ it follows by
Lemma \ref{l:explicit} that $\Ho^1  \KK = \{ [1], [h] \}$ where
$h$ is a $\sigma$-fixed element of order 2. A computation shows that we
can take
$$ h = \diag( \SmallMatrix{ 0 & 1 & 0\\ 1 & 0 & 0\\ 0 & 0 & -1},
\SmallMatrix{0 & 0 & 1\\ 0 & -1 & 0 \\ 1 & 0 & 0},
\SmallMatrix{0 & 0 & 1\\ 0 & -1 & 0 \\ 1 & 0 & 0} )=Q_3Q_2Q_3^{-1}.$$
Then $\sigma(h)=h$ and $h^2=1$.

We have a short exact sequence
\[1\to C^\circ\labelto{i} C\labelto{j} K\to 1,\]
which gives rise to a cohomology exact sequence
\[ \KK(\R)\to \Ho^1\CC^\circ\labelto{i_*} \Ho^1\CC\labelto{j_*} \Ho^1
\KK.\]
Since $\Ho^1\CC^\circ=1$ we see from the cohomology exact sequence that
$j_*^{-1}[1]=\{[1]\}$.

In order to compute $j_*^{-1}[h]$ we use Corollary \ref{c:39-cor2}.
For this we twist the exact sequence by $h$; note
that $h$ is a cocycle in $\CC$. So we define a new conjugation $\tau$ of
$C$ by $\tau(c) = h\sigma(c)h^{-1}$. We write $\widehat{\KK} = (K,\tau)$
and similarly we define $\widehat{\CC}^\circ$, $\widehat{\CC}$.
Following Construction \ref{con:rightact} we define a right action of
$\widehat{\KK}(\R)$ on $\Ho^1 \widehat{\CC}^\circ$ by
$$[t]*[a] = [a^{-1}\cdot t \cdot \tau(a)] \text{ for } t\in \Zl^1
\widehat{\CC}^\circ, aC^\circ \in \widehat{\KK}(\R).$$

Now we compute the various objects that play a role here. We have
$\tau(T_2(s,t)) = T_2(\ov s^{-1}, \ov t^{-1})$, implying that
$$\Ho^1 \widehat{\CC}^\circ = \{ [1], [T_2(1,-1)], [T_2(-1,1)], [T_2(-1,-1)]\}.$$
Furthermore, for the action of $\tau$ on $K$ we have
$\tau([Q_1]) = [Q_1^2]$, $\tau([Q_2]) = [Q_2]$ and $\tau([Q_3]) = [Q_3]$.
Hence $\widehat{\KK}(\R)$ is the group of order 6 generated by
$[Q_2]$, $[Q_3]$. In the action defined above we have that $[Q_2]$ interchanges
$[T_2(1,1)]$, $[T_2(-1,-1)]$ and it fixes the other two. Furthermore $[Q_3]$
permutes $[T_2(1,1)]\to [T_2(-1,1)] \to [T_2(-1,-1)]\to [T_2(1,1)]$ and fixes
$[T_2(1,-1)]$. Hence there are two orbits with respective representatives
$[1]$ and $[T_2(1,-1)]$.

Now by Corollary \ref{c:39-cor2} it now follows that $j_*^{-1}[h] =
\{ [h], [T_2(1,-1)h] \}$. Hence $\Ho^1 \Zm_{q,e} = \{ [1], [h], [T_2(1,-1)h] \}$.
\item $-ie_{159}-ie_{247}+ie_{257}+ie_{258}+ie_{269}-ie_{347}+ie_{349}+ie_{358}+ie_{369}$
  is a real element in $\YY$. Here $\Zm_{q,e}$ is of order 81. Hence
  $\Ho^1 \Zm_{q,e}=1$, we do not need to work out the conjugation.
\item $-e_{149}+e_{157}+e_{269}-e_{358}$ is a real element in $\YY$. We have
$$g_0=\SmallMatrix{ 1 & 0 & 0 & 0 & 0 & 0 & 0 & 0 & 0 \\
                0 & \tfrac{1}{2} & -\tfrac{1}{2} & 0 & 0 & 0 & 0 & 0 & 0 \\
                0 & 0 & 0 & 0 & 0 & -1 & 0 & 0 & 1 \\
                0 & 0 & 0 & 0 & -\tfrac{1}{2} & 0 & 0 & -\tfrac{1}{2} & 0 \\
                0 & 0 & 0 & 1 & 0 & 0 & 1 & 0 & 0 \\
                0 & \tfrac{1}{2}i & \tfrac{1}{2}i & 0 & 0 & 0 & 0 & 0 & 0 \\
                0 & 0 & 0 & 0 & 0 & i & 0 & 0 & i \\
                0 & 0 & 0 & 0 & i & 0 & 0 & -i & 0 \\
                0 & 0 & 0 & -\tfrac{1}{2}i & 0 & 0 & \tfrac{1}{2}i & 0 & 0},
  n_0=\SmallMatrix{ 1 & 0 & 0 & 0 & 0 & 0 & 0 & 0 & 0 \\
                0 & 0 & -1 & 0 & 0 & 0 & 0 & 0 & 0 \\
                0 & -1 & 0 & 0 & 0 & 0 & 0 & 0 & 0 \\
                0 & 0 & 0 & 0 & 0 & 0 & 1 & 0 & 0 \\
                0 & 0 & 0 & 0 & 0 & 0 & 0 & 1 & 0 \\
                0 & 0 & 0 & 0 & 0 & 0 & 0 & 0 & -1 \\
                0 & 0 & 0 & 1 & 0 & 0 & 0 & 0 & 0 \\
                0 & 0 & 0 & 0 & 1 & 0 & 0 & 0 & 0 \\
                0 & 0 & 0 & 0 & 0 & -1 & 0 & 0 & 0}.$$
Here $\Zm_{q,e}$ is a 2-dimensional torus consisting of elements $T_2(s,t)$.
We have $n_0  \overline{T_2(s,t)} n_0^{-1} = T_2(\ov s,\ov s^{-1}\ov t^{-1})$.
Hence $\Ho^1 \Zm_{q,e}=1$.
\item $-e_{149}+e_{157}+e_{259}-e_{268}-e_{359}+e_{368}$ is a real element in
  $\YY$. We have
$$g_0=\SmallMatrix{ 1 & 0 & 0 & 0 & 0 & 0 & 0 & 0 & 0 \\
                0 & 0 & -\tfrac{1}{2} & 0 & 0 & 0 & 0 & 0 & 0 \\
                0 & 0 & 0 & 0 & 0 & -1 & 0 & 0 & -1 \\
                0 & 0 & 0 & 0 & \tfrac{1}{2} & 0 & 0 & \tfrac{1}{2} & 0 \\
                0 & 0 & 0 & -1 & 0 & 0 & 1 & 0 & 0 \\
                0 & i & 0 & 0 & 0 & 0 & 0 & 0 & 0 \\
                0 & 0 & 0 & 0 & 0 & i & 0 & 0 & -i \\
                0 & 0 & 0 & 0 & -i & 0 & 0 & i & 0 \\
                0 & 0 & 0 & \tfrac{1}{2}i & 0 & 0 & \tfrac{1}{2}i & 0 & 0},
n_0= \SmallMatrix{ 1 & 0 & 0 & 0 & 0 & 0 & 0 & 0 & 0 \\
                0 & -1 & 0 & 0 & 0 & 0 & 0 & 0 & 0 \\
                0 & 0 & 1 & 0 & 0 & 0 & 0 & 0 & 0 \\
                0 & 0 & 0 & 0 & 0 & 0 & -1 & 0 & 0 \\
                0 & 0 & 0 & 0 & 0 & 0 & 0 & 1 & 0 \\
                0 & 0 & 0 & 0 & 0 & 0 & 0 & 0 & 1 \\
                0 & 0 & 0 & -1 & 0 & 0 & 0 & 0 & 0 \\
                0 & 0 & 0 & 0 & 1 & 0 & 0 & 0 & 0 \\
                0 & 0 & 0 & 0 & 0 & 1 & 0 & 0 & 0}.$$
Here $\Zm_{q,e}$ is a 2-dimensional torus consisting of elements $T_2(s,t)$.
We have $n_0\overline{T_2(s,t)} n_0^{-1} = T_2(\ov s, \ov s^2\ov t^{-1})$.
Hence $\Ho^1 \Zm_{q,e} = \{ [1], [T_2(1,-1)] \}$.
\item $ie_{148}-ie_{159}+ie_{167}+ie_{249}+ie_{257}+ie_{349}+ie_{357}$ is a real
  element in $\YY$. We have
$$g_0=\SmallMatrix{ i & 0 & 0 & 0 & 0 & 0 & 0 & 0 & 0 \\
                0 & -i & 0 & 0 & 0 & 0 & 0 & 0 & 0 \\
                0 & 0 & 0 & -1 & 0 & 0 & 1 & 0 & 0 \\
                0 & 0 & 0 & 0 & 0 & \tfrac{1}{2} & 0 & 0 & -\tfrac{1}{2} \\
                0 & 0 & 0 & 0 & -1 & 0 & 0 & 1 & 0 \\
                0 & 0 & \tfrac{1}{2}i & 0 & 0 & 0 & 0 & 0 & 0 \\
                0 & 0 & 0 & i & 0 & 0 & i & 0 & 0 \\
                0 & 0 & 0 & 0 & 0 & -i & 0 & 0 & -i \\
                0 & 0 & 0 & 0 & \tfrac{1}{2}i & 0 & 0 & \tfrac{1}{2}i & 0},
n_0=\SmallMatrix{ -1 & 0 & 0 & 0 & 0 & 0 & 0 & 0 & 0 \\
                0 & -1 & 0 & 0 & 0 & 0 & 0 & 0 & 0 \\
                0 & 0 & -1 & 0 & 0 & 0 & 0 & 0 & 0 \\
                0 & 0 & 0 & 0 & 0 & 0 & -1 & 0 & 0 \\
                0 & 0 & 0 & 0 & 0 & 0 & 0 & -1 & 0 \\
                0 & 0 & 0 & 0 & 0 & 0 & 0 & 0 & -1 \\
                0 & 0 & 0 & -1 & 0 & 0 & 0 & 0 & 0 \\
                0 & 0 & 0 & 0 & -1 & 0 & 0 & 0 & 0 \\
                0 & 0 & 0 & 0 & 0 & -1 & 0 & 0 & 0}.  $$
Here $\Zm_{q,e}^\circ$ is a 1-dimensional torus consisting of elements $T_1(s)$.
We have $n_0 \overline{T_1(s)} n_0^{-1} = T_1(\ov s)$. Hence $\Ho^1 \Zm_{q,e}^\circ
=1$. The component group is of order 9. Hence by Proposition \ref{p:C-3}
also $\Ho^1 \Zm_{q,e} =1$.
\item $-ie_{159}+ie_{249}+ie_{258}+ie_{357}+ie_{369}$ is a real element in $\YY$.
  We have
$$g_0=\SmallMatrix{ 0 & 0 & 0 & 0 & 0 & 0 & 0 & 0 & i \\
                0 & 0 & 0 & 0 & 0 & 0 & \tfrac{1}{2} & \tfrac{1}{2}i & 0 \\
                0 & 0 & -i & 0 & 1 & 0 & 0 & 0 & 0 \\
                0 & \tfrac{1}{2} & 0 & 0 & 0 & -\tfrac{1}{2}i & 0 & 0 & 0 \\
                i & 0 & 0 & i & 0 & 0 & 0 & 0 & 0 \\
                0 & 0 & 0 & 0 & 0 & 0 & \tfrac{1}{2}i & \tfrac{1}{2} & 0 \\
                0 & 0 & -1 & 0 & i & 0 & 0 & 0 & 0 \\
                0 & -i & 0 & 0 & 0 & 1 & 0 & 0 & 0 \\
                \tfrac{1}{2} & 0 & 0 & -\tfrac{1}{2} & 0 & 0 & 0 & 0 & 0},
n_0= \SmallMatrix{ 0 & 0 & 0 & -1 & 0 & 0 & 0 & 0 & 0 \\
                0 & 0 & 0 & 0 & 0 & i & 0 & 0 & 0 \\
                0 & 0 & 0 & 0 & i & 0 & 0 & 0 & 0 \\
                -1 & 0 & 0 & 0 & 0 & 0 & 0 & 0 & 0 \\
                0 & 0 & i & 0 & 0 & 0 & 0 & 0 & 0 \\
                0 & i & 0 & 0 & 0 & 0 & 0 & 0 & 0 \\
                0 & 0 & 0 & 0 & 0 & 0 & 0 & -i & 0 \\
                0 & 0 & 0 & 0 & 0 & 0 & -i & 0 & 0 \\
                0 & 0 & 0 & 0 & 0 & 0 & 0 & 0 & -1}.  $$
We have $\Zm_{q,e} = H\times \Zm_{q,e}^\circ$. Here $H$ is a group of order 9
and $\Zm_{q,e}^\circ$ consists of the elements $M(A)$, where $A$ is a
$2\times 2$-matrix of determinant 1. For the conjugation we have
$n_0  \overline{M(A)} n_0^{-1} =M(U\overline{A}U^{-1})$ with
$U=\SmallMatrix{ 0 & 1 \\ 1 & 0}$. An easy calculation shows that the group
of real points of $\Zm_{q,e}^\circ$ consists of
\[M\SmallMatrix{a&b\\\bar b&\bar a} \text{ where $a,b\in \C$ and
  $a\bar a - b\bar b=1.$}\]
So this group is a noncompact real form of $\SL(2,\C)$ and hence isomorphic
to $\SL(2,\R)$. It follows that $\Ho^1 \Zm_{q,e}^\circ=1$. As $H$ is of order
9 also $\Ho^1 H = 1$ (whatever the conjugation is). We conclude that
$\Ho^1 \Zm_{q,e}=1$.
\item $-e_{149}+e_{157}-e_{268}+e_{368}$ is a real element in $\YY$. We have
  $$g_0=\SmallMatrix{ 1 & 0 & 0 & 0 & 0 & 0 & 0 & 0 & 0 \\
                0 & 0 & \tfrac{1}{2} & 0 & 0 & 0 & 0 & 0 & 0 \\
                0 & 0 & 0 & 0 & 0 & 1 & 0 & 1 & 0 \\
                0 & 0 & 0 & -\tfrac{1}{2} & 0 & 0 & \tfrac{1}{2} & 0 & 0 \\
                0 & 0 & 0 & 0 & 1 & 0 & 0 & 0 & -1 \\
                0 & -i & 0 & 0 & 0 & 0 & 0 & 0 & 0 \\
                0 & 0 & 0 & 0 & 0 & -i & 0 & i & 0 \\
                0 & 0 & 0 & i & 0 & 0 & i & 0 & 0 \\
                0 & 0 & 0 & 0 & -\tfrac{1}{2}i & 0 & 0 & 0 & -\tfrac{1}{2}i},
n_0=\SmallMatrix{ 1 & 0 & 0 & 0 & 0 & 0 & 0 & 0 & 0 \\
                0 & -1 & 0 & 0 & 0 & 0 & 0 & 0 & 0 \\
                0 & 0 & 1 & 0 & 0 & 0 & 0 & 0 & 0 \\
                0 & 0 & 0 & 0 & 0 & 0 & -1 & 0 & 0 \\
                0 & 0 & 0 & 0 & 0 & 0 & 0 & 0 & -1 \\
                0 & 0 & 0 & 0 & 0 & 0 & 0 & 1 & 0 \\
                0 & 0 & 0 & -1 & 0 & 0 & 0 & 0 & 0 \\
                0 & 0 & 0 & 0 & 0 & 1 & 0 & 0 & 0 \\
                0 & 0 & 0 & 0 & -1 & 0 & 0 & 0 & 0}.$$
Here $\Zm_{q,e}$ consists of the elements
$$M(s,A) = \diag(s,s^{-2},s,\det(A)^{-1},A,s^2\det(A),s^{-1}A^{-T})$$
where $s\in \C^\times$, $A\in \GL(2,\C)$. For the conjugation
$\sigma$ we have
$$\sigma(M(s,A)) = n_0 M(\ov s, \ov A)n_0^{-1} = M(\ov s, \ov s^{-1}
\det(\ov A)^{-1}\ov A).$$
Set $T_2= \C^\times \times \C^\times$. Define $j : \Zm_{q,e} \to T_2$ by
$j(M(s,A)) = (s,\det(A))$.
Let $N$ be the subgroup consisting of $M(1,A)$ for $A\in \SL(2,\C)$. Then we
have the exact sequence
$$1\to N\to \Zm_{q,e} \labelto{j} T_2\to 1.$$
On $T_2$ we use the conjugation $\sigma$ given by $\sigma(s,t) = (\ov s,
\ov s^{-2}\ov t^{-1})$. Then the map $j$ is $\Gamma$-equivariant. We have that
$\Ho^1 (T_2,\sigma) = \{ [(1,1)], [(1,-1)] \}$. By Proposition
\ref{p:serre-prop38} we have the exact sequence
$$ \Ho^1 (N,\sigma) \to \Ho^1 (\Zm_{q,e},\sigma) \labelto{j_*} \Ho^1 (T_2,\sigma).$$
We have $\sigma(M(1,A)) = M(1,\ov A)$ for $A\in \SL(2,\C)$. Hence
$\Ho^1 (N,\sigma)=1$, so that $j_*^{-1}([(1,1)]) = \{ [M(1,1)] \}$.

Consider $b=M(1,\SmallMatrix{0&i\\-i&0})$. Then $b\sigma(b) = 1$ so that
$b\in \Zl^1(\Zm_{q,e},\sigma)$. Furthermore $j(b) = (1,-1)$. We twist the
above sequence by $b$. By $\tau$ we denote the twisted conjugation on
$\Zm_{q,e}$. A short computation shows that $\tau(M(s,A)) =
M(\ov s, \ov s^{-1} (\ov A)^{-T})$. Hence $N^\tau$ is isomorphic to
$\mathrm{SU}(2)$ so that $\Ho^1 (N,\tau) = \{ [M(1,1)], [M(1,-1)]\}$.
Let $b' = M(-1,\SmallMatrix{1&0\\0&-1})$, so $j(b') = (-1,-1)\in T_2^\tau$.
We have
$$(b')^{-1} M(1,1) \tau(b') = M(-1,\SmallMatrix{1&0\\0&-1})M(-1,
-\SmallMatrix{1&0\\0&-1})=M(1,-1).$$
So for the action defined in Construction \ref{con:rightact} we see that
there is one orbit. Hence by Corollary \ref{c:39-cor2} we see that
$j_*^{-1}([(1,-1)]) = \{ [b] \}$. The conclusion is that $\Ho^1(\Zm_{q,e},\sigma)
=\{ [M(1,1)], [b] \}$.
\item $ie_{147}-ie_{158}-ie_{169}+ie_{259}+ie_{359}$ is a real element in $\YY$.
  We have
$$g_0=\SmallMatrix{ -i & 0 & 0 & 0 & 0 & 0 & 0 & 0 & 0 \\
                0 & i & 0 & 0 & 0 & 0 & 0 & 0 & 0 \\
                0 & 0 & 0 & 0 & -1 & 0 & 0 & 1 & 0 \\
                0 & 0 & 0 & 0 & 0 & \tfrac{1}{2} & 0 & 0 & \tfrac{1}{2} \\
                0 & 0 & 0 & 1 & 0 & 0 & 1 & 0 & 0 \\
                0 & 0 & \tfrac{1}{2}i & 0 & 0 & 0 & 0 & 0 & 0 \\
                0 & 0 & 0 & 0 & i & 0 & 0 & i & 0 \\
                0 & 0 & 0 & 0 & 0 & -i & 0 & 0 & i \\
                0 & 0 & 0 & -\tfrac{1}{2}i & 0 & 0 & \tfrac{1}{2}i & 0 & 0},
n_0=\SmallMatrix{ -1 & 0 & 0 & 0 & 0 & 0 & 0 & 0 & 0 \\
                0 & -1 & 0 & 0 & 0 & 0 & 0 & 0 & 0 \\
                0 & 0 & -1 & 0 & 0 & 0 & 0 & 0 & 0 \\
                0 & 0 & 0 & 0 & 0 & 0 & 1 & 0 & 0 \\
                0 & 0 & 0 & 0 & 0 & 0 & 0 & -1 & 0 \\
                0 & 0 & 0 & 0 & 0 & 0 & 0 & 0 & 1 \\
                0 & 0 & 0 & 1 & 0 & 0 & 0 & 0 & 0 \\
                0 & 0 & 0 & 0 & -1 & 0 & 0 & 0 & 0 \\
                0 & 0 & 0 & 0 & 0 & 1 & 0 & 0 & 0}.$$
Here $\Zm_{q,e}$ is a 2-dimensional torus consisting of elements $T_2(s,t)$
for $s,t\in \C^\times$. Now $n_0\overline{T_2(s,t)} n_0^{-1} =
T_2(\ov s^{-1}, \ov s\ov t)$. Hence $\Ho^1 \Zm_{q,e}^\circ = 1$. The component
group is of order 3, so that also $\Ho^1 \Zm_{q,e}=1$ by Proposition \ref{p:C-3}.
\item $e_{249}-e_{258}-e_{357}+e_{369}$ is a real element in $\YY$. We have
$$g_0=\SmallMatrix{ 0 & 0 & -1 & 0 & 0 & 0 & 0 & 0 & 0 \\
                -\tfrac{1}{2} & -\tfrac{1}{2} & 0 & 0 & 0 & 0 & 0 & 0 & 0 \\
                0 & 0 & 0 & 0 & 0 & -1 & 0 & 1 & 0 \\
                0 & 0 & 0 & 0 & -\tfrac{1}{2} & 0 & 0 & 0 & \tfrac{1}{2} \\
                0 & 0 & 0 & 1 & 0 & 0 & 1 & 0 & 0 \\
                -\tfrac{1}{2}i & \tfrac{1}{2}i & 0 & 0 & 0 & 0 & 0 & 0 & 0 \\
                0 & 0 & 0 & 0 & 0 & i & 0 & i & 0 \\
                0 & 0 & 0 & 0 & i & 0 & 0 & 0 & i \\
                0 & 0 & 0 & -\tfrac{1}{2}i & 0 & 0 & \tfrac{1}{2}i & 0 & 0},
n_0=\SmallMatrix{ 0 & 1 & 0 & 0 & 0 & 0 & 0 & 0 & 0 \\
                1 & 0 & 0 & 0 & 0 & 0 & 0 & 0 & 0 \\
                0 & 0 & 1 & 0 & 0 & 0 & 0 & 0 & 0 \\
                0 & 0 & 0 & 0 & 0 & 0 & 1 & 0 & 0 \\
                0 & 0 & 0 & 0 & 0 & 0 & 0 & 0 & -1 \\
                0 & 0 & 0 & 0 & 0 & 0 & 0 & -1 & 0 \\
                0 & 0 & 0 & 1 & 0 & 0 & 0 & 0 & 0 \\
                0 & 0 & 0 & 0 & 0 & -1 & 0 & 0 & 0 \\
                0 & 0 & 0 & 0 & -1 & 0 & 0 & 0 & 0}.  $$
Here $\Zm_{q,e}^\circ$ consists of elements $M(A)$ where $A\in \GL(2,\C)$.
We have $n_0 \overline{M(A)} n_0^{-1} = M(U\ov A U^{-1})$, where $U=\SmallMatrix{
  0&1\\1&0}$. The group of real points under this conjugation consists of
$$\SmallMatrix{a&b\\ \ov b & \ov a} \text{ with } a\ov a-b\ov b\neq 0.$$
Hence it is a noncompact real form of $\GL(2,\C)$, and therefore isomorphic
to $\GL(2,\R)$. It follows that $\Ho^1 \Zm_{q,e}^\circ=1$. The component group
is of order 3, hence also $\Ho^1 \Zm_{q,e}=1$ by Proposition \ref{p:C-3}.
\item $-ie_{159}-ie_{247}+ie_{249}-ie_{347}+ie_{357}$ is a real element in $\YY$.
  We have
$$g_0=\SmallMatrix{ 0 & 0 & 0 & 0 & 0 & 0 & -1 & -2 & -4 \\
                0 & 0 & 0 & 0 & 0 & 0 & \tfrac{1}{2} & \tfrac{1}{2} & 1 \\
                2i & -i & 0 & -2 & 1 & 0 & 0 & 0 & 0 \\
                0 & 0 & -\tfrac{1}{2} & 0 & 0 & \tfrac{1}{2} & 0 & 0 & 0 \\
                -i & i & 0 & 1 & 0 & 0 & 0 & 0 & 0 \\
                0 & 0 & 0 & 0 & 0 & 0 & -\tfrac{1}{2}i & -\tfrac{1}{2}i & 0 \\
                2 & -1 & 0 & -2i & i & 0 & 0 & 0 & 0 \\
                0 & 0 & i & 0 & 0 & i & 0 & 0 & 0 \\
                -\tfrac{1}{2} & \tfrac{1}{2} & 0 & \tfrac{1}{2}i & 0 & 0 & 0 & 0 & 0},
n_0=\SmallMatrix{ 0 & 0 & 0 & i & -i & 0 & 0 & 0 & 0 \\
                0 & 0 & 0 & 0 & -i & 0 & 0 & 0 & 0 \\
                0 & 0 & 0 & 0 & 0 & -1 & 0 & 0 & 0 \\
                i & -i & 0 & 0 & 0 & 0 & 0 & 0 & 0 \\
                0 & -i & 0 & 0 & 0 & 0 & 0 & 0 & 0 \\
                0 & 0 & -1 & 0 & 0 & 0 & 0 & 0 & 0 \\
                0 & 0 & 0 & 0 & 0 & 0 & 1 & 0 & 0 \\
                0 & 0 & 0 & 0 & 0 & 0 & -2 & -1 & 0 \\
                0 & 0 & 0 & 0 & 0 & 0 & 1 & 1 & 1}.  $$
Here $\Zm_{q,e}^\circ$ consists of the elements $M(A)$ where $A$ is a
$2\times 2$-matrix of nonzero determinant. For the conjugation $\sigma$ we have
$\sigma(M(A)) = M(\det(\ov A)^{-1} U\ov AU^{-1})$, where
$$U = \SmallMatrix{1&-1\\0&-1}\text{ so that } U^{-1}=U.$$
We let $N$ be the subgroup consisting of $M(A)$ with $\det(A)=1$. We have an
exact sequence
$$1\to N\to \Zm_{q,e}^\circ \labelto{j} \C^\times \to 1$$
where $j(M(A)) = \det(A)$.
On $\C^\times$ we use the conjugation $\sigma$ given by
$\sigma(s) = \ov s^{-1}$; then $j$ is $\Gamma$-equivariant. This also means that
$\Ho^1 \C^\times = \{ [1], [-1] \}$. By Proposition \ref{p:serre-prop38} we have
the exact sequence
$$ \Ho^1 N \to \Ho^1 \Zm_{q,e}^\circ \labelto{j_*} \Ho^1 \C^\times.$$
By direct computation it is straightforward to see that $N^\sigma = \{ h\in N
\mid \sigma(h)=h\}$ is isomorphic to $\SL(2,\R)$. Hence $\Ho^1 N = 1$.
From the exactness of the sequence it now follows that $j_*^{-1}([1]) = \{[1]\}$.

Now we want to determine $j_*([-1])$ using Corollary \ref{c:39-cor2}. For this
we first lift $-1\in \C^\times$ to a cocycle $M(V)$ in $\Zm_{q,e}^\circ$. This
$V$ has to satisfy $V\sigma(V)=1$ and $\det(V) = -1$. But $\sigma(V) =
\det(\ov V)^{-1}U\ov V U^{-1}=-U\ov VU^{-1}$. So we need $V$ with $\det(V)=-1$ and
$U\ov V =-V^{-1}U$. A calculation shows that we can take
$$V=\SmallMatrix{1&1\\1&0}.$$
Set $W=VU = \SmallMatrix{1&-2\\1&-1}$.
Then the twisted conjugation is given by $\tau(M(A)) =
M(\det(\ov A)^{-1} W\ov AW^{-1})$. A small computation shows that $N^\tau$ is
isomorphic to $\SU(2)$, so that $\Ho^1 (N,\tau) = \{ [1], [-1] \}$
(the latter follows from \cite[III.4.5, Theorem 6 and Example (a)]{Serre1997}).
Corollary \ref{c:39-cor2} shows that $$j_*^{-1}([-1]) = \{ [M(V)],[M(-V)]\}.$$

We show that $[M(V)]\neq [M(-V)]$. By Proposition \ref{p:serre-prop38} we have
the exact sequence
$$(\C^\times)^{\tau} \labelto{\delta} \Ho^1(N,\tau) \labelto{i_*}
\Ho^1(\Zm_{q,e}^\circ,\tau).$$
The twisted conjugation on $\C^\times$ is equal to $\sigma$. The fixed point
group  is $(\C^\times)^\sigma = \{ s\in \C \mid s\ov s = 1\}$ is connected.
This implies that $\delta((\C^\times)^\tau)$ is trivial.
So the kernel of $i_*$ is trivial, and hence $[M(1)]$, $[M(-1)]$ are
distinct in $\Ho^1(\Zm_{q,e}^\circ,\tau)$. By \ref{subsec:twistedbij} we see
that the map $\Ho^1(\Zm_{q,e}^\circ,\tau) \to \Ho^1(\Zm_{q,e}^\circ,\sigma)$,
$[x]\mapsto [xM(V)]$ is bijective. It follows that $[M(V)]$, $[M(-V)]$ are
distinct elements in $\Ho^1(\Zm_{q,e}^\circ,\sigma)$. So we conclude that
$\Ho^1(\Zm_{q,e}^\circ,\sigma)=\{ [1], [M(V)],[M(-V)]\}$. We have
$$M(V) =\SmallMatrix{ 1 & 1 & 0 & 0 & 0 & 0 & 0 & 0 & 0 \\
                1 & 0 & 0 & 0 & 0 & 0 & 0 & 0 & 0 \\
                0 & 0 & -1 & 0 & 0 & 0 & 0 & 0 & 0 \\
                0 & 0 & 0 & -1 & -1 & 0 & 0 & 0 & 0 \\
                0 & 0 & 0 & -1 & 0 & 0 & 0 & 0 & 0 \\
                0 & 0 & 0 & 0 & 0 & -1 & 0 & 0 & 0 \\
                0 & 0 & 0 & 0 & 0 & 0 & 0 & 0 & -1 \\
                0 & 0 & 0 & 0 & 0 & 0 & 0 & -1 & 2 \\
                0 & 0 & 0 & 0 & 0 & 0 & -1 & 1 & -1},
M(-V) = \SmallMatrix{ -1 & -1 & 0 & 0 & 0 & 0 & 0 & 0 & 0 \\
                -1 & 0 & 0 & 0 & 0 & 0 & 0 & 0 & 0 \\
                0 & 0 & -1 & 0 & 0 & 0 & 0 & 0 & 0 \\
                0 & 0 & 0 & 1 & 1 & 0 & 0 & 0 & 0 \\
                0 & 0 & 0 & 1 & 0 & 0 & 0 & 0 & 0 \\
                0 & 0 & 0 & 0 & 0 & -1 & 0 & 0 & 0 \\
                0 & 0 & 0 & 0 & 0 & 0 & 0 & 0 & -1 \\
                0 & 0 & 0 & 0 & 0 & 0 & 0 & -1 & 2 \\
                0 & 0 & 0 & 0 & 0 & 0 & -1 & 1 & -1}. $$
We also describe a second approach. Write $C=\Zm_{q,e}$. We know that
$C^\circ\simeq \GL(2,\C)$ over $\C$. Let $D$ be the commutator subgroup
of $C^\circ$ and let $D^\sigma$ be its group of real points. Then
$D^\sigma$ is isomorphic to $\SL(2,\R)$ and hence to $\SU(1,1)$. The
quotient $(C^\circ)^\sigma/D^\sigma$ is a compact 1-dimensional torus.
It follows that $(C^\circ)^\sigma \simeq \mathrm{U}(1,1)$. Here
we regard $\mathrm{U}(1,1)$ as the unitary group of the Hermitian form
(with matrix) $Q=\diag(1,-1)$. In other words we define the conjugation
$\eta : \GL(2,\C)\to \GL(2,\C)$ by $\eta(g) = Q(\ov g)^{-T} Q^{-1}$. Then
$\mathrm{U}(1,1) = \GL(2,\C)^\eta$.

\begin{lemma}[well known] $\# \Ho^1(\GL(2,\C),\eta)=3$ with cocycles
$c_1=\diag(1,-1)$, $c_0=\diag(1,1)=1$, $c_2=\diag(-1,1)$.
\end{lemma}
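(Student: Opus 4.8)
The cleanest route is to translate $\Ho^1$ of the unitary group directly into the classification of Hermitian forms, exactly in the spirit of Proposition \ref{p:U(m,n)} and its proof, and then read off the explicit cocycles. First note that $\eta$ is an anti-regular involution (it is an anti-holomorphic involution of $\GL(2,\C)$, which is reductive, so Remark \ref{rem:antiholomorphic} applies), so $\mathbf{U}:=(\GL_{2,\C},\eta)$ is a bona fide $\R$-group, and its group of real points is the unitary group $\mathrm{U}(1,1)$ of the Hermitian form with matrix $Q=\diag(1,-1)$.

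The plan is to identify $\Zl^1\mathbf{U}$ with the set of invertible Hermitian $2\times 2$ matrices. Writing ${}^\gamma c=\eta(c)=Q\,c^{-*}Q^{-1}$ (where $c^{*}=\ov c^{\hs t}$ and $Q^{-1}=Q$), the cocycle condition $c\cdot{}^\gamma c=1$ from Definition \ref{d:H1-nonab} unwinds to $cQ=Qc^{*}$, i.e. to the statement that $H:=cQ$ is Hermitian; since $c$ is invertible so is $H$. Conversely $H\mapsto c=HQ$ inverts this, so $c\mapsto cQ$ is a bijection from $\Zl^1\mathbf{U}$ onto the set of invertible Hermitian matrices. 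Next I would compute how the coboundary action $c*a=a^{-1}c\,{}^{\gamma}a$ of Definition \ref{d:H1-nonab} transports: a short computation gives $(c*a)Q=a^{-1}(cQ)(a^{*})^{-1}$, which is precisely the congruence action $H\mapsto a^{-1}H(a^{-1})^{*}$ of $\GL(2,\C)$ on Hermitian matrices. Hence $\Ho^1\mathbf{U}$ is in canonical bijection with the set of invertible Hermitian $2\times2$ matrices modulo congruence. By the Hermitian version of Sylvester's law of inertia (the same ingredient used in the proof of Proposition \ref{p:U(m,n)}), this set is in bijection with the possible signatures $(p,q)$, $p+q=2$, namely $(2,0)$, $(1,1)$, $(0,2)$; thus $\#\Ho^1\mathbf{U}=3$. (Alternatively one can phrase the same argument through Proposition \ref{p:coh-orbits}, taking $\YY$ to be the variety of invertible Hermitian matrices acted on by $\GG=R_{\C/\R}\GL_{2,\C}$ via $g\cdot M=gMg^{*}$, with $\Stab_\GG(Q)=\mathbf{U}$ and $\Ho^1\GG=1$ by Proposition \ref{p:Weil}(i).)

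Finally I would exhibit the three explicit cocycles. Each $c_j\in\{\diag(1,1),\diag(1,-1),\diag(-1,1)\}$ is real and diagonal, hence $c_j Q$ is Hermitian, so by the above $c_j\in\Zl^1\mathbf{U}$. Under the dictionary $c\mapsto cQ$ they correspond to $\diag(1,1)\cdot Q=Q=\diag(1,-1)$ of signature $(1,1)$, to $\diag(1,-1)\cdot Q=\diag(1,1)$ of signature $(2,0)$, and to $\diag(-1,1)\cdot Q=\diag(-1,-1)$ of signature $(0,2)$. These three signatures are pairwise distinct, so $[c_0],[c_1],[c_2]$ are pairwise non-cohomologous, and since there are exactly three classes they exhaust $\Ho^1\mathbf{U}$. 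This matches the assignment $c_1\leftrightarrow\SL(2,\R)$-type (signature $(2,0)$, i.e. the compact-looking form giving $\mathrm{U}(2)$), $c_0\leftrightarrow\mathrm{U}(1,1)$ itself, $c_2\leftrightarrow\mathrm{U}(0,2)$, as needed in the application.

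\textbf{Expected main obstacle.} The conceptual content (three classes, via Sylvester) is immediate; the only delicate point is the bookkeeping in the cocycle–Hermitian-form dictionary — getting the twist $\eta(c)=Q c^{-*}Q^{-1}$ right, checking that the coboundary action becomes the \emph{congruence} action (and on which side), and confirming that left multiplication by $Q$ is the correct intertwiner so that the three displayed diagonal matrices land on three \emph{different} signatures rather than repeating one. Once the sign/transpose/inverse conventions are pinned down consistently with Definition \ref{d:H1-nonab}, the rest is routine.
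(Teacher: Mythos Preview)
Your proof is correct and follows essentially the same approach as the paper: identify $\Ho^1(\GL(2,\C),\eta)$ with congruence classes of nondegenerate Hermitian forms on $\C^2$, invoke Sylvester's law of inertia to get three classes, and then check that the three displayed diagonal cocycles hit the three distinct signatures. In fact you supply more detail than the paper does---you spell out the bijection $c\mapsto cQ$ from cocycles to Hermitian matrices and verify that the coboundary action becomes congruence---whereas the paper simply asserts the classification and then records the calculation ${}_{c_1}F_{+,-}=F_{+,+}$, ${}_{c_2}F_{+,-}=F_{-,-}$, ${}_{c_0}F_{+,-}=F_{+,-}$, which is exactly your dictionary $c_jQ$.
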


\begin{proof}[Idea of proof]
We have that $\Ho^1(\GL(2,\C),\eta)$ classifies equivalence classes of
nondegenerate Hermitian forms on $\C^2$. By the Hermitian version of the
inertia law, there are three classes of such forms with representatives
$F_{+,+}=\diag(1,1),\ F_{+,-}=\diag(1,-1),\ F_{-,-}=\diag(-1,-1)$.
Thus $\# \Ho^1(\GL(2,\C),\eta)=3$.
A calculation  shows that
$_{c_1}F_{+,-}=F_{+,+},\ _{c_2}F_{+,-}=F_{-,-}$,
and of course $_{c_0}F_{+,-}=F_{+,-}$,
which completes the proof of the lemma.
\end{proof}

Now we determine a $\Gamma$-equivariant isomorphism $\varphi : \GL(2,\C)\to
C^\circ$. For this we first determine an isomorphism between the Lie algebras
of $\mathrm{U}(1,1)=\GL(2,\C)^\eta$ and $(C^\circ)^\sigma$. We have that $g\in
\GL(2,\C)$ lies in $\mathrm{U}(1,1)$ if and only if $gQ\ov g^{T} =Q$. So
a $u\in \gl(2,\C)$ lies in $\Lie (\mathrm{U}(1,1))$ if and only if
$uQ+Q\ov u^T =0$. A short calculation shows that
$\Lie (\mathrm{U}(1,1))$ is spanned by
$$I=\SmallMatrix{i&0\\0&i}, \hat h = \SmallMatrix{0&i\\-i&0},
\hat e = \tfrac{1}{2}\SmallMatrix{1&i\\1&-1},
\hat f = \tfrac{1}{2}\SmallMatrix{-i&1\\1&i}.$$
The Lie algebra of $C^\circ$ consists of elements $L(u)$ for $u\in \gl(2,\C)$
where
$$L(u) = \diag(u,-\mathrm{Tr}(u),u+\mathrm{Tr}(u)\cdot 1, \mathrm{Tr}(u),v)$$
where $v$ is a $3\times 3$-matrix depending linearly on $u$ (it is the matrix
of the adjoint representation of $\gl(2,\C)$ on $\ssl(2,\C)$). We have that
$L(u)$ lies in $\Lie( (C^\circ)^\sigma )$ if and only if $U\ov u U = u+
\mathrm{Tr}(\ov u)\cdot 1$.
A computation shows that the Lie algebra of $(C^\circ)^\sigma$ is spanned by
$L(\diag(i,i))$, $L(h)$, $L(e)$, $L(f)$, where
$$h=\SmallMatrix{1&-1\\0&-1}, e=\SmallMatrix{0&i\\0&0},
f=\SmallMatrix{-\tfrac{i}{2} & \tfrac{i}{4}\\-i&\tfrac{i}{2}}.$$
Mapping $\diag(i,i)\mapsto L(\diag(i,i))$, $\hat h\mapsto h$,
$\hat e\mapsto e$, $\hat f\mapsto f$ yields an isomorphism
$d\varphi : \mathrm{U}(1,1) \to (C^\circ)^\sigma$.
Now we define $\varphi$ by $\varphi( \exp(u) ) = \exp (d\varphi(u))$ for
$u\in \Lie(\mathrm{U}(1,1))$.

We have
\begin{align*}
  c_1&=\SmallMatrix{i&0\\0&i} \exp(-\hat e)\exp(\hat f)\exp(-\hat e)\\
  c_2&= \SmallMatrix{i&0\\0&i} \exp(\hat e)\exp(-\hat f)\exp(\hat e).
\end{align*}
So we set
\begin{align*}
q_1 &= \SmallMatrix{i&0\\0&i} \exp(-e)\exp(f))\exp(-e)=
\SmallMatrix{\tfrac{1}{2} & \tfrac{3}{4}\\ 1 & -\tfrac{1}{2}}\\
q_2 &= \SmallMatrix{i&0\\0&i} \exp(e)\exp(-f))\exp(e)=
\SmallMatrix{-\tfrac{1}{2} & -\tfrac{3}{4}\\ -1 & \tfrac{1}{2}},
\end{align*}
with which we have $\varphi(c_1) = M(q_1)$ and $\varphi(c_2) = M(q_2)$.
It follows that $\Ho^1 (C^\circ,\sigma) = \{ [1], [M(q_1)], [M(q_2)]\}$.

Now we want to compute $\Ho^1 (C,\sigma)$. Write $K=C/C^\circ$. We have a short
exact sequence
$$1\to C^\circ\labelto{i} C\labelto{j} K\to 1$$
which by Proposition \ref{p:serre-prop38} gives rise to the exact sequence
$$\Ho^1 (C^\circ,\sigma) \labelto{i_*} \Ho^1(C,\sigma) \labelto{j_*}
\Ho^1(K,\sigma).$$
Since $K$ is of order 3 we have $\Ho^1(K,\sigma)=1$ (Corollary \ref{c:2m+1}).
Hence $i_*$ is surjective. Write
$g= \diag(\zeta^2,\zeta^2,\zeta^2,1,1,1,\zeta,\zeta,\zeta)$, where $\zeta$ is a
primitive third root of unity. Then the class of
$g$ generates $K$. We have
$$n_0 \ov g n_0^{-1} = g^2 M(\diag(\zeta^2,\zeta^2)).$$
It follows that $K^\sigma=1$, so by Proposition \ref{p:action-C-Gamma} we see
that $i_*$ is injective. Hence $\Ho^1 (C,\sigma) =\{ [1], [M(q_1)], [M(q_2)]\} $.
\item $-ie_{159}+ie_{249}+ie_{259}+ie_{357}+ie_{359}$ is a real element in $\YY$.
  We have
$$g_0=\SmallMatrix{ 0 & 0 & 0 & 0 & 0 & 0 & 0 & 0 & 1 \\
                0 & 0 & 0 & 0 & 0 & 0 & -\tfrac{1}{2} & \tfrac{1}{2} & -1 \\
                0 & i & 0 & 0 & 1 & 0 & 0 & 0 & 0 \\
                0 & 0 & -\tfrac{1}{2} & 0 & 0 & \tfrac{1}{2} & 0 & 0 & 0 \\
                -i & 0 & 0 & 1 & 0 & 0 & 0 & 0 & 0 \\
                0 & 0 & 0 & 0 & 0 & 0 & -\tfrac{1}{2}i & -\tfrac{1}{2}i & 0 \\
                0 & 1 & 0 & 0 & i & 0 & 0 & 0 & 0 \\
                0 & 0 & i & 0 & 0 & i & 0 & 0 & 0 \\
                -\tfrac{1}{2} & 0 & 0 & \tfrac{1}{2}i & 0 & 0 & 0 & 0 & 0},
n_0=\SmallMatrix{ 0 & 0 & 0 & i & 0 & 0 & 0 & 0 & 0 \\
                0 & 0 & 0 & 0 & -i & 0 & 0 & 0 & 0 \\
                0 & 0 & 0 & 0 & 0 & -1 & 0 & 0 & 0 \\
                i & 0 & 0 & 0 & 0 & 0 & 0 & 0 & 0 \\
                0 & -i & 0 & 0 & 0 & 0 & 0 & 0 & 0 \\
                0 & 0 & -1 & 0 & 0 & 0 & 0 & 0 & 0 \\
                0 & 0 & 0 & 0 & 0 & 0 & 0 & -1 & 0 \\
                0 & 0 & 0 & 0 & 0 & 0 & -1 & 0 & 0 \\
                0 & 0 & 0 & 0 & 0 & 0 & 0 & 0 & 1}.
$$
Here $\Zm_{q,e}$ consists of the elements $T_3(s,t,u)$. We have
$$n_0  \overline{T_3(s,t,u)} n_0^{-1} =
T_3(\bar s^{-1}\bar u^2,\bar t^{-1}\bar u^{-1}, \bar u).$$
Using the algorithm of Section \ref{sec:H1T} it is straightforward to see
that $\Ho^1 \Zm_{q,e} = \{ [1], [T_3(-1,1,1)]\}$.
\item $e_{249}-e_{357}$ is a real element in $\YY$. We have
  $$g_0=\SmallMatrix{ 0 & 0 & -1 & 0 & 0 & 0 & 0 & 0 & 0 \\
                -\tfrac{1}{2} & -\tfrac{1}{2} & 0 & 0 & 0 & 0 & 0 & 0 & 0 \\
                0 & 0 & 0 & 0 & -1 & 0 & 1 & 0 & 0 \\
                0 & 0 & 0 & 0 & 0 & \tfrac{1}{2} & 0 & 0 & -\tfrac{1}{2} \\
                0 & 0 & 0 & 1 & 0 & 0 & 0 & 1 & 0 \\
                -\tfrac{1}{2}i & \tfrac{1}{2}i & 0 & 0 & 0 & 0 & 0 & 0 & 0 \\
                0 & 0 & 0 & 0 & i & 0 & i & 0 & 0 \\
                0 & 0 & 0 & 0 & 0 & -i & 0 & 0 & -i \\
                0 & 0 & 0 & -\tfrac{1}{2}i & 0 & 0 & 0 & \tfrac{1}{2}i & 0},
n_0=\SmallMatrix{ 0 & 1 & 0 & 0 & 0 & 0 & 0 & 0 & 0 \\
                1 & 0 & 0 & 0 & 0 & 0 & 0 & 0 & 0 \\
                0 & 0 & 1 & 0 & 0 & 0 & 0 & 0 & 0 \\
                0 & 0 & 0 & 0 & 0 & 0 & 0 & 1 & 0 \\
                0 & 0 & 0 & 0 & 0 & 0 & -1 & 0 & 0 \\
                0 & 0 & 0 & 0 & 0 & 0 & 0 & 0 & -1 \\
                0 & 0 & 0 & 0 & -1 & 0 & 0 & 0 & 0 \\
                0 & 0 & 0 & 1 & 0 & 0 & 0 & 0 & 0 \\
                0 & 0 & 0 & 0 & 0 & -1 & 0 & 0 & 0}.  $$
Here $\Zm_{q,e} = H\ltimes \Zm_{q,e}^\circ$. The identity component $\Zm_{q,e}^\circ$
consists of elements $T_4(s,t,u,v)$. We consider the conjugation $\sigma$ given
by $\sigma(a)= n_0 \ov a n_0^{-1}$ for $a\in \Zm_{q,e}$.
We have
$$\sigma(T_4(s,t,u,v)) n_0^{-1} = T_4(\ov v, \ov u, \ov t, \ov s).$$
Hence $\Ho^1 (\Zm_{q,e}^\circ,\sigma) = 1$.
The group $H$ is abelian and of order 6.
It is generated by $g$ (of order 2) and $h_1$ (of order 3). Set
$$d=T_4(1,1,-1,-1) =\diag(-1,-1,1,1,1,1,-1,-1,1).$$
Then with $g'=gd$ we have $g' n_0 \ov g' n_0^{-1} =1$ so that $g$ lifts to
the cocycle $g'\in \Zm_{q,e}$. Now consider the conjugation $\tau$ of $\Zm_{q,e}$
given by $\tau(a) = g' \sigma(a) (g')^{-1}$. Then
$\tau(T_4(s,t,u,v)) = T_4(\ov u, \ov v, \ov s, \ov t)$. Hence
$\Ho^1 (\Zm_{q,e}^\circ,\tau)=1$. By Proposition \ref{p:C-3-2} it follows that
$\Ho^1 (\Zm_{q,e},\sigma)=\{[1],[g']\}$.
\item $-e_{149}+e_{157}+e_{259}-e_{359}$ is a real element in $\YY$. We have
$$g_0=\SmallMatrix{ 1 & 0 & 0 & 0 & 0 & 0 & 0 & 0 & 0 \\
                0 & \tfrac{1}{2} & \tfrac{1}{2} & 0 & 0 & 0 & 0 & 0 & 0 \\
                0 & 0 & 0 & 1 & 0 & 0 & 1 & 0 & 0 \\
                0 & 0 & 0 & 0 & 0 & \tfrac{1}{2} & 0 & \tfrac{1}{2} & 0 \\
                0 & 0 & 0 & 0 & 1 & 0 & 0 & 0 & 1 \\
                0 & -\tfrac{1}{2}i & \tfrac{1}{2}i & 0 & 0 & 0 & 0 & 0 & 0 \\
                0 & 0 & 0 & -i & 0 & 0 & i & 0 & 0 \\
                0 & 0 & 0 & 0 & 0 & -i & 0 & i & 0 \\
                0 & 0 & 0 & 0 & -\tfrac{1}{2}i & 0 & 0 & 0 & \tfrac{1}{2}i},
  n_0=\SmallMatrix{ 1 & 0 & 0 & 0 & 0 & 0 & 0 & 0 & 0 \\
                0 & -1 & 0 & 0 & 0 & 0 & 0 & 0 & 0 \\
                0 & 0 & 1 & 0 & 0 & 0 & 0 & 0 & 0 \\
                0 & 0 & 0 & 0 & 0 & 0 & 1 & 0 & 0 \\
                0 & 0 & 0 & 0 & 0 & 0 & 0 & -1 & 0 \\
                0 & 0 & 0 & 0 & 0 & 0 & 0 & 0 & 1 \\
                0 & 0 & 0 & 1 & 0 & 0 & 0 & 0 & 0 \\
                0 & 0 & 0 & 0 & -1 & 0 & 0 & 0 & 0 \\
                0 & 0 & 0 & 0 & 0 & -1 & 0 & 0 & 0}.$$
Here $\Zm_{q,e}^\circ$ is a 3-dimensional torus consisting of the elements
$T_3(s,t,u)$. We have
$$n_0 \overline{T_3(s,t,u)} n_0^{-1} = T_3(\ov t, \ov s, \ov s^{-2} \ov t^2\ov u).
$$
Hence $\Ho^1 \Zm_{q,e}^\circ = 1$. The component group is of order 3 hence
by Proposition \ref{p:C-3} also $\Ho^1 \Zm_{q,e} = 1$.
\item $-ie_{149}-ie_{157}+ie_{168}$ is a real element in $\YY$. We have
$$g_0=\SmallMatrix{ i & 0 & 0 & 0 & 0 & 0 & 0 & 0 & 0 \\
                0 & -\tfrac{1}{2}i & \tfrac{1}{2} & 0 & 0 & 0 & 0 & 0 & 0 \\
                0 & 0 & 0 & -1 & 0 & 0 & 0 & 1 & -1 \\
                0 & 0 & 0 & 0 & \tfrac{1}{2} & \tfrac{1}{2} & 0 & \tfrac{1}{2} & 0 \\
                0 & 0 & 0 & 0 & 0 & 1 & 1 & 0 & 0 \\
                0 & -\tfrac{1}{2} & \tfrac{1}{2}i & 0 & 0 & 0 & 0 & 0 & 0 \\
                0 & 0 & 0 & i & 0 & 0 & 0 & i & -i \\
                0 & 0 & 0 & 0 & -i & -i & 0 & i & 0 \\
                0 & 0 & 0 & 0 & 0 & -\tfrac{1}{2}i & \tfrac{1}{2}i & 0 & 0},
n_0=\SmallMatrix{ -1 & 0 & 0 & 0 & 0 & 0 & 0 & 0 & 0 \\
                0 & 0 & i & 0 & 0 & 0 & 0 & 0 & 0 \\
                0 & i & 0 & 0 & 0 & 0 & 0 & 0 & 0 \\
                0 & 0 & 0 & 0 & 0 & 0 & 0 & -1 & 1 \\
                0 & 0 & 0 & 0 & 0 & 0 & -1 & 1 & 0 \\
                0 & 0 & 0 & 0 & 0 & 0 & 1 & 0 & 0 \\
                0 & 0 & 0 & 0 & 0 & 1 & 0 & 0 & 0 \\
                0 & 0 & 0 & 0 & 1 & 1 & 0 & 0 & 0 \\
                0 & 0 & 0 & 1 & 1 & 1 & 0 & 0 & 0}.  $$
Here $\Zm_{q,e}$ consists of the elements $M(A,B)$ where $A$, $B$ are
respectively $2\times 2$ and $3\times 3$-matrices of determinant 1. For the
conjugation $\sigma$ we have
$$\sigma(M(A,B))= M(\overline{A'}, U\overline{B}^{-T}U^{-1}).$$
Here if $A= \SmallMatrix{ a & b \\ c & d }$ then $A'=\SmallMatrix{ d & c \\
  b & a }$ and
$$U=\SmallMatrix{ 0 & 0 & 1\\ 0 & 1 & 1 \\ 1 & 1 & 1 }.$$
We write $C^\circ=C_2\times C_3$, where $C^1=\{M(A,1)\}$ and $C_3=\{M(1,B)\}$.

We compute $\Ho^1(C_2,\sigma)$. We have
\[C_2^\sigma=\left\{M(A,1) \mid A= \SmallMatrix{a&b\\ \bar b&\bar a}
\text{ with } a\bar a-b\bar b=1\right\},\]
which is noncompact, hence isomorphic to $\SL(2,\R)$.
It follows that $\Ho^1(C_2,\sigma)=1$.

We have that $M(1,B)\in C_3^\sigma$ if and only if $B^{-1} U \ov B^{-T} =U$.
Passing to the inverse we see that this is equivalent to $BU\ov B^T=U$.
Now $U$ is a symmetric matrix with one negative and two positive eigenvalues.
Hence $C_3^\sigma$ is isomorphic to $\SU(2,1)$. Set
$$A = \SmallMatrix{0&0&1\\ -1&1&0\\ -1&0&1}.$$
Then $AUA^T = \diag(1,1,-1)$. Hence $g\mapsto A^{-1} gA$ is an isomorphism
$\SU(2,1)\to C_3^\sigma$. By Proposition \ref{p:U(m,n)} we have that
$\Ho^1 \SU(2,1)$ consists of the classes of 1 and $D=\diag(-1,-1,1)$.
So $\Ho^1(C_3,\sigma)$ consists of the classes of 1 and $M(1,B)$ where
$$B=A^{-1} DA = \SmallMatrix{1&0&-2\\2&-1&-2\\0&0&-1}.$$

Alternatively we can consider the matrix $B=-U$. A direct computation shows that
it lies in $\Zl^1(C_3,\sigma)$. Consider the Hermitian form $\mathcal{H}_U$ given by
\[(x,y)\mapsto x^t\cdot U\cdot y, \quad x,y\in\C^3.\]
We can diagonalize the matrix $U$ using computer, that is, find a basis
$e_1,e_2,e_3$ in $\C^3$
in which our matrix $U$ will be $\diag(\lambda_1,\lambda_2,\lambda_3)$,
where
\[\lambda_1=-0.8019\dots<0,\ \lambda_2=0.5549\dots>0,\ \lambda_3=2.2469\dots>0.\]
Now our $B=-U$ multiplies $e_1$ by the {\em  positive} number $-\lambda_1$,
and it multiplies $e_2$ and $e_3$ by the {\em negative} numbers $-\lambda_2$
and $-\lambda_3$, respectively.
It follows the  $B$-twisted  Hermitian form $_B (\mathcal{H}_U)$ is negative definite,
and hence nonisomorphic to $\mathcal{H}_U$. Thus $B=-U$ is a nontrivial cocycle.
So $\Ho^1 (C_3,\sigma) = \{ [1], [M(1,B)] \}$.

Using Proposition \ref{p:C-3} we now see that $\Ho^1 \Zm_{q,e}^\circ=
\{[1],[M(1,B)]\}$ so that also $\Ho^1 \Zm_{q,e} =\Ho^1 \{[1],[M(1,B)]\}$.
\item  $-e_{149}+e_{157}$ is a real element in $\YY$. We have
$$g_0=\SmallMatrix{ 1 & 0 & 0 & 0 & 0 & 0 & 0 & 0 & 0 \\
                0 & \tfrac{1}{2} & \tfrac{1}{2} & 0 & 0 & 0 & 0 & 0 & 0 \\
                0 & 0 & 0 & -1 & 1 & 0 & 1 & 1 & 0 \\
                0 & 0 & 0 & 0 & 0 & -\tfrac{1}{2} & 0 & 0 & \tfrac{1}{2} \\
                0 & 0 & 0 & 0 & 1 & 0 & 1 & 0 & 0 \\
                0 & -\tfrac{1}{2}i & \tfrac{1}{2}i & 0 & 0 & 0 & 0 & 0 & 0 \\
                0 & 0 & 0 & i & -i & 0 & i & i & 0 \\
                0 & 0 & 0 & 0 & 0 & i & 0 & 0 & i \\
                0 & 0 & 0 & 0 & -\tfrac{1}{2}i & 0 & \tfrac{1}{2}i & 0 & 0},
n_0=\SmallMatrix{ 1 & 0 & 0 & 0 & 0 & 0 & 0 & 0 & 0 \\
                0 & 0 & 1 & 0 & 0 & 0 & 0 & 0 & 0 \\
                0 & 1 & 0 & 0 & 0 & 0 & 0 & 0 & 0 \\
                0 & 0 & 0 & 0 & 0 & 0 & 0 & -1 & 0 \\
                0 & 0 & 0 & 0 & 0 & 0 & 1 & 0 & 0 \\
                0 & 0 & 0 & 0 & 0 & 0 & 0 & 0 & -1 \\
                0 & 0 & 0 & 0 & 1 & 0 & 0 & 0 & 0 \\
                0 & 0 & 0 & -1 & 0 & 0 & 0 & 0 & 0 \\
                0 & 0 & 0 & 0 & 0 & -1 & 0 & 0 & 0}.  $$
Here $\Zm_{q,e}$ consists of $M(A,B)$ where $A,B\in \GL(2,\C)$. For the
conjugation $\sigma$ we have
$$\sigma(M(A,B)) = M(U\ov AU^{-1}, \det(\ov A) V(\ov B)^{-T} V^{-1} )$$
where
$$U=\SmallMatrix{0&1\\1&0}, V=\SmallMatrix{0&1\\-1&0}.$$
Set $T_2 = (\C^\times)^2$ and define $j : \Zm_{q,e} \to T_2$, $j(M(A,B)) =
(\det(A),\det(B))$. Let $N\subset \Zm_{q,e}$ consist of all $M(A,B)$ with
$\det(A)=\det(B)=1$ (so $N$ is the kernel of $j$). On $T_2$ we define
$\sigma(s,t) = (\ov s, \ov s^2\ov t^{-1})$, then $j$ commutes with $\sigma$.
We have the exact sequence
$$1\to N\to \Zm_{q,e}\labelto{j} T_2\to 1$$
yielding the exact sequence
$$\Ho^1(N,\sigma) \to \Ho^1 (\Zm_{q,e},\sigma) \labelto{j_*} \Ho^1(T_2,\sigma).$$
Let $G_1,G_2$ be the subgroups of $N$ consisting respectively of all
$M(A,1)$ and $M(1,B)$ for $A,B\in \SL(2,\C)$. Then $N=G_1\times G_2$.

Let
\[ A=\SmallMatrix{a&b\\c&d},\text{ then } U\ov A U^{-1}=
\SmallMatrix{\bar d&\bar c\\ \bar b&\bar a}.\]
Thus
\[G_1^\sigma=\left\{M(A,1) \mid A= \SmallMatrix{\bar a&\bar b\\ \bar b&\bar a}\,
\text{ with } a\bar a-b\bar b=1\right\}.\]
We see that the group $G_1^\sigma$ is noncompact, and hence isomorphic to
$\SL(2,\R)$. We conclude that $\Ho^1(G_1,\sigma)=1$.

Similarly write
\[ B=\SmallMatrix{a&b\\c&d},\text{ then } V\ov A V^{-1}=
\SmallMatrix{\bar a&\bar b\\ \bar c&\bar d}.\]
Hence $G_2^\sigma$ is equal to $\SL(2,\R)$ and therefore $\Ho^1(G_2,\sigma)=1$.
We conclude that $\Ho^1(N,\sigma)=1$.

By the method of Section \ref{sec:H1T} it is straightforward to show
that $\Ho^1 (T_2,\sigma) = \{ [1], [T_2(1,-1)] \}$.

Because the above sequence is exact it follows that $j_*^{-1}([1]) = \{[1]\}$.
We want to determine $j_*^{-1}([T_2(1,-1)])$. Let $B_0=\SmallMatrix{0&i\\-i&0}$
and $b=M(1,B_0)$, then $j(b) = T(1,-1)$ and $b\in \Zl^1 (\Zm_{q,e},\sigma)$.
We twist the conjugation $\sigma$ by $b$, that is, we define $\tau(g) =
b\sigma(g)b^{-1}$ for $g\in \Zm_{q,e}$. Then
$$\tau(M(A,B)) = M(U\ov AU^{-1}, \det(\ov A) (\ov B)^{-T} )$$
Then for $g\in G_1$ we have
$\tau(g)=\sigma(g)$ so that $\Ho^1(G_1,\tau)=1$. For $g=M(1,B)\in G_2$ we have
$\tau(g)=M(1,(\ov B)^{-T})$. It follows that $G_2^\tau = \SU(2)$ and therefore
$\Ho^1(G_2,\tau) = \{ [1], [M(1,-1)]\}$. On $T_2$ the twisted conjugation is
equal to $\sigma$ because $T_2$ is abelian. A short computation shows that
$$T_2\tau = T_2^\sigma = \{ (s,t) \mid s\in \R, t\in \C, s^2 = t\ov t\}.$$
Consider $c=(-1,1)\in T_2^\tau$; we compute $[1]\cdot c$ where the action is
defined in Construction \ref{con:rightact}. Let $g=M(\SmallMatrix{1&0\\0&-1},1)$
so that $j(g)=c$. Then $\tau(g) = M(\SmallMatrix{-1&0\\0&1},-1)$,
and
$$[1]\cdot c = [ g^{-1}\tau(g) ] = [ M(-1,-1) ] = [M(1,-1)].$$
Hence $T_2^\tau$ has only one orbit on $\Ho^1(N,\tau)$. It follows that
$j_*^{-1}([T_2(1,-1)]) = \{[b]\}$. We conclude that $\Ho^1(\Zm_{q,e},\sigma) =
\{[1],[b]\}$.
\item $ie_{147}$ is a real element in $\YY$. We have
$$g_0=\SmallMatrix{ 1 & 0 & 0 & 0 & 0 & 0 & 0 & 0 & 0 \\
                0 & \tfrac{1}{2} & \tfrac{1}{2} & 0 & 0 & 0 & 0 & 0 & 0 \\
                0 & 0 & 0 & 1 & 0 & 0 & 1 & 0 & 0 \\
                0 & 0 & 0 & 0 & 0 & \tfrac{1}{2} & 0 & \tfrac{1}{2} & 0 \\
                0 & 0 & 0 & 0 & 1 & 0 & 0 & 0 & 1 \\
                0 & -\tfrac{1}{2}i & \tfrac{1}{2}i & 0 & 0 & 0 & 0 & 0 & 0 \\
                0 & 0 & 0 & -i & 0 & 0 & i & 0 & 0 \\
                0 & 0 & 0 & 0 & 0 & -i & 0 & i & 0 \\
                0 & 0 & 0 & 0 & -\tfrac{1}{2}i & 0 & 0 & 0 & \tfrac{1}{2}i},
n_0=\SmallMatrix{ 1 & 0 & 0 & 0 & 0 & 0 & 0 & 0 & 0 \\
                0 & 0 & 1 & 0 & 0 & 0 & 0 & 0 & 0 \\
                0 & 1 & 0 & 0 & 0 & 0 & 0 & 0 & 0 \\
                0 & 0 & 0 & 0 & 0 & 0 & 1 & 0 & 0 \\
                0 & 0 & 0 & 0 & 0 & 0 & 0 & 0 & 1 \\
                0 & 0 & 0 & 0 & 0 & 0 & 0 & 1 & 0 \\
                0 & 0 & 0 & 1 & 0 & 0 & 0 & 0 & 0 \\
                0 & 0 & 0 & 0 & 0 & 1 & 0 & 0 & 0 \\
                0 & 0 & 0 & 0 & 1 & 0 & 0 & 0 & 0}.  $$
The group $\Zm_{q,e}$ consists of the elements $M(A,B,C)$ where $A,B,C\in
\GL(2,\C)$, $\det(A)\hs\det(B)\hs\det(C)=1$. For the conjugation $\sigma$ we
have the following. Write
$$M(A,B,C)=M(\SmallMatrix{ a_{22} & a_{23}\\ a_{32} & a_{33} },
\SmallMatrix{ a_{55} & a_{56}\\ a_{65} & a_{66} }
\SmallMatrix{ a_{88} & a_{89}\\ a_{98} & a_{99} }).$$
then the $\sigma(M(A,B,C))$ is
$$M(\SmallMatrix{ \bar a_{33} & \bar a_{32}\\ \bar a_{23} & \bar a_{22} },
\SmallMatrix{ \bar a_{99} & \bar a_{98}\\ \bar a_{89} & \bar a_{88} },
\SmallMatrix{ \bar a_{66} & \bar a_{65}\\ \bar a_{56} & \bar a_{55} }).$$

Let $H'$ be the subgroup of $\Zm_{q,e}$ consisting of the elements
$M(A,1,1)$ with $\det(A)=1$. Let $H''=\GL(2,\C)\times \GL(2,\C)$.
We define a homomorphism
\[\phi\colon \Zm_{q,e}\to H'',\quad M(A,B,C)\mapsto (B,C),\]
and we define the conjugation $\sigma$ on $H''$ in such a way that $\phi$
commutes with $\sigma$. We obtain a short exact sequence
\[1\to H'\to H\to H''\to 1.\]
We have
\[(H'')^\sigma = \left\{(\SmallMatrix{ a_{55} & a_{56}\\ a_{65} & a_{66} },
\SmallMatrix{ \bar a_{66} & \bar a_{65}\\ \bar a_{56} & \bar a_{55} })
\mid \SmallMatrix{ a_{55} & a_{56}\\ a_{65} & a_{66} }\in\GL(2,\C) \right\}\simeq \GL(2,\C),\]
whence $\Ho^1(H'',\sigma)=1$ by Proposition \ref{p:Weil}(i).
We have
\[(H')^\sigma\simeq \left\{\SmallMatrix{ a_{22} & a_{23}\\ \bar a_{23} & \bar a_{22}}\mid a_{22}\bar a_{22}- a_{23}\bar a_{23}=1\right\}.\]
We see that $(H')^\sigma$ is noncompact. Thus  $(H')^\sigma$ is a noncompact
real form of $\SL(2,\C)$, and therefore $(H')^\sigma\simeq\SL(2,\R)$ and
$\Ho^1(H',\sigma)=1$.
We conclude that $\Ho^1(\Zm_{q,e},\sigma)=1$.
\end{enumerate}

\end{subsec}

\end{document}